\newcommand*{\mailto}[1]{\href{mailto:#1}{\nolinkurl{#1}}}
\newcommand{\arxiv}[1]{\href{http://arxiv.org/abs/#1}{arXiv:#1}}
\newcommand{\bbC}{{\mathbb{C}}}
\newcommand{\bbN}{{\mathbb{N}}}
\newcommand{\bbQ}{{\mathbb{Q}}}
\newcommand{\bbR}{{\mathbb{R}}}
\newcommand{\bbS}{{\mathbb{S}}}
\newcommand{\bbZ}{{\mathbb{Z}}}
\newcommand{\bsA}{{\boldsymbol{A}}}
\newcommand{\bsB}{{\boldsymbol{B}}}
\newcommand{\bsD}{{\boldsymbol{D}}}
\newcommand{\bsH}{{\boldsymbol{H}}}
\newcommand{\bsI}{{\boldsymbol{I}}}
\newcommand{\bsT}{{\boldsymbol{T}}}
\newcommand{\cA}{{\mathcal A}}
\newcommand{\cB}{{\mathcal B}}
\newcommand{\cC}{{\mathcal C}}
\newcommand{\cF}{{\mathcal F}}
\newcommand{\cH}{{\mathcal H}}
\newcommand{\cI}{{\mathcal I}}
\newcommand{\cK}{{\mathcal K}}
\newcommand{\cL}{{\mathcal L}}
\newcommand{\cM}{{\mathcal M}}
\newcommand{\cN}{{\mathcal N}}
\newcommand{\cP}{{\mathcal P}}
\newcommand{\cR}{{\mathcal R}}
\newcommand{\cS}{{\mathcal S}}
\newcommand{\cT}{{\mathcal T}}
\newcommand{\gB}{{\mathfrak{B}}}
\newcommand{\gF}{{\mathfrak{F}}}
\newcommand{\beq}{\begin{equation}}
\newcommand{\enq}{\end{equation}}
\newcommand{\z}{\zeta}
\DeclareMathOperator{\esssup}{ess.sup} 
\DeclareMathOperator{\ran}{ran}
\DeclareMathOperator{\dom}{dom}
\DeclareMathOperator{\tr}{tr}
\DeclareMathOperator*{\nlim}{n-lim}
\DeclareMathOperator*{\slim}{s-lim}
\DeclareMathOperator{\Det}{det}
\DeclareMathOperator*{\sgn}{sgn}
\renewcommand{\Re}{\text{\rm Re}}
\renewcommand{\Im}{\text{\rm Im}}
\renewcommand{\ln}{\text{\rm ln}}
\newcommand{\loc}{\operatorname{loc}}
\newcommand{\dott}{\,\cdot\,}
\newcommand{\no}{\notag}
\newcommand{\lb}{\label}
\newcommand{\f}{\frac}
\newcommand{\ul}{\underline}
\newcommand{\ol}{\overline}
\newcommand{\wti}{\widetilde}
\newcommand{\Oh}{O}
\newcommand{\oh}{o}
\newcommand{\hatt}{\widehat} 
\newcommand{\bi}{\bibitem}
\newcommand{\LN}{[L^2(\bbR^n)]^N} 
\newcommand{\LrhoN}{[L_{\rho}^2(\bbR^n)]^N} 
\newcommand{\LtauN}{[L_{\tau}^2(\bbR^n)]^N} 
\newcommand{\LmwtitauN}{[L_{-\wti \tau}^2(\bbR^n)]^N} 
\newcommand{\LwtitauN}{[L_{\wti \tau}^2(\bbR^n)]^N} 
\newcommand{\LnuN}{[L_{\nu}^2(\bbR^n)]^N} 
\newcommand{\WoneN}{[W^{1,2}(\bbR^n)]^N} 
\newcommand{\WtwoN}{[W^{2,2}(\bbR^n)]^N} 
\newcommand{\SN}{[S(\bbR^n)]^N} 
\renewcommand{\ge}{\geqslant}
\let\geq\geqslant
\let\leq\leqslant
\def\theequation{\@arabic\c@equation}
\numberwithin{equation}{section}
\newtheorem{theorem}{Theorem}[section]
\newtheorem{proposition}[theorem]{Proposition}
\newtheorem{lemma}[theorem]{Lemma}
\newtheorem{corollary}[theorem]{Corollary}
\newtheorem{definition}[theorem]{Definition}
\newtheorem{hypothesis}[theorem]{Hypothesis}
\theoremstyle{remark}
\newtheorem{remark}[theorem]{Remark}
\begin{document}

\title[Limiting Absorption Principle and Spectral Shift Functions]{The Limiting Absorption Principle  
for Massless Dirac Operators, Properties of Spectral Shift Functions, and an Application to the 
Witten Index of Non-Fredholm Operators}

\author[A.\ Carey et al.]{Alan Carey}  
\address{Mathematical Sciences Institute, Australian National University, 
Kingsley St., Canberra, ACT 0200, Australia 
and School of Mathematics and Applied Statistics, University of Wollongong, NSW, Australia,  2522}  
\email{\mailto{acarey@maths.anu.edu.au}}
\urladdr{\url{http://maths.anu.edu.au/~acarey/}}
  
\author[]{Fritz Gesztesy}
\address{Department of Mathematics, 
Baylor University, Sid Richardson Bldg., 1410 S.\,4th Street,
Waco, TX 76706, USA}
\email{\mailto{Fritz\_Gesztesy@baylor.edu}}
\urladdr{\url{http://www.baylor.edu/math/index.php?id=935340}}

\author[]{Galina Levitina} 
\address{Mathematical Sciences Institute, Australian National University, 
Kingsley St., Canberra, ACT 0200, Australia } 
\email{\mailto{Galina.Levitina@anu.edu.au}}
\urladdr{\url{https://maths.anu.edu.au/people/academics/galina-levitina}}

\author[]{Roger Nichols}
\address{Department of Mathematics (Dept.~6956), The University of Tennessee at Chattanooga, 615 McCallie Ave, Chattanooga, TN 37403, USA}
\email{\mailto{Roger-Nichols@utc.edu}}
\urladdr{\url{https://sites.google.com/mocs.utc.edu/rogernicholshomepage/home}}

\author[]{Fedor Sukochev}
\address{School of Mathematics and Statistics, UNSW, Kensington, NSW 2052,
Australia} 
\email{\mailto{f.sukochev@unsw.edu.au}}
\urladdr{\url{https://research.unsw.edu.au/people/scientia-professor-fedor-sukochev}}

\author[]{Dmitriy Zanin} 
\address{School of Mathematics and Statistics, UNSW, Kensington, NSW 2052,
Australia} 
\email{\mailto{d.zanin@unsw.edu.au}}
\urladdr{\url{https://research.unsw.edu.au/people/dr-dmitriy-zanin}}

\date{\today}
\thanks{A.C., G.L., and F.S. gratefully acknowledge financial support from the Australian
Research Council.}

\@namedef{subjclassname@2020}{\textup{2020} Mathematics Subject Classification}
\subjclass[2020]{Primary: 35P25, 35Q40, 81Q10; Secondary: 47A10, 47A40.}
\keywords{Dirac operators, limiting absorption principle, spectral shift function, Witten index. }

\begin{abstract} 
Applying the theory of strongly smooth operators, we derive a limiting absorption principle (LAP) on any compact interval in $\bbR \backslash \{0\}$ for the free massless Dirac operator, 
\begin{equation*} 
H_0 = \alpha \cdot (-i \nabla) 
\end{equation*} 
in $[L^2(\bbR^n)]^N$, $n \in \bbN$, $n \geq 2$, $N=2^{\lfloor(n+1)/2\rfloor}$. We then use this to demonstrate the absence of singular continuous spectrum of interacting massless Dirac operators $H = H_0 +V$, where the entries of the (essentially bounded) 
matrix-valued potential $V$ decay like $\Oh\big(|x|^{-1 - \varepsilon}\big)$ as $|x| \to \infty$ for 
some $\varepsilon > 0$. This includes the special case of electromagnetic potentials decaying 
at the same rate. In addition, we derive a one-to-one correspondence between embedded eigenvalues of $H$ 
in $\bbR \backslash \{0\}$ and the eigenvalue $- 1$ of the (normal boundary values of the) 
Birman--Schwinger-type operator 
\[
\ol{V_2 (H_0 - (\lambda_0 \pm i 0)I_{[L^2(\bbR^n)]^N})^{-1} V_1^*}. 
\]
Upon expressing $\xi(\dott; H,H_0)$ as normal boundary values of regularized Fredholm determinants to the real axis, we then prove that in the concrete case $(H,H_0)$, under appropriate hypotheses on $V$ (implying the decay of $V$ like $\Oh\big(|x|^{-n -1 - \varepsilon}\big)$ as $|x| \to \infty$), the associated spectral shift function satisfies $\xi(\dott;H,H_0) \in C((-\infty,0) \cup (0,\infty))$,
and that the left and right limits at zero, $\xi(0_{\pm}; H,H_0) = \lim_{\varepsilon \downarrow 0} \xi(\pm \varepsilon; H,H_0)$, exist.

This fact is then used to express the resolvent regularized Witten index of the non-Fredholm operator 
$\bsD_\bsA^{}$ in $L^2\big(\bbR;[L^2(\bbR^n)]^N\big)$ given by 
\[
\bsD_\bsA^{} = \f{d}{dt} + \bsA,
\quad \dom(\bsD_\bsA^{})= W^{1,2}\big(\bbR; [L^2(\bbR^n)]^N\big) \cap \dom(\bsA_-),    
\]
where  
\[
\bsA = \bsA_- + \bsB, \quad \dom(\bsA) = \dom(\bsA_-). 
\]
Here $\bsA$, $\bsA_-$, $\bsA_+$, $\bsB$, and $\bsB_+$ in $L^2\big(\bbR;[L^2(\bbR^n)]^N\big)$ are generated with the help of  the Dirac-type operators $H, H_0$ and potential matrices $V$, via 
\begin{align*} 
& A(t) = A_- + B(t), \; t \in \bbR, \quad A_- = H_0, \quad A_+ = A_- + B_+ = H, \\ 
& B(t)=b(t) B_+, \; t \in \bbR, \quad B_+ = V, 
\end{align*}
in $[L^2(\bbR^n)]^N$, assuming 
\begin{align*} 
& b^{(k)} \in C^{\infty}(\bbR) \cap L^{\infty}(\bbR; dt), \; k \in \bbN_0, \quad b' \in L^1(\bbR; dt),  \\ 
& \lim_{t \to \infty} b(t) = 1, \quad \lim_{t \to - \infty} b(t) = 0.    
\end{align*}
In particular, $A_{\pm}$ are the asymptotes of the family $A(t)$, $t \in \bbR$, as $t \to \pm \infty$ 
in the norm resolvent sense. \big(Here $L^2(\bbR;\cH) = \int_{\bbR}^{\oplus} dt \, \cH$ and 
$\bsT = \int_{\bbR}^{\oplus} dt \, T(t)$ represent direct integrals of Hilbert spaces and operators.\big) 

Introducing the nonnegative, self-adjoint operators 
\[
\bsH_1 = \bsD_{\bsA}^{*} \bsD_{\bsA}^{}, \quad 
\bsH_2 = \bsD_{\bsA}^{} \bsD_{\bsA}^{*}.
\]
in $L^2\big(\bbR;[L^2(\bbR^n)]^N\big)$, one of the principal results proved in this manuscript expresses the resolvent regularized Witten index 
$W_{k,r}(\bsD_\bsA^{})$ of $\bsD_\bsA^{}$ in terms of spectral shift functions via
\begin{align*}
W_{k,r}(\bsD_\bsA^{}) = \xi_L(0_+; \bsH_2, \bsH_1) 
&= [\xi(0_+;H,H_0) + \xi(0_-;H,H_0)]/2, \\ 
& \hspace*{2.3cm} k \in \bbN, \; k \geq \lceil n/2 \rceil.
\end{align*} 

Here the notation $\xi_L(0_+; \bsH_2, \bsH_1)$ indicates that $0$ is a right Lebesgue point for 
$\xi(\dott; \bsH_2, \bsH_1)$, and $W_{k,r}(T)$ represents the $k$th resolvent regularized Witten index of the densely defined, closed operator $T$ in the complex, separable Hilbert space $\cK$, defined by
\[
W_{k,r}(T) = \lim_{\lambda \uparrow 0} (- \lambda)^k 
\tr_{\cK} \big((T^* T - \lambda I_{\cK})^{-k} - (T T^* - \lambda I_{\cK})^{-k}\big),
\]
whenever the limit exists for some $k \in \bbN$. 
\end{abstract}

\maketitle


{\scriptsize{\tableofcontents}}
\normalsize

\section{Introduction} \lb{s1}

The primary motivation writing this manuscript was to express the Witten index of a certain class 
of non-Fredholm operators, generated from multi-dimensional, massless Dirac operators, in terms of appropriate underlying spectral shift functions. This goal necessitated a detailed control over continuity properties (more precisely, the existence of Lebesgue points) for the spectral shift functions involved, and hence the bulk of this manuscript is devoted to an exhaustive investigation of the spectral properties of multi-dimensional, massless Dirac operators. 

To set the stage, let $n \in \bbN$, $n \geq 2$, $N=2^{\lfloor(n+1)/2\rfloor}$, and denote by $\alpha_j$, $1\leq j\leq n$, 
$\alpha_{n+1} := \beta$, $n+1$ anti-commuting Hermitian $N\times N$ matrices with squares equal to $I_N$, that is, 
\begin{equation} \lb{1.1}
\alpha_j^*=\alpha_j,\quad \alpha_j\alpha_k + \alpha_k\alpha_j = 2\delta_{j,k}I_N, 
\quad 1\leq j,k\leq n+1, 
\end{equation}
and introduce in $\LN$ the free massless Dirac operator
\begin{equation}
H_0 = \alpha  \cdot (-i \nabla) = \sum_{j=1}^n \alpha_j (-i \partial_j),\quad \dom(H_0) = \WoneN,  \lb{1.2}
\end{equation}
where $\partial_j = \partial / \partial x_j$, $1 \leq j \leq n$. Introducing the self-adjoint matrix-valued potential 
$V = \{V_{\ell,m}\}_{1 \leq \ell,m \leq N}$ satisfying for some fixed $\rho \in (1,\infty)$, $C \in (0,\infty)$, 
\begin{equation}
V \in [L^{\infty} (\bbR^n)]^{N \times N}, \quad 
|V_{\ell,m}(x)| \leq C \langle x \rangle^{- \rho} \, \text{ for a.e.~$x \in \bbR^n$, $1 \leq \ell,m \leq N$,}    \lb{1.3}
\end{equation}
then permits one to introduce the massless Dirac operator $H$ in $\LN$ via 
\begin{equation}
H = H_0 + V, \quad \dom(H) = \dom(H_0) = \WoneN.  \lb{1.4}
\end{equation}

In this context we recall our convention 
\begin{equation}
[L^2(\bbR^n)]^N = L^2(\bbR^n; \bbC^N), \quad [W^{1,2}(\bbR^n)]^N = W^{1,2}(\bbR^n; \bbC^N), \, 
\text{ etc.,}
\end{equation}
to be used throughout.

Then $H_0$ and $H$ are self-adjoint in $\LN$, 
with essential spectrum covering the 
entire real line,
\begin{equation}
\sigma_{ess} (H) = \sigma_{ess} (H_0) = \sigma (H_0) = \bbR,
\end{equation}
In addition,
\begin{equation}
\sigma_{ac}(H_0) = \bbR, \quad \sigma_p(H_0) = \sigma_{sc}(H_0) = \emptyset.  
\end{equation}

Relying on the theory of (local) Kato-smoothness (see, e.g., 
\cite[Sect.~XIII.7]{RS78}, \cite[Ch.~4]{Ya92}, and \cite[Chs.~0--2]{Ya10}) and its variant, strong (local) Kato-smoothness (see, e.g., \cite[Ch.~4]{Ya92}, \cite[Chs.~0--2]{Ya10}), then yields under the stated hypotheses on $V$ that actually, 
\begin{align}
& \sigma_{ess}(H) = \sigma_{ac}(H) = \bbR,    \lb{1.8} \\
& \sigma_{sc}(H) = \emptyset,      \lb{1.9} \\
& \sigma_s(H) \cap (\bbR \backslash \{0\}) = \sigma_p(H) \cap (\bbR \backslash \{0\}),    \lb{1.10}
\end{align}
with the only possible accumulation points of $\sigma_p(H)$ being $0$ and $\pm \infty$. Relations 
\eqref{1.8}--\eqref{1.10} describe only the tip of the proverbial iceberg in connection with Sections \ref{s2} and \ref{s3}. In fact, leading up to \eqref{1.10} we establish a limiting absorption principle (LAP) on any compact interval in $\bbR \backslash \{0\}$ for the free (i.e., non-interacting) massless Dirac operator 
$H_0$, prove the absence of singular continuous spectrum of $H = H_0 + V$ for matrix elements 
$V_{\ell,m}$, $1 \leq \ell, m \leq N$, of $V$ decaying like $\Oh\big(|x|^{-1 - \varepsilon}\big)$ as $|x| \to \infty$ for some $\varepsilon > 0$, derive H\"older continuity of the boundary values 
$(H_0 - (\lambda \pm i 0)I_{[L^2(\bbR^n)]^N})^{-1}$ in appropriate weighted $L^2$-spaces for $\lambda$ varying in compact subintervals of $\bbR \backslash \{0\}$, and derive H\"older continuity of the boundary values $(H - (\lambda \pm i 0)I_{[L^2(\bbR^n)]^N})^{-1}$ in appropriate weighted $L^2$-spaces for 
$\lambda$ varying in compact subintervals of $\bbR \backslash \{0\}$ away from the possibly embedded eigenvalues of $H$. In particular, factoring $V$ into $V = V_1^* V_2$, we derive a $1-1$-correspondence between embedded eigenvalues of $H$ in $\bbR \backslash \{0\}$ and the eigenvalue $- 1$ of the (normal boundary values of the) Birman--Schwinger-type operator $\ol{V_2 (H_0 - (\lambda_0 \pm i 0)I_{[L^2(\bbR^n)]^N})^{-1} V_1^*}$. 

This leaves open the existence of eigenvalues embedded in the essential spectrum, and particularly, the existence of an eigenvalue $0$. To deal with these situations one follows \cite[Theorems~2.3]{KOY15} and assumes in addition that 
\begin{align}
& \text{$V: \bbR^n \to \bbC^{n \times n}$ is Lebesgue measurable and self-adjoint 
a.e.\ on $\bbR^n$, and that}  \no \\
& \quad \text{for some $R > 0$, $V \in \big[C^1(E_R)\big]^{N \times N}$, where 
$E_R=\{x \in \bbR^n\,|\, |x| > R\}$,}    \lb{1.11} \\
& \text{and}     \no \\
\begin{split} 
& |x|^{1/2} V_{\ell,m}(x) \underset{|x| \to \infty}{=} \oh(1), \quad 
(x \cdot \nabla V_{\ell,m})(x) \underset{|x| \to \infty}{=} \oh(1), \quad 1 \leq \ell,m \leq N,    \lb{1.11a} \\
& \quad \text{uniformly with respect to directions.} 
\end{split} 
\end{align} 
Under all these conditions on $V$ one then obtains 
\begin{equation}
\sigma_p(H) \subseteq \{0\}.        \lb{1.12}
\end{equation}
This still leaves open the possibility of an eigenvalue $0$. To exclude that as well 
\cite[Theorems~2.1]{KOY15} assume in addition that 
\begin{equation}
\esssup_{x \in \bbR^n} |x| \|V(x)\|_{\cB(\bbC^N)} \leq C \, \text{ for some $C \in (0, (n-1)/2)$,} \lb{1.13}
\end{equation}
with $\|\,\cdot\,\|_{\cB(\bbC^N)}$ denoting the operator norm of an $N \times N$ matrix in $\bbC^N$. Then finally, 
\begin{equation}
\sigma_p(H) = \emptyset,        \lb{1.14}
\end{equation}
and hence $H$ and $H_0$ are unitarily equivalent under these conditions on $V$. The facts \eqref{1.12} and \eqref{1.14} are discussed in detail Section \ref{s4}.

We emphasize, however, that in the bulk of this manuscript we will not assume \eqref{1.13} as we explicitly intend to include situations with $0$ an eigenvalue (and/or a threshold resonance) of $H$. 

Section \ref{s5} provides a detailed study of the Green's function (matrix) of the free Dirac operator $H_0$, that is, the integral kernel of the resolvent $(H -z I_{[L^2(\bbR^n)]^N})^{-1}$, in terms of the Hankel function of order $1$ and half integer index $(n-2)/2$ and $n/2$, 
\begin{align}
& G_0(z;x,y) := (H_0 - z I)^{-1}(x,y)     \no \\ 
& \quad = i 4^{-1} (2 \pi)^{(2-n)/2} |x - y|^{2-n} z \, [z |x - y|]^{(n-2)/2} 
H_{(n-2)/2}^{(1)} (z |x - y|) I_N   \lb{1.15} \\
& \qquad - 4^{-1} (2 \pi)^{(2-n)/2} |x - y|^{1-n} [z |x - y|]^{n/2} H_{n/2}^{(1)} (z |x - y|) \, 
\alpha \cdot \f{(x - y)}{|x - y|}.    \no
\end{align}
The Green's function $G_0(z;\, \cdot \,,\, \cdot \,)$ of $H_0$ continuously extends to $z \in \ol{\bbC_+}$, in particular, the limit $z \to 0$ exists, 
\begin{align}
\begin{split} 
& \lim_{\substack{z \to 0, \\ z \in \ol{\bbC_+} \backslash\{0\}}} G_0(z;x,y) := G_0(0+i\,0;x,y)   \\
& \quad = i 2^{-1} \pi^{-n/2} \Gamma(n/2) \, \alpha \cdot \f{(x - y)}{|x - y|^n}, \quad x, y \in \bbR^n, \; x \neq y, \; n \in \bbN, \; n \geq 2,    \lb{1.16} 
\end{split} 
\end{align} 
and no blow up occurs for all $n \in \bbN$, $n \geq 2$. 

This section ends with various boundedness properties of integral operators $R_{0,\delta}$ and 
$R_{0,\delta}(z)$ in $[L^2(\bbR^n)]^N$, $n \geq 2$, associated with integral kernels that are bounded entrywise by 
\begin{equation}
|R_{0,\delta}( \, \cdot \,, \, \cdot \,)_{j,k}| \leq C \langle \, \cdot \, \rangle^{-\delta} 
|G_0(0; \, \cdot \,, \, \cdot \,)_{j,k}| \langle \, \cdot \, \rangle^{-\delta}, \quad \delta \geq 1/2, \; 1 \leq j,k \leq N,  
\lb{1.17} 
\end{equation}
and 
\begin{align}
\begin{split} 
|R_{0,\delta}(z; \, \cdot \,, \, \cdot \,)_{j,k}| \leq C \langle \, \cdot \, \rangle^{-\delta} 
|G_0(z;\, \cdot \,, \, \cdot \,)_{j,k}| \langle \, \cdot \, \rangle^{-\delta},& \\
\delta \geq (n + 1)/4, \; z \in \ol{\bbC_+}, \; 1 \leq j,k \leq N,      \lb{1.18} 
\end{split}
\end{align}  
for some $C \in (0,\infty)$. In particular, we prove that 
\begin{align}
& R_{0,\delta} \in \cB\big([L^2(\bbR^n)]^N\big), \quad \delta \geq 1/2,    \lb{1.19} \\
& R_{0,\delta}(z) \in \cB\big([L^2(\bbR^n)]^N\big), \quad \delta > (n+1)/4, \; z \in \ol{\bbC_+}.    \lb{1.20}
\end{align}

Section \ref{s6} takes the boundedness property of $R_{0,\delta}$ and $R_{0,\delta}(z)$ a step further by proving trace ideal properties. In fact, employing interpolation techniques for trace ideals, we prove, among a variety of related results, that for $n \geq 2$ and $\delta > (n+1)/4$, 
\begin{equation}
R_{0,\delta}(z) \in \cB_p\big([L^2(\bbR^n)]^N\big), \quad p > n, \; z \in \ol{\bbC_+}.    \lb{1.21}
\end{equation}

Since we are dealing with $n$-dimensional Dirac operators, $n \geq 2$, the study of resolvents alone is insufficient and certain $n$-dependent powers of the resolvent of $H_0$ and $H$ naturally enter the analysis. As a result, in Section \ref{s7} we prove that for all $k \in \bbN$, $k \geq n$,
\begin{equation}
\big[(H - z I_{[L^2(\bbR^n]^N)})^{-k} - (H_0 - z I_{[L^2(\bbR^n]^N)})^{-k}\big] 
\in \cB_1\big([L^2(\bbR^n)]^N\big), \quad z \in \bbC \backslash \bbR,
\end{equation}
as well as for all $\varepsilon > 0$,
\begin{equation}
V \big(H_0^2 + I_{[L^2[\bbR^n]^N)}\big)^{-(n/2) - \varepsilon}  \in \cB_1\big([L^2(\bbR^n)]^N\big), 
\end{equation}
now assuming additional decay of $V$ of the type, for some $\varepsilon > 0$, 
\begin{equation}
V \in [L^{\infty} (\bbR^n)]^{N \times N}, \quad 
|V_{\ell,m}(x)| \leq C \langle x \rangle^{- n - \varepsilon} \, \text{ for a.e.~$x \in \bbR^n$, $1 \leq \ell,m \leq N$.}    \lb{1.24}
\end{equation}

The next two sections, Sections \ref{s8} and \ref{s8a}, are devoted to the notion of the spectral shift function for a pair of self-adjoint operators $(S,S_0)$ in $\cH$, particularly building on work of Yafaev \cite{Ya05}: We start by introducing the class of functions $\gF_m(\bbR)$, $m \in \bbN$, by
\begin{align}
& \gF_m(\bbR) := \big\{f \in C^2(\bbR) \, \big| \, 
f^{(\ell)} \in L^{\infty}(\bbR); \text{ there exists } 
\varepsilon >0 \text{ and } f_0 = f_0(f) \in \bbC    \no \\
& \quad  \text{ such that } 
\big(d^{\ell}/d \lambda^{\ell}\big)\big[f(\lambda) - f_0 \lambda^{-m}\big] \underset{|\lambda|\to \infty}{=} 
\Oh\big(|\lambda|^{- \ell - m - \varepsilon}\big), \; \ell = 0,1,2 \big\}     \lb{1.25}
\end{align} 
(it is implied that $f_0 = f_0(f)$ is the same as $\lambda \to \pm \infty$); one observes that 
$C_0^{\infty}(\mathbb{R}) \subset \mathfrak F_m(\mathbb{R})$, $m \in \bbN$. Assuming that 
\begin{align}
\begin{split} 
& \dom(S) = \dom(S_0), \quad (S-S_0) \in \cB(\cH), \\
&\text{for some $0<\varepsilon<1/2$, } \,  
(S-S_0)(S_0^2+I_\cH)^{- (m/2) - \varepsilon}\in \cB_1(\cH),    \lb{1.26}
\end{split} 
\end{align}
the following are then the principal results of Section \ref{s8}: Let $m \in \bbN$, then 
\begin{equation}
[f(S) - f(S_0)] \in \cB_1(\cH), \quad \in \gF_m(\bbR), 
\end{equation}
and there exists a function
\begin{equation} \lb{1.28}
\xi(\,\cdot\,;S,S_0) \in L^1\big(\bbR; (1+|\lambda|)^{-m-1}\, d\lambda\big)
\end{equation}
such that the following trace formula holds, 
\begin{equation}\lb{1.29}
\tr_{\cH}(f(S) - f(S_0)) = \int_{\bbR}d\lambda\, \xi(\lambda;S,S_0)f'(\lambda),\quad f\in \gF_m(\bbR).
\end{equation}
In particular, one has 
\begin{equation} 
\big[(S-zI_{\cH})^{-m} - (S_0-zI_{\cH})^{-m}\big] \in\cB_1(\cH), \quad z \in \bbC \backslash \bbR,  \lb{1.30}
\end{equation} 
and 
\begin{equation} \lb{1.31}
\tr_{\cH}\big((S-zI_{\cH})^{-m} - (S_0-zI_{\cH})^{-m}\big) = -m \int_{\bbR} \frac{\xi(\lambda;S,S_0) d\lambda}{(\lambda - z)^{m+1}},   \quad z\in \bbC\backslash \bbR.
\end{equation}

The following Section \ref{s8a} then derives an explicit representation for the spectral shift function 
$\xi(\,\cdot\,;S,S_0)$ in terms of normal boundary values to the real axis of regularized Fredholm determinants as follows: Slightly extending our set of hypotheses and now assuming that 
$S_0$ and $S$ are self-adjoint operators in $\cH$ with $(S-S_0) \in \cB(\cH)$, we suppose in addition the following two conditions: \\[1mm] 
$(i)$ If $m \in \bbN$ is odd, assume that 
\begin{equation} \lb{1.32}
\big[(S - zI_{\cH})^{-m} - (S_0 - zI_{\cH})^{-m}\big] \in \cB_1(\cH),\quad z\in \bbC\backslash \bbR,
\end{equation} 
and 
\begin{equation} \lb{1.33}
(S-S_0)(S_0 - zI_{\cH})^{-j}\in \cB_{(m+1)/j}(\cH),\quad j\in \bbN,\; 1\leq j\leq m+1. 
\end{equation}
$(ii)$ If $m \in \bbN$ is even, assume that for some $0<\varepsilon<1/2$, 
\begin{equation} \lb{1.34}
(S-S_0) \big(S_0^2+I_\cH\big)^{- (m/2) - \varepsilon}\in \cB_1(\cH).
\end{equation}
(In this case one can show that \eqref{1.33} holds as well). 

Introducing 
\begin{equation} \lb{1.35}
F_{S,S_0}(z) := \ln\big(\Det_{\cH,m+1}\big((S-zI_{\cH})(S_0-zI_{\cH})^{-1}\big)\big), 
\quad z\in \bbC \backslash \bbR,
\end{equation}
where $(\Det_{\cH,m+1}(\dott)$ denotes the $(m+1)$st regularized Fredholm determinant, and 
introducing the analytic function $G_{S,S_0}(\dott)$ in $\bbC \backslash \bbR$ such that
\begin{align} \lb{1.36}
&\frac{d^m}{d z^m}G_{S,S_0}(z)    \no \\
& \quad =\tr_{\cH}\Bigg(\frac{d^{m-1}}{d z^{m-1}}
\sum_{j=0}^{m-1}(-1)^{m-j}(S_0-zI_{\cH})^{-1}\big[(S-S_0)(S_0 - z I_{\cH})^{-1}\big]^{m-j}\Bigg),  \no \\
& \hspace*{9.5cm} z\in \bbC \backslash \bbR,    
\end{align}
the main result of Section \ref{s8a} then reads as follows: If $F_{S,S_0}$ and $G_{S,S_0}$ have normal (or nontangential) boundary values on $\bbR$, then for a.e.~$\lambda\in\bbR$, 
\begin{equation} \lb{1.37}
\xi(\lambda;S,S_0)= \pi^{-1}\Im (F_{S,S_0}(\lambda+i0)) - \pi^{-1}\Im (G_{S,S_0}(\lambda+i0)) 
+ P_{m-1}(\lambda)\,\text{ for a.e.~$\lambda\in \bbR$},
\end{equation}
where $P_{m-1}$ is a polynomial of degree less than or equal to $m - 1$. 

The subsequent two sections then analyze \eqref{1.35} and \eqref{1.36} and their normal boundary 
values to the real axis in the concrete case where $S=H$ and $S_0 = H_0$. 

More precisely, Section \ref{s9} 
establishes continuity properties of $\Im(F_{H,H_0}(\lambda + i0))$, $\lambda \in \bbR$, by invoking a lengthy study of threshold spectral properties of $H$, following an approach by Jensen and Nenciu \cite{JN01}, and,   especially, by Erdo{\u g}an, Goldberg, and Green \cite{EGG19}, \cite{EGG20}, \cite{EG13}, \cite{EG17}. In particular, we recall an exhaustive study of eigenvalues $0$ and /or resonances at $0$ and finally prove that 
under assumptions \eqref{1.11}, \eqref{1.11a}, and \eqref{1.24}, 
$F_{H,H_0}(\dott)$, has normal boundary values on $\bbR\backslash \{0\}$. In addition, the boundary values to $\bbR$ 
of the function $\Im(F_{H,H_0}(z))$, $z \in \bbC_+$, are continuous on $(-\infty, 0) \cup (0,\infty)$, 
\begin{equation}
\Im(F_{H,H_0}(\lambda + i0)) \in C((-\infty,0) \cup (0,\infty)),      \lb{1.38} 
\end{equation}
and the left and right limits at zero, 
\begin{equation}
\Im(F_{H,H_0}(0_{\pm} + i 0)) = \lim_{\varepsilon \downarrow 0} \Im(F_{H,H_0}(\pm \varepsilon + i 0)), 
\lb{1.39}
\end{equation}   
exist. In particular, if $0$ is a regular point for $H$ (i.e., in the absence of any zero energy eigenvalue and resonance of $H$), then 
\begin{equation}
\Im(F_{H,H_0}(\lambda + i0)) \in C(\bbR).     \lb{1.40} 
\end{equation}

Under the following strengthened decay assumption on $V$, for some $\varepsilon > 0$, 
\begin{equation}
V \in [L^{\infty} (\bbR^n)]^{N \times N}, \quad 
|V_{\ell,m}(x)| \leq C \langle x \rangle^{- n - 1 - \varepsilon} \, \text{ for a.e.~$x \in \bbR^n$, $1 \leq \ell,m \leq N$,}    \lb{1.42}
\end{equation}
an unrelenting barrage of estimates finally proves in Section \ref{s10} that if 
$n \in \bbN$ is odd, $n \geq 3$, then $\frac{d^n}{d z^n}G_{H,H_0}(\dott)$ is analytic in $\bbC_+$ and 
continuous in $\ol{\bbC_+}$. If $n \in \bbN$ is even, then $\frac{d^n}{d z^n}G_{H,H_0}(\dott)$ is analytic in 
$\bbC_+$, continuous in $\ol{\bbC_+}\backslash\{0\}$. Moreover, if $n\geq 4$, then
\begin{align}
\bigg\|\frac{d^n}{dz^n}G_{H,H_0}(\dott)\bigg\|_{\cB(\bbC^N)}\underset{\substack{z \to 0, \\ z \in \ol{\bbC_+} \backslash\{0\}}}{=}O\big(|z|^{-[n - (n/(n-1))]}\big),    \lb{1.43}
\end{align}
and if $n=2$, then for any $\delta\in (0,1)$,
\begin{align}
\bigg\|\frac{d^2}{dz^2}G_{H,H_0}(\dott)\bigg\|_{\cB(\bbC^2)}\underset{\substack{z \to 0, \\ z \in \ol{\bbC_+} \backslash\{0\}}}{=} O\big(|z|^{-(1+\delta)}\big).    \lb{1.44} 
\end{align}

Thus, combining \eqref{1.37}--\eqref{1.40}, \eqref{1.43}, \eqref{1.44} finally yields the first principal 
result of Section \ref{s11} in the following form: 
Under the hypotheses \eqref{1.11}, \eqref{1.11a}, and \eqref{1.42}, 
\begin{equation}
\xi(\dott;H,H_0) \in C((-\infty,0) \cup (0,\infty)),      \lb{1.45} 
\end{equation}
and the left and right limits at zero, 
\begin{equation}
\xi(0_{\pm}; H,H_0) = \lim_{\varepsilon \downarrow 0} \xi(\pm \varepsilon; H,H_0), 
\lb{1.46}
\end{equation}   
exist. In particular, if $0$ is a regular point for $H$, then 
\begin{equation}
\xi(\dott;H,H_0) \in C(\bbR).     \lb{1.47} 
\end{equation}

This represents the main spectral theoretic result derived in this manuscript. The remainder of Section \ref{s11} then describes our application to the (resolvent regularized) Witten index of a particular class of 
non-Fredholm operators in the space $L^2\big(\bbR;[L^2(\bbR^n)]^N\big)$ in connection with multi-dimensional, massless Dirac operators.  

This requires some preparations to which we turn next. We recall a bit of notation: Linear operators in the Hilbert space $L^2(\bbR; dt; \cH)$, in short, $L^2(\bbR; \cH)$, will be denoted by calligraphic boldface symbols of the type $\bsT$, to distinguish them from operators $T$ in $\cH$. In particular, operators denoted by $\bsT$ in the Hilbert space $L^2(\bbR;\cH)$ represent operators associated with a 
family of operators $\{T(t)\}_{t\in\bbR}$ in $\cH$, defined by
\begin{align}
&(\bsT f)(t) = T(t) f(t) \, \text{ for a.e.\ $t\in\bbR$,}    \no \\
& f \in \dom(\bsT) = \bigg\{g \in L^2(\bbR;\cH) \,\bigg|\,
g(t)\in \dom(T(t)) \text{ for a.e.\ } t\in\bbR;    \lb{1.48}  \\
& \quad t \mapsto T(t)g(t) \text{ is (weakly) measurable;} \, 
\int_{\bbR} dt \, \|T(t) g(t)\|_{\cH}^2 <  \infty\bigg\}.   \no
\end{align}
In the special case, where $\{T(t)\}$ is a family of bounded operators on $\cH$ with 
$\sup_{t\in\bbR}\|T(t)\|_{\cB(\cH)}<\infty$, the associated operator $\bsT$ is a bounded operator on $L^2(\bbR;\cH)$ with $\|\bsT\|_{\cB(L^2(\bbR;\cH))} = \sup_{t\in\bbR}\|T(t)\|_{\cB(\cH)}$.

For brevity we will abbreviate $\bsI := I_{L^2(\bbR; \cH)}$ in the following and note that in the concrete situation of $n$-dimensional, massless Dirac operators at hand, $\cH = [L^2(\bbR^n)]^N$. 

Denoting 
\begin{equation}
A_- = H_0, \quad B_+ = V, \quad A_+ = A_- + B_+ = H, 
\end{equation}
we introduce two families of operators in 
$[L^2(\bbR^n)]^N$ by 
\begin{align}
\begin{split} 
& B(t) = b(t) B_+, \quad t \in \bbR,     \\
& b^{(k)} \in C^{\infty}(\bbR) \cap L^{\infty}(\bbR; dt), \; k \in \bbN_0, \quad b' \in L^1(\bbR; dt),  \\ 
& \lim_{t \to \infty} b(t) = 1, \quad \lim_{t \to - \infty} b(t) = 0,    \lb{1.50} \\
& A(t) = A_- + B(t), \quad t \in \bbR.      
\end{split} 
\end{align}

Next, following the general setups described in \cite{CGGLPSZ16b}, \cite{CGLPSZ16}--\cite{CLPS20}, 
\cite{GLMST11}, \cite{Pu08}, the operators  
$\bsA$, $\bsB, \bsA '=\bsB'$ are now given in terms of the families $A(t)$, $B(t)$, and $B'(t)$, $t\in\bbR$, as in \eqref{1.48}. In addition, $\bsA_-$  in $L^2\big(\bbR;[L^2(\bbR^n)]^N\big)$ represents 
the self-adjoint (constant fiber) operator defined by 
\begin{align}
&(\bsA_- f)(t) = A_- f(t) \, \text{ for a.e.\ $t\in\bbR$,}   \no \\
& f \in \dom(\bsA_-) = \bigg\{g \in L^2\big(\bbR;[L^2(\bbR^n)]^N\big) \,\bigg|\,
g(t)\in \dom(A_-) \text{ for a.e.\ } t\in\bbR,    \no \\
& \quad t \mapsto A_- g(t) \text{ is (weakly) measurable,} \,  
\int_{\bbR} dt \, \|A_- g(t)\|_{[L^2(\bbR^n)]^N}^2 < \infty\bigg\}.    \lb{1.51}
\end{align} 
At this point one can introduce the fundamental operator $\bsD_\bsA^{}$ in 
$L^2\big(\bbR;[L^2(\bbR^n)]^N\big)$ by 
\begin{equation}
\bsD_\bsA^{} = \f{d}{dt} + \bsA,
\quad \dom(\bsD_\bsA^{})= W^{1,2}\big(\bbR; [L^2(\bbR^n)]^N\big) \cap \dom(\bsA_-),   \lb{1.52}
\end{equation} 
where  
\begin{align}
& \bsA = \bsA_- + \bsB, \quad \dom(\bsA) = \dom(\bsA_-),     \\
& \bsB \in \cB\big(L^2\big(\bbR;[L^2(\bbR^n)]^N\big)\big). 
\end{align}
Here the operator $d/dt$ in $L^2\big(\bbR;[L^2(\bbR^n)]^N\big)$  is defined by 
\begin{align}
& \bigg(\f{d}{dt}f\bigg)(t) = f'(t) \, \text{ for a.e.\ $t\in\bbR$,} \quad 
f \in \dom(d/dt) = W^{1,2} \big(\bbR; [L^2(\bbR^n)]^N\big).     \lb{1.54} 
\end{align} 
Since $\bsD_\bsA^{}$ is densely defined and closed in $L^2\big(\bbR;[L^2(\bbR^n)]^N\big)$, one can introduce the nonnegative, self-adjoint operators 
$\bsH_j$, $j=1,2$, in $L^2\big(\bbR;[L^2(\bbR^n)]^N\big)$ by
\begin{equation}
\bsH_1 = \bsD_{\bsA}^{*} \bsD_{\bsA}^{}, \quad 
\bsH_2 = \bsD_{\bsA}^{} \bsD_{\bsA}^{*}.
\end{equation} 

Introducing the operator $\bsH_0$ in $L^2\big(\bbR; [L^2(\bbR^n)]^N\big)$ by  
\begin{equation}
\bsH_0 = - \f{d^2}{dt^2} + \bsA_-^2, \quad \dom(\bsH_0) 
= W^{2,2}\big(\bbR; [L^2(\bbR^n)]^N\big) \cap \dom\big(\bsA_-^2\big),     \lb{1.56}
\end{equation}
then $\bsH_0$ is self-adjoint and one obtains the following decomposition of the operators $\bsH_j$, 
$j=1,2$, 
\begin{align}
& \bsH_j = - \f{d^2}{dt^2} + \bsA^2 + (-1)^j \bsA^{\prime}    \no \\
& \hspace*{5.5mm} = \bsH_0 + \bsB \bsA_- + \bsA_- \bsB + \bsB^2 + (-1)^j \bsB^{\prime},  
\lb{1.57} \\
& \, \dom(\bsH_j) = \dom(\bsH_0), \quad j =1,2.      \no 
\end{align} 

Next, we turn to a canonical approximation procedure: Consider the characteristic function for the interval $[- \ell,\ell] \subset \bbR$,  
\begin{equation}
\chi_{\ell}(\nu) = \chi_{[- \ell,\ell]}(\nu), \quad \nu \in \bbR, \; \ell \in \bbN,   \lb{1.58}
\end{equation} 
and hence $\slim_{\ell \to \infty} \chi_{\ell}(A_-) = I_{[L^2(\bbR^n)]^N}$.     
Introducing  
\begin{align}
\begin{split} 
& A_{\ell}(t) = A_- + \chi_{\ell}(A_-) B(t) \chi_{\ell}(A_-) = A_- + B_{\ell}(t), \\
& \dom(A_{\ell}(t)) = \dom(A_-), \quad \ell \in \bbN, \; t \in \bbR,    \lb{1.59} \\
& B_{\ell}(t) = \chi_{\ell}(A_-) B(t) \chi_{\ell}(A_-), \quad 
\dom(B_{\ell}(t)) = [L^2(\bbR^n)]^N, \quad \ell \in \bbN, \; t \in \bbR,   \\
& A_{+,\ell} = A_- + \chi_{\ell}(A_-) B_+ \chi_{\ell}(A_-), \quad \dom(A_{+,\ell}) = \dom(A_-), 
\quad \ell \in \bbN,   \\
\end{split} 
\end{align}
one concludes that 
\begin{align} 
& A_{+,\ell} - A_- = \chi_{\ell}(A_-) B_+ \chi_{\ell}(A_-) \in \cB_1\big([L^2(\bbR^n)]^N\big), 
\quad \ell \in \bbN,    \lb{1.60} \\ 
& A_{\ell}'(t) = B_{\ell}'(t) = \chi_{\ell}(A_-) B'(t) \chi_{\ell}(A_-) \in \cB_1\big([L^2(\bbR^n)]^N\big), 
\quad \ell \in \bbN, \; t \in \bbR.    \lb{1.61} 
\end{align}

As a consequence of \eqref{1.60}, the spectral shift functions 
$\xi(\, \cdot \, ; A_{+,\ell}, A_-)$, $\ell \in \bbN$, exist and are uniquely determined by 
\begin{equation}
\xi(\, \cdot \, ; A_{+,\ell}, A_-) \in L^1(\bbR; d\nu), \quad \ell \in \bbN.     \lb{1.63} 
\end{equation}

We also note the analogous decompositions,
\begin{align}
& \bsH_{j,\ell} = - \f{d^2}{dt^2} + \bsA_{\ell}^2 + (-1)^j \bsA_{\ell}^{\prime}    \no \\
& \hspace*{6.7mm} = \bsH_0 + \bsB_{\ell} \bsA_- + \bsA_- \bsB_{\ell} 
+ \bsB_{\ell}^2 + (-1)^j \bsB_{\ell}^{\prime}, \\
& \, \dom(\bsH_{j,\ell}) = \dom(\bsH_0) = W^{2,2}\big(\bbR; [L^2(\bbR^n)]^N\big),  
\quad \ell \in \bbN, \;  j =1,2,  \no 
\end{align}
with
\begin{equation}
\bsB_{\ell} = \chi_{\ell}(\bsA_-) \bsB \chi_{\ell}(\bsA_-), \quad 
\bsB_{\ell}^{\prime} = \chi_{\ell}(\bsA_-) \bsB^{\prime} \chi_{\ell}(\bsA_-), \quad \ell \in \bbN.   
\end{equation} 

Under hypotheses \eqref{1.11}, \eqref{1.11a}, \eqref{1.42}, and 
\begin{equation}
V_{\ell,m} \in W^{4n,\infty}(\bbR^n), \quad 1 \leq \ell, m \leq N,     \lb{1.66} 
\end{equation} 
it is proven in \cite{CLPS20} that  
\begin{align}
\begin{split} 
& [(\bsH_2 - z \, \bsI)^{-q} - (\bsH_1 - z \, \bsI)^{-q}\big], \, 
\big[(\bsH_{2,\ell} - z \, \bsI)^{-q} - (\bsH_{1,\ell} - z \, \bsI)^{-q}\big]    \lb{1.67} \\
& \quad \in \cB_1\big(L^2\big(\bbR; [L^2(\bbR^n)]^N\big)\big), \quad \ell \in \bbN, \; 
 r \in \bbN, \; r \geq  \lceil n/2 \rceil , 
\end{split}
\end{align} 
and 
\begin{align}
& \lim_{\ell\to\infty} \big\|\big[(\bsH_{2,\ell} - z \, \bsI)^{-q} - (\bsH_{1,\ell} - z \, \bsI)^{-q}\big]   \no \\
& \hspace*{1cm} - [(\bsH_2 - z \, \bsI)^{-q} - (\bsH_1 - z \, \bsI)^{-q}\big]
\big\|_{\cB_1(L^2(\bbR;[L^2(\bbR^n)]^N))} = 0,     \lb{1.68} \\ 
& \hspace*{8.25cm} z \in \bbC \backslash [0,\infty).   \no
\end{align} 

Relations \eqref{1.67} together with the fact that $\bsH_j \geq 0$, $\bsH_{j,\ell} \geq 0$, $\ell \in \bbN$, $j=1,2$, implies the existence and uniqueness of spectral shift functions 
$\xi(\, \cdot \,; \bsH_2, \bsH_1)$ and 
$\xi(\, \cdot \,; \bsH_{2,\ell}, \bsH_{1,\ell})$ for the pair of operators $(\bsH_2, \bsH_1)$ and 
$(\bsH_{2,\ell}, \bsH_{1,\ell})$, $\ell \in \bbN$, respectively, employing the normalization 
\begin{equation}
\xi(\lambda; \bsH_2, \bsH_1) = 0, \, 
\xi(\lambda; \bsH_{2,\ell}, \bsH_{1,\ell}) = 0, \quad  \lambda < 0, \; \ell \in \bbN     \lb{1.69} 
\end{equation}
(cf.\ \cite[Sect.~8.9]{Ya92}). Moreover, 
\begin{equation}
\xi(\dott; \bsH_2, \bsH_1) \in L^1\big(\bbR; (1+ |\lambda|)^{-q-1} d\lambda\big).    \lb{1.70}
\end{equation}
Since 
\begin{equation} 
\int_{\bbR} dt \, \|A_{\ell}'(t)\|_{\cB_1([L^2(\bbR^n)]^N)} < \infty, \quad \ell \in \bbN,   \lb{1.71} 
\end{equation} 
employing $b'(\dott) \in L^1(\bbR;dt)$, one obtains (cf.\ \cite{GLMST11}, \cite{Pu08}) 
\begin{equation}
[(\bsH_{2,\ell} - z \, \bsI)^{-1} - (\bsH_{1,\ell} - z \, \bsI)^{-1}\big] 
\in \cB_1\big(L^2\big(\bbR;[L^2(\bbR^n)]^N\big)\big), \quad \ell \in \bbN,  
\end{equation}
and hence 
\begin{equation}
\xi(\dott; \bsH_{2,\ell}, \bsH_{1,\ell}) \in L^1(\bbR; d\lambda), \quad \ell \in \bbN.  
\end{equation}
In addition, \eqref{12.28}, \eqref{1.69}, and \eqref{1.71} imply the approximate trace formula,   
\begin{equation}
\int_{[0,\infty)} \f{\xi(\lambda; \bsH_{2,\ell}, \bsH_{1,\ell}) \, d\lambda}{(\lambda - z)^2} 
= \f{1}{2} \int_{\bbR} \f{\xi(\nu; A_{+,\ell}, A_-) \, d\nu}{(\nu^2 -z)^{3/2}}, \quad 
\ell \in \bbN, \; z \in \bbC \backslash [0,\infty),      \lb{1.74}
\end{equation}
which in turn implies 
\begin{equation}
\xi(\lambda; \bsH_{2,\ell}, \bsH_{1,\ell}) = \begin{cases} \f{1}{\pi} \int_{- \lambda^{1/2}}^{\lambda^{1/2}} 
\f{\xi(\nu; A_{+,\ell}, A_-) \, d \nu}{(\lambda - \nu^2)^{1/2}}, & 
\text{for a.e.~$\lambda > 0$,} \\
0, & \lambda < 0, 
\end{cases} 
\quad \ell \in\bbN,    \lb{1.75} 
\end{equation}
via a Stieltjes inversion argument. 

Given hypothesis \eqref{1.24}, we will prove in Theorem \ref{t7.4} that  
\begin{align} 
\begin{split} 
\big[(A_+ - z I_{[L^2(\bbR^n)]^N})^{-r_0} - (A_- - z I_{[L^2(\bbR^n)]^N})^{-r_0} \big] \in \cB_1\big([L^2(\bbR^n)]^N\big),&    \lb{1.76} \\ 
r_0 \in \bbN, \; r_0 \geq 2 \lfloor n/2 \rfloor + 1, \; z \in \bbC\backslash\bbR.&
\end{split} 
\end{align} 
Since $2 \lfloor n/2 \rfloor + 1$ is always odd, \cite[Theorem~2.2]{Ya05} yields the existence 
of a spectral shift function $\xi(\dott; A_+,A_-)$ for the pair $(A_+, A_-)$ satisfying  
\begin{equation}
\xi(\dott; A_+,A_-) \in L^1\big(\bbR; (1 + |\nu|)^{-q_0-1} d \nu\big).      \lb{1.77} 
\end{equation}
While  \eqref{1.77} does not determine $\xi(\dott; A_+,A_-)$ uniquely, one can show (following \cite[Theorem~4.7]{CGLNPS16}) that there exists a unique spectral shift function $\xi(\dott; A_+,A_-)$ 
given by the limiting relation 
\begin{equation}
\lim_{\ell \to \infty} \xi(\dott; A_{+,\ell},A_-)  = \xi(\dott; A_+,A_-) \, \text{ in $L^1\big(\bbR; (1 + |\nu|)^{-q_0-1} d\nu\big)$,}     \lb{1.78} 
\end{equation}
and hence we will always choose this particular spectral shift function in \eqref{1.78} for the pair $(A_+,A_-)$ in the following.

At this point one can entertain the limit $\ell \to \infty$ in \eqref{1.75}: Indeed, \eqref{1.75} yields 
\begin{align}
& \int_{[0,\infty)} d \lambda \, \xi(\lambda; \bsH_{2,\ell}, \bsH_{1,\ell}) f'(\lambda)  
= \f{1}{\pi} \int_{[0,\infty)} d \lambda \, f'(\lambda) \int_{- \lambda^{1/2}}^{\lambda^{1/2}} 
\f{\xi(\nu; A_{+,\ell}, A_-) \, d \nu}{(\lambda - \nu^2)^{1/2}}    \no \\
& \quad = \f{1}{\pi} \int_{\bbR} d\nu \, \xi(\nu; A_{+,\ell},A_-) F'(\nu), \quad \ell \in \bbN,   \lb{1.79} 
\end{align}
where $F'$ is defined by 
\begin{equation}
F'(\nu) = \int_{\nu^2}^{\infty} d\lambda \, f'(\lambda) (\lambda - \nu^2)^{-1/2}, \quad \nu \in \bbR.
\lb{1.80} 
\end{equation}
The limit $\ell \to \infty$ on left-hand side of \eqref{1.79} is controlled via \eqref{1.68}, and, since $F' \in C^{\infty}_0(\bbR)$,  the right-hand side of \eqref{1.79} is controlled via \eqref{1.78}, implying 
\begin{align}
& \int_{[0,\infty)} d \lambda \, \xi(\lambda; \bsH_2, \bsH_1) f'(\lambda)   
= \f{1}{\pi} \int_{\bbR} d \nu \, \xi(\nu; A_+, A_-) F'(\nu)   \no \\
& \quad = \f{1}{\pi} \int_{\bbR} d \lambda \, f'(\lambda) \int_{- \lambda^{1/2}}^{\lambda^{1/2}} 
\f{\xi(\nu; A_+, A_-) \, d \nu}{(\lambda - \nu^2)^{1/2}} \chi_{[0,\infty)}(\lambda), \quad 
f \in C^{\infty}_0(\bbR),    
\end{align}
and hence 
\begin{equation}
\xi(\lambda; \bsH_2, \bsH_1) = \f{1}{\pi} \int_{- \lambda^{1/2}}^{\lambda^{1/2}} 
\f{\xi(\nu; A_+, A_-) \, d \nu}{(\lambda - \nu^2)^{1/2}} \, \text{ for a.e.~$\lambda > 0$,} 
\lb{1.82} 
\end{equation}
due to our normalization in \eqref{1.69}.  This establishes the limiting relation $\ell \to \infty$ of \eqref{1.75}.

Having established \eqref{1.82}, we turn to the resolvent regularized Witten index of the operator 
$\bsD_\bsA^{}$. Since $\sigma(A_{\pm}) = \bbR$, in particular, $0 \notin \rho(A_+) \cap \rho(A_-)$, 
$\bsD_\bsA^{}$ is a non-Fredholm operator. Even though $\bsD_\bsA^{}$ is a non-Fredholm operator, its Witten index is well-defined and expressible in terms of the spectral shift functions for the pair of operators 
$(\bsH_2, \bsH_1)$ and $(A_+, A_-)$ as will be shown below.

To introduce an appropriately (resolvent regularized) Witten index of $\bsD_\bsA^{}$, we consider a densely defined, closed operator $T$ in the complex, separable Hilbert space $\cK$ and assume that for some 
$k \in \bbN$, and all $\lambda < 0$
\begin{equation}
\big[(T^* T - \lambda I_{\cK})^{-k} - (T T^* - \lambda I_{\cK})^{-k}\big] \in \cB_1(\cK).   
\end{equation}
Then the $k$th resolvent regularized Witten index of $T$ is defined by
\begin{equation}
W_{k,r}(T) = \lim_{\lambda \uparrow 0} (- \lambda)^k 
\tr_{\cK} \big((T^* T - \lambda I_{\cK})^{-k} - (T T^* - \lambda I_{\cK})^{-k}\big),   \lb{1.84}
\end{equation}
whenever the limit exists. The analogous semigroup regularized definition reads, 
\begin{equation}
W_s(T) = \lim_{t \uparrow \infty} \tr_{\cK}
\big(e^{- t T^*T} - e^{-t TT^*}\big),
\end{equation}
but in this manuscript it suffices to employ \eqref{1.84}.

The second main result of Section \ref{s11}, and at the same time the main result of this manuscript, the characterization of the Witten index of $\bsD_\bsA^{}$ in terms of spectral shift functions, can thus be summarized as follows: 

\begin{theorem} \lb{t1.1} 
Assume hypotheses \eqref{1.11}, \eqref{1.11a}, \eqref{1.42}, and \eqref{1.66}. 
Then $0$ is a right Lebesgue point of $\xi(\dott; \bsH_2, \bsH_1)$, denoted by 
$\xi_L(0_+; \bsH_2, \bsH_1)$, and 
\begin{equation}
\xi_L(0_+; \bsH_2, \bsH_1) = [\xi(0_+;A_+,A_-) + \xi(0_-;A_+,A_-)]/2.
\end{equation}
In addition, the resolvent regularized Witten index $W_{k,r}(\bsD_\bsA^{})$ of $\bsD_\bsA^{}$ 
exists for all $k \in \bbN$, $k \geq \lceil n/2\rceil$, and 
equals
\begin{align}
\begin{split} 
W_{k,r}(\bsD_\bsA^{}) = \xi_L(0_+; \bsH_2, \bsH_1) &= [\xi(0_+;A_+,A_-) + \xi(0_-;A_+,A_-)]/2    \\
&= [\xi(0_+;H,H_0) + \xi(0_-;H,H_0)]/2.    \lb{1.86}
\end{split} 
\end{align}
\end{theorem}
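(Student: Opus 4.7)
The plan is to read both claims off the Stieltjes-type representation \eqref{1.82} relating $\xi(\dott;\bsH_2,\bsH_1)$ to $\xi(\dott;A_+,A_-)=\xi(\dott;H,H_0)$, exploiting the continuity and one-sided limits of the latter at $0$ established via \eqref{1.45}--\eqref{1.46}, and then converting the identification of $\xi_L(0_+;\bsH_2,\bsH_1)$ into the Witten-index statement by a classical Abelian limit applied to the trace formula \eqref{1.31}. Substituting $\nu=\sqrt{\lambda}\,s$ in \eqref{1.82} transforms it, for a.e.\ $\lambda>0$, into
\[
\xi(\lambda;\bsH_2,\bsH_1)=\frac{1}{\pi}\int_{-1}^{1}\frac{\xi\bigl(\sqrt{\lambda}\,s;A_+,A_-\bigr)}{\sqrt{1-s^2}}\,ds.
\]
Since $\xi(\dott;A_+,A_-)$ is bounded in a neighborhood of $0$ (being continuous on $(-\infty,0)\cup(0,\infty)$ with finite one-sided limits at $0$), dominated convergence with majorant $C(1-s^2)^{-1/2}\in L^1([-1,1])$ together with $\int_0^1(1-s^2)^{-1/2}\,ds=\pi/2$ gives
\[
\lim_{\lambda\downarrow 0}\xi(\lambda;\bsH_2,\bsH_1)=\tfrac{1}{2}\bigl[\xi(0_+;A_+,A_-)+\xi(0_-;A_+,A_-)\bigr]=:L.
\]
A classical right limit at $0$ is stronger than being a right Lebesgue point, so $0$ is a right Lebesgue point of $\xi(\dott;\bsH_2,\bsH_1)$ with $\xi_L(0_+;\bsH_2,\bsH_1)=L$.

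For the Witten index, fix $k\in\bbN$ with $k\geq\lceil n/2\rceil$ so that the trace class property \eqref{1.67} is in force. Combining \eqref{1.31} (with $m=k$, $S=\bsH_2$, $S_0=\bsH_1$) with the normalization $\xi(\lambda;\bsH_2,\bsH_1)=0$ for $\lambda<0$ from \eqref{1.69} yields, for $\mu<0$,
\[
\tr_{L^2(\bbR;[L^2(\bbR^n)]^N)}\!\bigl((\bsH_1-\mu\bsI)^{-k}-(\bsH_2-\mu\bsI)^{-k}\bigr)=k\int_{0}^{\infty}\frac{\xi(\lambda;\bsH_2,\bsH_1)}{(\lambda-\mu)^{k+1}}\,d\lambda.
\]
Substituting $\mu=-\varepsilon$ in definition \eqref{1.84} reduces the Witten index to
\[
W_{k,r}(\bsD_\bsA^{})=\lim_{\varepsilon\downarrow 0}\,k\,\varepsilon^{k}\int_{0}^{\infty}\frac{\xi(\lambda;\bsH_2,\bsH_1)}{(\lambda+\varepsilon)^{k+1}}\,d\lambda.
\]
The measures $d\pi_{\varepsilon}(\lambda):=k\,\varepsilon^{k}(\lambda+\varepsilon)^{-k-1}\chi_{(0,\infty)}(\lambda)\,d\lambda$ are probability measures on $(0,\infty)$ concentrating at $0$ as $\varepsilon\downarrow 0$. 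Writing $\xi(\dott;\bsH_2,\bsH_1)=L+g(\dott)$ with $g(\lambda)\to 0$ as $\lambda\downarrow 0$, the contribution of the constant part is exactly $L$, while the $g$-contribution is split at a small $\delta>0$: on $(0,\delta)$ it is controlled by $\sup_{0<\lambda<\delta}|g(\lambda)|$, which is small for small $\delta$; on $(\delta,\infty)$ the bound $(\lambda+\varepsilon)^{-k-1}\leq\lambda^{-k-1}\leq C_{\delta}(1+\lambda)^{-k-1}$ together with the integrability $\xi(\dott;\bsH_2,\bsH_1)\in L^{1}((0,\infty);(1+\lambda)^{-k-1}\,d\lambda)$ from \eqref{1.70} produces an $O(\varepsilon^{k})$ error. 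The iterated limit yields $W_{k,r}(\bsD_\bsA^{})=L=\xi_{L}(0_+;\bsH_2,\bsH_1)$, and the identifications $A_-=H_0$, $A_+=H$ then give \eqref{1.86}.

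The conceptual weight of the argument is entirely carried by the inputs \eqref{1.45}--\eqref{1.46}, \eqref{1.67}, \eqref{1.70}, and \eqref{1.82}, all established earlier; once these are in hand, the proof is a dominated-convergence step followed by a standard Abelian limit, and no genuine obstacle remains. The only point requiring mild care is the legitimacy of the pointwise passage in \eqref{1.82} (which holds only a.e.), but since the right-hand side has a classical limit as $\lambda\downarrow 0$, the essential and Lebesgue-point values coincide with that limit and no refinement is needed.
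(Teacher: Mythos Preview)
Your argument is correct and essentially makes explicit what the paper's own proof leaves to external references. The paper's proof of this result (stated there as Theorem~\ref{t12.7}) is very brief: it observes that the key new input is the existence of one-sided limits of $\xi(\dott;A_+,A_-)=\xi(\dott;H,H_0)$ at $0$ (Theorem~\ref{t12.2}), and then cites \cite[Theorem~4.3]{CGPST17} for the case $q=1$ and \cite[Sects.~5.2,~7.1]{CLPS20} for $q\geq 2$ to carry out the passage from this regularity to the Lebesgue-point and Witten-index statements.

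Your proof supplies the content of those citations directly. The substitution $\nu=\sqrt{\lambda}\,s$ in the Pushnitski formula \eqref{1.82} followed by dominated convergence is exactly the mechanism that converts one-sided limits of $\xi(\dott;A_+,A_-)$ into a classical right limit (hence a right Lebesgue point) of $\xi(\dott;\bsH_2,\bsH_1)$; and the Abelian argument via the probability measures $d\pi_\varepsilon(\lambda)=k\varepsilon^k(\lambda+\varepsilon)^{-k-1}\chi_{(0,\infty)}(\lambda)\,d\lambda$ applied to the trace formula is the standard route to the Witten index once that right limit is known. Both steps are correct as written, and your handling of the a.e.\ nature of \eqref{1.82} in the final paragraph is the right observation. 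The only place where one might add a word is the applicability of the trace formula \eqref{1.31} to the pair $(\bsH_2,\bsH_1)$ for the specific $k\geq q=\lceil n/2\rceil$, but this follows from the trace-class property \eqref{1.67} together with \cite[Sect.~8.9]{Ya92} (as the paper itself invokes around \eqref{12.40}--\eqref{12.41}), so no genuine gap arises.
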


This is the first result of this kind applicable to non-Fredholm operators in a partial differential operator setting involving multi-dimensional massless Dirac operators.
In a sense, a project that started with Pushnitski in 2008, was considerably extended in scope in 
\cite{GLMST11}, and further developed with the help of \cite{CGGLPSZ16b}--\cite{CLPS20}, \cite{GN20}, \cite{GN20a}, finally comes full circle.

Appendix \ref{sA} collects some useful results on block matrix operators, Appendix \ref{sB} is devoted to asymptotic results for Hankel functions, Appendic \ref{sC} presents low-energy expansions and estimates for the free Dirac Green's function in the massless case, Appendix \ref{sD} recalls a product formula for modified (regularized) Fredholm determinants, and finally, Appendix \ref{sE} collects some of the notational conventions used throughout this manuscript.

\section{Some Background on (Locally) Smooth Operators} 
\lb{s2}

In this section we first recall a few basic facts on the notion of (local) Kato-smoothness (see, e.g., 
\cite[Sect.~XIII.7]{RS78}, \cite[Ch.~4]{Ya92}, and \cite[Chs.~0--2]{Ya10}) and then recall a variant, strong (local) Kato-smoothness (see, e.g., \cite[Ch.~4]{Ya92}, \cite[Chs.~0--2]{Ya10}), as these 
concepts will be useful in subsequent sections.

\begin{definition} \lb{d2.1} 
Let $S$ be self-adjoint in $\cH$ and $T \in \cC(\cH,\cK)$ with $\dom(S) \subseteq dom(T)$ 
and fix $\varepsilon_0 > 0$. Then $T$ is called $S$-Kato-smooth $($in short, $S$-smooth in the following\,$)$ if for each $f \in \cH$, 
\begin{align}
\begin{split}
& \|T\|_{S}^2 = \sup_{\varepsilon \in(0, \varepsilon_ 0), \; \|f\|_{\cH}=1} \f{1}{4 \pi^2} \int_{\bbR} d \lambda \, 
\Big[\big\|T(S - (\lambda + i \varepsilon) I_{\cH})^{-1}f\big\|^2_{\cH}     \lb{3.1} \\
& \hspace*{5.2cm} + \big\|T(S - (\lambda - i \varepsilon) I_{\cH})^{-1}f\big\|^2_{\cH}\Big] < \infty.
\end{split} 
\end{align}
\end{definition}

It suffices to require \eqref{3.1} for a dense set of $f \in \cH$ as $T$ is closed.  If $T$ is $S$-smooth, then $T$ is infinitesimally bounded with respect to $S$. 

In terms of unitary groups, $T$ 
is $S$-smooth if and only if for all $f \in \cH$, $e^{- i t S}f \in \dom(T)$ for a.e.~$t \in \bbR$ and 
\begin{equation}
\f{1}{2 \pi} \int_{\bbR} dt \, \big\|T e^{- i t S} f\big\|^2_{\cH} \leq C_0 \|f\|^2_{\cH} 
\end{equation}
for some constant $C_0 \in (0,\infty)$ ($C_0$ can be chosen to be $\|T\|^2_{S}$, but not  
smaller).

An immediate consequence regarding the absence of singular spectrum derives from the fact 
that if $T$ is $S$-smooth then
\begin{equation}
\ol{\ran(T^*)} \subseteq \cH_{ac}(S). 
\end{equation}  
In particular, 
\begin{equation}
\text{if, in addition, $\ker (T) = \{0\}$, then $\cH_{ac}(S) = \cH$,}
\end{equation}
and hence the spectrum of $S$ is purely absolutely continuous, 
\begin{equation}
\sigma(S) = \sigma_{ac}(S), \quad \sigma_{p}(S) = \sigma_{sc}(S) = \emptyset. 
\end{equation}
Here $\cH_{ac}(S)$ denotes the absolutely continuous subspace associated with $S$. 

Moreover, as long as $B \in \cB(\cK, \cL)$ (with $\cL$ another complex, separable Hilbert space), $BT$ is $S$-smooth whenever $T$ is $S$-smooth. 

Finally, if $T \in \cC(\cH,\cK)$ and for all $z \in \bbC \backslash \bbR$, 
$T (S -z I_{\cH})^{-1} T^*$ has a bounded closure in $\cH$ satisfying for some fixed 
$\varepsilon_0 > 0$, 
\begin{equation}
C_1 = \sup_{\lambda \in \bbR, \; \varepsilon \in(0, \varepsilon_0)} 
\big\|\ol{T (S -(\lambda + i \varepsilon ) I_{\cH})^{-1} T^*}\big\|_{\cB(\cH)} < \infty, 
\lb{3.4}
\end{equation}
then $T$ is $S$-smooth with $\|T\|_{S} \leq C_1/\pi$. 

While Definition \ref{d2.1} describes a global condition, a local version can be introduced as follows:

\begin{definition} \lb{d2.2} 
Let $S$ be self-adjoint in $\cH$ and $T \in \cC(\cH,\cK)$ with $\dom(S) \subseteq dom(T)$. 
$T$ is called $S$-Kato-smooth on a Borel set $\Lambda \subseteq \bbR$ 
$($in short, $S$-smooth on $\Lambda$ in the following\,$)$ if $T E_S(\Lambda)$ is $S$-smooth.
\end{definition}

Again, if $B \in \cB(\cK, \cL)$, then $BT$ is $S$-smooth on $\Lambda_0$ whenever $T$ is. 

For $T E_S(\Lambda)$ to be well-defined it suffices that 
$E_S(\Lambda) \cH \cap \dom(S) \subseteq \dom(T)$. 

If $T$ is $S$-smooth on $\Lambda$ then
\begin{equation}
\ol{\ran\big((T E_S(\Lambda))^*\big)} \subseteq \cH_{ac}(S), 
\end{equation}  
in particular, 
\begin{align}
\begin{split} 
& \text{ if, in addition, $\ker (T) = \{0\}$, then} \\
& \quad \sigma(S) \cap \Lambda = \sigma_{ac}(S) \cap \Lambda, \quad 
\sigma_{p}(S) \cap \Lambda = \sigma_{sc}(S) \cap \Lambda = \emptyset. 
\end{split} 
\end{align}

If $T \in \cC(\cH,\cK)$ and for all $z \in \bbC \backslash \bbR$, 
$T (S -z I_{\cH})^{-1} T^*$ has a bounded closure in $\cH$ satisfying for some fixed 
$\varepsilon_0 > 0$, 
\begin{equation}
\sup_{\lambda \in \Lambda, \; \varepsilon \in (0,\varepsilon_0)} 
\big\|\ol{T (S -(\lambda + i \varepsilon ) I_{\cH})^{-1} T^*}\big\|_{\cB(\cH)} < \infty, 
\lb{3.9}
\end{equation}
or
\begin{equation}
\sup_{\lambda \in \Lambda, \; \varepsilon \in (0,\varepsilon_0)} \varepsilon 
\big\|\ol{T (S -(\lambda \pm i \varepsilon ) I_{\cH})^{-1}}\big\|_{\cB(\cH)} < \infty, 
\lb{3.10}
\end{equation}
then $T$ is $S$-smooth on $\ol \Lambda$. 

Next, following \cite[Sect.~4.4]{Ya10}, we turn to the concept of strongly smooth operators on 
a compact interval $\Lambda_0 = [\lambda_1, \lambda_2]$, $\lambda_j \in \bbR$, $j=1,2$, 
$\lambda_1 < \lambda_2$ (tailored toward certain applications to differential operators). 
This requires some preparations: Given a separable complex 
Hilbert space $\cH_0$, one considers the (nonseparable) Banach space of $\cH_0$-valued 
H\"older continuous functions of order $\tau \in (0,1]$, denoted by 
$C^{\tau}(\Lambda_0; \cH_0)$, with norm 
\begin{equation}   
\|f \|_{C^{\tau}(\Lambda_0; \cH_0)} = \sup_{\lambda, \lambda' \in \Lambda_0} \bigg(
\|f(\lambda)\|_{\cH_0} + \f{\|f(\lambda) - f(\lambda')\|_{\cH_0}}{|\lambda - \lambda'|^{\tau}}\bigg), 
\quad f \in C^{\tau}(\Lambda_0; \cH_0).  
\end{equation}
Suppose the self-adjoint operator $S$ in $\cH$ has purely absolutely continuous spectrum 
on $\Lambda_0$, that is, 
\begin{equation}
\sigma(S) \cap \Lambda_0 = \sigma_{ac}(S) \cap \Lambda_0, \quad 
\sigma_{p}(S) \cap \Lambda_0 = \sigma_{sc}(S) \cap \Lambda_0 = \emptyset,
\end{equation}
of constant multiplicity $m_0 \in \bbN \cup \{\infty\}$ on $\Lambda_0$, with $\dim(\cH_0) = m_0$. 
In addition, let 
\begin{equation}
\cF_0: \begin{cases} E_S(\Lambda_0) \cH \to L^2(\Lambda_0; d \lambda; \cH_0), \\
f \mapsto \cF_0 f := \wti f,  \end{cases} \text{be unitary,}
\end{equation}
and ``diagonalizing'' $S$, that is, turning $S E_S(\Lambda_0)$ into a multiplication operator. 
More precisely, $\cF_0$ generates a spectral representation of $S$ via, 
\begin{equation}
(\cF_0 E_S(\Omega) f)(\lambda) = \chi_{\Omega \cap \Lambda_0}(\lambda) \wti f(\lambda), 
\quad f \in E_S(\Lambda_0) \cH. 
\end{equation}

With these preparations in place, we are now in position to define the notion of strongly smooth operators (cf.\ \cite[Sect.~4.4]{Ya92}, where a more general concept is introduced):

\begin{definition} \lb{d2.3} 
Let $S$ be self-adjoint in $\cH$ with purely absolutely continuous spectrum 
of constant $($possibly, infinite\,$)$ multiplicity on $\Lambda_0$ and suppose that $T \in \cC(\cH,\cK)$ with $\dom(S) \subseteq dom(T)$.~Then $T$ is called strongly $S$-Kato-smooth on $\Lambda_0$ $($in short, strongly $S$-smooth 
on $\Lambda_0$ in the following\,$)$, with exponent $\tau \in (0,1]$, 
if $\cF_0 (T E_S(\Lambda))^*: \cK \to C^{\tau}(\Lambda_0; \cH_0)$ is continuous, that is, 
for $f = (T E_S(\Lambda_0))^* \xi$, $\xi \in \cK$, 
\begin{align}
\begin{split}
& \big\|\wti f(\lambda)\big\|_{\cH_0} = \|(\cF_0(T E_S(\Lambda_0))^* \xi)(\lambda)\|_{\cH_0}
\leq C \|\xi\|_{\cK},    \\ 
& \big\|\wti f(\lambda) - \wti f(\lambda')\big\|_{\cH_0} \leq C |\lambda - \lambda'|^{\tau} \|\xi\|_{\cK},
\end{split} 
\end{align}
with $C \in (0,\infty)$ independent of $\lambda, \lambda' \in \Lambda_0$ and $\xi \in \cK$. 
\end{definition}

Not surprisingly, the terminology chosen is consistent with the fact that 
\begin{equation}
\text{if $T$ is strongly $S$-smooth on $\Lambda_0$, then it is $S$-smooth on $\Lambda_0$.}  
\end{equation}
Moreover, as long as $B \in \cB(\cK, \cL)$ (with $\cL$ another complex, separable Hilbert space) 
and $T$ is strongly $S$-smooth with exponent $\tau \in (0,1]$ on $\Lambda_0$, then $BT$ is 
strongly $S$-smooth on $\Lambda_0$ with the same exponent $\tau \in (0,1]$. 

Next, we recall a perturbation approach in which $S$ corresponds to the ``sum'' of an unperturbed 
self-adjoint operator $S_0$ in $\cH$ and a perturbation $V$ in $\cH$ that can be factorized into a product $V_1^* V_2$ as follows: 
Suppose $V_j \in \cC(\cH,\cK)$, $j=1,2$, with 
\begin{equation}
V_j(|S_0| + I_{\cH})^{-1/2} \in \cB(\cH,\cK), \quad j =1,2,     \lb{3.17}
\end{equation} 
and the symmetry condition, 
\begin{equation}
(V_1 f, V_2 g)_{\cK} = (V_2 f, V_1 g)_{\cK}, \quad f, g \in \dom\big(|S_0|^{1/2}\big). 
\end{equation} 
In addition, suppose that for some (and hence for all) $z \in \rho(S_0)$, 
$V_2 (S_0 - zI_{\cH})^{-1} V_1^*$ has a bounded extension in $\cK$, which is then given by 
its closure
\begin{equation}
\ol{V_2 (S_0 - zI_{\cH})^{-1} V_1^*} 
= V_2 (S_0 - z I_{\cH})^{-1/2} \big[V_1 (S_0 - \ol{z} I_{\cH})^{-1/2}\big]^*.      \lb{3.19} 
\end{equation}
Here the operator $\ol{V_2 (S_0 - zI_{\cH})^{-1} V_1^*} $ represents an abstract 
Birman--Schwinger-type operator. 

Finally, we assume that 
\begin{equation}
\big[I_{\cK} + \ol{V_2 (S_0 - z_0 I_{\cH})^{-1} V_1^*}\big]^{-1} \in \cB(\cK) \, 
\text{ for some $z_0 \in \rho(S_0)$.} 
\end{equation}
Then the equation 
\begin{align} 
R(z) &= (S_0 -z I_{\cH})^{-1}   \no \\
& \quad - \big[V_1(S_0 - \ol{z} I_{\cH})^{-1}\big]^* \big[I_{\cH} + \ol{V_2 (S_0 - z I_{\cH})^{-1} V_1^*}\big]^{-1} V_2(S_0 - z I_{\cH})^{-1},    \no \\
& \hspace*{8.5cm} z \in \bbC \backslash \bbR,    \lb{3.21} 
\end{align}
defines the resolvent of a self-adjoint operator $S$ in $\cH$, that is, 
\begin{equation} 
R(z) = (S - z I_{\cH})^{-1}, \quad z \in \bbC \backslash \bbR,    \lb{3.22}
\end{equation}  
with $S \supseteq S_0 + V_1^* V_2$ (the latter defined on $\dom(S_0) \cap \dom (V_1^* V_2)$, which may consist of $\{0\}$ only); for details we refer to \cite{Ka66} (see also \cite{GLMZ05}, 
\cite[Sect.~1.9]{Ya92}).

One also has 
\begin{align}
(S - z I_{\cH})^{-1} - (S_0 - z I_{\cH})^{-1} 
& = - \big[V_1 (S - \ol{z} I_{\cH})^{-1}\big]^* V_2 (S_0 - z I_{\cH})^{-1}    \no \\
&= - \big[V_1 (S_0 - \ol{z} I_{\cH})^{-1}\big]^* V_2 (S - z I_{\cH})^{-1},    \no \\
& \hspace*{4.35cm} z \in \bbC \backslash \bbR,  
\end{align}
and 
\begin{align} 
(S_0 - z I_{\cH})^{-1} &= (S -z I_{\cH})^{-1}   \no \\
& \quad - \big[V_1(S - \ol{z} I_{\cH})^{-1}\big]^* \big[I_{\cH} - \ol{V_2 (S - z I_{\cH})^{-1} V_1^*}\big]^{-1} V_2(S - z I_{\cH})^{-1},    \no \\
& \hspace*{7.5cm} z \in \bbC \backslash \bbR,   
\end{align}
as well as, 
\begin{align} 
\ol{V_1 (S - z I_{\cH})^{-1} V_1^*} &= \ol{V_1 (S_0 - z I_{\cH})^{-1} V_1^*}      \\ 
& \quad - \big[\ol{V_1 (S - z I_{\cH})^{-1} V_1^*}\big] 
\big[\ol{V_2 (S_0 - z I_{\cH})^{-1} V_1^*}\big], 
\quad z \in \bbC \backslash \bbR,   \no 
\end{align} 
implying
\begin{align}
\begin{split} 
\ol{V_1 (S - z I_{\cH})^{-1} V_1^*} = \ol{V_1 (S_0 - z I_{\cH})^{-1} V_1^*} 
\big[I_{\cK} + \ol{V_2 (S_0 - z I_{\cH})^{-1} V_1^*}\big]^{-1},      \lb{3.26} \\ 
z \in \bbC \backslash \bbR.
\end{split} 
\end{align}
Similarly,
\begin{align}
\begin{split} 
\ol{V_2 (S - z I_{\cH})^{-1} V_1^*} =  I_{\cH} - 
\big[I_{\cK} + \ol{V_2 (S_0 - z I_{\cH})^{-1} V_1^*}\big]^{-1}, \quad z \in \bbC \backslash \bbR.    \lb{3.26a} 
\end{split} 
\end{align}

The remaining results in this section are all taken from \cite[Sects.~4.4--4.7]{Ya10}.

\begin{theorem} \lb{t2.4}
Assuming the hypotheses on $S_0$, $V_j$, $j=1,2$, employed in \eqref{3.17}--\eqref{3.26}, 
suppose the following additional conditions hold: \\[1mm]
$(i)$ $V_2$ is $S_0$-smooth on $\Lambda_0 = [\lambda_1, \lambda_2]$, $\lambda_j \in \bbR$, 
$j=1,2$, $\lambda_1 < \lambda_2$. \\[1mm] 
$(ii)$ The analytic, operator-valued functions $\ol{V_2(S_0 - z I_{\cH})^{-1} V_1^*}$, 
$\ol{V_1 (S_0 - z I_{\cH})^{-1} V_1^*}$ on $\Re(z) \in (\lambda_1, \lambda_2)$, $\Im(z) \neq 0$, are continuous in $\cB(\cK)$-norm up to and including the two rims of the ``cut'' along $(\lambda_1, \lambda_2)$.  
\\[1mm] 
$(iii)$ For some $k \in \bbN$, $\big[\ol{V_2 (S_0 - z I_{\cH})^{-1} V_1^*}\big]^k \in \cB_{\infty}(\cK)$, 
$\Im(z) \neq 0$. \\[1mm] 
Define 
\begin{align}
& \cN_{\pm} = \big\{\lambda \in \Lambda_0 \, \big| \, \text{there exists $0 \neq f \in \cK$ s.t.} \, 
- f = \ol{V_2 (S_0 - (\lambda \pm i 0) I_{\cH})^{-1} V_1^*} f\big\},   \no \\ 
& \cN = \cN_- \cup \cN_+.    \lb{3.27} 
\end{align} 
Then $\cN_{\pm}, \cN$ are closed and of Lebesgue measure zero. Moreover, the analytic, operator-valued function $\ol{V_1 (S - z I_{\cH})^{-1} V_1^*}$ on $\Re(z) \in (\lambda_1, \lambda_2)$, 
$\Im(z) \neq 0$, is continuous in $\cB(\cK)$-norm up to and including the two rims of the ``cut'' along 
$(\lambda_1, \lambda_2) \backslash \cN$. If, in addition, $\ker(V_1) = \{0\}$, then 
$S$ has purely absolutely continuous spectrum on $\Lambda_0 \backslash \cN$, that is, 
\begin{equation}
\sigma(S) \cap (\Lambda_0 \backslash \cN) = \sigma_{ac}(S) \cap (\Lambda_0 \backslash \cN), 
\quad \sigma_{p}(S) \cap (\Lambda_0 \backslash \cN) 
= \sigma_{sc}(S) \cap (\Lambda_0 \backslash \cN) = \emptyset. 
\end{equation} 
\end{theorem}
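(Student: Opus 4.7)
The strategy is first to analyze the exceptional set $\cN$ through Fredholm and analytic-Fredholm arguments, then to deduce continuity of $\ol{V_1 (S - z I_{\cH})^{-1} V_1^*}$ up to the boundary of $\bbC_+\cup\bbC_-$ outside $\cN$, and finally to combine this with the smoothness criterion \eqref{3.9} and the general principle that $S$-smoothness confines the spectrum to the absolutely continuous part. I first verify that $\cN_\pm$ is closed. Writing $T(\lambda) := \ol{V_2 (S_0 - (\lambda + i0)I_\cH)^{-1} V_1^*}$, hypothesis $(ii)$ yields that $\lambda \mapsto T(\lambda)$ is norm-continuous on $\Lambda_0$ and hypothesis $(iii)$ yields compactness of $T(z)^k$. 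Given a sequence $\lambda_n \in \cN_+$ with $\lambda_n \to \lambda$ and unit $f_n \in \cK$ satisfying $T(\lambda_n) f_n = -f_n$, iteration gives $T(\lambda_n)^k f_n = (-1)^k f_n$; joint norm-continuity and uniform boundedness of $T$ near $\lambda$ imply $\|T(\lambda_n)^k - T(\lambda)^k\|_{\cB(\cK)} \to 0$, and compactness of $T(\lambda)^k$ then yields a norm-convergent subsequence $T(\lambda)^k f_n \to g$, so $f_n \to (-1)^k g =: f$ with $\|f\|=1$ and $-f = T(\lambda)f$; thus $\lambda\in\cN_+$. The argument for $\cN_-$ is identical.

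The measure-zero claim is the \emph{principal technical obstacle}. The plan is to apply the analytic Fredholm theorem to $z \mapsto I_\cK + \ol{V_2(S_0 - zI_\cH)^{-1}V_1^*}$ on $\bbC_+$ (and separately $\bbC_-$): this family is operator-analytic, has Fredholm values of index zero by $(iii)$, and by the construction \eqref{3.21}--\eqref{3.22} with $S$ self-adjoint its inverse exists throughout $\bbC_+ \cup \bbC_-$. Its boundary values on $\Lambda_0$ then constitute a norm-continuous family of Fredholm operators of index zero. At any $\lambda_0 \in \cN_\pm$ a Riesz projection reduces the question $-1 \in \sigma(T(\lambda+i0))$ near $\lambda_0$ to a finite-dimensional determinant equation; combining this reduction with the $S_0$-smoothness of $V_2$ from $(i)$ — which via Definition \ref{d2.1} controls the boundary $L^2$-integrability of $\|V_2(S_0 - (\cdot \pm i0)I_\cH)^{-1} V_1^* h\|_\cK$ on $\Lambda_0$ — rules out $\cN_\pm$ carrying positive Lebesgue measure, since a positive-measure family of solutions $T(\lambda+i0) f_\lambda = -f_\lambda$ produces a positive-measure set of generalized embedded eigenvectors for $S$ incompatible with the smoothness bound.

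With $\cN_\pm$ closed and of measure zero in hand, continuity of $\ol{V_1(S - z I_\cH)^{-1}V_1^*}$ up to $\Lambda_0 \setminus \cN$ follows from \eqref{3.26}:
\begin{equation*}
\ol{V_1(S - z I_\cH)^{-1} V_1^*} = \ol{V_1(S_0 - z I_\cH)^{-1} V_1^*}\bigl[I_\cK + \ol{V_2(S_0 - z I_\cH)^{-1} V_1^*}\bigr]^{-1}.
\end{equation*}
The first factor is norm-continuous up to the cut by $(ii)$; the inverse factor is norm-continuous on $\Lambda_0 \setminus \cN_\pm$ because inversion of a norm-continuous, pointwise-invertible, Fredholm family is norm-continuous. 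Their product is thus norm-continuous on $\Lambda_0 \setminus \cN$ and uniformly bounded in $\cB(\cK)$ on every compact subinterval $\Lambda$ of $\Lambda_0 \setminus \cN$. Applying criterion \eqref{3.9} to the pair $(V_1, S)$ on $\Lambda$ shows that $V_1$ is $S$-smooth on $\Lambda$, and hence on all of $\Lambda_0 \setminus \cN$ by exhaustion.

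Finally, under $\ker(V_1) = \{0\}$ the range $\ran(V_1^*)$ is dense in $\cH$. For any compact $\Lambda \subset \Lambda_0 \setminus \cN$, smoothness of $V_1$ on $\Lambda$ gives, via the general inclusion $\ol{\ran((V_1 E_S(\Lambda))^*)} \subseteq \cH_{ac}(S)$ and $\ol{\ran(E_S(\Lambda)V_1^*)} = E_S(\Lambda)\cH$, that $E_S(\Lambda)\cH \subseteq \cH_{ac}(S)$. Exhausting $\Lambda_0 \setminus \cN$ by such $\Lambda$ yields $E_S(\Lambda_0 \setminus \cN)\cH \subseteq \cH_{ac}(S)$, which is equivalent to $\sigma(S) \cap (\Lambda_0 \setminus \cN) = \sigma_{ac}(S) \cap (\Lambda_0 \setminus \cN)$ and $\sigma_p(S) \cap (\Lambda_0 \setminus \cN) = \sigma_{sc}(S) \cap (\Lambda_0 \setminus \cN) = \emptyset$.
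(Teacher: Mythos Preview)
The paper does not give its own proof of this theorem; it is quoted from Yafaev's monograph \cite[Sects.~4.4--4.7]{Ya10}. Your argument for closedness of $\cN_\pm$, the use of the resolvent identity \eqref{3.26} to propagate boundary continuity from $S_0$ to $S$ outside $\cN$, and the passage from local boundedness of $\ol{V_1(S-zI_\cH)^{-1}V_1^*}$ to local $S$-smoothness of $V_1$ via \eqref{3.9} and thence to pure absolute continuity on $\Lambda_0\setminus\cN$, are all correct and coincide with the standard route.

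The genuine gap is in your measure-zero step. After the Riesz-projection reduction near a fixed $\lambda_0\in\cN_+$ you correctly arrive at a finite-dimensional determinant $d(z)$, analytic in a half-strip of $\bbC_+$, continuous up to the real axis, and nonvanishing for $\Im(z)>0$ (since $I_\cK+T(z)$ is invertible there by self-adjointness of $S$). But the mechanism you invoke to force $\{\lambda:d(\lambda+i0)=0\}$ to be null is incorrect. The $S_0$-smoothness of $V_2$ from hypothesis $(i)$ is the estimate \eqref{3.1} for a \emph{fixed} vector integrated in $\lambda$; it says nothing about a $\lambda$-dependent family $f_\lambda$ satisfying $T(\lambda+i0)f_\lambda=-f_\lambda$, and in particular cannot rule out such solutions on a positive-measure set. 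Your sentence ``a positive-measure family \dots\ produces a positive-measure set of generalized embedded eigenvectors for $S$ incompatible with the smoothness bound'' has no content: the smoothness you have is relative to $S_0$, not $S$, and $S$-smoothness of $V_1$ is precisely what remains to be established.

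The actual input in Yafaev's argument is a boundary-uniqueness theorem for scalar analytic functions: a bounded analytic function on $\bbC_+$ that is not identically zero has $\log|d(\,\cdot+i0)|\in L^1_{\loc}(\bbR)$, hence its boundary zero set is Lebesgue-null (a Privalov/Luzin-type fact). Applying this locally to the reduced determinant $d$ and covering $\cN_+$ by neighbourhoods yields $|\cN_+|=0$; similarly for $\cN_-$. Hypothesis $(i)$ plays no role in this particular step.
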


As detailed in \cite[Remark~4.6.3]{Ya92}, condition $(iii)$ in Theorem \ref{t2.4} can be 
replaced by the following one: \\[1mm]
$(iii')$ Suppose there exist $z_{\pm} \in \rho(S_0)$, $\pm \Im(z_{\pm}) > 0$, such that 
\begin{equation}
\big[I_{\cK} + \ol{V_2 (S_0 - z_{\pm} I_{\cH})^{-1} V_1^*}\big]^{-1} \in \cB(\cK), 
\end{equation}
and 
\begin{align}
\begin{split} 
& \ol{V_2 (S_0 - z I_{\cH})^{-1} V_1^*} - \ol{V_2 (S_0 - z' I_{\cH})^{-1} V_1^*}   \\ 
& \quad = (z - z') \ol{V_2 (S_0 - z I_{\cH})^{-1} (S_0 - z' I_{\cH})^{-1} V_1^*} \in \cB_{\infty}(\cK), 
\quad z, z' \in \rho(S_0). 
\end{split} 
\end{align}

Next we strengthen the hypotheses in Theorem \ref{t2.4} by invoking the notion of strong 
$S_0$-smoothness:

\begin{theorem} \lb{t2.5}
Assuming the hypotheses on $S_0$, $V_j$, $j=1,2$, employed in \eqref{3.17}--\eqref{3.26}, 
suppose in addition the following conditions hold: \\[1mm]
$(i)$ $S_0$ has purely absolutely continuous spectrum of constant multiplicity 
$m_0 \in \bbN \cup \{\infty\}$ on $\Lambda_0 = [\lambda_1, \lambda_2]$, $\lambda_j \in \bbR$, 
$j=1,2$, $\lambda_1 < \lambda_2$. \\[1mm] 
$(ii)$ $V_j$ are strongly $S_0$-smooth on any compact subinterval of $\Lambda_0$ with exponents $\tau_j > 0$, $j=1,2$.  \\[1mm] 
$(iii)$ For some $k \in \bbN$, $\big[\ol{V_2 (S_0 - z I_{\cH})^{-1} V_1^*}\big]^k 
\in \cB_{\infty}(\cK)$, $\Im(z) \neq 0$. \\[1mm] 
Then the analytic, operator-valued functions 
\begin{equation} 
\ol{V_1 (S_0 - z I_{\cH})^{-1} V_1^*}, \,
\ol{V_2 (S - z I_{\cH})^{-1} V_1^*}, \; \text{ $\big($resp., $\ol{V_1 (S - z I_{\cH})^{-1} V_1^*}$$\big)$} 
\end{equation} 
on $\Re(z) \in (\lambda_1, \lambda_2)$, $\Im(z) \neq 0$, are H\"older continuous in $\cB(\cK)$-norm with 
exponent $\min\{\tau_1, \tau_2\}$ up to and including the two rims of the ``cut'' along 
$(\lambda_1, \lambda_2)$ $($resp., $(\lambda_1, \lambda_2) \backslash \cN$$)$. \\[1mm] 
Moreover, the local wave operators
\begin{equation}
W_{\pm} (S,S_0; \Lambda_0) = \slim_{t \to \pm \infty} e^{i t S} e^{- i t S_0} P_{S_0, ac}(\Lambda_0), 
\end{equation}
with $P_{S_0, ac}(\Lambda_0) = E_{S_0} (\Lambda_0) E_{S_0, ac}$, and $ E_{S_0, ac}$ the projection onto the absolutely continuous subspace of $S_0$, exist and are complete, that is, 
\begin{equation}
\ker(W_{\pm} (S,S_0; \Lambda_0)) = \cH \ominus E_{S_0, ac} (\Lambda_0) \cH, \quad 
\ran(W_{\pm} (S,S_0; \Lambda_0)) = P_{S, ac} (\Lambda_0) \cH, 
\end{equation}
with $P_{S, ac}(\Lambda_0) = E_{S} (\Lambda_0) E_{S, ac}$, and $ E_{S, ac}$ the projection onto the absolutely continuous subspace of $S$. \\[1mm] 
For the remainder of this theorem suppose in addition that $\tau_1 > 1/2$. Then 
\begin{equation}
\cN_{\pm} = \cN = \sigma_p(S) \cap \Lambda_0 
\end{equation}
and the $($geometric\,$)$ multiplicities of the eigenvalue $\lambda_0 \in \Lambda_0$ of $S$ and the 
eigenvalue $- 1$ of $\ol{V_2 (S_0 - (\lambda_0 \pm i 0) I_{\cH})^{-1} V_1^*}$ coincide.
If in addition, $\ker(V_1) = \{0\}$, then 
$S$ has no singularly continuous spectrum on $\Lambda_0$, that is, 
\begin{equation}
\sigma(S) \cap \Lambda_0 = \sigma_{ac}(S) \cap \Lambda_0, \quad 
\sigma_{sc}(S) \cap \Lambda_0 = \emptyset, 
\end{equation} 
and the singular spectrum of $S$ on the interior, $(\lambda_1, \lambda_2)$, of $\Lambda_0$ 
consists only of eigenvalues of finite multiplicity with no accumulation point in 
$(\lambda_1, \lambda_2)$, in particular, 
\begin{equation}
\sigma_s(S) \cap (\lambda_1, \lambda_2) = \sigma_p(S) \cap (\lambda_1, \lambda_2).  
\end{equation} 
\end{theorem}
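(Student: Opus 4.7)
The plan is to follow Yafaev's scheme from \cite[Sects.~4.4--4.7]{Ya10}, transferring everything to the spectral representation $\cF_0$ of $S_0$ on $\Lambda_0$ provided by Definition \ref{d2.3}. In that representation the sandwich operator $\ol{V_i(S_0-zI_\cH)^{-1}V_j^*}$, $i,j\in\{1,2\}$, decomposes as a composition of $[\cF_0(V_i E_{S_0}(\Lambda_0))^*]^*$, multiplication by $(\lambda-z)^{-1}$, and $\cF_0(V_j E_{S_0}(\Lambda_0))^*$. By hypothesis $(ii)$ the outer factors are continuous from $\cK$ into $C^{\tau_j}(\Lambda_0;\cH_0)$, so a Privalov-type argument for operator-valued Cauchy integrals with H\"older densities produces norm boundary values in $\cB(\cK)$ on every compact subinterval of $(\lambda_1,\lambda_2)$, H\"older continuous with exponent $\min\{\tau_1,\tau_2\}$. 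This settles the H\"older regularity of $\ol{V_1(S_0-zI_\cH)^{-1}V_1^*}$ on the rims.

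For the perturbed sandwich operators I would invoke the resolvent identities \eqref{3.26} and \eqref{3.26a}, which reduce the problem to controlling the inverse of $I_\cK+\ol{V_2(S_0-zI_\cH)^{-1}V_1^*}$ up to the real axis. Hypothesis $(iii)$ gives compactness of some power, and combined with the norm continuity of the boundary values this permits an analytic Fredholm argument on the rims: the inverse exists as a meromorphic $\cB(\cK)$-valued function with exceptional set exactly $\cN_\pm$ from \eqref{3.27}, where closedness and Lebesgue measure zero of $\cN$ were already established in Theorem \ref{t2.4}. Local boundedness of the inverse on $(\lambda_1,\lambda_2)\backslash\cN$ together with the algebra structure of H\"older spaces then transfers the H\"older regularity to the perturbed sandwiches. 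Existence and completeness of the local wave operators $W_\pm(S,S_0;\Lambda_0)$ follow from the Kato--Birman local smoothness theorem (see \cite[Ch.~4]{Ya92}), since strong $S_0$-smoothness implies local $S_0$-smoothness of $V_1,V_2$ and the factorization $V=V_1^*V_2$ is the standard input.

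Under the sharpened assumption $\tau_1>1/2$, the Birman--Schwinger-type correspondence $\cN_\pm=\cN=\sigma_p(S)\cap\Lambda_0$, with matching geometric multiplicities, is obtained as follows: any solution of $-\varphi=\ol{V_2(S_0-(\lambda_0\pm i0)I_\cH)^{-1}V_1^*}\varphi$ produces, by composing with the H\"older boundary trace of $[V_1 E_{S_0}(\Lambda_0)]^*$, a vector $\psi$ that one shows is a genuine eigenvector of $S$ at $\lambda_0$; the threshold $\tau_1>1/2$ is precisely what allows the pointwise interpretation of this trace without loss of integrability. Conversely, any eigenvector of $S$ in $\Lambda_0$ yields such a $\varphi$ via $V_2\psi$ and the resolvent identity. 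The established H\"older continuity of $\ol{V_1(S-zI_\cH)^{-1}V_1^*}$ on $(\lambda_1,\lambda_2)\backslash\cN$, together with $\ker(V_1)=\{0\}$ and Stieltjes inversion, then yields purely absolutely continuous spectrum of $S$ on $\Lambda_0\backslash\cN$, whence $\sigma_{sc}(S)\cap\Lambda_0=\emptyset$ and the singular spectrum inside $\Lambda_0$ consists only of eigenvalues of finite multiplicity without accumulation in $(\lambda_1,\lambda_2)$.

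The main obstacle I anticipate is the rigorous implementation of the Birman--Schwinger correspondence and the multiplicity matching at the threshold $\tau_1>1/2$. This step requires one to treat boundary traces of $V_1$-weighted resolvents as genuine pointwise objects in $\cK$, which is only permissible under strong $S_0$-smoothness with exponent strictly above $1/2$; under weaker regularity the relevant boundary integrals could diverge or fail to produce honest eigenvectors, so the sharp exponent bound must be tracked carefully through the entire construction.
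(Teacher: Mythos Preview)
The paper does not supply its own proof of this theorem: it is stated as background material, with the sentence preceding Theorem~\ref{t2.4} announcing that ``the remaining results in this section are all taken from \cite[Sects.~4.4--4.7]{Ya10}.'' Your proposal explicitly follows Yafaev's scheme from the same sections, so there is no meaningful divergence to compare; your sketch is a reasonable outline of the argument the paper simply cites.
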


Again, condition $(iii)$ in Theorem \ref{t2.5} can be replaced by condition $(iii')$ above. 

To make the transition from local to global wave operators we also recall the following result.

\begin{theorem} \lb{t2.6}
Assuming the hypotheses on $S_0$, $V_j$, $j=1,2$, employed in \eqref{3.17}--\eqref{3.26}, 
suppose in addition the following conditions hold: \\[1mm]
$(i)$ $S_0$ has purely absolutely continuous spectrum of constant multiplicity 
$m_{0,k} \in \bbN \cup \{\infty\}$ on a system of intervals 
$\Lambda_{0,\ell} = [\lambda_{1,\ell}, \lambda_{2,\ell}]$, $\lambda_{j,\ell} \in \bbR$, 
$j=1,2$, $\lambda_{1,\ell} < \lambda_{2,\ell}$, $\ell \in \cI$, $\cI \subseteq \bbN$ an appropriate index 
set, such that 
\begin{equation}
\sigma (S_0) \bigg \backslash \bigcup_{\ell \in \cI} \Lambda_{0,\ell} \, \text{ has Lebesgue 
measure zero.}
\end{equation}
$(ii)$ $V_j$ are strongly $S_0$-smooth on any compact subinterval of $\Lambda_{0,\ell}$ with 
exponents $\tau_{j,\ell} > 0$, $j=1,2$, $\ell \in \cI$.  \\[1mm] 
$(iii)$ For some $k \in \bbN$, $\big[\ol{V_2 (S_0 - z I_{\cH})^{-1} V_1^*}\big]^k 
\in \cB_{\infty}(\cK)$, $\Im(z) \neq 0$. \\[1mm] 
Then the $($global\,$)$ wave operators
\begin{equation}
W_{\pm} (S,S_0) = \slim_{t \to \pm \infty} e^{i t S} e^{- i t S_0} , 
\end{equation}
exist and are complete, that is, 
\begin{equation}
\ker(W_{\pm} (S,S_0)) = \{0\}, \quad 
\ran(W_{\pm} (S,S_0)) = E_{S, ac} \cH, 
\end{equation}
with $ E_{S, ac}$ the projection onto the absolutely continuous subspace of $S$.   
\end{theorem}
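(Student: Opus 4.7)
The plan is to reduce the global wave operator statement to the local version in Theorem~\ref{t2.5} by decomposing the absolutely continuous subspace of $S_0$ along the intervals $\Lambda_{0,\ell}$, and then gluing the pieces with a soft approximation argument. First, I would observe that on any compact subinterval of any $\Lambda_{0,\ell}$, conditions $(i)$--$(iii)$ of Theorem~\ref{t2.5} are directly implied by the present hypotheses (strong $S_0$-smoothness transfers to compact subintervals, and the compactness condition $(iii)$ is already stated globally). Applying Theorem~\ref{t2.5} on each such subinterval, and then exhausting $\Lambda_{0,\ell}$ by compact subintervals in the interior, yields for each $\ell\in\cI$ the existence and completeness of local wave operators $W_{\pm}(S,S_0;\Lambda_{0,\ell})$, together with
\[
\ran\big(W_{\pm}(S,S_0;\Lambda_{0,\ell})\big)=P_{S,ac}(\Lambda_{0,\ell})\cH.
\]

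Next I would disjointify the covering by setting $\wti\Lambda_{0,1}:=\Lambda_{0,1}$ and $\wti\Lambda_{0,\ell}:=\Lambda_{0,\ell}\setminus\bigcup_{j<\ell}\Lambda_{0,j}$ for $\ell\geq 2$. Because $S_0$ is purely absolutely continuous on each $\Lambda_{0,\ell}$ and $\sigma(S_0)\setminus\bigcup_\ell\Lambda_{0,\ell}$ is Lebesgue null, the a.c.\ spectral measure of $S_0$ is supported on $\bigcup_\ell\wti\Lambda_{0,\ell}$, giving the orthogonal decomposition
\[
E_{S_0,ac}\cH \;=\; \bigoplus_{\ell\in\cI} E_{S_0}\big(\wti\Lambda_{0,\ell}\big)\cH.
\]
For $f=\sum_\ell f_\ell$ in this decomposition I would then propose
\[
W_{\pm}f \;:=\; \sum_{\ell\in\cI} W_{\pm}\big(S,S_0;\wti\Lambda_{0,\ell}\big)\,f_\ell,
\]
extended by zero on $(E_{S_0,ac}\cH)^{\perp}$. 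To see that this pointwise prescription really coincides with $\slim_{t\to\pm\infty}e^{itS}e^{-itS_0}f$, I would run a standard $\varepsilon/3$ argument: given $\varepsilon>0$, choose $N$ so large that $\big\|\sum_{\ell>N}f_\ell\big\|_\cH<\varepsilon/3$ (this tail bound persists uniformly in $t$ because $e^{itS}e^{-itS_0}$ is unitary), then invoke the existence of each local limit to find $T>0$ with $\big\|e^{itS}e^{-itS_0}f_\ell-W_{\pm}(S,S_0;\wti\Lambda_{0,\ell})f_\ell\big\|_\cH<\varepsilon/(3N)$ for every $\ell\leq N$ and $|t|>T$. Isometry, and the intertwining relation $S\,W_{\pm}=W_{\pm}\,S_0$ on $E_{S_0,ac}\cH$, follow by passing these limits through the corresponding relations for each local $W_{\pm}(S,S_0;\wti\Lambda_{0,\ell})$.

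The hard part will be the completeness claim $\ran(W_{\pm})=E_{S,ac}\cH$. Local completeness already delivers $\bigoplus_\ell P_{S,ac}(\Lambda_{0,\ell})\cH\subseteq\ran(W_{\pm})$, so what must be verified is that the a.c.\ spectrum of $S$ cannot escape $\bigcup_\ell\Lambda_{0,\ell}$, i.e.\ $E_{S,ac}\big(\bbR\setminus\bigcup_\ell\Lambda_{0,\ell}\big)=0$. Here I would run Theorem~\ref{t2.5} in the symmetric direction: use the compactness condition $(iii)$ together with the resolvent identities \eqref{3.26}, \eqref{3.26a} to transfer strong $S_0$-smoothness of $V_1,V_2$ into a corresponding smoothness with respect to $S$ on each $\Lambda_{0,\ell}\setminus\cN$, which, combined with the fact that $\cN$ is countable and closed in each compact subinterval, forces any a.c.\ mass of $S$ in $\bbR\setminus\bigcup_\ell\Lambda_{0,\ell}$ to vanish. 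Finally, $\ker(W_{\pm})=\{0\}$ (read on $E_{S_0,ac}\cH$) is automatic from isometry on each block of the orthogonal decomposition above.
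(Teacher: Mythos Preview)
The paper does not supply a proof of this theorem; it is quoted from \cite[Sects.~4.4--4.7]{Ya10} (see the remark preceding Theorem~\ref{t2.4}). Your existence argument---local wave operators from Theorem~\ref{t2.5}, disjointification, and gluing via an $\varepsilon/3$ argument---is the standard route and is correct.

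The completeness argument, however, contains a genuine gap. You correctly isolate the missing ingredient $E_{S,ac}\big(\bbR\setminus\bigcup_{\ell}\Lambda_{0,\ell}\big)=0$, but then claim that transferring smoothness to $S$ on each $\Lambda_{0,\ell}\setminus\cN$ ``forces any a.c.\ mass of $S$ in $\bbR\setminus\bigcup_\ell\Lambda_{0,\ell}$ to vanish.'' This is a non sequitur: local $S$-smoothness of the $V_j$ on $\Lambda_{0,\ell}\setminus\cN$ controls the spectrum of $S$ \emph{inside} $\Lambda_{0,\ell}$ (and is precisely what yields the local completeness $\ran(W_\pm(S,S_0;\Lambda_{0,\ell}))=P_{S,ac}(\Lambda_{0,\ell})\cH$ you already invoked), but says nothing about $\sigma_{ac}(S)$ outside $\bigcup_\ell\Lambda_{0,\ell}$. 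The correct argument splits the complement into two pieces. On $\sigma(S_0)\setminus\bigcup_\ell\Lambda_{0,\ell}$, which has Lebesgue measure zero by hypothesis~$(i)$, no absolutely continuous spectral measure can live. On $\bbR\cap\rho(S_0)$, the Birman--Schwinger family $z\mapsto\ol{V_2(S_0-zI_{\cH})^{-1}V_1^*}$ is analytic, and hypothesis~$(iii)$ (or its variant~$(iii')$) together with the assumed invertibility of $I_{\cK}+\ol{V_2(S_0-z_0I_{\cH})^{-1}V_1^*}$ at some $z_0\in\rho(S_0)$ allow analytic Fredholm theory to conclude via \eqref{3.21} that $S$ has at most discrete spectrum on each component of $\bbR\cap\rho(S_0)$---hence no a.c.\ spectrum there. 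Combining the two pieces closes the gap.
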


\smallskip

For additional references in the context of smooth operator theory, limiting absorption principles,   
and completeness of wave operators, see, for instance, \cite{Ag75}, \cite{ABG96}, 
\cite{BM97}, \cite[Ch.~17]{BW83}, \cite{BD87}, \cite{Ge08}, 
\cite{GJ07}, \cite{Ka66}, \cite{Ku78}, \cite{MP96}, 
\cite[Sect.~XIII.7]{RS78}, \cite{Ri06}, \cite[Ch.~4]{Ya92}, \cite[Chs.~0--2]{Ya10}.

\section{A Limiting Absorption Principle for Interacting, Massless Dirac Operators} 
\lb{s3}

In this section, following \cite[Sects.~1.11, 2.1, 2.2]{Ya10}, we apply the abstract framework of strongly smooth operators of the preceding section to the concrete case of massless Dirac operators with electromagnetic potentials.  

To rigorously define the free massless $n$-dimensional Dirac operators to be studied in the sequel, we now introduce the following set of basic hypotheses assumed for the remainder of this manuscript (these hypotheses will have to be strengthened later on).

\begin{hypothesis} \lb{h3.1}  Let $ n \in \bbN$, $n\geq 2$. \\[1mm] 
$(i)$ Set $N=2^{\lfloor(n+1)/2\rfloor}$ and let $\alpha_j$, $1\leq j\leq n$, 
$\alpha_{n+1} := \beta$, denote $n+1$ anti-commuting Hermitian $N\times N$ matrices with squares equal to $I_N$, that is, 
\begin{equation} \lb{2.1}
\alpha_j^*=\alpha_j,\quad \alpha_j\alpha_k + \alpha_k\alpha_j = 2\delta_{j,k}I_N, 
\quad 1\leq j,k\leq n+1.
\end{equation}
$(ii)$  Introduce 
in $\LN$ the free massless Dirac operator
\begin{equation}
H_0 = \alpha  \cdot (-i \nabla) = \sum_{j=1}^n \alpha_j (-i \partial_j),\quad \dom(H_0) = \WoneN,  \lb{2.2}
\end{equation}
where $\partial_j = \partial / \partial x_j$, $1 \leq j \leq n$. \\[1mm] 
$(iii)$ Next, consider the self-adjoint matrix-valued potential 
$V = \{V_{\ell,m}\}_{1 \leq \ell,m \leq N}$ satisfying for some fixed $\rho \in (1,\infty)$, $C \in (0,\infty)$, 
\begin{equation}
V \in [L^{\infty} (\bbR^n)]^{N \times N}, \quad 
|V_{\ell,m}(x)| \leq C \langle x \rangle^{- \rho} \, \text{ for a.e.~$x \in \bbR^n$, $1 \leq \ell,m \leq N$.}    \lb{4.1}
\end{equation}
Under these assumptions on $V$, the massless Dirac operator $H$ in $\LN$ is defined via
\begin{equation}
H = H_0 + V, \quad \dom(H) = \dom(H_0) = \WoneN.  \lb{4.2}
\end{equation}
\end{hypothesis}

Then $H_0$ and $H$ are self-adjoint in $\LN$, 
with essential spectrum covering the 
entire real line,
\begin{equation}
\sigma_{ess} (H) = \sigma_{ess} (H_0) = \sigma (H_0) = \bbR,
\end{equation}
a consequence of relative compactness of $V$ with respect to $H_0$. In addition,
\begin{equation}
\sigma_{ac}(H_0) = \bbR, \quad \sigma_p(H_0) = \sigma_{sc}(H_0) = \emptyset.  
\end{equation}

On occasion (cf.\ Section \ref{s7}) we will drop the self-adjointness hypothesis on the $N \times N$ matrix $V$ and 
still define a closed operator $H$ in $\LN$ as in \eqref{4.2}.  

For completeness we also recall that the massive free Dirac operator in $\LN$ associated 
with the mass parameter $m > 0$ then would be of the form
\begin{equation}
H_0(m) = H_0 + m \beta, \quad \dom(H_0(m)) = \WoneN, \; m > 0, \; \beta = \alpha_{n+1}, 
\end{equation}
but we will primarily study the massless case $m=0$ in this manuscript.

In the special one-dimensional case $n=1$, one can choose for $\alpha_1$ either a real constant or one of the three Pauli matrices. Similarly, in the massive case, $\beta$ would typically be a second Pauli matrix (different from $\alpha_1$). For simplicity we confine ourselves to $n \in \bbN$, 
$n \geq 2$, in the following. 

Let $S(\bbR^n)$ denote the Schwartz space of rapidly decreasing functions on $\bbR^n$ and 
$S'(\bbR^n)$ the space of tempered distributions. In addition, for any $n \in \bbN$, we also introduce the scale of weighted $L^2$-spaces, 
\begin{equation}
L^2_s(\bbR^n) = \{f \in S'(\bbR^n) \, | \, \|f\|_{L^2_s(\bbR^n)} 
:= \|\langle x \rangle^s  f\|_{L^2(\bbR^n)} < \infty\}, \quad s \in \bbR.
\end{equation}
Defining $Q_j$ as the operator of multiplication by $x_j$, $1 \leq j \leq n$, in $L^2(\bbR^n)$, and 
introducing $Q = (Q_1,\dots,Q_n)$, one notes that 
\begin{equation} 
\dom(\langle Q \rangle^s) = L^2_s(\bbR^n), \quad s \in \bbR.
\end{equation} 

Employing the relations \eqref{2.1}, one observes that 
\begin{equation} 
H_0(m)^2 = I_N \big[-\Delta + m^2 I_{[L^2(\bbR^n)]^N}\big], \quad \dom\big( H_0(m)^2\big) = \WtwoN, \; m \geq 0.   \lb{2.7} 
\end{equation}

\begin{remark} \lb{r3.2}
Since we permit a (sufficiently decaying) matrix-valued potential $V$ in $H$, this includes, in particular, the case of electromagnetic interactions introduced via minimal coupling, that is, $V$ 
describes also special cases of the form,
\begin{equation}
H(q,A) := \alpha \cdot (-i \nabla - A) + q I_N = H_0 + [q I_N - \alpha \cdot A], \quad 
\dom(H(q,A)) = \WoneN,
\end{equation}
where $(q,A)$ represent the electromagnetic potentials on $\bbR^n$, with 
$q: \bbR^n \to \bbR$, $q \in L^{\infty}(\bbR^n)$, $A = (A_1,\dots,A_n)$, $A_j: \bbR^n \to \bbR$, 
$A_j \in L^{\infty}(\bbR^n)$, $ 1 \leq j \leq n$, and for some fixed $\rho > 1$, $C \in (0,\infty)$, 
\begin{equation}
|q(x)| + |A_j(x)| \leq C \langle x \rangle^{- \rho}, \quad x \in \bbR^n, \; 1 \leq j \leq n.  
 \lb{4.14}
\end{equation}
${}$ \hfill $\diamond$
\end{remark}

To analyze the spectral properties of $H$ we first turn to the spectral representation of 
$H_0 = \alpha \cdot (-i \nabla)$ (see also Thaller \cite[Sect.~5.6]{Th92} and 
Yafaev \cite[Sect.~2.4]{Ya10}). 
Introducing the unitary Fourier transform in $L^2(\bbR^n)$ via 
\begin{equation}
\cF : \begin{cases} 
L^2(\bbR^n; d^nx) \to L^2(\bbR^n; d^np) \\ 
f \mapsto (\cF f)(p) := f^{\wedge}(p) = \slim_{R \to \infty} (2 \pi)^{- n/2}
\int_{|x| \leq R} d^n x \, e^{- i p \cdot x} f(x),      \lb{3.14} 
\end{cases} 
\end{equation}
with $\slim$ abbreviating the limit in the topology of $L^2(\bbR^n)$, one obtains
\begin{equation} \lb{3.16y}
H_0 = \cF^{-1} |p| (\alpha \cdot \omega) \cF, 
\end{equation}
employing polar coordinates in Fourier space, $p = |p| \, \omega$, $\omega \in S^{n-1}$. Since 
by \eqref{2.1} (see also \eqref{2.7}) 
\begin{equation}
(\alpha \cdot \omega)^2 = I_N, \quad \omega \in S^{n-1}, 
\end{equation}
the self-adjoint matrix $\alpha \cdot \omega$ has eigenvalues 
$\pm 1$ of multiplicity $N/2$ with associated spectral projection matrices of rank $N/2$  
denoted by $\Pi_{\pm}(\omega)$, 
\begin{equation}
\alpha \cdot \omega = \Pi_+(\omega) - \Pi_-(\omega), \quad \omega \in S^{n-1}.  
\end{equation}

Introducing
\begin{equation} \lb{3.19y}
T(\omega) = 2^{-1/2}(\alpha_{n+1} + \alpha \cdot \omega),\quad \omega\in S^{n-1}, 
\end{equation}
one infers that $T(\omega)\in \bbC^{N\times N}$ is Hermitian symmetric for each $\omega\in S^{n-1}$.  In addition, the anti-commutation property in \eqref{2.1} implies
\begin{align}
T(\omega)T(\omega)^* &= 2^{-1}\big[\alpha_{n+1}^2 + (\alpha\cdot \omega)\alpha_{n+1} + \alpha_{n+1}(\alpha\cdot\omega) + (\alpha\cdot\omega)^2\big]\\
&= I_N,\quad \omega\in S^{n-1},\no
\end{align}
so that $T(\omega)$ is actually unitary for each $\omega\in S^{n-1}$.  The reason for introducing the unitary matrix $T(\omega)$, $\omega\in S^{n-1}$, is that it can be used to diagonalize the matrix $\alpha\cdot p$.  Indeed, writing $p\in\bbR^n$ in polar coordinates as $p=|p|\omega$ with $\omega\in S^{n-1}$, one obtains
\begin{align}
T(\omega)|p|\alpha_{n+1}T(\omega)^*&= 2^{-1}|p|\big[\alpha_{n+1} + 2\alpha\cdot\omega - (\alpha\cdot\omega)^2\alpha_{n+1} \big]\no\\
&=|p|(\alpha\cdot\omega)\no\\
&=\alpha\cdot p,\lb{3.21y}
\end{align}
so that $\alpha\cdot p$ is unitarily equivalent to $|p|\alpha_{n+1}$ in $\bbC^N$.  Of course, 
$\alpha_{n+1}$ is Hermitian symmetric, so it may be diagonalized by conjugating with a fixed 
(i.e., $p$-independent) unitary matrix $U\in \bbC^{N\times N}$.  We may assume without loss 
that the columns of $U$ are arranged so that
\begin{equation} \lb{3.22y}
\alpha_{n+1} =
U\begin{pmatrix}
-I_{N/2} & 0\\
0 & I_{N/2}
\end{pmatrix}U^*,
\end{equation}
where $0$ denotes the zero matrix in $\bbC^{(N/2)\times(N/2)}$. The facts \eqref{3.21y} and \eqref{3.22y} combine to yield
\begin{equation} \lb{3.23y}
\alpha\cdot p = \wti T(\omega) |p| \begin{pmatrix}
-I_{N/2} & 0\\
0 & I_{N/2}
\end{pmatrix} \wti T(\omega)^*,\quad p=|p|\omega\in \bbR^n,
\end{equation}
where
\begin{equation}
\wti T(\omega) := T(\omega)U,\quad \omega\in S^{n-1},
\end{equation}
and then \eqref{3.16y} implies
\begin{equation} \lb{3.25y}
H_0 = \cF^{-1}\wti T(\omega) |p| \begin{pmatrix}
-I_{N/2} & 0\\
0 & I_{N/2}
\end{pmatrix} \wti T(\omega)^*\cF.
\end{equation}
A simple manipulation in \eqref{3.25y} yields
\begin{equation} \lb{3.26y}
\wti T(\omega)^*\cF H_0 = |p| \begin{pmatrix}
-I_{N/2} & 0\\
0 & I_{N/2}
\end{pmatrix} \wti T(\omega)^*\cF.
\end{equation}
To ``diagonalize'' $H_0$, we introduce the notation
\begin{equation}
P_- :=\begin{pmatrix}
I_{N/2} & 0 \\
0 & 0
\end{pmatrix},
\quad
P_+:=\begin{pmatrix}
0 & 0 \\
0 & I_{N/2}
\end{pmatrix},
\end{equation}
and define the transformation
\begin{equation}
\cF_{H_0}:\LN\to L^2(\bbR;d\lambda;[L^2(S^{n-1})]^N)
\end{equation}
according to
\begin{align}
\begin{split} 
 (\cF_{H_0}f)(\lambda,\omega)=
\begin{cases}
|\lambda|^{(n-1)/2} P_- \wti T(\omega)^* f^{\wedge}(|\lambda|\omega), & \lambda<0,\\
|\lambda|^{(n-1)/2} P_+ \wti T(\omega)^* f^{\wedge}(|\lambda|\omega), & \lambda\geq 0,
\end{cases}&     \\
 \omega\in S^{n-1},\; f\in \LN.&
\end{split} 
\end{align}
The transformation $\cF_{H_0}$ is unitary.  In fact,
\begin{align}
\|\cF_{H_0}f\|_{L^2(\bbR;d\lambda;[L^2(S^{n-1})]^N)}^2&=\int_0^{\infty}d\lambda\, |\lambda|^{n-1}\int_{S^{n-1}}d^{n-1}\omega\,\Big\{ \big\|P_-\wti T(\omega)^* f^{\wedge}(|\lambda|\omega)\big\|_{\bbC^N}^2\no\\
&\qquad  + \big\|P_+\wti T(\omega)^* f^{\wedge}(|\lambda|\omega)\big\|_{\bbC^N}^2\Big\},\quad f\in \LN.\lb{3.30y}
\end{align}
Since $(P_-\xi, P_+\eta)_{\bbC^N}=0$ for all $\xi,\eta\in \bbC^N$, an application of the Pythagorean theorem in \eqref{3.30y} yields
\begin{align}
\|\cF_{H_0}f\|_{L^2(\bbR;d\lambda;[L^2(S^{n-1})]^N)}^2& = \int_0^{\infty}d\lambda\, |\lambda|^{n-1} \int_{S^{n-1}}d^{n-1}\omega\, \big\| \wti T(\omega)^* f^{\wedge}(|\lambda|\omega)\big\|_{\bbC^N}^2\no\\
& = \big\| f^{\wedge}\big\|_{[L^2(\bbR^n)]^N}^2\no\\
& = \big\|f\big\|_{[L^2(\bbR^n)]^N}^2,\quad f\in \LN.
\end{align}
To check that $\cF_{H_0}$ correctly diagonalizes $H_0$ in the sense that
\begin{equation} \lb{3.32y}
(\cF_{H_0}H_0f)(\lambda,\,\cdot\,) = \lambda (\cF_{H_0}f)(\lambda,\,\cdot\,)\, \text{ for a.e.~$\lambda\in \bbR$},\; f\in \WoneN,
\end{equation}
one considers separately the cases $\lambda<0$ and $\lambda\geq 0$.  Indeed, for a fixed $f\in \WoneN$, one applies \eqref{3.26y} to obtain
\begin{align}
(\cF_{H_0}H_0f)(\lambda,\omega)&= |\lambda|^{(n-1)/2}P_- \wti T(\omega)^* (H_0f)^{\wedge}(|\lambda|\omega)\no\\
&= -|\lambda|P_-\wti T(\omega)^* f^{\wedge}(|\lambda|\omega)\no\\
&=\lambda (\cF_{H_0}f)(\lambda,\omega),\quad \lambda<0,\; \omega\in S^{n-1},\lb{3.33y}
\end{align}
and, similarly,
\begin{align}
(\cF_{H_0}H_0f)(\lambda,\omega)&= |\lambda|^{(n-1)/2}P_+ \wti T(\omega)^* (H_0f)^{\wedge}(|\lambda|\omega)\no\\
&= |\lambda|P_+\wti T(\omega)^* f^{\wedge}(|\lambda|\omega)\no\\
&=\lambda (\cF_{H_0}f)(\lambda,\omega),\quad \lambda\geq 0,\; \omega\in S^{n-1}.\lb{3.34y}
\end{align}
Equations \eqref{3.33y} and \eqref{3.34y} combine to yield \eqref{3.32y}.  Of course, \eqref{3.32y} generalizes to
\begin{equation} \lb{335y}
(\cF_{H_0}\psi(H_0)f)(\lambda,\,\cdot\,) = \psi(\lambda)(\cF_{H_0}f)(\lambda,\,\cdot\,)\, 
\text{ for a.e.~$\lambda\in \bbR$},\; f\in \dom(\psi(H_0)),
\end{equation}
for any measurable function $\psi$ on $\bbR$.

Consequently, \cite[Proposition~2.4.1]{Ya10} applies to $H_0$, resulting in the following facts:

\begin{proposition} \lb{p3.3}  
Suppose Hypothesis \ref{h3.1}\,$(i)$,\,$(ii)$ and let $\gamma > 1/2$. Then $\langle Q \rangle^{-\gamma}$ is strongly $H_0$-smooth on compact subintervals of 
$\bbR \backslash \{0\}$ with exponent $\tau > 0$ given by
\begin{equation}
\tau = \begin{cases}
\gamma - (1/2), & \gamma \in ((1/2), (3/2)), \\
1 - \varepsilon, & \gamma = 3/2, \; \varepsilon \in (0,1), \\
1, & \gamma \geq 3/2. 
\end{cases}
\end{equation}
\end{proposition}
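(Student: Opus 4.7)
\medskip

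\noindent\textbf{Proof proposal.}
The plan is to exploit the explicit spectral representation of $H_0$ constructed in \eqref{3.14}--\eqref{3.34y}: since $\cF_{H_0}$ diagonalizes $H_0$ as multiplication by $\lambda$, Definition \ref{d2.3} reduces strong $H_0$-smoothness of $\langle Q \rangle^{-\gamma}$ on a compact $\Lambda_0 \subset \bbR \backslash \{0\}$ to a H\"older continuity estimate in $\lambda \in \Lambda_0$ for the map
\[
\lambda \mapsto \big(\cF_{H_0} \langle Q \rangle^{-\gamma} f\big)(\lambda, \,\cdot\,) \in [L^2(S^{n-1})]^N,
\]
with norm bounded by a constant times $\|f\|_{[L^2(\bbR^n)]^N}$. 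Because $\Lambda_0$ is bounded away from $0$, the prefactor $|\lambda|^{(n-1)/2}$ is smooth and nonzero on $\Lambda_0$, and $\wti T(\omega)^*$ is smooth in $\omega \in S^{n-1}$. Consequently, the task reduces to proving H\"older regularity in $r = |\lambda|$ of the sphere-trace map
\[
r \mapsto h_r(\omega) := h^{\wedge}(r\omega), \qquad h := \langle Q \rangle^{-\gamma} f \in [L^2_{\gamma}(\bbR^n)]^N,
\]
regarded as an $[L^2(S^{n-1})]^N$-valued function on compact subintervals of $(0,\infty)$.

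Next I would invoke the Agmon--Kuroda restriction theorem, which is the analytic core of \cite[Proposition~2.4.1]{Ya10}: for every $s > 1/2$ and every compact $[r_1,r_2] \subset (0,\infty)$ there exists $C = C(s, r_1, r_2)>0$ with
\[
\sup_{r \in [r_1,r_2]} \|h^{\wedge}(r\,\cdot\,)\|_{L^2(S^{n-1})} \le C \|h\|_{L^2_s(\bbR^n)}.
\]
Combining this bound with the elementary interpolation inequality $|e^{-ir\omega\cdot x} - e^{-ir'\omega\cdot x}| \le 2^{1-\tau}|(r-r')\omega\cdot x|^{\tau}$, valid for $\tau \in [0,1]$, applied inside the Fourier integral for $h_r - h_{r'}$, and reducing the resulting weighted-$L^2$ bound to a one-dimensional radial convergence condition via polar coordinates on $\bbR^n$ (so that the threshold becomes $2\gamma - 2\tau > 1$, not $2\gamma - 2\tau > n$), I would obtain for $\gamma \in (1/2, 3/2)$ the H\"older estimate
\[
\|h_r - h_{r'}\|_{L^2(S^{n-1})} \le C_{\tau,\gamma, r_1, r_2} |r-r'|^{\tau} \|h\|_{L^2_{\gamma}(\bbR^n)}, \qquad \tau = \gamma - (1/2).
\]
The endpoint $\gamma = 3/2$ is handled identically with an arbitrarily small loss, yielding $\tau = 1-\varepsilon$ for any $\varepsilon \in (0,1)$.

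For $\gamma \ge 3/2$ I would upgrade to Lipschitz continuity ($\tau = 1$) by differentiating in $r$: since $\partial_r h^{\wedge}(r\omega) = -i\,\omega \cdot (Qh)^{\wedge}(r\omega)$ and $Qh \in [L^2_{\gamma-1}(\bbR^n)]^N$ with $\gamma - 1 > 1/2$, the trace estimate applied to each component of $Qh$ furnishes a uniform $L^2(S^{n-1})$-bound on $\partial_r h_r$ on $[r_1,r_2]$. The main obstacle is the sharp restriction theorem itself, namely reaching the threshold $s > 1/2$ rather than the cruder $s > n/2$ one obtains from a na\"ive pointwise bound; this must be handled via the angular-integration argument described above (an $L^2$ bound on the sphere is cheaper than an $L^\infty$ bound on the fibre). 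Once the sharp trace inequality is available, the matrix-valued factors $|\lambda|^{(n-1)/2} P_{\pm} \wti T(\omega)^*$ cause no additional difficulty on compact subintervals of $\bbR \backslash \{0\}$, and the three cases of the proposition follow.
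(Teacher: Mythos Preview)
Your proposal is correct and coincides with the paper's approach: the paper does not give an independent proof here but, having set up the spectral representation $\cF_{H_0}$ in \eqref{3.14}--\eqref{335y}, simply records that \cite[Proposition~2.4.1]{Ya10} applies to $H_0$. Your sketch is precisely a summary of that proposition's proof --- reduction via $\cF_{H_0}$ to the sphere-trace map $r \mapsto h^{\wedge}(r\,\cdot\,)$, the Agmon--Kuroda restriction bound for $s>1/2$, differentiation in $r$ for $\gamma>3/2$, and the one-dimensional threshold for the intermediate range.

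One minor point worth flagging: the elementary bound $|e^{-ir\omega\cdot x}-e^{-ir'\omega\cdot x}|\le 2^{1-\tau}|(r-r')\,\omega\cdot x|^{\tau}$, taken literally and fed back into the restriction estimate, produces the \emph{strict} condition $2\gamma-2\tau>1$, hence only $\tau<\gamma-1/2$. To reach the endpoint $\tau=\gamma-1/2$ one should instead use that $h^{\wedge}\in H^{\gamma}(\bbR^n)$ and, after passing to polar coordinates on a compact radial interval away from the origin, invoke the sharp one-dimensional Sobolev embedding $H^{\gamma}_{\loc}\hookrightarrow C^{\gamma-1/2}$ for the $L^2(S^{n-1})$-valued function $r\mapsto h^{\wedge}(r\,\cdot\,)$ (this is the route taken in \cite{Ya10}). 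The distinction is harmless for the applications in Section~\ref{s3}, where only $\tau>1/2$ for $\gamma>1$ is used.
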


We note that for $\tau > 1/2$, $z \in \bbC \backslash \bbR$, 
\begin{equation}
\langle \, \cdot \, \rangle^{-\tau} (H_0 -zI_{[L^2(\bbR^n)]^N})^{-1} \langle \, \cdot \, \rangle^{-\tau} 
\in \cB_{\infty}\big([L^2(\bbR^n)]^N\big), 
\end{equation}
a special case of the well-known general fact (cf., e.g., \cite[p.~41]{Ya92}), 
\begin{align} 
\begin{split}
& f(Q) g(-i \nabla) \in \cB_{\infty}\big(L^2(\bbR^n)\big) \, \text{ for any 
$f, g \in L^{\infty}(\bbR^n)$} \\
& \quad \text{with $\lim_{|x|\to\infty} f(x) = 0 = \lim_{|p|\to\infty} g(p)$.}
\end{split}
\end{align}  
To make the connection with the results collected in Section \ref{s2}, we identify $S_0$ and $H_0$ and $S$ with 
$H = H_0 + V$, and we factorize $V$ according to 
\begin{equation} \lb{3.25aa}
V = V_1^* V_2, \quad V_1 = V_1^* = \langle \, \cdot \, \rangle^{-\tau} I_N, \quad 
V_2 = \langle \, \cdot \, \rangle^{\tau} V, 
\end{equation}
with $V$ satisfying the conditions in \eqref{4.1} for some fixed $\rho > 1$, and hence, with 
$\tau \in (1/2, \rho)$, 
\begin{equation} \lb{formula2}
\|V_2(\, \cdot \,)\|_{\cB(\bbC^N)} \leq C \langle \, \cdot \, \rangle^{-(\rho - \tau)}.
\end{equation} 
In addition, 
\begin{align} 
\begin{split} 
& \langle \, \cdot \, \rangle^{-\tau} \big(H -zI_{[L^2(\bbR^n)]^N} \big)^{-1} \langle \, \cdot \, \rangle^{-\tau}  
= \langle \, \cdot \, \rangle^{-\tau} \big(H_0 -zI_{[L^2(\bbR^n)]^N} \big)^{-1} \langle \, \cdot \, \rangle^{-\tau}   \\
& \quad \times \big[I_{[L^2(\bbR^n)]^N} + \langle \, \cdot \, \rangle^{\tau} V \, 
\big(H_0 -zI_{[L^2(\bbR^n)]^N} \big)^{-1} \langle \, \cdot \, \rangle^{-\tau} \big]^{-1},  
\quad z \in \bbC \backslash \bbR, 
\end{split} 
\end{align}
to mention just a few analogs of the abstract facts collected in \eqref{3.17}--\eqref{3.26}, which all 
apply in this concrete setting of massless Dirac operators. 

Thus, temporarily assuming $\rho > 3/2$ in \eqref{4.1}, Theorem \ref{t2.5} applies to 
$S = H_0 +V$ with 
\begin{equation}
\tau_1 = \tau - (1/2) > 1/2, \, \text{ necessitating } \, \tau > 1, 
\lb{4.25} 
\end{equation} 
and 
\begin{equation}
\tau_2 = (\rho - \tau) - (1/2) > 0, \, \text{ requiring } \, \rho > 3/2. 
\lb{4.26} 
\end{equation}

Actually, as shown in \cite[p.~98--99]{Ya10} in the context of the Laplacian $h_0$ in $L^2(\bbR^n)$, 
\begin{equation} 
h_0 = - \Delta, \quad \dom(h_0) = H^2(\bbR^n) 
\end{equation} 
it suffices to assume 
just $\rho > 1$ in \eqref{4.1} (even though this cannot be inferred directly from abstract results, 
the latter require $\rho > 3/2$ as outlined in \eqref{4.25}, \eqref{4.26}) and 
$\tau \in (1/2, \rho - 1/2)$. A closer examination of \cite[p.~98--99]{Ya10} (see also 
\cite[p.~118]{Ya10}) reveals that 
there is nothing special about $h_0$ and precisely the same results apply to 
$H_0 = \alpha \cdot (-i \nabla)$ as we discuss next. 

Applying Theorems \ref{t2.4}--\ref{t2.6}, to the pair $(H, H_0)$ and to a union of 
compact intervals exhausting $(-\infty, 0) \cup (0,\infty)$, combined with the approach in 
\cite[p.~98--99, 118]{Ya10},  thus yield the following result: 

\begin{theorem} \lb{t3.4}
Assume Hypothesis \ref{h3.1} and consider $H$ as defined in \eqref{4.2}. Then 
\begin{align}
& \sigma_{ess}(H) = \sigma_{ac}(H) = \bbR,    \lb{3.44} \\
& \sigma_{sc}(H) = \emptyset,      \lb{3.45} \\
& \sigma_s(H) \cap (\bbR \backslash \{0\}) = \sigma_p(H) \cap (\bbR \backslash \{0\}),    \lb{3.46}
\end{align}
with the only possible accumulation points of $\sigma_p(H)$ 
being $0$ and $\pm \infty$.  If
\begin{equation}
\cN_0 := \sigma_p(H) \cap (\bbR \backslash \{0\}) = \sigma_d(H) \cap (\bbR \backslash \{0\}),
\end{equation}
then the operators 
\begin{align} 
\begin{split} 
& \ol{V_2 (H_0 - (\lambda \pm i 0)I_{[L^2(\bbR^n)]^N})^{-1} V_1^*}, \; 
\ol{V_1 (H_0 - (\lambda \pm i 0)I_{[L^2(\bbR^n)]^N})^{-1} V_1^*} \lb{formula3}\\ 
& \quad \text{$\big($resp.,  
$\ol{V_1 (H - (\lambda \pm i 0)I_{[L^2(\bbR^n)]^N})^{-1} V_1^*}$$\big)$} 
\end{split} 
\end{align} 
are H\"older continuous in $\cB([L^2(\bbR^n)]^N)$-norm with respect to $\lambda$ varying in compact subintervals of 
$\bbR \backslash \{0\}$ $($resp., $\bbR \backslash (\{0\} \cup \cN_0)$$)$. In particular, with $\cN_{\pm}$ defined in analogy to 
\eqref{3.27} by 
\begin{align}
\begin{split} 
\cN_{\pm} = \big\{\lambda \in \bbR \backslash \{0\} \, \big| \, & \, \text{there exists 
$0 \neq f \in \cK$ s.t.} \\
& - f = \ol{V_2 (H_0 - (\lambda \pm i 0)I_{[L^2(\bbR^n)]^N})^{-1} V_1^*} f\big\},    \lb{4.33} 
\end{split} 
\end{align} 
one obtains 
\begin{equation}
\cN_+ = \cN_- := \cN_0,
\end{equation}
and the $($geometric\,$)$ multiplicities of the eigenvalue $\lambda_0 \in \bbR \backslash \{0\}$ 
of $H$ and the eigenvalue $- 1$ of 
$\ol{V_2 (H_0 - (\lambda_0 \pm i 0)I_{[L^2(\bbR^n)]^N})^{-1} V_1^*}$ coincide and are finite. 
Finally, the global wave operators
\begin{equation}
W_{\pm} (H,H_0) = \slim_{t \to \pm \infty} e^{i t H} e^{- i t H_0} ,    \lb{3.50} 
\end{equation}
exist and are complete, that is, 
\begin{equation}
\ker(W_{\pm} (H,H_0)) = \{0\}, \quad 
\ran(W_{\pm} (H,H_0)) = E_{H, ac} \cH,    \lb{3.51a}
\end{equation}
with $ E_{H, ac}$ the projection onto the absolutely continuous subspace of $H$.   
\end{theorem}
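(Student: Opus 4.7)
The plan is to apply the abstract strong-smoothness framework of Theorems \ref{t2.4}--\ref{t2.6} with $S_0 = H_0$ and $S = H$, using the factorization $V = V_1^* V_2$ of \eqref{3.25aa} with $V_1 = \langle \, \cdot \, \rangle^{-\tau} I_N$ and $V_2 = \langle \, \cdot \, \rangle^{\tau} V$, choosing $\tau \in (1/2, \rho - 1/2)$ (which is possible since $\rho > 1$). By Proposition \ref{p3.3}, the multiplication operator $\langle Q \rangle^{-\gamma}$ is strongly $H_0$-smooth on every compact subinterval of $\bbR \backslash \{0\}$ for $\gamma > 1/2$ with a positive H\"older exponent. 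Applied with $\gamma = \tau$ for $V_1$ and, using \eqref{formula2}, with $\gamma = \rho - \tau > 1/2$ for $V_2$, this yields strong $H_0$-smoothness of both factors on every compact subinterval of $\bbR \backslash \{0\}$ with exponents $\tau_1, \tau_2 > 0$. The compactness hypothesis $(iii')$ of Section \ref{s2} follows from the standard observation that $f(Q) g(-i\nabla)$ is compact on $L^2(\bbR^n)$ whenever $f, g$ vanish at infinity, applied to $\ol{V_2 (H_0 - zI)^{-1} V_1^*}$ via the resolvent identity.

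With these verifications in place, Theorem \ref{t2.4} (with $(iii')$ in place of $(iii)$) applied on any compact $\Lambda_0 = [\lambda_1, \lambda_2] \subset \bbR \backslash \{0\}$ produces a closed exceptional set $\cN \subset \Lambda_0$ of Lebesgue measure zero, pure absolute continuity of $H$ on $\Lambda_0 \backslash \cN$, and the Birman--Schwinger-type description \eqref{4.33} of $\cN_{\pm}$. To pass to the stronger H\"older-continuous limits of \eqref{formula3}, the identification $\cN_\pm = \cN_0 = \sigma_p(H) \cap (\bbR \backslash \{0\})$, and the coincidence of geometric multiplicities, one invokes Theorem \ref{t2.5}, which abstractly requires $\tau_1 > 1/2$, hence $\tau > 1$, hence $\rho > 3/2$. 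The main technical obstacle is to remove this gap and reach the stated assumption $\rho > 1$; this is done by the bootstrap argument of Yafaev developed for the Laplacian in \cite[pp.~98--99, 118]{Ya10}. The argument uses only the second resolvent identity together with the local smoothness recorded in Proposition \ref{p3.3}, and transfers verbatim to $H_0 = \alpha \cdot (-i\nabla)$; this is the sole step that is not a direct application of the abstract theorems.

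Exhausting $\bbR \backslash \{0\}$ by a countable increasing family of compact subintervals and combining the local conclusions produces the H\"older continuity of the three boundary-value families in \eqref{formula3} on compact subsets of $\bbR \backslash \{0\}$ (respectively $\bbR \backslash (\{0\} \cup \cN_0)$). Analytic Fredholm theory applied to the operator-valued function $I_{[L^2(\bbR^n)]^N} + \ol{V_2 (H_0 - z I)^{-1} V_1^*}$ — whose iterated powers lie in $\cB_\infty$ by $(iii')$ — shows that on each such interval the exceptional set $\cN$ is a finite set of eigenvalues of $H$ of finite geometric multiplicity. This yields \eqref{3.46} and the statement that the only possible accumulation points of $\sigma_p(H)$ are $0$ and $\pm \infty$, and in particular \eqref{3.45}.

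Finally, Theorem \ref{t2.6}, applied with the same exhaustion and the same verifications of $(i)$--$(iii')$, yields the existence and completeness \eqref{3.50}, \eqref{3.51a} of the global wave operators $W_\pm(H, H_0)$. Completeness, combined with $\sigma(H_0) = \sigma_{ac}(H_0) = \bbR$, immediately gives $\sigma_{ac}(H) = \bbR$, while $\sigma_{ess}(H) = \bbR$ already follows from Weyl's theorem and relative compactness of $V$ with respect to $H_0$; this establishes \eqref{3.44} and completes the proof. The decisive step is thus the Yafaev-type bootstrap that accommodates the borderline decay $\rho > 1$; everything else is careful bookkeeping in passing from the local results on $[\lambda_1,\lambda_2]$ to the global statements on $\bbR \backslash \{0\}$.
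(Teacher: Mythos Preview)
Your proposal is correct and follows essentially the same route as the paper: apply the abstract strong-smoothness results (Theorems \ref{t2.4}--\ref{t2.6}) with the factorization \eqref{3.25aa} and Proposition \ref{p3.3}, observe that this directly yields the theorem only for $\rho>3/2$, and then close the gap to $\rho>1$ via the Yafaev bootstrap from \cite[pp.~98--99, 118]{Ya10}. The paper spells out this bootstrap explicitly (the Calder\'on-type interpolation estimate for $\|(\cF_{H_0}f)(\lambda,\cdot)\|$, the weighted mapping property of $(H_0-(\lambda\pm i0)I)^{-1}$ on functions with $(\cF_{H_0}h)(\lambda,\cdot)=0$, and the iterated regularity gain $g\in L^2_\nu$ with $\nu<\tau+\ell(\rho-1)$ that eventually places the candidate resonance function in $\dom(H)$), whereas you defer to the reference; otherwise the arguments coincide.
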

\begin{proof}

As discussed above, Theorems \ref{t2.5} and \ref{t2.6} apply to $S_0=H_0$ and $S=H$ and a union of closed intervals $\Lambda_0$ exhausting $(-\infty, 0) \cup (0,\infty)$  under the additional assumption that $\rho>3/2$ (and $\tau \in (1, \rho - 1/2)$). Hence, Theorem \ref{t3.4} is proved subject to $\rho > 3/2$.

To improve this to $\rho > 1$ (and $\tau \in (1/2, \rho - 1/2)$) we now follow  
\cite[p.~98--99, 118]{Ya10}.  First, one notes that if $\lambda\in \bbR\backslash\{0\}$ is an eigenvalue of $H$ with a corresponding eigenvector $\psi\in \LN$, then
\begin{equation} \lb{3.39aa}
\|(\cF_{H_0}\psi)(\mu,\,\cdot\,)\|_{[L^2(S^{n-1})]^N}\leq C_1 |\mu - \lambda|^{\rho-(3/2)},\quad \mu\in \bbR,
\end{equation}
for some $C_1\in (0,\infty)$.
In fact, by \eqref{4.1}, $g:=-V\psi\in \LrhoN$, and since $\langle Q\rangle^{-\rho}$ is strongly $H_0$-continuous with exponent $\rho - (1/2) > (1/2)$ by Proposition \ref{p3.3}, the function $\wti g := \cF_{H_0}g$ is H\"older continuous:
\begin{equation} \lb{3.40aa}
\|\wti g(\mu,\,\cdot\,) - \wti g(\lambda,\,\cdot\,)\|_{[L^2(S^{n-1})]^N}\leq C |\mu - \lambda|^{\rho - (1/2)}, \quad \mu \in \bbR,
\end{equation}
for some constant $C\in (0,\infty)$, which is independent of $\lambda$ and $\mu\in \bbR$.  In addition, since $g = H_0\psi - \lambda \psi$, the spectral representation in \eqref{335y} yields
\begin{equation} \lb{3.41aa}
\wti g(\mu,\,\cdot\,) = (\mu - \lambda)(\cF_{H_0}\psi)(\mu,\,\cdot\,),\quad \mu\in \bbR,
\end{equation}
which implies $\wti g(\lambda,\,\cdot\,)=0$.  Therefore, \eqref{3.40aa} reduces to
\begin{equation}
\|\wti g(\mu,\,\cdot\,)\|_{[L^2(S^{n-1})]^N} \leq C |\mu - \lambda|^{\rho-(1/2)},\quad \mu\in \bbR,
\end{equation}
and then \eqref{3.41aa} yields \eqref{3.39aa} with $C_1=C$.  In addition, one also notes that the equation
$(H_0 - \lambda I_{[L^2(\bbR^n)]^N})\psi = -V\psi$ implies
\begin{equation} \lb{3.43aa}
g=-V(H_0 - (\lambda \pm i0)I_{[L^2(\bbR^n)]^N})^{-1}g,
\end{equation}
since $\psi = (H_0 - (\lambda \pm i0)I_{[L^2(\bbR^n)]^N})^{-1}g$ and $\psi = 0$ if $g = 0$.

To prove that non-zero eigenvalues of $H$ have finite multiplicity and may only accumulate at $0$ and $\pm \infty$, one may follow the proof of \cite[Proposition 1.9.2]{Ya10} essentially verbatim; one only needs to replace $\bbR_+$ by $\bbR\backslash\{0\}$.


Next, one proves that for any $\tau \in (0,1/2]$ and $p<2(1-2\tau)^{-1}$, and for any compact set 
$X\subset \bbR$, 
\begin{equation} \lb{3.44aa}
\int_{X} d\lambda\, \|(\cF_{H_0}f)(\lambda,\,\cdot\,)\|_{[L^2(S^{n-1})]^N}^2 \leq C_2\|f\|_{\LtauN}^p,\quad f\in \LtauN,
\end{equation}
for some $C_2=C_2(\alpha,p,X)\in (0,\infty)$.  To prove \eqref{3.44aa}, one can follow, with minor modifications, the proof of \cite[Proposition 1.9.3]{Ya10}.  Indeed, for an arbitrary compact set 
$X\subset \bbR$, one introduces the family of spaces
\begin{equation}
L^p\big(X; d\lambda; [L^2(S^{n-1})]^N\big),\quad p\in [1,\infty)\cup\{\infty\},
\end{equation}
and observes that by \cite[Theorem 1.1.4]{Ya10},
\begin{equation}
\cF_{H_0}f\in  L^{\infty}\big(X; d\lambda; [L^2(S^{n-1})]^N\big),\quad f\in \LtauN,\; \tau>1/2.
\end{equation}
The formula
\begin{align}
\begin{split} 
(\cT(f_1,f_2))(\lambda) = ((\cF_{H_0}f_1)(\lambda,\,\cdot\,),(\cF_{H_0}f_2)(\lambda,\,\cdot\,))_{[L^2(S^{n-1})]^N}&\\
\text{ for a.e.~$\lambda\in X$ and $(f_1,f_2)\in \LtauN\times \LtauN$},&
\end{split} 
\end{align}
defines a bilinear map for each $q\geq 1$ and $\tau\geq 0$:
\begin{equation}
\cT:\LtauN\times \LtauN\to L^q(X,d\lambda).
\end{equation}
The map $\cT$ is continuous for $\tau_0=0$, $q_0=1$ and $\tau_1>1/2$, $q_1=\infty$, so by Calder\'on's complex bilinear interpolation theorem (c.f., e.g., \cite[\S 1.19.5]{Tr95}, \cite{Ca64}), for any $s\in [0,1]$ the map $\cT$ is continuous for
\begin{equation}
\tau = \tau(s) = s\tau_0 + (1-s)\tau_1,\quad q^{-1}=q(s)^{-1} = sq_0^{-1}+ (1-s)q_1^{-1},
\end{equation}
and
\begin{equation} \lb{3.51}
\|\cT(f_1,f_2)\|_{L^q(X,d\lambda)} \leq C(\tau,X)\|f_1\|_{\LtauN}\|f_2\|_{\LtauN},\quad f_1,f_2\in \LtauN.
\end{equation}
Taking $f_1=f_2=f\in \LtauN$ and $q=p/2$ in \eqref{3.51} yields \eqref{3.44aa}.

In analogy with \cite[Lemma 1.9.4]{Ya10}, if $h\in \LtauN$ for some $\tau\in (1/2,1]$ and $(\cF_{H_0}h)(\lambda,\,\cdot\,)=0$ for some $\lambda\in \bbR\backslash \{0\}$, then
\begin{equation} \lb{3.52aa}
(H_0-(\lambda\pm i0)I_{[L^2(\bbR^n)]^N})^{-1}h\in \LmwtitauN,\quad \wti \tau > 1-\tau.
\end{equation}
To prove \eqref{3.52aa}, it suffices to show
\begin{align}
\begin{split} 
\big|\big((H_0-(\lambda\pm i0)I_{[L^2(\bbR^n)N]})^{-1}h,g\big)_{[L^2(\bbR^n)]^N}\big|\leq C_3\|h\|_{\LtauN}\|g\|_{\LwtitauN},&\\
g\in \SN,& 
\end{split} 
\end{align}
for some $C_3 = C_3(\lambda) \in (0,\infty)$.  Using the spectral representation for $H_0$, one infers that
\begin{align} 
\begin{split} 
&\big((H_0-(\lambda\pm i0)I_{[L^2(\bbR^n)]^N})^{-1}h,g\big)_{[L^2(\bbR^n)]^N}    \\
&\quad = \int_{\bbR}\, d\mu\, (\mu - \lambda \mp i0)^{-1}\big( \wti h(\mu,\,\cdot\,),\wti g(\mu,\,\cdot\,)\big)_{[L^2(S^{n-1})]^N},\quad 
g\in \SN.\lb{3.53aa} 
\end{split} 
\end{align}
Since
\begin{align}
\wti h, \wti g\in L^2\big([0,\infty); d\lambda; L^2(S^{n-1})^{N/2}\big) \oplus \, L^2\big((-\infty,0]); d\lambda; L^2(S^{n-1})^{N/2}\big),
\end{align}
it suffices to estimate the integral in \eqref{3.53aa} over a compact neighborhood, say 
$X_{\lambda}$, of the point $\lambda$.  By Proposition \ref{p3.3},
\begin{align}
\begin{split} 
\big\|\wti h(\mu,\,\cdot\,)\big\|_{[L^2(S^{n-1})]^N} &= \big\|\wti h(\lambda,\,\cdot\,) - \wti h(\mu,\,\cdot\,)\big\|_{[L^2(S^{n-1})]^N}     \\
&\leq C_0|\lambda - \mu|^{\tau-(1/2)}\|h\|_{\LtauN}, 
\end{split} 
\end{align}
for some $C_0\in (0,\infty)$, and consequently,
\begin{align}
&\bigg| \int_{X_{\lambda}}\, d\mu\, (\mu - \lambda \mp i0)^{-1}\big( \wti h(\mu,\,\cdot\,),\wti g(\mu,\,\cdot\,)\big)_{[L^2(S^{n-1})]^N}\bigg|    \lb{3.56aa} \\
&\quad  \leq C_0 \|h\|_{\LtauN}\int_{X_{\lambda}} \, d\mu\, |\lambda - \mu|^{\tau - (3/2)}\big\|\wti g(\mu,\,\cdot\,)\big\|_{[L^2(S^{n-1})]^N},\quad g\in \SN.   \no 
\end{align}
An application of H\"older's inequality yields for any conjugate pair $p^{-1}+q^{-1}=1$ the estimate
\begin{align}
\begin{split} 
& \int_{X_{\lambda}} \, d\mu\, |\lambda - \mu|^{\tau - (3/2)}\big\|\wti g(\mu,\,\cdot\,)\big\|_{[L^2(S^{n-1})]^N}    \\ 
& \quad \leq \bigg(\int_{X_{\lambda}}\, d\mu\, |\lambda-\mu|^{-q((3/2)-\tau)}\bigg)^{q^{-1}} 
 \bigg(\int_{X_{\lambda}}\, d\mu\, \big|\wti g(\mu,\,\cdot\,)\big\|_{[L^2(S^{n-1})]^N}^p\bigg)^{p^{-1}}.\lb{3.57aa}
 \end{split} 
\end{align}
By \eqref{3.44aa} with $C_2=C_2(\alpha,p,X_{\lambda})$, 
\begin{equation} \lb{3.58aa}
\int_{X_{\lambda}}\,d\mu\, \big\|\wti g(\mu,\,\cdot\,)\big\|_{[L^2(S^{n-1})]^N}^p 
\leq C_2 \|g\|_{\LwtitauN}^p,
\end{equation}
where $p<2(1-2\wti \tau)^{-1}$.  Therefore, the conjugate exponent satisfies $q>2(1+2\wti \tau)^{-1}$, and consequently $q((3/2)-\tau)>(3-2\tau)(1+2\wti \tau)^{-1}$. Thus, if $\wti \tau > 1 - \tau$, that is, 
if $\tau + \wti \tau >1$, then $(3-2\tau)(1+2\wti \tau)^{-1}<1$, so $q((3/2)-\tau)$ may be chosen to be smaller than $1$, rendering the first integral on the right-hand side in \eqref{3.57aa} finite. In conclusion, \eqref{3.56aa}, \eqref{3.57aa}, and \eqref{3.58aa} combine to yield the desired estimate.

Finally, we turn to the issue of absence of singular continuous spectrum for $H$. Introducing the set 
$\cN:=\cN_+\cup \cN_-$, so that
\begin{equation}
\sigma_s(H)\backslash\{0\} \subseteq \cN, 
\end{equation}
to prove that $\sigma_{sc}(H) = \emptyset$, it suffices to show that any $\lambda \in \cN$ must be an eigenvalue of $H$.  To this end, let $\lambda\in \cN$, so that there exists an $f\in \LN\backslash \{0\}$ such that 
\begin{equation} \lb{3.60aa}
- f = \ol{V_2 (H_0 - (\lambda \pm i 0)I_{[L^2(\bbR^n)]^N})^{-1} V_1^*} f,
\end{equation}
with $V_1$ and $V_2$ taken as in \eqref{3.25aa} with $\tau\in (1/2,\rho-(1/2))$.  Introducing
\begin{equation} \lb{3.61aa}
g=\langle \,\cdot\,\rangle^{-\tau}f\in \LtauN,
\end{equation}
the equations for $f$ in \eqref{3.60aa} may be recast as \eqref{3.43aa}.  In view of \eqref{3.61aa} and the fact that $\wti g(\lambda)=0$, the estimate in \eqref{3.52aa} applies to $h=g$:
\begin{equation}
(H_0-(\lambda\pm i0)I_{[L^2(\bbR^n)]^N})^{-1}g\in \LmwtitauN,\quad \wti \tau > 1-\tau.
\end{equation}
Then the condition in \eqref{4.1} and the identity in \eqref{3.43aa} combine to yield
\begin{equation}
g=-V(H_0 - (\lambda \pm i0)I_{[L^2(\bbR^n)]^N})^{-1}g \in \LnuN,\quad \nu < \tau + \rho - 1.
\end{equation}
Iterating the same argument $\ell\in \bbN$ times yields
\begin{equation} \lb{3.78yy}
g\in \LnuN,\quad \nu< \tau + \ell(\rho - 1).
\end{equation}
If $\ell$ is chosen so that $\ell > (1-\tau)/(\rho-1)$, then $\tau + \ell(\rho - 1)>1$.  Therefore, \eqref{3.78yy} implies, in particular, that $g\in \LwtitauN$ for some $\wti \tau\in (1,3/2)$.  Consequently, by Proposition \ref{p3.3}, $\wti g$ is H\"older continuous of order $\wti \tau - (1/2)$.  Next, the 
function $\psi:=(H_0-(\lambda \pm i0)I_{[L^2(\bbR^n)]^N})^{-1}g$ belongs to $\dom(H)=\dom(H_0)=\WoneN$ 
since $\wti \psi(\mu) = \wti g(\mu)(\mu-\lambda)^{-1}$, $\wti g(\lambda)=0$, and $\wti \tau - (1/2) > 1/2$.  By \eqref{3.43aa}, $\psi$ satisfies the Dirac equation $H\psi = \lambda \psi$.  Moreover, $\psi$ is a nontrivial solution.  Indeed, if $\psi=0$, then $g = -V\psi=0$.  Of course, one then obtains $f = \langle \,\cdot\,\rangle^{\tau}g=0$, which contradicts the assumption $f\in \LN\backslash\{0\}$.  Therefore, $\psi$ is an eigenfunction and $\lambda$ is a corresponding eigenvalue.  As a result, $\cN\subseteq \sigma_p(H)$ and $\sigma_{sc}(H)=\emptyset$.
\end{proof}

\begin{remark} \lb{r3.5}
The fact that $\big\|\ol{V_2 (H_0 - (\lambda \pm i 0)I_{[L^2(\bbR^n)]^N})^{-1} V_1^*}\big\|_{\cB(L^2(\bbR^3))}$ does not 
decay as $\lambda \to \pm \infty$ shows that in principle one cannot rule out eigenvalues of $H$ running off to $\infty$ 
and/or $-\infty$. In fact, it has been shown in \cite{KOY15a} that for all $\tau > 1/2$, there exists a constant $C_{\tau} \in (0, \infty)$ such that 
\begin{equation}
\sup_{z \in \bbC \backslash \bbR} \big\| \langle \, \cdot \, \rangle^{-\tau} (H_0 - z I_{[L^2(\bbR^3)]^4})^{-1} \langle \, \cdot \, \rangle^{-\tau} \big\|_{\cB([L^2(\bbR^3)]^4)} \leq C_{\tau} 
\quad   \lb{3.78A}
\end{equation} 
and that 
\begin{align}
\begin{split}
\big\| \langle \, \cdot \, \rangle^{-\tau} (H_0 - (\lambda \pm i 0) I_{[L^2(\bbR^3)]^4})^{-1} 
\langle \, \cdot \, \rangle^{-\tau} \big\|_{\cB([L^2(\bbR^3)]^4)}&     \\ 
\text{ does not decay as $|\lambda| \to \infty$ for any $\tau > 1/2$.}&   \lb{3.78B}
\end{split}
\end{align} 
In the case of massive Dirac operators (i.e., with $H_0$ replaced by $H_0(m)$), the condition $\tau > 1/2$ needs to 
be replaced by $\tau \geq 1$. For results in this direction we also refer to \cite{Pl01}, \cite{Pl02}, \cite{PSU95}, \cite{Ya93}. 
This contrasts sharply with the case of Schr\"odinger operators where a Riemann--Lebesgue-type argument yields 
decay of the underlying Birman--Schwinger operator (see, e.g., \cite[Theorem~III.13]{Si71}). \hfill $\diamond$
\end{remark}

\begin{remark} \lb{r3.6}
The transformation in \eqref{3.19y} employed to diagonalize $\alpha\cdot p$ is similar to the celebrated Foldy--Wouthuysen transformation (see, e.g., \cite{Co04}, \cite{Th88}, \cite[Sect.~5.6]{Th92}). The latter is well known to diagonalize $H_0$. In fact, introducing the unitary $N \times N$ block operator matrix $U_N$ in $ \LN$, $n \in \bbN$, $n \geq 2$, 
via 
\begin{equation}
U_N = 2^{-1/2} \big[I_N + \beta (\alpha \cdot (-i \nabla)) |-i \nabla|^{-1} \big], \quad 
U_N^{-1} = 2^{-1/2} \big[I_N - \beta (\alpha \cdot (-i \nabla)) |-i \nabla|^{-1} \big], 
\end{equation}
one infers that 
\begin{align}
\begin{split} 
& \wti H_0 = U_N H_0 U_N^{-1} 
= \begin{pmatrix} I_{N/2} (- \Delta)^{1/2} & 0 \\ 0 & - I_{N/2} (-\Delta)^{1/2}\end{pmatrix}, \\
& \dom\big(\wti H_0\big) = \WoneN.       \lb{4.41}
\end{split} 
\end{align}
It is worth pointing out that every result in this section has a verbatim analog for operators of the 
type 
\begin{equation}
\wti H_0 + V, \, \text{ and } \,  I_{N} \big(- \Delta + m^2 I_{L^2(\bbR^n)}\big)^{1/2} + V, \; m \geq 0, 
\end{equation} 
in $ \LN$, with $V$ satisfying \eqref{4.1}. More generally, $(- \Delta)^{1/2}$ can be replaced 
by general fractional powers  $(- \Delta)^{\gamma}$, $\gamma > 0$, and even by more general 
functions $h(-\Delta)$ (cf.\ \cite{BN97}). This comment is of some significance as a large body of 
work went into studying $ I_{N} (- \Delta)^{1/2} + V$ (especially, in the scalar case $N=1$) over the past two decades. We refer, for instance, to \cite{BN97}, \cite{BCG20}, \cite{Ic88}, \cite{Ki10}, \cite{Ki11}, 
\cite{Ma06}, \cite{RU16}, \cite[p.~124]{Si79}, \cite{Um94}, \cite{Um95}, \cite{Um00}, \cite{Um06}, \cite{UN93}, \cite{UW08}, \cite{We07}. \hfill $\diamond$
\end{remark}

In the following section we will recall conditions on $V$ that yield the absence of eigenvalues 
of $H$ (implying unitary equivalence of $H$ and $H_0$ via the wave operators $W_{\pm}(H,H_0)$ 
in Theorem \ref{t3.4}, see Remark \ref{r4.3}). 

We conclude this section with some hints at additional literature (beyond 
\cite[Sects.~1.11, 2.1, 2.2]{Ya10}) concerning the absence of singular continuous spectrum 
and proofs of limiting absorption principles for operators of the form $H_0 + V$. 

In the the case of three-dimensional massless Dirac operators, the absence of singular continuous spectrum of $H$ with scalar potentials 
(i.e., $V = v \, I_N$), including the case of long-range interactions $v$, was proved in \cite{Da05}. The limiting absorption 
principle for $H_0$ in three dimensions  was derived in \cite{SU08}. For the proof of existence of absolutely continuous spectrum of massless Dirac  operators  for $n=3$, where $V = v \, \beta$, see \cite{De04}. To the best of our knowledge, these references in the special case $n=3$ comprise all explicit statements about the absence of the singular continuous spectrum of $H$ and/or the limiting absorption principle for $H_0$. So Theorem \ref{t3.4} is new for $n \in \bbN \backslash \{3\}$, $n \geq 2$, which is particularly interesting in the case $n=2$ as the latter is related to applications involving graphene. On the other hand, we emphasize that Theorem \ref{t3.4} is a direct consequence of the material presented by Yafaev in \cite[Sect.~2.4]{Ya10}. In the context of massless Dirac operators in dimension $n=2$ we also refer to \cite{EGG20} (see also \cite{EGG19}).We also note that a global limiting absorption principle for $H_0$ on $\bbR$ for all $n \in \bbN$, $n \geq 2$, was proved in \cite{CGKLNPS18}, \cite{KOY15a}, \cite{BU18}.

For the case of massive Dirac operators $H(m) = H + m \, \beta$, $m > 0$, we also refer to \cite{BH92}, \cite{BG10}, \cite{BMP93}, \cite{EG17}, \cite{EGT18}, \cite{EGT19}, \cite{GM01}, \cite{IM99}, \cite{It95}, \cite{Mo64}, \cite{Ok08}, \cite{PSU95}, 
\cite{PSU98}, \cite{Sa17}, \cite{Th72}, \cite{Vo88}, \cite[Sect.~1.12]{Ya10}, \cite{Ya72}, \cite{Ya76}, \cite{Ya93}. 

Finally, for scattering theory for Dirac operators we refer, for instance, to \cite{Da05}, \cite{Ec74}, \cite{GM01}, 
\cite{GS74}, \cite{It95}, \cite{LT88}, \cite{Na76}, \cite{PSU95}, \cite{PSU98}, \cite{Pr63}, \cite{Sa01}, \cite{Si79}, \cite{Th81}, \cite{Th91}, \cite{Th91a}, \cite[Ch.~8]{Th92},  \cite{TE86}, \cite{Th72}, \cite{VW73}, \cite[Sect.~1.12]{Ya10}.

\section{On the Absence of Eigenvalues for Interacting, Massless Dirac Operators} 
\lb{s4}

In this section we briefly comment on results concerning the absence of eigenvalues of 
massless Dirac operators. Since we are particularly interested in potentials vanishing 
at $\pm \infty$, implying the absence of spectral gaps of $H$,  
\begin{equation}
\sigma_{ess}(H) = \bbR, 
\end{equation} 
the absence of eigenvalues is equivalent to the absence of eigenvalues embedded into the 
essential spectrum of $H$ (a somewhat unusual situation from a quantum mechanical point 
of view).

In the context of massive Dirac operators $H(m) = H + m \, \beta$, with mass parameter 
$m > 0$ and vanishing potentials at $\pm \infty$, there exists a fair number of papers 
describing the absence of embedded eigenvalues in the essential spectrum of $H(m)$,
\begin{equation}
\sigma_{ess}(H(m)) = (-\infty, -m] \cup [m,\infty) 
\end{equation}
(or in certain regions of the essential spectrum), especially in the three-dimensional case, $n=3$. Relevant references in this context are, 
for instance, \cite{Al72}, \cite{BG87}, \cite{Ka76}, \cite{Ka81}, \cite{KOY15}, \cite{Ok03}, 
\cite{Ro70} (however, this reference is imprecise w.r.t. implicit smoothness assumptions on the electromagnetic potential coefficients), \cite{Th76}, \cite{Vo87}.

The existence of threshold eigenvalues (and/or resonances) at $\pm m$ are discussed, for 
instance, in \cite{El01}, \cite{SU11}. 

In the massless case, $m=0$, zero eigenvalues and/or zero-energy resonances (as well as the absence zero-energy resonances) are treated in 
\cite{AMN99}, \cite{AMN00}, \cite{AMN00a}, \cite{AMN00b}, \cite{AC79}, \cite{Ai16}, 
\cite{BE01}, \cite{BE02}, \cite{BE03}, \cite[Ch.~4]{BE11}, \cite{BES08}, \cite{BV15}, 
\cite{DP16}, \cite{El00}, \cite{El02}, \cite{ES01}, \cite{FLL86}, \cite{KOY15}, \cite{LY86}, 
\cite{Pe08}, \cite{Ro69}, \cite{RS06}, \cite{Sc10}, \cite{SU08}, \cite{SU08a}, \cite{SU11}, 
\cite{SU12}, \cite{SU14}, \cite{SU15}, \cite{ZG13}. A fair number of these references consider 
the case of Pauli operators in three dimensions, $[\sigma \cdot (-i \nabla - A)]^2$, with 
$\sigma =(\sigma_1, \sigma_2, \sigma_3)$ the standard Pauli matrices. 


The subject of absence of zero modes, especially, zero-energy eigenvalues, for massless Dirac operators has hardly been studied. Exceptions are \cite{DO99} (see also \cite{KY99}), 
\cite{KOY03}, \cite{KOY15},  and properties 
of the corresponding (generalized) eigenfunctions are discussed in 
\cite{Ai16}, \cite[Sect.~4.4]{BE11}, \cite{BES08}, \cite{SU08}, \cite{SU08a}, \cite{ZG13}.

Here we recall the following special cases of results in \cite[Theorems~2.1, 2.3]{KOY15}:

\begin{theorem} \lb{t4.1} 
Let $n \in \bbN$, $n \geq 2$. \\[1mm] 
$(i)$ Assume that $V: \bbR^n \to \bbC^{N \times N}$ is Lebesgue measurable and self-adjoint 
a.e.\ on $\bbR^n$, satisfying 
\begin{equation}
\esssup_{x \in \bbR^n} |x| \|V(x)\|_{\cB(\bbC^N)} \leq C \, \text{ for some $C \in (0, (n-1)/2)$,} \lb{4.3}
\end{equation}
with $\|\,\cdot\,\|_{\cB(\bbC^N)}$ denoting the operator norm of an $N \times N$ matrix in $\bbC^N$. Then any distributional solution 
$u \in [H^1_{loc}(\bbR^n)]^N \cap [L^2(\bbR^n; |x|^{-1} d^n x)]^N$ of 
$(H_0 + V)u = 0$ vanishes identically.  \\[1mm]
$(ii)$ Suppose that 
\begin{align}
& \text{$V: \bbR^n \to \bbC^{n \times n}$ is Lebesgue measurable and self-adjoint 
a.e.\ on $\bbR^n$, and that}  \no \\
& \quad \text{for some $R > 0$, $V \in \big[C^1(E_R)\big]^{N \times N}$, where 
$E_R=\{x \in \bbR^n\,|\, |x| > R\}$,}    \lb{4.3A} \\
& \text{and}     \no \\
\begin{split} 
& |x|^{1/2} V_{\ell,m}(x) \underset{|x| \to \infty}{=} \oh(1), \quad 
(x \cdot \nabla V_{\ell,m})(x) \underset{|x| \to \infty}{=} \oh(1), \quad 1 \leq \ell,m \leq N,    \lb{4.4} \\
& \quad \text{uniformly with respect to directions.} 
\end{split} 
\end{align}

Then if for some $\lambda \in \bbR \backslash \{0\}$, $u \in [L^2(E_R)]^N$ satisfies 
$(H_0 + V)u = \lambda u$ on $E_R$ in the distributional sense, then $u$ vanishes identically on 
$\bbR^n$. \\[1mm]
$(iii)$ The self-adjoint realization $H$ of $H_0 + V$ satisfying 
\begin{equation}
\int_{\bbR^n} d^n x \, |x|^{-1} \|f(x)\|_{\bbC^N}^2 < \infty, \quad f \in \dom(H),   \lb{4.5} 
\end{equation}
has no eigenvalue zero in case $(i)$ and no eigenvalue $\lambda \in \bbR \backslash \{0\}$ 
in case $(ii)$. 
\end{theorem}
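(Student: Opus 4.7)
Part (iii) of the statement should follow from (i) and (ii) without extra work, so the real content lies in the first two parts. The plan is that any eigenfunction $f$ of the self-adjoint realization $H$ automatically lies in $[L^2(\bbR^n; |x|^{-1} d^n x)]^N$ by hypothesis \eqref{4.5}, and simultaneously in $[H^1(\bbR^n)]^N \subset [H^1_{loc}(\bbR^n)]^N$. This places $f$ squarely inside the class of admissible functions in (i) when the eigenvalue is zero and, upon restriction to an arbitrary $E_R$, inside the class of (ii) when the eigenvalue is nonzero, reducing (iii) to the two pointwise distributional statements.

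For (i) the plan is to exploit a sharp \emph{spinorial} Hardy--Dirac inequality. Using the anticommutation relations \eqref{2.1} to decompose $H_0$ in polar form $H_0 = (\alpha\cdot\omega)\bigl[-i\partial_r - i(n-1)/(2r)\bigr] + r^{-1}\,(\text{angular part})$, I would derive, for $u \in [H^1_{loc}(\bbR^n)]^N \cap [L^2(\bbR^n; |x|^{-1} d^n x)]^N$, the inequality
\[
\left(\frac{n-1}{2}\right)^{\!2} \int_{\bbR^n} \frac{\|u(x)\|_{\bbC^N}^2}{|x|^2}\, d^n x \;\leq\; \int_{\bbR^n} \|H_0 u(x)\|_{\bbC^N}^2\, d^n x,
\]
possibly with additional nonnegative angular terms on the left. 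The constant $(n-1)/2$, strictly larger than the scalar Hardy constant $(n-2)/2$ one would get from $H_0^2 = -\Delta \otimes I_N$, is precisely what allows the critical threshold in \eqref{4.3}. Applying this to $H_0 u = -Vu$ together with the pointwise bound $\|V(x)\|_{\cB(\bbC^N)} \leq C/|x|$, $C < (n-1)/2$, yields
\[
\|H_0 u\|_{[L^2(\bbR^n)]^N}^2 \;=\; \|Vu\|_{[L^2(\bbR^n)]^N}^2 \;\leq\; \frac{4 C^2}{(n-1)^2}\, \|H_0 u\|_{[L^2(\bbR^n)]^N}^2,
\]
and the strict inequality forces $H_0 u = 0$. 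Ellipticity of $H_0^2 = -\Delta \otimes I_N$, combined with $u \in [L^2(\bbR^n; |x|^{-1} d^n x)]^N$, then produces $u \equiv 0$.

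For (ii) the plan is a Kato-style absence-of-embedded-eigenvalues argument adapted to a first-order matrix system. The hypotheses $|x|^{1/2} V_{\ell,m}(x) = \oh(1)$ and $(x \cdot \nabla V_{\ell,m})(x) = \oh(1)$ as $|x|\to\infty$ are the classical Kato decay conditions. I would introduce the dilation generator $A = \tfrac12(x\cdot p + p \cdot x)$, compute the formal commutator
\[
i[H_0 + V, A] \;=\; H_0 - (x \cdot \nabla V),
\]
and test it against an eigenfunction $u$ on $E_R$ with eigenvalue $\lambda \neq 0$, using a sequence of smooth radial cutoffs at radius $R \to \infty$. Absorbing boundary contributions via the $|x|^{1/2}V = \oh(1)$ hypothesis and using the second decay condition to control $x\cdot\nabla V$, one obtains an inequality of the form $|\lambda|\,\|u\|_{[L^2(E_R)]^N}^2 \leq \oh(1)$, which forces $u \equiv 0$ outside a sufficiently large ball. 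A standard strong unique continuation result for the Dirac equation, obtained by squaring to a second-order elliptic system and invoking Carleman or Jerison--Kenig estimates, then propagates $u \equiv 0$ across compact sets to all of $\bbR^n$.

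The hard parts are the following. In (i), the main technical obstacle is establishing the sharp spinorial Hardy--Dirac inequality with the constant $(n-1)/2$: the naive route through $H_0^2 = -\Delta\otimes I_N$ together with the classical scalar Hardy inequality yields only the weaker constant $(n-2)/2$, so the radial spinor decomposition of $H_0$ must genuinely be used. In (ii), the obstacle is that Kato's classical scheme is naturally tailored to self-adjoint second-order scalar operators enjoying Agmon positivity; transplanting it to a first-order matrix Dirac operator requires careful tracking of the commutator algebra in \eqref{2.1}, of the angular contributions that appear in the polar decomposition, and of the precise rate at which the conditions \eqref{4.3A}, \eqref{4.4} permit absorption of lower-order terms. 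Both points are exactly what is carried out in \cite{KOY15}, of which Theorem \ref{t4.1} records the special cases we need.
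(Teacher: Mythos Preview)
The paper does not prove Theorem \ref{t4.1}; it is stated there purely as a citation of \cite[Theorems~2.1, 2.3]{KOY15}, with no argument supplied. So there is no ``paper's own proof'' to compare against, and you correctly identify this at the end of your proposal by pointing back to \cite{KOY15}.

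Your sketch of the ideas behind \cite{KOY15} is broadly in the right spirit---a sharp Hardy--Dirac inequality with constant $(n-1)/2$ for part (i), and a virial/commutator argument combined with unique continuation for part (ii)---but a couple of details in your outline for (i) do not quite line up with the stated hypotheses. You write down an inequality involving $\int \|u\|^2/|x|^2$, yet the hypothesis in (i) only guarantees $u \in [L^2(\bbR^n;|x|^{-1}d^nx)]^N$, and $u$ is merely in $H^1_{loc}$, so neither $\int\|u\|^2/|x|^2$ nor $\|H_0 u\|_{L^2}$ is a priori finite. The actual argument in \cite{KOY15} works directly with the weight $|x|^{-1}$ via an integration-by-parts identity adapted to the first-order structure of $H_0$, rather than by squaring to $-\Delta$ and invoking a Hardy inequality with weight $|x|^{-2}$. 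This is a technical point, not a conceptual error, but it is exactly the place where the sharp constant $(n-1)/2$ (rather than $(n-2)/2$) emerges, so the mechanism deserves care.
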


\begin{remark} \lb{r4.2} 
We note that Theorem \ref{t4.1} due to \cite[Theorems~2.1, 2.3]{KOY15} appears to have been the first, and up to now, the only result available proving absence of eigenvalues of $H$ in the massless case. The results on global $H_0$-smoothness of $\langle Q \rangle^{-1}$ proven in \cite{CGKLNPS18} now yields a second such result (and unitary equivalence of $H$ and $H_0$) for $\|V\|_{\cB(L^2(\bbR^n))}$ sufficiently small.  
\hfill $\diamond$
\end{remark}

\begin{remark} \lb{r4.3} 
In the context of Theorem \ref{t4.1}\,$(iii)$ we note that if $V$ satisfies \eqref{4.1} and hence 
$\dom(H) = \WoneN$, then Kato's inequality (cf., e.g., \cite[p.~19--20]{BE11}, 
\cite{He77}),
\begin{equation}
\int_{\bbR^n} d^n x \, |x|^{-1} |f(x)|^2 \leq C_n \int_{\bbR^n} d^n p \, 
|p| \big|\hatt f(p)\big|^2, 
\quad f \in S(\bbR^n), \; n \in \bbN, \; n \geq 2,
\end{equation} 
for appropriate constants $C_n \in (0,\infty)$, $n \geq 2$ (Kato's inequality extends to the homogeneous Sobolev space $D^{1/2}(\bbR^n)$ of order $1/2$), yields, in particular,
\begin{align}
& \int_{\bbR^n} d^n x \, |x|^{-1} |f(x)|^2 \leq C_n \int_{\bbR^n} d^n p \, 
|p| \big|\hatt f(p)\big|^2 
\leq C_n \int_{\bbR^n} d^n p \, \big[1 + |p|^2\big] \big|\hatt f(p)\big|^2,   \no \\ 
& \hspace*{7cm} f \in H^1(\bbR^n), \; n \in \bbN, \; n \geq 2. 
\end{align} 
Thus, under assumption \eqref{4.1} on $V$, condition \eqref{4.5} holds automatically. \hfill $\diamond$
\end{remark}

We summarize the discussion on absence of eigenvalues in this section as follows:

\begin{corollary} \lb{c4.4}
$(i)$ In addition to Hypothesis \ref{h3.1} assume that $V$ satisfies conditions \eqref{4.3A}, \eqref{4.4}. Then
\begin{equation}
\sigma_p(H) \subseteq \{0\}.        \lb{4.8}
\end{equation}
$(ii)$ In addition to Hypothesis \ref{h3.1} assume that $V$ satisfies conditions \eqref{4.3}--\eqref{4.4}. Then 
\begin{equation}
\sigma_p(H) = \emptyset.        \lb{4.9}
\end{equation}
Moreover, $H$ and $H_0$ are unitarily equivalent. 
\end{corollary}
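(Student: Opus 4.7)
The plan is to assemble Corollary \ref{c4.4} from two inputs already at hand: the abstract absolute continuity / wave operator package of Theorem \ref{t3.4}, and the Kato--Okaji--Yamada non-existence theorems packaged as Theorem \ref{t4.1}. The only ``mechanical'' step is checking that the eigenfunction condition \eqref{4.5} is automatic in our setting, so that Theorem \ref{t4.1}\,$(iii)$ actually applies; this is where Remark \ref{r4.3} enters.

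\smallskip

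First I would prove part $(i)$. Suppose $\lambda \in \sigma_p(H)$ with $\lambda \neq 0$, and let $u \in \dom(H) = \WoneN$ be a corresponding eigenfunction. By Hypothesis \ref{h3.1}, $u \in \WoneN \subset [H^1_{loc}(\bbR^n)]^N$. Kato's inequality, as recorded in Remark \ref{r4.3}, gives
\[
\int_{\bbR^n} d^n x \, |x|^{-1} \|u(x)\|_{\bbC^N}^2
\le C_n \int_{\bbR^n} d^n p \, \big[1+|p|^2\big] \big\|\hat u(p)\big\|_{\bbC^N}^2 < \infty,
\]
so \eqref{4.5} holds. Since the hypotheses \eqref{4.3A}, \eqref{4.4} are in force, Theorem \ref{t4.1}\,$(ii)$ (applied on $E_R$, noting $u \in [L^2(E_R)]^N$ trivially) forces $u \equiv 0$, a contradiction. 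Hence no such nonzero $\lambda$ exists, yielding \eqref{4.8}.

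\smallskip

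For part $(ii)$, the stronger assumption \eqref{4.3} together with the same Kato-inequality argument as above puts each eigenfunction of $H$ into $[L^2(\bbR^n;|x|^{-1}d^n x)]^N$, so Theorem \ref{t4.1}\,$(i)$ rules out the eigenvalue $0$. Combined with \eqref{4.8}, this gives $\sigma_p(H) = \emptyset$, as in \eqref{4.9}. For the unitary equivalence, I would invoke Theorem \ref{t3.4}: the global wave operators $W_\pm(H,H_0)$ in \eqref{3.50} exist and are complete, with $\ran(W_\pm(H,H_0)) = E_{H,ac}\,[L^2(\bbR^n)]^N$. But under the present hypotheses we have established
\[
\sigma_{sc}(H) = \emptyset \quad (\text{Theorem \ref{t3.4}}), \qquad \sigma_p(H) = \emptyset,
\]
so $H$ has purely absolutely continuous spectrum, $E_{H,ac} = I_{[L^2(\bbR^n)]^N}$. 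Thus each $W_\pm(H,H_0)$ is an isometry from $[L^2(\bbR^n)]^N$ onto $[L^2(\bbR^n)]^N$, hence unitary, and the standard intertwining property $H\, W_\pm(H,H_0) = W_\pm(H,H_0)\, H_0$ then gives $H = W_\pm(H,H_0)\, H_0\, W_\pm(H,H_0)^{-1}$.

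\smallskip

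No step here is difficult in isolation; the only point that requires thought is verifying that the integrability condition \eqref{4.5} of \cite{KOY15} is not an extra assumption in our framework. This is the mildly delicate obstacle, but it is resolved cleanly by Kato's inequality on $[H^1(\bbR^n)]^N$ as noted in Remark \ref{r4.3}. Everything else is bookkeeping: combine $(i)$ with the $L^2$-on-$|x|^{-1}$ non-existence of a zero mode to get $\sigma_p(H)=\emptyset$, then promote completeness of wave operators to unitarity via the already-proved purity of the absolutely continuous spectrum.
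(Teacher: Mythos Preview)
Your proposal is correct and follows essentially the same route as the paper: invoke Theorem \ref{t4.1}\,$(ii)$ (via Remark \ref{r4.3} and the domain $\dom(H)=\WoneN$) to exclude nonzero eigenvalues, then Theorem \ref{t4.1}\,$(i)$ to exclude the zero eigenvalue under \eqref{4.3}, and finally combine \eqref{3.45}, \eqref{3.50}, \eqref{3.51a} with $\sigma_p(H)=\emptyset$ to upgrade completeness of $W_\pm(H,H_0)$ to unitarity. One minor remark: in your part $(i)$ the Kato-inequality verification of \eqref{4.5} is not actually needed there, since Theorem \ref{t4.1}\,$(ii)$ only requires $u\in[L^2(E_R)]^N$, which is immediate from $u\in\WoneN$; the integrability condition \eqref{4.5} is genuinely used only when you invoke Theorem \ref{t4.1}\,$(i)$ in part $(ii)$.
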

\begin{proof}
The inclusion \eqref{4.8} follows from Theorem \ref{t4.1}\,$(ii)$,\,$(iii)$ and Remark \ref{r4.3}. 
The fact \eqref{4.9} follows from Theorem \ref{t4.1}\,$(i)$, Remark \ref{r4.3}, and \eqref{4.8}. Unitary equivalence of $H$ and $H_0$ 
is a consequence of \eqref{3.45}, \eqref{3.50}, \eqref{3.51a}, and \eqref{4.9}. 
\end{proof}

\section{The Green's Functions of $H_0(m)$ and $H_0$}     \lb{s5}

In this section we study the Green's function for $H_0$, that is, the integral kernel of the resolvent 
of $H_0$. 

We start, however, with the Green's function of the Laplacian in $L^2(\bbR^n)$,
\begin{equation} 
h_0 = - \Delta, \quad \dom(h_0) = H^2(\bbR^n).
\end{equation}
The Green's function of $h_0$, denoted by $g_0(z; \, \cdot \,, \, \cdot \,)$, is of the form,
\begin{align}
& g_{0}(z;x,y) := (h_0 - z I_{L^2(\bbR^n)})^{-1}(x,y)   \no \\ 
& \quad = \begin{cases} (i/2) z^{- 1/2} e^{i z^{1/2} |x-y|}, & n =1, \; z\in\bbC\backslash\{0\}, \\[2mm]
(i/4) \big(2\pi z^{-1/2} |x - y|\big)^{(2-n)/2} 
H^{(1)}_{(n-2)/2}\big(z^{1/2}|x - y|\big), & n\ge 2, \; z\in\bbC\backslash\{0\}, 
\end{cases}    \lb{5.2}  \no \\
& \hspace*{6.3cm}   \Im\big(z^{1/2}\big) > 0, 
\; x, y \in\bbR^n, \; x \neq y, 
\end{align}
and for $z=0$, $n \geq 3$, 
\begin{equation}
g_0(0;x,y) = \displaystyle{\f{1}{(n-2) \omega_{n-1}} |x - y|^{2-n}}, \quad n \ge 3, \; x, y \in\bbR^n, \; x \neq y. 
\end{equation}
Here $H^{(1)}_{\nu}(\, \cdot \,)$ denotes the Hankel function of the first kind 
with index $\nu\geq 0$ (cf.\ \cite[Sect.\ 9.1]{AS72}) and 
$\omega_{n-1}=2\pi^{n/2}/\Gamma(n/2)$ ($\Gamma(\, \cdot \,)$ the Gamma function, 
cf.\ \cite[Sect.\ 6.1]{AS72}) represents the area of the unit sphere $S^{n-1}$ in $\bbR^n$. 

As $z\to 0$, $g_0(z; \, \cdot \,, \, \cdot \,)$ is continuous on the off-diagonal for $n\geq 3$, 
\begin{align}
\begin{split}
\lim_{\substack{z \to 0\\z \in \bbC \backslash \{0\}}} g_0(z;x,y) = g_0(0;x,y) 
= \f{1}{(n-2) \omega_{n-1}} |x - y|^{2-n},& \\ 
 x, y \in\bbR^n, \; x \neq y, \; n \in \bbN, \; n \ge 3,&    \lb{5.3a} 
 \end{split} 
\end{align} 
but blows up for $n=1$ as 
\begin{align}
& g_0(z;x,y) \underset{\substack{z\to 0 \\ z\in \bbC \backslash\{0\}}}{=} 
(i/2) z^{-1/2} - 2^{-1} |x-y| + \Oh\big(z^{1/2} |x-y|^2\big), \quad x, y \in \bbR, 
\end{align}
and for $n=2$ as
\begin{align} 
\begin{split} 
& g_0(z;x,y) \underset{\substack{z\to 0 \\ z\in \bbC \backslash\{0\}}}{=} 
-\f{1}{2\pi} \ln\big(z^{1/2}|x - y|/2\big)
\big[1+\Oh\big(z|x - y|^2\big)\big] 
+ \f{1}{2\pi} \psi(1)     \\
& \hspace*{2.7cm}  + \Oh\big(|z||x - y|^2\big),   \quad 
 x, y \in\bbR^2, \; x \neq y.    \lb{5.4} 
 \end{split} 
\end{align} 
Here $\psi(w)=\Gamma'(w)/\Gamma(w)$ denotes the digamma function 
(cf. \cite[Sect.\,6.3]{AS72}). 

For reasons of subsequent comparisons with the case of the free massive Dirac operator 
$H_0(m) = H_0 + m \, \beta$, $m > 0$, we now start with the latter and compute,
\begin{align}
& (H_0(m) - z I_{[L^2(\bbR^n)]^N})^{-1} = (H_0(m) + z I_{[L^2(\bbR^n)]^N}) 
\big(H_0(m)^2 - z^2 I_{[L^2(\bbR^n)]^N}\big)^{-1}    \no \\ 
& \quad = (- i \alpha \cdot \nabla + m \, \beta + z I_{[L^2(\bbR^n)]^N})
\big(h_0 - \big(z^2 - m^2\big) I_{[L^2(\bbR^n)]^N}\big)^{-1} I_N,    \lb{5.5a} 
\end{align}
employing 
\begin{equation}
H_0(m)^2 = (h_0 + m^2 I_{[L^2(\bbR^n)]^N}) I_N. 
\end{equation}

Assuming 
\begin{equation}
m > 0, \; z \in \bbC \backslash (\bbR \backslash [-m,m]), \; \Im\big(z^2 - m^2\big)^{1/2} > 0, \; 
x, y \in \bbR^n, \; x \neq y, \; n \in \bbN, \; n \geq 2,    \lb{5.7}
\end{equation}
and exploiting \eqref{5.5a}, one thus obtains for the Green's function $G_0(m,z;\, \cdot \,,\, \cdot \,)$ 
of $H_0(m)$, 
\begin{align}
& G_0(m,z;x,y) := (H_0(m) - z I_{[L^2(\bbR^n)]^N})^{-1}(x,y)    \no \\
& \quad = i 4^{-1} (2 \pi)^{(2-n)/2} |x - y|^{2-n}(m \, \beta + z I_N)  \no \\
& \qquad \times  \big[\big(z^2 - m^2\big)^{1/2} |x - y|\big]^{(n-2)/2} 
H_{(n-2)/2}^{(1)} \big(\big(z^2 - m^2\big)^{1/2} |x - y|\big)     \no \\
& \qquad - 4^{-1} (2 \pi)^{(2-n)/2} |x - y|^{1 - n} \, \alpha \cdot \f{(x - y)}{|x - y|}    \no \\
& \qquad \quad \times \big[\big(z^2 - m^2\big)^{1/2} |x - y|\big]^{n/2} 
H_{n/2}^{(1)} \big(\big(z^2 - m^2\big)^{1/2} |x - y|\big).    
\end{align}
Here we employed the identity (\cite[p.~361]{AS72}),
\begin{equation}
\big[H_{\nu}^{(1)} (\zeta)\big]' = - H_{\nu + 1}^{(1)}(\zeta) + \nu \, \zeta^{-1} H_{\nu}^{(1)}(\zeta), \quad 
\nu, \zeta \in \bbC.  
\end{equation}

Equations \eqref{5.9}, \eqref{5.10} reveal the facts (still assuming \eqref{5.7}),
\begin{align}
& \lim_{\substack{z \to \pm m \\ z \in \bbC \backslash \{\pm m\}}} 
G_0(m,z; x,y) = 4^{-1} \pi^{- n/2} \Gamma((n-2)/2) |x - y|^{2-n} (m \, \beta \pm m I_N)     \lb{5.14} \\
& \quad + i 2^{-1} \pi^{-n/2} \Gamma(n/2) \, \alpha \cdot \f{(x - y)}{|x - y|^n}, 
\quad m > 0, \; x, y \in \bbR^n, \; x \neq y, \; n \in \bbN, \; n \geq 3,    \no \\
& G_0(m,z; x,y) \underset{\substack{z \to \pm m \\ z \in \bbC \backslash \{\pm m}}{=}  
- (4 \pi)^{-1} \ln\big(z^2 - m^2\big) (m \, \beta \pm m I_2)        \no \\
&  \quad  - (2 \pi)^{-1} \ln(|x - y|) (m \, \beta \pm m I_2) 
+ i (2 \pi)^{-1} \, \alpha \cdot \f{(x - y)}{|x - y|^2}    \lb{5.15} \\
&  \quad  
+ \Oh\big(\big(z^2 - m^2\big) \ln\big(z^2 - m^2\big)\big), 
\quad m > 0, \; x, y \in \bbR^2, \; x \neq y.  \no
\end{align}
(Here the remainder term $\Oh\big(\big(z^2 - m^2\big) \ln\big(z^2 - m^2\big)\big)$ depends 
on $x, y \in \bbR^2$, but this is of no concern at this point.) 
In particular, $G_0(m,z; \, \cdot \, , \, \cdot \,)$ blows up logarithmically as $z \to \pm m$ 
in two dimensions, $n=2$, just as $g_0(z, \, \cdot \, , \, \cdot \,)$ does as $z \to 0$. 

By contrast, the massless case is quite different and assuming 
\begin{equation}
z \in \bbC_+, \; x, y \in \bbR^n, \; x \neq y, \; n \in \bbN, \; n \geq 2,    \lb{5.16}
\end{equation}
one computes in the case $m=0$ for the Green's function $G_0(z;\, \cdot \,,\, \cdot \,)$ of $H_0$, 
\begin{align}
& G_0(z;x,y) := (H_0 - z I_{[L^2(\bbR^n)]^N})^{-1}(x,y)     \no \\ 
& \quad = i 4^{-1} (2 \pi)^{(2-n)/2} |x - y|^{2-n} z \, [z |x - y|]^{(n-2)/2} 
H_{(n-2)/2}^{(1)} (z |x - y|) I_N   \lb{5.17} \\
& \qquad - 4^{-1} (2 \pi)^{(2-n)/2} |x - y|^{1-n} [z |x - y|]^{n/2} H_{n/2}^{(1)} (z |x - y|) \, 
\alpha \cdot \f{(x - y)}{|x - y|}.    \no
\end{align}
The Green's function $G_0(z;\, \cdot \,,\, \cdot \,)$ of $H_0$ continuously extends to $z \in \ol{\bbC_+}$. In addition, in the massless case $m=0$, the limit $z \to 0$ exists\footnote{Our choice of notation $0+i\,0$ in 
$G_0(0+i\,0;x,y)$ indicates that the limit $\lim_{z \to 0}$ is performed in the closed upper half-plane 
$\ol{\bbC_+}$.}, 
\begin{align}
\begin{split} 
& \lim_{\substack{z \to 0, \\ z \in \ol{\bbC_+} \backslash\{0\}}} G_0(z;x,y) := G_0(0+i\,0;x,y)   \\
& \quad = i 2^{-1} \pi^{-n/2} \Gamma(n/2) \, \alpha \cdot \f{(x - y)}{|x - y|^n}, \quad x, y \in \bbR^n, \; x \neq y, \; n \in \bbN, \; n \geq 2,    \lb{5.18} 
\end{split} 
\end{align} 
and no blow up occurs for all $n \in \bbN$, $n \geq 2$. 

\begin{remark} \lb{r5.1} 
$(i)$ The observation of an absence of blow up in $G_0(z;\, \cdot \,,\, \cdot \,)$ as $z \to 0$ is consistent with the sufficient condition for the Dirac operator $H = H_0 + V$ (in dimensions $n \in \bbN$, $n \geq 2$), with $V$ an appropriate self-adjoint $N \times N$ matrix-valued potential, having no eigenvalues, as derived in \cite[Theorems~2.1, 2.3]{KOY15}. \\[1mm]
$(ii)$ The asymptotic behavior, for some $d_n \in (0,\infty)$, 
\begin{align}
\|G_0(0+i\,0;x,y)\|_{\bbC^N} \underset{\substack{z \to 0, \\ z \in \ol{\bbC_+} \backslash\{0\}}}{=}  
d_n |x-y|^{1-n}, \quad x, y \in \bbR^n, \; x \neq y, \; n \in \bbN, \; n \geq 2, 
\end{align} 
implies the absence of zero-energy resonances (cf.\ Section \ref{s9} for a detailed discussion) of $H$ for $n\in \bbN$, $n\geq 3$, for sufficiently fast decaying short-range potentials $V$ at infinity, as $|\, \cdot \,|^{1-n}$ lies in 
$L^2(\bbR^n)$ near infinity if and only if $n \geq 3$. This is consistent with observations in \cite{Ai16}, \cite[Sect.~4.4]{BE11}, \cite{BES08}, \cite{BGW95}, \cite{SU08}, \cite{SU08a}, \cite{ZG13} for $n=3$ (see also Remark \ref{r9.8}\,$(ii)$). 
This should be contrasted with the behavior of Schr\"odinger operators where 
\begin{align}
\begin{split}
\lim_{\substack{z \to 0 \\ z \in \bbC \backslash \{0\}}} g_0(z;x,y) = g_0(0;x,y) 
= \f{1}{(n-2) \omega_{n-1}} |x - y|^{2-n},& \\ 
 x, y \in\bbR^n, \; x \neq y, \; n \in \bbN, \; n \ge 3,&    
 \end{split} 
\end{align} 
implies the absence of zero-energy resonances of $h= h_0 + w$ for $n\in \bbN$, $n\geq 5$, again for sufficiently fast decaying short-range potentials $w$ at infinity, as $|\, \cdot \,|^{2-n}$ lies in $L^2(\bbR^n)$ near infinity if and only if $n \geq 5$, as observed in \cite{Je80}.
\hfill $\diamond$
\end{remark}

\begin{remark} \lb{r5.2} 
In the special case $n=3$, the identities
\begin{align}
H_{1/2}^{(1)}(\zeta) &= -i\bigg(\frac{2}{\pi}\bigg)^{1/2} \frac{e^{i\zeta}}{\zeta^{1/2}},\quad \\
H_{3/2}^{(1)}(\zeta) &= -\bigg(\frac{2}{\pi}\bigg)^{1/2}\frac{e^{i\zeta}(\zeta+i)}{\zeta^{3/2}},\quad \zeta\in \bbC\backslash\{0\},
\end{align}
combine in \eqref{5.17} to yield
\begin{align}
&G_0(z;x,y)\\
&\quad= \frac{e^{iz|x-y|}}{4\pi |x-y|}\bigg[z \, I_N + z \, \alpha\cdot \frac{(x-y)}{|x-y|} + i \, \alpha\cdot \frac{(x-y)}{|x-y|^2}\bigg],\quad x,y\in \bbR^3,\; x\neq y,\; z\in \bbC_+.\no
\end{align}
\hfill $\diamond$
\end{remark}

\begin{remark}
It is possible to expand the massless Dirac Green's function $G_0(z;\,\cdot\,,\,\cdot\,)$ in powers of $z$ in such a way that several coefficients in the expansion vanish (the precise number of vanishing coefficients depending on the dimension $n$) for odd dimensions $n\geq 5$.  This observation relies on the following connection between the modified Bessel and spherical Bessel functions (cf., e.g., \cite[10.1.1]{AS72}):
\begin{align}
H_{j+(1/2)}^{(1)}(\zeta) = (2\pi^{-1}\zeta)^{1/2} h_j^{(1)}(\zeta),\quad \zeta\in \bbC\backslash\{0\},\; j\in \bbN.\lb{B.33}
\end{align}
Moreover, by \cite[10.1.16]{AS72},
\begin{align}
h_j^{(1)}(\zeta) = i^{-(j+1)}\zeta^{-1}e^{i\zeta}\sum_{k=0}^j\frac{(j+k)!}{k!(j-k)!}(-2i\zeta)^{-k},\quad \zeta\in \bbC\backslash\{0\},\; j\in \bbN.\lb{B.34}
\end{align}
Upon combining \eqref{B.33} and \eqref{B.34}, one obtains for odd dimensions $n\geq 3$,
\begin{align}
&H_{(n/2)-1}^{(1)}(\zeta) = H_{[(n-3)/2]+(1/2)}^{(1)}(\zeta)\lb{B.35}\\
&\quad= 2^{1/2}\pi^{-1/2}i^{(1-n)/2}\zeta^{-1/2}e^{i\zeta}\sum_{k=0}^{(n-3)/2}\frac{([(n-3)/2]+k)!}{k!([(n-3)/2]-k)!}(-2i\zeta)^{-k},\quad \zeta \in \bbC\backslash\{0\},\no
\end{align}
and
\begin{align}
&H_{(n/2)}^{(1)}(\zeta) = H_{[(n-1)/2]+(1/2)}^{(1)}(\zeta)    \no \\
&\quad= 2^{1/2}\pi^{-1/2}i^{-(n+1)/2}\zeta^{-1/2}e^{i\zeta}\sum_{k=0}^{(n-1)/2}\frac{([(n-1)/2]+k)!}{k!([(n-1)/2]-k)!}(-2i\zeta)^{-k},     \lb{B.36} \\
& \hspace*{9cm} \zeta\in \bbC\backslash\{0\}.   \no
\end{align}
Thus, using the expansions \eqref{B.35} and \eqref{B.36} in \eqref{5.17}, one obtains the following expansion for the massless Dirac Green's function in odd dimensions $n\geq 3$:
\begin{align} \lb{B.39}
&G_0(z;x,y) = i (-1)^{(1-n)/2}2^{-(n+1)/2}\pi^{(1-n)/2}e^{iz|x-y|}I_N   \no\\
&\qquad \times \sum_{k=0}^{(n-3)/2}\frac{([(n-3)/2]+k)!}{k!([(n-3)/2]-k)!}(-2)^{-k}(iz)^{-k+[(n-1)/2]}|x-y|^{-k-[(n-1)/2]}\no\\
&\quad+ i(-1)^{(1-n)/2}2^{-(n+1)/2}\pi^{(1-n)/2}e^{iz|x-y|}\alpha\cdot\frac{(x-y)}{|x-y|}    \\
&\qquad \times \sum_{k=0}^{(n-1)/2}\frac{([(n-1)/2]+k)!}{k!([(n-1)/2]-k)!}(-2)^{-k}(iz)^{-k+[(n-1)/2]}|x-y|^{-k-[(n-1)/2]},\no\\
&\hspace*{7.55cm} x,y\in \bbR^n,\; x\neq y,\; z\in \bbC\backslash\bbR.\no
\end{align}
Introducing the power series for the exponential in \eqref{B.39} and reordering the series to combine like powers of $iz$, one obtains
\begin{align} \lb{B.40}
&G_0(z;x,y) = (-1)^{(3-n)/2}2^{-(n+1)/2}\pi^{(1-n)/2}z|x-y|\sum_{j=0}^{\infty}d_j(iz)^j|x-y|^{-(n-1-j)}I_N\no\\
&\quad + i(-1)^{(1-n)/2}2^{-(n+1)/2}\pi^{(1-n)/2}\sum_{j=0}^{\infty}d_j'(iz)^j|x-y|^{-(n-1-j)}\alpha\cdot\frac{(x-y)}{|x-y|},\no\\
&\hspace*{7cm} x,y\in \bbR^n,\; x\neq y,\; z\in \bbC\backslash\bbR, 
\end{align}
where for each $j\in \bbN_0$, the numerical coefficients $d_j$ and $d_j'$ are given by
\begin{align}
d_j &= \sum_{\substack{k=0 \\ k\geq [(n-3)/2]-j}}^{(n-3)/2}\frac{([(n-3)/2]+k)!}{k!([(n-3)/2]-k)!}(-2)^{-k}
\frac{1}{(j+k-[(n-3)/2])!},\lb{B.41}\\
d_j'&= \sum_{\substack{k=0 \\ k\geq [(n-1)/2]-j}}^{(n-1)/2}\frac{([(n-1)/2]+k)!}{k!([(n-1)/2]-k)!}(-2)^{-k}
\frac{1}{(j+k-[(n-1)/2])!},\quad j\in \bbN_0.\lb{B.42}
\end{align}

In odd dimensions $n\geq 5$, certain of the coefficients $d_j$ and $d_j'$ in \eqref{B.41} and \eqref{B.42} vanish based on the following combinatorial identity:

\smallskip

\noindent 
{\bf Proposition} ({\cite[Lemma 3.3]{Je80}}).  
If $m\in \bbN$ and
\begin{equation}
c_j:= \sum_{\substack{k=0 \\ k\geq m-j}}^m \frac{(m+k)!}{k!(m-k)!}(-2)^{-k}\frac{1}{(k+j-m)!},\quad j\in \bbN_0,
\end{equation}
then $c_j=0$ for $j=1,3,\ldots,2m-1$.

Applying this proposition with $m=(n-3)/2$ and $m=(n-1)/2$, one infers that for $n\geq5$ is odd, the free massless Green's function $G_0(z,\,\cdot\,,\,\cdot\,)$ is given by \eqref{B.40}--\eqref{B.42} and
\begin{align}
d_j&=0\quad\text{for all odd $j\in \bbN$ satisfying $1\leq j\leq n-4$},\\
d_j'&=0\quad\text{for all odd $j\in\bbN$ satisfying $1\leq j\leq n-2$}.
\end{align}
\hfill $\diamond$
\end{remark}

Since $H_0$ has no spectral gap, $\sigma(H_0) = \bbR$, but $h_0$ has the half-line 
$(-\infty, 0)$ in its resolvent set, a comparison of $h_0$ with the massive free 
Dirac operator $H_0(m) = H_0 + m \, \beta$, $m > 0$, with spectral gap $(- m, m)$, replacing the energy $z=0$ by $z=\pm m$, is quite natural and then exhibits a similar logarithmic blowup behavior as $z \to 0$
in dimensions $n=2$.

Returning to our analysis of the resolvent of $H_0$, the asymptotic behavior \eqref{5.9}--\eqref{5.11} 
implies for some $c_n \in (0,\infty)$,
\begin{align} 
\|G_0(0+i\,0;x,y)\|_{\cB(\bbC^N)} \leq c_n |x - y|^{1-n}, \quad 
x, y \in \bbR^n, \; x \neq y, \; n \in \bbN, \; n \geq 2,  \lb{5.19}
\end{align}
and for given $R \geq 1$,
\begin{align} 
& \|G_0(z;x,y)\|_{\cB(\bbC^N)} \leq c_{n,R}(z) e^{- \Im(z) |x - y|} \begin{cases} 
 |x - y|^{1-n}, & |x - y| \leq 1, \; x \neq y, \\
1, & 1 \leq |x - y | \leq R, \\
 |x - y|^{(1 - n)/2}, & |x - y| \geq R,
\end{cases}     \no \\ 
& \hspace*{4.65cm} 
z \in \ol{\bbC_+}, \; x, y \in \bbR^n, \; x \neq y, \; n \in \bbN, \; n \geq 2,  
\lb{5.20}
\end{align}
for some $c_{n,R}(\, \cdot \,)\in (0,\infty)$ continuous and locally bounded on $\ol{\bbC_+}$. 

For future purposes we now rewrite $G_0(z; \, \cdot \,,\, \cdot\,)$ as follows:
\begin{align}
& G_0(z;x,y) = i 4^{-1} (2 \pi)^{(2-n)/2} |x - y|^{2-n} z \, [z |x - y|]^{(n-2)/2} 
H_{(n-2)/2}^{(1)} (z |x - y|) I_N   \no \\
& \qquad - 4^{-1} (2 \pi)^{(2-n)/2} |x - y|^{1-n} [z |x - y|]^{n/2} H_{n/2}^{(1)} (z |x - y|) \, 
\alpha \cdot \f{(x - y)}{|x - y|}    \no \\
& \quad = |x - y|^{1-n} f_n(z,x-y),     \lb{5.35} \\
& \hspace*{9mm}
z \in \ol{\bbC_+}, \; x, y \in \bbR^n, \; x \neq y, \; n \in \bbN, \; n \geq 2,   \no 
\end{align}
where $f_n$ is continuous and locally bounded on $\ol{\bbC_+} \times \bbR^n$, in addition,
\begin{align}
\begin{split} 
\|f_n(z,x)\|_{\cB(\bbC^{N})} \leq c_n(z) e^{- \Im(z) |x|} \begin{cases} 
1, & 0 \leq |x| \leq 1, \\
|x|^{(n -1)/2}, & |x| \geq 1, 
\end{cases}&     \lb{5.35a} \\ 
z \in \ol{\bbC_+}, \; x, y \in \bbR^n,&
\end{split} 
\end{align}
for some constant $c_n(\, \cdot \,)\in (0,\infty)$ continuous and locally bounded on 
$\ol{\bbC_+}$. In particular, decomposing $G_0(z; \, \cdot \,,\, \cdot\,)$ into
\begin{align} 
G_0(z;x,y) &= G_0(z;x,y) \chi_{[0,1]}(|x-y|)
 + G_0(z;x,y) \chi_{[1,\infty)}(|x-y|)    \no \\
& = G_{0, <}(z;x-y) + G_{0, >}(z;x-y),     \lb{5.36} \\
& \hspace*{6mm} 
z \in \ol{\bbC_+}, \; x, y \in \bbR^n, \; x \neq y, \; n \in \bbN, \; n \geq 2,   \no
\end{align}
where
\begin{align} 
G_{0, <}(z;x-y) &:= G_0(z;x,y) \chi_{[0,1]}(|x-y|),\lb{5.39w}\\
G_{0, >}(z;x-y) &:= G_0(z;x,y) \chi_{[1,\infty)}(|x-y|),\\
& \hspace*{6mm}z \in \ol{\bbC_+}, \; x, y \in \bbR^n, \; x \neq y, \; n \in \bbN, \; n \geq 2,   \no
\end{align}
one verifies that 
\begin{align}
\begin{split}
|G_{0, >}(z;x-y)_{j,k}| \leq \begin{cases} C_n |x-y|^{-(n-1)}, & z=0, \\
C_n(z) |x-y|^{-(n-1)/2}, & z \in \ol{\bbC_+},  \\
\end{cases}&     \\
x,y \in \bbR^n, \; |x-y| \geq 1, \; 1 \leq j,k \leq N,&      \lb{5.36a} 
\end{split}
\end{align} 
for some constants $C_n, C_n(\, \cdot \,) \in (0, \infty)$, in particular, 
\begin{equation}
G_{0,>}(z;\, \cdot \,) \in [L^{\infty}(\bbR^n)]^{N \times N}, 
\quad z \in \ol{\bbC_+},    \lb{5.37}
\end{equation}
and that
\begin{equation}\lb{5.42c}
\text{$G_{0,>}(\, \cdot \,;\, \cdot \,)$ is continuous on 
$\ol{\bbC_+} \times \bbR^n$.}
\end{equation}

In the next section, we will use the decomposition \eqref{5.36} to derive trace ideal properties of operators of the type 
$F_1(\, \cdot \,) (H_0 - z I_{[L^2(\bbR^n)]^N})^{-1} F_2(\, \cdot \,)$, employing results of 
\cite[Subsection~5.4]{BKS91} 
in the case $n \geq 3$.  We also derive trace ideal properties of 
$\langle \, \cdot \, \rangle^{-\delta} (H_0 - z I_{[L^2(\bbR^n)]^N})^{-1} \langle \, \cdot \, \rangle^{-\delta}$ 
in the case $n \geq 2$ using a different approach based on a combination of Sobolev's inequality and complex interpolation. 

\section{Trace Ideal Properties of $F_1(\, \cdot \,) (H_0 - z I_{[L^2(\bbR^n)]^N})^{-1} F_2(\, \cdot \,)$ and 
$\langle \, \cdot \, \rangle^{-\delta} (H_0 - z I_{[L^2(\bbR^n)]^N})^{-1}
\langle \, \cdot \, \rangle^{-\delta} $}     \lb{s6}

In the first part of this section we derive trace ideal properties of operators of the type 
$F_1(\, \cdot \,) (H_0 - z I_{[L^2(\bbR^n)]^N})^{-1} F_2(\, \cdot \,)$, employing results of 
\cite[Subsection~5.4]{BKS91} 
in the case $n \geq 3$. In the second part of this section we derive trace ideal properties of 
$\langle \, \cdot \, \rangle^{-\delta} (H_0 - z I_{[L^2(\bbR^n)]^N})^{-1} \langle \, \cdot \, \rangle^{-\delta}$ 
in the case $n \geq 2$ by a different approach based on a combination of Sobolev's inequality and complex interpolation. These two approaches are independent and complement each other. 

The considerations \eqref{5.35}--\eqref{5.37} readily imply the following facts:

\begin{lemma} \lb{l5.3}
Let $n \in \bbN$, $n \geq 2$, and $F,H \in [L^2(\bbR^n)]^{N \times N}$. Introducing 
\begin{equation}
R_{0,>,F,H} (z;x,y) = F(x) G_{0, >}(z;x-y) H(y), \quad z \in \ol{\bbC_+}, 
\; x,y \in \bbR^n,   \lb{5.38}
\end{equation} 
the integral operator $R_{0,>,F,H} (z)$ in $L^2(\bbR^n)$ with integral kernel $R_{0,>,F,H} (z;\, \cdot \,,\, \cdot \,)$ satisfies
\begin{equation}
R_{0,>,F,H} (z) \in \cB_2\big([L^2(\bbR^n)]^N\big), \quad z \in \ol{\bbC_+},  
\lb{5.39} 
\end{equation}
and $R_{0,>,F,H} (\, \cdot \,)$ is continuous on $\ol{\bbC_+}$ with respect to 
the $\|\, \cdot \,\|_{\cB_2([L^2(\bbR^n)]^N)}$-norm. 

In particular, this applies to $F, H$ satisfying for some constant $C \in (0,\infty)$, 
\begin{equation}
|F_{j,k}|, |H_{j,k}|  \leq C \langle \, \cdot \, \rangle^{- \delta}, \quad \delta > n/2, \; 1 \leq j, k \leq N. 
\end{equation} 
\end{lemma}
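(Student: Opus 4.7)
The plan is to deduce Hilbert--Schmidt membership of $R_{0,>,F,H}(z)$ directly from the observation made in \eqref{5.37} that $G_{0,>}(z;\, \cdot \,) \in [L^\infty(\bbR^n)]^{N\times N}$ for $z \in \ol{\bbC_+}$, combined with the assumption $F, H \in [L^2(\bbR^n)]^{N \times N}$. Recall that for an integral operator on $[L^2(\bbR^n)]^N$ with matrix-valued kernel $K(\,\cdot\,,\,\cdot\,)$, the Hilbert--Schmidt norm equals
\[
\|K\|_{\cB_2([L^2(\bbR^n)]^N)}^2 = \int_{\bbR^n}\int_{\bbR^n} \|K(x,y)\|_{\mathrm{HS}(\bbC^N)}^2 \, d^n x \, d^n y.
\]
First I would apply submultiplicativity of the Hilbert--Schmidt matrix norm under matrix multiplication together with the entrywise $L^\infty$-bound from \eqref{5.37} to estimate
\[
\|F(x) G_{0,>}(z;x-y) H(y)\|_{\mathrm{HS}(\bbC^N)}^2 \leq C_N \|G_{0,>}(z;\,\cdot\,)\|_{[L^\infty(\bbR^n)]^{N\times N}}^2 \|F(x)\|_{\mathrm{HS}(\bbC^N)}^2 \|H(y)\|_{\mathrm{HS}(\bbC^N)}^2
\]
for some combinatorial constant $C_N$ depending only on $N$. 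Integrating in $x$ and $y$ factorizes and immediately yields
\[
\|R_{0,>,F,H}(z)\|_{\cB_2([L^2(\bbR^n)]^N)}^2 \leq C_N \|G_{0,>}(z;\,\cdot\,)\|_{[L^\infty(\bbR^n)]^{N\times N}}^2 \|F\|_{[L^2(\bbR^n)]^{N\times N}}^2 \|H\|_{[L^2(\bbR^n)]^{N\times N}}^2,
\]
which establishes \eqref{5.39}.

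Next, for the continuity statement, I would pick $z_0 \in \ol{\bbC_+}$ and a sequence $z_k \to z_0$ in $\ol{\bbC_+}$. The continuity of $G_{0,>}(\,\cdot\,;\,\cdot\,)$ on $\ol{\bbC_+} \times \bbR^n$ from \eqref{5.42c} gives pointwise convergence of the kernel difference to zero for $(x,y)$ off the measure-zero set $\{|x-y|=1\}$. The $L^\infty$-bound in \eqref{5.36a} (via the continuous and locally bounded constants $C_n(\,\cdot\,)$) ensures that $\|G_{0,>}(z;\,\cdot\,)\|_{L^\infty}$ is locally bounded in $z \in \ol{\bbC_+}$, producing a uniform-in-$k$ domination
\[
\|F(x) [G_{0,>}(z_k;x-y) - G_{0,>}(z_0;x-y)] H(y)\|_{\mathrm{HS}(\bbC^N)}^2 \leq C' \|F(x)\|_{\mathrm{HS}(\bbC^N)}^2 \|H(y)\|_{\mathrm{HS}(\bbC^N)}^2,
\]
the right-hand side being integrable on $\bbR^n \times \bbR^n$. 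Dominated convergence then yields $\|R_{0,>,F,H}(z_k) - R_{0,>,F,H}(z_0)\|_{\cB_2} \to 0$, establishing continuity in the Hilbert--Schmidt norm.

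The particular case is then an immediate application: the bound $|F_{j,k}|, |H_{j,k}| \leq C \langle\,\cdot\,\rangle^{-\delta}$ with $\delta > n/2$ places each matrix entry in $L^2(\bbR^n)$, since $\langle\,\cdot\,\rangle^{-\delta} \in L^2(\bbR^n)$ precisely when $\delta > n/2$. Hence $F, H \in [L^2(\bbR^n)]^{N \times N}$ and the general statement applies. The proof is essentially routine given \eqref{5.37} and \eqref{5.42c}; there is no genuine obstacle — the whole argument relies on the two facts that $G_{0,>}$ is uniformly bounded (because the singularity at $x=y$ has been cut off and the decay in $|x-y|$ provides boundedness away from $z$) and depends continuously on $z$, both of which are already recorded just before the lemma.
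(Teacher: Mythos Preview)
Your proof is correct and follows essentially the same route as the paper's: both compute the Hilbert--Schmidt norm of the kernel directly, bound the matrix kernel pointwise using the $L^\infty$-bound \eqref{5.37} on $G_{0,>}$ together with submultiplicativity of matrix norms, and then integrate using $F,H\in[L^2(\bbR^n)]^{N\times N}$; the continuity argument via dominated convergence with the locally uniform $L^\infty$-bound and the pointwise continuity \eqref{5.42c} is likewise the same. The paper routes the pointwise matrix bound through $\|\,\cdot\,\|_{\cB(\bbC^N)}$ and then passes to $\|\,\cdot\,\|_{\cB_2(\bbC^N)}$ via $\|D\|_{\cB_2(\bbC^N)}\le N^{1/2}\|D\|_{\cB(\bbC^N)}$, whereas you use Hilbert--Schmidt submultiplicativity directly; these are equivalent in finite dimensions and the difference is cosmetic.
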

\begin{proof}
We apply Theorem \ref{tA.2}\,$(iii)$ and Lemma \ref{lA.3}. 

Let $F,H\in [L^2(\bbR^n)]^{N \times N}$ and $z\in \overline{\bbC_+}$ be fixed.  To prove \eqref{5.39}, it suffices to show
\begin{equation} \lb{5.41}
\big\|R_{0,>,F,H} (z;\,\cdot\,,\,\cdot\,)\big\|_{\cB_2(\bbC^N)} 
\in L^2(\bbR^{2n};d^n x \, d^n y)
\end{equation}
and apply \cite[Thm.~11.6]{BS87} (in the special case 
$L^2(\bbR^n \times \bbR^n; d^nx \,d^n y)$).  To prove \eqref{5.41} we recall
\begin{equation} \lb{5.42}
\|D\|_{\cB_2(\bbC^n)} \leq N^{1/2} \|D\|_{\cB(\bbC^N)},\quad D \in \bbC^{N\times N}.
\end{equation}
Then by \eqref{5.37} and \eqref{5.41},
\begin{align}
\|R_{0,>,F,H} (z;x,y)\|_{\cB_2(\bbC^N)} &\leq N^{1/2} 
\|F(x)\|_{\cB(\bbC^N)} \|G_{0,>}(z;x-y)\|_{\cB(\bbC^N)} \|H(y)\|_{\cB(\bbC^N)}\no\\
&\leq C(z) \|F(x)\|_{\cB(\bbC^N)} \|H(y)\|_{\cB(\bbC^N)},\quad x,y\in \bbR^n,\lb{5.43}
\end{align}
for an appropriate constant $C(z)>0$.  Since by hypothesis $F,H\in [L^2(\bbR^n)]^{N \times N}$, and hence,
\begin{equation}
\|F(\, \cdot \,)\|_{\cB(\bbC^N)}^2 \leq \|F(\, \cdot \,)\|_{\cB_2(\bbC^N)}^2 
= \sum_{j,k=1}^N |F_{j,k}(\, \cdot \,)|^2 \in L^1(\bbR^n),
\end{equation}
and analogously for $H$, the estimate in \eqref{5.43} implies \eqref{5.41}.

To prove the continuity claim, let $z,z'\in \overline{\bbC_+}$.  One computes (cf.~\cite[Thm.~11.6]{BS87})
\begin{align}
&\big\|R_{0,>,F,H} (z) - R_{0,>,F,H} (z') \big\|_{\cB_2([L^2(\bbR^n)]^N)}^2\no\\
&\quad = \int_{\bbR^n\times\bbR^n} d^n x \, d^n y \, 
\big\|R_{0,>,F,H} (z;x,y) - R_{0,>,F,H} (z';x,y) \big\|_{\cB_2(\bbC^N)}^2    \no\\
&\quad \leq N \int_{\bbR^n\times\bbR^n} d^n x \, d^n y \, 
\|F(x)\|_{\cB(\bbC^N)}^2 \big\|G_{0,>}(z;x-y) - G_{0,>}(z';x-y)\big\|_{\cB(\bbC^N)}^2   \no \\
& \hspace*{3.6cm} \times \|H(y)\|_{\cB(\bbC^N)}^2.    \lb{5.44}
\end{align}
An application of Lebesgue's dominated convergence theorem, making use of \eqref{5.37}, 
$F,H \in [L^2(\bbR^n)]^{N \times N}$, and the continuity of $G_{0,>}(z;x-y)$ with respect to $(z, x-y)$
in $\cB(\bbC^N)$ (see \eqref{5.42c}), then yields
\begin{equation}
\lim_{\substack{z\to z' \\ z, z' \in \ol{\bbC_+}}}\big\|R_{0,>,F,H} (z) - R_{0,>,F,H} (z') \big\|_{\cB_2([L^2(\bbR^n)]^N)}=0.
\end{equation}
\end{proof}

To improve upon Lemma \ref{l5.3}, we now recall the following version of Sobolev's inequality (see, e.g., \cite[Corollary~I.14]{Si71}).

\begin{theorem} \lb{t5.4}
Let $n \in \bbN$, $\lambda \in (0, n)$, $r, s \in (1,\infty)$, $r^{-1} + s^{-1} + 
\lambda n^{-1} = 2$, $f \in L^r(\bbR^n)$, $h \in L^s(\bbR^n)$. Then, there 
exists $C_{r,s,\lambda,n} \in (0,\infty)$ such that
\begin{equation}
\int_{\bbR^n \times \bbR^n} d^n x \, d^n y \, \f{|f(x)||h(y)|}{|x-y|^{\lambda}}
\leq C_{r,s,\lambda,n} \|f\|_{L^r(\bbR^n)} \|h\|_{L^s(\bbR^n)}.    \lb{6.18} 
\end{equation}
\end{theorem}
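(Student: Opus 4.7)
The plan is to recognize \eqref{6.18} as the classical Hardy--Littlewood--Sobolev inequality and reduce it to the standard convolution form via H\"older's inequality. First, I would rewrite the double integral on the left-hand side of \eqref{6.18} as a pairing between $|h|$ and the convolution of $|f|$ with the Riesz-type kernel $|\,\cdot\,|^{-\lambda}$:
\begin{equation*}
\int_{\bbR^n \times \bbR^n} d^n x \, d^n y \, \f{|f(x)||h(y)|}{|x-y|^{\lambda}}
= \int_{\bbR^n} d^n y \, |h(y)| \, \bigl(|\,\cdot\,|^{-\lambda} * |f|\bigr)(y).
\end{equation*}
An application of H\"older's inequality in the $y$-variable, with conjugate exponents $s$ and $s' = s/(s-1)$, then yields
\begin{equation*}
\int_{\bbR^n \times \bbR^n} d^n x \, d^n y \, \f{|f(x)||h(y)|}{|x-y|^{\lambda}}
\leq \|h\|_{L^s(\bbR^n)} \, \bigl\||\,\cdot\,|^{-\lambda} * |f|\bigr\|_{L^{s'}(\bbR^n)}.
\end{equation*}

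Next, I would invoke the Hardy--Littlewood--Sobolev theorem in its standard convolution form: for $\lambda \in (0,n)$ and $1 < r < q < \infty$ related by $r^{-1} + \lambda n^{-1} = 1 + q^{-1}$, the operator $g \mapsto |\,\cdot\,|^{-\lambda} * g$ maps $L^r(\bbR^n)$ boundedly into $L^q(\bbR^n)$. Applied with $q = s'$, the scaling condition becomes $r^{-1} + \lambda n^{-1} = 2 - s^{-1}$, which is precisely the hypothesis $r^{-1} + s^{-1} + \lambda n^{-1} = 2$ of the theorem. One also checks that the assumptions $r, s \in (1,\infty)$ together with $\lambda \in (0,n)$ force $s' \in (1,\infty)$ and $r < s'$, so the HLS hypotheses are met. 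This produces
\begin{equation*}
\bigl\||\,\cdot\,|^{-\lambda} * |f|\bigr\|_{L^{s'}(\bbR^n)} \leq \wti C_{r,\lambda,n} \, \|f\|_{L^r(\bbR^n)},
\end{equation*}
and combining the two estimates delivers \eqref{6.18} with $C_{r,s,\lambda,n} = \wti C_{r,\lambda,n}$.

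The only nontrivial input is the classical HLS convolution bound itself, whose proof decomposes $|\,\cdot\,|^{-\lambda} = K_R + K^R$ into its truncations at radius $R$, observes that both pieces belong to weak-$L^{n/\lambda}$, and then uses Young's inequality together with a Marcinkiewicz-style interpolation across an optimally chosen $R=R(y)$ to convert weak-type bounds into strong-type bounds at the off-endpoint exponents $r, s'$. There is no genuine obstacle beyond this classical ingredient; indeed the result is already recorded as \cite[Corollary~I.14]{Si71}, and in the body of the paper it suffices to cite that reference.
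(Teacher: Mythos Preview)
Your argument is correct, and in fact you already anticipated the paper's treatment: the paper does not prove this statement at all but merely records it as a known version of Sobolev's inequality, citing \cite[Corollary~I.14]{Si71}. Your reduction via H\"older's inequality to the standard Hardy--Littlewood--Sobolev convolution bound is a valid route to the result, but the paper regards Theorem~\ref{t5.4} as classical and moves on.
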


For subsequent purposes, we also recall some basic facts on $L^p$-properties of Riesz potentials (see, e.g., 
\cite[Sect.~V.1]{St70}):

\begin{theorem} \lb{t5.4A}
Let $n \in \bbN$, $\alpha \in (0,n)$, and introduce the Riesz potential operator $\cR_{\alpha,n}$ as follows: 
\begin{align}
\begin{split} 
& (\cR_{\alpha,n} f)(x) = \big((- \Delta)^{- \alpha/2} f\big)(x) 
= \gamma(\alpha,n)^{-1} \int_{\bbR^n} d^n y \, |x - y|^{\alpha - n} f(y),     \lb{5.50} \\
& \gamma(\alpha,n) = \pi^{n/2} 2^{\alpha} \Gamma(\alpha/2)/\Gamma((n-\alpha)/2), 
\end{split}
\end{align}
for appropriate functions $f$ $($see below\,$)$. \\[1mm] 
$(i)$ Let $p \in [1,\infty)$ and $f \in L^p(\bbR^n)$. Then the integral $(\cR_{\alpha,n} f)(x)$ converges for $($Lebesgue\,$)$ a.e.~$x \in \bbR^n$.   \\[1mm] 
$(ii)$ Let $1 < p < q < \infty$, $q^{-1} = p^{-1} - \alpha n^{-1}$, and $f \in L^p(\bbR^n)$. Then there 
exists $C_{p,q,\alpha,n} \in (0,\infty)$ such that
\begin{equation}
\|\cR_{\alpha,n} f\|_{L^q(\bbR^n)} 
\leq C_{p,q,\alpha,n} \|f\|_{L^p(\bbR^n)}.    \lb{5.51} 
\end{equation}
\end{theorem}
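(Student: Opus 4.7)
The plan is to deduce both assertions via a single splitting of the Riesz kernel at a free parameter $R>0$, followed by optimization \`a la Hedberg. Writing $r=|x-y|$ and splitting the defining integral
\[
(\cR_{\alpha,n}f)(x) = \gamma(\alpha,n)^{-1}\int_{\bbR^n} d^n y\,|x-y|^{\alpha-n}f(y)
\]
at $r=R$, I would estimate the inner piece via a dyadic annular decomposition together with the definition of the Hardy--Littlewood maximal function $(Mf)(x)=\sup_{\rho>0}\rho^{-n}\int_{|x-y|<\rho} d^n y\,|f(y)|$, yielding
\[
\int_{|x-y|<R} d^n y\,|x-y|^{\alpha-n}|f(y)| \,\leq\, C_{\alpha,n}\,R^{\alpha}(Mf)(x),
\]
the geometric convergence of the dyadic sum relying on $\alpha>0$.

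For part $(ii)$, I would control the outer piece by H\"older's inequality with conjugate exponent $p'$. The tail kernel satisfies $\int_{|x-y|>R} d^n y\,|x-y|^{(\alpha-n)p'}=C_{\alpha,n,p}R^{(\alpha-n)p'+n}$, which is finite exactly when $p<n/\alpha$ (equivalently, $q^{-1}=p^{-1}-\alpha n^{-1}>0$), producing the bound $C R^{\alpha-n/p}\|f\|_{L^p(\bbR^n)}$. Balancing the two estimates by choosing $R^{n/p}\asymp \|f\|_{L^p(\bbR^n)}/(Mf)(x)$ delivers Hedberg's pointwise inequality
\[
|(\cR_{\alpha,n}f)(x)| \,\leq\, C\,\|f\|_{L^p(\bbR^n)}^{\alpha p/n}\,(Mf)(x)^{1-\alpha p/n}.
\]
Since the Sobolev scaling $q^{-1}=p^{-1}-\alpha n^{-1}$ gives $1-\alpha p/n = p/q$, raising to the $q$-th power, integrating over $x\in\bbR^n$, and invoking the Hardy--Littlewood--Wiener maximal theorem ($\|Mf\|_{L^p(\bbR^n)}\leq c_p\|f\|_{L^p(\bbR^n)}$ for $1<p<\infty$) yields
\[
\|\cR_{\alpha,n}f\|_{L^q(\bbR^n)}^{q}\,\leq\, C\,\|f\|_{L^p(\bbR^n)}^{q-p}\|Mf\|_{L^p(\bbR^n)}^{p}\,\leq\, C'\,\|f\|_{L^p(\bbR^n)}^{q},
\]
which is \eqref{5.51}.

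For part $(i)$, the a.e.\ convergence assertion (in the substantive range $1\leq p<n/\alpha$) is an immediate byproduct of the same near/far splitting: the compactly supported local piece of $|z|^{\alpha-n}$ lies in $L^1(\bbR^n)$ since $\alpha>0$, so Young's convolution inequality delivers a function in $L^p(\bbR^n)$, hence finite a.e.; the far piece is bounded pointwise by H\"older's inequality, producing an essentially bounded contribution. For $p\geq n/\alpha$ the pointwise statement must be read in the sense of the standard reference \cite[Sect.~V.1]{St70}, where it is handled via density/truncation arguments. The principal obstacle in the whole argument is the $L^p$-boundedness of the maximal operator, which is classical (Vitali covering lemma plus Marcinkiewicz interpolation between the weak-$(1,1)$ endpoint and the trivial $(\infty,\infty)$ endpoint); once this is granted, the only nontrivial bookkeeping is verification of the exponent identity $1-\alpha p/n = p/q$, which is routine.
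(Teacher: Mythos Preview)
The paper does not prove this theorem at all; it merely recalls it as a standard fact with the citation ``see, e.g., \cite[Sect.~V.1]{St70}''. Your Hedberg-type argument is a correct and classical route to the Hardy--Littlewood--Sobolev inequality, and is essentially the approach in Stein's reference (the near/far kernel splitting, maximal-function control of the local piece, H\"older on the tail, then optimization in $R$).

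One small caveat on part $(i)$: the paper's statement with $p\in[1,\infty)$ is somewhat informal, since for $p\geq n/\alpha$ the integral need not converge for general $f\in L^p(\bbR^n)$ (take $f(y)=\langle y\rangle^{-n/p-\varepsilon}$ with $\varepsilon$ small). Stein's theorem is stated under the implicit restriction $p<n/\alpha$ coming from $q^{-1}=p^{-1}-\alpha n^{-1}>0$, and your proof correctly works in that range. Your hedge that the case $p\geq n/\alpha$ ``must be read in the sense of the standard reference'' is generous; in fact the a.e.\ convergence claim simply fails there without further qualification, and the paper only uses the theorem in the regime $p<n/\alpha$ anyway (cf.\ the applications in the proof of Theorem~\ref{t9.7}).
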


We also note the $\beta$ function-type integral (cf.\ \cite[p.~118]{St70}),
\begin{align}
& \int_{\bbR^n} d^n y \, |e_k - y|^{\alpha - n} |y|^{\beta - n} 
= \gamma(\alpha,n) \gamma(\beta,n)/\gamma(\alpha + \beta,n),     \no \\
& 0 < \alpha < n, \; 0 < \beta < n, \; \alpha + \beta < n,   \lb{5.52} \\
& e_k =(0,\dots,\underbrace{1}_{k},\dots,0), \; 1 \leq k \leq n.    \no 
\end{align}
and the Riesz composition formula (see \cite[Sects.~3.1, 3.2]{Du70}),
\begin{align}
& \int_{\bbR^n} d^n y \, |x_1 - y|^{\alpha - n} |y - x_2|^{\beta - n} 
= [\gamma(\alpha,n) \gamma(\beta,n)/\gamma(\alpha + \beta,n)] |x_1 - x_2|^{\alpha + \beta -n},   \no \\
& \hspace*{3.8cm} 0 < \alpha < n, \; 0 < \beta < n, \; \alpha + \beta < n, \; x_1, x_2 \in \bbR^n.     \lb{5.52A} 
\end{align}

For later use in Section \ref{s9}, we recall the following estimate taken from \cite[Lemma~6.3]{EG10}.
\begin{lemma} \lb{l3.12}
Let $n\in \bbN$ and $x_1,x_2\in \bbR^n$.  If $\alpha,\beta \in (0,n]$, $\varepsilon, \gamma \in (0,\infty)$, 
with $n + \gamma \geq \alpha + \beta$, and $\alpha + \beta \neq n$, then
\begin{align}
\begin{split}
&\int_{\bbR^n} d^ny \, |x_1 - y|^{\alpha - n} \langle y\rangle^{-\gamma-\varepsilon} |y - x_2|^{\beta - n}     \\
&\quad \leq C_{n,\alpha,\beta,\gamma,\varepsilon} 
\begin{cases}
|x_1 - x_2|^{-\max\{0,n - \alpha - \beta\}},& |x_1 - x_2|\leq 1,\\
|x_1 - x_2|^{-\min\{n - \alpha,n - \beta,n + \gamma - \alpha -\beta\}},& |x_1 - x_2| \geq 1,
\end{cases}
\end{split} 
\end{align}
where $C_{n,\alpha,\beta,\gamma,\varepsilon}\in (0,\infty)$ is an $x_1, x_2$-independent constant. \end{lemma}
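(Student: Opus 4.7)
The plan is to partition $\bbR^n$ into a few subregions, tailored to the relative positions of $y$ with respect to $x_1$, $x_2$, and the origin, and to estimate the integrand on each region by elementary radial integrals combined with the Riesz composition formula \eqref{5.52A}. In all cases the weight $\langle y\rangle^{-\gamma-\varepsilon}$ will only be used to secure convergence at infinity, while the precise power of $|x_1-x_2|$ is extracted from the Riesz kernels themselves.

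For the short-distance case $|x_1-x_2|\leq 1$, I would first treat $\alpha+\beta<n$: discard the weight (using $\langle y\rangle^{-\gamma-\varepsilon}\leq 1$) and apply \eqref{5.52A} directly to obtain the sharp bound $C|x_1-x_2|^{\alpha+\beta-n}$. If instead $\alpha+\beta>n$, I would split $\bbR^n$ into a ball around $x_1$, a ball around $x_2$ (each of radius $|x_1-x_2|/2$), and their complement. On each small ball the regular factor is bounded by a multiple of $|x_1-x_2|^{\beta-n}$ (respectively $|x_1-x_2|^{\alpha-n}$), while integrating the singular factor yields $|x_1-x_2|^{\alpha}$ (respectively $|x_1-x_2|^{\beta}$); the net contribution is $O\big(|x_1-x_2|^{\alpha+\beta-n}\big)=O(1)$. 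On the complement, a further splitting (bounded versus unbounded $|y|$) together with the elementary inequality $|x_1-y|^{\alpha-n}|y-x_2|^{\beta-n}\leq \max\{|x_1-y|,|y-x_2|\}^{\alpha+\beta-2n}$ on the near-diagonal part yields a contribution of order $O(1)$; integrability at infinity is secured by the standing hypothesis $n+\gamma\geq\alpha+\beta$ together with $\varepsilon>0$.

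For the long-distance case $R:=|x_1-x_2|\geq 1$, I would reuse the three-region decomposition but with radii $R/4$. On the ball $\{|y-x_1|\leq R/4\}$ one has $|y-x_2|\geq 3R/4$, hence $|y-x_2|^{\beta-n}\leq CR^{\beta-n}$, while $\int_{|y-x_1|\leq R/4}|x_1-y|^{\alpha-n}\langle y\rangle^{-\gamma-\varepsilon}\,d^ny$ is bounded uniformly in $x_1$ (the worst case being $x_1$ near the origin, for which the integral reduces to a convergent radial one since $\alpha>0$). This produces the $R^{-(n-\beta)}$ contribution; by symmetry, the second ball gives $R^{-(n-\alpha)}$. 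On the complement $\{|y-x_1|,|y-x_2|\geq R/4\}$, I would subdivide into $|y|\leq 3R$ and $|y|\geq 3R$. In the bounded subregion, each Riesz factor is of order at most $R^{\alpha-n}$ (respectively $R^{\beta-n}$), the volume is $O\big(R^n\big)$, and whenever $x_1,x_2$ lie at distance comparable to $R$ from the origin the weight contributes an extra $R^{-\gamma-\varepsilon}$; a careful check using $\gamma\geq\alpha+\beta-n$ yields a net $O\big(R^{\alpha+\beta-n-\gamma-\varepsilon}\big)$. In the unbounded subregion $|y|\geq 3R$ one has $|y-x_j|\sim|y|$, so the integrand is $O\big(|y|^{\alpha+\beta-2n-\gamma-\varepsilon}\big)$, and direct integration again yields $O\big(R^{\alpha+\beta-n-\gamma-\varepsilon}\big)$. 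Combining these and absorbing $R^{-\varepsilon}\leq 1$ produces the $R^{-(n+\gamma-\alpha-\beta)}$ contribution.

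The main obstacle will be careful bookkeeping of the positions of $x_1,x_2$ relative to the origin in the long-distance case: when $x_1$ (or $x_2$) is itself near the origin, the weight $\langle y\rangle^{-\gamma-\varepsilon}$ supplies no additional decay in the corresponding "near" region, and one must verify that the coarser bounds $R^{-(n-\beta)}$ and $R^{-(n-\alpha)}$ survive in all sub-configurations. A secondary technical point is the interplay between the three competing exponents $n-\alpha$, $n-\beta$, $n+\gamma-\alpha-\beta$: the final minimum reflects precisely which of the three regions dominates for given values of $\alpha,\beta,\gamma$. Finally, the exclusion $\alpha+\beta\neq n$ is used to avoid the logarithmic correction to the Riesz composition formula that would otherwise appear at that borderline case.
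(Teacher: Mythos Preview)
The paper does not prove this lemma; it simply records the result from \cite[Lemma~6.3]{EG10} and uses it as a black box. Your region-decomposition strategy is the standard approach and matches in spirit what is done in the cited source.

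One concrete claim in your long-distance analysis is incorrect, however. You assert that
\[
\int_{|y-x_1|\leq R/4}|x_1-y|^{\alpha-n}\langle y\rangle^{-\gamma-\varepsilon}\,d^ny
\]
is bounded uniformly in $x_1$ (and $R$), the worst case being $x_1$ near the origin ``for which the integral reduces to a convergent radial one since $\alpha>0$.'' The condition $\alpha>0$ only controls the singularity at $y=x_1$; it says nothing about the tail. Take $x_1=0$ and suppose $\alpha>\gamma+\varepsilon$, which the hypotheses permit (choose $\beta$ small so that $n+\gamma\geq\alpha+\beta$ forces no lower bound on $\gamma$ beyond $\gamma>0$). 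Then the integral is $\sim R^{\alpha-\gamma-\varepsilon}$ and diverges as $R\to\infty$. Consequently the near-$x_1$ ball does not produce only the $R^{-(n-\beta)}$ contribution; depending on the configuration it may yield a term of order $R^{-(n+\gamma-\alpha-\beta)}$ instead. This does not break your overall plan---both terms are dominated by the minimum in the statement---but it shows that the case analysis you flag as the ``main obstacle'' is essential, not cosmetic: the three exponents $n-\alpha$, $n-\beta$, $n+\gamma-\alpha-\beta$ do not correspond one-to-one with the three regions. A clean fix is to record
\[
\int_{|y-x_1|\leq R/4}|x_1-y|^{\alpha-n}\langle y\rangle^{-\gamma-\varepsilon}\,d^ny\leq C\big(1+R^{\max\{0,\alpha-\gamma-\varepsilon\}}\big)
\]
from the outset and propagate both pieces. (Minor aside: in your short-distance complement, the inequality $|x_1-y|^{\alpha-n}|y-x_2|^{\beta-n}\leq\max\{|x_1-y|,|y-x_2|\}^{\alpha+\beta-2n}$ should have $\min$, not $\max$, since the exponents are nonpositive; the argument you intend works with that correction.)
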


Returning to $G_{0,>}(z;\, \cdot \,)$, we next combine the estimate \eqref{5.36a} with Theorem \ref{t5.4}, 
rather than just using the $L^{\infty}$-bound \eqref{5.37} on $G_{0,>}(z;\, \cdot \,)$ in Lemma \ref{l5.3}, 
yielding a considerable improvement of Lemma \ref{l5.3}.

\begin{theorem} \lb{t5.5}
Let $n \in \bbN$, $n \geq 2$. \\[1mm]
$(i)$ Let  $z = 0$ and $F,H \in [L^{4n/(n+\varepsilon)}(\bbR^n)]^{N \times N}$ for some 
$\varepsilon > 0$. Introducing the integral operator $R_{0,>,F,H} (0)$ in 
$L^2(\bbR^n)$ with integral kernel $R_{0,>,F,H} (0;\, \cdot \,,\, \cdot \,)$ as in \eqref{5.38}, then 
\begin{equation}
R_{0,>,F,H} (0) \in \cB_2\big([L^2(\bbR^n)]^N\big).   
\lb{5.37A} 
\end{equation}
In particular, this applies to $F, H$ satisfying for some constant $C \in (0,\infty)$, 
\begin{equation} 
|F_{j,k}|, |H_{j,k}|  \leq C \langle \, \cdot \, \rangle^{- \delta}, \quad \delta > n/4, \; 1 \leq j, k \leq N. 
\end{equation} 
$(ii)$ Let $z \in \ol{\bbC_+}$ and $F,H \in [L^{4n/(n+1)}(\bbR^n)]^{N \times N}$. 
Introducing the integral operator $R_{0,>,F,H} (z)$ in $L^2(\bbR^n)$ with 
integral kernel $R_{0,>,F,H} (z;\, \cdot \,,\, \cdot \,)$ as in \eqref{5.38}, then 
\begin{equation}
R_{0,>,F,H} (z) \in \cB_2\big([L^2(\bbR^n)]^N\big), \quad z \in \ol{\bbC_+},  
\lb{5.40A} 
\end{equation}
and $R_{0,>,F,H} (\, \cdot \,)$ is continuous on $\ol{\bbC_+}$ with respect to 
the $\|\, \cdot \,\|_{\cB_2([L^2(\bbR^n)]^N)}$-norm. 

In particular, this applies to $F, H$ satisfying for some constant $C \in (0,\infty)$, 
\begin{equation}
|F_{j,k}|, |H_{j,k}|  \leq C \langle \, \cdot \, \rangle^{- \delta}, \quad \delta > (n+1)/4, \; 1 \leq j, k \leq N.
\end{equation}   
\end{theorem}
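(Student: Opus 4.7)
The plan is to mimic the Hilbert--Schmidt argument used in the proof of Lemma \ref{l5.3}, but instead of wasting the decay of $G_{0,>}(z;\cdot)$ at infinity by invoking only the $L^\infty$-bound \eqref{5.37}, to exploit the pointwise decay \eqref{5.36a} together with the weak Young/Hardy--Littlewood--Sobolev inequality recorded in Theorem \ref{t5.4}. The starting point in both cases is the same estimate as in \eqref{5.43},
\begin{equation*}
\|R_{0,>,F,H}(z;x,y)\|_{\cB_2(\bbC^N)}^2 \le N\, \|F(x)\|_{\cB(\bbC^N)}^2\, \|G_{0,>}(z;x-y)\|_{\cB(\bbC^N)}^2\, \|H(y)\|_{\cB(\bbC^N)}^2,
\end{equation*}
together with the observation that since $F, H \in [L^{p}(\bbR^n)]^{N \times N}$ for the relevant exponent $p$, equivalence of matrix norms on $\bbC^{N\times N}$ yields $\|F(\cdot)\|_{\cB(\bbC^N)}^2, \|H(\cdot)\|_{\cB(\bbC^N)}^2 \in L^{p/2}(\bbR^n)$.

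For part $(i)$, on the support of $G_{0,>}(0;x-y)$ one has $|x-y|\ge 1$, so by \eqref{5.36a}, $\|G_{0,>}(0;x-y)\|_{\cB(\bbC^N)}^2 \le C_n^2 |x-y|^{-2(n-1)}$. Since $2(n-1) \ge n-\varepsilon$ for all $n\ge 2$ and $\varepsilon >0$, one can further dominate $|x-y|^{-2(n-1)}\chi_{[1,\infty)}(|x-y|) \le |x-y|^{-(n-\varepsilon)}$. I then apply Theorem \ref{t5.4} with $\lambda = n-\varepsilon \in (0,n)$ and $r = s = 2n/(n+\varepsilon)$ (which solves $r^{-1}+s^{-1}+\lambda/n = 2$) to $f = \|F(\cdot)\|_{\cB(\bbC^N)}^2$ and $h = \|H(\cdot)\|_{\cB(\bbC^N)}^2$, obtaining
\begin{equation*}
\int_{\bbR^n \times \bbR^n} \!d^n x\, d^n y\, \|R_{0,>,F,H}(0;x,y)\|_{\cB_2(\bbC^N)}^2 \le C\, \|F\|_{[L^{4n/(n+\varepsilon)}]^{N\times N}}^2\, \|H\|_{[L^{4n/(n+\varepsilon)}]^{N\times N}}^2,
\end{equation*}
which via the $L^2$-kernel criterion (cf.\ \cite[Thm.~11.6]{BS87}) gives \eqref{5.37A}. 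The particular case $|F_{j,k}|,|H_{j,k}| \le C\langle\cdot\rangle^{-\delta}$ with $\delta > n/4$ is handled by choosing $\varepsilon >0$ so small that $\delta > (n+\varepsilon)/4$, which guarantees $\langle\cdot\rangle^{-\delta} \in L^{4n/(n+\varepsilon)}(\bbR^n)$.

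For part $(ii)$, I repeat the argument using the uniform bound $\|G_{0,>}(z;x-y)\|_{\cB(\bbC^N)}^2 \le C_n(z)^2 |x-y|^{-(n-1)}$ from \eqref{5.36a}. Since $n-1 \in (0,n)$ for $n\ge 2$, Theorem \ref{t5.4} applies with $\lambda = n-1$ and $r = s = 2n/(n+1)$, giving $R_{0,>,F,H}(z) \in \cB_2([L^2(\bbR^n)]^N)$ for every $z \in \overline{\bbC_+}$ under the hypothesis $F, H \in [L^{4n/(n+1)}(\bbR^n)]^{N \times N}$, with the corresponding weight statement $\delta > (n+1)/4$ immediate. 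The continuity in $z$ is then inherited from the continuity of the scalar kernel $G_{0,>}(\cdot;\cdot)$ on $\overline{\bbC_+}\times \bbR^n$ established in \eqref{5.42c}: the analog of the expression \eqref{5.44} in Lemma \ref{l5.3} is handled by Lebesgue's dominated convergence theorem, where for $z'$ restricted to a compact neighborhood of $z$ in $\overline{\bbC_+}$ the local boundedness of $c_n(\cdot)$ in \eqref{5.35a} supplies a uniform pointwise bound $\|G_{0,>}(z';x-y)-G_{0,>}(z;x-y)\|_{\cB(\bbC^N)}^2 \le 4C_n^{\,2}|x-y|^{-(n-1)}$, and the Sobolev estimate used above shows the resulting majorant is integrable against $\|F(x)\|_{\cB(\bbC^N)}^2 \|H(y)\|_{\cB(\bbC^N)}^2$.

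The main obstacle I anticipate is purely bookkeeping: verifying that the Sobolev exponent relation $r^{-1}+s^{-1}+\lambda/n = 2$ combined with the requirement $\lambda \in (0,n)$ is compatible with the claimed $L^p$-hypotheses on $F,H$ in each case, and checking that the logarithmic slack in $(i)$ (which forces $\varepsilon > 0$) cannot be avoided because the natural Sobolev exponent would be $\lambda = 2(n-1) \ge n$ for $n \ge 2$. This is precisely the reason the $\varepsilon$ appears in the exponent $4n/(n+\varepsilon)$ rather than a cleaner $4$; the $z=0$ kernel decays too rapidly for the sharp endpoint of Sobolev's inequality to apply, whereas in part $(ii)$ the slower half-power decay of the Hankel kernel at infinity places $\lambda = n-1$ strictly inside $(0,n)$, removing the need for an $\varepsilon$-loss.
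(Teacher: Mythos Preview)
Your proposal is correct and follows essentially the same approach as the paper's own proof: both bound the squared kernel by $|x-y|^{-(n-\varepsilon)}$ (resp.\ $|x-y|^{-(n-1)}$) on $|x-y|\ge 1$ and then apply the Hardy--Littlewood--Sobolev inequality (Theorem~\ref{t5.4}) with the same exponents $r=s=2n/(n+\varepsilon)$ (resp.\ $2n/(n+1)$), concluding continuity by dominated convergence exactly as in Lemma~\ref{l5.3}. Your explanation of why the $\varepsilon$-loss is forced in part~$(i)$ is a helpful gloss that the paper leaves implicit.
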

\begin{proof}
Again, we apply Theorem \ref{tA.2}\,$(iii)$ and Lemma \ref{lA.3}. 

If $z=0$, then $R_{0,>,F,H} (0;\, \cdot \,,\, \cdot \,)$ generates a Hilbert--Schmidt operator in 
$L^2(\bbR^n)$ upon applying the following modified $z=0$ part in estimate \eqref{5.36a},  
\begin{equation}
|G_{0, >}(z;x-y)_{j,k}| \leq c_{n, \varepsilon}  |x-y|^{-(n-\varepsilon)/2}, \quad 
x,y \in \bbR^n, \; |x-y| \geq 1, \; 1 \leq j,k \leq N, 
\end{equation} 
for some constants $c_{n, \varepsilon} \in (0, \infty)$, combined with Sobolev's inequality 
in the form 
\begin{align}
\begin{split} 
& \int_{\bbR^n \times \bbR^n} d^nx d^n y \, \f{|f(x)|^2 |h(y)|^2}{|x-y|^{n-\varepsilon}}\chi_{[1,\infty)}(|x-y|)   \\
& \quad \leq C_{n, \varepsilon} \|f^2\|_{L^{2n/(n+\varepsilon)}(\bbR^n)} \|h^2\|_{L^{2n/(n+\varepsilon)}(\bbR^n)}, 
\end{split} 
\end{align}
identifying $r=s= 2n/(n+\varepsilon)$, $\lambda = n-\varepsilon$ in \eqref{6.18}. One verifies that 
$\langle \, \cdot \, \rangle^{-\delta} \in L^{4n/(n+\varepsilon)}(\bbR^n)$ if $\delta > (n + \varepsilon)/4$, 
and, since $\varepsilon > 0$ can be chosen arbitrarily small, if $\delta > n/4$.  

The general case $z \in \ol{\bbC_+}$ follows along the same lines using the modified estimate 
\eqref{5.36a}, 
\begin{equation}
|G_{0, >}(z;x-y)_{j,k}| \leq c_{n}  |x-y|^{-(n-1)/2}, \quad 
x,y \in \bbR^n, \; |x-y| \geq 1, \; 1 \leq j,k \leq N, 
\end{equation} 
for some constant $c_n \in (0, \infty)$, again combined with Sobolev's inequality in the form 
\begin{align}
\begin{split} 
& \int_{\bbR^n \times \bbR^n} d^nx d^n y \, \f{|f(x)|^2 |h(y)|^2}{|x-y|^{n-1}}\chi_{[1,\infty)}(|x-y|)   \\
& \quad \leq C_{n} \|f^2\|_{L^{2n/(n+1)}(\bbR^n)} \|h^2\|_{L^{2n/(n+1)}(\bbR^n)},     \lb{6.23A} 
\end{split} 
\end{align}
identifying $r=s= 2n/(n+1)$, $\lambda = n-1$ in \eqref{6.18}. One verifies that 
$\langle \, \cdot \, \rangle^{-\delta} \in L^{4n/(n+1)}(\bbR^n)$ if $\delta > (n + 1)/4$.  

Finally, continuity of $R_{0,>,F,H} (\, \cdot \,)$ on $\ol{\bbC_+}$ with respect to the 
$\|\, \cdot \,\|_{\cB_2([L^2(\bbR^n)]^N)}$-norm follows again by applying Lebesgue's dominated convergence theorem as in the proof of Lemma \ref{l5.3}. 
\end{proof}

We recall the following interesting results of McOwen \cite{Mc79} and 
Nirenberg--Walker \cite{NW73}, which provide necessary and sufficient conditions for the boundedness of 
certain classes of integral operators in $L^p(\bbR^n)$:

\begin{theorem} \lb{t5.6}
Let $n \in \bbN$, $c, d \in \bbR$, $c + d > 0$, $p \in (1,\infty)$, and $p'=p/(p-1)$. Then the following items 
$(i)$ and $(ii)$ hold. \\[1mm] 
$(i)$ Consider 
\begin{equation}
K_{c,d}(x,y) = |x|^{-c} |x - y|^{(c + d) - n} |y|^{-d}, 
\quad x, y \in \bbR^n, \; x \neq x',    \lb{5.21} 
\end{equation}
then the integral operator $K_{c,d}$ in $L^p(\bbR^n)$ with integral kernel 
$K_{c,d}(\, \cdot \,, \, \cdot \,)$ 
in \eqref{5.21} is bounded if and only if $c < n/p$ and $d < n/p'$. \\[1mm] 
$(ii)$ Consider 
\begin{equation}
\wti K_{c,d}(x,y) = (1+|x|)^{-c} |x - y|^{(c + d) - n} (1+|y|)^{-d}, 
\quad x, y \in \bbR^n, \; x \neq x',    \lb{5.21mc}
\end{equation}
then the integral operator $\wti K_{c,d}$ in $L^p(\bbR^n)$ with integral kernel 
$\wti K_{c,d}(\, \cdot \,, \, \cdot \,)$ 
in \eqref{5.21mc} is bounded if and only if $c < n/p$ and $d < n/p'$.
\end{theorem}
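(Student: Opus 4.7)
The plan is to establish sufficiency in both parts via a weighted Schur test and necessity via scale-invariance and explicit test functions.

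For the sufficiency direction of (i), I would exploit the fact that $K_{c,d}(x,y)$ is homogeneous of degree $-n$, so $L^p$-boundedness is natural from a scaling viewpoint. I apply Schur's test with the power weight $\phi(x) = |x|^{-a}$, which requires verification of
\begin{align*}
\int_{\bbR^n} d^n y \, |x|^{-c} |x-y|^{(c+d)-n} |y|^{-d - a p'} &\leq C_1 |x|^{-a p'}, \\
\int_{\bbR^n} d^n x \, |x|^{-c - a p} |x-y|^{(c+d)-n} |y|^{-d} &\leq C_2 |y|^{-a p}.
\end{align*}
Using the Riesz composition formula \eqref{5.52A} with $\alpha = c+d \in (0,n)$ and $\beta$ chosen from the remaining power of $y$ or $x$ respectively, each integral evaluates to the required homogeneous power, provided the exponent-convergence constraints $c/p' < a < (c+n)/p'$ and $d/p < a < (d+n)/p$ are satisfiable. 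These two open intervals have nonempty intersection precisely when $c < n/p$ and $d < n/p'$, given the standing hypothesis $c+d > 0$ (which secures the local integrability of $|x-y|^{(c+d)-n}$ at the diagonal). Choosing any $a$ in the intersection yields boundedness on $L^p(\bbR^n)$.

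For the necessity part of (i), I would use the dilation invariance: for each $R > 1$, pick the test family $f_R(x) = |x|^{-n/p} \chi_{\{R^{-1}\leq|x|\leq R\}}$, for which $\|f_R\|_{L^p(\bbR^n)}^p \sim \log R$. A direct computation in polar coordinates, using the angular integral of $|x - y|^{(c+d)-n}$ and pointwise estimates on $K_{c,d} f_R$, shows that if either $c \geq n/p$ or $d \geq n/p'$, then $\|K_{c,d} f_R\|_{L^p}/\|f_R\|_{L^p}$ diverges as $R \to \infty$, contradicting boundedness. The boundary cases are handled by squeezing in a slowly varying logarithmic factor in the test function.

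For part (ii), the sufficiency follows by a region-splitting reduction to (i): write the kernel $\wti K_{c,d}$ as a sum of four pieces obtained by inserting the characteristic functions of $\{|x| \leq 1\} \cup \{|x| \geq 1\}$ and $\{|y| \leq 1\} \cup \{|y| \geq 1\}$. On the region $\{|x|, |y| \geq 1\}$ the weight is comparable to $|x|^{-c}|y|^{-d}$ and the result reduces directly to (i); on the region $\{|x|, |y| \leq 1\}$ the weight is bounded and the kernel acts as a compactly supported fractional integral $|x-y|^{(c+d)-n}$, which is $L^p$-bounded by Hardy--Littlewood--Sobolev (Theorem \ref{t5.4A}) combined with H\"older on the bounded set, using $c+d > 0$; the mixed regions are controlled by a similar splitting. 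Necessity is tested by the same $f_R$ as above, concentrated on $\{R \leq |x| \leq 2R\}$ so that $(1+|x|)^{-c} \sim |x|^{-c}$ on the relevant scales, thereby inheriting the obstruction from (i). The main obstacle I anticipate is isolating the sharp boundary behavior in the necessity argument for (ii), where the inhomogeneity of the weight breaks exact dilation covariance and one must align the scale of the test functions with the crossover scale $|x| \sim 1$ of the weight; this requires a more delicate choice of test functions (e.g.\ radial functions adapted to dyadic annuli together with a logarithmic refinement) to saturate both the $c < n/p$ and $d < n/p'$ thresholds simultaneously.
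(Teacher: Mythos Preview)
The paper does not actually prove this theorem; it is quoted as a known result of McOwen and Nirenberg--Walker, with no argument given. Your outline is essentially the classical proof found in those references: the weighted Schur test with a power weight $\phi(x)=|x|^{-a}$ for sufficiency in (i), scaling/test functions for necessity, and a reduction of (ii) to (i) at large scales together with local fractional-integral estimates.

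One concrete slip: the intervals you wrote for $a$ are not the ones the Schur test actually produces. With $\phi(x)=|x|^{-a}$ in your two displayed inequalities, the Riesz composition formula \eqref{5.52A} (applied with $\alpha=c+d$ and $\beta=n-d-ap'$, respectively $\beta=n-c-ap$) yields the constraints
\[
\frac{c}{p'} < a < \frac{n-d}{p'} \quad \text{and} \quad \frac{d}{p} < a < \frac{n-c}{p},
\]
not $c/p' < a < (c+n)/p'$ and $d/p < a < (d+n)/p$. With the correct bounds, the two open intervals intersect precisely when $c<n/p$ and $d<n/p'$ (and these in turn force $c+d<n$, so the Riesz formula indeed applies). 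Your stated intervals would still overlap at the critical endpoint $c=n/p$, $d=n/p'$, which is wrong. Once this is fixed, the sufficiency argument goes through exactly as you describe, and your necessity and part (ii) strategies are the standard ones.
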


This result implies the following fact.

\begin{theorem} \lb{t5.7} 
Let $n \in \bbN$, $n \geq 2$. \\[1mm] 
$(i)$ Then the integral operator $R_{0,\delta}$ in $[L^2(\bbR^n)]^N$ with associated integral kernel $R_{0,\delta}( \, \cdot \,, \, \cdot \,)$ bounded entrywise by 
\begin{equation}
|R_{0,\delta}( \, \cdot \,, \, \cdot \,)_{j,k}| \leq C \langle \, \cdot \, \rangle^{-\delta} 
|G_0(0; \, \cdot \,, \, \cdot \,)_{j,k}| \langle \, \cdot \, \rangle^{-\delta}, \quad \delta \geq 1/2, \; 1 \leq j,k \leq N,  
\end{equation}
for some $C \in (0,\infty)$, is bounded, 
\begin{equation} 
R_{0,\delta} \in \cB\big([L^2(\bbR^n)]^N\big).   \lb{5.23}
\end{equation} 
$(ii)$ The integral operator $R_{0,\delta}(z)$ in $[L^2(\bbR^n)]^N$, with associated integral 
kernel $R_{0,\delta}(z; \, \cdot \,, \, \cdot \,)$ bounded entrywise by 
\begin{align}
\begin{split} 
|R_{0,\delta}(z; \, \cdot \,, \, \cdot \,)_{j,k}| \leq C \langle \, \cdot \, \rangle^{-\delta} 
|G_0(z;\, \cdot \,, \, \cdot \,)_{j,k}| \langle \, \cdot \, \rangle^{-\delta},& \\
\delta \geq (n + 1)/4, \; z \in \ol{\bbC_+}, \; 1 \leq j,k \leq N,  
\end{split}
\end{align}  
for some $C \in (0,\infty)$, is bounded,  
\begin{equation} 
R_{0,\delta}(z) \in \cB\big([L^2(\bbR^n)]^N\big), \quad z \in \ol{\bbC_+}.    \lb{5.22}
\end{equation} 
$(iii)$ The integral operator $R_{0,\alpha,\beta}(z)$ in $[L^2(\bbR^n)]^N$, with associated integral 
kernel $R_{0, \alpha, \beta}(z; \, \cdot \,, \, \cdot \,)$ bounded entrywise by 
\begin{align}
\begin{split} 
|R_{0, \alpha,\beta}(z; \, \cdot \,, \, \cdot \,)_{j,k}| \leq C \langle \, \cdot \, \rangle^{-\alpha} 
|G_0(z;\, \cdot \,, \, \cdot \,)_{j,k}| \langle \, \cdot \, \rangle^{-\beta},& \\
\alpha \geq (n - 1)/2, \; \beta \geq 1, \; z \in \ol{\bbC_+}, \; 1 \leq j,k \leq N,  
\end{split}
\end{align}  
for some $C \in (0,\infty)$, is bounded,  
\begin{equation} 
R_{0,\alpha,\beta}(z) \in \cB\big([L^2(\bbR^n)]^N\big), \quad z \in \ol{\bbC_+}.    \lb{5.24}
\end{equation} 
\end{theorem}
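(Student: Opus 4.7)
The plan is to reduce each of (i), (ii), (iii) to the McOwen--Nirenberg--Walker boundedness criterion, Theorem~\ref{t5.6}(ii), applied to weighted Riesz-type integral operators on $L^2(\bbR^n)$. By Lemma~\ref{lA.3}, a matrix-valued integral operator on $[L^2(\bbR^n)]^N$ is bounded whenever each of its $N^2$ scalar entries defines a bounded operator on $L^2(\bbR^n)$; and since $\langle\dott\rangle$ is equivalent to $1+|\dott|$, the task reduces to choosing parameters $c,d\ge 0$ with $c+d$ matching the Riesz exponent of the pointwise scalar bound on $|G_0(z;\dott,\dott)_{j,k}|$, while $c\le\alpha$, $d\le\beta$, and (for $p=p'=2$) the strict inequalities $c<n/2$, $d<n/2$ hold.

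For (i), the bound $\|G_0(0+i\,0;x,y)\|_{\cB(\bbC^N)}\le c_n|x-y|^{1-n}$ from \eqref{5.19} is of the form \eqref{5.21mc} with $c+d=1$; taking $c=d=1/2$ satisfies $c,d<n/2$ for every $n\ge 2$, and any excess in $\delta\ge 1/2$ is absorbed by monotonicity $\langle\dott\rangle^{-\delta}\le\langle\dott\rangle^{-1/2}$. For (ii) and (iii) I split $G_0=G_{0,<}+G_{0,>}$ as in \eqref{5.36}. The near-diagonal piece obeys $|G_{0,<}(z;x-y)|\le C|x-y|^{1-n}$ uniformly in $z\in\ol{\bbC_+}$ by the $|x|\le 1$ clause of \eqref{5.35a}, so it is treated exactly as in (i); the weight requirement is met because $(n+1)/4\ge 1/2$ in (ii) and $(n-1)/2\ge 1/2$, $\beta\ge 1\ge 1/2$ in (iii). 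The far-diagonal piece obeys $|G_{0,>}(z;x-y)|\le c_n(z)|x-y|^{(1-n)/2}$ by \eqref{5.36a}, giving $c+d=(n+1)/2$; for (ii) I take $c=d=(n+1)/4$, with $(n+1)/4<n/2\iff n>1$, hence valid for $n\ge 2$; for (iii) with $n\ge 3$ I take $c=(n-1)/2\le\alpha$ and $d=1\le\beta$, both strictly less than $n/2$.

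The step I expect to require the most care is the borderline configuration $n=2$ in (iii), where the only allocation compatible with $c\le\alpha=1/2$, $d\le\beta=1$ and $c+d=3/2$ is $c=1/2$, $d=1$, at which point the strict inequality $d<n/2=1$ of Theorem~\ref{t5.6}(ii) just fails. When $\alpha>(n-1)/2$ strictly, the perturbation $c=1/2+\eta$, $d=1-\eta$ restores the hypotheses for small $\eta>0$. At the exact boundary $\alpha=1/2$, $\beta=1$, $n=2$, pointwise entrywise bounds alone appear insufficient, and I would close the argument by exploiting oscillation of $G_0(z;\dott,\dott)$: factoring $(H_0-zI)^{-1}=(H_0+zI)\big[(-\Delta-z^2I)^{-1}\otimes I_N\big]$ from $H_0^2=-\Delta\cdot I_N$, invoking the classical limiting absorption principle for $-\Delta$ on $\bbR^2$ in weighted $L^2$-spaces, and absorbing the first-order factor from $H_0+zI$ at the cost of trading one power of the weight against the bounded operator $\nabla(-\Delta-z^2 I)^{-1/2}$.
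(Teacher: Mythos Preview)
Your argument for parts~(i), (ii), and for~(iii) when $n\ge3$ coincides with the paper's own proof: entrywise reduction via Theorem~\ref{tA.2}\,(i) and Lemma~\ref{lA.3}, the near/far splitting \eqref{5.36}--\eqref{5.37a}, and then Theorem~\ref{t5.6}\,(ii) with $c=d=1/2$ for the $|x-y|^{1-n}$ piece and either $c=d=(n+1)/4$ (part~(ii)) or $c=(n-1)/2$, $d=1$ (part~(iii), $n\ge3$) for the $|x-y|^{(1-n)/2}$ piece. This part is correct and matches the paper.

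You are more careful than the paper in flagging the endpoint $n=2$, $\alpha=1/2$, $\beta=1$ in~(iii): the paper's one-line ``one employs Theorem~\ref{t5.6} with $c=\alpha$, $d=\beta$'' silently requires $d=\beta<n/2$, which fails at that point. However, your proposed rescue via oscillation and the limiting absorption principle for $-\Delta$ cannot work. Part~(iii) is a statement about \emph{any} integral operator whose kernel is dominated entrywise by $\langle x\rangle^{-\alpha}|G_0(z;x,y)_{j,k}|\langle y\rangle^{-\beta}$, not about the specific operator $\langle Q\rangle^{-\alpha}(H_0-zI)^{-1}\langle Q\rangle^{-\beta}$; once absolute values are taken, the factorization $(H_0-zI)^{-1}=(H_0+zI)\big[(-\Delta-z^2)^{-1}\otimes I_N\big]$ and all cancellation are unavailable. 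Worse, the ``if and only if'' in Theorem~\ref{t5.6}\,(ii) shows that the scalar majorant $(1+|x|)^{-1/2}|x-y|^{-1/2}(1+|y|)^{-1}$ defines an \emph{unbounded} operator on $L^2(\bbR^2)$, so no entrywise-domination argument can close this case. The paper never actually uses~(iii) at this endpoint (its only application, around \eqref{9.31}, has one weight arising from a $C_0^\infty$ cutoff, hence arbitrarily strong decay), so the gap is harmless in context; but you should not expect your LAP argument to yield a proof of~(iii) as literally stated for $n=2$ at the exact endpoint.
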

\begin{proof}
$(i)$ The inclusion \eqref{5.23} is then an immediate consequence of \eqref{5.18} and hence the estimate 
$|G_0(0;x,y)_{j,k}| \leq C |x - y|^{1-n}$, $x, y \in \bbR^n$, $x \neq y$, $1 \leq j,k \leq N$, 
Theorem \ref{t5.6}, choosing $c = d = 1/2$ in \eqref{5.21}, and an application of 
Theorem \ref{tA.2}\,$(i)$ and Lemma \ref{lA.3}. \\[1mm] 
$(ii)$ To prove the inclusion \eqref{5.22} we employ the estimates \eqref{5.9}--\eqref{5.11} 
(cf.\ also \eqref{5.20}) to obtain
\begin{align}
|G_0(z;x,y)_{j,k}| & \leq C(z) |x - y|^{1-n} \chi_{[0,1]}(|x - y|)    \no \\
& \quad + D(z) |x - y|^{(1-n)/2} \chi_{[1,\infty)}(|x - y|),      \lb{5.37a} \\
& \hspace*{-2mm} z \in \ol{\bbC_+}, 
\; x, y \in \bbR^n, \; x \neq y, \; 1 \leq j,k \leq N,      \no 
\end{align} 
for some $C, D(z) \in (0,\infty)$, and apply Theorems \ref{t5.6} (parts $(i)$ or $(ii)$) and \ref{tA.2}\,$(i)$ (cf.\ also 
Lemma \ref{lA.3}) to both terms on the right-hand sides of 
\eqref{5.37a}. The part $0 \leq |x-y| \leq 1$ in \eqref{5.37a} leads to $\delta \geq 1/2$, whereas the 
part $|x-y| \geq 1$ in \eqref{5.37a} yields $\delta \geq (n+1)/4$, implying \eqref{5.22}. \\[1mm]
$(iii)$ Again we employ the estimate \eqref{5.37a} and argue as in item $(ii)$ for the part where $|x-y| \leq 1$. 
For the part $|x-y| \geq 1$ in \eqref{5.37a} one employs Theorem \ref{t5.6} with $c = \alpha \geq (n-1)/2$ and 
$d = \beta \geq 1$. 
\end{proof}

Given the fact \eqref{5.40A}, we will now focus on $G_{0,<}(z;\, \cdot \,)$, $z \in \ol{\bbC_+}$.  We begin by recalling that $a(- i\nabla)$ is a convolution-type operator of the form,
\begin{equation}
(a(-i \nabla) \varphi)(x) := \big(\big(\cF^{-1}a\big) * \varphi\big)(x) = 
(2 \pi)^{-n/2} \int_{\bbR^n} d^n y \, a^{\vee}(x-y) \varphi(y), \quad 
\varphi \in \cS(\bbR^n),    \lb{6.1} 
\end{equation}
given $a \in \cS'(\bbR^n)$, $n \in \bbN$. We are particularly interested in operators of the type 
\begin{equation} 
b(Q) a(-i \nabla), 
\end{equation} 
with $Q$ abbreviating the operator of multiplication by the independent variable $x$, such that $b(Q) a(-i \nabla)$ extends to a bounded, actually, compact operator in $L^2(\bbR^n)$, in fact, we will focus on its membership in certain Schatten--von Neumann classes. The prime result we will employ from \cite[Subsection~5.4]{BKS91} in this context can be formulated as follows: 

\begin{theorem} [{\cite[Subsection~5.4, p.~103]{BKS91}}] \lb{t6.1}
Let $2 < r < s$, and suppose that $a, \psi \in L^r_{weak} (\bbR^n)$, 
$\psi >0$, $\|\psi\|_{L^r_{weak}(\bbR^n)} \leq 1$, and let $b$ be a measurable function such that $b / \psi \in 
L^s_{weak}\big(\bbR^n; \psi^r d^nx\big)$. Then 
\begin{equation}
b(Q) a(-i \nabla) \in \cB_s\big(L^2(\bbR^n)\big),
\end{equation}
and for some constant $C(r,s) \in (0,\infty)$, 
\begin{equation}
\|b(Q) a(-i \nabla)\|_{\cB_s(L^2(\bbR^n))} \leq C(r,s) \|a\|_{L^r_{weak}(\bbR^n)} 
\bigg(\int_{\bbR^n} d^n x \, b(x)^s \psi(x)^{r-s}\bigg)^{1/s}.
\end{equation}
\end{theorem}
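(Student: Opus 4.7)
The plan is to combine the weak-type Cwikel--Birman--Solomyak inequality with a dyadic decomposition adapted to the weight $\psi$. As a starting point, I would invoke the weak-$L^r$ Cwikel estimate: for $r>2$ and $f,g\in L^r_{weak}(\bbR^n)$, the operator $f(Q)g(-i\nabla)$ belongs to the weak Schatten ideal $\cB_{r,\infty}(L^2(\bbR^n))$, with $\|f(Q)g(-i\nabla)\|_{\cB_{r,\infty}}\leq C_r\|f\|_{L^r_{weak}}\|g\|_{L^r_{weak}}$. Applied with $f=\psi\chi_E$ for any measurable $E\subseteq\bbR^n$, this furnishes a uniform bound on $\chi_E(Q)\psi(Q)a(-i\nabla)$ in $\cB_{r,\infty}$ depending only on $\|a\|_{L^r_{weak}}$ and on the weak-$L^r$ size of $\psi$ restricted to $E$.

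Next, I would decompose the target operator according to the level sets of $b/\psi$. Setting $E_k=\{x\in\bbR^n:2^k<(b/\psi)(x)\leq 2^{k+1}\}$ for $k\in\bbZ$ and $T_k:=\chi_{E_k}(Q)\,\psi(Q)\,a(-i\nabla)$, the pieces $T_k$ act on mutually orthogonal subspaces indexed by the pairwise disjoint sets $E_k$. The multiplier $M:=(b/\psi)(Q)$ satisfies $\|M\chi_{E_k}(Q)\|_{\cB(L^2(\bbR^n))}\leq 2^{k+1}$, so one writes $b(Q)a(-i\nabla)=M\,\psi(Q)\,a(-i\nabla)=\sum_{k\in\bbZ}MT_k$, with
\begin{equation*}
\|MT_k\|_{\cB_{r,\infty}}\leq 2^{k+1}\|T_k\|_{\cB_{r,\infty}}\lesssim 2^k\|a\|_{L^r_{weak}}\mu(E_k)^{1/r},
\end{equation*}
where $\mu=\psi^r\,d^nx$. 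The weighted weak-$L^s$ hypothesis on $b/\psi$ then enters precisely via the summability estimate $\sum_k 2^{ks}\mu(E_k)\lesssim\int_{\bbR^n}b^s\psi^{r-s}\,d^nx$, delivering control over the $s$-summed sizes of the dyadic pieces.

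The main obstacle will be converting these weak-Schatten bounds on the dyadic pieces into a strong $\cB_s$-bound on their sum. A direct H\"older-type calculation produces an estimate of the form $\sum_k\|MT_k\|_{\cB_{r,\infty}}^s\lesssim\|a\|_{L^r_{weak}}^s\int_{\bbR^n}b^s\psi^{r-s}\,d^nx$, but to promote this into the strong inequality $\|b(Q)a(-i\nabla)\|_{\cB_s}^s\leq C(r,s)^s\|a\|_{L^r_{weak}}^s\int_{\bbR^n}b^s\psi^{r-s}\,d^nx$ one needs a Birman--Solomyak piecing lemma that upgrades a controlled sum of operators having orthogonal source-projections from the weak- to the strong-Schatten class, exploiting the pairwise orthogonality of the projections $\chi_{E_k}(Q)$ together with the strict inequality $r<s$ (so that the summability gains a definite margin). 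Verifying this upgrade cleanly, and extracting precisely the constant $C(r,s)$ stated in the theorem, is where the delicacy of the argument will concentrate.
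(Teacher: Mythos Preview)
The paper does not prove this theorem; it is quoted verbatim as a result from \cite[Subsection~5.4, p.~103]{BKS91} and then applied. So there is no ``paper's own proof'' to compare against---only the original Birman--Karadzhov--Solomyak argument.

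Your outline is in fact the BKS strategy: a dyadic level-set decomposition of $b/\psi$, the weak-type Cwikel bound on each piece, and an orthogonality-based piecing lemma to pass from $\cB_{r,\infty}$ control on the pieces to a strong $\cB_s$ bound on the sum. That much is right and matches the source.

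One point deserves care. You write $\|T_k\|_{\cB_{r,\infty}}\lesssim\|a\|_{L^r_{weak}}\,\mu(E_k)^{1/r}$ with $\mu=\psi^r\,d^nx$. But the weak-Cwikel inequality gives you only $\|T_k\|_{\cB_{r,\infty}}\lesssim\|a\|_{L^r_{weak}}\,\|\chi_{E_k}\psi\|_{L^r_{weak}}$, and since $\psi$ is merely in $L^r_{weak}$, the quantity $\mu(E_k)=\int_{E_k}\psi^r\,d^nx$ need not be finite, nor need it dominate the weak norm $\|\chi_{E_k}\psi\|_{L^r_{weak}}$. In the BKS argument this is handled by an additional layer: one also slices according to the level sets of $\psi$ itself (or equivalently works with the distribution function of $\psi$), so that on each doubly-indexed piece both the Lebesgue measure and the $\psi$-weight are controlled simultaneously. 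Your single decomposition in $b/\psi$ alone does not achieve this, and the step $\|\chi_{E_k}\psi\|_{L^r_{weak}}\lesssim\mu(E_k)^{1/r}$ is where the gap sits. Once that second layer is in place, the ``piecing lemma'' (a Rotfel'd-type inequality exploiting the orthogonality of the range projections $\chi_{E_k}(Q)$ together with $r<s$) does the upgrade you describe.
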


Next, we recall (with $n \in \bbN$, $n \geq 2$) that
\begin{align}
& |\, \cdot \,|^{\gamma-n} \in L^{n/(n-\gamma)}_{weak} (\bbR^n), 
\quad 0 < \gamma < n,  \\
& \big(\big(|\, \cdot \,|^{\gamma-n}\big)^{\wedge}\big)(\xi) = c_n |\xi|^{-\gamma}, 
\quad \big(|\, \cdot \,|^{\gamma-n}\big)^{\wedge} \in L^{n/\gamma}_{weak} (\bbR^n), 
\quad 0 < \gamma < n,   \\
\begin{split} 
& G_{0,<}(z;\, \cdot \,)_{j,k} = |\, \cdot \,|^{1-n} f_n(z,\, \cdot \,)_{j,k} 
\chi_{[0,1]}(|\, \cdot \,|) \in L^{n/(n-1)}_{weak} (\bbR^n) \subset L^p(\bbR^n), \\ 
& \hspace*{5.85cm} 1 \leq j,k \leq N, \; p \in (0, n/(n-1)),  
\end{split} 
\end{align}
where $f_n(z,\dott)$ and $G_{0,<}(z;\, \cdot \,)$ are defined by \eqref{5.35a} and \eqref{5.39w}, respectively.  In addition, we recall the Hausdorff--Young inequality and its weak 
analog (cf., e.g., \cite[p.~32]{RS75}),
\begin{align}
& \big\|f^{\wedge}\big\|_{L^{p/(p-1)}(\bbR^n)} \leq D_{p,n} 
\|f\|_{L^p(\bbR^n)}, \quad p \in [1,2], \\
& \big\|f^{\wedge}\big\|_{L^{p/(p-1)}_{weak}(\bbR^n)} \leq C_{p,n} 
\|f\|_{L^p_{weak}(\bbR^n)}, \quad p \in (1,2),   \lb{wHYi}
\end{align} 
noting that $p/(p-1) \in (2, \infty)$ if $p \in (1,2)$. In particular, since 
$p = n/(n-1) \in (1,2)$ for $n \geq 3$, 
\begin{align} 
\begin{split}
[G_{0,<}(z;\, \cdot \,)_{j,k}]^{\wedge} = 
[|\, \cdot \,|^{1-n} f_n(z,\, \cdot \,)_{j,k} \chi_{[0,1]}(|\, \cdot \,|)]^{\wedge} 
 \in L^n_{weak} (\bbR^n),&       \\
1 \leq j,k \leq N, \; n \in \bbN, \; n \geq 3.&    \lb{6.10} 
\end{split} 
\end{align}

Thus, an application of Theorem \ref{t6.1} and yields the following result.

\begin{theorem} \lb{t6.2}
Let $n \in \bbN$, $n \geq 3$, $\varepsilon > 0$, and assume that for some $q \in (n, \infty)$, $F \in [L^q(\bbR^n; (1+|x|)^{(q-n)(1 + \varepsilon)} d^nx)]^{N \times N}$. Then,  
\begin{equation}
F(Q) G_{0,<}(z;-i \nabla)^{\wedge} \in \cB_q\big([L^2(\bbR^n)]^N\big), \quad z \in \ol{\bbC_+},  
\lb{6.11} 
\end{equation}
and
\begin{align}
& \big\|F(Q)_{j,\ell} [G_{0,<}(z;-i \nabla)_{\ell,k}]^{\wedge}\big\|_{\cB_q(L^2(\bbR^n))} \leq C_{\ell,k}(n, q) 
\big\|[G_{0,<}(z,\, \cdot \,)_{\ell,k}]^{\wedge}\big\|_{L^n_{weak} (\bbR^n)}    \no \\[1mm] 
& \quad \times \bigg(\int_{\bbR^n} (1 + |x|)^{(q-n)(1+\varepsilon)} d^n x \, |F(x)_{j,\ell}|^q\bigg)^{1/q},       \quad 1 \leq j,k,\ell \leq N.      \lb{6.12}
\end{align} 
In addition, the operator $F(Q) G_{0,<}(z;-i \nabla)^{\wedge}$ is continuous with respect to 
$z \in \ol{\bbC_+}$ in the $\cB_q\big([L^2(\bbR^n)]^N\big)$-norm. 
\end{theorem}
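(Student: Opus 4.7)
The plan is to reduce to the scalar case entry by entry via the block matrix machinery of Appendix~\ref{sA}, then invoke Theorem~\ref{t6.1} on each scalar pair with a carefully chosen weight $\psi$. Writing
\[
\bigl(F(Q)\,G_{0,<}(z;-i\nabla)^{\wedge}\bigr)_{j,k} = \sum_{\ell=1}^{N} F(Q)_{j,\ell}\,\bigl[G_{0,<}(z;\, \cdot \,)_{\ell,k}\bigr]^{\wedge}(-i\nabla),
\]
Lemma~\ref{lA.3} reduces \eqref{6.11} and \eqref{6.12} to the corresponding scalar statement for each summand.

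For each fixed triple $(j,k,\ell)$ I would apply Theorem~\ref{t6.1} with $r=n$, $s=q$, symbol $a=[G_{0,<}(z;\, \cdot \,)_{\ell,k}]^{\wedge}$, multiplier $b=F_{j,\ell}$, and weight $\psi(x)=c_n\langle x\rangle^{-1}$, where $c_n\in(0,\infty)$ is chosen so that $\|\psi\|_{L^{n}_{weak}(\bbR^n)}\le 1$. The condition $a\in L^{n}_{weak}(\bbR^n)$ is exactly the content of \eqref{6.10}, which itself follows by applying the weak Hausdorff--Young inequality \eqref{wHYi} (valid because $n\ge 3$ forces $p=n/(n-1)\in(1,2)$) to $G_{0,<}(z;\, \cdot \,)_{\ell,k}\in L^{n/(n-1)}_{weak}(\bbR^n)$. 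The multiplier-side condition $b/\psi\in L^{q}_{weak}(\bbR^n;\psi^{n}d^nx)$ follows by Chebyshev from the stronger $L^{q}(\psi^{n}d^nx)$ bound
\[
\int_{\bbR^n}(b/\psi)^{q}\,\psi^{n}\,d^nx = c_n^{n-q}\int_{\bbR^n}|F(x)_{j,\ell}|^{q}\,\langle x\rangle^{q-n}\,d^nx,
\]
which is finite since $\langle x\rangle^{q-n}\le\langle x\rangle^{(q-n)(1+\varepsilon)}$ and $F$ satisfies the assumed weighted integrability. The conclusion of Theorem~\ref{t6.1} then yields \eqref{6.11} together with \eqref{6.12}, after absorbing the constant $c_n^{(n-q)/q}$ and the crude inequality $\langle x\rangle^{q-n}\le\langle x\rangle^{(q-n)(1+\varepsilon)}$ into $C_{\ell,k}(n,q)$.

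For the continuity claim I would apply the same scheme to the difference $F(Q)_{j,\ell}\bigl[G_{0,<}(z;\, \cdot \,)_{\ell,k}-G_{0,<}(z';\, \cdot \,)_{\ell,k}\bigr]^{\wedge}(-i\nabla)$: Theorem~\ref{t6.1} bounds its $\cB_{q}(L^2(\bbR^n))$-norm by a constant times $\|[G_{0,<}(z;\, \cdot \,)_{\ell,k}-G_{0,<}(z';\, \cdot \,)_{\ell,k}]^{\wedge}\|_{L^{n}_{weak}}$ multiplied by the same $F$-factor, and \eqref{wHYi} further reduces matters to showing
\[
\bigl\|G_{0,<}(z;\, \cdot \,)_{\ell,k}-G_{0,<}(z';\, \cdot \,)_{\ell,k}\bigr\|_{L^{n/(n-1)}_{weak}(\bbR^n)} \underset{z \to z'}{\longrightarrow} 0.
\]
Using the representation $G_{0,<}(z;x)=|x|^{1-n}f_{n}(z,x)\chi_{[0,1]}(|x|)$ from \eqref{5.35}--\eqref{5.35a}, the difference is dominated pointwise by $|x|^{1-n}\chi_{\{|x|\le 1\}}(x)\cdot\sup_{|y|\le 1}\|f_{n}(z,y)-f_{n}(z',y)\|_{\cB(\bbC^N)}$; continuity of $f_n$ on the compact set $K\times\{|x|\le 1\}$ for any compact $K\subset\ol{\bbC_+}$ yields uniform convergence of the supremum to $0$, while $\||x|^{1-n}\chi_{\{|x|\le 1\}}\|_{L^{n/(n-1)}_{weak}(\bbR^n)}<\infty$ controls the remaining factor.

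The main obstacle is the borderline weak-$L^{n/(n-1)}$ (rather than $L^{n/(n-1)}$) nature of $|x|^{1-n}\chi_{\{|x|\le 1\}}$, which compels the use of weak-type spaces and the weak Hausdorff--Young inequality throughout, and precludes a direct dominated-convergence argument for continuity. This step also fails in dimension $n=2$, where $p=n/(n-1)=2$ lies outside the range of \eqref{wHYi}, and it thereby explains the restriction $n\ge 3$ in the hypothesis. The $(1+\varepsilon)$ exponent on the $F$-weight is not strictly required (the weaker weight $\langle x\rangle^{q-n}$ suffices in the Theorem~\ref{t6.1} input), but it provides harmless slack in \eqref{6.12} and costs nothing since $q>n$.
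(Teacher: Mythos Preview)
Your proposal is correct and follows essentially the same route as the paper: reduce to scalar entries, apply Theorem~\ref{t6.1} with $r=n$, $s=q$, $a=[G_{0,<}(z;\,\cdot\,)_{\ell,k}]^{\wedge}\in L^n_{weak}(\bbR^n)$ (via \eqref{6.10} and the weak Hausdorff--Young inequality), then handle continuity by bounding the $\cB_q$-norm of the difference via the same estimate. The only notable variation is your choice $\psi(x)=c_n\langle x\rangle^{-1}$, whereas the paper takes $\psi(x)=c(1+|x|)^{-1-\varepsilon}$; the paper's choice lands directly on the weight $(1+|x|)^{(q-n)(1+\varepsilon)}$ appearing in \eqref{6.12}, while yours first yields the sharper weight $\langle x\rangle^{q-n}$ and then weakens it---this is why, as you correctly observe, the $\varepsilon>0$ is inessential for the argument. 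One small citation slip: the block-matrix reduction is really the fact \eqref{A.5} (membership in $\cB_q$ is entrywise), not Lemma~\ref{lA.3}.
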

\begin{proof} Pick $k, \ell \in \{1, \dots, N\}$.  
Introducing $\psi(x) = c (1 + |x|)^{-1 - \varepsilon}$, $x \in \bbR^n$, 
$c > 0$, one infers that $\psi \in L^n(\bbR^n)$ and choosing $c = \big\|(1 + |\,\cdot\,|)^{-1-\varepsilon}\big\|_{L^n_{weak}(\bbR^n)}^{-1}$ yields $\|\psi\|_{L^n_{weak}(\bbR^n)} \leq 1$. Identifying 
$a^{\vee}$ in Theorem \ref{t6.1} and 
\begin{equation} 
G_{0,<}(z;\, \cdot \,)_{\ell,k} \in L^{n/(n-1)}_{weak} (\bbR^n) \subset L^p(\bbR^n), \quad  
p \in (0, n/(n-1)), 
\end{equation} 
the inclusion \eqref{6.10} yields 
\begin{equation}
a = \big(a^{\vee}\big)^{\wedge} = [G_{0,<}(z;\, \cdot \,)_{\ell,k}]^{\wedge} 
\in L^n_{weak} (\bbR^n).
\end{equation}
Identifying $b$ in Theorem \ref{t6.1} with $F_{j,\ell} \in L^q(\bbR^n; (1+|x|)^{(q-n)(1 + \varepsilon)} d^nx)$, $r$ with $n \geq 3$, and $s$ with $q > n$, one verifies that $F_{j,\ell}/\psi \in L^n\big(\bbR^n; \psi^n d^n x\big)$ and all hypotheses of Theorem \ref{t6.1} are satisfied. Hence, the inclusion \eqref{6.11} and the estimate \eqref{6.12} hold. 

Continuity of $F(Q) G_{0,<}(z;-i \nabla)^{\wedge}$ with respect to $z \in \ol{\bbC_+}$ in the 
$\cB_q\big([L^2(\bbR^n)]^N\big)$-norm follows from the estimate \eqref{6.12} (replacing 
$G_{0,<}(z;-i \nabla)$ by $G_{0,<}(z;-i \nabla) - G_{0,<}(z';-i \nabla)$), 
the explicit structure of $G_{0,<}(z;\, \cdot \,)$ in \eqref{5.35}, \eqref{5.36},  
and the continuity of $f_n(\, \cdot \,, \, \cdot \,)$, combined with the weak Hausdorff--Young 
inequality \eqref{wHYi} and the fact that $L^q(\bbR^n;d\rho) \subset L^q_{weak}(\bbR^n; d\rho)$ with 
$\|g\|_{L^q_{weak}(\bbR^n; d\rho)} \leq \|g\|_{L^p(\bbR^n; d\rho)}$, $g \in L^q(\bbR^n; d\rho)$, 
$q \in (0,\infty)$. Indeed, with $C_n \in (0,\infty)$ some universal constant, 
\begin{align}
& \big\|[G_{0,<}(z;\, \cdot \,)_{\ell,k}]^{\wedge} 
- G_{0,<}(z';\, \cdot \,)_{\ell,k}]^{\wedge}\big\|_{L^n_{weak} (\bbR^n)}     \no \\
& \quad \leq C_n \|G_{0,<}(z;\, \cdot \,)_{\ell,k} 
- G_{0,<}(z';\, \cdot \,)_{\ell,k}\|_{L^{n/(n-1)}_{weak} (\bbR^n)}    \no \\
& \quad \leq C_n \|G_{0,<}(z;\, \cdot \,)_{\ell,k} - G_{0,<}(z';\, \cdot \,)_{\ell,k}\|_{L^{n/(n-1)}(\bbR^n)} 
\underset{\substack{z \to z' \\[.5mm] z, z' \in \ol{\bbC_+}}}{\longrightarrow} 0,  
\end{align}
applying the dominated convergence theorem.  
\end{proof}

A combination of Theorems \ref{t5.5} and \ref{t6.2} then yields the first principal result of this section, which strengthens a part of Theorem \ref{t3.4} (see \eqref{formula3}) and shows that the Birman--Schwinger operators $\ol{V_2 (H_0 - zI_{[L^2(\bbR^n)]^N})^{-1} V_1^*}$  are continuous in the closed upper half-plane in an appropriate Schatten norm, provided that $V_j$, $j=1,2$ satisfy appropriate boundedness and decay hypotheses.

\begin{theorem} \lb{t6.3}
Let $n \in \bbN$, $n \geq 3$, $\varepsilon > 0$, and suppose that 
\begin{equation}
F_1 \in [L^q(\bbR^n; (1+|x|)^{(q-n)(1 + \varepsilon)} d^nx)]^{N \times N} \, 
\text{ for some $q \in (n, \infty)$,}
\end{equation}
and 
\begin{equation}
F_{\ell} \in [L^{4n/(n+1)}(\bbR^n)]^{N \times N} \cap [L^{\infty}(\bbR^n)]^{N \times N}, \quad \ell =1,2. 
\end{equation}
Introducing 
\begin{equation}
R_{0,F_1,F_2} (z,x,y) = F_1(x) G_0(z;x,y) F_2(y), \quad z \in \ol{\bbC_+}, 
\; x,y \in \bbR^n, \; x \neq y,  \lb{6.15}
\end{equation} 
the integral operator $R_{0,F_1,F_2} (z)$ in $[L^2(\bbR^n)]^N$ with integral kernel 
$R_{0,F_1,F_2} (z,\, \cdot \,,\, \cdot \,)$ satisfies
\begin{equation}
R_{0,F_1,F_2} (z) \in \cB_q\big([L^2(\bbR^n)]^N\big), \quad z \in \ol{\bbC_+},  
\lb{6.16} 
\end{equation}
and $R_{0,F_1,F_2} (\, \cdot \,)$ is continuous on $\ol{\bbC_+}$ with respect to 
the $\|\, \cdot \,\|_{\cB_q([L^2(\bbR^n)]^N)}$-norm. 

In particular, this applies to $F_{\ell}$, $\ell = 1,2$, satisfying for some constant $C \in (0,\infty)$, 
\begin{equation} 
|F_{\ell,j,k}|  \leq C \langle \, \cdot \, \rangle^{- \delta}, \quad \delta > (n+1)/4, \; 1 \leq j, k \leq N, 
\; \ell =1,2. 
\end{equation} 
\end{theorem}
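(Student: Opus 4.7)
My plan is to decompose $G_0(z;x,y) = G_{0,<}(z;x-y) + G_{0,>}(z;x-y)$ as in \eqref{5.36}, inducing a corresponding additive decomposition
\begin{equation*}
R_{0,F_1,F_2}(z) = R_{0,<,F_1,F_2}(z) + R_{0,>,F_1,F_2}(z),
\end{equation*}
where $R_{0,<,F_1,F_2}(z)$ and $R_{0,>,F_1,F_2}(z)$ have integral kernels $F_1(x) G_{0,<}(z;x-y) F_2(y)$ and $F_1(x) G_{0,>}(z;x-y) F_2(y)$, respectively. I will show that each piece lies in $\cB_q\big([L^2(\bbR^n)]^N\big)$ with $\cB_q$-continuous dependence on $z \in \ol{\bbC_+}$, and add.

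For the tail piece $R_{0,>,F_1,F_2}(z)$, I would invoke Theorem \ref{t5.5}\,$(ii)$ directly, using $F_1, F_2 \in [L^{4n/(n+1)}(\bbR^n)]^{N \times N}$; this yields Hilbert--Schmidt membership together with $\cB_2$-continuity in $z \in \ol{\bbC_+}$. Since $q > n \geq 3$ forces $q > 2$, the canonical contractive embedding $\cB_2 \hookrightarrow \cB_q$ (with $\|\,\cdot\,\|_{\cB_q} \leq \|\,\cdot\,\|_{\cB_2}$) upgrades both the ideal membership and the norm continuity to the desired $\cB_q$-statement. For the short-range/singular part, I would view $R_{0,<,F_1,F_2}(z)$ as the composition $F_1(Q) T_{<}(z) F_2(Q)$, where $T_{<}(z)$ is the matrix-valued convolution operator whose scalar entries are convolution by $G_{0,<}(z;\,\cdot\,)_{\ell,k}$. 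Theorem \ref{t6.2}, applied entry-by-entry across the $N$-component block structure, places each scalar piece $F_1(Q)_{j,\ell} T_{G_{0,<}(z;\,\cdot\,)_{\ell,k}} \in \cB_q\big(L^2(\bbR^n)\big)$ with $\cB_q$-continuous dependence on $z$, using the weighted $L^q$-hypothesis on $F_1$. Since $F_2 \in [L^{\infty}(\bbR^n)]^{N\times N}$ makes $F_2(Q)$ a bounded multiplication operator independent of $z$, the right-ideal property $\cB_q \cdot \cB \subseteq \cB_q$, together with summation over the indices $\ell,k$ and the matrix reassembly afforded by Theorem \ref{tA.2} and Lemma \ref{lA.3}, delivers $R_{0,<,F_1,F_2}(z) \in \cB_q\big([L^2(\bbR^n)]^N\big)$ with the required continuity.

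For the ``in particular'' claim with $|F_{\ell,j,k}| \leq C \langle \, \cdot \, \rangle^{-\delta}$ and $\delta > (n+1)/4$, the $L^\infty$-bound is immediate, and $\langle \, \cdot \, \rangle^{-\delta} \in L^{4n/(n+1)}(\bbR^n)$ is equivalent to $\delta > (n+1)/4$. The weighted $L^q$-hypothesis on $F_1$ reduces to the integrability condition $q\delta - (q-n)(1+\varepsilon) > n$, i.e., $q(\delta - 1) > (q-n)\varepsilon$; with $\delta$ fixed, I would choose $q > n$ sufficiently close to $n$ and $\varepsilon > 0$ sufficiently small to secure this inequality.

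The main (and rather modest) technical point is the edge case $n = 3$, where $(n+1)/4 = 1$ makes the excess $\delta - 1 > 0$ potentially small; the balance $q(\delta - 1) > (q-n)\varepsilon$ then forces a careful but elementary selection of $q$ and $\varepsilon$. Otherwise, the argument is essentially a gluing exercise, with the substantive analytic content already encapsulated in Theorems \ref{t5.5}\,$(ii)$ and \ref{t6.2}; the remaining bookkeeping concerns only the matrix-valued reassembly, which is handled cleanly by Theorem \ref{tA.2}\,$(iii)$ and Lemma \ref{lA.3}.
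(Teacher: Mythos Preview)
Your proposal is correct and follows essentially the same approach as the paper: split $G_0=G_{0,<}+G_{0,>}$, handle the tail via Theorem~\ref{t5.5}\,$(ii)$ (with the embedding $\cB_2\hookrightarrow\cB_q$ since $q>n\geq 3>2$), and handle the local singular part via Theorem~\ref{t6.2} together with right multiplication by the bounded operator $F_2(Q)$. Your verification of the ``in particular'' clause is also in line with the paper's, and in fact more explicit; one minor point is that the block-matrix reassembly for general $\cB_q$ is governed by \eqref{A.5} rather than Theorem~\ref{tA.2}\,$(iii)$ (which is specific to $\cB_2$).
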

\begin{proof}
Recalling the decomposition \eqref{5.36}, 
\begin{equation}
G_0(z;x,y) = G_{0, <}(z;x-y) + G_{0, >}(z;x-y), \quad x, y \in \bbR^n, \; x \neq y,
\lb{6.17} 
\end{equation}
(now employed for $n \in \bbN$, $n \geq 3$), one applies Theorem \ref{t5.5} to $G_{0,>}(z;\, \cdot \,)$ and Theorem \ref{t6.2} to $G_{0,<}(z;\, \cdot \,)$. 

One readily verifies that if $\delta > (n+1)/4$, then $\langle \, \cdot \, \rangle^{-\delta} I_N$ satisfies the conditions assumed on $F_{\ell}$, $\ell =1,2$, 
\end{proof}

This handles the case $n \geq 3$. Due to the condition $s > r > 2$ (in the underlying concrete case, $r=n$) in Theorem \ref{t6.1}, the special case $n=2$ in connection with $G_{0, <}(z;\, \cdot \,)$ does not subordinate to these techniques and hence will be treated using an alternative approach next (which actually applies to all dimensions $n \geq 2$). While Theorem \ref{t6.3} only handles the case 
$n \geq 3$, it has the advantage that it yields continuity of $R_{0,F_1,F_2} (\, \cdot \,)$ on $\ol{\bbC_+}$ (and hence, particularly along the real axis) in a straightforward manner. 

To describe an alternative approach to this circle of ideas, we start with some preparatory material 
on the following trace ideal interpolation result, see, for instance, \cite[Theorem~III.13.1]{GK69}, \cite[Theorem~0.2.6]{Ya10} (see also \cite{GLST15}, \cite[Theorem~III.5.1]{GK70}).  

\begin{theorem} \lb{t6.4} 
Let $p_j \in [1,\infty) \cup \{\infty\}$, $\Sigma= \{\zeta\in\bbC\,|\, \Re(\zeta) \in (\xi_1,\xi_2)\}$, 
$\xi_j \in \bbR$, $\xi_1 < \xi_2$, $j=1,2$. Suppose that $A(\zeta)\in\cB(\cH)$, $\zeta\in \ol \Sigma$ 
and that $A(\,\cdot\,)$ is analytic on $\Sigma$, continuous up to $\partial \Sigma$, and that 
$\|A(\, \cdot \,)\|_{\cB(\cH)}$ is bounded on $\ol \Sigma$. Assume that for some $C_j \in (0,\infty)$, 
\begin{equation}
\sup_{\eta \in \bbR}\|A(\xi_j + i \eta)\|_{\cB_{p_j}(\cH)} \leq C_j, \quad j=1,2.      \lb{6.19}
\end{equation}
Then
\begin{equation}
A(\zeta) \in \cB_{p(\Re(\zeta))}(\cH), \quad \f{1}{p(\Re(\zeta))} = \f{1}{p_1}
+ \f{\Re(\zeta) - \xi_1}{\xi_2 - \xi_1} \bigg[\f{1}{p_2} - \f{1}{p_1}\bigg], 
\quad \zeta \in \ol \Sigma,     \lb{6.20}
\end{equation}
and
\begin{equation}
\|A(\zeta)\|_{\cB_{p(\Re(\zeta))}(\cH)}  \leq C_1^{(\xi_2 - \Re(\zeta))/(\xi_2 - \xi_1)} 
C_2^{(\Re(\zeta) - \xi_1)/(\xi_2 - \xi_1)},
\quad \zeta\in \ol \Sigma.     \lb{6.21}
\end{equation}
In case $p_j = \infty$, $\cB_{\infty}(\cH)$ can be replaced by $\cB(\cH)$. 
\end{theorem}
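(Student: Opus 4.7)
The plan is to derive Theorem \ref{t6.4} by combining Schatten duality with Hadamard's three-lines theorem, which is the standard route (Stein interpolation specialized to the Schatten scale). First I would normalize by setting $\theta(\zeta) := (\Re(\zeta) - \xi_1)/(\xi_2 - \xi_1)$, so that the exponent in \eqref{6.20} reads $1/p(\Re(\zeta)) = (1 - \theta(\zeta))/p_1 + \theta(\zeta)/p_2$, and fix an arbitrary $\zeta_0 \in \Sigma$, writing $p_0 := p(\Re(\zeta_0))$ with conjugate exponent $p_0'$. Via Schatten duality,
\begin{equation*}
\|T\|_{\cB_{p_0}(\cH)} = \sup\big\{|\tr_\cH(T K)| \,:\, K \text{ finite rank}, \; \|K\|_{\cB_{p_0'}(\cH)} \le 1\big\}
\end{equation*}
(with $\cB_{p_0'}$ replaced by $\cB(\cH)$ when $p_0 = 1$), it suffices to estimate $|\tr_\cH(A(\zeta_0) K)|$ uniformly over such $K$ by $C_1^{1-\theta(\zeta_0)} C_2^{\theta(\zeta_0)}$.

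Given such a finite-rank $K$ with Schmidt decomposition $K = \sum_{j=1}^M s_j (\, \cdot \,, \varphi_j)_\cH \psi_j$, where $s_j > 0$, $\{\varphi_j\}_j$ and $\{\psi_j\}_j$ are orthonormal, and $\sum_j s_j^{p_0'} \le 1$, I would construct the entire operator-valued function
\begin{equation*}
K(\zeta) := \sum_{j=1}^M s_j^{p_0' \gamma(\zeta)} (\, \cdot \,, \varphi_j)_\cH \psi_j, \qquad \gamma(\zeta) := \frac{1}{p_1'} + \frac{\zeta - \xi_1}{\xi_2 - \xi_1}\bigg[\frac{1}{p_2'} - \frac{1}{p_1'}\bigg],
\end{equation*}
with the convention $s^w := \exp(w \ln s)$ for $s > 0$. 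By design $\gamma(\zeta_0) = 1/p_0'$ gives $K(\zeta_0) = K$, while $\Re \gamma(\xi_j + i \eta) = 1/p_j'$ for $j = 1,2$, $\eta \in \bbR$, so a direct computation of singular values yields the boundary control $\|K(\xi_j + i \eta)\|_{\cB_{p_j'}(\cH)} \le 1$. The scalar function
\begin{equation*}
f(\zeta) := \tr_\cH(A(\zeta) K(\zeta)), \quad \zeta \in \ol{\Sigma},
\end{equation*}
is then analytic on $\Sigma$, continuous on $\ol{\Sigma}$, and bounded there since $K(\,\cdot\,)$ has fixed finite rank $M$ with $\|K(\zeta)\|_{\cB_1(\cH)}$ uniformly bounded on compact strips and $\|A(\,\cdot\,)\|_{\cB(\cH)}$ is bounded on $\ol{\Sigma}$ by hypothesis. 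The H\"older inequality for Schatten ideals together with \eqref{6.19} produces the boundary bounds $|f(\xi_j + i \eta)| \le \|A(\xi_j + i \eta)\|_{\cB_{p_j}(\cH)} \|K(\xi_j + i \eta)\|_{\cB_{p_j'}(\cH)} \le C_j$, and Hadamard's three-lines theorem applied to $f$ delivers $|f(\zeta_0)| \le C_1^{1 - \theta(\zeta_0)} C_2^{\theta(\zeta_0)}$. Taking the supremum over admissible $K$ yields \eqref{6.21} at $\zeta_0$, and hence both \eqref{6.20} and \eqref{6.21} on the open strip; the boundary $\partial \Sigma$ is then reached by continuity of $A(\,\cdot\,)$ combined with lower semicontinuity of the Schatten norms.

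The main obstacle is the bookkeeping of the endpoint cases $p_j \in \{1, \infty\}$. When $p_j = 1$ one has $p_j' = \infty$ and $1/p_j' = 0$, so on $\Re(\zeta) = \xi_j$ each factor $s_k^{p_0' \gamma(\zeta)}$ has unit modulus, forcing $\|K(\xi_j + i \eta)\|_{\cB(\cH)} \le 1$; pairing with $A(\xi_j + i \eta) \in \cB_1(\cH)$ via $|\tr_\cH(AK)| \le \|A\|_{\cB_1} \|K\|_{\cB(\cH)}$ still closes the estimate, which is precisely why $\cB_\infty$ is replaced by $\cB(\cH)$ in the theorem's final sentence. When $p_j = \infty$ one has $p_j' = 1$ and \eqref{6.19} is interpreted as an operator-norm bound, and the argument goes through unchanged after appropriate $L^\infty$-type interpretations of the Schmidt coefficient conditions on $K$. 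Apart from these conventions, the proof is entirely driven by the algebraic identity $\gamma(\zeta_0) = 1/p_0'$ and the convexity encoded in the three-lines principle.
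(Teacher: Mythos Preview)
The paper does not prove Theorem~\ref{t6.4}; it is quoted with references to \cite[Theorem~III.13.1]{GK69} and \cite[Theorem~0.2.6]{Ya10}, and your Stein-interpolation argument via Schatten duality and Hadamard's three-lines theorem is precisely the standard proof found in those sources.

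One small slip worth flagging: your claim that ``$\gamma(\zeta_0) = 1/p_0'$ gives $K(\zeta_0) = K$'' is only literally correct when $\Im(\zeta_0) = 0$, since $\gamma$ is affine in $\zeta$, not in $\Re(\zeta)$; for general $\zeta_0 \in \Sigma$ one has $\Re\gamma(\zeta_0) = 1/p_0'$ but $\gamma(\zeta_0)$ acquires an imaginary part, so $K(\zeta_0)$ differs from $K$ by unimodular phase factors on the singular values. The fix is immediate and standard: either (i) prove the bound first for real $\zeta_0 \in (\xi_1,\xi_2)$ and then treat $\zeta_0 = \xi + i\eta_0$ by applying the real case to the shifted family $\widetilde A(\zeta) := A(\zeta + i\eta_0)$, which satisfies the same boundary hypotheses~\eqref{6.19}, or (ii) replace $\gamma(\zeta)$ by $\gamma(\zeta) - i\,\Im\gamma(\zeta_0)$ so that $K(\zeta_0) = K$ exactly. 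With this adjustment the argument is complete.
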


A combination of Theorems \ref{t5.6}, \ref{t5.4} and \ref{t6.4} then yields the following fact 
(cf.\ \cite{GN20}).

\begin{theorem} \lb{t6.5} 
Let $n \in \bbN$, $n \geq 2$, $0 < 2 \gamma < n$, $\delta > \gamma$, and suppose  that 
$T_{\gamma,\delta}$ is an integral operator in $L^2(\bbR^n)$ whose integral kernel 
$T_{\gamma,\delta}(\, \cdot \, , \, \cdot \,)$ satisfies the estimate 
\begin{equation}
|T_{\gamma, \delta}(x,y)| \leq C \langle x \rangle^{-\delta} |x - y|^{2 \gamma - n} 
\langle y \rangle^{-\delta}, \quad x, y \in \bbR^n, \; x \neq y 
\end{equation}
for some $C \in (0,\infty)$. Then, 
\begin{equation} \lb{6.23}
T_{\gamma, \delta} \in \cB_p\big(L^2(\bbR^n)\big), \quad p > n/(2\gamma), \; p \geq 2,
\end{equation} 
and 
\begin{align}
\|T_{\gamma, \delta}\|_{\cB_{n/(2 \gamma - \varepsilon)}(L^2(\bbR^n))} 
& \leq \sup_{\eta \in \bbR} \big[\|T_{\gamma, \delta}(- 2 \gamma + \varepsilon + i \eta)
\|_{\cB(L^2(\bbR^n))}\big]^{2[- 2 \gamma + (n/2) + \varepsilon]/n}     \no \\
& \quad \times \sup_{\eta \in \bbR} \big[\|T_{\gamma, \delta}(- 2 \gamma + (n/2) + \varepsilon + i \eta)
\|_{\cB_2(L^2(\bbR^n))}\big]^{2(2 \gamma - \varepsilon)/n}     \lb{6.25}
\end{align}
for $0 < \varepsilon$ sufficiently small. 
\end{theorem}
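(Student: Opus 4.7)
The plan is to establish \eqref{6.23} via Stein's complex interpolation theorem for trace ideals (Theorem \ref{t6.4}), interpolating between operator-norm boundedness (furnished by the McOwen--Nirenberg--Walker estimate, Theorem \ref{t5.6}\,$(ii)$) on one boundary line of a vertical strip and Hilbert--Schmidt membership (furnished by the Sobolev-type inequality, Theorem \ref{t5.4}) on the other.

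Concretely, I would fix a small $\varepsilon > 0$ with $\max(0, 2\gamma - n/2) < \varepsilon < 2\gamma$, set $\xi_1 := -2\gamma + \varepsilon$ and $\xi_2 := -2\gamma + (n/2) + \varepsilon$ (so that $0 \in (\xi_1, \xi_2)$), and introduce on the strip $\Sigma := \{\zeta \in \bbC \,|\, \Re(\zeta) \in (\xi_1, \xi_2)\}$ the family of integral operators $A(\zeta)$ with kernel
\begin{equation*}
A(\zeta; x, y) := e^{\zeta^2} \, \langle x \rangle^{B \zeta} \, T_{\gamma, \delta}(x,y) \, \langle y \rangle^{B \zeta} \, |x-y|^{\zeta}, \quad B := -(\delta - \varepsilon/2)/(2\gamma - \varepsilon).
\end{equation*}
Note $A(0) = T_{\gamma, \delta}$. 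The Gaussian prefactor $e^{\zeta^2}$, of modulus $e^{(\Re\zeta)^2 - (\Im\zeta)^2}$, furnishes decay in $|\Im \zeta|$ and will guarantee uniform $\cB(L^2(\bbR^n))$-boundedness of $A(\dott)$ throughout $\ol{\Sigma}$, as demanded by Theorem \ref{t6.4}. Analyticity of $\zeta \mapsto A(\zeta)$ on $\Sigma$ follows from Morera's theorem applied to $\zeta \mapsto (A(\zeta) f, g)_{L^2}$, with Fubini justified by the locally uniform pointwise kernel bounds below.

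The two endpoint estimates flow from the pointwise majorization
\begin{equation*}
|A(\zeta; x, y)| \leq C \, |e^{\zeta^2}| \, \langle x \rangle^{-\delta + B \Re(\zeta)} \, |x - y|^{2\gamma - n + \Re(\zeta)} \, \langle y \rangle^{-\delta + B \Re(\zeta)}.
\end{equation*}
The constant $B$ has been calibrated so that at $\Re(\zeta) = \xi_1$ this bound reduces to $C |e^{\zeta^2}| \langle x \rangle^{-\varepsilon/2} |x - y|^{-n + \varepsilon} \langle y \rangle^{-\varepsilon/2}$; Theorem \ref{t5.6}\,$(ii)$, applied with $c = d = \varepsilon/2 < n/2$ and $p = 2$, then yields $\sup_{\eta \in \bbR} \|A(\xi_1 + i\eta)\|_{\cB(L^2(\bbR^n))} < \infty$. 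At $\Re(\zeta) = \xi_2$, the $|x-y|$-exponent becomes $-n/2 + \varepsilon$, and Theorem \ref{t5.4} applied with $r = s = 2n/(n + 2\varepsilon)$ to the double integral of $|A(\zeta; x, y)|^2$ produces Hilbert--Schmidt boundedness, provided the weight exponent $\delta - B \xi_2$ exceeds $(n + 2\varepsilon)/4$; a short algebraic simplification shows this threshold to be equivalent to the hypothesis $\delta > \gamma$, delivering $\sup_{\eta \in \bbR} \|A(\xi_2 + i \eta)\|_{\cB_2(L^2(\bbR^n))} < \infty$.

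Theorem \ref{t6.4} with $p_1 = \infty$ and $p_2 = 2$ then yields $A(\zeta) \in \cB_{p(\Re(\zeta))}(L^2(\bbR^n))$ on $\ol{\Sigma}$; the interpolation formula at $\Re(\zeta) = 0$ produces $p(0) = n/(2\gamma - \varepsilon)$, and the resulting estimate \eqref{6.25} is exactly the specialization of \eqref{6.21} upon using $A(0) = T_{\gamma, \delta}$. The inclusion $\cB_{n/(2\gamma - \varepsilon)} \subseteq \cB_p$ whenever $p \geq n/(2\gamma - \varepsilon)$, together with the freedom to take $\varepsilon > 0$ arbitrarily small, yields \eqref{6.23} for all $p > n/(2\gamma)$ with $p \geq 2$. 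The main obstacle lies in the simultaneous calibration of the linear $\zeta$-dependent weights so that both endpoint requirements---the boundedness condition $c, d < n/2$ of Theorem \ref{t5.6}\,$(ii)$ and the integrability threshold $(n + 2\varepsilon)/4$ of the Hilbert--Schmidt Sobolev estimate---are met by a single affine exponent $B\zeta$; the fact that this compatibility reduces exactly to $\delta > \gamma$ is the crux of the proof.
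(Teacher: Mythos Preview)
Your proof is correct and follows the same strategy as the paper: build an analytic family deforming $T_{\gamma,\delta}$, verify $\cB(L^2)$-boundedness at one edge of the strip via Theorem~\ref{t5.6}\,$(ii)$ and $\cB_2$-membership at the other via Theorem~\ref{t5.4}, then interpolate with Theorem~\ref{t6.4}. The paper's family is a bit simpler, namely
\[
T_{\gamma,\delta}(\zeta;x,y)=T_{\gamma,\delta}(x,y)\,\langle x\rangle^{-\zeta/2}\,|x-y|^{\zeta}\,\langle y\rangle^{-\zeta/2},
\]
so the weight coefficient is just $-1/2$ rather than your $B=-(\delta-\varepsilon/2)/(2\gamma-\varepsilon)$; with this choice the boundedness check at $\Re(\zeta)=\xi_1$ reduces immediately to $\delta\ge\gamma$ and the Hilbert--Schmidt check at $\Re(\zeta)=\xi_2$ to $\delta>\gamma$, with no intermediate algebra. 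Two minor remarks: your Gaussian prefactor $e^{\zeta^2}$ is harmless but unnecessary, since the modulus of the kernel (yours and the paper's alike) depends only on $\Re(\zeta)$, so the bound from Theorem~\ref{t5.6}\,$(ii)$ is already uniform in $\Im(\zeta)$; and the inequality \eqref{6.25} as written refers to the paper's family $T_{\gamma,\delta}(\zeta)$, so your interpolation estimate would literally feature $A(\xi_j+i\eta)$ in place of $T_{\gamma,\delta}(\xi_j+i\eta)$---same substance, different labels.
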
 
\begin{proof}
Following the idea behind Yafaev's proof of \cite[Lemma~0.13.4]{Ya10}, we 
introduce the analytic family of integral operators 
$T_{\gamma, \delta}(\, \cdot \,)$ in $L^2(\bbR^n)$ generated by the integral kernel 
\begin{equation}
T_{\gamma, \delta}(\zeta;x,y) = T_{\gamma, \delta}(x,y) \, \langle x \rangle^{- (\zeta/2)} 
|x - y|^{\zeta} 
\langle y \rangle^{- (\zeta/2)}, \quad x, y \in \bbR^n, \; x \neq y,  
\end{equation}
noting $T_{\gamma, \delta}(0) = T_{\gamma, \delta}$. 

By Theorems \ref{t5.6}\,$(ii)$ and \ref{tA.2}\,$(i)$ (for $N=1$), 
\begin{equation}
T_{\gamma, \delta}(\zeta) \in \cB\big(L^2(\bbR^n)\big), \quad 0 < \Re(\zeta) + 2 \gamma < n, 
\; \delta \geq \gamma. 
\end{equation}
To check the Hilbert--Schmidt property of $T_{\gamma, \delta}(\, \cdot \,)$ one 
estimates for the square of $|T_{\gamma, \delta}(\, \cdot \, ; \, \cdot \,,\, \cdot \,)|$,
\begin{align}
\begin{split} 
|T_{\gamma, \delta}(\zeta;x,y)|^2 \leq 
\langle x \rangle^{- 2\delta - \Re(\zeta)} 
|x - y|^{2 \Re(\zeta) + 4 \gamma - 2n}  \langle y \rangle^{- 2\delta - \Re(\zeta)},&  \\
x, y \in \bbR^n, \; x \neq y,&  
\end{split} 
\end{align}
and hence one can apply Theorem \ref{t5.4} upon identifying 
$\lambda = 2n - 4 \gamma- 2 \Re(z)$, $r=s=n/[\Re(\zeta) + 2 \gamma]$, and 
$f = h = \langle \, \cdot \, \rangle^{- [2 \delta + \Re(\zeta)]}$, to verify that 
$0 < \lambda < n$ translates into $n/2 < \Re(\zeta) + 2 \gamma < n$, and 
$f \in L^r(\bbR^n)$ holds with $r \in(1,2)$ if $\delta > \gamma$. Hence, 
\begin{equation}
T_{\gamma, \delta}(\zeta) \in \cB_2\big(L^2(\bbR^n)\big), 
\quad n/2 < \Re(\zeta) + 2 \gamma < n, \; \delta > \gamma.
\end{equation}  
It remains to interpolate between the $\cB\big(L^2(\bbR^n)\big)$ and 
$\cB_2\big(L^2(\bbR^n)\big)$ property, employing Theorem \ref{t6.4} as follows. 
Choosing $0 < \varepsilon$ sufficiently small, one identifies 
$\xi_1 = - 2 \gamma + \varepsilon$, $\xi_2 = - 2 \gamma + (n/2) + \varepsilon$, 
$p_1 = \infty$, $p_2 = 2$, and hence obtains
\begin{equation} \lb{6.28}
p(\Re(\zeta)) = n/[\Re(\zeta) + 2 \gamma - \varepsilon], 
\end{equation} 
in particular, $p(0) > n/(2 \gamma)$ (and of course, $p(0) \geq 2$).  Since $\varepsilon$ may be taken arbitrarily small, \eqref{6.23} follows from \eqref{6.28} and \eqref{6.25} is a direct consequence of 
\eqref{6.21}. 
\end{proof}

One notes that while subordination in general only applies to $\cB_p$-ideals with $p$ even (see the 
discussion in \cite[p.~24 and Addendum~E]{Si05}), the use of complex interpolation in Theorem \ref{t6.5} (and the focus on bounded and Hilbert--Schmidt operators) permits one to avoid this restriction.  

Combining Theorems \ref{t5.4}, \ref{t5.6}\,$(ii)$, \ref{t6.4}, and \ref{t6.5} then yields the second principal result of this section.

\begin{theorem} \lb{t6.6} 
Let $n \in \bbN$, $n \geq 2$. Then the integral operator $R_{0,\delta}$ in $[L^2(\bbR^n)]^N$ with integral kernel $R_{0,\delta}( \, \cdot \,, \, \cdot \,)$ permitting the entrywise bound  
\begin{equation}
|R_{0,\delta}( \, \cdot \,, \, \cdot \,)_{j,k}| \leq C \langle \, \cdot \, \rangle^{-\delta} 
|G_0(0+i\,0; \, \cdot \,, \, \cdot \,)_{j,k}| \langle \, \cdot \, \rangle^{-\delta}, \quad \delta > 1/2, \; 
1 \leq j,k \leq N, 
\end{equation}
for some $C \in (0,\infty)$, satisfies
\begin{equation} 
R_{0,\delta} \in \cB_p\big([L^2(\bbR^n)]^N\big), \quad p > n.   \lb{6.30}
\end{equation} 
In a similar fashion, the integral operator $R_{0,\delta}(z)$ in $[L^2(\bbR^n)]^N$ with integral 
kernel $R_{0,\delta}(z; \, \cdot \,, \, \cdot \,)$ permitting the entrywise bound  
\begin{align} 
\begin{split}
|R_{0,\delta}(z; \, \cdot \,, \, \cdot \,)_{j,k}| \leq C \langle \, \cdot \, \rangle^{-\delta} 
|G_0(z;\, \cdot \,, \, \cdot \,)_{j,k}| \langle \, \cdot \, \rangle^{-\delta},& \\ 
z \in \ol{\bbC_+}, \; \delta > (n+1)/4, \; 1 \leq j,k \leq N,& 
\end{split}
\end{align}  
for some $C \in (0,\infty)$, satisfies 
\begin{equation} 
R_{0,\delta}(z) \in \cB_p\big([L^2(\bbR^n)]^N\big), \quad p > n, 
\; z \in \ol{\bbC_+}.     \lb{6.32}
\end{equation} 
\end{theorem}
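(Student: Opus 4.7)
The plan is to reduce the matrix-valued statement to a scalar interpolation estimate by applying Theorem~\ref{t6.5} entrywise, and then to lift back to $[L^2(\bbR^n)]^N$ via the block matrix machinery of Appendix~\ref{sA} (Theorem~\ref{tA.2}\,$(i)$ and Lemma~\ref{lA.3}). First I would treat the zero-energy case. Combining the standing hypothesis with the entrywise bound $|G_0(0+i\,0;x,y)_{j,k}| \leq c_n |x-y|^{1-n}$ (a consequence of \eqref{5.19}) yields
\begin{equation*}
|R_{0,\delta}(x,y)_{j,k}| \leq C' \langle x \rangle^{-\delta} |x-y|^{1-n} \langle y \rangle^{-\delta}, \quad 1 \leq j,k \leq N.
\end{equation*}
Identifying this with the hypothesis of Theorem~\ref{t6.5} forces $2\gamma - n = 1 - n$, i.e., $\gamma = 1/2$, with $0 < 2\gamma = 1 < n$ (using $n \geq 2$) and $\delta > 1/2 = \gamma$. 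Theorem~\ref{t6.5} then places each scalar entry in $\cB_p(L^2(\bbR^n))$ for every $p > n/(2\gamma) = n$; the side constraint $p \geq 2$ is automatic since $n \geq 2$. Assembling the block matrix via Theorem~\ref{tA.2}\,$(i)$ and Lemma~\ref{lA.3} then yields \eqref{6.30}.

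For the general case $z \in \ol{\bbC_+}$, I would invoke the entrywise decomposition \eqref{5.37a},
\begin{equation*}
|G_0(z;x,y)_{j,k}| \leq C(z) |x-y|^{1-n}\chi_{[0,1]}(|x-y|) + D(z) |x-y|^{(1-n)/2}\chi_{[1,\infty)}(|x-y|),
\end{equation*}
and split the integral kernel of $R_{0,\delta}(z)$ accordingly into a near-diagonal piece $R_<(z)$ and a far-field piece $R_>(z)$. On $R_<(z)$, dropping the cutoff only enlarges the kernel and reduces the analysis to the zero-energy case: Theorem~\ref{t6.5} with $\gamma = 1/2$ gives $R_<(z) \in \cB_p([L^2(\bbR^n)]^N)$ for every $p > n$. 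On $R_>(z)$, again dropping the cutoff and matching $2\gamma - n = (1-n)/2$ forces $\gamma = (n+1)/4$, and the hypothesis $\delta > (n+1)/4 = \gamma$ is precisely the standing assumption; one verifies $0 < 2\gamma = (n+1)/2 < n$ for $n \geq 2$, and the resulting Schatten exponent constraint $p > n/(2\gamma) = 2n/(n+1) < 2$ is subsumed by $p \geq 2$. Thus $R_>(z) \in \cB_p([L^2(\bbR^n)]^N)$ for every $p \geq 2$, hence in particular for $p > n$. Adding the two contributions and lifting via the block matrix argument yields \eqref{6.32}.

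The principal technical point requiring care is the second case: for each piece of the cutoff decomposition one must simultaneously satisfy the three conditions of Theorem~\ref{t6.5}, namely (a) $0 < 2\gamma < n$, (b) $\delta > \gamma$, and (c) the resulting interpolation exponent is no larger than the desired $p$. The near-diagonal piece, with $\gamma = 1/2$, is precisely what forces the lower bound $p > n$; the far-field piece, with $\gamma = (n+1)/4$, on the other hand tightens the decay requirement on the weight $\langle\,\cdot\,\rangle^{-\delta}$ and is the reason the hypothesis of Theorem~\ref{t6.6} reads $\delta > (n+1)/4$ rather than $\delta > 1/2$. Since $(n+1)/4 > 1/2$ for all $n \geq 2$, both weight conditions are met simultaneously, and the two Schatten constraints are compatible, so the argument closes.
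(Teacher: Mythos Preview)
Your argument is correct and follows essentially the same route as the paper: reduce entrywise via \eqref{5.19} and \eqref{5.37a} to scalar kernels of the form $\langle x\rangle^{-\delta}|x-y|^{2\gamma-n}\langle y\rangle^{-\delta}$, apply Theorem~\ref{t6.5} with $\gamma=1/2$ (near-diagonal/zero-energy) and $\gamma=(n+1)/4$ (far-field), and then assemble the block matrix. One small citation point: for the final lift from entrywise $\cB_p$-membership to $\cB_p\big([L^2(\bbR^n)]^N\big)$ you should invoke \eqref{A.5} rather than Theorem~\ref{tA.2}\,$(i)$, since the latter addresses only $\cB(\cdot)$, not Schatten classes; the paper itself cites \eqref{A.5} for this step.
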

\begin{proof}
We will apply the fact \eqref{A.5}. 

The inclusion \eqref{6.30} is immediate from \eqref{5.18} (employing the elementary estimate 
$|G_0(0;x,y)_{j,k}| \leq C |x - y|^{1-n}$, $x, y \in \bbR^n$, $x \neq y$, $1 \leq j,k \leq N$) 
and Theorem \ref{t6.5} (with $\gamma = 1/2$). 

To prove the inclusion \eqref{6.32} we again employ the estimate \eqref{5.37a}. 
An application of Theorem \ref{t6.5} to both terms in \eqref{5.37a}, then yields for the part 
where $0 \leq |x-y| \leq 1$ that $\gamma = 1/2$ and hence $\delta > 1/2$ and $p > n$. Similarly, 
for the part where $|x-y| \geq 1$ one infers $\gamma = (n+1)/4$ and hence $\delta > (n+1)/4$ and 
$p > 2n/(n+1)$, $p \geq 2$, and thus one concludes $\delta > (n+1)/4$ and $p > n$.   
\end{proof}

\begin{remark} \lb{r6.7}
Continuity of $R_{0,\delta} (\, \cdot \,)$ on $\ol{\bbC_+}$ with respect to 
the $\|\, \cdot \,\|_{\cB_p([L^2(\bbR^n)]^N)}$-norm, $p > n$, appears to be more difficult to 
prove within this complex interpolation approach. In this context the first approach described in this 
section is by far simpler to apply, but in turn it is restricted to the case $n \geq 3$. In fact, as recorded 
in Theorem \ref{t6.3}, if $\delta > (n+1)/4$, then $\langle \, \cdot \, \rangle^{-\delta} I_N$ satisfies the conditions assumed on $F_{\ell}$, $\ell =1,2$, in Theorem \ref{t6.3}, implying the fact,  
\begin{align} 
\begin{split}
& \text{For $n \geq 3$, $\delta > (n+1)/4$, $R_{0,\delta} (\, \cdot \,)$ is continuous on $\ol{\bbC_+}$} \\ 
& \quad \text{with respect to 
the $\|\, \cdot \,\|_{\cB_p([L^2(\bbR^n)]^N)}$-norm, $p > n$.} 
\end{split}
\end{align} 
Fortunately, the remaining case $n=2$ can easily be handled directly as we demonstrate next. 
\hfill $\diamond$
\end{remark} 

\begin{corollary} \lb{c6.8}
Let $n = 2$, $\delta > 3/4$, and $z_0 \in \ol{\bbC_+}$. Then $R_{0,\delta} (\, \cdot \,)$, as introduced in Theorem \ref{t6.6}, satisfies 
\begin{equation} 
[R_{0,\delta}(z_1) - R_{0,\delta}(z_2)] \in \cB_2\big([L^2(\bbR^2)]^N\big), \quad 
z_j \in \ol{\bbC_+},  \; j=1,2,   \lb{6.36}
\end{equation} 
and 
\begin{equation}
\lim_{\substack{z \to z_0 \\ z \in \ol{\bbC_+}\backslash\{z_0\}}} \|R_{0,\delta}(z) 
- R_{0,\delta}(z_0)\|_{\cB_2([L^2(\bbR^2)]^N)} = 0.
\end{equation}
\end{corollary}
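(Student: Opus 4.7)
The plan is to exploit the cancellation of the leading $|x-y|^{-1}$ diagonal singularity of $G_0(z;x,y)$ in the difference $G_0(z_1;\dott,\dott)-G_0(z_2;\dott,\dott)$ and then extract the sharp threshold $\delta>3/4$ from an application of Sobolev's inequality (Theorem~\ref{t5.4}) in dimension $n=2$. The starting point is the decomposition
\[
G_0(z;x,y)=G_0(0+i\,0;x,y)+\wti G(z;x,y),\qquad z\in\ol{\bbC_+},\; x\neq y,
\]
which, since $G_0(0+i\,0;\dott,\dott)$ is $z$-independent, gives $G_0(z_1;x,y)-G_0(z_2;x,y)=\wti G(z_1;x,y)-\wti G(z_2;x,y)$.

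First I would derive uniform pointwise bounds on $\wti G(z;\dott,\dott)$ from the explicit Hankel representation \eqref{5.17}, which for $n=2$ becomes
\[
G_0(z;x,y)=\f{i\,z}{4}H_0^{(1)}(z|x-y|)\,I_2-\f{z}{4}H_1^{(1)}(z|x-y|)\,\alpha\cdot\f{x-y}{|x-y|}.
\]
Small-argument asymptotics (Appendix \ref{sB}) yield $H_0^{(1)}(w)=(2i/\pi)\ln(w/2)+\Oh(1)$ and $wH_1^{(1)}(w)=-2i/\pi+\Oh(w^2\ln w)$ as $w\to 0$; identifying the $z$-independent singular part $(i/(2\pi|x-y|))\alpha\cdot(x-y)/|x-y|$ with $G_0(0+i\,0;x,y)$, one obtains that for every compact $K\subset\ol{\bbC_+}$ there exists $C_K\in(0,\infty)$ with
\[
\|\wti G(z;x,y)\|_{\cB(\bbC^2)}\leq
\begin{cases} C_K\bigl(1+|\ln|x-y||\bigr), & 0<|x-y|\leq1, \\ C_K\,|x-y|^{-1/2}, & |x-y|\geq1,\end{cases}\qquad z\in K,
\]
where the far-diagonal line uses \eqref{5.20} together with $|x-y|^{-1}\leq|x-y|^{-1/2}$ for $|x-y|\geq1$.

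Second I would bound the Hilbert--Schmidt norm of $R_{0,\delta}(z_1)-R_{0,\delta}(z_2)$ by $\sum_{j,k}\int\!\!\int \langle x\rangle^{-2\delta}|\wti G(z_1;x,y)_{j,k}-\wti G(z_2;x,y)_{j,k}|^2\langle y\rangle^{-2\delta}\,d^2x\,d^2y$, split according to whether $|x-y|\leq1$ or $|x-y|\geq1$. The near-diagonal piece is dominated by $4C_K^2\!\int\!\!\int_{|x-y|\leq1}\langle x\rangle^{-2\delta}(1+|\ln|x-y||)^2\langle y\rangle^{-2\delta}d^2x\,d^2y$, which is finite for any $\delta>1/2$ after the change of variable $y=x+u$ and the elementary inequality $\langle x+u\rangle^{-2\delta}\leq c\langle x\rangle^{-2\delta}$ valid for $|u|\leq1$. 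The far-diagonal piece reduces to
\[
\int_{\bbR^2}\!\!\int_{\bbR^2}\langle x\rangle^{-2\delta}\,|x-y|^{-1}\,\langle y\rangle^{-2\delta}\,d^2x\,d^2y,
\]
which by Theorem \ref{t5.4} with $r=s=4/3$, $\lambda=1$, $n=2$ is finite precisely when $\langle\dott\rangle^{-2\delta}\in L^{4/3}(\bbR^2)$, i.e., exactly when $\delta>3/4$. This establishes \eqref{6.36} with a HS-norm bound uniform for $(z_1,z_2)$ in any compact subset of $\ol{\bbC_+}\times\ol{\bbC_+}$.

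Finally, continuity in $\cB_2$-norm follows from Lebesgue's dominated convergence theorem: for each fixed $x\neq y$, continuity of $z\mapsto H_0^{(1)}(z|x-y|),\, H_1^{(1)}(z|x-y|)$ on $\ol{\bbC_+}$ yields $|\wti G(z;x,y)-\wti G(z_0;x,y)|\to 0$ as $z\to z_0$, while the squared integrand is bounded above by the $L^1$-function constructed in the previous step. The main technical obstacle is arranging the pointwise bound on $\wti G$ to be simultaneously uniform across compacta in $\ol{\bbC_+}$ (including $z=0$) and sharp enough near the diagonal to match the exponent $\delta=3/4$; this rests entirely on subtracting the boundary value $G_0(0+i\,0;\dott,\dott)$, which is precisely the mechanism that removes the $|x-y|^{-1}$ singularity obstructing the Hilbert--Schmidt property of $R_{0,\delta}(z)$ itself.
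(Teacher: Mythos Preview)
Your proposal is correct and follows essentially the same approach as the paper: both exploit the fact that the $|x-y|^{-1}$ singularity of $G_0(z;x,y)$ is $z$-independent and hence cancels in differences, leaving a logarithmic-type remainder whose Hilbert--Schmidt property is controlled (for the critical piece) via Sobolev's inequality (Theorem~\ref{t5.4}) with $n=2$, $\lambda=1$, $r=s=4/3$, which is precisely what forces the threshold $\delta>3/4$. The paper's organization differs only in that it invokes Theorem~\ref{t5.5}\,$(ii)$ to dispose of the far-diagonal piece $G_{0,>}$ first and then treats $G_{0,<}$ (using the crude bound $|\ln|x-y||^2\leq c_0|x-y|^{-1}$ followed by Sobolev), whereas you subtract the boundary value $G_0(0+i\,0;\dott,\dott)$ directly and handle near- and far-diagonal regions by hand.
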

\begin{proof}
Once more we will apply the fact \eqref{A.5} (for $p=2$). 

By Theorem \ref{t5.5}\,$(ii)$ it suffices to focus on $G_{0,<} (z; \, \cdot \,)$. The explicit formula 
(see \eqref{B.13z}, \eqref{B.23z}),
\begin{align}
G_0(z;x,y) &= i 4^{-1} z \, H_0^{(1)} (z |x - y|) I_N  \no \\
& \quad - 4^{-1} |x - y|^{-1} [z |x - y|] H_1^{(1)} (z |x - y|) \, 
\alpha \cdot \f{(x - y)}{|x - y|},     \lb{6.37}  \\
& \hspace*{4.15cm} z \in \ol{\bbC_+}, \; x, y \in \bbR^2, \; x \neq y,    \no 
\end{align}
 together with the $z \to 0, \, z \in \ol{\bbC_+}\backslash \{0\}$ limit \eqref{B.28z}, then permit the following conclusions: Only if $z \to 0$ ($z \in \bbC_+ \backslash \{0\}$) and/or if $|x-y| \to 0$, can $G_0(z;x,y)$ develop a singularity which then is of the form $\ln(z|x-y|)$ and $|x-y|^{-1}$. (In all other circumstances $G_0$ is continuous 
 on $\bbC_+ \times \bbR^2 \times \bbR^2$.) However, the $|x-y|^{-1}$-singularity is $z$-independent and hence drops out in differences of the form $R_{0,\delta}(z_1) - R_{0,\delta}(z_2)$, $z_j \in \ol{\bbC_+}$, $j=1,2$. Thus one can safely ignore the $|x-y|^{-1}$-singularity. Thus, this only leaves the $\ln(z_j |x-y|)$-singularity, $j=1,2$, when considering $G_{0,<}(z_1;x,y) - G_{0,<}(z_2;x,y)$. This then yields the estimate (see also \eqref{5.35} and \eqref{5.35a}),
\begin{align}
& |G_{0,<}(z;x,y)_{j,k} - G_{0,<}(z_0;x,y)_{j,k}|    \no \\
& \quad \leq \begin{cases}  
C [|z| + |z_0|] |\ln(|x-y|)| + D(z,z_0), & z, z_0 \in \ol{\bbC_+}\big\backslash\{0\}, \\
C |z| |\ln(|x-y|)| + D(z), & z \in \ol{\bbC_+}\big\backslash\{0\}, \; z_0 = 0,   
\end{cases}      \lb{6.38} \\
& \hspace*{3.7cm}  x, y \in \bbR^2, \; 0 <|x - y| \leq 1, \; 1 \leq j, k \leq N,     \no 
\end{align}
with $C \in (0,\infty)$, and $D(\, \cdot \,, z_0), D(\, \cdot \,) \in (0,\infty)$ continuous and locally 
bounded on $\ol{\bbC_+}$. The logarithmic-type integral kernel in \eqref{6.37} can now be handled 
as in \cite[Proposition~7.1.17]{Ya10} (upon multiplying $R_0(z)$ by a factor of $z$ if $z_0=0$, and choosing $|z|=1$ in equation (7.1.25) in \cite[p.~272]{Ya10}), implying the asserted Hilbert--Schmidt property. Alternatively, one can use the very rough estimate (for some $c_0 \in (0,\infty)$) 
\begin{equation}
|\ln(|x-y|)|^2 \leq c_0 |x-y|^{-1}, \quad 0 < |x-y| \leq 1, 
\end{equation}
and apply the Sobolev inequality in the form of \eqref{6.18} with $n=2$, $\lambda = 1$, $r=s=4/3$, recalling that 
$\langle \dott \rangle^{- 2 \delta} \in L^{4/3}(\bbR^2)$ if $\delta > 3/4$.
\end{proof}

Combining Theorems \ref{t6.3}, \ref{t6.6}, Remark \ref{r6.7}, and Corollary \ref{c6.8}, we finally 
summarize  the principal results of this section as follows:

\begin{theorem} \lb{t6.9} 
Let $n \in \bbN$, $n \geq 2$ and consider the integral operator 
$R_{0, \delta}(z)$ in $[L^2(\bbR^n)]^N$ with integral 
kernel $R_{0,\delta}(z; \, \cdot \,, \, \cdot \,)$ permitting the entrywise bound  
\begin{align}
\begin{split}
|R_{0,\delta}(z; \, \cdot \,, \, \cdot \,)_{j,k}| \leq C \langle \, \cdot \, \rangle^{-\delta} 
|G_0(z;\, \cdot \,, \, \cdot \,)_{j,k}| \langle \, \cdot \, \rangle^{-\delta},& \\ 
z \in \ol{\bbC_+}, \; \delta > (n+1)/4, \; 1 \leq j, k \leq N,& 
\end{split}
\end{align}  
for some $C \in (0,\infty)$. Then $R_{0,\delta}(z)$ satisfies 
\begin{equation} 
R_{0,\delta}(z) \in \cB_p\big([L^2(\bbR^n)]^N\big), \quad 
p > n, \; z \in \ol{\bbC_+}.     \lb{6.40}
\end{equation} 
Moreover, if $n\geq 3$, $\delta > (n+1)/4$, then $R_{0,\delta} (\, \cdot \,)$ is continuous on 
$\ol{\bbC_+}$ with respect to the 
$\|\, \cdot \,\|_{\cB_p([L^2(\bbR^n)]^N)}$-norm for $p > n$. Finally, if $n=2$, $\delta > 3/4$, then
\begin{equation} 
[R_{0,\delta}(z_1) - R_{0,\delta}(z_2)] \in \cB_2\big([L^2(\bbR^2)]^N\big), \quad 
z_j \in \ol{\bbC_+},  \; j=1,2,   \lb{6.41}
\end{equation} 
and 
\begin{equation}
\lim_{\substack{z \to z_0 \\ z \in \ol{\bbC_+}\backslash\{z_0\}}} \|R_{0,\delta}(z) 
- R_{0,\delta}(z_0)\|_{\cB_2([L^2(\bbR^2)]^N)} = 0.     \lb{6.42}
\end{equation} 
\end{theorem}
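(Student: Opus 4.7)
The plan is to assemble Theorem \ref{t6.9} from results already established earlier in the section, handling the three assertions \eqref{6.40}, the $n\geq 3$ continuity claim, and \eqref{6.41}--\eqref{6.42} in turn.

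First, I would dispense with \eqref{6.40}. Under the stated entrywise bound with $\delta>(n+1)/4$, the second half of Theorem \ref{t6.6} gives $R_{0,\delta}(z)\in\cB_p([L^2(\bbR^n)]^N)$ for $p>n$ and every $z\in\ol{\bbC_+}$ directly, once one passes from entrywise matrix bounds to the block-matrix Schatten ideal using the domination principle recorded in \eqref{A.5} and Lemma \ref{lA.3}. This step is immediate with no new input.

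Second, for the continuity claim in dimensions $n\geq 3$, the strategy is to recognize that $F_1=F_2=\langle\,\cdot\,\rangle^{-\delta}I_N$ satisfies the hypotheses of Theorem \ref{t6.3} provided $\delta>(n+1)/4$, and then invoke that theorem. Indeed, $\langle\,\cdot\,\rangle^{-\delta}I_N\in [L^\infty(\bbR^n)]^{N\times N}$ trivially, and $\langle\,\cdot\,\rangle^{-\delta}\in L^{4n/(n+1)}(\bbR^n)$ precisely when $4n\delta/(n+1)>n$, i.e.\ $\delta>(n+1)/4$. Moreover, one can choose $q>n$ sufficiently close to $n$ and $\varepsilon>0$ sufficiently small so that $q\delta-(q-n)(1+\varepsilon)>n$, which places $\langle\,\cdot\,\rangle^{-\delta}$ in $L^q(\bbR^n;(1+|x|)^{(q-n)(1+\varepsilon)}d^nx)$. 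Theorem \ref{t6.3} then delivers $\cB_q$-norm continuity of the dominating operator $R_{0,F_1,F_2}(\,\cdot\,)$ on $\ol{\bbC_+}$. A final application of the entrywise-domination principle \eqref{A.5}, together with the freedom to increase $q$ up to any prescribed $p>n$ by choosing $q$ between $n$ and $p$, transfers the $\cB_q$-continuity to $\cB_p$-continuity of $R_{0,\delta}(\,\cdot\,)$.

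Third, in the remaining dimension $n=2$, assertions \eqref{6.41} and \eqref{6.42} are precisely the content of Corollary \ref{c6.8}, so I would simply cite it. The underlying mechanism is that the $|x-y|^{-1}$ singularity in the integral kernel \eqref{6.37} is $z$-independent and therefore cancels in the difference $R_{0,\delta}(z_1)-R_{0,\delta}(z_2)$, while the only remaining $z$-dependent singular behavior is logarithmic in $z|x-y|$, which is square-integrable against $\langle\,\cdot\,\rangle^{-2\delta}\otimes\langle\,\cdot\,\rangle^{-2\delta}$ for $\delta>3/4$ via the Sobolev inequality \eqref{6.18} with $n=2$, $\lambda=1$, $r=s=4/3$ (noting $\langle\,\cdot\,\rangle^{-2\delta}\in L^{4/3}(\bbR^2)$ iff $\delta>3/4$).

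No single step is a genuine obstacle since all the analytic work has been carried out earlier; the only bookkeeping worth pointing out is verifying that the exponent threshold $\delta>(n+1)/4$ is exactly what is needed to place $\langle\,\cdot\,\rangle^{-\delta}I_N$ simultaneously in the three classes required by Theorem \ref{t6.3}, which is why the uniform condition $\delta>(n+1)/4$ suffices to cover both the membership and the continuity parts when $n\geq 3$. The $n=2$ case is slightly lower, $\delta>3/4$, because only the $L^{4/3}$-Sobolev argument is invoked there rather than the finer weak-$L^p$ machinery of Theorem \ref{t6.2} which is unavailable in dimension two.
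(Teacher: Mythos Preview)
Your approach is essentially the paper's: assemble \eqref{6.40} from Theorem \ref{t6.6}, the $n\geq 3$ continuity from Theorem \ref{t6.3} (as in Remark \ref{r6.7}), and \eqref{6.41}--\eqref{6.42} from Corollary \ref{c6.8}. The verification that $\langle\,\cdot\,\rangle^{-\delta}I_N$ satisfies the hypotheses of Theorem \ref{t6.3} when $\delta>(n+1)/4$ is correct.

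One point deserves clarification. You write that Theorem \ref{t6.3} yields $\cB_q$-continuity of the \emph{dominating} operator $R_{0,F_1,F_2}$, and then that ``the entrywise-domination principle \eqref{A.5} \ldots\ transfers the $\cB_q$-continuity to $\cB_p$-continuity of $R_{0,\delta}(\,\cdot\,)$.'' This is not right as stated: pointwise domination of integral kernels (Theorem \ref{tA.2}, Lemma \ref{lA.3}) transfers \emph{membership} in $\cB_p$, but it does not transfer \emph{continuity} of the map $z\mapsto R_{0,\delta}(z)$. If $R_{0,\delta}(z)$ has an unspecified kernel merely bounded by $\langle x\rangle^{-\delta}|G_0(z;x,y)_{j,k}|\langle y\rangle^{-\delta}$, there is no control on $R_{0,\delta}(z)-R_{0,\delta}(z')$. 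The paper's intended reading (cf.\ Remark \ref{r6.7}) is that in the continuity assertion $R_{0,\delta}(z)$ \emph{is} the operator $R_{0,F_1,F_2}(z)$ with $F_1=F_2=\langle\,\cdot\,\rangle^{-\delta}I_N$, so there is nothing to transfer; Theorem \ref{t6.3} applies directly. After that, the passage from $\cB_q$-continuity to $\cB_p$-continuity for any $p>q$ is just the inclusion $\cB_q\subset\cB_p$, not a domination argument. With that correction, your proof matches the paper's.
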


\section{Powers of Resolvents and Trace Ideals} \lb{s7}

We now introduce the following considerably strengthened set of assumptions on the short-range potential $V$:

\begin{hypothesis} \lb{h7.1}
Let $n \in \bbN$ and suppose that $V$ satisfies for some constant $C \in (0, \infty)$ and $\rho\in (n,\infty)$, 
\begin{equation}
V \in [L^{\infty} (\bbR^n)]^{N \times N}, \quad 
|V_{\ell,m}(x)| \leq C \langle x \rangle^{- \rho} \, \text{ for a.e.~$x \in \bbR^n$, $1 \leq \ell,m \leq N$.}    
\end{equation}
\end{hypothesis}

Given Hypothesis \ref{h7.1}, the principal purpose of this section is to prove that for $k \geq n$,
\begin{equation} \lb{7.1}
\big[(H - zI_{[L^2(\bbR^n)]^N})^{- k} - (H_0 - zI_{[L^2(\bbR^n)]^N})^{- k} \big] \in \cB_1\big([L^2(\bbR^n) ]^N\big),\quad z\in \bbC\backslash\bbR.
\end{equation}

Here $H = H_0 + V$ is defined according to \eqref{4.2}, but we do not assume self-adjointness of the $N \times N$ matrix $V$ in this section.

The following arguments are straightforward generalizations of the arguments in \cite{Ya05} in the 
three-dimensional context $n=3$. We start with a study of $H_0$:

\begin{lemma} \lb{l7.2}
Let $r\in (0,\infty)$, $k\in \bbN$, and define $p(r,k):=n/\min\{r,k\}$.  If $p>p(r,k)$, $p\geq 1$, then
\begin{equation}
\langle \,\cdot\,\rangle^{-r}(H_0 - zI_{[L^2(\bbR^n)]^N})^{-k}\in \cB_p\big([L^2(\bbR^n)]^N\big),\quad z\in \bbC\backslash\bbR.
\end{equation}
In particular, choosing $r = n + \varepsilon$ for some $\varepsilon > 0$, $k = n + 1$, then 
$p(n + \varepsilon,n+1) < 1$, and hence 
\begin{equation}
\langle \,\cdot\,\rangle^{-n - \varepsilon}(H_0 - zI_{[L^2(\bbR^n)]^N})^{-n - 1}\in \cB_1\big([L^2(\bbR^n)]^N\big), 
\quad z\in \bbC\backslash\bbR.    \lb{7.4} 
\end{equation}
\end{lemma}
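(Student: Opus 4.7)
The plan is to reduce the assertion to a standard trace-ideal bound of Kato--Seiler--Simon type for operators of the form $f(Q)g(-i\nabla)$. The first step is to relate powers of $(H_0-zI)^{-1}$ to powers of $\langle -i\nabla\rangle^{-1}$. Since $H_0^2=-\Delta\,I_N$ by \eqref{2.7}, in Fourier representation $(H_0-zI)^{-k}$ is multiplication by the matrix symbol $(\alpha\cdot p-z)^{-k}$, whose norm satisfies $\|(\alpha\cdot p-z)^{-k}\|_{\cB(\bbC^N)}\leq C(z)\langle p\rangle^{-k}$ uniformly in $p\in\bbR^n$ for $z\in\bbC\backslash\bbR$ (since $\operatorname{dist}(z,\sigma(\alpha\cdot p))\geq |\Im z|$ and grows like $|p|$ for large $|p|$). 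Consequently $\langle -i\nabla\rangle^k(H_0-zI)^{-k}$ extends to a bounded operator on $[L^2(\bbR^n)]^N$, and one may factor
\[
\langle\,\cdot\,\rangle^{-r}(H_0-zI)^{-k}=\bigl[\langle\,\cdot\,\rangle^{-r}\langle -i\nabla\rangle^{-k}I_N\bigr]\cdot\bigl[\langle -i\nabla\rangle^k(H_0-zI)^{-k}\bigr].
\]
Since $\cB_p$ is a two-sided ideal, and by Lemma \ref{lA.3} the $N\times N$ block version reduces to the scalar case up to a dimensional constant, it suffices to prove $\langle Q\rangle^{-r}\langle -i\nabla\rangle^{-k}\in\cB_p(L^2(\bbR^n))$ for $p>p(r,k)$, $p\geq 1$.

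For $p\geq 2$, I would apply the Kato--Seiler--Simon inequality (see, e.g., \cite[Theorem~4.1]{Si05})
\[
\|f(Q)g(-i\nabla)\|_{\cB_p(L^2(\bbR^n))}\leq(2\pi)^{-n/p}\|f\|_{L^p(\bbR^n)}\|g\|_{L^p(\bbR^n)},\quad p\geq 2,
\]
to $f(x)=\langle x\rangle^{-r}$ and $g(\xi)=\langle\xi\rangle^{-k}$. One checks that $f\in L^p(\bbR^n)$ iff $rp>n$ and $g\in L^p(\bbR^n)$ iff $kp>n$; jointly these conditions give exactly $p>p(r,k)$.

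For $1\leq p<2$, the assumption $p>p(r,k)$ forces $\min\{r,k\}>n/p\geq n/2$. I would then choose a splitting $r=r_1+r_2$, $k=k_1+k_2$ with each $r_j,k_j>n/2$, and apply a H\"older factorization for Schatten ideals of the form
\[
\langle Q\rangle^{-r}\langle -i\nabla\rangle^{-k}=\bigl[\langle Q\rangle^{-r_1}\langle -i\nabla\rangle^{-k_1}\bigr]\cdot\bigl[\langle Q\rangle^{-r_2}\langle -i\nabla\rangle^{-k_2}\bigr]+R,
\]
where $R$ is a commutator remainder produced by moving $\langle Q\rangle^{-r_2}$ past $\langle -i\nabla\rangle^{-k_1}$. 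The two displayed factors each lie in $\cB_{p_j}$ with exponent $p_j>\max\{2,p(r_j,k_j)\}$ and $1/p=1/p_1+1/p_2$, by the $p\geq 2$ case already handled. By pseudodifferential calculus, the remainder $R$ gains one order of decay in both the weight and the smoothing (its principal symbol is $-i(\nabla_\xi\langle\xi\rangle^{-k_1})\cdot(\nabla_x\langle x\rangle^{-r_2})$), so iterating the decomposition finitely many times reduces $R$ to a term directly controlled by the $p\geq 2$ case.

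The main obstacle is this $p<2$ regime, where a direct Kato--Seiler--Simon estimate is unavailable and one must combine a careful H\"older factorization with pseudodifferential commutator estimates. The announced consequence \eqref{7.4} then follows by specializing to $r=n+\varepsilon$, $k=n+1$: here $p(r,k)=n/(n+\varepsilon)<1$, so one may take $p=1$, yielding the trace-class assertion.
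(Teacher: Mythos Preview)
Your reduction to the scalar case is correct and matches the paper's argument: both factor $(H_0-zI)^{-k}$ as a scalar Fourier multiplier of order $-k$ times a bounded matrix operator (the paper uses $(h_0-z^2I)^{-k/2}I_N$, you use $\langle -i\nabla\rangle^{-k}I_N$; these differ by a bounded factor). The paper then simply cites \cite[Lemma~4.3]{Ya10} for the scalar statement $\langle Q\rangle^{-r}(h_0-z^2I)^{-k/2}\in\cB_p(L^2(\bbR^n))$ for $p>p(r,k)$, $p\geq 1$, whereas you attempt to reprove it. Your Kato--Seiler--Simon argument for $p\geq 2$ is correct and standard.

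The gap is in your treatment of $1\leq p<2$. The commutator remainder
\[
R=\langle Q\rangle^{-r_1}\bigl[\langle Q\rangle^{-r_2},\langle -i\nabla\rangle^{-k_1}\bigr]\langle -i\nabla\rangle^{-k_2}
\]
is a genuine pseudodifferential operator, not an operator of the form $f(Q)g(-i\nabla)$, so you cannot simply ``iterate the decomposition'' on it; there is no obvious way the iteration produces terms to which the $p\geq 2$ Kato--Seiler--Simon bound applies. More seriously, even if you could show $R\in\cB_q$ for some $q\geq 2$, that would place $R$ in a \emph{larger} ideal than $\cB_p$, not a smaller one, so this does not close the argument. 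What you actually need is that $R$ itself factors as a product of $\cB_{p_j}$ operators with $\sum 1/p_j=1/p$, and your sketch does not establish this. A clean route for $1\leq p<2$ is the Birman--Solomyak bound (see, e.g., \cite[Theorem~4.5]{Si05}): $f(Q)g(-i\nabla)\in\cB_p$ whenever $f,g\in\ell^p(L^2)(\bbR^n)$, and one checks directly that $\langle\,\cdot\,\rangle^{-s}\in\ell^p(L^2)(\bbR^n)$ if and only if $sp>n$, which recovers exactly the condition $p>p(r,k)$. Alternatively, just cite the packaged result \cite[Proposition~3.1.5, Lemma~3.4.3]{Ya10} (cf.\ Lemma~\ref{l7.5}) as the paper does.
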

\begin{proof}
Since 
\begin{align}
\big(H_0-zI_{[L_2(\bbR^n)]^N}\big)^{-k}&=\big(H_0+zI_{[L_2(\bbR^n)]^N}\big)^{k}\big(H_0^2-z^2I_{[L_2(\bbR^n)]^N}\big)^{-k}\no\\
&=\big(h_0-z^2I_{L^2(\bbR^n)}\big)^{-k/2}I_N \no\\
&\quad \times \big(H_0+zI_{[L_2(\bbR^n)]^N}\big)^{k}\big(H_0^2-z^2I_{[L_2(\bbR^n)]^N}\big)^{-k/2},\\
&\hspace*{6.15cm}z\in \bbC\backslash\bbR,\no
\end{align}
and the operator $\big(H_0+zI_{[L_2(\bbR^n)]^N}\big)^{k}\big(H_0^2-z^2I_{[L_2(\bbR^n)]^N}\big)^{-k/2}$ is bounded, it is sufficient to prove the assertion for the operator $\langle \dott\rangle^{-r}\big(h_0-z^2I_{L^2(\bbR^n)}\big)^{-k/2}$. The latter follows from  \cite[Lemma 4.3,  p.~145]{Ya10}.
\end{proof}

Turning from $H_0$ to $H = H_0 +V$ then yields the following result. 

\begin{lemma}\lb{l7.3}
Assume that $V \in [L^{\infty} (\bbR^n)]^{N \times N}$, let $r\in (0,\infty)$, $k\in \bbN$, and define $p(r,k):=n/\min\{r,k\}$.  If $p>p(r,k)$, $p\geq 1$, then
\begin{equation} \lb{7.13}
\langle \,\cdot\,\rangle^{-r}(H - zI_{[L^2(\bbR^n)]^N})^{-k}\in \cB_p\big([L^2(\bbR^n)]^N\big),\quad z\in \bbC\backslash\bbR.
\end{equation}
In particular, choosing $r = n + \varepsilon$ for some $\varepsilon > 0$, $k = n + 1$, then 
$p(n + \varepsilon,n+1) < 1$, and hence 
\begin{equation}
\langle \,\cdot\,\rangle^{-n - \varepsilon}(H - zI_{[L^2(\bbR^n)]^N})^{-n - 1}\in \cB_1\big([L^2(\bbR^n)]^N\big), 
\quad z\in \bbC\backslash\bbR.
\end{equation}
\end{lemma}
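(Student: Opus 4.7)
The plan is to combine the second resolvent identity (for the base case $k=1$) with a commutator-based spreading of the weight $\langle \cdot \rangle^{-r}$ across the $k$ resolvent factors, and then to reduce everything to Lemma~\ref{l7.2} via H\"older's inequality in Schatten ideals. For $k=1$, I start from
\begin{equation*}
\langle \cdot \rangle^{-r}(H-z)^{-1} = \langle \cdot \rangle^{-r}(H_0-z)^{-1}\bigl[I_{[L^2(\bbR^n)]^N} - V(H-z)^{-1}\bigr],
\end{equation*}
which follows from the second resolvent identity. Lemma~\ref{l7.2} places the first factor in $\cB_p\bigl([L^2(\bbR^n)]^N\bigr)$ for every $p > n/\min(r,1)$ with $p \geq 1$, and since $V$ and $(H-z)^{-1}$ are bounded, the bracketed factor is bounded on $[L^2(\bbR^n)]^N$. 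This disposes of $k = 1$ and, in particular, records the fact that $L^a R \in \cB_{p}$ for every $p > n/\min(a,1)$, $p \geq 1$, where I abbreviate $L := \langle \cdot \rangle^{-1}$ and $R := (H-z)^{-1}$.

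For $k \geq 2$ I exploit the commutator identity
\begin{equation*}
L^a R = R L^a + R [H,L^a] R, \qquad [H,L^a] = [H_0,L^a] = -i\,\alpha \cdot (\nabla L^a),
\end{equation*}
in which $V$ drops out because both $V$ and $L^a$ are multiplication operators; since $|\nabla L^a| \leq a\,\langle \cdot \rangle^{-a-1}$, the commutator $[H,L^a]$ is bounded multiplication by a matrix-valued function that is entrywise $O(\langle \cdot \rangle^{-a-1})$. Writing $L^r R^k = (L^{r/k})^k R^k$ and iteratively commuting each factor $L^{r/k}$ past the next available $R$, I will obtain a finite expansion
\begin{equation*}
L^r R^k = (L^{r/k} R)^k + \sum_\alpha E_\alpha,
\end{equation*}
in which each error term $E_\alpha$ is a product containing $k + j_\alpha$ factors of $R$ together with multiplication operators whose total $L$-exponent is $r + j_\alpha$, for some integer $j_\alpha \geq 1$.

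The final step is to estimate each piece by H\"older's inequality in Schatten classes. The main term $(L^{r/k} R)^k$ lies in $\cB_{p_1/k}$ whenever $L^{r/k} R \in \cB_{p_1}$, and by the base case this holds for every $p_1 > n/\min(r/k,1)$; letting $p_1$ approach $n/\min(r/k,1)$ from above places the main term in $\cB_p$ for every $p > n/\bigl[k\,\min(r/k,1)\bigr] = n/\min(r,k)$ with $p \geq 1$. Each error term, after regrouping into $k + j_\alpha$ factors of the form $L^{b_i} R$ with $\sum_i b_i = r + j_\alpha$ (up to bounded matrix-valued multipliers absorbing the derivatives of $L^{r/k}$), will lie in $\cB_p$ for every $p > n/[\min(r,k) + j_\alpha]$, a strictly stronger inclusion. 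Combining the two contributions yields \eqref{7.13}, and the trace-class assertion for $r = n+\varepsilon$, $k = n+1$ follows at once by taking $p = 1 > n/(n+\varepsilon)$. The principal technical nuisance will be the bookkeeping of the commutator expansion, in particular verifying that after the derivative factors $\nabla L^{r/k}$ have been absorbed into adjacent $L$-powers, every error term decomposes into products of $L^{b_i} R$ blocks to which H\"older applies with optimal exponents; this hinges on the cancellation that each commutator simultaneously adds one $R$-factor and one unit of total $L$-weight, which guarantees that every error term falls into a strictly smaller Schatten ideal than the main term.
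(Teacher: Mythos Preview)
Your base case ($k=1$) and overall strategy---spreading the weight $\langle\cdot\rangle^{-r}$ across the $k$ resolvents via the commutator $[H,\langle\cdot\rangle^{-a}]=[H_0,\langle\cdot\rangle^{-a}]$ and then applying H\"older for Schatten ideals together with Lemma~\ref{l7.2}---are exactly the paper's.

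The gap is in the treatment of the error terms. The heuristic ``each commutator adds one $R$-factor and one unit of total $L$-weight, hence a strictly smaller ideal'' is only valid when that extra weight is \emph{spread} over the extra resolvents; a bare $R$ contributes nothing to the H\"older count because $L^a R\in\cB_{p}$ for $p>n/\min(a,1)$ already saturates at $p>n$ once $a\geq 1$. After the finitely many commutations needed to extract the main term $(L^{r/k}R)^k$, the error terms carry their weight on only a few of their $k+j_\alpha$ resolvents. For instance, take $k=3$, $r\geq3$: one first-generation error (in your notation $L=\langle\cdot\rangle^{-1}$, $R=(H-zI)^{-1}$) is
\[
L^{2r/3}\,R\,[H,L^{r/3}]\,R^{3},
\]
with $[H,L^{r/3}]$ a bounded matrix multiple of $L^{r/3+1}$. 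Grouped as $(L^{2r/3}R)\cdot([H,L^{r/3}]R)\cdot R\cdot R$, two of the four resolvents are bare, and H\"older yields only $\cB_p$ for $p>n/2$, whereas the target is $p>n/3$. Redistributing the weight over the bare resolvents forces further commutation, which spawns new error terms with larger $j_\alpha$; the expansion is not finite, and the promised ``regrouping into $k+j_\alpha$ factors $L^{b_i}R$'' is precisely the unjustified step.

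The paper closes this by recasting the same idea as an induction on $k$: a \emph{single} commutation
\[
\langle\cdot\rangle^{-r}(H-zI)^{-k-1}=(H-zI)^{-1}\langle\cdot\rangle^{-r}(H-zI)^{-k}+(H-zI)^{-1}[H_0,\langle\cdot\rangle^{-r}](H-zI)^{-k-1}
\]
gives a main term handled by the split $\langle\cdot\rangle^{-r}=\langle\cdot\rangle^{-r/(k+1)}\langle\cdot\rangle^{-kr/(k+1)}$ plus the base case and the induction hypothesis at level $k$, together with a single error term whose extra unit of decay exactly offsets the extra resolvent---so the same split (now with total weight $r+1$, and one $(H-zI)^{-1}$ absorbed as a bounded factor) again reduces to the induction hypothesis at level $k$. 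Your direct expansion is this induction unwound; repackaging it as an induction on $k$ removes the difficulty.
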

\begin{proof}
Let $z\in \bbC\backslash\bbR$ and $r\in (0,\infty)$.  The proof employs induction on $k\in \bbN$.  In the base case, $k=1$, one writes
\begin{align}  
\begin{split} \lb{7.14}
& \langle \,\cdot\,\rangle^{-r}(H - zI_{[L^2(\bbR^n)]^N})^{-1} \\
& \quad = \big[\langle \,\cdot\,\rangle^{-r}(H_0 - zI_{[L^2(\bbR^n)]^N})^{-1} 
\big]\big[(H_0 - zI_{[L^2(\bbR^n)]^N})(H - zI_{[L^2(\bbR^n)]^N})^{-1} \big].
\end{split}
\end{align}
The first factor on the right-hand side in \eqref{7.14} belongs to $\cB_p\big([L^2(\bbR^n)]^N\big)$ for $p>p(r,1)$, $p\geq 1$ by Lemma \ref{l7.2}.  Since the second factor on the right-hand side in \eqref{7.14} is a bounded operator, \eqref{7.13} holds with $k=1$.

Suppose that \eqref{7.13} holds for $k\in \bbN$.  Multiplying throughout the commutator identity
\begin{equation}
(H - zI_{[L^2(\bbR^n)]^N})\langle\,\cdot\,\rangle^{-r} - \langle\,\cdot\,\rangle^{-r}(H - zI_{[L^2(\bbR^n)]^N}) 
= [H_0,\langle\,\cdot\,\rangle^{-r}]
\end{equation}
from the left by $(H - zI_{[L^2(\bbR^n)]^N})^{-1}$ and right by $(H - zI_{[L^2(\bbR^n)]^N})^{-k-1}$, one obtains
\begin{align}
& \langle \,\cdot\,\rangle^{-r}(H - zI_{[L^2(\bbR^n)]^N})^{-k-1} = (H - zI_{[L^2(\bbR^n)]^N})^{-1} 
\langle\,\cdot\,\rangle^{-r} (H - zI_{[L^2(\bbR^n)]^N})^{-k}       \no \\
&\quad + (H - zI_{[L^2(\bbR^n)]^N})^{-1}[H_0,\langle\,\cdot\,\rangle^{-r}](H - zI_{[L^2(\bbR^n)]^N})^{-k-1}.\lb{7.10c}
\end{align}
One has 
\begin{align}
& (H - zI_{[L^2(\bbR^n)]^N})^{-1}\langle\,\cdot\,\rangle^{-r} (H - zI_{[L^2(\bbR^n)]^N})^{-k}     \lb{7.17} \\
& \quad = \big[(H - zI_{[L^2(\bbR^n)]^N})^{-1}\langle\,\cdot\,\rangle^{-r(k+1)^{-1}}\big]  
\big[ \langle\,\cdot\,\rangle^{-kr(k+1)^{-1}}(H - zI_{[L^2(\bbR^n)]^N})^{-k}\big].  \no
\end{align}
Now, by the base case,
\begin{equation}
(H - zI_{[L^2(\bbR^n)]^N})^{-1}\langle\,\cdot\,\rangle^{-r(k+1)^{-1}}\in \cB_p\big([L^2(\bbR^n)]^N\big),\quad p>p\big(r(k+1)^{-1},1\big),
\end{equation}
and by the induction step
\begin{equation}
\langle\,\cdot\,\rangle^{-kr(k+1)^{-1}}(H - zI_{[L^2(\bbR^n)]^N})^{-k}\in \cB_p\big([L^2(\bbR^n)]^N\big),\quad p>p\big(kr(k+1)^{-1},k\big).
\end{equation}
Therefore, the product on the right-hand side in \eqref{7.17} belongs to the trace ideal $\cB_p\big([L^2(\bbR^n)]^N\big)$ for $p\geq 1$, with
\begin{equation} \lb{7.20}
p^{-1} < p\big(r(k+1)^{-1},1\big)^{-1} + p\big(kr(k+1)^{-1},k\big)^{-1}.
\end{equation}
To compute the right-hand side of \eqref{7.20}, one distinguishes the two possible cases: 
$(i)$ $r(k+1)^{-1}<1$ or $(ii)$ $r(k+1)^{-1}\geq 1$.

In case $(i)$, $r<k+1$, and
\begin{align}
p\big(kr(k+1)^{-1},k\big) &= n(k+1)r^{-1},\\
p\big(kr(k+1)^{-1},k\big)^{-1} &= n(k+1)r^{-1}k^{-1}.
\end{align}
Hence, the right-hand side of \eqref{7.20} equals
\begin{equation}
n^{-1}(k+1)^{-1}r + n^{-1}(k+1)^{-1}kr = n^{-1}r = n^{-1}\min\{r,k+1\} = p(r,k+1)^{-1},
\end{equation}
so the right-hand side in \eqref{7.17} belongs to $\cB_p\big([L^2(\bbR^n)]^N\big)$ for all indices $p>p(r,k+1)$.

In case $(ii)$, $r \geq k+1$, and
\begin{align}
p\big(kr(k+1)^{-1},k\big) &= n,\\
p\big(kr(k+1)^{-1},k\big)^{-1} &= nk^{-1}.
\end{align}
Hence the right-hand side of \eqref{7.20} equals
\begin{equation}
n^{-1}(k+1) = p(r,k+1)^{-1},
\end{equation}
so the right-hand side of \eqref{7.17}, and hence the first term on the right-hand side in \eqref{7.10c}, belongs to $\cB_p\big([L^2(\bbR^n)]^N\big)$ for all indices $p>p(r,k+1)$.  To treat the second term in \eqref{7.10c}, one uses
\begin{equation}\lb{7.21c}
[H_0,\langle\,\cdot\,\rangle^{-r}] = V_0,
\end{equation}
where
\begin{equation}
V_0(x) = -r \langle x\rangle^{-(r+2)}(\alpha \cdot x),\quad x\in \bbR^n,
\end{equation}
so that
\begin{equation}\lb{7.23c}
\|V_0(x)\|_{\cB(\bbC^N)} \leq C\langle x\rangle^{-(r+1)},\quad x\in \bbR^n,
\end{equation}
for an $x$-independent constant $C>0$.  Thus, the second term on the right-hand side in \eqref{7.10c} belongs to $\cB_p\big([L^2(\bbR^n)]^N\big)$ for all indices $p>p(r,k+1)$ by the same argument used to treat the first term.
\end{proof}

Given these preparations, the principal result of this section reads as follows. 

\begin{theorem} \lb{t7.4} 
Let $k \in \bbN$ with $k \geq n$ and suppose that $V$ satisfies Hypothesis \ref{h7.1}. Then
\begin{equation} \lb{7.25}
\big[(H - zI_{[L^2(\bbR^n)]^N})^{- k} - (H_0 - zI_{[L^2(\bbR^n)]^N})^{- k} \big] \in \cB_1\big([L^2(\bbR^n) ]^N\big), 
\quad z\in \bbC\backslash\bbR.
\end{equation}
\end{theorem}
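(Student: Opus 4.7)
The plan is to telescope the difference $(H-zI)^{-k} - (H_0-zI)^{-k}$ into a sum of $k$ terms each of the form $(H_0-zI)^{-(j+1)} V (H-zI)^{-(k-j)}$, factor $V$ through suitable weights $\langle\,\cdot\,\rangle^{-\rho_i(j)}$, and then reduce everything to Lemmas \ref{l7.2} and \ref{l7.3} via H\"older's inequality in Schatten ideals. Concretely, starting from the first-order resolvent identity $(H-zI)^{-1} - (H_0-zI)^{-1} = -(H_0-zI)^{-1} V (H-zI)^{-1}$ and applying $(k-1)!^{-1} d^{k-1}/dz^{k-1}$ together with the Leibniz rule yields
\begin{equation}
(H-zI)^{-k} - (H_0-zI)^{-k} = -\sum_{j=0}^{k-1} (H_0-zI)^{-(j+1)} V (H-zI)^{-(k-j)}, \quad z\in\bbC\backslash\bbR.
\end{equation}
It therefore suffices to place each summand in $\cB_1([L^2(\bbR^n)]^N)$.

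To do so, for each $j\in\{0,\ldots,k-1\}$, I would introduce the splitting $\rho_1(j) := (j+1)\rho/(k+1)$, $\rho_2(j) := (k-j)\rho/(k+1)$, so that $\rho_1(j),\rho_2(j)>0$ and $\rho_1(j)+\rho_2(j)=\rho$. Hypothesis \ref{h7.1} then guarantees that $W_j := \langle\,\cdot\,\rangle^{\rho_1(j)} V \langle\,\cdot\,\rangle^{\rho_2(j)}$ satisfies $\|W_j(x)\|_{\cB(\bbC^N)} \leq C$ uniformly in $x$, and each summand factors as
\begin{equation}
-\big[(H_0-zI)^{-(j+1)} \langle\,\cdot\,\rangle^{-\rho_1(j)}\big] \, W_j \, \big[\langle\,\cdot\,\rangle^{-\rho_2(j)} (H-zI)^{-(k-j)}\big].
\end{equation}
By Lemma \ref{l7.2} (applied with $\bar z$ and then taking adjoints, using self-adjointness of $H_0$), the first bracketed factor lies in $\cB_{p_1}$ for every $p_1 > n/\min\{\rho_1(j),j+1\}$, $p_1\geq 1$; by Lemma \ref{l7.3}, the third bracketed factor lies in $\cB_{p_2}$ for every $p_2 > n/\min\{\rho_2(j),k-j\}$, $p_2\geq 1$. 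H\"older's inequality in Schatten ideals then places the whole product in $\cB_q$ whenever $1/p_1 + 1/p_2 = 1/q$, so trace class membership reduces to verifying that
\begin{equation}
\min\{\rho_1(j),j+1\} + \min\{\rho_2(j),k-j\} > n \quad \text{for every } j\in\{0,\ldots,k-1\}.
\end{equation}

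This key inequality splits into two cases dictated by the sign of $\rho/(k+1)-1$. If $\rho\leq k+1$, then $\rho_1(j)\leq j+1$ and $\rho_2(j)\leq k-j$, so the two minima are $\rho_1(j)$ and $\rho_2(j)$ and their sum equals $\rho > n$ by Hypothesis \ref{h7.1}; if $\rho > k+1$, then $\rho_1(j)\geq j+1$ and $\rho_2(j)\geq k-j$, so the two minima are $j+1$ and $k-j$ and their sum equals $k+1 > n$ by the hypothesis $k\geq n$. In either case the sum exceeds $n$ strictly, so $p_1,p_2>1$ with $1/p_1+1/p_2=1$ can be chosen in the admissible ranges, giving $q=1$ and hence each summand in $\cB_1([L^2(\bbR^n)]^N)$, and the theorem follows. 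The main (though modest) obstacle is locating the proportional splitting $\rho_i(j)\propto (j+1),(k-j)$; it is precisely this choice that simultaneously exploits both hypotheses $\rho>n$ and $k\geq n$ and makes the bookkeeping close uniformly in $j$.
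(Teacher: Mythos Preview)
Your proof is correct and follows essentially the same approach as the paper. The paper telescopes via $(H-zI)^{-k}-(H_0-zI)^{-k}=-\sum_{j=1}^{k}(H-zI)^{-j}V(H_0-zI)^{j-k-1}$ (the mirror image of your sum), splits the weight as $\langle x\rangle^{-j\rho/(k+1)}$ and $\langle x\rangle^{-(k+1-j)\rho/(k+1)}$, and then performs the same two-case analysis on whether $\rho/(k+1)<1$ or $\rho/(k+1)\geq 1$; your proportional splitting $\rho_1(j)=(j+1)\rho/(k+1)$, $\rho_2(j)=(k-j)\rho/(k+1)$ and your case split are exactly this argument up to the index shift $j\mapsto j-1$ and the interchange of the roles of $H$ and $H_0$.
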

\begin{proof}
Let $k \geq n$.  By the first resolvent equation,
\begin{align} 
\begin{split} \lb{7.26}
&(H-zI_{[L^2(\bbR^n)]^N})^{-1} - (H_0 - zI_{[L^2(\bbR^n)]^N})^{-1}    \\
& \quad = - (H - zI_{[L^2(\bbR^n)]^N})^{-1}V(H_0 - zI_{[L^2(\bbR^n)]^N})^{-1},\quad z\in \bbC\backslash \bbR.
\end{split} 
\end{align}
Differentiation of \eqref{7.26} with respect to $z$ yields
\begin{align}
\begin{split} 
& (H - zI_{[L^2(\bbR^n)]^N})^{- k} - (H_0 - zI_{[L^2(\bbR^n)]^N})^{- k}   \\
& \quad = - \sum_{j=1}^k (H - zI_{[L^2(\bbR^n)]^N})^{-j} 
V(H_0 - zI_{[L^2(\bbR^n)]^N})^{j-k-1},\quad z\in \bbC\backslash \bbR.\lb{7.27}
\end{split} 
\end{align}
From this point on, let $z\in \bbC\backslash \bbR$ be fixed and write
\begin{align}
&(H - zI_{[L^2(\bbR^n)]^N})^{-j}V(H_0 - zI_{[L^2(\bbR^n)]^N})^{j-k-1}\no\\
&\quad = \big[(H - zI_{[L^2(\bbR^n)]^N})^{-j}\langle x\rangle^{-j\rho (k+1)^{-1}}\big] 
\big[\langle x\rangle^{\rho}V\big]    \lb{7.28} \\
&\qquad \times \big[\langle x\rangle^{-(k+1-j)\rho(k+1)^{-1}}(H_0 - zI_{[L^2(\bbR^n)]^N})^{j-k-1}\big],  
\quad j\in \bbN,\; 1\leq j\leq k.   \no 
\end{align}
By Lemma \ref{l7.3}, for a fixed $j\in \bbN$, $1\leq j\leq k$,
\begin{equation}
(H - zI_{[L^2(\bbR^n)]^N})^{-j}\langle x\rangle^{-j\rho (k+1)^{-1}} \in \cB_p\big([L^2(\bbR^n)]^N\big),\quad p>p\big(j\rho(k+1)^{-1},j\big),
\end{equation}
and by Lemma \ref{l7.2}, 
\begin{align} 
\begin{split} 
\langle x\rangle^{-(k+1-j)\rho(k+1)^{-1}}(H_0 - zI_{[L^2(\bbR^n)]^N})^{j-k-1} \in \cB_p\big([L^2(\bbR^n)]^N\big),& \\
p>p\big((k+1-j)\rho(k+1)^{-1},k+1-j\big).&
\end{split} 
\end{align}
One distinguishes the two possible cases: $(i)$ $\rho(k+1)^{-1}<1$, or $(ii)$ $\rho(k+1)^{-1}\geq 1$.

In case $(i)$ with $\rho(k+1)^{-1}<1$, one computes
\begin{align}
p\big((k+1-j)\rho(k+1)^{-1},k+1-j\big) &= \frac{n}{(k+1-j)\rho(k+1)^{-1}},\\
p\big(j\rho(k+1)^{-1},j\big) &= \frac{n}{j\rho(k+1)^{-1}},
\end{align}
so that
\begin{align}
p\big((k+1-j)\rho(k+1)^{-1},k+1-j\big)^{-1} + p\big(j\rho(k+1)^{-1},j\big)^{-1}= \frac{\rho}{n} > 1.
\end{align}
Hence, the right-hand side of \eqref{7.28} belongs to $\cB_1\big([L^2(\bbR^n)]^N\big)$.

In case $(ii)$ with $\rho(k+1)^{-1}\geq 1$, one computes
\begin{align}
p\big((k+1-j)\rho(k+1)^{-1},k+1-j\big) &= \frac{n}{k+1-j} \\
p\big(j\rho(k+1)^{-1},j\big) &= \frac{n}{j},
\end{align}
so that
\begin{align}
\begin{split} 
&p\big((k+1-j)\rho(k+1)^{-1},k+1-j\big)^{-1} + p\big(j\rho(k+1)^{-1},j\big)^{-1}   \\
&\quad= \frac{k+1}{n} = \frac{k}{n} +\frac{1}{n} \geq 1 + \frac{1}{n} > 1.
\end{split} 
\end{align}
Hence, the right-hand side of \eqref{7.28} belongs to $\cB_1\big([L^2(\bbR^n)]^N\big)$.

In either case, the right-hand side of \eqref{7.28} belongs to $\cB_1\big([L^2(\bbR^n)]^N\big)$.  
Since $j\in \bbN$, $1\leq j\leq k$, was arbitrary, it follows that every term in the summation on the right-hand side in \eqref{7.27} belongs to $\cB_1\big([L^2(\bbR^n)]^N\big)$, and then \eqref{7.25} follows from the vector space properties of the trace class.
\end{proof}

We conclude this section by recalling a well-known result:

\begin{lemma} \lb {l7.5}
Suppose $p > n/\min(\tau, 2 \kappa)$, $p \geq 1$, with $\tau > 0$, $\kappa > 0$. Then
\begin{equation}
\langle \, \cdot \, \rangle^{- \tau} (h_0 + I_{L^2(\bbR^n)})^{- \kappa} \in \cB_p\big(L^2(\bbR^n)\big).  \lb{7.40}
\end{equation}
In particular, if $V$ satisfies Hypothesis \ref{h7.1} and $\kappa > n/2$,
\begin{equation}
V \big(H_0^2 + I_{[L^2(\bbR^n)]^N}\big)^{- \kappa} \in \cB_1\big([L^2(\bbR^n)]^N\big).    \lb{7.41}
\end{equation} 
\end{lemma}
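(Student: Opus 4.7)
\textbf{Proof plan for Lemma \ref{l7.5}.} The scalar trace-ideal inclusion \eqref{7.40} is essentially Yafaev's Lemma~4.3 on p.~145 of \cite{Ya10} (already invoked in the proof of Lemma~\ref{l7.2}), applied with $\tau$ and $\kappa$ in place of his $\rho$ and $k$. My plan is to sketch that argument briefly and then deduce \eqref{7.41}. For the range $p \in [2,\infty)$ the key tool is the Kato--Seiler--Simon/Birman--Solomyak estimate
\begin{equation*}
\|f(Q)\,g(-i\nabla)\|_{\cB_p(L^2(\bbR^n))} \leq (2\pi)^{-n/p}\,\|f\|_{L^p(\bbR^n)}\,\|g\|_{L^p(\bbR^n)}, \quad p \in [2,\infty),
\end{equation*}
(cf., e.g., \cite[Thm.~4.1]{Si05}), applied with $f(x) = \langle x\rangle^{-\tau}$ and $g(\xi) = (|\xi|^2+1)^{-\kappa}$. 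Since $\langle\dott\rangle^{-\tau} \in L^p(\bbR^n)$ iff $p\tau > n$, and $(|\dott|^2+1)^{-\kappa} \in L^p(\bbR^n)$ iff $2p\kappa > n$, the two requirements collapse to $p > n/\min(\tau,2\kappa)$, yielding \eqref{7.40} in this range.

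For $p \in [1,2)$—forcing the strengthened decay $\min(\tau,2\kappa) > n/p \geq n/2$—a Hilbert--Schmidt factorization with commutator remainder does the job. At the critical endpoint $p=1$ (for which $\tau > n$ and $2\kappa > n$), one writes
\begin{equation*}
\langle Q\rangle^{-\tau}(h_0 + I_{L^2(\bbR^n)})^{-\kappa} = A_1 A_2 + R,
\end{equation*}
where $A_1 = A_2 = \langle Q\rangle^{-\tau/2}(h_0 + I_{L^2(\bbR^n)})^{-\kappa/2}$, each of which lies in $\cB_2(L^2(\bbR^n))$ by Kato--Seiler--Simon (under the present decay, both $\langle x\rangle^{-\tau/2}$ and $(|\xi|^2+1)^{-\kappa/2}$ lie in $L^2(\bbR^n)$), so $A_1 A_2 \in \cB_1(L^2(\bbR^n))$ by the Schatten--H\"older inequality. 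The residual term
\begin{equation*}
R = \langle Q\rangle^{-\tau/2}\big[\langle Q\rangle^{-\tau/2},\,(h_0 + I_{L^2(\bbR^n)})^{-\kappa/2}\big](h_0 + I_{L^2(\bbR^n)})^{-\kappa/2}
\end{equation*}
is itself trace class by iterated pseudo-differential commutator estimates: the commutator produces one additional power of decay in both position and frequency, reducing the analysis to a structurally similar but faster-decaying operator. The intermediate range $p \in (1,2)$ then follows either by the analogous factorization or by complex interpolation in the sense of Theorem~\ref{t6.4} between the $p=1$ case just established and the $p=2$ case from Kato--Seiler--Simon.

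To deduce \eqref{7.41}: under Hypothesis~\ref{h7.1} we have $\|V(x)\|_{\cB(\bbC^N)} \leq C\langle x\rangle^{-\rho}$ with $\rho > n$, and by \eqref{2.7}, $H_0^2 = h_0\,I_N$, so $(H_0^2 + I_{[L^2(\bbR^n)]^N})^{-\kappa} = (h_0 + I_{L^2(\bbR^n)})^{-\kappa}\,I_N$. I then factor
\begin{equation*}
V\big(H_0^2 + I_{[L^2(\bbR^n)]^N}\big)^{-\kappa} = \big[V \langle Q\rangle^{\rho}\big] \cdot \big[\langle Q\rangle^{-\rho} \big(H_0^2 + I_{[L^2(\bbR^n)]^N}\big)^{-\kappa}\big],
\end{equation*}
in which $V\langle Q\rangle^{\rho}$ is a bounded matrix-valued multiplication operator by the pointwise bound on $V$. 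Applying the already-established \eqref{7.40} with $\tau = \rho$ and the hypothesized $\kappa > n/2$: since $\min(\rho,2\kappa) > n$, we have $n/\min(\rho,2\kappa) < 1$, so $p=1$ satisfies $p > n/\min(\rho,2\kappa)$, yielding $\langle Q\rangle^{-\rho}(h_0 + I_{L^2(\bbR^n)})^{-\kappa} \in \cB_1(L^2(\bbR^n))$. Lemma~\ref{lA.3} lifts this through the diagonal $I_N$-structure to $\cB_1([L^2(\bbR^n)]^N)$, and composition on the left with the bounded prefactor preserves the trace class, giving \eqref{7.41}. The main technical obstacle is the $p \in [1,2)$ range of \eqref{7.40}; for the application $p=1$, the delicate ingredient is the commutator estimate controlling the remainder $R$, but the iterative reduction is a standard device from Yafaev's argument.
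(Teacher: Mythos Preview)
Your approach is correct and aligned with the paper's: the paper simply cites \cite[Proposition~3.1.5, Lemma~3.4.3]{Ya10} (not Lemma~4.3, which was used earlier for Lemma~\ref{l7.2}) and \cite[Ch.~4]{Si05} for \eqref{7.40}, and deduces \eqref{7.41} by combining \eqref{2.7} with \eqref{7.40}---exactly your factorization $V(H_0^2+I)^{-\kappa} = [V\langle Q\rangle^{\rho}]\cdot[\langle Q\rangle^{-\rho}(h_0+I)^{-\kappa}I_N]$. Two minor corrections: your commutator sketch for $p\in[1,2)$ is heuristically right but, as you acknowledge, the fractional-$\kappa$ case needs the integral-representation machinery in Yafaev rather than a one-line remark; and for the scalar-to-matrix lift you want the elementary block-diagonal observation \eqref{A.5}, not Lemma~\ref{lA.3} (which concerns pointwise kernel domination and does not address $\cB_1$).
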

\begin{proof}
While \eqref{7.40} is a special case of \cite[Proposition~3.1.5, Lemma~3.4.3]{Ya10} (see also 
\cite{GN20}, \cite[Ch.~4]{Si05}), \eqref{7.41} follows from combining \eqref{2.7} and \eqref{7.40}.
\end{proof}

\section{The Spectral Shift Function: Abstract Facts} \lb{s8}

The significance of Theorem \ref{t7.4} is that the trace class condition \eqref{7.25} permits one to define a spectral shift function for the pair $(H,H_0)$.  To make this precise, we introduce the class of 
functions $\gF_m(\bbR)$, $m \in \bbN$, by
\begin{align}
& \gF_m(\bbR) := \big\{f \in C^2(\bbR) \, \big| \, 
f^{(\ell)} \in L^{\infty}(\bbR); \text{ there exists } 
\varepsilon >0 \text{ and } f_0 = f_0(f) \in \bbC    \no \\
& \quad  \text{ such that } 
\big(d^{\ell}/d \lambda^{\ell}\big)\big[f(\lambda) - f_0 \lambda^{-m}\big] \underset{|\lambda|\to \infty}{=} 
\Oh\big(|\lambda|^{- \ell - m - \varepsilon}\big), \; \ell = 0,1,2 \big\}.     \lb{8.1}
\end{align} 
(It is implied that $f_0 = f_0(f)$ is the same as $\lambda \to \pm \infty$.) One observes that 
$C_0^{\infty}(\mathbb{R}) \subset \mathfrak F_m(\mathbb{R})$, $m \in \bbN$.

In \cite{Kr62}, M.~Krein established the existence of a spectral shift function corresponding to any pair of resolvent comparable self-adjoint operators.  Specifically, Krein proved that if $S_0$ and $S$ are self-adjoint and satisfy
\begin{equation}
\big[(S - zI_{\cH})^{-1} - (S_0 - zI_{\cH})^{-1}\big] \in \cB_1(\cH)   \lb{8.2}
\end{equation}
for some (and, hence, for all) $z\in \bbC\backslash\bbR$, then
\begin{equation}
[f(S)-f(S_0)] \in \cB_1(\cH),\quad f\in \gF_1(\bbR),
\end{equation}
and there exists a real-valued spectral shift function
\begin{equation}
\xi(\,\cdot\,;S,S_0)\in L^1\big(\bbR,(1+|\lambda|)^{-2}\, d\lambda\big)
\end{equation}
so that
\begin{equation}
\tr_{\cH}(f(S)-f(S_0)) = \int_{\bbR}d\lambda\, \xi(\lambda;S,S_0)f'(\lambda),\quad f\in \gF_1(\bbR). 
\lb{eq_classical_tace_formula}
\end{equation}

One limitation to Krein's theory is that the condition \eqref{8.2} generally does not hold for Schr\"odinger operators in dimensions $n\geq 4$.  Similar difficulties are encountered for the polyharmonic operator (cf. \cite[\S 3.4]{Ya10}) and the Dirac operator (cf.~\cite[\S 3.5.3]{Ya10} and Theorem \ref{t7.4}).  In these cases, only the difference of higher powers of the resolvents belongs to the trace class (cf. \cite[Remark 3.3.3]{Ya10}).  Using the theory of double operator integrals, Yafaev \cite{Ya05} proved the existence of a spectral shift function under the weaker assumption that the difference of an odd power of the resolvents belongs to the trace class.

\begin{theorem}[{\cite[Theorem 2.2]{Ya05}}] \lb{t8.1}
Let $m \in \bbN$, $m$ odd, and suppose that $S_0$ and $S$ are self-adjoint operators in $\cH$ with
\begin{equation} \lb{8.6}
\big[(S - zI_{\cH})^{-m} - (S_0 - zI_{\cH})^{-m}\big] \in \cB_1(\cH),\quad z\in \bbC\backslash \bbR.
\end{equation}
Then
\begin{equation}
[f(S) - f(S_0)] \in \cB_1(\cH),\quad f\in \gF_m(\bbR),
\end{equation}
and there exists a function
\begin{equation} \lb{8.8}
\xi(\,\cdot\,;S,S_0)\in L^1\big(\bbR; (1+|\lambda|)^{-m-1}\, d\lambda\big)
\end{equation}
such that the following trace formula holds, 
\begin{equation}
\tr_{\cH}(f(S) - f(S_0)) = \int_{\bbR}d\lambda\, \xi(\lambda;S,S_0)f'(\lambda),\quad f\in \gF_m(\bbR).
\end{equation}
In particular, one has 
\begin{equation} \lb{8.8A}
\tr_{\cH}\big((S-zI_{\cH})^{-m} - (S_0-zI_{\cH})^{-m}\big) = -m \int_{\bbR} 
\frac{\xi(\lambda;S,S_0) d\lambda}{(\lambda - z)^{m+1}},   \quad z\in \bbC\backslash \bbR.
\end{equation}
\end{theorem}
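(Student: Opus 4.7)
The plan is to combine an $m$-th order Helffer--Sj\"ostrand representation (which handles functions in $\gF_m(\bbR)$ under only a trace-class hypothesis on the $m$-th power resolvent difference) with a reduction to Krein's classical trace formula \eqref{eq_classical_tace_formula}, applied to an auxiliary pair of bounded self-adjoint operators built from $S$ and $S_0$ whose difference is directly trace class. First, for $f\in \gF_m(\bbR)$, I would construct an almost-analytic extension $\wti f\in C^\infty(\bbC)$ of order $m$ satisfying $\wti f|_\bbR = f$ together with the quantitative estimate
\begin{equation*}
\big|(\partial_{\bar z}\wti f)(z)\big|\leq C(1+|z|)^{-m-1-\varepsilon}|\Im(z)|^{m-1},\quad z\in \bbC,
\end{equation*}
where the decay in $|z|$ uses the $\gF_m$-asymptotics of $f^{(\ell)}$ (first subtracting the tail $f_0(\lambda - i)^{-m}$, whose contribution is handled separately by the direct functional calculus applied to \eqref{8.6}), and the vanishing to order $m-1$ in $|\Im(z)|$ comes from a standard Borel-type construction. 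The $m$-th order Helffer--Sj\"ostrand formula then yields
\begin{equation*}
f(S) - f(S_0) = \frac{(-1)^{m-1}}{\pi(m-1)!}\int_\bbC d^2z\,(\partial_{\bar z}\wti f)(z)\big[(S-zI_\cH)^{-m} - (S_0-zI_\cH)^{-m}\big],
\end{equation*}
and absolute convergence in $\cB_1(\cH)$ follows by combining \eqref{8.6} (finite $\cB_1$-norm at $z=i$) with the weighted estimate $\big\|(S-zI_\cH)^{-m}-(S_0-zI_\cH)^{-m}\big\|_{\cB_1(\cH)}=\Oh(|\Im(z)|^{-m})$, derived from the resolvent identity and the uniform bound $\|(S-zI_\cH)^{-m}\|_{\cB(\cH)}\leq |\Im(z)|^{-m}$.

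Next, to construct $\xi$, I would introduce the bounded self-adjoint operators
\begin{equation*}
T:=\tfrac{1}{2i}\big[(S-iI_\cH)^{-m} - (S+iI_\cH)^{-m}\big], \quad T_0 := \tfrac{1}{2i}\big[(S_0-iI_\cH)^{-m}-(S_0+iI_\cH)^{-m}\big],
\end{equation*}
so that $(T-T_0)\in \cB_1(\cH)$ by \eqref{8.6}. Krein's classical theorem applied to this pair yields a spectral shift function $\eta(\,\cdot\,;T,T_0)\in L^1(\bbR;d\mu)$ of compact support. The transfer to a function $\xi(\,\cdot\,;S,S_0)$ on the $S$-spectral axis proceeds via the invariance principle for the real-analytic transformation $\varphi_m(\lambda):=\Im[(\lambda-i)^{-m}]$; crucially, the odd-$m$ hypothesis is what controls the sign structure and the piecewise-monotonicity pattern of $\varphi_m$, so that the signed sum of $\eta$ over the preimage branches $\varphi_m^{-1}(\mu)$ defines $\xi$ unambiguously. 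The weight $(1+|\lambda|)^{-m-1}$ in \eqref{8.8} then matches the Jacobian $\varphi_m'(\lambda)=-m\Im[(\lambda-i)^{-m-1}]\asymp (1+|\lambda|)^{-m-1}$ at infinity.

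Finally, the resolvent trace formula \eqref{8.8A} is obtained directly by applying Krein's trace formula for $(T,T_0)$ with test function $g(\mu)=\mu$ and its analytic continuations, pulled back through $\varphi_m$. The general trace formula for $f\in \gF_m(\bbR)$ then follows by substituting \eqref{8.8A} into the Helffer--Sj\"ostrand representation of the first step, exchanging orders of integration (justified by the absolute $\cB_1$-convergence above combined with $\xi\in L^1(\bbR;(1+|\lambda|)^{-m-1}d\lambda)$), and recognizing the inner $z$-integral as the Cauchy-type representation of $f'(\lambda)$ in terms of $\partial_{\bar z}\wti f$. The main anticipated obstacle is the invariance-principle step for the non-monotonic transformation $\varphi_m$ in the regime $m\geq 3$, which requires careful branch-counting across level sets; a cleaner bypass, pursued in Section \ref{s8a} of the manuscript, is to extract $\xi$ a.e.\ directly from boundary values of $\pi^{-1}\Im(F_{S,S_0}(\lambda+i0))$ via regularized Fredholm determinants as in \eqref{1.35}, thereby sidestepping the invariance-principle bookkeeping entirely.
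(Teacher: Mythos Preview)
The paper does not prove this theorem; it is quoted verbatim from Yafaev \cite[Theorem~2.2]{Ya05}, so there is no ``paper's proof'' to compare against.  That said, your sketch has a genuine gap in the construction of $\xi$.

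The function $\varphi_m(\lambda)=\Im\big[(\lambda-i)^{-m}\big]$ is \emph{even} precisely when $m$ is odd: since $(-\lambda-i)^{-m}=(-1)^{-m}\overline{(\lambda-i)^{-m}}=-\overline{(\lambda-i)^{-m}}$, one has $\varphi_m(-\lambda)=\varphi_m(\lambda)$.  (For instance $\varphi_1(\lambda)=(1+\lambda^2)^{-1}$ and $\varphi_3(\lambda)=(3\lambda^2-1)(1+\lambda^2)^{-3}$.)  An even function is as far from monotone as possible, so the invariance principle simply does not apply, and ``signed branch counting'' cannot recover $\xi$: the Krein function $\eta(\,\cdot\,;T,T_0)$ sees only the symmetrized data $\xi(\lambda)+\xi(-\lambda)$, which does not determine $\xi$.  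Thus the oddness of $m$ works \emph{against} this particular construction rather than for it.  Your suggested bypass through Section~\ref{s8a} is not available either, since Theorem~\ref{t8.13} rests on Hypothesis~\ref{h8.3} (in particular the trace-ideal conditions \eqref{8.12} on $(S-S_0)(S_0-zI_\cH)^{-j}$), which is strictly stronger than the bare assumption \eqref{8.6}.

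A smaller issue: the asserted bound $\big\|(S-zI_\cH)^{-m}-(S_0-zI_\cH)^{-m}\big\|_{\cB_1(\cH)}=\Oh(|\Im z|^{-m})$ does not follow from the operator-norm bound $\|(S-zI_\cH)^{-m}\|\le|\Im z|^{-m}$ and the resolvent identity alone.  Writing $R(z)^m=R(i)^m\big[(S-i)(S-z)^{-1}\big]^m$ one gets a first term $D(i)\big[(S-i)(S-z)^{-1}\big]^m$ with $\cB_1$-norm $\lesssim (1+|z|)^m|\Im z|^{-m}\|D(i)\|_{\cB_1}$, but the remaining cross term $R_0(i)^m\big[U(z)^m-U_0(z)^m\big]$ involves $R(z)-R_0(z)$, which is not known to lie in any trace ideal under \eqref{8.6}.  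One needs an independent argument (e.g., $\cB_1$-valued analyticity of $z\mapsto D(z)$ combined with Cauchy estimates on discs, or the double-operator-integral machinery Yafaev actually uses) to control this norm uniformly enough for the Helffer--Sj\"ostrand integral to converge in $\cB_1$.

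The route that \emph{does} work---and is exactly what the paper carries out for even $m$ in Theorem~\ref{t8.2}---is to pass through a genuinely monotone change of variable such as $\phi(t)=t(1+t^2)^{(m-1)/2}$ (with $\phi'(t)=(1+t^2)^{(m-3)/2}(1+mt^2)\ge1$), prove a trace-class resolvent condition for the pair $(\phi(S),\phi(S_0))$, and then invoke the classical invariance principle.  The odd/even dichotomy enters in how one verifies that trace-class condition, and Yafaev's argument for odd $m$ uses double operator integrals rather than the $T=\Im[(S-i)^{-m}]$ device you propose.
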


\begin{remark} \lb{r8.2} 
The above theorem, together with Theorem \ref{t7.4} guarantees that for Dirac operators $H$ and $H_0$ 
in $[L^2(\bbR^n)]^N$ the spectral shift function $\xi(\,\cdot\,; H,H_0)$ exists. However, for the representation of the spectral shift function in terms of a regularized perturbation determinant it is desirable to take the regularized determinant $\Det_{\cH,p}((H-zI_\cH)(H_0-zI_\cH)^{-1})$ with $p$ equal to $n+1$. Theorem \ref{t8.1} permits this in odd space dimensions $n$. In even space dimensions Theorem \ref{t8.1} does not guarantee the appropriate integrability of the spectral shift function $\xi(\,\cdot\,; H,H_0)$ and so one would be forced to consider a regularized determinant with $p=n+2$. To avoid this drawback, we prove that under a certain stronger condition (satisfied for Dirac operators $H$ and $H_0$ considered in Section \ref{s3}) an analogue of Theorem \ref{t8.1} holds for any $m\in\bbN$. 
\hfill $\diamond$
\end{remark}

\begin{hypothesis}\lb{hyp_even_ssf}
Let $m \in \bbN$ and assume that $S$ and $S_0$ are self-adjoint operators in $\cH$ 
with a common dense domain, such that 
\begin{equation}
(S-S_0) \in \cB(\cH), 
\end{equation}
and for some $0<\varepsilon<1/2$, 
\begin{equation} \lb{hyp_even_epsilon}
(S-S_0)(S_0^2+I_\cH)^{- (m/2) - \varepsilon}\in \cB_1(\cH).
\end{equation}
\end{hypothesis}

\begin{remark} \lb{r8.4} $(i)$ Assuming Hypothesis \ref{hyp_even_ssf}, it follows that 
\begin{equation}
(S-S_0)(S_0-zI_\cH)^{-m-1}\in \cB_1(\cH). 
\end{equation} 
Since $(S-S_0) \in \cB(\cH)$, it follows from the three line theorem that 
\begin{equation} \lb{8.12a}
(S-S_0)(S_0 - zI_{\cH})^{-j}\in \cB_{(m+1)/j}(\cH),\quad j\in \bbN,\; 1\leq j\leq m+1,
\end{equation}
Furthermore, another application of the three line theorem implies  
\begin{equation} \lb{8.12b}
(S_0 - zI_{\cH})^{-j_1}(S-S_0)(S_0 - zI_{\cH})^{-j_2}\in \cB_{(m+1)/(j_1+j_2)}(\cH)
\end{equation}
for all $j_1, j_2\in \bbN,$ with $1\leq j_1+j_2\leq m+1$. \\[1mm] 
$(ii)$ For the proof of Theorem \ref{t8.2} we will only need \eqref{hyp_even_epsilon} and \eqref{8.12a}. We assumed boundedness of $S-S_0$ only to get \eqref{8.12a} as a consequence of \eqref{hyp_even_epsilon}. It is possible to go beyond this boundedness assumption, but we omit further details at this point. \\[1mm]
$(iii)$ The inclusion \eqref{7.41} shows that assumption \eqref{hyp_even_epsilon} holds with $m=n$ for the pair of Dirac operators $(H,H_0)$ as long as $V$ satisfies Hypothesis \ref{h7.1}. 
${}$ \hfill $\diamond$ 
\end{remark}

The following result appeared in \cite[Theorem~2.6]{CLPS20}.

\begin{theorem}\lb{t8.6}
Assume Hypothesis \ref{hyp_even_ssf}. For any $j=1,\dots,m$, one has 
\begin{equation}
\left(S-zI_\cH\right)^{-j} - \left(S_0 -zI_\cH \right)^{-j}\in \cB_{(m+1)/(j+1)}(\cH). 
\end{equation} 
\end{theorem}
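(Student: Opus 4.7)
The plan is to induct on $j$. Throughout, fix $z\in\bbC\setminus\bbR$ and abbreviate $X_i := (S-zI_{\cH})^{-i} - (S_0-zI_{\cH})^{-i}$. The principal tools are the formula (in the spirit of \eqref{7.27})
\[
X_j = -\sum_{\ell=1}^{j}(S-zI_{\cH})^{-\ell}(S-S_0)(S_0-zI_{\cH})^{-(j+1-\ell)},
\]
the Schatten estimates \eqref{8.12a}, \eqref{8.12b} (together with their adjoint versions such as $(S_0-zI_{\cH})^{-j}(S-S_0) \in \cB_{(m+1)/j}(\cH)$, which are legitimate since $S-S_0$ is bounded and self-adjoint), H\"older's inequality for trace ideals, and the standard inclusion $\cB_p(\cH)\subset\cB_q(\cH)$ for $p\le q$.

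For the base case $j=1$, I would iterate the second resolvent identity once to write
\[
X_1 = -(S_0-zI_{\cH})^{-1}(S-S_0)(S_0-zI_{\cH})^{-1} + \bigl[(S_0-zI_{\cH})^{-1}(S-S_0)(S_0-zI_{\cH})^{-1}\bigr]\bigl[(S-S_0)(S-zI_{\cH})^{-1}\bigr].
\]
By \eqref{8.12b} with $j_1=j_2=1$, both the first summand and the bracketed $\cB_{(m+1)/2}(\cH)$-factor of the second summand lie in $\cB_{(m+1)/2}(\cH)$; the trailing factor $(S-S_0)(S-zI_{\cH})^{-1}$ is bounded, so $X_1\in\cB_{(m+1)/2}(\cH)$.

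For the inductive step, assuming $X_i\in\cB_{(m+1)/(i+1)}(\cH)$ for all $1\le i\le j-1$, the terms of the displayed expansion with $1\le\ell\le j-1$ are handled by splitting $(S-zI_{\cH})^{-\ell} = (S_0-zI_{\cH})^{-\ell} + X_\ell$: the first piece lies in $\cB_{(m+1)/(j+1)}(\cH)$ by \eqref{8.12b}, while the second is a product of $X_\ell\in\cB_{(m+1)/(\ell+1)}(\cH)$ with $(S-S_0)(S_0-zI_{\cH})^{-(j+1-\ell)}\in\cB_{(m+1)/(j+1-\ell)}(\cH)$ (from \eqref{8.12a}), landing in $\cB_{(m+1)/(j+2)}(\cH)\subset\cB_{(m+1)/(j+1)}(\cH)$ by H\"older.

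The main obstacle is the endpoint term $\ell=j$, namely $(S-zI_{\cH})^{-j}(S-S_0)(S_0-zI_{\cH})^{-1}$, since a direct split of $(S-zI_{\cH})^{-j}$ would demand the very conclusion $X_j\in\cB_{(m+1)/(j+1)}(\cH)$. To sidestep this circularity, I would exploit the identity $(S-zI_{\cH})^{-1}(S-S_0)(S_0-zI_{\cH})^{-1} = -X_1$ to recast the endpoint term as $-(S-zI_{\cH})^{-(j-1)}X_1$, and then split $(S-zI_{\cH})^{-(j-1)} = (S_0-zI_{\cH})^{-(j-1)} + X_{j-1}$. The piece $X_{j-1}X_1$ sits in $\cB_{(m+1)/j}(\cH)\cdot\cB_{(m+1)/2}(\cH)\subset\cB_{(m+1)/(j+2)}(\cH)$ by the induction hypothesis and the base case. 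For the remaining piece $(S_0-zI_{\cH})^{-(j-1)}X_1$, I would use the symmetric form $X_1 = -(S_0-zI_{\cH})^{-1}(S-S_0)(S-zI_{\cH})^{-1}$ to rewrite it as $-(S_0-zI_{\cH})^{-j}(S-S_0)(S-zI_{\cH})^{-1}$, and then expand the trailing $(S-zI_{\cH})^{-1} = (S_0-zI_{\cH})^{-1} + X_1$: the first contribution $(S_0-zI_{\cH})^{-j}(S-S_0)(S_0-zI_{\cH})^{-1}$ lies in $\cB_{(m+1)/(j+1)}(\cH)$ by \eqref{8.12b}, while the second, $(S_0-zI_{\cH})^{-j}(S-S_0)\cdot X_1$, is in $\cB_{(m+1)/j}(\cH)\cdot\cB_{(m+1)/2}(\cH)\subset\cB_{(m+1)/(j+2)}(\cH)$. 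Collecting all contributions yields $X_j\in\cB_{(m+1)/(j+1)}(\cH)$, closing the induction.
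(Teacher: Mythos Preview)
Your argument is correct. The induction is well-structured, the base case via the iterated resolvent identity is fine, and your handling of the troublesome endpoint term $\ell=j$ by peeling off one factor $(S-zI_{\cH})^{-1}(S-S_0)(S_0-zI_{\cH})^{-1}=-X_1$ and then splitting $(S-zI_{\cH})^{-(j-1)}=(S_0-zI_{\cH})^{-(j-1)}+X_{j-1}$ is exactly the right manoeuvre to avoid circularity. All the Schatten-class memberships you invoke are covered by \eqref{8.12a}, \eqref{8.12b} (and their adjoint versions, valid since $S-S_0$ is bounded self-adjoint), and the H\"older combinations land in $\cB_{(m+1)/(j+2)}(\cH)\subset\cB_{(m+1)/(j+1)}(\cH)$ as claimed; note that for $j=m$ the intermediate index $(m+1)/(m+2)<1$, but the quasi-normed Schatten classes $\cB_p$ for $0<p<1$ and H\"older's inequality therein are standard, so this causes no difficulty.

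As for comparison: the paper does not supply a proof of this theorem at all---it simply cites \cite[Theorem~2.6]{CLPS20}. Your self-contained induction based on the expansion \eqref{7.27} together with \eqref{8.12a}--\eqref{8.12b} is therefore a genuine addition; it stays entirely within the tools already developed in the paper and requires nothing beyond Remark~\ref{r8.4}.
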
 

\begin{lemma}\lb{lem_weighted_resol_diff}
Assume Hypothesis \ref{hyp_even_ssf}. For  any $j=1,\dots, m$, and $z\in\bbC \backslash \bbR$,  
one has  
\begin{equation} 
\big[(S-zI_\cH)^{-j}-(S_0-zI_\cH)^{-j}\big](S_0+zI_\cH)^{-m+j}\in\cB_1(\cH).
\end{equation} 
\end{lemma}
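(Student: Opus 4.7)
The plan is to expand the difference $R^j - R_0^j$ (writing $R := (S-zI_\cH)^{-1}$, $R_0 := (S_0-zI_\cH)^{-1}$, $R_0^+ := (S_0+zI_\cH)^{-1}$, and $V := S - S_0$) through iterated application of the resolvent identity $R - R_0 = -R_0 V R$, then bound each resulting term multiplied by $(R_0^+)^{m-j}$ via Schatten--H\"older using the multi-resolvent class bounds developed around Remark \ref{r8.4}. First, the telescoping identity
\[
R^j - R_0^j \,=\, -\sum_{k=1}^{j} R_0^k\,V\,R^{j-k+1}
\]
reduces the problem to showing that each term of the form $R_0^b V R^a (R_0^+)^c$ with $a, b \geq 1$, $c \geq 0$, and $a+b+c = m+1$ belongs to $\cB_1(\cH)$.

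Next, I would iterate the identity $R^a = R_0^a - \sum_{k'=1}^a R_0^{k'} V R^{a-k'+1}$ inside the trailing $R^a$ factor exactly $m$ times, producing by induction on the number of substitutions a finite decomposition
\[
R_0^b V R^a (R_0^+)^c \,=\, \sum_{q=1}^{m} \Sigma_q \,+\, \Sigma_{\mathrm{rem}}
\]
(with signs absorbed into the summands), in which each ``closed'' summand collected in $\Sigma_q$ has the form $R_0^{b_0}V R_0^{b_1} V \cdots V R_0^{b_q}(R_0^+)^c$ with $b_0 = b$, every $b_i \geq 1$, and $\sum_{i=0}^q b_i + c = m + q$, while $\Sigma_{\mathrm{rem}}$ collects the summands of shape $R_0^{b_0}VR_0^{b_1}V\cdots VR_0^{b_m}V R^{a'}(R_0^+)^c$ carrying $m+1$ factors of $V$, with every $b_i \geq 1$, $a' \geq 1$, and hence $\sum_{i=0}^{m}b_i \geq m+1$. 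The key technical input is the multi-resolvent Schatten bound
\[
R_0^{b_0}VR_0^{b_1}V\cdots V R_0^{b_p}(R_0^+)^{c_p} \,\in\, \cB_{(m+1)/W}(\cH), \qquad W := \sum_{i=0}^{p} b_i + c_p,
\]
which I would establish by the block decomposition
$[R_0^{b_0}VR_0^{b_1}]\cdot[VR_0^{b_2}]\cdots[VR_0^{b_{p-1}}]\cdot [VR_0^{b_p}(R_0^+)^{c_p}]$, applying \eqref{8.12b} to the first block, \eqref{8.12a} to the middle ones, and an analog of \eqref{8.12a} involving both $R_0$ and $R_0^+$ factors to the final block, and then invoking Schatten--H\"older. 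The intermingling of $R_0$ and $R_0^+$ is harmless because Hypothesis \ref{hyp_even_ssf} is invariant under $z \mapsto -z$, so the three-line theorem argument producing \eqref{8.12a}--\eqref{8.12b} in Remark \ref{r8.4} applies verbatim with $(S_0+zI_\cH)^{-1}$ in place of $(S_0-zI_\cH)^{-1}$.

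Applied to each $\Sigma_q$ this yields $W = m+q$ and hence $\Sigma_q \in \cB_{(m+1)/(m+q)}(\cH) \subseteq \cB_1(\cH)$ for every $q \geq 1$; applied to $\Sigma_{\mathrm{rem}}$ (where the trailing block $[VR^{a'}(R_0^+)^c]$ is merely bounded), the same argument yields $\Sigma_{\mathrm{rem}} \in \cB_{(m+1)/W'}(\cH) \subseteq \cB_1(\cH)$ with $W' = \sum_{i=0}^{m} b_i \geq m+1$. Summing the finitely many resulting terms gives $[R^j - R_0^j](R_0^+)^{m-j} \in \cB_1(\cH)$, proving the lemma. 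The main obstacle lies in the bookkeeping for the iterated expansion, namely proving by induction that after $m$ substitutions the closed summands have precisely the stated shape and weight $m+q$, and that the remainder carries $m+1$ factors of $V$ with all intermediate $R_0$-exponents at least one; once this combinatorial structure is in place, the Schatten class estimates themselves are routine applications of Remark \ref{r8.4} and H\"older's inequality.
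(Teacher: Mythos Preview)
Your argument is correct and complete in all essentials; the iterated resolvent expansion together with the block Schatten--H\"older estimate does the job. It is, however, a genuinely different route from the paper's. The paper argues by induction on $j$: the base case $j=1$ is handled by applying the resolvent identity twice and invoking \eqref{8.12a}--\eqref{8.12b} directly, while the inductive step writes
\[
\big[R^{j+1}-R_0^{j+1}\big](R_0^+)^{m-j-1} = \big[R^{j}-R_0^{j}\big]R\,(R_0^+)^{m-j-1} + R_0^{j}\big[R-R_0\big](R_0^+)^{m-j-1},
\]
and then controls each piece using the induction hypothesis together with the \emph{a priori} Schatten bound $R^{j}-R_0^{j}\in\cB_{(m+1)/(j+1)}(\cH)$ from Theorem~\ref{t8.6}. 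Your approach instead expands everything down to $R_0$-only words plus a remainder carrying $m{+}1$ copies of $V$, and never needs Theorem~\ref{t8.6}; the price is the combinatorial bookkeeping you flag at the end (which is straightforward once one tracks that each substitution $R^{a}\to R_0^{k'}VR^{a-k'+1}$ increases both the $V$-count and the total resolvent count by one, so a closed word produced at stage $q$ has weight $m+q$).

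One small point of exposition: your justification for the mixed $R_0/R_0^+$ estimate via ``$z\mapsto -z$ symmetry'' is a bit loose. What you actually need, $VR_0^{j_1}(R_0^+)^{j_2}\in\cB_{(m+1)/(j_1+j_2)}(\cH)$, follows most cleanly from \eqref{8.12a} and the spectral-calculus observation that $(S_0-zI_\cH)^{j_2}(S_0+zI_\cH)^{-j_2}\in\cB(\cH)$ for $z\notin\bbR$, since then $VR_0^{j_1}(R_0^+)^{j_2}=\big[VR_0^{j_1+j_2}\big]\big[(S_0-zI_\cH)^{j_2}(S_0+zI_\cH)^{-j_2}\big]$.
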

\begin{proof}
We prove the claim by induction on $j$. Let $j=1$. Using the resolvent identity twice one writes
\begin{align}\lb{tech_lemma_eq1}
\big[(S&-zI_\cH)^{-1}-(S_0-zI_\cH)^{-1}\big](S_0+zI_\cH)^{-m+1}   \no \\
&=-(S-zI_\cH)^{-1}(S-S_0)(S_0-zI_\cH)^{-1} (S_0+zI_\cH)^{-m+1}    \no \\
&=(S-zI_\cH)^{-1}(S-S_0)(S_0-zI_\cH)^{-1}(S-S_0)(S_0-zI_\cH)^{-1} (S_0+zI_\cH)^{-m+1}   \no \\
&\quad -(S_0-zI_\cH)^{-1}(S-S_0)(S_0-zI_\cH)^{-1} (S_0+zI_\cH)^{-m+1}.
\end{align}
By \eqref{8.12b} one obtains 
\begin{equation} 
(S_0-zI_\cH)^{-1}(S-S_0)(S_0-zI_\cH)^{-m}\in\cB_1(\cH),
\end{equation} 
and therefore the second term on the right-hand side of \eqref{tech_lemma_eq1} is a trace-class operator.
By \eqref{8.12a}, 
\begin{equation} 
(S-S_0)(S_0-zI_\cH)^{-1}\in \cB_{m+1}(\cH),\quad (S-S_0)(S_0-zI_\cH)^{-m}\in\cB_{(m+1)/m}(\cH),
\end{equation} 
guaranteeing that the first term on the right-hand side of \eqref{tech_lemma_eq1} is also a trace-class operator. Thus, one concludes that 
\begin{equation} 
\big[(S-zI_\cH)^{-1}-(S_0-zI_\cH)^{-1}\big](S_0+zI_\cH)^{-m+1}\in \cB_1(\cH),
\end{equation} 
proving the first induction step. 

Next, suppose that 
\begin{equation} 
\big[(S-zI_\cH)^{-j}-(S_0-zI_\cH)^{-j}\big](S_0+zI_\cH)^{-m+j}\in\cB_1(\cH)
\end{equation} 
for some $j=1,\dots m-1$. Writing 
\begin{align}\lb{eq_even_ssf_ugly_I_II}
\big[(S&-zI_\cH)^{-j-1}-(S_0-zI_\cH)^{-j-1}\big](S_0+zI_\cH)^{-m+j+1}   \no \\
&=\big[(S-zI_\cH)^{-j}-(S_0-zI_\cH)^{-j}\big](S-zI_\cH)^{-1}(S_0+zI_\cH)^{-m+j+1}    \no \\
&\quad+ (S_0-zI_\cH)^{-j}\big[(S-zI_\cH)^{-1}-(S_0-zI_\cH)^{-1}\big](S_0+zI_\cH)^{-m+j+1}     \\
&\quad := (I) + (II),   \no 
\end{align}
we will treat the terms $(I)$ and $(II)$ separately in the following. 

For $(I)$ on the right-hand side of \eqref{eq_even_ssf_ugly_I_II} one gets 
\begin{align}
&\big[(S-zI_\cH)^{-j}-(S_0-zI_\cH)^{-j}\big](S-zI_\cH)^{-1}(S_0+zI_\cH)^{-m+j+1}  \no \\
&\quad =\big[(S-zI_\cH)^{-j}-(S_0-zI_\cH)^{-j}\big]    \no \\
&\qquad \quad \times\big[(S-zI_\cH)^{-1}-(S_0-zI_\cH)^{-1}\big](S_0+zI_\cH)^{-m+j+1}   \no \\
&\qquad +\big[(S-zI_\cH)^{-j}-(S_0-zI_\cH)^{-j}\big](S_0-zI_\cH)^{-1}(S_0+zI_\cH)^{-m+j+1}    \no \\
&\quad = - \big[(S-zI_\cH)^{-j}-(S_0-zI_\cH)^{-j}\big](S-zI_\cH)^{-1}    \no \\
&\qquad \quad \times(S-S_0)(S_0-zI_\cH)^{-1}(S_0+zI_\cH)^{-m+j+1}     \\
&\qquad +\big[(S-zI_\cH)^{-j}-(S_0-zI_\cH)^{-j}\big](S_0-zI_\cH)^{-1}(S_0+zI_\cH)^{-m+j+1}.   \no 
\end{align}
By the induction hypothesis one concludes that 
\begin{equation} 
\big[(S-zI_\cH)^{-j}-(S_0-zI_\cH)^{-j}\big](S_0+zI_\cH)^{-m+j} \in \cB_1(\cH),
\end{equation} 
and therefore also 
\begin{equation} 
\big[(S-zI_\cH)^{-j}-(S_0-zI_\cH)^{-j}\big](S_0-zI_\cH)^{-1}(S_0+zI_\cH)^{-m+j+1}  \in \cB_1(\cH).
\end{equation} 
By Theorem \ref{t8.6} one obtains 
\begin{equation} 
\big[(S-zI_\cH)^{-j}-(S_0-zI_\cH)^{-j}\big]\in\cB_{(m+1)/(j+1)}(\cH),
\end{equation} 
and by \eqref{8.12a},  
\begin{equation} 
(S-S_0)(S_0+zI_\cH)^{-m+j}\in \cB_{(m+1)/(m-j)}(\cH).
\end{equation} 
Therefore, 
\begin{align}
& \big[(S-zI_\cH)^{-j}-(S_0-zI_\cH)^{-j}\big](S-zI_\cH)^{-1}  (S-S_0)(S_0-zI_\cH)^{-1}   \no \\
&\quad\times (S_0+zI_\cH)^{-m+j+1} \in \cB_{(m+1)/(j+1)}(\cH)\cdot  \cB_{(m+1)/(m-j)}(\cH)\subset \cB_1(\cH), \\
& \hspace*{8.68cm} 1 \leq j \leq m-1.  \no 
\end{align}
Thus, $(I) \in \cB_1(\cH)$. 

To show that also $(II)$ on the right-hand side of \eqref{eq_even_ssf_ugly_I_II} is a trace-class operator, one writes 
\begin{align}
& (S_0-zI_\cH)^{-j}\big[(S-zI_\cH)^{-1}-(S_0-zI_\cH)^{-1}\big](S_0+zI_\cH)^{-m+j+1}   \no \\
& \quad =-(S_0-zI_\cH)^{-j}(S-zI_\cH)^{-1}(S-S_0)(S_0-zI_\cH)^{-1}(S_0+zI_\cH)^{-m+j+1}   \no \\
& \quad =-(S_0-zI_\cH)^{-j}(S_0-zI_\cH)^{-1}(S-S_0)(S-zI_\cH)^{-1}    \no \\
&\qquad \quad \times(S-S_0)(S_0-zI_\cH)^{-1}(S_0+zI_\cH)^{-m+j+1}    \\
&\qquad+(S_0-zI_\cH)^{-j}(S_0-zI_\cH)^{-1}(S-S_0)(S_0-zI_\cH)^{-1}(S_0+zI_\cH)^{-m+j+1}. \no
\end{align}
By \eqref{8.12a} one infers 
\begin{align} 
\begin{split} 
& (S_0-zI_\cH)^{-j-1}(S-S_0)\in \cB_{(m+1)/(j+1)}(\cH),    \\
& (S-S_0)(S_0+zI_\cH)^{-m+j}\in\cB_{(m+1)/(m-j)}(\cH),
\end{split} 
\end{align} 
and hence, 
\begin{align} 
\begin{split} 
& (S_0-zI_\cH)^{-j}(S_0-zI_\cH)^{-1}(S-S_0)(S-zI_\cH)^{-1}(S-S_0)(S_0-zI_\cH)^{-1}    \\
& \quad \times (S_0+zI_\cH)^{-m+j+1} \in \cB_1(\cH). 
\end{split}
\end{align} 
Furthermore, by \eqref{8.12b}, 
\begin{equation} 
(S_0+zI_\cH)^{-j-1}(S-S_0)(S_0+zI_\cH)^{-m+j}\in \cB_1(\cH), \quad 1 \leq j \leq m - 1.
\end{equation} 
Thus, also $(II)$ is a trace-class operator. Combining this with the fact that $(I) \in \cB_1(\cH)$ and referring to \eqref{eq_even_ssf_ugly_I_II}, one concludes that 
\begin{equation} 
\big[(S-zI_\cH)^{-j-1}-(S_0-zI_\cH)^{-j-1}\big](S_0+zI_\cH)^{-m+j+1}\in\cB_1(\cH).
\end{equation} 
\end{proof}

From this point on we assume Hypothesis \ref{hyp_even_ssf} for even $m = 2k$ for the remainder of this section. Introducing the function 
\begin{equation}\lb{def_even_phi}
\phi(t)=t \big(1+t^2\big)^{\frac{m-1}{2}}=t \big(1+t^2\big)^{k-(1/2)}, \quad t\in\bbR,
\end{equation}
we aim at proving that 
\begin{equation}\lb{even_resol_comp}
  \big[(\phi(S)+iI_{\cH})^{-1}-(\phi(S_0)+iI_{\cH})^{-1}\big] \in \cB_1(\cH), 
  \end{equation}
guaranteeing that the spectral shift function $\xi(\,\cdot\,; \phi(S), \psi(S_0))$ is well-defined. Since 
\begin{equation} 
\phi'(t)= \big(1+t^2\big)^{\frac{m-3}{2}} \big(1+mt^2\big)\geq 1>0, 
\end{equation} 
it follows that $\phi$ is a strictly monotone increasing function on $\bbR$. 
Therefore, one can use the invariance principle for the spectral shift function (see \cite[Section 8.11]{Ya92}) 
to introduce $\xi(\,\cdot\,;S,S_0)$ by setting 
\begin{equation} 
\xi(\lambda;S,S_0)=\xi(\phi(\lambda);\phi(S),\phi(S_0)) \, \text{ for a.e.~$ \lambda\in \bbR$.}
\end{equation} 
The choice of $\phi$ and integrability properties of $\xi(\,\cdot\,; \phi(S), \phi(S_0))$ will imply an appropriate integrability condition for $\xi(\,\cdot\,; S, S_0)$.

A crucial result in the proof of the inclusion \eqref{even_resol_comp} is the following result.
We recall that the H\"older space $C^{1,\alpha}([0,1])$, $0\leq \alpha\leq 1$, is the class of functions $f$ on $[0,1]$ such that 
\begin{equation} 
\|f\|_{C^{1,\alpha}([0,1])}=\|f\|_{C^1([0,1])}+\sup_{t_1,t_2\in[0,1]}\frac{|f'(t_1)-f'(t_2)|}{|t_1-t_2|^\alpha}<\infty.
\end{equation} 

\begin{theorem}\lb{thm_PS}\cite[Theorem 4 and Corollary 2]{PS09}
Suppose that $A$ and $B$ are self-adjoint operators on a Hilbert space, such that $(A-B) \in \cB_1(\cH)$ and 
$\sigma(A)\cup\sigma(B)\subset [0,1]$. For any function $f\in C^{1,\alpha}([0,1])$ with $0< \alpha\leq 1$ one 
has 
\begin{equation} 
[f(A)-f(B)] \in\cB_1(\cH) 
\end{equation} 
and 
\begin{equation} 
\|f(A)-f(B)\|_{\cB_1(\cH)}\leq C \|A-B\|_{\cB_1(\cH)},
\end{equation} 
where the constant $C$ is independent of $A$ and $B$. 
\end{theorem}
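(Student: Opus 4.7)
The plan is to establish this trace-class Lipschitz-type result via the theory of double operator integrals (DOI) initiated by Birman--Solomyak and refined by Peller. The starting point is the Daleckii--Krein-type representation
\begin{equation*}
f(A) - f(B) = \iint_{[0,1]\times [0,1]} \frac{f(\lambda) - f(\mu)}{\lambda - \mu}\, dE_A(\lambda)\,(A - B)\, dE_B(\mu),
\end{equation*}
where the integrand is interpreted as $f'(\lambda)$ on the diagonal $\lambda = \mu$. Denoting the divided difference by $\psi_f$, this formula exhibits $f(A) - f(B)$ as the image of $A - B$ under the DOI transformer $T_{\psi_f}^{A,B}$. Since $\sigma(A)\cup \sigma(B) \subset [0,1]$ and $f\in C^{1,\alpha}([0,1])$, the symbol $\psi_f$ is continuous on $[0,1]^2$.

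First I would reduce the statement to showing that $T_{\psi_f}^{A,B}$ is bounded from $\cB_1(\cH)$ to $\cB_1(\cH)$ with norm at most $C\|f\|_{C^{1,\alpha}([0,1])}$, independent of $A, B$. In the language of Birman--Solomyak this is the assertion that $\psi_f$ is a Schur multiplier on $\cB_1$ with respect to arbitrary Borel spectral measures on $[0,1]$, with quantitative control. A standard route to such a bound is to exhibit a factorization
\begin{equation*}
\psi_f(\lambda,\mu) = \int_{\Omega} \alpha(\lambda,s)\, \beta(\mu,s)\, d\nu(s), \qquad \int_{\Omega} \|\alpha(\,\cdot\,,s)\|_{L^\infty} \|\beta(\,\cdot\,,s)\|_{L^\infty}\, d|\nu|(s) < \infty,
\end{equation*}
because then $T_{\psi_f}^{A,B}(X) = \int_{\Omega} \alpha(A,s)\, X\, \beta(B,s)\, d\nu(s)$ yields the desired bound on $\cB_1(\cH)$.

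The main technical step is constructing such a factorization with norm controlled by $\|f\|_{C^{1,\alpha}}$. My approach would be a dyadic Littlewood--Paley decomposition: extend $f$ to all of $\bbR$ preserving its $C^{1,\alpha}$ seminorm, write $f' = \sum_n g_n$ with $g_n$ spectrally localized at frequency $\sim 2^n$, and for each $n$ represent the corresponding piece of $\psi_f$ via a Fourier-type integral whose kernels are controlled by $\|g_n\|_{L^\infty}$. Summation then produces the factorization, provided $\sum_n \|g_n\|_{L^\infty} < \infty$; the strict positivity $\alpha > 0$ enters precisely here, via the embedding $C^{\alpha}([0,1]) \hookrightarrow B^{0}_{\infty,1}([0,1])$ in the Besov scale applied to $f'$. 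For $\alpha = 0$ this embedding fails, which explains why mere $C^1$-regularity is insufficient (Farforovskaya's counterexample).

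The hard part will be controlling the factorization uniformly in $A, B$ and handling the regularization of the divided difference across the diagonal $\lambda = \mu$ while tracking boundary effects from the compact interval $[0,1]$. An alternative, and perhaps cleaner, route would be to invoke Pisier's characterization of Schur multipliers via the Haagerup tensor product $L^\infty \otimes_h L^\infty$ and verify membership there directly from the Besov regularity of $f'$. Either way, the essential analytic input is the quantitative smoothness beyond $C^1$ encoded by $\alpha > 0$, together with the general DOI machinery that converts a factorization of the symbol into a trace-class bound on the transformer.
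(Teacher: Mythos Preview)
The paper does not supply its own proof of this theorem; it is quoted verbatim from \cite[Theorem~4 and Corollary~2]{PS09} and used as a black box. So there is no in-paper argument to compare against.

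That said, your outline matches the approach of the cited source. Potapov--Sukochev establish the result via double operator integrals: one writes $f(A)-f(B)$ as the DOI transformer $T_{\psi_f}^{A,B}(A-B)$ with symbol the divided difference $\psi_f$, and the task reduces to showing $\psi_f$ is a Schur multiplier on $\cB_1(\cH)$ uniformly in the spectral measures. The analytic input is exactly the one you identify: the embedding of $C^{\alpha}$ into the Besov space $B^0_{\infty,1}$ (equivalently, $f\in B^1_{\infty,1}$), which furnishes a summable Littlewood--Paley decomposition of $f'$ and hence an integral factorization of $\psi_f$ with the required $L^\infty\otimes_h L^\infty$ bound. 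The strict inequality $\alpha>0$ is indeed what makes the dyadic sum converge, and Farforovskaya's example shows this cannot be relaxed to $\alpha=0$.

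Your sketch is correct at the level of strategy. The parts you flag as ``hard'' (regularizing across the diagonal, boundary effects on $[0,1]$) are genuine but routine once the Besov machinery is in place; extension of $f$ from $[0,1]$ to $\bbR$ preserving the $C^{1,\alpha}$ norm handles the compact-interval issue, and the divided difference is automatically continuous across the diagonal since $f\in C^1$.
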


Assuming Hypothesis \ref{hyp_even_ssf} with $m=2k$, $k \in \bbN$, we intend to use Theorem \ref{thm_PS} for the operators $A= \big(S^2+I_\cH\big)^{-k}$ and $B= \big(S_0^2+I_\cH\big)^{-k}.$ In the following Lemma \ref{lem_even_square} we 
will show that with this choice of operators $A,B$ the condition $(A-B) \in\cB_1(\cH)$ of Theorem~\ref{thm_PS} is satisfied. 

\begin{lemma}\lb{lem_even_square}
Assume Hypothesis \ref{hyp_even_ssf} with $m=2k$ for some $k \in \bbN$. Then 
\begin{equation} 
\Big[\big(S^2+I_\cH\big)^{-k} - \big(S_0^2+I_\cH\big)^{-k}\Big] \in \cB_1(\cH).
\end{equation} 
\end{lemma}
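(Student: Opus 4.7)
The plan is to factor $(S^2+I_\cH)^{-k}$ through commuting resolvents of $S$ at the points $\pm i$, so as to reduce the claim to Lemma \ref{lem_weighted_resol_diff} and Theorem \ref{t8.6} (which are both already proven under Hypothesis \ref{hyp_even_ssf} with $m=2k$). Since $S$ is self-adjoint, the functional calculus gives the commuting factorization
\[
(S^2+I_\cH)^{-k} = (S-iI_\cH)^{-k}(S+iI_\cH)^{-k},
\]
and analogously for $S_0$. My plan is therefore to write
\begin{align*}
(S^2+I_\cH)^{-k} - (S_0^2+I_\cH)^{-k}
&= \bigl[(S-iI_\cH)^{-k} - (S_0-iI_\cH)^{-k}\bigr](S+iI_\cH)^{-k} \\
&\quad + (S_0-iI_\cH)^{-k}\bigl[(S+iI_\cH)^{-k} - (S_0+iI_\cH)^{-k}\bigr],
\end{align*}
and show that each summand lies in $\cB_1(\cH)$.

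For the first summand, I will insert the intermediate resolvent power $(S_0+iI_\cH)^{-k}$ and split it as
\begin{align*}
& \bigl[(S-iI_\cH)^{-k} - (S_0-iI_\cH)^{-k}\bigr](S_0+iI_\cH)^{-k} \\
& \quad + \bigl[(S-iI_\cH)^{-k} - (S_0-iI_\cH)^{-k}\bigr]\bigl[(S+iI_\cH)^{-k} - (S_0+iI_\cH)^{-k}\bigr].
\end{align*}
The first piece belongs to $\cB_1(\cH)$ directly by Lemma \ref{lem_weighted_resol_diff} applied with $z=i$, $j=k$, and $m=2k$ (noting $m-j=k$). For the second piece, Theorem \ref{t8.6} applied with $m=2k$ and $j=k$ places each of the two factors in $\cB_{(2k+1)/(k+1)}(\cH)$; the H\"older-type product inequality for Schatten ideals then places the product in $\cB_p(\cH)$ with $p=(2k+1)/(2k+2)<1$, and hence in $\cB_1(\cH)$.

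For the second summand $(S_0-iI_\cH)^{-k}\bigl[(S+iI_\cH)^{-k} - (S_0+iI_\cH)^{-k}\bigr]$, I plan to pass to the Hilbert-space adjoint. Using self-adjointness of $S$ and $S_0$ one finds that the adjoint equals
\[
\bigl[(S-iI_\cH)^{-k} - (S_0-iI_\cH)^{-k}\bigr](S_0+iI_\cH)^{-k},
\]
which is exactly the operator already shown to lie in $\cB_1(\cH)$ via Lemma \ref{lem_weighted_resol_diff}. Since $\cB_1(\cH)$ is stable under taking adjoints, the second summand belongs to $\cB_1(\cH)$ as well, and the lemma follows.

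The main obstacle I anticipate is not conceptual but bookkeeping: verifying that the indices forced by the decomposition, namely $j=k$ and $m-j=k$ with $m=2k$, truly match the hypotheses of Lemma \ref{lem_weighted_resol_diff} and Theorem \ref{t8.6}, and that in the cross-term step the Schatten exponent $(2k+1)/(2k+2)$ is genuinely strictly less than one (it is, for every $k\in\bbN$) so that the Schatten-class H\"older inequality really produces trace class and not some weaker ideal. Once these routine checks are in place, the splitting above assembles the proof from ingredients already at hand.
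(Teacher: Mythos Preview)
Your proposal is correct and follows essentially the same approach as the paper. Both proofs factor $(S^2+I_\cH)^{-k}=(S-iI_\cH)^{-k}(S+iI_\cH)^{-k}$, reduce the difference to the same three pieces (two ``weighted difference'' terms handled via Lemma~\ref{lem_weighted_resol_diff} at $z=\pm i$, plus a cross term of two $\cB_{(2k+1)/(k+1)}$ factors handled by Theorem~\ref{t8.6} and the Schatten H\"older inequality), and the adjoint step you spell out for the second summand is precisely what the paper uses implicitly when it cites Lemma~\ref{lem_weighted_resol_diff} for both mixed terms.
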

\begin{proof}
One writes  
\begin{align}
& \big(S^2+I_\cH\big)^{-k} - \big(S_0^2+I_\cH\big)^{-k}    \no \\
& \quad =(S+iI_\cH)^{-k}(S-iI_\cH)^{-k}-(S_0+iI_\cH)^{-k}(S_0-iI_\cH)^{-k}  \no \\
& \quad =\big[(S+iI_\cH)^{-k}-(S_0+iI_\cH)^{-k}\big] \big[(S-iI_\cH)^{-k}-(S_0-iI_\cH)^{-k}\big]  \no \\
&\qquad+\big[(S+iI_\cH)^{-k}-(S_0+iI_\cH)^{-k}\big](S_0-iI_\cH)^{-k}     \lb{8.41AA}  \\
&\qquad+(S_0+iI_\cH)^{-k}\big[(S-iI_\cH)^{-k}-(S_0-iI_\cH)^{-k}\big].   \no 
\end{align}
By Lemma \ref{lem_weighted_resol_diff}, the second and the third terms are trace-class operators. 
By Theorem \ref{t8.6} one infers that 
\begin{equation} 
\big[(S+iI_\cH)^{-k}-(S_0+iI_\cH)^{-k}\big] \in \cB_{(m+1)/(k+1)}(\cH). 
\end{equation} 
Therefore, the first term on the right-hand side of \eqref{8.41AA} is a trace-class operator too.
\end{proof}

\begin{lemma}\lb{lem_good_functions}
Let $k\in\bbN$ and introduce the functions
\begin{equation} 
h_1(t)=\frac1{t^2 \big(1+t^2\big)^{2k-1}+1},\quad h_2(t)=\frac{\big(1+t^2\big)^k}{t^2 \big(1+t^2\big)^{2k-1}+1},\quad t\in\bbR.
\end{equation} 
There exist $f_1, f_2\in C^{1,\frac1k}([0,1])$ such that 
\begin{equation} 
h_1(t)=f_1\Big(\big(1+t^2\big)^{-k}\Big),\quad h_2(t)=f_2\Big(\big(1+t^2\big)^{-k}\Big),  \quad t\in\bbR.
\end{equation} 
\end{lemma}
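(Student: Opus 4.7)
The plan is to produce $f_1, f_2$ by an explicit change of variables. Setting $s=(1+t^2)^{-k}$, so that $1+t^2=s^{-1/k}$ and $(1+t^2)^{2k-1}=s^{-2+1/k}$, one computes
\begin{equation}
t^2(1+t^2)^{2k-1}+1 \;=\; s^{-2}\bigl(1-s^{1/k}+s^2\bigr).
\end{equation}
Multiplying the numerator and denominator of $h_1(t)$ and $h_2(t)$ by $s^2$ produces the candidates
\begin{equation}
f_1(s) \;=\; \frac{s^2}{1-s^{1/k}+s^2},\qquad f_2(s) \;=\; \frac{s}{1-s^{1/k}+s^2},\qquad s\in[0,1],
\end{equation}
which manifestly satisfy the desired identities on all of $\bbR$ (with the values at $s=0$ corresponding to the limits of $h_1,h_2$ as $|t|\to\infty$).

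Before discussing regularity, I would verify that the common denominator $g(s):=1-s^{1/k}+s^2$ is bounded below by a positive constant on $[0,1]$. Indeed, $g(s)=0$ would force $s^{1/k}=1+s^2\geq 1$, hence $s=1$, but $g(1)=1$, a contradiction. Continuity and compactness then yield $g\geq c_0>0$ on $[0,1]$. A direct differentiation now gives
\begin{align}
f_1'(s) &= \frac{2s + (k^{-1}-2)\,s^{1+1/k}}{g(s)^2}, \\
f_2'(s) &= \frac{1 - (1-k^{-1})\,s^{1/k} - s^2}{g(s)^2}.
\end{align}

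To finish, I would invoke the elementary fact that $s\mapsto s^{1/k}$ lies in $C^{0,1/k}([0,1])$, since $|s_1^{1/k}-s_2^{1/k}|\leq |s_1-s_2|^{1/k}$ for $s_1,s_2\in[0,1]$, and therefore so do $s\mapsto s^{1+1/k}$, $s\mapsto s^2$, $s\mapsto g(s)$, etc. Since $C^{0,1/k}([0,1])$ is an algebra closed under division by functions that are bounded below by a positive constant, both $f_1'$ and $f_2'$ belong to $C^{0,1/k}([0,1])$; continuity of $f_j'$ at $s=0$ is read off directly from the explicit formulas, so $f_j\in C^1([0,1])$ as well, and hence $f_1,f_2\in C^{1,1/k}([0,1])$.

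The only subtle point, and thus the main (minor) obstacle, is that $g$ itself fails to be $C^1$ at $s=0$ when $k\geq 2$: differentiating term-by-term produces $g'(s)=-k^{-1}s^{1/k-1}+2s$, which blows up at the origin. This rules out a naive chain-rule argument transferring the smoothness of $h_j$ in $t$ through $t=\phi^{-1}(s)$. The resolution is the explicit computation of $f_j'$ above: the numerators arise from combinations in which the singular $s^{1/k-1}$ factor is multiplied by $s^2$ (for $f_1'$) or $s$ (for $f_2'$), producing a well-behaved $s^{1+1/k}$ or $s^{1/k}$ contribution. Once this cancellation is noted, the $C^{0,1/k}$-algebra argument finishes the proof without further effort.
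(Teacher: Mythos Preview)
Your proof is correct and follows essentially the same approach as the paper: the same explicit candidates $f_1(s)=s^2/(1-s^{1/k}+s^2)$ and $f_2(s)=s/(1-s^{1/k}+s^2)$ are produced via the substitution $s=(1+t^2)^{-k}$, and the $C^{1,1/k}$ regularity is verified by differentiating and using that $s\mapsto s^{1/k}$ lies in $C^{0,1/k}([0,1])$ together with the positive lower bound on the denominator. The only cosmetic difference is that the paper reduces to $f_2$ alone via $f_1(u)=u f_2(u)$, whereas you compute $f_1'$ directly; your added justification that $g(s)\geq c_0>0$ and the algebra property of $C^{0,1/k}$ are details the paper leaves implicit.
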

\begin{proof}
We set 
\begin{equation} 
f_1(u)=\frac{u^2}{1-u^{1/k}+u^2}, \quad f_2(u)=\frac{u}{1-u^{1/k}+u^2},\quad u\in[0,1].
\end{equation}  
A direct verification shows that 
\begin{equation} 
h_1(t)=f_1\Big(\big(1+t^2\big)^{-k}\Big),\quad h_2(t)=f_2\Big(\big(1+t^2\big)^{-k}\Big), \quad t\in\bbR.
\end{equation} 
Since 
\begin{equation} 
1-u^{1/k}+u^2>0, \quad u\in[0,1],
\end{equation} 
$f_1,f_2 \in C([0,1])$. By the fact $f_1(u)=uf_2(u), u\in[0,1]$, it suffices to show that $f_2\in C^{1,\frac1k}([0,1])$. 
One verifies that 
 \begin{align}
 f'_2(u)&=\frac{\big[1-u^{1/k}+u^2\big] - u\big[-\frac1ku^{1/k-1}+2u\big]}{\big[1-u^{1/k}+u^2\big]^2}     \no \\
 &=\frac{1-[1-(1/k)]u^{1/k}-u^2}{\big[1-u^{1/k}+u^2\big]^2}.
 \end{align}
 Clearly $f_j\in C([0,1])$, $j = 1,2$. Furthermore, since the map $u\mapsto u^{1/k}$ is of H\"older class 
 $C^{0,\frac1k}([0,1])$ and the map $u\mapsto \big[1-u^{1/k}+u^2\big]^{-2}$ is bounded on $[0,1]$, it follows that 
 $f_2'\in C^{0,\frac1k}([0,1])$, that is, $f_2\in C^{1, \frac1k}([0,1])$, as required.
\end{proof}

\begin{lemma}\lb{lem_function_g}
Assume Hypothesis \ref{hyp_even_ssf} with $m=2k$ for some $k \in \bbN$. Let $h_2$ be as in Lemma \ref{lem_good_functions} and introduce 
\begin{equation} 
g(t)=\frac{t}{\big(1+t^2\big)^{1/2}},\quad t\in\bbR.
\end{equation} 
Then, 
\begin{equation} 
[g(S)-g(S_0)] h_2(S_0)\in \cB_1(\cH).
\end{equation} 
\end{lemma}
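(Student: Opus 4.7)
My strategy is to use the integral representation
\begin{equation*}
g(t) = \frac{2}{\pi}\int_0^{\infty}\frac{t\, ds}{t^2 + 1 + s^2}, \qquad t \in \bbR,
\end{equation*}
which will translate the lemma into a uniform trace-norm estimate on a one-parameter family of resolvent differences indexed by $s$. Setting $a_s := (1+s^2)^{1/2} \geq 1$ and applying the second resolvent identity together with the spectral theorem, this representation will yield
\begin{equation*}
g(S) - g(S_0) = -\frac{2}{\pi}\,\Re\int_0^\infty (S-ia_s I_\cH)^{-1}(S-S_0)(S_0-ia_s I_\cH)^{-1}\, ds,
\end{equation*}
so the claim reduces to showing that the integrand multiplied on the right by $h_2(S_0)$ lies in $\cB_1(\cH)$ with an upper bound that is integrable in $s\in[0,\infty)$.

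Next I will extract from Hypothesis \ref{hyp_even_ssf} (with $m=2k$) the trace class factor $T := (S-S_0)\big(S_0^2+I_\cH\big)^{-k-\varepsilon} \in \cB_1(\cH)$ and rewrite
\begin{equation*}
(S-ia_sI_\cH)^{-1}(S-S_0)(S_0-ia_sI_\cH)^{-1}h_2(S_0) = (S-ia_sI_\cH)^{-1}\,T\,F_s(S_0),
\end{equation*}
where $F_s(t) := (1+t^2)^{k+\varepsilon}\, h_2(t)/(t-ia_s)$. The bound $\|(S-ia_sI_\cH)^{-1}\|_{\cB(\cH)} \leq a_s^{-1}$ is elementary, while $\|T\|_{\cB_1(\cH)}$ is provided by the hypothesis. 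The decisive estimate will be the uniform operator-norm bound
\begin{equation*}
\|F_s(S_0)\|_{\cB(\cH)} \leq C\, a_s^{2\varepsilon - 1}, \qquad s \geq 0,
\end{equation*}
since combining the three factors will then bound the $\cB_1(\cH)$-norm of the integrand by $C\,\|T\|_{\cB_1(\cH)}(1+s^2)^{\varepsilon - 1}$, an expression that is integrable on $[0,\infty)$ precisely when $\varepsilon < 1/2$.

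The main obstacle is establishing this last operator-norm bound. By the spectral theorem it reduces to estimating $\sup_{t\in\bbR}|F_s(t)|$, which I will attack in two steps. First, I will show directly from the definition of $h_2$ that
\begin{equation*}
(1+t^2)^{k+\varepsilon}\, h_2(t) \leq C(1+t^2)^{\varepsilon}, \qquad t \in \bbR,
\end{equation*}
by separating $|t|\leq 1$ (where everything is bounded) from $|t|\geq 1$ (where the denominator of $h_2$ satisfies $t^2(1+t^2)^{2k-1} \geq (1+t^2)^{2k}/2$). The problem then becomes bounding $\sup_{t\in\bbR}(1+t^2)^\varepsilon/(t^2+a_s^2)^{1/2}$, whose maximum (attained at $t^2 \sim a_s^2$) evaluates to $C\, a_s^{2\varepsilon - 1}$; it is precisely here that the constraint $\varepsilon < 1/2$ is essential, and this same constraint makes the $s$-integral converge. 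Assembling the three factors then produces an absolutely convergent $\cB_1(\cH)$-valued integral and yields the conclusion.
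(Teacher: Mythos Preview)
Your proof is correct and follows essentially the same route as the paper. Both arguments express $g(S)-g(S_0)$ as an integral of resolvent differences (your parametrization $s\mapsto a_s=(1+s^2)^{1/2}$ is the paper's $\theta=(1+\lambda)^{1/2}$ after the substitution $\lambda=s^2$), apply the resolvent identity, extract the trace-class factor $(S-S_0)(S_0^2+I_\cH)^{-k-\varepsilon}$ from the hypothesis, and then use the remaining bounded factors to produce an $s$-integrable $\cB_1$-bound that converges precisely because $\varepsilon<1/2$. The only cosmetic difference is that the paper first replaces $h_2(S_0)$ by $(S_0^2+I_\cH)^{-k}$ (via the elementary bound $h_2(t)=O((1+t^2)^{-k})$) before estimating, whereas you keep $h_2$ inside $F_s$ and bound it there; the resulting inequalities are identical.
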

\begin{proof} 
Since
\begin{equation} 
h_2(t)=\frac{\big(1+t^2\big)^k}{t^2\big(1+t^2\big)^{2k-1}+1}\underset{|t|\to\infty}{=} \Oh\Big(\big(1+t^2\big)^{-k}\Big), 
\end{equation} 
 it suffices to show that 
\begin{equation} 
[g(S)-g(S_0)] \big(S_0^2+I_\cH\big)^{-k}\in \cB_1(\cH).
\end{equation} 
By \cite[Lemma 3.1]{CGGLPSZ16b},  
\begin{equation} 
g(S)-g(S_0)=\frac1\pi \Re\bigg( \int_0^\infty \frac{d\lambda}{\lambda^{1/2}}\big[(S+i(\lambda+1)^{1/2}I_\cH)^{-1}-(S_0+i(\lambda+1)^{1/2}I_\cH)^{-1}\big]\bigg),
\end{equation} 
with a convergent Bochner integral in $\cB(\cH)$. The substitution $\theta=(1+\lambda)^{1/2}$ then yields 
\begin{equation} 
g(S)-g(S_0)=\frac1\pi \Re\bigg( \int_1^\infty \frac{\theta d\theta}{(\theta^2-1)^{1/2}}\big[(S+i\theta I_\cH)^{-1}-(S_0+i\theta I_\cH)^{-1}\big]\bigg).
\end{equation}
Therefore, it suffices to prove that 
\begin{equation} 
 \int_1^\infty \frac{\theta d\theta}{(\theta^2-1)^{1/2}}\big[(S\pm i\theta I_\cH)^{-1}-(S_0\pm i\theta I_\cH)^{-1}\big] 
 \big(S_0^2+I_\cH\big)^{-k},
 \end{equation} 
are convergent integrals in $\cB_1(\cH)$. 

The resolvent identity implies 
\begin{align}
& \int_1^\infty \frac{\theta d\theta}{(\theta^2-1)^{1/2}}\big[(S\pm i\theta I_\cH)^{-1}-(S_0\pm i\theta I_\cH)^{-1}\big]  \big(S_0^2+I_\cH\big)^{-k}  \no \\
& \quad  =-\int_1^\infty \frac{\theta d\theta}{(\theta^2-1)^{1/2}}(S\pm i\theta I_\cH)^{-1}(S-S_0)(S_0\pm i\theta I_\cH)^{-1}  \big(S_0^2+I_\cH\big)^{-k}.
\end{align}
Let $0<\varepsilon<1/2$ be as in Hypothesis \ref{hyp_even_ssf}, that is,  
\begin{equation} 
(S-S_0) \big(S_0^2+I_\cH\big)^{-k-\varepsilon}\in \cB_1(\cH).
\end{equation} 
One estimates  
\begin{align}
& \Big\|(S\pm i\theta I_\cH)^{-1}(S-S_0)(S_0\pm i\theta I_\cH)^{-1} \big(S_0^2+I_\cH\big)^{-k}\Big\|_{\cB_1(\cH)}  \no \\
& \quad \leq \big\|(S\pm i\theta I_\cH)^{-1}\big\|_{\cB(\cH)} 
\Big\|(S-S_0)\big(S_0^2+I_\cH\big)^{-k-\varepsilon}\Big\|_{\cB_1(\cH)} 
\Big\|\big(S_0^2+\theta^2I_\cH\big)^{-1/2+\varepsilon}\Big\|_{\cB(\cH)}   \no \\
& \quad \leq \theta^{-2+2\varepsilon} \Big\|(S-S_0)\big(S_0^2+I_\cH\big)^{-k-\varepsilon}\Big\|_{\cB_1(\cH)},
\end{align}
implying, 
\begin{align}
& \bigg\|\int_1^\infty \frac{\theta d\theta}{(\theta^2-1)^{1/2}}(S\pm i\theta I_\cH)^{-1}
(S-S_0)(S_0\pm i\theta I_\cH)^{-1} \big(S_0^2+I_\cH\big)^{-k}\bigg\|_{\cB_1(\cH)}   \no \\
& \quad \leq \int_1^\infty \frac{d\theta}{(\theta^2-1)^{1/2}\theta^{1-2\varepsilon}} 
\Big\|(S-S_0)\big(S_0^2+I_\cH\big)^{-k-\varepsilon}\Big\|_{\cB_1(\cH)}.
\end{align}
Since $\varepsilon< 1/2$, it follows that $\int_1^\infty d\theta (\theta^2-1)^{-1/2}\theta^{2\varepsilon - 1}<\infty$ and thus the integral 
\begin{equation} 
\int_1^\infty \frac{\theta d\theta}{(\theta^2-1)^{1/2}}(S\pm i\theta I_\cH)^{-1}(S-S_0)(S_0\pm i\theta I_\cH)^{-1} \big(S_0^2+I_\cH\big)^{-k}
\end{equation} 
converges in $\cB_1(\cH)$. 
\end{proof}

\begin{lemma}\lb{lem_even_phi_trace_class}
 Assume Hypothesis \ref{hyp_even_ssf} with $m = 2k$ for some $k \in \bbN$. For the function $\phi$ introduced in \eqref{def_even_phi} one concludes that 
\begin{equation} 
\big[(\phi(S)+iI_\cH)^{-1}-(\phi(S_0)+iI_\cH)^{-1}\big] \in \cB_1(\cH).
 \end{equation} 
\end{lemma}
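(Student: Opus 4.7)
The plan is to algebraically decompose the resolvent $(\phi(t)+i)^{-1}$ into pieces whose differences are already known to be trace class from the preceding lemmas, and then assemble the result. A direct computation gives
\[
(\phi(t)+i)^{-1}
= \frac{\phi(t)-i}{\phi(t)^2+1}
= \frac{t(1+t^2)^{k-1/2}}{t^2(1+t^2)^{2k-1}+1}
- \frac{i}{t^2(1+t^2)^{2k-1}+1}
= g(t)h_2(t) - i h_1(t),
\]
where $g$, $h_1$, $h_2$ are the functions from Lemmas \ref{lem_good_functions} and \ref{lem_function_g}. Applying the spectral theorem, it therefore suffices to prove
\[
[h_1(S)-h_1(S_0)] \in \cB_1(\cH)
\quad \text{and} \quad
[g(S)h_2(S) - g(S_0)h_2(S_0)] \in \cB_1(\cH).
\]

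For the first inclusion, I would invoke Lemma \ref{lem_good_functions}, which supplies $f_1 \in C^{1,1/k}([0,1])$ with $h_1(t)=f_1((1+t^2)^{-k})$. Since both $(S^2+I_\cH)^{-k}$ and $(S_0^2+I_\cH)^{-k}$ are self-adjoint with spectrum contained in $[0,1]$, and their difference lies in $\cB_1(\cH)$ by Lemma \ref{lem_even_square}, Theorem \ref{thm_PS} applies directly and yields $h_1(S)-h_1(S_0)=f_1((S^2+I_\cH)^{-k})-f_1((S_0^2+I_\cH)^{-k}) \in \cB_1(\cH)$. The very same reasoning, using $f_2 \in C^{1,1/k}([0,1])$ with $h_2(t)=f_2((1+t^2)^{-k})$, shows
\[
[h_2(S)-h_2(S_0)] \in \cB_1(\cH).
\]

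For the second inclusion, I would split
\[
g(S)h_2(S) - g(S_0)h_2(S_0)
= g(S)\bigl[h_2(S)-h_2(S_0)\bigr] + \bigl[g(S)-g(S_0)\bigr]h_2(S_0).
\]
The first summand is the product of the bounded operator $g(S)$ (note $|g(t)|\le 1$) and the trace-class operator $h_2(S)-h_2(S_0)$, hence lies in $\cB_1(\cH)$. The second summand is trace class by Lemma \ref{lem_function_g}, which was tailored precisely for this purpose. Combining the two inclusions with the decomposition $(\phi(S)+iI_\cH)^{-1}-(\phi(S_0)+iI_\cH)^{-1} = [g(S)h_2(S)-g(S_0)h_2(S_0)] - i[h_1(S)-h_1(S_0)]$ finishes the proof.

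The main obstacle is not any individual step but, rather, recognizing the correct algebraic decomposition of $(\phi(t)+i)^{-1}$ into a linear combination of $g\cdot h_2$ and $h_1$; once this is in hand, all ingredients have been prepared in Lemmas \ref{lem_even_square}, \ref{lem_good_functions}, and \ref{lem_function_g}, and the H\"older-class perturbation result of Theorem \ref{thm_PS} does the remaining heavy lifting. A subtle point is the asymmetric split in the product term: one must group the factors so that the ``bad'' difference $g(S)-g(S_0)$ is paired with $h_2(S_0)$ (not $h_2(S)$), because Lemma \ref{lem_function_g} is formulated with the operator $S_0$ on the right.
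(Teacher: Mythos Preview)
Your proof is correct and follows essentially the same approach as the paper: the identical algebraic decomposition $(\phi(t)+i)^{-1}=g(t)h_2(t)-ih_1(t)$, the same splitting of the product term, and the same appeals to Lemmas \ref{lem_even_square}, \ref{lem_good_functions}, \ref{lem_function_g} together with Theorem \ref{thm_PS}. (In fact, the paper's printed version contains a harmless $h_1/h_2$ label swap in the displayed decomposition, which your write-up avoids.)
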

\begin{proof} 
One writes  
\begin{align}
[\phi(t)+i]^{-1} &= \Big[t\big(1+t^2\big)^{k-\frac12}+i\Big]^{-1}   \no \\
& =\frac{t\big(1+t^2\big)^{k-\frac12}}{t^2\big(1+t^2\big)^{2k-1}+1}-i\frac1{t^2\big(1+t^2\big)^{2k-1}+1}   \no \\
& =\frac{t}{\big(t^2+1\big)^{1/2}} \, \frac{\big(1+t^2\big)^{k}}{t^2\big(1+t^2\big)^{2k-1}+1}-i\frac1{t^2\big(1+t^2\big)^{2k-1}+1},
\end{align}
that is,  
\begin{equation} 
[\phi(t)+i]^{-1}=g(t)h_1(t)-ih_2(t),\quad t\in\bbR,
\end{equation} 
where $h_1,h_2$ are introduced in Lemma \ref{lem_good_functions} and $g$ in Lemma \ref{lem_function_g}.
Therefore,
\begin{align}
& \big[(\phi(S)+iI_\cH)^{-1} - (\phi(S_0)+iI_\cH)^{-1}\big]    \no \\
& \quad =g(S)h_1(S)-g(S_0)h_1(S_0)-i[h_2(S)-h_2(S_0)]   \no \\
& \quad = [g(S)-g(S_0)] h_1(S_0)+ g(S)[h_1(S)-h_1(S_0)] -i [h_2(S)-h_2(S_0)].
\end{align}
and by Lemma \ref{lem_function_g} one concludes that 
\begin{equation} 
[g(S)-g(S_0)] h_1(S_0)\in\cB_1(\cH).   
\end{equation} 
Thus, Lemma \ref{lem_good_functions} implies 
\begin{equation} 
h_j(S)-h_j(S_0)=f_j\Big(\big(S^2+I_\cH\big)^{-k}\Big)-f_j\Big(\big(S_0^2+I_\cH\big)^{-k}\Big), \quad j=1,2,
\end{equation} 
with $f_j\in C^{1,\frac1k}([0,1])$, $j =1,2$. Lemma \ref{lem_even_square} then yields 
\begin{equation} 
\Big[\big(S^2+I_\cH\big)^{-k} - \big(S_0^2+I_\cH\big)^{-k}\Big] \in\cB_1(\cH).
\end{equation} 
Thus, by Theorem \ref{thm_PS} one obtains  
\begin{equation} 
\Big[f_j\Big(\big(S^2+I_\cH\big)^{-k}\Big) - f_j\Big(\big(S_0^2+I_\cH\big)^{-k}\Big)\Big] \in \cB_1(\cH), \quad j=1,2,
\end{equation} 
and hence,
\begin{equation} 
\big[(\phi(S)+iI_\cH)^{-1} - (\phi(S_0)+iI_\cH)^{-1}\big] \in \cB_1(\cH),
 \end{equation} 
as required.
\end{proof}

The following theorem improves the integrability condition in \eqref{8.8} for even $m\in\bbN$.  

\begin{theorem}\lb{t8.2}
Assume Hypothesis \ref{hyp_even_ssf} with $m=2k$ for some $k \in \bbN$. For any $f\in \gF_m(\bbR)$ one has 
\begin{equation}
[f(S) - f(S_0)] \in \cB_1(\cH),
\end{equation}
and there exists a function
\begin{equation} \lb{eq_ssf_even_integrability}
\xi(\,\cdot\,;S,S_0)\in L^1\big(\bbR; (1+|\lambda|)^{-m-1}\, d\lambda\big)
\end{equation}
such that the following trace formula holds, 
\begin{equation}\lb{eq_trace_ssf_even}
\tr_{\cH}(f(S) - f(S_0)) = \int_{\bbR}d\lambda\, \xi(\lambda;S,S_0)f'(\lambda),\quad f\in \gF_m(\bbR).
\end{equation}
In particular, one has 
\begin{equation} 
\big[(S-zI_{\cH})^{-m} - (S_0-zI_{\cH})^{-m}\big] \in\cB_1(\cH), \quad z \in \bbC \backslash \bbR,  \lb{8.11A}
\end{equation} 
and 
\begin{equation} \lb{8.11}
\tr_{\cH}\big((S-zI_{\cH})^{-m} - (S_0-zI_{\cH})^{-m}\big) = -m \int_{\bbR} \frac{\xi(\lambda;S,S_0) d\lambda}{(\lambda - z)^{m+1}},   \quad z\in \bbC\backslash \bbR.
\end{equation}
\end{theorem}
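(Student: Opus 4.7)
The plan is to apply the invariance principle for the spectral shift function together with the classical Krein theorem to the auxiliary pair $(\phi(S), \phi(S_0))$, where $\phi(t) = t(1+t^2)^{k-(1/2)}$ as in \eqref{def_even_phi}. The key input from the preceding lemmas, specifically Lemma \ref{lem_even_phi_trace_class}, is that
\[
\big[(\phi(S) + iI_{\cH})^{-1} - (\phi(S_0) + iI_{\cH})^{-1}\big] \in \cB_1(\cH),
\]
which is precisely Krein's trace-class resolvent hypothesis for the pair $(\phi(S), \phi(S_0))$. Applying Krein's theorem to this pair yields a spectral shift function $\eta := \xi(\,\cdot\,; \phi(S), \phi(S_0)) \in L^1(\bbR; (1+|\mu|)^{-2} d\mu)$ together with the classical trace formula $\tr_{\cH}(g(\phi(S)) - g(\phi(S_0))) = \int_\bbR d\mu\, \eta(\mu) g'(\mu)$ for all $g \in \gF_1(\bbR)$.

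Next, given $f \in \gF_m(\bbR)$ with $m = 2k$, I would set $g := f \circ \phi^{-1}$ and verify that $g \in \gF_1(\bbR)$. Since $\phi$ is a $C^\infty$ strictly monotone bijection of $\bbR$ (recall $\phi'(t) = (1+t^2)^{(m-3)/2}(1+mt^2) \geq 1$) with $\phi(\lambda) \sim \sgn(\lambda)|\lambda|^m$ and $\phi'(\lambda) \sim m|\lambda|^{m-1}$ as $|\lambda| \to \infty$, the inverse satisfies $\phi^{-1}(\mu) \sim \sgn(\mu)|\mu|^{1/m}$. A direct chain-rule computation then shows that the asymptotic expansion $\frac{d^\ell}{d\lambda^\ell}[f(\lambda) - f_0 \lambda^{-m}] = O(|\lambda|^{-\ell - m - \varepsilon})$ translates into $\frac{d^\ell}{d\mu^\ell}[g(\mu) - f_0 \mu^{-1}] = O(|\mu|^{-\ell - 1 - \varepsilon/m})$ for $\ell = 0, 1, 2$, so indeed $g \in \gF_1(\bbR)$. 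Since $g(\phi(S)) = f(S)$ and $g(\phi(S_0)) = f(S_0)$ by the spectral theorem, one obtains $[f(S) - f(S_0)] \in \cB_1(\cH)$ together with the trace formula in the $\mu$-variable.

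The third step is to define $\xi(\lambda; S, S_0) := \eta(\phi(\lambda))$ for a.e.\ $\lambda \in \bbR$ (the invariance principle) and perform the substitution $\mu = \phi(\lambda)$ in the above trace formula. Since $g'(\phi(\lambda)) = f'(\lambda)/\phi'(\lambda)$, one obtains
\[
\tr_{\cH}(f(S) - f(S_0)) = \int_\bbR d\mu\, \eta(\mu) g'(\mu) = \int_\bbR d\lambda\, \xi(\lambda; S, S_0) f'(\lambda), \quad f \in \gF_m(\bbR),
\]
giving \eqref{eq_trace_ssf_even}. The integrability \eqref{eq_ssf_even_integrability} follows from the same substitution together with $\eta \in L^1(\bbR; (1+|\mu|)^{-2} d\mu)$: for large $|\lambda|$ one has $(1+|\phi(\lambda)|)^{-2}\phi'(\lambda) \sim m(1+|\lambda|)^{-m-1}$, so $\int_\bbR d\lambda\, |\xi(\lambda; S, S_0)|(1+|\lambda|)^{-m-1} < \infty$.

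Finally, to deduce \eqref{8.11A} and \eqref{8.11}, I would apply the result to $f_z(\lambda) := (\lambda - z)^{-m}$ for $z \in \bbC \backslash \bbR$. One readily checks that $f_z \in \gF_m(\bbR)$ with $f_0 = 1$ (the leading correction $(\lambda - z)^{-m} - \lambda^{-m}$ is $O(|\lambda|^{-m-1})$ with matching derivative bounds), which yields the trace-class inclusion \eqref{8.11A} and, upon substituting $f_z'(\lambda) = -m(\lambda - z)^{-m-1}$ into the general trace formula \eqref{eq_trace_ssf_even}, the identity \eqref{8.11}. The main obstacle in this scheme is the careful verification that $g = f \circ \phi^{-1} \in \gF_1(\bbR)$, particularly the second-derivative asymptotics, since the chain rule applied to $g''(\mu) = [f''(\phi^{-1}(\mu))(\phi^{-1})'(\mu)^2 + f'(\phi^{-1}(\mu))(\phi^{-1})''(\mu)]$ produces several terms whose cancellations and decay must be tracked uniformly; this is routine but requires patience to extract the correct exponent $\varepsilon/m$ in the remainder.
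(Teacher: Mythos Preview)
Your proposal is correct and follows essentially the same route as the paper: invoke Lemma~\ref{lem_even_phi_trace_class} to apply Krein's classical theorem to the pair $(\phi(S),\phi(S_0))$, define $\xi(\lambda;S,S_0)=\xi(\phi(\lambda);\phi(S),\phi(S_0))$ via the invariance principle, and recover the trace formula and the weighted integrability by the substitution $\mu=\phi(\lambda)$. The paper simply asserts $f\circ\phi^{-1}\in\gF_1(\bbR)$ without details, whereas you sketch the chain-rule verification; your remainder exponent $\varepsilon/m$ is the right one.
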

\begin{proof}
%
Let $\phi$ be as in \eqref{def_even_phi}.
Then Lemma \ref{lem_even_phi_trace_class} implies that 
\begin{equation} 
\big[(\phi(S)+iI_\cH)^{-1}-(\phi(S_0)+iI_\cH)^{-1}\big] \in \cB_1(\cH), 
 \end{equation} 
and hence there exists the spectral shift function 
\begin{equation} 
\xi(\,\cdot\,;\phi(S),\phi(S_0))\in L^1\big([0,1]; (1+|\lambda|)^{-2}d\lambda\big)
\end{equation} 
 for the pair $(\phi(S),\phi(S_0))$.
 Since  
 \begin{equation} 
 \phi'(t)= \big(1+t^2\big)^{\frac{m-3}{2}} \big(1+mt^2\big)\geq 1>0, 
 \end{equation} 
 it follows that $\phi$ is strictly monotone increasing on $\bbR$. Hence, we introduce the spectral shift function 
 $\xi(\,\cdot\,;S,S_0)$ by setting 
 \begin{equation} 
 \xi(\lambda;S,S_0)=\xi(\phi(\lambda); \phi(S),\phi(S_0)) \, \text{ for a.e.~$\lambda\in\bbR$.} 
 \end{equation} 
  Since $\xi(\,\cdot\,;\phi(S),\phi(S_0))\in L^1\big([0,1]; (1+|\lambda|)^{-2}d\lambda\big)$, the definition of $\phi$ implies that  
 \begin{equation} 
 \xi(\,\cdot\,;S,S_0)\in L^1\big(\bbR; (1+|\lambda|)^{-m-1}\, d\lambda\big).
 \end{equation} 
 Next, let $f\in \gF_m(\bbR)$. Then $f\circ \phi^{-1}\in\gF_1(\bbR)$, and hence \eqref{eq_classical_tace_formula} 
 implies  
 \begin{align}
 \tr_{\cH}\big(f(S)-f(S_0)\big)&=\tr_\cH\big( (f\circ \phi^{-1})(\phi(S))-(f\circ \phi^{-1})(\phi(S_0))\big)   \no \\
 &=\int_0^1\xi(\mu; \phi(S),\phi(S_0)) f'(\phi^{-1}(\mu)) \frac{d\mu}{\phi'(\phi^{-1}(\mu))}   \no \\
 &=\int_\bbR \xi(\phi(\lambda); \phi(S),\phi(S_0)) f'(\lambda)d\lambda   \no \\
 &=\int_\bbR d\lambda \, \xi(\lambda; S, S_0) f'(\lambda),
 \end{align}
 proving \eqref{eq_trace_ssf_even}.
 
 Since for any $z\in\bbC \backslash \bbR$ the map $\lambda\mapsto (\lambda-z)^{-m}$, $\lambda\in\bbR$, belongs to the class $\gF_m(\bbR)$, the trace formula \eqref{8.11} is a particular case of formula \eqref{eq_trace_ssf_even}.
\end{proof}

\section{Representing $\xi(\, \cdot \,;S,S_0)$ in Terms of Regularized Fredholm Determinants}  \lb{s8a}

In this section we establish the representation of $\xi(\, \cdot \,;S,S_0)$ in terms of regularized Fredholm determinants. 

\begin{hypothesis}\lb{h8.3}
Let $S_0$ and $S$ be self-adjoint operators in $\cH$ with $(S-S_0) \in \cB(\cH)$. \\[1mm]
$(i)$ If $m \in \bbN$ is odd, assume \eqref{8.6}, that is,
\begin{equation} \lb{9.1A}
\big[(S - zI_{\cH})^{-m} - (S_0 - zI_{\cH})^{-m}\big] \in \cB_1(\cH),\quad z\in \bbC\backslash \bbR.
\end{equation} 
and 
\begin{equation} \lb{8.12}
(S-S_0)(S_0 - zI_{\cH})^{-j}\in \cB_{(m+1)/j}(\cH),\quad j\in \bbN,\; 1\leq j\leq m+1. 
\end{equation}
$(ii)$ If $m \in \bbN$ is even, assume the remaining conditions in Hypothesis \ref{hyp_even_ssf}, that is, 
for some $0<\varepsilon<1/2$, 
\begin{equation} \lb{hyp_even_epsilonA}
(S-S_0) \big(S_0^2+I_\cH\big)^{- (m/2) - \varepsilon}\in \cB_1(\cH).
\end{equation}
\end{hypothesis}

By Remark \ref{r8.4}\,$(i)$, \eqref{8.12} holds for odd and even $m$. 

\begin{remark}
In the applications to multidimensional Dirac operators to be considered in the sequel, the number $m$ in \eqref{8.12} is the dimension of the underlying Euclidean space $\bbR^n$, that is, $m = n$, $n \in \bbN$, $n \geq 2$, 
as detailed in Lemma \ref{l7.2}. \hfill $\diamond$
\end{remark}

By Theorem \ref{t8.6}, Hypothesis \ref{h8.3} implies 
\begin{equation}
\big[(S-zI_{\cH})^{-m} - (S_0-zI_{\cH})^{-m} \big]\in \cB_1(\cH),\quad z\in \bbC\backslash\bbR,  \lb{9.8AA} 
\end{equation}
for odd and even $m$. 

By Theorem \ref{t8.2} the spectral shift function $\xi(\,\cdot\,;S,S_0)$ exists and \eqref{8.8} and \eqref{8.11} hold.  The main aim of the present section is to obtain an almost everywhere representation for $\xi(\,\cdot\,;S,S_0)$ in terms of the regularized perturbation determinants of the operators $(S-S_0)(S_0-zI_{\cH})^{-1}$, $z\in \bbC\backslash \bbR$.

To set the stage, we begin by recalling some basic definitions and results pertaining to regularized determinants to be used in the sequel.  For detailed discussions of regularized determinants, we refer to \cite[Chapters 3, 5, and 9]{Si05} and \cite[\S1.7]{Ya92}.

Let $\{h_n\}_{n=1}^{\infty}$ denote an orthonormal basis for $\cH$ and suppose $A\in\cB_1(\cH)$.  For each 
$N \in \bbN$, let $M_N \in \bbC^{N \times N}$ denote the matrix with entries
\begin{equation}
\delta_{j,k} + (h_j,Ah_k)_{\cH},\quad 1\leq j,k\leq N.
\end{equation}
The sequence $\{\Det_{\bbC^{N\times N}}(M_N)\}_{N=1}^{\infty}$ has a limit as $N \to \infty$, and its value does not depend on the orthonormal basis chosen.  One defines the Fredholm determinant
\begin{equation}
\Det_{\cH}(I_{\cH}+A):=\lim_{N \to\infty}\Det_{\bbC^{N \times N}}(M_N).
\end{equation}
The Fredholm determinant is continuous with respect to $\|\,\cdot\,\|_{\cB_1(\cH)}$.  That is, if $\{A_n\}_{n=1}^{\infty}\subset \cB_1(\cH)$ and $\lim_{n\to \infty} \|A_n - A\|_{\cB_1(\cH)}=0$, then
\begin{equation} \lb{8.22a}
\lim_{n\to \infty} \Det_{\cH}(I_{\cH} + A_n) = \Det_{\cH}(I_{\cH} + A).
\end{equation}
In fact, the Fredholm determinant is Fr\'echet differentiable with respect to $A$ (cf., e.g., \cite[Theorem 5.2]{Si05}).
Moreover, if $\Omega \subseteq \bbC$ is an open set and $A:\Omega \to \cB_1(\cH)$ is analytic, then the function $\Det_{\cH}(I_{\cH} + A(\,\cdot\,))$ is analytic in $\Omega$, and
\begin{equation}
\frac{d}{dz} \log\big(\Det_{\cH}(I_{\cH}+A(z))\big) = \tr_{\cH}\big((I_{\cH}+A(z))^{-1}A'(z)\big).
\end{equation}

The definition of the Fredholm determinant given in \eqref{8.22a} is generally not meaningful if $A\in \cB_p(\cH)$ with $p\in \bbN\backslash \{1\}$. To give meaning to the determinant in this case, suitable modifications are needed.  For $p\in \bbN\backslash \{1\}$, one introduces the function $R_p:\cB_p(\cH)\to \cB_1(\cH)$ by
\begin{equation}\lb{9.15a}
R_p(A) = (I_{\cH} +A)e^{\sum_{j=1}^{p-1}(-1)^jj^{-1}A^j} - I_{\cH},\quad A\in \cB_p(\cH).
\end{equation}
Then the regularized (or modified) Fredholm determinant is defined by
\begin{equation}\lb{9.16a}
\Det_{\cH,p}(I_{\cH}+A) = \Det_{\cH}(I_{\cH}+ R_p(A)),\quad A\in \cB_p(\cH),\quad p\in \bbN\backslash\{1\}.
\end{equation}
The Fredholm determinant $\det_{\cH,p} (\, \cdot \,)$ retains many of the properties of the ordinary Fredholm determinant.  For example, $\Det_{\cH,p}(I_{\cH}+A)$ is continuous with respect to $A\in \cB_p(\cH)$: if $A\in \cB_p(\cH)$, $\{A_n\}_{n=1}^{\infty}\subset \cB_p(\cH)$, and $\lim_{n\to \infty}\|A_n - A\|_{\cB_p(\cH)}=0$, then
\begin{equation}
\lim_{n\to \infty} \Det_{\cH,p}(I_{\cH}+A_n) = \Det_{\cH,p}(I_{\cH}+A).
\end{equation}
In addition, if $\Omega\subseteq\bbC$ is open and $A:\Omega\to \cB_p(\cH)$ is analytic in $\Omega$, then the function $\Det_{\cH,p}(I_{\cH}+A(\,\cdot\,))$ is analytic in $\Omega$ and
\begin{equation} \lb{8.27a}
\frac{d}{dz}\log\big(\Det_{\cH,p}(I_{\cH}+A(z))\big) = (-1)^{p-1}\tr_{\cH}\big((I_{\cH}+A(z))^{-1} A^{p-1}(z)A'(z) \big).
\end{equation}

The importance of the regularized determinant stems from the fact that for $A\in \cB_p(\cH)$, the operator $I_{\cH}+A$ is boundedly invertible (i.e., $-1\in \rho(A)$) if and only if $\Det_{\cH,p}(I_{\cH}+A)\neq 0$ (cf., e.g., \cite[Theorem 9.2]{Si05}).  Equivalently, $-1\in \sigma(A)$ if and only if $\Det_{\cH,p}(I_{\cH}+A)=0$.

Finally, we note the cyclicity property of the regularized determinant:  if $A,B\in \cB(\cH)$ are such that $AB,BA\in \cB_p(\cH)$, then
\begin{equation} \lb{det_product}
\Det_{\cH,p}(I_{\cH}+AB) = \Det_{\cH,p}(I_{\cH}+BA).
\end{equation}
In particular, if $A\in \cB_p(\cH)$ and $B\in \cB(\cH)$, then \eqref{det_product} holds. Similarly, if $A,B \in \cB(\cH)$ are such that $AB,BA\in \cB_1(\cH)$, then
\begin{equation} \lb{trace_product}
\tr_{\cH}(AB) = \tr_{\cH}(BA).
\end{equation}

With these preliminaries in hand, we start with introducing the regularized determinant associated with the (non-symmetrized) Birman--Schwinger-type operator 
\begin{equation} 
B(z) := (S-S_0)(S_0-zI_{\cH})^{-1}, \quad z\in\bbC\backslash\bbR.    \lb{8.B(z)} 
\end{equation}

\begin{lemma}\lb{l8.7}
Assume Hypothesis \ref{h8.3}. The map 
\begin{equation} \lb{8.19}
B(z)=(S-S_0)(S_0-zI_{\cH})^{-1},\quad z\in \bbC\backslash \bbR,
\end{equation}
is a $\cB_{m+1}(\cH)$-valued analytic function. 
\end{lemma}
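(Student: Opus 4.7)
The plan is to verify the two assertions separately: first, Schatten-class membership at each fixed $z$, and second, analyticity as a $\cB_{m+1}(\cH)$-valued function. Both are direct consequences of Hypothesis \ref{h8.3} together with the first resolvent identity.

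For pointwise membership, I fix $z\in\bbC\backslash\bbR$ and invoke \eqref{8.12} with $j=1$. In the odd case this is explicit in Hypothesis \ref{h8.3}$(i)$; in the even case it follows from Remark \ref{r8.4}$(i)$, which uses the three-line theorem to interpolate \eqref{hyp_even_epsilonA} against the bound $(S-S_0)\in\cB(\cH)$ and extract $(S-S_0)(S_0-zI_{\cH})^{-j}\in\cB_{(m+1)/j}(\cH)$ for every $j\in\{1,\dots,m+1\}$. In particular $B(z)=(S-S_0)(S_0-zI_{\cH})^{-1}\in\cB_{m+1}(\cH)$.

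For analyticity, I would show that $B$ is norm-differentiable on $\bbC\backslash\bbR$ with derivative $B'(z_0)=(S-S_0)(S_0-z_0 I_{\cH})^{-2}$. Applying the first resolvent identity twice gives
\begin{equation*}
\frac{B(z)-B(z_0)}{z-z_0}-(S-S_0)(S_0-z_0I_{\cH})^{-2}=(z-z_0)\,(S-S_0)(S_0-zI_{\cH})^{-1}(S_0-z_0I_{\cH})^{-2}.
\end{equation*}
The $\cB_{m+1}(\cH)$-norm of the right-hand side is bounded by
$|z-z_0|\,\|(S-S_0)(S_0-zI_{\cH})^{-1}\|_{\cB_{m+1}(\cH)}\,\|(S_0-z_0I_{\cH})^{-2}\|_{\cB(\cH)}$,
and the first factor stays uniformly bounded as $z$ varies over a compact neighborhood of $z_0$ in $\bbC\backslash\bbR$ (a local-uniformity statement that can itself be read off the same three-line interpolation, since $\|(S-S_0)(S_0-zI_{\cH})^{-m-1}\|_{\cB_1(\cH)}$ and $\|S-S_0\|_{\cB(\cH)}$ are locally bounded in $z$). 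Hence the difference quotient converges to $(S-S_0)(S_0-z_0I_{\cH})^{-2}$ in $\cB_{m+1}(\cH)$-norm at rate $\Oh(|z-z_0|)$, yielding analyticity.

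I do not anticipate a genuine obstacle; the only point worth flagging is the availability of \eqref{8.12} in the even case, which is precisely what Remark \ref{r8.4}$(i)$ supplies via interpolation of \eqref{hyp_even_epsilonA}.
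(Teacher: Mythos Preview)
Your argument is correct. The paper takes a closely related but formally different route: instead of computing the difference quotient directly, it expands the resolvent in a Neumann series about $z_0$,
\[
(S_0-zI_{\cH})^{-1}=\sum_{k\geq 0}(S_0-z_0I_{\cH})^{-k-1}(z-z_0)^k,
\]
multiplies through by $(S-S_0)$, and then checks that the resulting power series for $B(z)$ converges in $\cB_{m+1}(\cH)$-norm by the estimate
\[
\big\|(S-S_0)(S_0-z_0I_{\cH})^{-k-1}\big\|_{\cB_{m+1}(\cH)}\leq \big\|B(z_0)\big\|_{\cB_{m+1}(\cH)}\,|\Im(z_0)|^{-k}.
\]
This is the power-series version of your difference-quotient computation; the two are essentially equivalent and rest on the same input \eqref{8.12}. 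One minor simplification in your argument: to get local boundedness of $\|B(z)\|_{\cB_{m+1}(\cH)}$ you do not need to revisit the interpolation. Writing $B(z)=B(z_0)\,(S_0-z_0I_{\cH})(S_0-zI_{\cH})^{-1}$ via the resolvent identity already shows $\|B(z)\|_{\cB_{m+1}(\cH)}\leq \|B(z_0)\|_{\cB_{m+1}(\cH)}\,\|(S_0-z_0I_{\cH})(S_0-zI_{\cH})^{-1}\|_{\cB(\cH)}$, which is uniformly bounded for $z$ near $z_0$.
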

\begin{proof}
Let $z_0\in \bbC\backslash \bbR$ be fixed.  Then (see e.g. \cite[Theorem 5.14]{We80}) 
\begin{equation}
(S_0-zI_{\cH})^{-1}=\sum_{k=0}^{\infty}(S_0-z_0I_{\cH})^{-k-1}(z-z_0)^k,
\end{equation}
where the series converges with respect to the $\cB(\cH)$-norm for all $z\in\bbC\backslash\bbR$ such that $|z-z_0|<\|(S_0-z_0I_{\cH})^{-1}\|_{\cB(\cH)}^{-1}\leq |\Im(z_0)|^{-1}$.
Therefore,
\begin{equation}
(S-S_0)(S_0-zI_{\cH})^{-1}=\sum_{k=0}^{\infty}(S-S_0)(S_0-z_0I_{\cH})^{-k-1}(z-z_0)^k.
\end{equation}
We claim that the latter series converges in the ball $\{z\in \bbC\,|\,|z-z_0|<|\Im(z_0)|\}$ in the $\cB_{m+1}(\cH)$ norm. By \eqref{8.12}, the operator $B(z) = (S-S_0)(S_0-z_0I_{\cH})^{-1}\in \cB_{m+1}(\cH)$, so that
\begin{align}
&\big\|(S-S_0)(S_0-z_0I_{\cH})^{-k-1}\big\|_{\cB_{m+1}(\cH)}\\
&\quad\leq \big\|(S-S_0)(S_0-z_0I_{\cH})^{-1}\big\|_{\cB_{m+1}(\cH)} \big\|(S_0-z_0I_{\cH})^{-k}\big\|_{\cB(\cH)}\no\\
&\quad\leq \big\|(S-S_0)(S_0-z_0I_{\cH})^{-1}\big\|_{\cB_{m+1}(\cH)} |\Im(z_0)|^{-k}.\no
\end{align}
This proves the convergence.
\end{proof}

\begin{lemma}\lb{l8.8}
Assume Hypothesis \ref{h8.3}. The function
\begin{equation} \lb{8.23}
F_{S,S_0}(z) := \ln\big(\Det_{\cH,m+1}\big((S-zI_{\cH})(S_0-zI_{\cH})^{-1}\big)\big),\quad z\in \bbC\backslash \bbR,
\end{equation}
is well-defined, in fact, analytic in $\bbC \backslash \bbR$.
\end{lemma}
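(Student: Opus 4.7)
The plan is to reduce the statement to a standard fact about regularized Fredholm determinants applied to the analytic family already provided by Lemma \ref{l8.7}, and then handle the (mild) issue of choosing an analytic branch of the logarithm.

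First, I would rewrite the argument of the determinant as a perturbation of the identity: for $z \in \bbC \backslash \bbR \subset \rho(S_0)$, one has
\begin{equation*}
(S - z I_{\cH})(S_0 - z I_{\cH})^{-1} = I_{\cH} + (S - S_0)(S_0 - z I_{\cH})^{-1} = I_{\cH} + B(z),
\end{equation*}
with $B(z)$ as in \eqref{8.B(z)}. By Lemma \ref{l8.7} the map $z \mapsto B(z)$ is a $\cB_{m+1}(\cH)$-valued analytic function on $\bbC \backslash \bbR$, and hence by the general properties of $\Det_{\cH,m+1}(\dott)$ recalled in \eqref{9.15a}--\eqref{8.27a}, the scalar function
\begin{equation*}
\cD(z) := \Det_{\cH,m+1}(I_{\cH} + B(z)), \quad z \in \bbC \backslash \bbR,
\end{equation*}
is analytic on $\bbC \backslash \bbR$.

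Next, I would show that $\cD(z) \neq 0$ on $\bbC \backslash \bbR$, which is the key fact needed to make sense of the logarithm. Since both $S_0$ and $S$ are self-adjoint in $\cH$, one has $\bbC \backslash \bbR \subset \rho(S_0) \cap \rho(S)$, so the operator $(S - z I_{\cH})(S_0 - z I_{\cH})^{-1} = I_{\cH} + B(z)$ admits the bounded inverse $(S_0 - z I_{\cH})(S - z I_{\cH})^{-1}$. Equivalently, $-1 \notin \sigma(B(z))$. Invoking the characterization $\Det_{\cH,m+1}(I_{\cH} + A) \neq 0 \iff -1 \in \rho(A)$ for $A \in \cB_{m+1}(\cH)$ (cf.\ \cite[Theorem~9.2]{Si05}), one concludes $\cD(z) \neq 0$ for every $z \in \bbC \backslash \bbR$.

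Finally, I would address the branch-of-logarithm issue. The open half-planes $\bbC_+$ and $\bbC_-$ are each simply connected, and $\cD$ is analytic and nowhere vanishing on each of them; therefore on each half-plane there exists an analytic branch of $\log \cD(\dott)$, unique up to an additive constant in $2\pi i \bbZ$. Choosing any such branch on $\bbC_+$ and on $\bbC_-$ defines an analytic function
\begin{equation*}
F_{S,S_0}(z) = \ln \cD(z), \quad z \in \bbC \backslash \bbR,
\end{equation*}
as claimed. The only mildly delicate point is this global single-valuedness obstruction; this will be pinned down unambiguously later (in the explicit formula \eqref{1.36} for $G_{S,S_0}$ and in the normalization implicit in \eqref{1.37}), where a natural choice of branch is fixed by the boundary behavior as $|\Im(z)| \to \infty$. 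For the present statement, however, existence of an analytic branch on each connected component of $\bbC \backslash \bbR$ suffices.
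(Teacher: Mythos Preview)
Your proof is correct and follows essentially the same route as the paper: rewrite the operator as $I_{\cH}+B(z)$, invoke Lemma~\ref{l8.7} for $\cB_{m+1}$-analyticity of $B(\dott)$, deduce analyticity of the regularized determinant, show it is nowhere vanishing on $\bbC\backslash\bbR$, and then take an analytic logarithm on each simply connected half-plane. The only minor difference is in the nonvanishing step: the paper argues by contradiction (if $\Det_{\cH,m+1}(I_{\cH}+B(z))=0$ then $-1$ is an eigenvalue of the compact operator $B(z)$, forcing $z$ to be an eigenvalue of $S$, contradicting self-adjointness), whereas you directly exhibit the bounded inverse $(S_0-zI_{\cH})(S-zI_{\cH})^{-1}$ of $I_{\cH}+B(z)$ and conclude $-1\in\rho(B(z))$---arguably a cleaner route to the same conclusion.
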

\begin{proof}
By the second resolvent identity,
\begin{equation}
(S-zI_{\cH})(S_0-zI_{\cH})^{-1}=I_{\cH}+B(z),\quad z\in \bbC\backslash \bbR,
\end{equation}
where $B(\, \cdot \,)$ is the $\cB_{m+1}(\cH)$-valued analytic function defined in \eqref{8.19}. Hence, by the properties of regularized determinants the function 
\begin{equation}
z\to\Det_{\cH,m+1}(I_{\cH}+B(z))
\end{equation}
is analytic in $\bbC_{\pm}$. Combining the Cauchy integral theorem and \cite[Theorem V.4.1]{Pa91}, one infers that the function
\begin{equation}
z\to\ln\big(\Det_{\cH,m+1}(I_{\cH}+B(z))\big)
\end{equation}
is a well-defined analytic function in $\bbC_{\pm}$, provided that $\Det_{\cH,m+1}(I_{\cH}+B(z))\neq 0$ for all $z\in\bbC\backslash \bbR$.   Thus, it remains to show, that $\Det_{\cH,m+1}(I_{\cH}+B(z))\neq0$ for every $z\in\bbC\backslash \bbR$. If $\Det_{\cH,m+1}(I_{\cH}+B(z))=0$ for some $z\in \bbC\backslash \bbR$,  then $-1$ is in the spectrum of $B(z).$ By compactness of $B(\, \cdot \,)$, $-1$ is an eigenvalue of $B(z),$ and therefore, $I_{\cH}+B(z)$ has a nontrivial kernel. Since $S_0$ is self-adjoint and hence $z \in \bbC \backslash \bbR$ cannot be an eigenvalue of $S_0$, $z$ is an eigenvalue for $S$, which, once more, cannot be the case since $S$ is also self-adjoint.
\end{proof}

To correlate the function $\ln\big(\Det_{\cH,m+1}(I_{\cH}+B(\,\cdot\,))\big)$ with the spectral shift function for the pair $(S,S_0)$, we need to introduce an auxiliary function.

\begin{lemma}\lb{l8.9}
Assume Hypothesis \ref{h8.3} and let $B(\,\cdot\,)$ be defined by \eqref{8.19}.  There exists an analytic function $G_{S,S_0}(\dott)$ in $\bbC \backslash \bbR$ such that
\begin{equation} \lb{8.28}
\frac{d^m}{d z^m}G_{S,S_0}(z)=\tr_{\cH}\Bigg(\frac{d^{m-1}}{d z^{m-1}}\sum_{j=0}^{m-1}(-1)^{m-j}(S_0-zI_{\cH})^{-1}B(z)^{m-j}\Bigg),\quad z\in \bbC\backslash \bbR.
\end{equation}
\end{lemma}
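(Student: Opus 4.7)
The plan is to show that the right-hand side of \eqref{8.28} is a well-defined analytic scalar function on $\bbC\backslash\bbR$, and then obtain $G_{S,S_0}$ as an $m$-fold antiderivative on each of the simply connected components $\bbC_+$ and $\bbC_-$.

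The key algebraic step is a telescoping identity. Using the elementary identity $(I_\cH+B(z))\sum_{k=0}^m(-B(z))^k = I_\cH+(-1)^m B(z)^{m+1}$ together with the second resolvent identity in the form $(I_\cH+B(z))^{-1}=(S_0-zI_\cH)(S-zI_\cH)^{-1}$, I would establish, after the change of summation index $k=m-j$,
\begin{equation*}
\sum_{j=0}^{m-1}(-1)^{m-j}(S_0-zI_\cH)^{-1}B(z)^{m-j}
=\big[(S-zI_\cH)^{-1}-(S_0-zI_\cH)^{-1}\big]+(-1)^m(S-zI_\cH)^{-1}B(z)^{m+1}.
\end{equation*}
Call this common value $F(z)$.

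Next I would verify that $\frac{d^{m-1}}{dz^{m-1}}F(z)\in\cB_1(\cH)$ for each $z\in\bbC\backslash\bbR$ and that it depends analytically on $z$ as a $\cB_1(\cH)$-valued function. For the first summand, $\frac{d^{m-1}}{dz^{m-1}}[(S-zI_\cH)^{-1}-(S_0-zI_\cH)^{-1}]=(m-1)!\big[(S-zI_\cH)^{-m}-(S_0-zI_\cH)^{-m}\big]$, which lies in $\cB_1(\cH)$ by \eqref{9.8AA} (equivalently, Theorem \ref{t8.6}) and is clearly analytic in $z$. For the second, $B(z)^{m+1}\in\cB_1(\cH)$ by Hölder applied to $B(z)\in\cB_{m+1}(\cH)$ from Lemma \ref{l8.7}, so $(S-zI_\cH)^{-1}B(z)^{m+1}$ is a $\cB_1(\cH)$-valued analytic function (product of an analytic bounded and an analytic $\cB_1(\cH)$-valued map); expanding its $(m-1)$-st derivative by Leibniz produces a finite sum of products whose trace-class membership follows from Hölder combined with the trace ideal bounds \eqref{8.12} and \eqref{8.12b}. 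Consequently, $\phi(z):=\tr_\cH\bigl[\frac{d^{m-1}F}{dz^{m-1}}\bigr]$ is well defined and analytic on $\bbC\backslash\bbR$.

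Finally, since $\bbC_+$ and $\bbC_-$ are each simply connected, the analytic function $\phi$ admits an antiderivative on each half-plane; iterating $m$ times yields a function $G_{S,S_0}$ analytic on $\bbC\backslash\bbR$ with $G_{S,S_0}^{(m)}=\phi$, which is precisely \eqref{8.28}. The construction is unique only up to addition of a polynomial of degree at most $m-1$ on each component of $\bbC\backslash\bbR$. The main obstacle is that the individual summands $(S_0-zI_\cH)^{-1}B(z)^{m-j}$ lie only in $\cB_{(m+1)/(m-j)}(\cH)$, strictly larger than $\cB_1(\cH)$ when $j<m$, so one cannot take $\tr_\cH$ term-by-term nor interchange trace with derivative before summing; the telescoping identity circumvents this by collapsing the sum into a combination for which trace class membership (after $(m-1)$ differentiations), analyticity, and the existence of antiderivatives are standard.
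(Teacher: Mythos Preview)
Your argument is correct, but it takes a different route from the paper, and your final remark about ``the main obstacle'' mischaracterizes the situation.

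The paper proceeds term by term: for each fixed $j\in\{0,\dots,m-1\}$ it expands $\frac{d^{m-1}}{dz^{m-1}}\big[(S_0-zI_{\cH})^{-1}B(z)^{m-j}\big]$ by Leibniz into a sum of monomials of the form
\[
(S_0-zI_{\cH})^{-(k_1+1)}\prod_{\ell=2}^{m-j+1}(S-S_0)(S_0-zI_{\cH})^{-(k_\ell+1)},\qquad |\ul k|=m-1,
\]
and observes that each such monomial has $m-j$ factors of $(S-S_0)$ and a total resolvent power $2m-j\ge m+1$, so H\"older together with \eqref{8.12} (or \eqref{8.12b}) places each monomial in $\cB_1(\cH)$. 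Hence every summand in \eqref{8.28} is individually trace class after the $(m-1)$-st differentiation, the sum is $\cB_1(\cH)$-valued analytic, and the trace is a well-defined analytic scalar whose $m$-fold antiderivative is $G_{S,S_0}$. So your claim that ``one cannot take $\tr_{\cH}$ term-by-term'' is simply false; the paper does exactly that.

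Your route instead uses the telescoping identity (which in the paper is derived later as \eqref{8.47}) to rewrite the full sum as $\big[(S-zI_{\cH})^{-1}-(S_0-zI_{\cH})^{-1}\big]+(-1)^m(S-zI_{\cH})^{-1}B(z)^{m+1}$ and then verifies trace-class membership and analyticity of the $(m-1)$-st derivative of each of these two pieces. This works and is arguably cleaner, since $B(z)^{m+1}$ is already $\cB_1(\cH)$-valued analytic (the $(m+1)$-fold product is a continuous multilinear map $\cB_{m+1}(\cH)^{m+1}\to\cB_1(\cH)$), so no Leibniz bookkeeping is needed for the second piece; and it anticipates the computation in Lemma~\ref{l8.10}. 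The price is that you introduce $(S-zI_{\cH})^{-1}$, which the paper's argument avoids entirely. Either way, the existence of $G_{S,S_0}$ follows by taking successive antiderivatives on the simply connected half-planes $\bbC_\pm$.
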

\begin{proof} 
It suffices to prove that each of the terms
\begin{equation}\lb{9.31b}
\tr_{\cH}\bigg(\frac{d^{m-1}}{d z^{m-1}}(S_0-zI_{\cH})^{-1}B(z)^{m-j}\bigg),\quad z\in \bbC\backslash \bbR,\; j\in \bbN_0,\; 0\leq j\leq m-1,
\end{equation}
defines an analytic function.  To analyze the operator under the trace in \eqref{9.31b}, we introduce multi-indices. Recalling $\bbN_0=\bbN \cup \{0\}$, for $\nu\in \bbN$ an element $\ul k\in \bbN_0^{\nu}$ is called a multi-index which we express componentwise as
\begin{equation}\lb{9.32c}
\ul k=(k_1,\ldots,k_{\nu}) \in \bbN_0^{\nu}, \quad k_p\in \bbN_0,\; 1\leq p \leq \nu.
\end{equation}
The order of the multi-index $\ul k\in \bbN_0^{\nu}$ is defined to be
\begin{equation}\lb{9.33c}
|\ul k|:=k_1+\cdots+k_{\nu}.
\end{equation}

For each fixed $j\in \bbN_0$ with $0\leq j\leq m-1$,
\begin{align}
&\frac{d^{m-1}}{d z^{m-1}}(S_0-zI_{\cH})^{-1}B(z)^{m-j} \lb{9.34b}\\
&\quad = \sum_{\substack{\ul k\in \bbN_0^{m-j+1} \\ |\ul k|=m-1}} c_{j,\ul k}(S_0-zI_{\cH})^{-(k_1+1)} \prod_{\ell=2}^{m-j+1}(S-S_0)(S_0-I_{\cH})^{-(k_{\ell}+1)},    \no 
\end{align}
for an appropriate set of $z$-independent scalars
\begin{equation}
c_{j,\ul k}\in \bbR,\quad \ul k\in \bbN_0^{m-j+1},\; |\ul k|=m-1.
\end{equation}
The assumption \eqref{8.12} and the analog of H\"older's inequality for trace ideals (see \cite[Theorem 2.8]{Si05}) imply that each term in the sum in \eqref{9.34b} is a trace class operator.  In particular, \eqref{9.34b} implies that the operator $\frac{d^{m-1}}{d z^{m-1}}(S_0-zI_{\cH})^{-1} B(z)^{m-j}$ is a trace class operator.  Repeating the argument in Lemma \ref{l8.7} and employing \eqref{8.12}, one concludes that the map 
\begin{equation}
\bbC \backslash \bbR \ni z\to \frac{d^{m-1}}{d z^{m-1}}(S_0-zI_{\cH})^{-1} B(z)^{m-j}, \quad 0 \leq j \leq m-1,
\end{equation}
is a $\cB_1(\cH)$-valued analytic function.
\end{proof}

The following lemma is the main result, which allows  to correlate the regularized determinant of the operator 
$I_{\cH} + (S-S_0)(S_0-zI_{\cH})^{-1}$ and the spectral shift function $\xi(\,\cdot\,,S,S_0)$ (see Theorem \ref{t8.13} below).

\begin{lemma}\lb{l8.10}
 Assume Hypothesis \ref{h8.3}.  If $F_{S,S_0}$ and $G_{S,S_0}$ denote the analytic functions in $\bbC\backslash \bbR$ introduced in \eqref{8.23} and \eqref{8.28}, respectively, then there exist polynomials $P_{\pm, m-1}$ of degree less than or equal to $m -1$ such that
\begin{equation}
F_{S,S_0}(z)=(z-i)^m \int_{\mathbb{R}} \frac{\xi(\lambda;S,S_0)d\lambda}{(\lambda-i)^m} \f{1}{\lambda-z} 
+ G_{S,S_0}(z)+P_{\pm, m-1}(z),\quad z\in \bbC_{\pm}.
\end{equation}
\end{lemma}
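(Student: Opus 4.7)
The plan is to compare the $m$-th derivatives of the three functions $F_{S,S_0}$, $G_{S,S_0}$, and the candidate Cauchy-type kernel
\begin{equation*}
K(z) := (z-i)^m \int_{\bbR} \frac{\xi(\lambda;S,S_0)\, d\lambda}{(\lambda-i)^m(\lambda-z)}, \quad z \in \bbC \backslash \bbR,
\end{equation*}
which is analytic on $\bbC_\pm$ since $\xi(\dott;S,S_0)(\lambda-i)^{-m} \in L^1(\bbR)$ by \eqref{eq_ssf_even_integrability}. If one can show $\tfrac{d^m}{dz^m}[F_{S,S_0}-G_{S,S_0}-K]=0$ on each of $\bbC_+,\bbC_-$, then $F_{S,S_0}-G_{S,S_0}-K$ equals a polynomial $P_{\pm,m-1}$ of degree $\leq m-1$ on each component, as required.

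First I would compute $\tfrac{d}{dz}F_{S,S_0}$ via the derivative formula \eqref{8.27a} for regularized determinants with $A(z)=B(z)$ (cf.\ \eqref{8.B(z)}, \eqref{8.19}) and $p=m+1$, obtaining
\begin{equation*}
\tfrac{d}{dz} F_{S,S_0}(z) = (-1)^m \tr_{\cH}\bigl((I_\cH+B(z))^{-1}B(z)^m B'(z)\bigr).
\end{equation*}
Iterating the identity $(I_\cH+B)^{-1}B = I_\cH - (I_\cH+B)^{-1}$ yields
\begin{equation*}
(I_\cH+B(z))^{-1}B(z)^m = \sum_{k=0}^{m-1}(-1)^{m-1-k}B(z)^k + (-1)^m (I_\cH+B(z))^{-1},
\end{equation*}
and using $(I_\cH+B(z))^{-1}=(S_0-zI_\cH)(S-zI_\cH)^{-1}$ together with $B'(z)=(S-S_0)(S_0-zI_\cH)^{-2}$ one gets $(I_\cH+B(z))^{-1}B'(z)=(S_0-zI_\cH)^{-1}-(S-zI_\cH)^{-1}$. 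Differentiating $m-1$ more times, using cyclicity of the trace, writing $B^kB'(z)=B(z)^{k+1}(S_0-zI_\cH)^{-1}$, and reindexing by $j=m-1-k$, one matches the sum with the one defining $G_{S,S_0}$ in \eqref{8.28}; the leading $(I_\cH+B)^{-1}B'$ contribution becomes $(m-1)!\tr_{\cH}\bigl((S_0-zI_\cH)^{-m}-(S-zI_\cH)^{-m}\bigr)$, which is well-defined by \eqref{9.8AA}. Thus
\begin{equation*}
\tfrac{d^m}{dz^m}F_{S,S_0}(z) = (m-1)!\tr_{\cH}\bigl((S_0-zI_\cH)^{-m}-(S-zI_\cH)^{-m}\bigr) + \tfrac{d^m}{dz^m}G_{S,S_0}(z).
\end{equation*}
Inserting the trace formula \eqref{8.11}/\eqref{8.8A} converts the trace term into $m!\int_{\bbR}\xi(\lambda;S,S_0)(\lambda-z)^{-m-1}d\lambda$.

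Next I would verify $\tfrac{d^m}{dz^m}K(z) = m!\int_{\bbR}\xi(\lambda;S,S_0)(\lambda-z)^{-m-1}d\lambda$. The binomial expansion $(z-i)^m=\bigl((z-\lambda)+(\lambda-i)\bigr)^m$ gives, after dividing by $(\lambda-z)(\lambda-i)^m$,
\begin{equation*}
\frac{(z-i)^m}{(\lambda-z)(\lambda-i)^m} = \frac{1}{\lambda-z} + \sum_{k=1}^{m}\binom{m}{k}(-1)^k\frac{(\lambda-z)^{k-1}}{(\lambda-i)^k},
\end{equation*}
and the second summand is a polynomial in $z$ of degree $\leq m-1$, so taking $m$ derivatives in $z$ kills it and leaves $m!/(\lambda-z)^{m+1}$; integrating against $\xi(\lambda;S,S_0)d\lambda$ (differentiation under the integral is justified by $\xi(\dott)\in L^1(\bbR;(1+|\lambda|)^{-m-1}d\lambda)$) yields the claim. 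Combining the two displays concludes the proof.

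The main technical obstacle lies in Step~1: the decomposition of $(I_\cH+B(z))^{-1}B(z)^mB'(z)$ separates it into terms like $B(z)^kB'(z)$ and $(S_0-zI_\cH)^{-1}-(S-zI_\cH)^{-1}$, which individually need not be trace class for $m\ge 2$; one has to keep them under a common trace and pass to the $m$-th derivative before invoking \eqref{9.8AA}, using the trace-ideal estimates \eqref{8.12}, \eqref{8.12a}, \eqref{8.12b} and Lemma \ref{lem_weighted_resol_diff} to justify differentiation under the trace and the analyticity of each $\cB_1(\cH)$-valued building block appearing in \eqref{8.28}.
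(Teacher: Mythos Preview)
Your overall strategy is exactly the paper's: compute $\tfrac{d}{dz}F_{S,S_0}$ via \eqref{8.27a}, decompose the operator under the trace, differentiate $m-1$ further times so that the resolvent-difference piece becomes trace class by \eqref{9.8AA}, then invoke the trace formula \eqref{8.11} and the algebraic identity $\tfrac{d^m}{dz^m}\big[(z-i)^m/((\lambda-z)(\lambda-i)^m)\big]=m!/(\lambda-z)^{m+1}$. Your binomial-expansion route to this last identity is a clean alternative to the paper's telescoping in \eqref{8.53}.

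There is one inaccuracy in your Step~1 you should fix. The claimed \emph{operator} identity
\[
(I_\cH+B(z))^{-1}B'(z)=(S_0-zI_\cH)^{-1}-(S-zI_\cH)^{-1}
\]
is false in general: a direct computation gives
\[
(I_\cH+B(z))^{-1}B'(z)=(S_0-zI_\cH)(S-zI_\cH)^{-1}(S-S_0)(S_0-zI_\cH)^{-2},
\]
which equals $(S_0-zI_\cH)^{-1}-(S-zI_\cH)^{-1}$ only if $(S-zI_\cH)^{-1}(S-S_0)$ commutes with $S_0$. The paper sidesteps this by first using cyclicity (legitimate since $B(z)^{m+1}\in\cB_1(\cH)$) to rewrite
\[
\tr_{\cH}\big((I_\cH+B(z))^{-1}B(z)^{m+1}(S_0-zI_\cH)^{-1}\big)=\tr_{\cH}\big((S-zI_\cH)^{-1}B(z)^{m+1}\big),
\]
and only \emph{then} applying the genuine resolvent identity $(S-zI_\cH)^{-1}B(z)=(S_0-zI_\cH)^{-1}-(S-zI_\cH)^{-1}$ iteratively to $(S-zI_\cH)^{-1}B(z)^{m+1}$ (their \eqref{8.46}--\eqref{8.47}). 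This yields a valid operator decomposition whose total is trace class (even though individual summands are not, as you correctly note), so the trace of the sum makes sense and can be differentiated. If you cycle first and then decompose, your argument goes through verbatim; your iteration of $(I_\cH+B)^{-1}B=I_\cH-(I_\cH+B)^{-1}$ is then algebraically equivalent to the paper's iteration of \eqref{8.46}.
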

\begin{proof}
One recalls the $\cB_{m+1}(\cH)$-valued analytic function $B(\,\cdot\,)$ defined in \eqref{8.19}.
By the second resolvent identity,
\begin{align}
\big(I_{\cH}+B(z)\big)^{-1}&=\big(I_{\cH}+(S-S_0)(S_0-zI_{\cH})^{-1}\big)^{-1}\\
&=\big((S-zI_{\cH})(S_0-zI_{\cH})^{-1}\big)^{-1}\no\\
&=(S_0-zI_{\cH})(S-zI_{\cH})^{-1},\quad z\in \bbC\backslash \bbR.\no
\end{align}
In addition, 
\begin{equation}
B'(z)=B(z)(S_0-zI_{\cH})^{-1},\quad z\in \bbC\backslash \bbR.
\end{equation} 
Applying \eqref{8.27a}, one obtains
\begin{align}
\frac{d}{dz}F_{S,S_0}(z)&=\frac{d}{dz}\ln\big(\Det_{\cH,m+1}(I_{\cH}+B(z))\big)\\
&=(-1)^{m}\tr_{\cH}\big((S_0-zI_{\cH})(S-zI_{\cH})^{-1} B(z)^m B(z)(S_0-zI_{\cH})^{-1}\big)\no\\
&=(-1)^{m}\tr_{\cH}\big((S_0-zI_{\cH})(S-zI_{\cH})^{-1} B(z)^{m+1}(S_0-zI_{\cH})^{-1}\big),\no\\
&\hspace*{7.9cm}z\in \bbC\backslash \bbR.\no
\end{align}
For $z\in \bbC\backslash \bbR$, $(S_0-zI_{\cH})(S-zI_{\cH})^{-1}\in\cB(\cH)$ and $B(z)^{m+1}\in\cB_1(\cH)$ (cf. \eqref{8.12}), so that
\begin{align}
\frac{d}{dz}F_{S,S_0}(z)&=(-1)^{m}\tr_{\cH}\big(B(z)^{m+1}(S_0-zI_{\cH})^{-1}(S_0-zI_{\cH})(S-zI_{\cH})^{-1}\big)\nonumber\\
&=(-1)^{m}\tr_{\cH}\big(B(z)^{m+1}(S-zI_{\cH})^{-1}\big)\no\\
&=(-1)^{m}\tr_{\cH}\big((S-zI_{\cH})^{-1} B(z)^{m+1}\big),\quad z\in \bbC\backslash \bbR.\lb{eq_det_and_ssf1}
\end{align}
By the second resolvent identity,
\begin{align}
(S-z)^{-1}B(z)&=(S-z)^{-1}(S-S_0)(S_0-z)^{-1}\no\\
&=(S_0-z)^{-1}-(S-z)^{-1},\quad z\in \bbC\backslash \bbR,\lb{8.46}
\end{align}
and repeated application of \eqref{8.46} yields
\begin{align}
&(-1)^{m}(S-zI_{\cH})^{-1} B(z)^{m+1}\lb{8.47}\\
&\quad =(-1)^m \big((S_0-zI_{\cH})^{-1}-(S-zI_{\cH})^{-1}\big) B(z)^{m}\no\\
&\quad =(S_0-zI_{\cH})^{-1}\sum_{j=0}^{m-1}(-1)^{m-j} B(z)^{m-j} + (S_0-zI_{\cH})^{-1}-(S-zI_{\cH})^{-1},\no\\
&\hspace*{9.4cm} z\in \bbC\backslash \bbR.\no
\end{align}
Hence, combining \eqref{eq_det_and_ssf1} with \eqref{8.47}, one obtains
\begin{align}
&\frac{d}{dz}F_{S,S_0}(z)\lb{8.48}\\
&\quad =\tr_{\cH}\bigg((S_0-zI_{\cH})^{-1}-(S-zI_{\cH})^{-1}
+(S_0-zI_{\cH})^{-1}\sum_{j=0}^{m-1}(-1)^{m-j} B(z)^{m-j}\bigg),\no\\
&\hspace*{10.87cm}z\in \bbC\backslash \bbR.\no
\end{align}
Differentiating \eqref{8.48} $m-1$ times,
\begin{align}
\frac{d^m}{d z^m}F_{S,S_0}(z)&=\frac{d^{m-1}}{d z^{m-1}}\tr_{\cH}\bigg((S_0-zI_{\cH})^{-1}-(S-zI_{\cH})^{-1}\lb{8.49}\\
&\hspace*{2.5cm}+(S_0-zI_{\cH})^{-1}\sum_{j=0}^{m-1}(-1)^{m-j} B(z)^{m-j}\bigg)\no\\
&=\tr_{\cH}\bigg((m-1)!\Big((S_0-zI_{\cH})^{-m}-(S-zI_{\cH})^{-m}\Big)\no\\
&\hspace*{1.5cm}+\frac{d^{m-1}}{d z^{m-1}}(S_0-zI_{\cH})^{-1}\sum_{j=0}^{m-1}(-1)^{m-j}B(z)^{m-j}\bigg),\quad z\in \bbC\backslash \bbR.\no
\end{align}
By \eqref{9.8AA} and Lemma \ref{l8.9}, \eqref{8.49} may be recast as
\begin{align}
\frac{d^m}{d z^m}F_{S,S_0}(z)&=(m-1)!\tr_{\cH}\Big((S_0-zI_{\cH})^{-m}-(S-zI_{\cH})^{-m}\Big)\\
&\quad +\frac{d^m}{d z^m}G_{S,S_0}(z),\quad z\in \bbC\backslash \bbR,\no
\end{align}
and an application of Theorem \ref{t8.2} yields
\begin{equation} \lb{8.51}
\frac{d^m}{d z^m}F_{S,S_0}(z)= m! \int_{\mathbb{R}}\frac{\xi(\lambda; S,S_0) \, d\lambda}{(\lambda-z)^{m+1}} +\frac{d^m}{d z^m}G_{S,S_0}(z),\quad z\in \bbC\backslash \bbR.
\end{equation}
Repeated application of the elementary identity
\begin{equation}
\frac{k!}{(\lambda-z)^{k+1}}=\frac{d}{dz}\Bigg(\frac{(k-1)!}{(\lambda-z)^k}-\frac{1}{(\lambda-i)^k}\Bigg),\quad z\in \bbC\backslash \bbR,\; \lambda\in \bbR,\; k\in \bbN,
\end{equation}
yields
\begin{align}
\frac{m!}{(\lambda-z)^{m+1}}&=\frac{d^m}{dz^m}\Bigg(\frac1{\lambda-z}-\sum_{j=1}^{m}\frac{(z-i)^{j-1}}{(\lambda-i)^j}\Bigg)\no\\
&=\frac{d^m}{d z^m}\Bigg(\frac{1}{\lambda-z}-\frac{(z-i)^m-(\lambda-i)^m}{(z-\lambda)(\lambda-i)^m}\Bigg)\no\\
&=\frac{d^m}{d z^m}\Bigg(\frac{(z-i)^m}{(\lambda-z)(\lambda-i)^m}\Bigg),\quad z\in \bbC\backslash \bbR,\; 
\lambda\in \bbR.\lb{8.53}
\end{align}
Therefore, \eqref{8.51} and \eqref{8.53} imply
\begin{equation}
\frac{d^m}{d z^m}F_{S,S_0}(z)=\frac{d^m}{d z^m}\int_\bbR \f{\xi(\lambda; S,S_0)d\lambda \, (z-i)^m}{(\lambda-i)^m (\lambda-z)} +\frac{d^m}{d z^m}G_{S,S_0}(z),\quad z\in \bbC\backslash \bbR,
\end{equation}
completing the proof.
\end{proof}

\begin{remark} \lb{r8.11}
For bookkeeping purposes we have thus far worked with the non-symmetrized Birman--Schwinger-type operator 
$B(z) := (S-S_0)(S_0-zI_{\cH})^{-1}$, $z\in\bbC\backslash\bbR$ and avoided a factorization of $S-S_0$ (and similarly we exploited $V$ without its factorization \eqref{3.25aa} in Section \ref{s7}). In the concrete case of massless Dirac operators $H_0, H$ we will eventually switch over to a symmetrized analog (cf. \eqref{3.19}--\eqref{3.26a}, \eqref{9.82}).  
\hfill $\diamond$
\end{remark}

The main result of this section provides a means for recovering the spectral shift function $\xi(\,\cdot\,; S,S_0)$ almost everywhere in terms of the normal (or nontangential) boundary values of the functions $F_{S,S_0}(\,\cdot\,)$ and $G_{S,S_0}(\,\cdot\,)$ when the latter exist.  Its proof relies on the following (special case of) Privalov's theorem (see, e.g., \cite[Theorem 1.2.5]{Ya92}).

\begin{theorem}\lb{t8.12}
Let $\theta\in L^1\big(\mathbb{R};(1+ |\lambda|)^{-1}\,d\lambda\big)$.  If
\begin{equation}
H(z):=\int_{\mathbb{R}}\frac{\theta(\lambda) d\lambda}{\lambda-z},\quad z\in \bbC\backslash\bbR,
\end{equation}
then
\begin{equation}
\lim_{\varepsilon\downarrow0}H(\lambda \pm i\varepsilon)=\pm \pi i\theta(\lambda)+p.v.\int_{\bbR} 
\frac{\theta(\lambda') d\lambda'}{\lambda'-\lambda}\,  \text{ for a.e.~$\lambda \in \bbR$},     \lb{8.57}
\end{equation}
where $p.v. \, (\, \cdot \,)$ abbreviates the principal value operation. In particular, one obtains the following special case of the Stieltjes inversion theorem,
\begin{equation}
\theta(\lambda) = (2 \pi i)^{-1} \lim_{\varepsilon \downarrow 0} [H(\lambda + i \varepsilon)) 
- H(\lambda - i \varepsilon))] \, \text{ for a.e.~$\lambda \in \bbR$.}    \lb{8.57a} 
\end{equation} 
Moreover, the normal limits in \eqref{8.57} and \eqref{8.57a} can be replaced by nontangential limits. 
\end{theorem}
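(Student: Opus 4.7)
The plan is to prove this as a standard Sokhotski--Plemelj/Privalov-type identity, reducing to the case of a compactly supported $L^1$ density by a cutoff argument. The weighted $L^1$ hypothesis on $\theta$ is tailored to guarantee absolute convergence of the Cauchy integral off $\bbR$, and the contribution from the tail of $\theta$ at infinity will be shown to be analytic across any fixed bounded real interval, so all the work reduces to a classical statement on compactly supported integrable densities.

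First I would fix $\lambda_0 \in \bbR$ and pick $R > |\lambda_0| + 1$, decomposing
\begin{equation*}
\theta = \theta_1 + \theta_2, \quad \theta_1 := \theta \chi_{[-2R, 2R]}, \quad \theta_2 := \theta \chi_{\bbR \backslash [-2R, 2R]},
\end{equation*}
and correspondingly $H = H_1 + H_2$ with $H_j(z) = \int_\bbR \theta_j(\lambda')(\lambda' - z)^{-1} d\lambda'$. For $z$ in a neighborhood of $[-R, R]$, the hypothesis $\theta \in L^1(\bbR; (1+|\lambda'|)^{-1} d\lambda')$ and the lower bound $|\lambda' - z| \geq R$ on $\operatorname{supp}(\theta_2)$ imply that the integrand in $H_2(z)$ is uniformly dominated by $C(1+|\lambda'|)^{-1}|\theta(\lambda')|$, so $H_2$ extends analytically across $[-R, R]$ by Morera's theorem; its boundary values from $\bbC_\pm$ coincide and equal the ordinary integral $\int_{\bbR \backslash [-2R, 2R]} \theta(\lambda')(\lambda' - \lambda_0)^{-1} d\lambda'$, with no delta-contribution since $\lambda_0$ lies outside the support.

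Next I would treat $H_1$, for which $\theta_1 \in L^1(\bbR)$ has compact support. The elementary decomposition
\begin{equation*}
\frac{1}{\lambda' - (\lambda \pm i\varepsilon)} = \frac{\lambda' - \lambda}{(\lambda' - \lambda)^2 + \varepsilon^2} \pm i \, \frac{\varepsilon}{(\lambda' - \lambda)^2 + \varepsilon^2}
\end{equation*}
splits $H_1(\lambda \pm i\varepsilon)$ into a Poisson integral (the imaginary part) and a truncated Hilbert transform (the real part). For the imaginary part, the identity $\varepsilon[(\lambda' - \lambda)^2 + \varepsilon^2]^{-1} = \pi P_\varepsilon(\lambda - \lambda')$ with $P_\varepsilon$ the Poisson kernel, together with the classical Lebesgue-differentiation-type theorem for Poisson integrals of $L^1$ functions, yields $\pm \pi i \theta_1(\lambda)$ at every Lebesgue point $\lambda$ of $\theta_1$, hence a.e. For the real part, I would invoke the classical a.e. existence of the Hilbert transform of an $L^1$ function (Kolmogorov's theorem together with the a.e. convergence of truncated singular integrals for $L^1$ densities with compact support, which is standard), yielding the principal value integral of $\theta_1$ against $(\lambda' - \lambda)^{-1}$ for a.e. $\lambda$. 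Recombining with $H_2$, and observing that the p.v. integral of $\theta_2$ is just the ordinary integral, one obtains \eqref{8.57} at every $\lambda_0$ with $|\lambda_0| < R$ off a null set; since $R$ is arbitrary, the identity holds a.e. on $\bbR$.

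The Stieltjes inversion formula \eqref{8.57a} is then immediate by subtracting the $-$ limit from the $+$ limit in \eqref{8.57} (the principal value terms cancel). The upgrade from normal to nontangential limits is a routine modification: in the cutoff step, $H_2$ is analytic near $\lambda_0$ so nontangential convergence is automatic; for $H_1$, both the Poisson integral and the truncated Hilbert transform admit the nontangential maximal function bounds that yield a.e. nontangential convergence via the Hardy--Littlewood maximal theorem. The main obstacle, and the only place where a non-trivial classical result is used as a black box, is the a.e. convergence of the truncated Hilbert transform of a compactly supported $L^1$ function to its principal value; everything else is dominated convergence and the Lebesgue differentiation theorem applied to the Poisson kernel.
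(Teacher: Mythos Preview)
Your proposal is correct and follows the standard Sokhotski--Plemelj route: localize by a cutoff so the tail contribution is analytic across the real line, then handle the compactly supported $L^1$ piece via the Poisson-kernel/Hilbert-transform decomposition, invoking Lebesgue differentiation for the former and a.e.\ existence of the Hilbert transform of an $L^1$ function for the latter.

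The paper does not prove this statement at all; it is quoted as a known special case of Privalov's theorem with a reference to \cite[Theorem~1.2.5]{Ya92} and used as a black box in the proof of Theorem~\ref{t8.13}. So there is no ``paper's own proof'' to compare against---your argument supplies exactly the kind of classical justification the paper outsources to the literature. One small remark: you correctly identify the a.e.\ convergence of the truncated Hilbert transform of an $L^1$ function as the only genuinely nontrivial input, and that is indeed the substance of Privalov's theorem in this setting.
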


\begin{theorem} \lb{t8.13}
Assume Hypothesis \ref{h8.3} and let $F_{S,S_0}$ and $G_{S,S_0}$ denote analytic functions in $\bbC\backslash \bbR$ satisfying \eqref{8.23} and \eqref{8.28}, respectively.  If $F_{S,S_0}$ and $G_{S,S_0}$ have normal boundary values on $\bbR$, then for a.e.~$\lambda\in\bbR$, 
\begin{equation} \lb{eq_main}
\xi(\lambda;S,S_0)= \pi^{-1}\Im (F_{S,S_0}(\lambda+i0)) - \pi^{-1}\Im (G_{S,S_0}(\lambda+i0)) 
+ P_{m-1}(\lambda)\,\text{ for a.e.~$\lambda\in \bbR$},
\end{equation}
where $P_{m-1}$ is a polynomial of degree less than or equal to $m - 1$. 
\end{theorem}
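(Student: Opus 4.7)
\smallskip

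\noindent
\textbf{Proof proposal for Theorem \ref{t8.13}.}
The plan is to extract the almost everywhere formula directly from the representation established in Lemma \ref{l8.10} via a Stieltjes-type inversion argument (Privalov's theorem). Rewriting the conclusion of Lemma \ref{l8.10} as
\begin{equation*}
F_{S,S_0}(z) - G_{S,S_0}(z) - P_{\pm,m-1}(z) = (z-i)^m H(z), \quad z \in \bbC_{\pm},
\end{equation*}
where
\begin{equation*}
H(z) := \int_{\bbR} \frac{\theta(\lambda)\,d\lambda}{\lambda - z}, \qquad
\theta(\lambda) := \frac{\xi(\lambda;S,S_0)}{(\lambda-i)^m},
\end{equation*}
I first observe that the integrability \eqref{8.8}/\eqref{eq_ssf_even_integrability} from Theorem \ref{t8.2} implies $\theta \in L^1\big(\bbR;(1+|\lambda|)^{-1}d\lambda\big)$, so that Privalov's theorem (Theorem \ref{t8.12}) applies to $H$. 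Consequently, for a.e.\ $\lambda \in \bbR$,
\begin{equation*}
H(\lambda+i0) - H(\lambda-i0) = 2\pi i\,\theta(\lambda) = \frac{2\pi i\,\xi(\lambda;S,S_0)}{(\lambda-i)^m}.
\end{equation*}

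Combining this jump formula with the hypothesis that $F_{S,S_0}$ and $G_{S,S_0}$ possess normal boundary values on $\bbR$, and multiplying both sides by $(\lambda-i)^m$, one obtains for a.e.\ $\lambda \in \bbR$ the identity
\begin{align*}
& \big[F_{S,S_0}(\lambda+i0) - F_{S,S_0}(\lambda-i0)\big] - \big[G_{S,S_0}(\lambda+i0) - G_{S,S_0}(\lambda-i0)\big] \\
& \quad - \big[P_{+,m-1}(\lambda) - P_{-,m-1}(\lambda)\big] = 2\pi i\,\xi(\lambda;S,S_0).
\end{align*}
The next step is to convert the jumps of $F_{S,S_0}$ and $G_{S,S_0}$ into twice their imaginary parts at $\lambda+i0$. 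This requires the conjugation symmetries $F_{S,S_0}(\bar z) = \overline{F_{S,S_0}(z)}$ and $G_{S,S_0}(\bar z) = \overline{G_{S,S_0}(z)}$. The first follows from self-adjointness of $S_0, S$ (so that $(S-S_0)(S_0 - \bar z I_{\cH})^{-1}$ is related to the adjoint of $(S-S_0)(S_0 - zI_{\cH})^{-1}$ via cyclicity of the regularized determinant \eqref{det_product}) together with the identity $\Det_{\cH,m+1}(I_\cH + A^*) = \overline{\Det_{\cH,m+1}(I_\cH + A)}$ for $A \in \cB_{m+1}(\cH)$, after a suitable choice of logarithmic branch in \eqref{8.23}. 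The second follows analogously from $\tr_\cH(T^*) = \overline{\tr_\cH(T)}$ applied to the $m$th derivative in \eqref{8.28}; because $G_{S,S_0}$ is determined only up to a polynomial of degree at most $m-1$ (which can be absorbed into $P_{\pm,m-1}$), one may fix the antiderivative so as to inherit the symmetry.

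Using these symmetries, $F_{S,S_0}(\lambda+i0) - F_{S,S_0}(\lambda-i0) = 2i\,\Im F_{S,S_0}(\lambda+i0)$ and analogously for $G_{S,S_0}$. Substituting into the displayed identity and dividing by $2\pi i$ yields
\begin{equation*}
\xi(\lambda;S,S_0) = \pi^{-1} \Im F_{S,S_0}(\lambda+i0) - \pi^{-1} \Im G_{S,S_0}(\lambda+i0) + P_{m-1}(\lambda),
\end{equation*}
with $P_{m-1}(\lambda) := -(2\pi i)^{-1}[P_{+,m-1}(\lambda) - P_{-,m-1}(\lambda)]$. Since $\xi(\,\cdot\,;S,S_0)$ and the two imaginary-part terms are real-valued a.e., $P_{m-1}$ is necessarily real on $\bbR$, and it has degree at most $m-1$ by construction. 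The main obstacle is the conjugation-symmetry verification: one must carefully track branches of the logarithm and the choice of antiderivative for $G_{S,S_0}$, but the polynomial freedom built into $P_{\pm,m-1}$ provides enough flexibility to absorb any residual discrepancy, so the argument is robust.
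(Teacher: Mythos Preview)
Your proposal is correct and follows essentially the same route as the paper: rewrite Lemma \ref{l8.10} as $(z-i)^m H(z)$ with $H$ a Cauchy integral of $\theta(\lambda)=\xi(\lambda;S,S_0)/(\lambda-i)^m\in L^1\big(\bbR;(1+|\lambda|)^{-1}d\lambda\big)$, apply Privalov's theorem (Theorem \ref{t8.12}) to extract the jump, and then invoke the conjugation symmetries $F_{S,S_0}(\bar z)=\overline{F_{S,S_0}(z)}$, $G_{S,S_0}(\bar z)=\overline{G_{S,S_0}(z)}$ to convert the jump into $2i$ times the imaginary part. The paper simply asserts these symmetries from the definitions, whereas you spell out the determinant/trace mechanism and the role of cyclicity and the polynomial freedom in $G_{S,S_0}$; this extra care is justified but not a different argument.
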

\begin{proof}
By Lemma \ref{l8.10}, 
\begin{equation} \lb{8.56}
F_{S,S_0}(z)=(z-i)^m \int_{\mathbb{R}}\frac{\xi(\lambda;S,S_0) d\lambda}{(\lambda-z)(\lambda-i)^m}\, +G_{S,S_0}(z)+P_{\pm,m-1}(z),\quad z\in \bbC_{\pm}.
\end{equation}
If
\begin{equation}
H_{\pm}(z):=F_{S,S_0}(z)-G_{S,S_0}(z)-P_{\pm,m-1}(z),\quad z\in \bbC_{\pm},
\end{equation}
then \eqref{8.56} may be recast as
\begin{equation}
\frac{H_{\pm}(z)}{(z-i)^m}=\int_{\mathbb{R}}\frac{\xi(\lambda;S,S_0) d\lambda}{(\lambda-z)(\lambda-i)^m}, 
\quad z\in \bbC_{\pm}, \; z\neq i.
\end{equation}
By \eqref{8.8}, the spectral shift function $\xi(\,\cdot\,;S,S_0)$ satisfies
\begin{equation}
\xi(\,\cdot\,;S,S_0)(\,\cdot\,-i)^{-m}\in L^1\big(\bbR; (1+ |\lambda|)^{-1}\,d\lambda\big).
\end{equation}
An application of Theorem \ref{t8.12} then yields
\begin{align}
\frac{\xi(\lambda;S,S_0)}{(\lambda - i)^m} = (2\pi i)^{-1} \lim_{\varepsilon\downarrow 0} \Bigg[\frac{H_+(\lambda+i\varepsilon)}{(\lambda +i\varepsilon-i)^m} - \frac{H_-(\lambda-i\varepsilon)}{(\lambda -i\varepsilon-i)^m} \Bigg]\, \text{ for a.e.~$\lambda\in \bbR$}, 
\end{align}
and hence,
\begin{align} \lb{8.63}
\xi(\lambda;S,S_0) &= (2\pi i)^{-1} \lim_{\varepsilon\downarrow 0} [H_+(\lambda-i\varepsilon)-H_-(\lambda+i\varepsilon)]  \, \text{ for a.e.~$\lambda\in \bbR$}.
\end{align} 
It follows from the definition of the functions $F_{S,S_0}$ and $G_{S,S_0}$ (cf.~Lemmas \ref{l8.8} and \ref{l8.9}) that
\begin{equation}
F_{S,S_0}(\bar{z})=\overline{F_{S,S_0}(z)},\quad G_{S,S_0}(\bar{z})=\overline{G_{S,S_0}(z)},\quad z\in \bbC\backslash \bbR.
\end{equation}
Thus, by \eqref{8.63}, 
\begin{align}
\xi(\lambda;S,S_0)&= (2\pi i)^{-1} \lim_{\varepsilon\downarrow 0} [F_{S,S_0}(\lambda+i\varepsilon)-F_{S,S_0}(\lambda-i\varepsilon)]   \no \\
&\quad - (2\pi i)^{-1} \lim_{\varepsilon\downarrow 0} [G_{S,S_0}(\lambda+i\varepsilon) 
+ G_{S,S_0}(\lambda-i\varepsilon)]     \no\\
&\quad - (2\pi i)^{-1} [P_{+,m-1}(\lambda)-P_{-,m-1}(\lambda)]      \no\\
&= \pi^{-1} \Im (F_{S,S_0}(\lambda+i0)) - \pi^{-1} \Im (G_{S,S_0}(\lambda+i0))+P_{m-1}(\lambda)   \\
& \hspace*{7.05cm} \text{ for a.e.~$\lambda\in \bbR$},  \no
\end{align}
where the polynomial $P_{m-1}:= (2\pi i)^{-1}[P_{-,m-1} - P_{+,m-1}]$ has degree less than or equal to $m - 1$ (since 
$P_{\pm,m-1}$ have degree less than or equal to $m - 1$).
\end{proof}

\section{Analysis of $\Im(F_{H,H_0}(\lambda + i0))$, $\lambda \in \bbR$} \lb{s9}

The principal purpose of this section is to analyze continuity properties of the function $\Im(F_{H,H_0}(\lambda+i0))$,  $\lambda\in \bbR$. 

One recalls (see Lemma \ref{l8.8}) that 
\begin{equation} 
F_{H,H_0}(z)=\ln \big({\det}_{[L^2(\bbR^n)]^N, n+1}\big(I_{[L^2(\bbR^n)]^N} 
+ V(H_0 -zI_{[L^2(\bbR^n)]^N})^{-1}\big)\big), \quad z\in \bbC_+.  
\end{equation}
Using a factorization $V=V_1^*V_2$ (see Hypothesis \ref{h9.5} for the details of the factorization) and elementary properties of regularized determinants, the analysis of the function $F_{H,H_0}(z)$ reduces to an analysis of 
\begin{equation} 
\ln \big({\det}_{[L^2(\bbR^n)]^N, n+1}\big(I_{[L^2(\bbR^n)]^N}+ V_2(H_0-zI_{L_2(\bbR^n)]^N})^{-1}V_1^*\big)\big), 
\quad z\in \bbC_+. 
\end{equation} 
Theorem \ref{t6.9} then guarantees that the Birman--Schwinger-type operator 
\begin{equation} 
V_2(H_0-zI_{L_2(\bbR^n)]^N})^{-1}V_1^*, \quad z \in \bbC_+,  
\end{equation} 
extends to a continuous $\cB_{n+1}\big([L^2(\bbR^n)]^N\big)$-valued function for $z$ in the closed upper-half plane 
$\ol{\bbC_+}$, provided that $V_1$ and $V_2$ are decaying sufficiently fast. In particular, the boundary values of  the regularized Fredholm determinant, 
\begin{equation}
{\det}_{[L^2(\bbR^n)]^N, n+1}\big(I_{[L^2(\bbR^n)]^N} 
+ \ol{V_2 (H_0 - (\lambda + i 0) I_{[L^2(\bbR^n)]^N})^{-1} V_1^*}\big), 
\end{equation}
exist and are continuous for all $\lambda\in \bbR$. This means that the function $F_{H,H_0}(\lambda+i0)$ has normal boundary values and is continuous at any point $\lambda$ in $\bbR$ such that 
\begin{equation} 
{\det}_{[L^2(\bbR^n)]^N, n+1}\big(I_{[L^2(\bbR^n)]^N} + \ol{V_2 (H_0 - (\lambda + i 0) I_{[L^2(\bbR^n)]^N})^{-1} V_1^*}\big)\neq 0.
\end{equation}  

By Theorem \ref{t3.4}, the latter holds for $\lambda \in \bbR \backslash \{0\}$ if and only if $\lambda$ is not an eigenvalue of $H$. By \cite{KOY15} one can exclude nonzero eigenvalues by assuming \eqref{4.3A} and \eqref{4.4} (see Theorem \ref{t4.1}).  In particular, under these assumptions the function $\Im(F_{H,H_0}(\lambda+i0))$ is continuous for 
$\lambda \in \bbR \backslash\{0\}$. Thus, the only point where the behavior $\Im(F_{H,H_0}(\lambda+i0))$, $\lambda\in \bbR$, remains to be studied is the ``threshold point''  $\lambda=0$, and hence the majority of this section is devoted to an analysis of the latter.

We start with a series of well-known preliminary results which we state without proof closely following the general outline in the paper by Jensen and Nenciu \cite{JN01}.

\begin{lemma} \lb{l9.1} ${}$ \\[1mm]
$(i)$ $($cf.\ \cite{JN01}$)$. Let $A$ be a densely defined closed operator and $P$ a projection in $\cH$. Suppose that $(A+P)^{-1} \in \cB(\cH)$ and denote by $a := P - P(A+P)^{-1}P$ an operator in $P \cH$. Then 
\begin{equation}
A^{-1} \in \cB(\cH) \, \text{ if and only if } \, a^{-1} \in \cB(P \cH).   \lb{9.1}
\end{equation}
In particular, if $a^{-1} \in \cB(P \cH)$ then
\begin{equation}
A^{-1} = (A+P)^{-1} + (A+P)^{-1} P a^{-1}P(A+P)^{-1}. 
\end{equation}
$(ii)$ $($cf.\ \cite{GH87}, \cite[Sect.~III.6.5]{Ka80}$)$. Let $A$ be a densely defined closed operator in $\cH$ and 
$\lambda_0 \in \bbC$ an isolated point in $\sigma(A)$ with $P_{\lambda_0}$ the Riesz projection in $\cH$ 
associated with $A$ and $\lambda_0$. If the quasi-nilpotent operator associated with $A$ and $\lambda_0$ vanishes, that is, $D_0 := (A - \lambda_0 I_{\cH}) P_{\lambda_0} =0$, then 
\begin{equation}
(A - \lambda_0 I_{\cH} + P_{\lambda_0})^{-1} = P_{\lambda_0} + S_{\lambda_0} \in \cB(\cH),
\end{equation} 
where
\begin{equation}
S_{\lambda_0} = \nlim_{\substack{z \to \lambda_0 \\ z \neq \lambda_0}} (A - z I_{\cH})^{-1} [I_{\cH} - P_{\lambda_0}] \in \cB(\cH). 
\end{equation}
$(iii)$ $($cf.\ \cite{Ra82}$)$. Let $A$ be a compact operator in $\cH$ and $\lambda_0 \in \bbC$ an isolated point in $\sigma(A)$ with $P_{\lambda_0}$ the Riesz projection in $\cH$ 
associated with $A$ and $\lambda_0$. Then 
\begin{equation}
(A - \lambda_0 I_{\cH} + P_{\lambda_0})^{-1} \in \cB(\cH). 
\end{equation}
$(iv)$ $($cf.\ \cite{JN01}$)$. Suppose that $\cH = \cH_1 \oplus \cH_2$ and $\gB$ in $\cH$ is the block operator 
matrix 
\begin{equation}
\gB = \begin{pmatrix} b_{1,1} & b_{1,2} \\ b_{2,1} & b_{2,2} \end{pmatrix}, 
\end{equation}
where 
\begin{align}
\begin{split} 
& b_{j,j} \, \text{ are densely defined, closed operators in $\cH_j$, $j=1,2$,}  \\
& b_{1,2} \in \cB(\cH_2,\cH_1), \quad b_{2,1} \in \cB(\cH_1,\cH_2).
\end{split} 
\end{align}
In addition, assume that $b_{2,2}^{-1} \in \cB(\cH_2)$. Then
\begin{equation}
\gB^{-1} \in \cB(\cH) \, \text{ if and only if } \, \big[b_{1,1} - b_{1,2} \, b_{2,2}^{-1} \, b_{2,1}\big]^{-1} \in \cB(\cH_1).     \lb{10.8} 
\end{equation} 
In particular, abbreviating 
\begin{equation} 
b := \big[b_{1,1} - b_{1,2} \, b_{2,2}^{-1} \, b_{2,1}\big], 
\end{equation} 
if $b^{-1} \in \cB(\cH_1)$,  
then
\begin{equation}
\gB^{-1} = \begin{pmatrix} b^{-1} & - b^{-1} \, b_{1,2} \, b_{2,2}^{-1} \\
- b_{2,2}^{-1} \, b_{2,1} \, b^{-1} & b_{2,2}^{-1} 
+ b_{2,2}^{-1} \, b_{2,1} \, b^{-1} \, b_{1,2} \, b_{2,2}^{-1}\end{pmatrix}.    \lb{10.10} 
\end{equation}
\end{lemma}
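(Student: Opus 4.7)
The plan is to establish the four items sequentially via standard operator-theoretic reductions; none requires deep analysis beyond the Riesz decomposition at an isolated point of the spectrum and the finite-rank structure of Riesz projections for compact operators. I would present the items in order.

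For part (i) I would implement the Grushin--Feshbach reduction. Setting $B = (A+P)^{-1}$ and exploiting the identity $A = (A+P) - P$, one converts the equation $A\psi = f$ into $\psi = Bf + BP\psi$; projecting by $P$ produces $a(P\psi) = PBf$ on $P\cH$. From this, the forward implication is transparent: given $a^{-1}\in\cB(P\cH)$, the candidate $A^{-1} = B + BPa^{-1}PB$ is verified by direct substitution using $AB = I_\cH - PB$. The reverse implication follows from the elementary identity $A^{-1} - B = -A^{-1}PB$, which, after restriction to $P\cH$ from both sides, yields $(P + PA^{-1}P)\,a = I_{P\cH}$, so $a^{-1}$ exists and equals $P + PA^{-1}P$ on $P\cH$.

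For parts (ii) and (iii) I would invoke the Riesz decomposition $\cH = P_{\lambda_0}\cH \oplus (I_\cH - P_{\lambda_0})\cH$ into $A$-invariant closed subspaces. On the complementary subspace, $A - \lambda_0 I_\cH$ is boundedly invertible (since $\lambda_0$ is isolated in $\sigma(A)$), with inverse precisely the reduced resolvent $S_{\lambda_0}$ displayed in the statement; moreover $P_{\lambda_0}$ vanishes there. Under the hypothesis $D_0 = 0$ in (ii), the restriction $(A - \lambda_0 I_\cH)|_{P_{\lambda_0}\cH}$ is the zero operator, so $(A - \lambda_0 I_\cH + P_{\lambda_0})|_{P_{\lambda_0}\cH}$ is the identity on $P_{\lambda_0}\cH$, with inverse $P_{\lambda_0}$; adding the two block pieces produces the claimed formula $P_{\lambda_0} + S_{\lambda_0}$. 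For (iii) the compactness of $A$ forces the Riesz projection $P_{\lambda_0}$ to have finite rank (the case $\lambda_0 = 0$ isolated forces $\cH$ finite-dimensional and is trivial), hence $(A - \lambda_0 I_\cH)|_{P_{\lambda_0}\cH}$ is a finite-dimensional nilpotent $N$, and $N + I$ on this finite-dimensional space is invertible because its sole eigenvalue is $1$; combining with the complementary piece as in (ii) yields bounded invertibility.

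For part (iv) I would use the standard Schur factorization
\begin{equation*}
\gB = \begin{pmatrix} I_{\cH_1} & b_{1,2}\,b_{2,2}^{-1} \\ 0 & I_{\cH_2}\end{pmatrix}
\begin{pmatrix} b & 0 \\ 0 & b_{2,2}\end{pmatrix}
\begin{pmatrix} I_{\cH_1} & 0 \\ b_{2,2}^{-1}\,b_{2,1} & I_{\cH_2}\end{pmatrix},
\end{equation*}
valid as an operator identity since the outer factors are bounded and unipotent on $\cH$. The two triangular factors are boundedly invertible with explicit inverses obtained by negating the off-diagonal entries, and $b_{2,2}$ is invertible by hypothesis, so $\gB^{-1}\in\cB(\cH)$ if and only if $b^{-1}\in\cB(\cH_1)$, giving \eqref{10.8}. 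Inverting each factor and multiplying reproduces the explicit formula \eqref{10.10}. The main obstacle, to the extent there is one, is purely notational bookkeeping in (iv): one must verify that the Schur factorization respects the (generally unbounded) domains of the diagonal blocks $b_{j,j}$, that is, that $b_{1,2}\,b_{2,2}^{-1}$ maps $\cH_2$ into $\dom(b_{1,1})$ and the analogous inclusion on the other side hold, so that the compositions are well-defined on a common dense core. Once these domain inclusions are checked, the algebraic manipulations are routine.
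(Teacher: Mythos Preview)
Your proposal is correct. The paper itself provides no proof for this lemma: immediately before the statement it explicitly announces ``a series of well-known preliminary results which we state without proof,'' and each part carries a citation (\cite{JN01}, \cite{GH87}, \cite{Ka80}, \cite{Ra82}) in lieu of an argument. Your sketch reproduces the standard proofs one finds in those references: the Grushin--Feshbach reduction for (i), the Riesz/Laurent decomposition for (ii)--(iii), and the Schur factorization for (iv).

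One small slip: in your reverse implication in (i) the resolvent-type identity reads $A^{-1} - B = A^{-1}PB$ (not $-A^{-1}PB$), obtained from $A^{-1} - (A+P)^{-1} = A^{-1}\big((A+P)-A\big)(A+P)^{-1}$. With the correct sign the computation on $P\cH$ gives $(I_{P\cH} + PA^{-1}P)\,a = I_{P\cH}$, which is precisely your stated conclusion; the companion identity $A^{-1}-B = BPA^{-1}$ supplies the right inverse. Also, your domain caveat in (iv) is slightly overstated: since $b_{2,2}^{-1}$ maps $\cH_2$ into $\dom(b_{2,2})$ and the outer triangular factors are bounded on all of $\cH$, the factorization respects $\dom(\gB) = \dom(b_{1,1})\oplus\dom(b_{2,2})$ without requiring $b_{1,2}b_{2,2}^{-1}$ to map into $\dom(b_{1,1})$.
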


We emphasize that $b$ in Lemma \ref{l9.1}\,$(iv)$ is also known as a Schur complement (see, e.g., \cite[Sect.~1.6]{Tr08}) and formula \eqref{10.10} is a variant of the so-called Feshbach formula (see, e.g., \cite{DJ01}). In particular, Lemma \ref{l9.1}\,$(iv)$ is especially useful in the context of two-dimensional Schr\"odinger operators (cf.\ \cite{JN01}) as well as two-dimensional massless Dirac operators (cf.\ \cite{EGG20}). 

\begin{lemma}[{\cite{JN01}}] \lb{l9.2} Suppose that $\Omega \subset \bbC$ has zero as an accumulation point. \\[1mm]
$(i)$ Let $A(\zeta) = A_0 + \zeta A_1(\zeta)$, $\zeta \in \Omega$, be a family of $\cB(\cH)$-valued operators, 
with $A_1(\, \cdot \,)$ uniformly bounded for $\zeta \in \Omega$ sufficiently small. Suppose that $0$ is an 
isolated point in $\sigma(A_0)$ and denote by $P_0$ the Riesz projection in $\cH$ associated with $A_0$ 
and $0$. If $A_0 P_0 = 0$ $($i.e., the quasi-nilpotent operator associated with $A_0$ and $0$ vanishes\,$)$, 
then for $\zeta \in \Omega$ sufficiently small, the operator $B(\, \cdot \,)$ in $P_0 \cH$, defined by 
\begin{equation}
B(\zeta) := \zeta^{-1} \big\{P_0 - P_0 [A(\zeta) + P_0]^{-1} P_0\big\} 
= \sum_{j \in \bbN_0} (- \zeta)^j P_0 \big[A_1(\zeta) (A_0 + P_0)^{-1}\big]^{j+1} P_0, 
\end{equation}
is uniformly bounded as $\zeta \to 0$. Moreover, for $\zeta \in \Omega$ sufficiently small, 
\begin{equation} 
A(\zeta)^{-1} \in \cB(\cH) \, \text{ if and only if } \, B(\zeta)^{-1} \in \cB(P_0 \cH). 
\end{equation}
In particular, if $B(\zeta)^{-1} \in \cB(P_0 \cH)$ for $\zeta \in \Omega$ sufficiently small, then 
\begin{equation}
A(\zeta)^{-1} = [A(\zeta) + P_0]^{-1} + \zeta^{-1} [A(\zeta) + P_0]^{-1} P_0 B(\zeta)^{-1} P_0 [A(\zeta) + P_0]^{-1}.     \lb{10.13} 
\end{equation}
\end{lemma}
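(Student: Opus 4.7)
The plan is to combine Lemma \ref{l9.1}\,$(i)$ (applied to $A(\zeta)$ with the Riesz projection $P_0$) with Lemma \ref{l9.1}\,$(ii)$ (applied to $A_0$ at $\lambda_0 = 0$), together with a Neumann series expansion for $[A(\zeta)+P_0]^{-1}$. The essential leverage is the commutation identity that $P_0$ satisfies with $(A_0+P_0)^{-1}$, which is what turns the singular Laurent coefficient $B(\zeta) = \zeta^{-1} a(\zeta)$ into a bounded operator.

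First, I would invert the unperturbed auxiliary operator. Since $0$ is isolated in $\sigma(A_0)$ with vanishing quasi-nilpotent part $D_0 = A_0 P_0 = 0$, Lemma \ref{l9.1}\,$(ii)$ gives $(A_0+P_0)^{-1} \in \cB(\cH)$. Because $P_0$ is the Riesz projection associated with $A_0$ and $0$, it commutes with $A_0$, so $P_0 A_0 = A_0 P_0 = 0$, from which one reads off the sandwich identities
\begin{equation*}
P_0 (A_0 + P_0)^{-1} = P_0 = (A_0 + P_0)^{-1} P_0.
\end{equation*}
Uniform boundedness of $A_1(\dott)$ makes $\zeta A_1(\zeta)(A_0+P_0)^{-1}$ a contraction for $|\zeta|$ sufficiently small, so a Neumann series yields
\begin{equation*}
[A(\zeta) + P_0]^{-1} = \sum_{j \in \bbN_0} (-\zeta)^j [(A_0+P_0)^{-1} A_1(\zeta)]^j (A_0+P_0)^{-1} \in \cB(\cH).
\end{equation*}

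Next, I would compute $a(\zeta) := P_0 - P_0 [A(\zeta)+P_0]^{-1} P_0$, which is exactly the reduced operator from Lemma \ref{l9.1}\,$(i)$ with $P = P_0$. Substituting the Neumann series and applying the sandwich identities telescopes every summand according to
\begin{equation*}
P_0 [(A_0+P_0)^{-1} A_1(\zeta)]^j (A_0+P_0)^{-1} P_0 = P_0 [A_1(\zeta)(A_0+P_0)^{-1}]^j P_0, \quad j \in \bbN_0,
\end{equation*}
the outer $(A_0+P_0)^{-1}$ factors being absorbed against the flanking $P_0$'s (and an $(A_0+P_0)^{-1} P_0 = P_0$ being inserted on the right to rebalance). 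The $j = 0$ contribution is just $P_0$ and cancels the leading $P_0$ in the definition of $a(\zeta)$; reindexing the surviving tail by $k = j - 1$ gives $a(\zeta) = \zeta B(\zeta)$ with $B(\zeta)$ precisely as in the stated series. Uniform boundedness of $B(\zeta)$ as $\zeta \to 0$ is then immediate from the geometric majorant provided by uniform boundedness of $A_1(\dott)$ together with the constant bound on $(A_0+P_0)^{-1}$.

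Finally, I would close the invertibility equivalence and extract the inversion formula. Since $a(\zeta) = \zeta B(\zeta)$, for $\zeta \neq 0$ invertibility of $a(\zeta)$ in $\cB(P_0 \cH)$ is equivalent to invertibility of $B(\zeta)$ in $\cB(P_0 \cH)$, and Lemma \ref{l9.1}\,$(i)$ transfers this directly to invertibility of $A(\zeta)$ in $\cB(\cH)$; substituting $a(\zeta)^{-1} = \zeta^{-1} B(\zeta)^{-1}$ into the inversion formula supplied by Lemma \ref{l9.1}\,$(i)$ then delivers \eqref{10.13}. The only step that is not purely formal is the telescoping identity in the second paragraph: this is the one place where the hypothesis $A_0 P_0 = 0$ (equivalently, vanishing of the quasi-nilpotent part of $A_0$ at $0$) is used in an essential way, and it is precisely what produces the extra factor of $\zeta$ legitimizing the singular rescaling $B(\zeta) = \zeta^{-1} a(\zeta)$ without which the whole scheme would collapse.
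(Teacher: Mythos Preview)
Your proof is correct. The paper does not supply its own proof of this lemma; it is quoted from Jensen--Nenciu \cite{JN01} among a list of ``well-known preliminary results which we state without proof.'' Your argument is exactly the natural one: invoke Lemma~\ref{l9.1}\,$(ii)$ to get $(A_0+P_0)^{-1}\in\cB(\cH)$, use $A_0P_0=P_0A_0=0$ to obtain the absorption identities $P_0(A_0+P_0)^{-1}=(A_0+P_0)^{-1}P_0=P_0$, expand $[A(\zeta)+P_0]^{-1}$ as a Neumann series, and then feed $a(\zeta)=\zeta B(\zeta)$ into Lemma~\ref{l9.1}\,$(i)$. This is precisely the mechanism in \cite{JN01}, and your identification of the quasi-nilpotent hypothesis $A_0P_0=0$ as the source of the extra factor of $\zeta$ is on point.

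One small wording issue: your description of the telescoping (``an $(A_0+P_0)^{-1}P_0=P_0$ being inserted on the right to rebalance'') is a bit misleading. Nothing is inserted; rather, in the product $P_0[(A_0+P_0)^{-1}A_1(\zeta)]^j(A_0+P_0)^{-1}P_0$ the outermost $(A_0+P_0)^{-1}$ on each side is absorbed by the flanking $P_0$, leaving $P_0 A_1(\zeta)(A_0+P_0)^{-1}\cdots A_1(\zeta)P_0$, which equals $P_0[A_1(\zeta)(A_0+P_0)^{-1}]^j P_0$ after one more application of $(A_0+P_0)^{-1}P_0=P_0$ on the right. The end result is the same as what you wrote, but the mechanism is pure absorption, not insertion.
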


\begin{remark} \lb{r9.3} 
A combined application of Lemma \ref{l9.1}\,$(iv)$ and Lemma \ref{l9.2} can be realized in the following 
scenario: Suppose
\begin{align}
& b_{1,1}(\zeta) = \zeta^{-1} [b_0 + \beta(\zeta)], \, \text{ with $b_0^{-1} \in \cB(\cH_1)$ and 
$\| \beta(\zeta)\|_{\cB(\cH_1)} \underset{\substack{\zeta \to 0 \\ \zeta \in \Omega}}{=} \oh(1)$},  \no \\
& b_{2,2}(\zeta)^{-1} \in \cB(\cH_2) \, \text{ is uniformly bounded for $\zeta \in \Omega$ sufficiently small,} \\
& b_{1,2}(\zeta) \in \cB(\cH_2,\cH_1), \quad b_{2,1}(\zeta) \in \cB(\cH_1,\cH_2) \, 
\text{ are uniformly bounded for $\zeta \in \Omega$}    \no \\
& \quad \text{sufficiently small.}     \no 
\end{align}
Then 
\begin{equation}
b_{1,1}(\zeta)^{-1} = \zeta b_0^{-1} \big[I_{\cH_1} + \beta(\zeta) b_0^{-1}\big]^{-1}, \quad 
\big\|b_{1,1}(\zeta)^{-1}\big\| \underset{\substack{\zeta \to 0 \\ \zeta \in \Omega}}{=} \Oh(\zeta), 
\end{equation}
and under these circumstances one then infers, with 
\begin{equation} 
b(\zeta) := \big[b_{1,1}(\zeta) - b_{1,2}(\zeta) \, b_{2,2}^{-1}(\zeta) \, b_{2,1}(\zeta)\big] 
\end{equation} 
(cf.\ \eqref{10.8}), that for $\zeta \in \Omega$ sufficiently small, 
\begin{equation}
b(\zeta)^{-1} = b_{1,1}(\zeta)^{-1} \big[I_{\cH_1} - b_{1,2}(\zeta) \, b_{2,2}(\zeta)^{-1} \, b_{2,1}(\zeta) \, b_{1,1}(\zeta)^{-1}\big]^{-1}.  
\end{equation} 
\end{remark}
${}$ \hfill $\diamond$

At this point one can summarize the strategy in deriving threshold expansions of resolvents described in Jensen and Nenciu \cite{JN01} (see also Murata \cite{Mu82}), in fact, in our context, expansions of  
\begin{align}
\begin{split} 
& V_2 (H - z I_{[L^2(\bbR^n)]^N})^{-1} V_1^* =  I_{[L^2(\bbR^n)]^N}     \\
& \quad - 
\big[I_{[L^2(\bbR^n)]^N} + V_2 (H_0 - z I_{[L^2(\bbR^n)]^N})^{-1} V_1^*\big]^{-1}, \quad z \in \bbC \backslash \bbR,    \lb{9.17} 
\end{split} 
\end{align}
in terms of the (symmetrized) Birman--Schwinger-type operator 
\begin{equation}
V_2 (H_0 - z I_{[L^2(\bbR^n)]^N})^{-1} V_1^*     \lb{9.18}
\end{equation} 
(cf.\ Theorem \ref{t3.4}) around $z=0$ as follows: \\[1mm]
$(\alpha)$ One notes upon combining \eqref{5.8}--\eqref{5.8f} and \eqref{5.17} that treating even dimensions $n$ is considerably more involved than the case of odd dimensions $n$ due to the presence of the logarithm\footnote{This is even more pronounced in the case of Schr\"odinger operators for $n=2$ due to the logarithmic blowup of the Green's function \eqref{5.2} as $z \to 0$. Actually, in the Schr\"odinger context even the one-dimensional case exhibits a $z^{-1/2}$ singularity at $z=0$, rendering both cases more involved than $n \geq 3$. Since the Dirac Green's matrix never exhibits a blowup as $z \to 0$ in all dimensions $n \in \bbN$, $n \geq 2$ (cf.\ \eqref{5.18}), this renders the massless Dirac situation technically a bit simpler than the case of one and two-dimensional Schr\"odinger operators (considered in great detail in \cite{JN01}).} in \eqref{5.8c}. At any rate, formulas \eqref{5.8}--\eqref{5.8f} and \eqref{5.17} permit one to expand the Birman--Schwinger-type operator 
\eqref{9.18} around $z=0$ assuming sufficient decay of $V_1^*(x), V_2(x)$ as 
$|x| \to \infty$. This step is cumbersome, but poses no further difficulties. What might cause difficulties is an expansion of the left-hand side of \eqref{9.17}, or, equivalently, an expansion of the inverse 
$\big[I_{[L^2(\bbR^n)]^N} + V_2 (H_0 - z I_{[L^2(\bbR^n)]^N})^{-1} V_1^*\big]^{-1}$ on the right-hand side of 
\eqref{9.17}. \\[1mm]
$(\beta)$ If this inverse exists boundedly in a sufficiently small neighborhood of $z=0$, that is, if 
\begin{equation}
\big[I_{[L^2(\bbR^n)]^N} + V_2 (H_0 - z I_{[L^2(\bbR^n)]^N})^{-1} V_1^*\big]^{-1} 
\in \cB\big([L^2(\bbR^n)]^N\big)     \lb{9.19} 
\end{equation}
for $|z|$ sufficiently small, then no difficulty arises and a geometric series argument yields the existence of such an expansion in norm (cf.\ Section \ref{s5}), given sufficient decay of $V_1^*(x), V_2(x)$ as $|x| \to \infty$ also in appropriate trace ideal norms (cf.\ the detailed treatment in Section \ref{s6}). This is actually the generic case where $H$ has no zero-energy eigenvalue and no zero-energy resonance (the latter is defined as giving rise to an eigenvalue $-1$ of the Birman--Schwinger-type operator \eqref{9.18} but with no associated 
$L^2$-eigenfunction in the domain of $H$). At this point all that remains is a computation of the expansion coefficients, but the latter is of limited urgency in our present context as we will primarily rely on the leading order in all expansions. 
\\[1mm]
$(\gamma)$ If the inverse in \eqref{9.19} does not exist boundedly in a sufficiently small neighborhood of $z=0$, that is, if the compact operator $V_2 (H_0 - z I_{[L^2(\bbR^n)]^N})^{-1} V_1^*$ has an eigenvalue $-1$, the situation changes drastically. In this case $H$ either has an eigenvalue $0$, or zero-energy resonances, or possibly both, a zero-energy eigenvalue and zero-energy resonances (all of them possibly degenerate) in the worst case scenario. In any of these (exceptional) situations the norm of 
\begin{equation}
\big[I_{[L^2(\bbR^n)]^N} + V_2 (H_0 - z I_{[L^2(\bbR^n)]^N})^{-1} V_1^*\big]^{-1} 
\end{equation}
and hence that of 
\begin{equation} 
V_2 (H - z I_{[L^2(\bbR^n)]^N})^{-1} V_1^* 
\end{equation} 
will exhibit a singularity as $z \to 0$. Without going into details in this summary (see, however, Theorem \ref{t9.11}), we note that the blowup in the case of zero-energy eigenvalue(s) is of the order $z^{-1}$, and in the presence of zero-energy resonances (but no zero-energy eigenvalues) is of a less singular structure, for instance, like $z^{-1} [\ln(z)]^{-1}$, $z^{-1/2}$, or $\ln(z)$, etc., the details now depending crucially on the space dimension $n \in \bbN$, $n \geq 2$, and whether Schr\"odinger or Dirac operators (massive or massless) are considered.   

But even though $\big[I_{[L^2(\bbR^n)]^N} + V_2 (H_0 - z I_{[L^2(\bbR^n)]^N})^{-1} V_1^*\big]$ does not possess a bounded inverse as $z \to 0$, the operator 
\begin{equation}
\big[I_{[L^2(\bbR^n)]^N} + V_2 (H_0 - z I_{[L^2(\bbR^n)]^N})^{-1} V_1^* + P_0\big],
\end{equation}
where $P_0$ is the (finite-dimensional) Riesz projection associated with the operator 
\begin{equation} 
I_{[L^2(\bbR^n)]^N} + \ol{V_2 (H_0 - (0 + i 0) I_{[L^2(\bbR^n)]^N})^{-1} V_1^*},     \lb{9.24} 
\end{equation}
and its eigenvalue $0$, the norm limit of 
\begin{equation}
I_{[L^2(\bbR^n)]^N} + V_2 (H_0 - i \varepsilon I_{[L^2(\bbR^n)]^N})^{-1} V_1^* 
\end{equation}
as $\varepsilon \downarrow 0$, and its eigenvalue $0$, actually has a bounded inverse according to Lemma \ref{l9.1}\,$(ii)$. (Assuming compactness of the operators 
$\ol{V_2 (H_0 - (0 +i 0) I_{[L^2(\bbR^n)]^N})^{-1} V_1^*}$ as well as 
$V_2 (H_0 - i \varepsilon I_{[L^2(\bbR^n)]^N})^{-1} V_1^*$, one concludes that $\dim(\ran(P_0)) < \infty$.) Lemma \ref{l9.2} then demonstrates the key reduction step where the inverse of $A(\zeta)$ 
in $\cH$ is now reduced to the inverse of $B(\zeta)$ in the finite-dimensional Hilbert space $P_0 \cH$. \\[1mm] 
$(\delta)$ At this point one iterates the procedure ending up localizing the singularity in subspaces of decreasing dimensions. With each step the singularity is increased. However, since 
\begin{equation}
z \big[\ol{V_2 (H_0 - z I_{[L^2(\bbR^n)]^N})^{-1} V_1^*}\big]
\end{equation} 
stays bounded for $z = i \varepsilon$ as $\varepsilon \downarrow 0$, the reduction process must stop after a finite number of steps, leading to invertibility of a reduced operator so that again a geometric series argument as in step $(\beta)$ applies. This completes the process resulting in an expansion in appropriate variables involving $z$, $z^{1/2}$, $\ln(z)$, or $[c + \ln(z)]$ for appropriate $c \in \bbC\backslash \{0\}$ (again, depending on spatial dimension $n$ and whether Schr\"odinger or Dirac operators are involved). We refer once more to \cite{JN01} for the somewhat involved details (and the difficulties associated with expansions involving 
$\sum_{k=-1}^{\infty} \sum_{\ell=-\infty}^{\infty} \zeta^k [\ln(\zeta)]^{\ell}$ which cannot be asymptotic in nature) in the case of Schr\"odinger operators and to \cite{EGG20} in the case of two-dimensional massless Dirac 
operators. Much of the threshold analysis in \cite{EGG20} readily extends to dimensions $n \geq 3$ as we will see later in this section.

\begin{remark} \lb{r9.4}
In outlining steps $(\alpha)$--$(\delta)$ above, we deliberately sidestepped verifying the condition $A_0 P_0 = 0$ necessary for Lemma \ref{l9.2} to hold. The condition is equivalent to the statement that the algebraic and geometric multiplicities of the eigenvalue $0$ of $A_0$ coincide. Since by \eqref{5.18}, $G_0(0+i\,0;x,y)$ is purely imaginary, but also involves the scalar product $\alpha\cdot (x-y)$, employing the polar decomposition for the self-adjoint $N \times N$ matrix $V(\dott)$ (i.e., $V(\dott) = U_V(\dott)|V(\dott)|$) in the form (cf.\ \cite{GMMN09}) 
\begin{align}
\begin{split} 
& V(x) = |V(x)|^{1/2}U_V(x) |V(x)|^{1/2} = V_1(x)^* V_2(x) \, \text{ for a.e.~$x \in \bbR^n$,}      \\
& V_1 = V_1^* = |V|^{1/2},  \quad V_2 = U_V |V|^{1/2} = U_V V_1, \quad U_V^2 = I_N,      \lb{9.27}
\end{split} 
\end{align}
making the choice that 
\begin{equation} 
\text{$U_V$ is unitary and self-adjoint}      \lb{9.27a} 
\end{equation}
(the choice of $U_V$ is nonunique if $V$ has a kernel and we simply fix $U_V$ to be the identity operator on $\ker(V)$), the matrix-valued integral kernel 
\begin{equation}
|V(x)|^{1/2}(x) G_0(0+i\,0;x,y) |V(y)|^{1/2} 
\end{equation} 
generates a self-adjoint operator. Hence, the elegant device used in \cite{JN01} that reduces their analysis to a self-adjoint operator $A_0$ in Lemma \ref{l9.2}, so that $A_0 P_0 =0$ is automatically satisfied, applies also in the massless Dirac operator context. (Naturally, this approach of \cite{JN01} also applies in the massive case, where $H_0(m)$, $m > 0$, has the spectral gap $(-m, m)$.) In essence, Jensen and Nenciu \cite{JN01} replace  the operator   
\begin{equation}
I_{[L^2(\bbR^n)]^N} + V_2 (H_0 - z I_{[L^2(\bbR^n)]^N})^{-1} V_1^*, \quad z \in \bbC\backslash\bbR, 
\lb{9.28} 
\end{equation}
by its modification 
\begin{equation}
U_V I_{[L^2(\bbR^n)]^N} + V_1 (H_0 - z I_{[L^2(\bbR^n)]^N})^{-1} V_1^*, \quad z \in \bbC\backslash\bbR, 
\lb{9.29} 
\end{equation}
and show that the formalism displayed in \eqref{3.19}--\eqref{3.26a} instantly extends to the setup in \eqref{9.29}. 
In particular, the norm limit
\begin{equation}
U_V I_{[L^2(\bbR^n)]^N} + \ol{V_1 (H_0 - (0 + i 0) I_{[L^2(\bbR^n)]^N})^{-1} V_1^*} 
\lb{9.30} 
\end{equation}
is now self-adjoint and hence the analog of the condition 
\begin{equation} 
A_0 P_0 = 0      \lb{9.30a}
\end{equation}
thus holds automatically. 
Due to this fact we can, without loss of generality, safely disregard the distinction between \eqref{9.28} and \eqref{9.29} in much of the remainder of this manuscript. 

Finally, by an abuse of notation, we also denote the Riesz projection associated with the self-adjoint operator  \eqref{9.30} and its eigenvalue $0$ by $P_0$. Assuming compactness of the operator 
\begin{equation} 
\ol{V_1 (H_0 - (0+i0) I_{[L^2(\bbR^n)]^N})^{-1} V_1^*},      \lb{9.31a} 
\end{equation} 
the fact that $\sigma(U_V) \subseteq \{1, -1\}$ implies that zero is an isolated point in the spectrum of the operator in \eqref{9.30} and hence 
\begin{equation}
\dim(\ran(P_0)) < \infty.     \lb{9.32a} 
\end{equation} 
(In the concrete context of \eqref{9.27} one has in addition that $V_1=V_1^*$, but this simplification is not needed to conclude \eqref{9.30a} and \eqref{9.32a}.) \hfill $\diamond$ 
\end{remark}

Applying the resolvent equation \eqref{3.21}, \eqref{3.22} to the pair $H, H_0$ results in 
\begin{align}
&(H - z I_{[L^2(\bbR^n]^N})^{-1} = (H_0 - z I_{[L^2(\bbR^n]^N})^{-1} 
- \big[V_1  (H_0 - {\ol z} I_{[L^2(\bbR^n]^N})^{-1}\big]^*    \no \\ 
& \quad \times \big[I_{[L^2(\bbR^n)]^N} + V_2 (H_0 - z I_{[L^2(\bbR^n)]^N})^{-1} V_1^*\big]^{-1} 
V_2 (H_0 - z I_{[L^2(\bbR^n]^N})^{-1},    \\
& \hspace*{9.55cm} z \in \bbC \backslash \bbR,    \no
\end{align} 
To analyze the possible singularity of $(H - z I_{[L^2(\bbR^n]^N})^{-1}$ as $z \to 0$, we choose 
arbitrary
\begin{equation}
\psi_j \in C_0^{\infty}(\bbR^n) \, \text{ real-valued, } \,  j=1,2, 
\end{equation}
and consider
\begin{align}
& \psi_2 I_N (H - z I_{[L^2(\bbR^n]^N})^{-1} \psi_1 I_N =  \psi_2 I_N(H_0 - z I_{[L^2(\bbR^n]^N})^{-1}  \psi_1 I_N 
\no \\
& \quad - \big[V_1  (H_0 - {\ol z} I_{[L^2(\bbR^n]^N})^{-1}  \psi_2 I_N\big]^*   \no \\ 
& \qquad \times \big[I_{[L^2(\bbR^n)]^N} + V_2 (H_0 - z I_{[L^2(\bbR^n)]^N})^{-1} V_1^*\big]^{-1}    \no \\
& \qquad \times V_2 (H_0 - z I_{[L^2(\bbR^n]^N})^{-1}  \psi_1 I_N, \quad z \in \bbC \backslash \bbR.   
\end{align} 
As long as 
\begin{equation}
\big||V|^{1/2}_{\ell,m}(x)\big| = |V_{1,\ell,m}(x)| \leq C \langle x \rangle^{- 1} \, 
\text{ for a.e.~$x \in \bbR^n$, $1 \leq \ell,m \leq N$,}    \lb{9.31}
\end{equation}
Theorem \ref{t5.7}\,$(iii)$ implies that 
\begin{equation}
V_j  (H_0 - z I_{[L^2(\bbR^n]^N})^{-1}  \psi_{j'} I_N \in \cB\big([L^2(\bbR^n)]^N\big), \quad z \in \ol{\bbC_+}, 
\; j, j' \in \{1,2\}, 
\end{equation}
since obviously 
\begin{equation}
\psi_2 I_N(H_0 - z I_{[L^2(\bbR^n]^N})^{-1}  \psi_1 I_N \in \cB\big([L^2(\bbR^n)]^N\big), \quad z \in \ol{\bbC_+}, 
\end{equation}
(in fact, Theorem \ref{t6.6} implies trace ideal properties) one also has 
\begin{equation}
\psi_2 I_N(H - z I_{[L^2(\bbR^n]^N})^{-1}  \psi_1 I_N \in \cB\big([L^2(\bbR^n)]^N\big), \quad z \in \ol{\bbC_+}.  
\end{equation}
Thus, since $\psi_j \in C_0^{\infty}(\bbR^n)$, $j=1,2$, are arbitrary (apart from being real-valued for 
simplicity), one thus concludes that 
\begin{align}
& \psi_2 I_N (H - z I_{[L^2(\bbR^n]^N})^{-1} \psi_1 I_N \in \cB\big([L^2(\bbR^n)]^N\big) \, 
\text{ for $|z|$ sufficiently small} \no \\
& \text{if and only if}   \\
& \big[I_{[L^2(\bbR^n)]^N} + \ol{V_2 (H_0 - z I_{[L^2(\bbR^n)]^N})^{-1} V_1^*}\big]^{-1} 
\in \cB\big([L^2(\bbR^n)]^N\big)   \no \\
& \quad \text{ for $|z|$ sufficiently small.}   \no 
\end{align}

Given the extensive treatment in \cite{JN01} in the case of Schr\"odinger operators in dimensions $n \in \bbN$ (especially, in the most difficult of cases $n=1,2$), and in \cite{EGG20} in the case of massless Dirac operators in dimension $n=2$, and given the fact that dimensions $n \in \bbN$, 
$n \geq 3$, subordinate in difficulty to the case $n=2$ in the massless context, we now briefly discuss the threshold (i.e., $z=0$) behavior of massless Dirac operator in dimensions $n \geq 2$.  

We start by making the following assumptions on the matrix-valued potential $V$.

\begin{hypothesis} \lb{h9.5}
Let $n \in \bbN$, $n \geq 2$, and $\varepsilon > 0$. Assume the a.e.~self-adjoint matrix-valued potential 
$V = \{V_{\ell,m}\}_{1 \leq \ell,m \leq N}$ satisfies for some fixed $\varepsilon \in (0,1)$, $C \in (0,\infty)$, 
\begin{align}
\begin{split} 
& V \in [L^{\infty} (\bbR^n)]^{N \times N},       \\
& |V_{\ell,m}(x)| \leq C \langle x \rangle^{- 2 (1 + \varepsilon)} \, 
\text{ for a.e.~$x \in \bbR^n$, $1 \leq \ell,m \leq N$.}    \lb{9.31A}
\end{split} 
\end{align}
In accordance with the factorization based on the polar decomposition of $V$ discussed in \eqref{9.27} we 
suppose that 
\begin{equation}
V = V_1^* V_2 = |V|^{1/2} U_V |V|^{1/2}, \, \text{ where } \, 
V_1 = V_1^* = |V|^{1/2}, \; V_2 = U_V |V|^{1/2}.
\end{equation}  
\end{hypothesis} 

We continue with the threshold, that is, the $z=0$ behavior of $H$: 

\begin{definition} \lb{d9.6} Assume Hypothesis \ref{h9.5} with $\varepsilon = 0$ in \eqref{9.31A}. \\[1mm] 
$(i)$ The point $0$ is called a zero-energy eigenvalue of $H$ if $H \psi = 0$ has a distributional solution $\psi$ satisfying 
\begin{equation} 
\psi \in \dom(H) = [W^{1,2}(\bbR^n)]^N     
\end{equation}
$($equivalently, $\ker(H) \supsetneqq \{0\}$$)$. \\[1mm] 
$(ii)$ The point $0$ is called a zero-energy $($or threshold\,$)$ resonance of $H$ if 
\begin{equation}
\ker\big(\big[I_{[L^2(\bbR^n)]^N} + \ol{V_2 (H_0 - (0 + i 0) I_{[L^2(\bbR^n)]^N})^{-1} V_1^*}\big]\big)   
\supsetneqq \{0\}, 
\end{equation}
and if there exists $0 \neq \phi \in \ker\big(\big[I_{[L^2(\bbR^n)]^N} + \ol{V_2 (H_0 - (0 + i 0) I_{[L^2(\bbR^n)]^N})^{-1} V_1^*}\big]\big)$ such that $\psi$ defined by 
\begin{align}
\begin{split} 
& \psi(x) = - \big((H_0 - (0 + i 0) I_{[L^2(\bbR^n)]^N})^{-1} V_1^* \phi\big)(x)     \lb{9.46A} \\
& \hspace*{8.5mm} = - i 2^{-1} \pi^{- n/2} \Gamma(n/2) \int_{\bbR^n} d^n y \, |x-y|^{-n} 
[\alpha\cdot (x - y)] V_1(y)^* \phi(y)    
\end{split} 
\end{align}
$($for a.e.~$x \in \bbR^n$, $n \geq 2$$)$ is a distributional solution of $H u = 0$ satisfying 
\begin{equation} 
\psi \notin [L^2(\bbR^n)]^N.
\end{equation} 
$(iii)$ $0$ is called a regular point for $H$ if it is neither a zero-energy eigenvalue nor a zero-energy 
resonance of $H$.  
\end{definition}

Additional properties of $\psi$ are isolated in Theorem \ref{t9.7}.

While the point $0$ being regular for $H$ is the generic situation, zero-energy eigenvalues and/or resonances are exceptional cases. 

For future purposes we recall the asymptotic Green's matrix expansion as $z \to 0$ in the following form,
\begin{align}
G_0(z;x,y) & \underset{\substack{z \to 0 \\ z \in \ol{\bbC_+}\backslash\{0\}}}{=} 
i 2^{-1} \pi^{-n/2} \Gamma(n/2) \alpha \cdot \f{(x - y)}{|x - y|^n}    \no \\
&  \qquad \quad \; - \delta_{n,2} (2 \pi)^{-1} z \ln(z) I_N   \no \\ 
&  \qquad \quad \; + \delta_{n,2} (2\pi)^{-1} [\gamma_{E-M} - i (\pi/2) + \ln(|x - y|/2))] z I_N      \no \\ 
& \qquad \quad \; + [1 - \delta_{n,2}] (n-2)^{-1} 2^{-1} \pi^{-n/2} \Gamma(n/2) |x - y|^{2-n} z I_N      \\
&  \qquad \quad \; + \delta_{n,2}  \Oh\big(z^2 |x - y| \ln(z |x - y|)\big) + \delta_{n,3} \Oh\big(z^2\big)
+ \Oh\big(z^2 |x - y|^2\big)    \no \\
& \underset{\substack{z \to 0 \\ z \in \ol{\bbC_+}\backslash\{0\}}}{=}  
R_{0,0}(x-y) + z R_{1,0}(x-y)   \no \\
&  \qquad \quad \; + z \big[- (2 \pi)^{-1} \ln(z/2) - (2\pi)^{-1} \gamma_{E-M} +i 4^{-1}\big] \delta_{n,2} R_{1,1}(x-y)   
\no \\
&  \qquad \quad \;  + \delta_{n,2} \Oh\big(z^2 |x - y| \ln(z |x - y|)\big) + \delta_{n,3} \Oh\big(z^2\big) 
+ \Oh\big(z^2 |x - y|^2\big), 
\end{align}
where we introduced the following convenient abbreviations (for $x, y \in \bbR^n$, $x \neq y$):
\begin{align}
R_{0,0}(x-y) &= G_0(0+i0;x,y) = i 2^{-1} \pi^{-n/2} \Gamma(n/2) \, \alpha \cdot \f{(x - y)}{|x - y|^n} \no \\
& = \begin{cases} (2 \pi)^{-1} i \alpha \cdot \nabla_x \ln(|x-y|), & n=2, \\
- i \alpha \cdot \nabla_x g_0(0;x,y), & n\geq 3, 
\end{cases}    \lb{9.46} \\
R_{1,0}(x-y) &= \begin{cases} -(2 \pi)^{-1} \ln(|x-y|) I_N, & n=2,    \lb{9.47} \\
g_0(0;x,y) I_N = \f{1}{(n-2) \omega_{n-1}} |x - y|^{2-n} I_N , & n \geq 3,  
\end{cases}  \\
& \hspace*{4.55cm} \omega_{n-1} = 2 \pi^{n/2}/\Gamma(n/2),      \no \\ 
R_{1,1}(x-y) &= 1, \quad n \geq 2.    \lb{9.48} 
\end{align}

\begin{theorem} \lb{t9.7}
Assume Hypothesis \ref{h9.5} with $\varepsilon = 0$ in \eqref{9.31A}.  \\[1mm] 
$(i)$ If $n=2$, there are precisely four possible cases: \\[1mm]
Case $(I)$: $0$ is regular for $H$. \\[1mm]
Case $(II)$: $0$ is a $($possibly degenerate\footnote{We will recall in Lemma \ref{l9.10a}\,$(i)$ that if $n=2$, the degeneracy in Case $(II)$ is at most two.}\,$)$ resonance of $H$. In this case the resonance 
functions $\psi$ satisfy 
\begin{align}
\begin{split} 
& \psi \in [L^q(\bbR^2)]^2, \quad q \in (2, \infty) \cup\{\infty\}, \quad \nabla \psi \in [L^2(\bbR^2)]^{2 \times 2},   \\ 
& \psi \notin [L^2(\bbR^2)]^2.     \lb{3.60b}
\end{split} 
\end{align}
Case $(III)$: $0$ is a $($possibly degenerate\,$)$ eigenvalue of $H$. In this case the corresponding 
eigenfunctions $\psi \in \dom(H) = \big[W^{1,2}(\bbR^2)\big]^2$ of $H \psi = 0$ also satisfy
\begin{equation} 
\psi \in [L^q(\bbR^2)]^2, \quad q \in [2, \infty) \cup \{\infty\}.     \lb{3.60c} 
\end{equation} 
Case $(IV)$: A possible mixture of Cases $(II)$ and $(III)$. \\[1mm]
$(ii)$ If $n \in \bbN$, $n \geq 3$, there are precisely two possible cases: \\[1mm]
Case $(I)$: $0$ is regular for $H$. \\[1mm]
Case $(II)$: $0$ is a $($possibly degenerate\,$)$ eigenvalue of $H$. In this case, the corresponding eigenfunctions $\psi \in \dom(H) = \big[W^{1,2}(\bbR^n)\big]^N$ of $H \psi = 0$ also satisfy
\begin{equation}
\psi \in \big[L^q(\bbR^n)\big]^N, \quad q \in \begin{cases} (3/2, \infty) \cup \{\infty\}, & n=3, \\
(4/3,4), & n=4, \\
(2n/(n+2), 2n/(n-2)), & n \geq 5.
\end{cases}     \lb{9.55A} 
\end{equation}
In particular, there are no zero-energy resonances of $H$ in dimension $n \geq 3$. \\[1mm]  
$(iii)$ The point $0$ is regular for $H$ if and only if 
\begin{equation}
\ker\big(\big[I_{[L^2(\bbR^n)]^N} + \ol{V_2 (H_0 - (0 + i 0) I_{[L^2(\bbR^n)]^N})^{-1} V_1^*}\big]\big) 
= \{0\}. 
\end{equation}
\end{theorem}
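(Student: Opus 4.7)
\medskip

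\noindent
\textbf{Proof plan.}
The strategy is to transfer the analysis from the putative zero mode $\psi$ of $H$ to the kernel element $\phi$ of the (normal boundary value of the) Birman--Schwinger-type operator, and to read off the integrability of $\psi$ from the explicit representation \eqref{9.46A} by combining Riesz-potential bounds (Theorem \ref{t5.4A}) with the decay of $V_1$. First, I would verify that if
\[
0\neq \phi\in \ker\!\bigl(I_{[L^2(\bbR^n)]^N}+\ol{V_2(H_0-(0+i0)I_{[L^2(\bbR^n)]^N})^{-1}V_1^*}\bigr),
\]
and $\psi$ is defined by \eqref{9.46A}, then $\psi$ is a distributional solution of $H\psi=0$. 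Indeed, formally $H_0\psi=-V_1^*\phi$, while $V_2\psi=-\ol{V_2(H_0-(0+i0)I)^{-1}V_1^*}\phi=\phi$, so $V\psi=V_1^*V_2\psi=V_1^*\phi=-H_0\psi$; the relation $\phi=V_2\psi$ will also be used repeatedly in the converse direction. All manipulations are justified by the weighted-$L^2$ boundedness of the limiting resolvent established in Theorem \ref{t3.4}.

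Next, I would estimate $\psi$ via the pointwise bound
\[
\|\psi(x)\|_{\bbC^N}\le C\int_{\bbR^n}|x-y|^{1-n}\|V_1(y)^*\|_{\cB(\bbC^N)}|\phi(y)|\,d^n y=C\,(\cR_{1,n}g)(x),
\]
where $g(y):=\|V_1(y)^*\|_{\cB(\bbC^N)}|\phi(y)|$ and $\cR_{1,n}$ is the Riesz potential of order $1$ from \eqref{5.50}. By Hypothesis \ref{h9.5}, $V_1\in [L^r(\bbR^n)]^{N\times N}$ for all $r>n/(1+\varepsilon)$, and since $\phi\in [L^2(\bbR^n)]^N$, H\"older's inequality yields $g\in L^p(\bbR^n)$ for all $p$ slightly larger than $2n/(n+2(1+\varepsilon))$. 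Theorem \ref{t5.4A}$(ii)$ then gives $\psi\in [L^q(\bbR^n)]^N$ with $q^{-1}=p^{-1}-n^{-1}$, which produces the ranges in \eqref{3.60b} (for $n=2$) and \eqref{9.55A} (for $n\ge 3$); the $L^\infty$ endpoint is obtained by splitting the integral at $|x-y|=1$ and using the $L^p\cap L^{p'}$ membership of $g$. For $n\ge 3$, the admissible range of $p$ contains $2n/(n+2)$, hence $\psi\in [L^2(\bbR^n)]^N$; combining with $V\psi\in [L^2]^N$ and $H_0\psi=-V\psi$ lifts $\psi$ into $\WoneN=\dom(H)$, so $\psi$ is a genuine eigenfunction and no resonance can occur.

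For $n=2$ the Riesz-potential argument stops just short of $L^2$, so I would perform a Taylor expansion of the kernel to isolate the leading behavior at infinity:
\[
\alpha\cdot\frac{x-y}{|x-y|^2}=\alpha\cdot\frac{x}{|x|^2}+\Oh(|y|/|x|^2),\qquad |x|\to\infty,
\]
leading to
\[
\psi(x)\underset{|x|\to\infty}{=}-i\pi^{-1}\Bigl[\alpha\cdot\frac{x}{|x|^2}\Bigr]\int_{\bbR^2}V_1(y)^*\phi(y)\,d^2y+\Oh\bigl(|x|^{-2}\bigr),
\]
the integral being absolutely convergent since $V_1\in[L^2(\bbR^2)]^{N\times N}$ and $\phi\in[L^2(\bbR^2)]^N$. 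Because $|x|^{-1}\notin L^2(\bbR^2)$ near infinity while $|x|^{-2}\in L^2$ near infinity, $\psi\in[L^2(\bbR^2)]^2$ if and only if the two scalar conditions $\int_{\bbR^2}V_1(y)^*\phi(y)\,d^2y=0\in\bbC^2$ hold; failure of either yields a genuine resonance, while their simultaneous vanishing promotes $\phi$ to an eigenfunction, whence Cases $(I)$--$(IV)$. The gradient bound in \eqref{3.60b} follows by differentiating \eqref{9.46A} once and applying the Calder\'on--Zygmund bound to the resulting singular integral. Finally, item $(iii)$ is immediate: triviality of the Birman--Schwinger kernel rules out resonances by definition and rules out zero eigenvalues since any $\psi\in\dom(H)$ with $H\psi=0$ produces $\phi=V_2\psi$ in that kernel and satisfies $\psi=-(H_0-(0+i0)I)^{-1}V_1^*\phi$, forcing $\psi=0$.

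The principal obstacle I anticipate is the $n=2$ endpoint analysis: the mapping $\cR_{1,2}:L^1\to L^2$ fails, so square-integrability of $\psi$ cannot be decided by Riesz-potential estimates alone and must be extracted from the explicit leading-order expansion, with the error term controlled in weighted spaces along the lines of \cite{EGG20}. A subsidiary point is verifying the cancellation mechanism that limits the resonance subspace to dimension at most two (which matches Lemma \ref{l9.10a}$(i)$), and ensuring the required integrability $V_1^*\phi\in L^1(\bbR^2)$ through the decay rate $|V_1|\lesssim\langle\,\cdot\,\rangle^{-(1+\varepsilon)}$ built into Hypothesis \ref{h9.5}.
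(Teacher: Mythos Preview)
Your overall strategy matches the paper's: pass between $\phi$ in the Birman--Schwinger kernel and the distributional solution $\psi$ via \eqref{9.46A}, control $\psi$ pointwise by a Riesz potential of $g=\|V_1\|\,|\phi|$, and argue the converse for item $(iii)$ by showing that any $\psi_0\in\ker(H)$ produces $\phi_0=V_2\psi_0$ in the Birman--Schwinger kernel. The paper does exactly this.

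There is, however, a genuine gap at the key step for $n\ge 3$. You claim ``the admissible range of $p$ contains $2n/(n+2)$, hence $\psi\in[L^2(\bbR^n)]^N$'', but under the theorem's hypothesis $\varepsilon=0$ one only has $\|V_1(y)\|\le C\langle y\rangle^{-1}$, so $\langle\,\cdot\,\rangle^{-1}\in L^r(\bbR^n)$ only for $r>n$, and H\"older gives $g\in L^p$ only for $p>2n/(n+2)$; Theorem \ref{t5.4A} then yields $\psi\in L^q$ only for $q>2$. The endpoint $q=2$ is exactly what is needed to rule out resonances, and H\"older plus Hardy--Littlewood--Sobolev does not reach it at $\varepsilon=0$. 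The paper closes this gap by invoking the Nirenberg--Walker/McOwen bound, Theorem \ref{t5.6}\,$(ii)$: the integral operator with kernel $|x-y|^{1-n}(1+|y|)^{-1}$ is bounded on $L^2(\bbR^n)$ precisely when $1<n/2$, i.e.\ $n\ge 3$. Your later references to the decay $|V_1|\lesssim\langle\,\cdot\,\rangle^{-(1+\varepsilon)}$ suggest you were tacitly working with $\varepsilon>0$, under which your route would succeed; but the statement is at $\varepsilon=0$. A related omission: the full range \eqref{9.55A} is not produced by a single Riesz step but by a bootstrap---once $\psi_0\in L^2$, the paper rewrites the bound with $\langle\,\cdot\,\rangle^{-2}|\psi_0|$ in place of $\langle\,\cdot\,\rangle^{-1}|\phi_0|$ and reapplies Theorem \ref{t5.4A}.

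Two smaller differences worth noting. For $\nabla\psi\in[L^2(\bbR^2)]^{2\times 2}$ the paper does not use Calder\'on--Zygmund; since $-i\alpha\cdot\nabla_x R_{0,0}(x-y)=\delta(x-y)I_N$ distributionally, one gets $i\alpha\cdot\nabla\psi_0=V_1^*\phi_0\in[L^2]^2$ directly, and $(\alpha\cdot p)^2=|p|^2 I_N$ converts this to the full gradient. And your Taylor-expansion analysis isolating the $|x|^{-1}$ tail and the two vanishing-moment conditions is not part of Theorem \ref{t9.7} at all---the theorem only records the $L^q$ properties in each case. That criterion is the content of Lemma \ref{l9.10a}, which is stated under the stronger Hypothesis \ref{h9.5mod}; indeed, at $\varepsilon=0$ one cannot even guarantee $V_1^*\phi\in L^1(\bbR^2)$, as you yourself flag at the end.
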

\begin{proof}
Since $G_0(0+i0;x,y)$, $x\neq y$, exists for all $n \geq 2$ (cf.\ \eqref{5.18}), the 
Birman--Schwinger eigenvalue equation (cf. \eqref{9.24}) 
\begin{equation} 
\big[I_{[L^2(\bbR^n)]^N} + \ol{V_2 (H_0 - (0 +i 0) I_{[L^2(\bbR^n)]^N})^{-1} V_1^*}\big] \phi_0 = 0,  \quad 
0 \neq \phi_0 \in [L^2(\bbR^n)]^N,   \lb{9.36} 
\end{equation}
gives rise to a distributional zero-energy solution $\psi_0 \in [L^1_{\loc}(\bbR^n)]^N$ of $H \psi_0 = 0$ in 
terms of $\phi_0$ of the form (for a.e.~$x \in \bbR^n$, $n \geq 2$),
\begin{align}
& \psi_0(x) = - \big((H_0 - (0 + i 0) I_{[L^2(\bbR^n)]^N})^{-1} V_1^* \phi_0\big)(x)     \no \\
& \hspace*{8.5mm} = - [R_{0,0} * (V_1^* \phi_0)](x)    \lb{9.36a} \\
& \hspace*{8.5mm} = - i 2^{-1} \pi^{- n/2} \Gamma(n/2) \int_{\bbR^n} d^n y \, |x-y|^{-n} [\alpha\cdot (x - y)] V_1(y)^* \phi_0(y),   \lb{9.37} \\
& \hspace*{8.5mm} = - i 2^{-1} \pi^{- n/2} \Gamma(n/2) \int_{\bbR^n} d^n y \, |x-y|^{-n} 
[\alpha\cdot (x - y)] V_1(y)^* V_2 (y) \psi_0(y),   \lb{9.37a} \\
& \phi_0(x) = (V_2 \psi_0)(x).       \lb{9.38}
\end{align}
In particular, one concludes that $\psi_0 \neq 0$. Thus, one estimates, 
with $\|V_1(\dott)\|_{\bbC^{N \times N}} \leq c \langle \dott \rangle^{-1}$ and some constant $d_n \in (0, \infty)$, 
\begin{align}
\begin{split}
\|\psi_0(x)\|_{\bbC^N} &\leq d_n \int_{\bbR^n} d^n y \, |x - y|^{1-n} \langle y \rangle^{-1}
\|\phi_0(y)\|_{\bbC^N}   \lb{9.41} \\
& = d_n \cR_{1,n}\big(\langle \dott \rangle^{-1} \|\phi_0(\dott)\|_{\bbC^N}\big)(x),  \quad x \in \bbR^n.  
\end{split}
\end{align}
Invoking the Riesz potential $\cR_{1,n}$ (cf.\ Theorem \ref{t5.4A}), 
one obtains (for some constant $\wti d_n \in (0,\infty)$)
\begin{align}
\|\psi_0(x)\|_{\bbC^N} & \leq d_n \int_{\bbR^n} d^n y \, |x - y|^{1-n} \langle y \rangle^{-1} \|\phi_0(y)\|_{\bbC^N}  \no \\
& \leq \wti d_n \cR_{1,n} \big(\langle \dott \rangle^{-1} \|\phi_0(\dott)\|_{\bbC^N}\big)(x),  \quad x \in \bbR^n, 
\lb{9.65A} 
\end{align}
and hence \eqref{5.51} implies (for some constant $\wti C_{p,q,n} \in (0,\infty)$)  
\begin{align}
\|\psi_0\|_{[L^q(\bbR^n)]^N} & 
\leq \wti d_n \big\|\cR_{1,n} \big(\langle \dott \rangle^{-1} \|\phi_0(\dott)\|_{\bbC^N}\big)
\big\|_{L^q(\bbR^n)}    \no \\
& \leq \wti C_{p,q,n} \big\|\langle \dott \rangle^{-1} \|\phi_0(\dott)\|_{\bbC^N}\big\|_{L^p(\bbR^n)}   \no \\
& \leq \wti C_{p,q,n} \big\|\langle \dott \rangle^{-1} \big\|_{L^s(\bbR^n)} \|\|\phi_0(\dott)\|_{\bbC^N}\|_{L^2(\bbR^n)} 
\no \\
& = \wti C_{p,q,n} \big\|\langle \dott \rangle^{-1} \big\|_{L^s(\bbR^n)} \|\phi_0\|_{[L^2(\bbR^n)]^N},   \lb{9.61} \\
& \hspace*{-3.4cm} 1 < p < q < \infty, \; p^{-1} = q^{-1} + n^{-1}, \; s = 2qn [2n + 2q - qn]^{-1} \geq 1.  \no
\end{align}
In particular, 
\begin{equation}
p = qn/(n + q), \quad 2n + 2q - qn > 0. 
\end{equation}
\noindent 
$(a)$ The case $n=2$: Then one can choose $q \in (2,\infty)$, hence  
$p = 2q/(q+2) \in (1,2)$, and $s = q > 2$. Thus, \eqref{9.61} and 
$\big\|\langle \dott \rangle^{-1}\big\|_{L^s(\bbR^2)} < \infty$ imply 
\begin{equation}
\psi_0 \in [L^q(\bbR^2)]^N, \quad q \in (2,\infty). 
\end{equation}

Recalling $R_{0,0}(x-y)$ in \eqref{9.46}, this implies 
\begin{align}
\begin{split} 
- i \alpha \cdot \nabla_x R_{0,0}(x-y) = - \Delta_x g_{0}(0;x,y) I_N = \delta(x-y) I_N,&   \lb{9.73}\\
x, y \in \bbR^n, \; x \neq y, \; n \geq 2,&
\end{split} 
\end{align}
in the sense of distributions. Here we abused notation a bit  and denoted also in the case $n=2$, 
\begin{equation}
g_0(0;x,y) = - (2 \pi)^{-1} \ln(|x-y|), \quad x, y \in \bbR^2, \; x \neq y, \; n=2.    \lb{9.74} 
\end{equation} 
Thus, one obtains 
\begin{align}
& i \alpha \cdot (\nabla \psi_0)(x) = 
- i \alpha \cdot \nabla_x [R_{0,0} * (V_1^* \phi_0)](x) = - i \alpha \cdot \nabla_x [- i (\alpha \cdot \nabla_x g_0 * (V_1^* \phi_0)](x)   \no \\
& \quad = [(- \Delta_x g_{0} I_N) * (V_1^* \phi_0)](x) = (V_1^* \phi_0)(x)  \in [L^2(\bbR^2)]^2,   \lb{9.75} 
\end{align}
proving $\nabla \psi_0 \in [L^2(\bbR^2)]^{2 \times 2}$, upon employing the fact that $[\alpha \cdot p]^2 = I_N |p|^2$, 
$p \in \bbR^n$.    

To prove that $\psi_0\in [L^{\infty}(\bbR^2)]^2$ in \eqref{3.60b} and \eqref{3.60c}, one applies \eqref{9.38} to the inequality in \eqref{9.41}, and then employs the condition $\|V_2(\dott)\|_{\bbC^{2\times 2}}\leq C\langle \dott\rangle^{-1}$ for some constant $C\in (0,\infty)$ to obtain
\begin{equation}
\|\psi_0(x)\|_{\bbC^2} \leq \widetilde{d}_2 \int_{\bbR^2}d^2y\, |x-y|^{-1}\langle y\rangle^{-2}\|\psi_0(y)\|_{\bbC^2},\quad x\in \bbR^2,
\end{equation}
where $\widetilde{d}_2\in (0,\infty)$ is an appropriate $x$-independent constant.  By H\"older's inequality,
\begin{equation} \lb{3.81b}
\|\psi_0(x)\|_{\bbC^2} \leq \widetilde{d}_2\bigg(\int_{\bbR^2}d^2y\, |x-y|^{-3/2}\langle y\rangle^{-3}\bigg)^{2/3} \bigg(\int_{\bbR^2}d^2y\, \|\psi_0(y)\|_{\bbC^2}^3 \bigg)^{1/3},\quad x\in \bbR^2.
\end{equation}
The second integral on the right-hand side in \eqref{3.81b} is finite since $\psi_0 \in [L^3(\bbR^2)]^2$.  Choosing $x_1=x$, $\alpha = n - (3/2)$, $\beta=n$, $\gamma=2$, and $\varepsilon = 1$ in Lemma \ref{l3.12}, one infers that
\begin{equation} \lb{3.82b}
\int_{\bbR^2}d^2y\, |x-y|^{-3/2}\langle y\rangle^{-3} \leq C_{2,3/2,0,2,1},\quad x\in \bbR^2. 
\end{equation}
Hence, the containment $\psi_0\in [L^{\infty}(\bbR^2)]^2$ follows from \eqref{3.81b} and \eqref{3.82b}.

\noindent 
$(b)$ The case $n \geq 3$: An application of Theorem \ref{t5.6}\,$(ii)$ with $c=0$, $d=1$, $p=p'=2$, and the inequality $1 < n/2$, combined with 
$\|\phi_0(\, \cdot \,)\|_{\bbC^N} \in L^2(\bbR^n)$, yield 
\begin{equation}
\|\psi_0(\, \cdot \,)\|_{\bbC^N} \in L^2(\bbR^n) \, \text{ and hence, } \, \psi_0 \in [L^2(\bbR^n)]^N, 
\quad n \geq 3.     \lb{9.42} 
\end{equation} 
To prove that actually $\psi_0 \in \dom(H) = [W^{1,2}(\bbR^n)]^N$, it suffices to argue as follows:  
\begin{equation}
i \alpha \cdot \nabla \psi_0 = - V \psi_0 \in [L^2(\bbR^n)]^N    \lb{9.43}
\end{equation}     
in the sense of distributions since $V \in [L^{\infty} (\bbR^n)]^{N \times N}$ and $\psi_0 \in [L^2(\bbR^n)]^N$.  
Given the fact $\dom(H_0) = \big[W^{1,2}(\bbR^n)\big]^N$ (cf.\ \eqref{2.2}), one concludes  
\begin{equation}
\psi_0 \in \big[W^{1,2}(\bbR^n)\big]^N, \quad n \geq 3.     \lb{9.45}
\end{equation}

By \eqref{9.45} we know that $\psi_0 \in \big[W^{1,2}(\bbR^n)\big]^N$. Employing 
the fact that $\phi_0 = V_2 \psi_0$ in the first line of \eqref{9.65A}, one obtains 
\begin{align}
\begin{split} 
\|\psi_0(x)\|_{\bbC^N} & \leq \wti D_n \int_{\bbR^n} d^n y \, |x - y|^{1-n} \langle y \rangle^{-2} \|\psi_0(y)\|_{\bbC^N}   \\
& =D_n \cR_{1,n} \big(\langle \dott \rangle^{-2} \|\psi_0(\dott)\|_{\bbC^N}\big)(x),  \quad x \in \bbR^n, 
\lb{9.77A} 
\end{split} 
\end{align}
for some constants $\wti D_n, D_n \in (0,\infty)$. Thus, as in \eqref{9.61}, \eqref{5.51} implies for $n \geq 3$, 
\begin{align}
\|\psi_0\|_{[L^q(\bbR^n)]^N} & \leq D_n 
\big\|\cR_{1,n} \big(\langle \dott \rangle^{-2} \|\psi_0(\dott)\|_{\bbC^N}\big)
\big\|_{L^q(\bbR^n)}    \no \\
& \leq \wti D_{p,q,n} \big\|\langle \dott \rangle^{-2} \|\psi_0(\dott)\|_{\bbC^N}\big\|_{L^p(\bbR^n)}   \no \\
& \leq \wti D_{p,q,n} \big\|\langle \dott \rangle^{-2} \big\|_{L^s(\bbR^n)} \|\|\psi_0(\dott)\|_{\bbC^N}\|_{L^2(\bbR^n)} 
\no \\
& = \wti D_{p,q,n} \big\|\langle \dott \rangle^{-2}\big\|_{L^s(\bbR^n)} \|\psi_0\|_{[L^2(\bbR^n)]^N},   \lb{9.78A} \\
& \hspace*{-2.75cm} 1 < p < q < \infty, \; p^{-1} = q^{-1} + n^{-1}, \; s = 2qn [2n + 2q - qn]^{-1} \geq 1,   \no
\end{align}
for some constant $\wti D_{p,q,n} \in (0,\infty)$. In particular, one again has $p = qn/(n + q)$ and 
$2n + 2q - qn > 0$. The latter condition implies $q < 2n/(n-2)$. The requirement $p > 1$ results in $q > n/(n-1)$, and the condition $s \geq 1$ yields $q \geq 2n/(3n-2)$ which, however, is superseded by $q > p > 1$. 
Moreover, the requirement $\big\|\langle \dott \rangle^{-2}\big\|_{L^s(\bbR^n)} < \infty$ yields $q > 2n/(n+2)$. 
Putting it all together implies \eqref{9.55A}. 

To prove the containment $\psi\in [L^{\infty}(\bbR^3)]^4$ in \eqref{9.55A}, one invokes the inequality in \eqref{9.77A} with $n=3$.  Indeed, applying H\"older's inequality (with conjugate exponents $q'=27/20$ and $q=27/7$) to the integral on the right-hand side of the inequality in \eqref{9.77A}, one infers that
\begin{align}
& \|\psi_0(x)\|_{\bbC^4} \leq d_3\bigg(\int_{\bbR^3}d^3y\, |x-y|^{-27/10}\langle y\rangle^{-27/10} 
\bigg)^{20/27} 
\bigg(\int_{\bbR^3}d^3y\, \|\psi_0(y)\|_{\bbC^4}^{2/77} \bigg)^{7/27},\no \\
& \hspace*{9.7cm}  x\in \bbR^3.     \lb{3.86b}
\end{align}
The second integral in \eqref{3.86b} is finite since $\psi_0\in [L^{27/7}(\bbR^3)]^4$, and the first integral in \eqref{3.86b} may be estimated by taking $x_1=x$, $\alpha = n - (27/10)$, $\beta = n$, $\gamma=2$, and 
$\varepsilon = 7/10$ in Lemma \ref{l3.12}, 
\begin{equation} \lb{3.87b}
\int_{\bbR^3}d^3y\, |x-y|^{-\frac{27}{10}}\langle y\rangle^{-\frac{27}{10}}\leq C_{3,\frac{27}{10},0,2,7/10}, 
\quad x\in \bbR^3\backslash\{0\}.
\end{equation}
Hence, the containment $\psi_0\in [L^{\infty}(\bbR^3)]^4$ follows from \eqref{3.86b} and \eqref{3.87b}.


Returning to arbitrary $n\geq 2$, we show (following the proof of \cite[Lemma~7.4]{EGG20}) that if $\ker(H) \supsetneqq \{0\}$ then also 
\begin{equation} 
\ker \big(\big[I_{[L^2(\bbR^n)]^N} + \ol{V_2 (H_0 - (0 + i0) I_{[L^2(\bbR^n)]^N})^{-1} V_1^*}\big]\big) 
\supsetneqq \{0\}. 
\end{equation}
Indeed, if $0 \neq \psi_0 \in \ker(H)$, then 
$\phi_0 := V_2 \psi_0 = U_V V_1 \psi_0 \in [L^2(\bbR^n)]^N$ and hence 
$V_1^* \phi_0 \in [L^2(\bbR^n)]^N$. Then, $H \psi_0 = 0$ yields 
$i \alpha \cdot \nabla \psi_0 = V \psi_0 = V_1^* V_2 \psi_0 = V_1^* \phi_0$. 

Thus, applying \eqref{9.46}, \eqref{9.73}--\eqref{9.74} once again, one obtains for all $n \geq 2$, 
\begin{align}
& - i \alpha \cdot \nabla \big[\psi_0 + (H_0 - (0 + i 0) I_{[L^2(\bbR^n)]^N})^{-1} V_1^* \phi_0\big] (x)    \no \\
& \quad = - i [\alpha \cdot \nabla \psi_0](x) 
- i \alpha \cdot \nabla_x [R_{0,0} * (V_1^* \phi_0)](x)     \no \\
& \quad = - i [\alpha \cdot \nabla \psi_0](x) 
- i \alpha \cdot \nabla_x [- i (\alpha \cdot \nabla_x g_0 * (V_1^* \phi_0)](x)    \no \\
& \quad = - i [\alpha \cdot \nabla \psi_0](x) + (- \Delta_x g_{0} I_N) * (V_1^* \phi_0)](x)    \no \\
& \quad = - i [\alpha \cdot \nabla \psi_0](x) + (V_1^* \phi_0)(x)    \no \\
& \quad = - V(x) \psi_0(x) + V(x) \psi_0(x) = 0. 
\end{align}
Consequently,  
\begin{equation}
- i \alpha \cdot \nabla [\psi_0 + (H_0 - (0 + i 0) I_{[L^2(\bbR^n)]^N})^{-1} V_1^* \phi_0] = 0,
\end{equation}
implying 
\begin{equation}
\psi_0 + (H_0 - (0 + i 0) I_{[L^2(\bbR^n)]^N})^{-1} V_1^* \phi_0 = c^\top, 
\end{equation}
for some $c \in \bbC^N$. Since $\psi_0 \in [L^2(\bbR^n)]^N$, and by exactly the same arguments employed in 
\eqref{9.41}--\eqref{9.42}, also $R_{0,0} * (V_1^* \phi_0) \in [L^2(\bbR^n)]^N$, one concludes that $c = 0$ and hence
\begin{equation}
\psi_0 = - (H_0 - (0 + i 0) I_{[L^2(\bbR^n)]^N})^{-1} V_1^* \phi_0. 
\end{equation}
Thus, $\phi_0 \neq 0$, and 
\begin{align}
0 = & V_2 \psi_0 + V_2 (H_0 - (0 + i 0) I_{[L^2(\bbR^n)]^N})^{-1}V_1^* \phi_0   \no \\
= & \big[I_{[L^2(\bbR^n)]^N} + \ol{V_2 (H_0 - (0 + i0) I_{[L^2(\bbR^n)]^N})^{-1} V_1^*}\big] \phi_0, 
\end{align}
that is, 
\begin{equation} 
0 \neq \phi_0 \in 
\ker \big(\big[I_{[L^2(\bbR^n)]^N} + \ol{V_2 (H_0 - (0 + i0) I_{[L^2(\bbR^n)]^N})^{-1} V_1^*}\big]\big).
\end{equation}  
This concludes the proof. 
\end{proof}

Recalling results of \cite{EGG20}, we will revisit the basic elements in the proof of item $(i)$ of 
Theorem \ref{t9.7} in Lemma \ref{l9.10a}.

\begin{remark} \lb{r9.8} 
$(i)$ In physical notation, the zero-energy resonances in Cases $(II)$ and $(IV)$ for $n=2$ correspond to eigenvalues $\pm 1/2$ of the spin-orbit operator (cf.\ the operator $S$ in \cite{KOY15}, \cite{KY01}) when 
 $V$ is spherically symmetric, see the discussion in \cite{EGG20}. \\[1mm] 
$(ii)$ For basics on the Birman--Schwinger principle in an abstract context, especially, if $0 \in \rho(H_0)$, we refer to \cite{GLMZ05} (cf.\ also \cite{BGHN16}, \cite{GHN15}) and the extensive literature cited therein. 
In the concrete case of Schr\"odinger operators, relations \eqref{9.37}, \eqref{9.38} are discussed at length in \cite{AGH82}, \cite{BGD88}, \cite{BGDW86}, \cite{EGG14}, \cite{EG13}, \cite{ES04}, \cite{ES06},  
\cite{GH87}, \cite{Je80}, \cite{Je84}, \cite{JK79}, \cite{JN01}, \cite{Mu82}, \cite{To17} (see also the list of references quoted therein), and in \cite{EG17}, \cite{EGG19}, \cite{EGG20}, \cite{EGT18}, \cite{EGT19} in the case of (massive and massless) Dirac operators. \\[1mm] 
$(iii)$ As mentioned in Remark \ref{r5.1}\,$(ii)$, the absence of zero-energy resonances is well-known in the 
three-dimensional case $n=3$, see \cite{Ai16}, \cite[Sect.~4.4]{BE11}, \cite{BES08}, \cite{BGW95}, \cite{SU08}, 
\cite{SU08a}, \cite{ZG13}. In fact, for $n=3$ the absence of zero-energy resonances has been shown 
under the weaker decay $|V_{j,k}| \leq C\langle x\rangle^{-1 - \varepsilon}$, $x \in \bbR^3$, in \cite{Ai16}. 
The absence of zero-energy resonances for massless Dirac operators in dimensions $n \geq 4$ as contained in Theorem \ref{t9.7}\,$(ii)$ appears to have gone unnoticed in the literature and was only recently observed in \cite{GN20a}.
${}$ \hfill $\diamond$
\end{remark}

To determine the leading order behavior of 
\begin{equation}
\big[U_V I_{[L^2(\bbR^n)]^N} + \ol{V_1 (H_0 - z I_{[L^2(\bbR^n)]^N})^{-1} V_1^*}\big]^{-1} \, 
\text{ as $z \to 0$, $z \in \ol{\bbC_+}$,} 
\end{equation} 
in all possible cases discussed in Theorem \ref{t9.7}, it is convenient to introduce some more notation:
\begin{align}
& T(z) := U_V I_{[L^2(\bbR^n)]^N} + V_1 (H_0 - z I_{[L^2(\bbR^n)]^N})^{-1} V_1^*, \quad z \in \bbC_+, \lb{10.90ww}\\
& T(\lambda) := U_V I_{[L^2(\bbR^n)]^N} + \ol{V_1 (H_0 - (\lambda + i0) I_{[L^2(\bbR^n)]^N})^{-1} V_1^*}, 
\quad \lambda \in \bbR. 
\end{align}
Next, we split $P_0$ in \eqref{9.32a} according to all possible cases in Theorem \ref{t9.7} as follows: If $n=2$, we write 
\begin{align}
P_0 = P_{0,1} \oplus P_{0,2},     \lb{9.48a} 
\end{align}
where $P_{0,1}$ represents case $(II)$, $P_{0,2}$ represents case $(III)$, and if $P_{0,1}$ and $P_{0,2}$ are both nonzero, $P_0$ represents case $(IV)$. Similarly, if $n \geq 3$, $P_0 \neq 0$ represents case $(II)$. (Again, we remark that we will 
discuss in Lemma \ref{l9.10a}\,$(i)$ that $\dim(\ran(P_{0,1})) \leq 2$.) 

In the following we denote the integral operators in $[L^2(\bbR^n)]^N$ generated by the integral 
kernels $R_{j,k}(\, \cdot\,,\,\cdot\,)$ in \eqref{9.46}--\eqref{9.48} by $R_{j,k}$, $j,k \in \{0,1\}$. In particular,
\begin{equation}
T(0) = U_V I_{[L^2(\bbR^n)]^N} + \ol{V_1 R_{0,0} V_1^*}. 
\end{equation}

In order to study asymptotics as $z\to 0$ of the Birman--Schwinger-type operators, we strengthen Hypothesis \ref{h9.5} as follows.

\begin{hypothesis}\lb{h9.5mod}
Let $n \in \bbN$, $n \geq 2$, and $\varepsilon > 0$. Assume the a.e.~self-adjoint matrix-valued potential 
$V = \{V_{\ell,m}\}_{1 \leq \ell,m \leq N}$ satisfies for some fixed $\varepsilon \in (0,1)$, $C \in (0,\infty)$, 
\begin{align}
\begin{split} 
& V \in [L^{\infty} (\bbR^n)]^{N \times N},       \\
& |V_{\ell,m}(x)| \leq C \langle x \rangle^{- n (1 + \varepsilon)} \, 
\text{ for a.e.~$x \in \bbR^n$, $1 \leq \ell,m \leq N$.}    \lb{9.31Amod}
\end{split} 
\end{align}
In accordance with the factorization based on the polar decomposition of $V$ discussed in \eqref{9.27} we 
suppose that 
\begin{equation}
V = V_1^* V_2 = |V|^{1/2} U_V |V|^{1/2}, \, \text{ where } \, 
V_1 = V_1^* = |V|^{1/2}, \; V_2 = U_V |V|^{1/2}.
\end{equation}  
\end{hypothesis}

We note that, in accordance with \eqref{9.31Amod}, the entries of $V_1(\dott)$ satisfy
\begin{equation}
|(V_1)_{\ell,m}(x)|\leq \wti C\langle x\rangle^{-n(1+\varepsilon)/2}\, \text{ for a.e.~$x \in \bbR^n$, $1 \leq \ell,m \leq N$,}
\end{equation}
for a constant $\wti C\in (0,\infty)$.

\begin{lemma} \lb{l9.9}
Assume Hypothesis \ref{h9.5mod}. Then $($cf.\ \eqref{9.46}--\eqref{9.48}$)$ 
\begin{align}
\begin{split} \lb{10.94}
& \ol{V_1 (H_0 - z I_{[L^2(\bbR^n)]^N})^{-1} V_1^*} 
\underset{\substack{z \to 0 \\ z \in \ol{\bbC_+}\backslash\{0\}}}{=} 
\ol{V_1 R_{0,0} V_1^*} + z \ol{V_1 R_{1,0} V_1^*}  \\
& \quad + z \big[- (2 \pi)^{-1} \ln(z/2) - (2\pi)^{-1} \gamma_{E-M} +i 4^{-1}\big] \delta_{n,2} 
\ol{V_1 R_{1,1} V_1^*} + E(z), 
\end{split} 
\end{align}
where 
\begin{equation}\lb{10.95}
\|E(z)\|_{\cB([L^2(\bbR^n)]^N)} \underset{\substack{z \to 0 \\ z \in \ol{\bbC_+}\backslash\{0\}}}{=} 
\Oh\big(|z|^{1 + \varepsilon}\big)
\end{equation}
$($with $0 < \varepsilon$ taken as in Hypothesis \ref{h9.5mod}$)$.
\end{lemma}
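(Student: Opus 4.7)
The plan is to regard $E(z)$ as the integral operator on $[L^2(\bbR^n)]^N$ whose matrix-valued kernel factorizes as
\begin{equation*}
E(z;x,y) = V_1(x)\,\Delta(z;x,y)\,V_1(y)^*,
\end{equation*}
where $\Delta(z;x,y)$ denotes the difference between $G_0(z;x,y)$ and the three explicit terms appearing in \eqref{10.94} (with the logarithmic coefficient $c_n(z)=-(2\pi)^{-1}\ln(z/2)-(2\pi)^{-1}\gamma_{E-M}+i/4$ appearing only in dimension two). The task then reduces to controlling the $\cB([L^2(\bbR^n)]^N)$-norm of this integral operator by $C|z|^{1+\varepsilon}$.

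Step 1. I would first establish a pointwise bound of the form $\|\Delta(z;x,y)\|_{\bbC^{N\times N}}\leq C|z|^{1+\varepsilon}\rho_n(x-y)$ for $z\in\ol{\bbC_+}$ small and $x\neq y$, where $\rho_n$ is an explicit locally integrable function with at most polynomial growth at infinity. The factor $|z|^{1+\varepsilon}$ is obtained by interpolation: the convergent Hankel expansion of Appendix \ref{sC} yields a natural $O(|z|^2)$ remainder in the regime $|z||x-y|\lesssim 1$, while the resolvent bound $\|G_0(z;x,y)\|_{\cB(\bbC^N)}\lesssim|x-y|^{(1-n)/2}$ from Section \ref{s5} handles the regime $|z||x-y|\gtrsim 1$; trading $|z|^{1-\varepsilon}$ against $|x-y|^{1-\varepsilon}$ interpolates these two regimes. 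In dimension $n=2$ one encounters the extra remainder $|z|^2|\ln(z|x-y|)|$, which is absorbed through the elementary inequality $|z|^{1-\varepsilon}|\ln(z|x-y|)|\leq C_\varepsilon(1+\langle x\rangle^{\varepsilon/2}+\langle y\rangle^{\varepsilon/2})$ valid for $|z|\leq 1$.

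Step 2. Once such a pointwise bound is in hand, I would verify boundedness of the integral operator with kernel $\langle x\rangle^{-n(1+\varepsilon)/2}\rho_n(x-y)\langle y\rangle^{-n(1+\varepsilon)/2}$ on $[L^2(\bbR^n)]^N$. On the near-diagonal region $|x-y|\leq 1$, the local singularity of $\rho_n$ is at worst $|x-y|^{1-n}$ (inherited from $R_{0,0}$), so the contribution is bounded by exactly the same argument used in Theorem \ref{t5.7}\,$(iii)$ and Theorem \ref{t6.6}. On the far region $|x-y|\geq 1$, I would use $|x-y|^k\leq C_k(\langle x\rangle^k+\langle y\rangle^k)$ and a Hilbert-Schmidt estimate; finiteness of the resulting integrals $\int\langle x\rangle^{-n(1+\varepsilon)+k}\,d^nx$ (with $k$ the polynomial degree of $\rho_n$) is exactly where the strengthened decay assumption \eqref{9.31Amod} in Hypothesis \ref{h9.5mod}, rather than the weaker \eqref{9.31A}, is indispensable.

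The main obstacle is in Step 1: producing a pointwise estimate on $\Delta(z;x,y)$ that simultaneously achieves the prefactor $|z|^{1+\varepsilon}$ and at most polynomial growth in $|x-y|$, uniformly for $z\in\ol{\bbC_+}$ near the origin. For odd $n\geq 3$ the Hankel expansion truncates cleanly and the bound is essentially immediate; for even $n$, and especially for $n=2$, the subtle interplay between the $z\ln(z)$ correction term in \eqref{10.94} and the logarithmic remainder requires the more delicate bookkeeping carried out in Appendix \ref{sC}, in parallel with the treatment in \cite{EGG20}. Once Step 1 is in place, Step 2 is a standard (but weight-dependent) Schur/Hilbert-Schmidt calculation.
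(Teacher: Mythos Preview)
Your proposal is essentially correct and follows the same two-step strategy as the paper: first establish a pointwise kernel bound $\|\Delta(z;x,y)\|\le C|z|^{1+\varepsilon}k(x,y)$ by splitting into the regimes $|z||x-y|\le 1$ (Hankel expansion plus the ``trade $|z|^{1-\varepsilon}$ for $|x-y|^{-(1-\varepsilon)}$'' trick) and $|z||x-y|\ge 1$ (direct Green's function bound via Lemma~\ref{lB.6}), then verify that the resulting weighted kernel defines a bounded integral operator.

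Two small clarifications. First, your remark that the near-diagonal singularity of $\rho_n$ is ``inherited from $R_{0,0}$'' is slightly off: since $R_{0,0}$ has been subtracted, the actual local singularity of the remainder is the milder $|x-y|^{(2+\varepsilon)-n}$ (this is what the trading argument produces), not $|x-y|^{1-n}$; your cruder bound still works, but the paper's sharper exponent is what makes Theorem~\ref{t5.6}\,$(ii)$ applicable with $c=d=1+\varepsilon/2$. Second, for $n\ge 3$ the function $\rho_n$ one obtains is actually \emph{bounded} at infinity (the far-regime bound is simply $|z|^2\le |z|^{1+\varepsilon}$), so no polynomial growth appears and the Hilbert--Schmidt step reduces to $\langle\,\cdot\,\rangle^{-n(1+\varepsilon)/2}\in L^2(\bbR^n)$. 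The paper handles boundedness via the Nirenberg--Walker result (Theorem~\ref{t5.6}) rather than Theorems~\ref{t5.7}/\ref{t6.6}, but these are interchangeable here. For $n=2$ the paper, like you, defers to \cite{EGG20}.
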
 
\begin{proof}
In order to prove \eqref{10.94} and \eqref{10.95} it suffices to show
\begin{align}
&\big\|V_1(x)G_0(z;x,y)V_1^*(y)-V_1(x)R_{0,0}(x-y)V_1^*(y)-zV_1(x)R_{1,0}(x-y)V_1^*(y)\big\|_{\cB(\bbC^N)}\no\\
&\quad \leq c_0|z|^{1+\varepsilon}k(x,y),\quad x,y\in \bbR^n,\, x\neq y,\, z\in \ol{\bbC_+}\backslash \{0\}, |z|\leq 1,\lb{10.108}
\end{align}
for some positive $(z,x,y)$-independent constant $c_0$ and for some $z$-independent function $k(\dott,\dott)$ which generates a bounded integral operator in $L^2(\bbR^n)$. In the following we treat separately the cases {\bf (I)} $n$ odd and {\bf (II)} $n$ even.

\medskip
\noindent
{\bf (I)} $n$ odd.  In order to prove \eqref{10.108}, we estimate
\begin{equation}
\begin{split}
G_0(z;x,y)-R_{0,0}(x-y)-zR_{1,0}(x-y),&\\
x,y\in \bbR^n,\, x\neq y,\, z\in \ol{\bbC_+}\backslash \{0\}, |z|\leq 1,&
\end{split}
\end{equation}
separately in the regimes $|z||x-y|\leq 1$ and $|z||x-y|>1$.

The expansion \eqref{B.5} implies
\begin{align}
&\big\|G_0(z;x,y)-R_{0,0}(x-y)-zR_{1,0}(x-y)\big\|_{\cB(\bbC^N)}\no\\
&\quad \leq c_1\big[|z|^2 + |z|^2|z-y|^{3-n}\big]\no\\
&\quad \leq c_1\big[|z|^2 + |z|^{1+\varepsilon}|z-y|^{(2+\varepsilon)-n}\big]\no\\
&\quad \leq c_1|z|^{1+\varepsilon}\big[1+|x-y|^{(2+\varepsilon)-n}\big],\lb{10.110}\\
& x,y\in \bbR^n,\, x\neq y,\, z\in \ol{\bbC_+}\backslash \{0\}, |z|\leq 1,\, |z||x-y|\leq 1,\no
\end{align}
for some $(z,x,y)$-independent constant $c_1\in (0,\infty)$.  By Lemma \ref{lB.6},
\begin{align}
&G_0(z;x,y)   \no \\
&\quad=i4^{-1}(2\pi)^{(2-n)/2}|x-y|^{(2-n)/2}z^{n/2}e^{iz|x-y|}\omega_{\frac{n-2}{2}}(z|x-y|)I_N \lb{10.113} \\
&\qquad-4^{-1}(2\pi)^{(2-n)/2}|x-y|^{(2-n)/2}z^{n/2}e^{iz|x-y|}\omega_{\frac{n}{2}}(z|x-y|)\alpha\cdot\frac{(x-y)}{|x-y|},\no\\
&\hspace*{6.3cm} x,y\in \bbR^n,\, x\neq y,\, z\in \ol{\bbC_+}\backslash \{0\},\no
\end{align}
with
\begin{align}
|x-y|^{(2-n)/2}|z|^{n/2}\big|\omega_{\nu}(z|x-y|)\big|&\leq c_2 |x-y|^{1-\frac{n}{2}}|z|^{n/2}(1+|z||x-y|)^{-1/2}\no\\
&\leq c_2 |z|^{n-1}(1+|z||x-y|)^{-1/2}\no\\
&\leq c_2|z|^2(1+|z||x-y|)^{-1/2},\lb{10.114}\\
&\hspace*{-3.7cm}x,y\in \bbR^n,\, x\neq y,\, z\in \ol{\bbC_+}\backslash \{0\}, |z|\leq 1,\, |z||x-y|\geq 1,\no
\end{align}
for some $(z,x,y)$-independent constant $c_2\in (0,\infty)$.  The representation \eqref{10.113} and the estimate \eqref{10.114} combine to yield
\begin{align}\lb{10.115}
\begin{split}
& \big\|G_0(z;x,y)\big\|_{\cB(\bbC^N)}\leq c_3 |z|^2(1+|z||x-y|)^{-1/2},     \\
& x,y\in \bbR^n,\, x\neq y,\, z\in \ol{\bbC_+}\backslash \{0\}, |z|\leq 1,\, |z||x-y|\geq 1, 
\end{split}
\end{align}
for some $(z,x,y)$-independent constant $c_3\in (0,\infty)$, and it follows that
\begin{align}
&\big\|G_0(z;x,y)-R_{0,0}(x-y)-zR_{1,0}(x-y)\big\|_{\cB(\bbC^N)}\no\\
&\quad \leq c_4 \big(|z|^2(1+|z||x-y|)^{-1/2}+|x-y|^{1-n}+|z||x-y|^{2-n}\big)\no\\
&\quad \leq c_4|z|^2,\lb{10.116}\\
&\text{for a.e.}\, x,y\in \bbR^n,\, x\neq y,\, z\in \ol{\bbC_+}\backslash \{0\}, |z|\leq 1,\, |z||x-y|\geq 1,\no
\end{align}
for some $(z,x,y)$-independent constant $c_4\in (0,\infty)$.

By combining \eqref{10.110} and \eqref{10.116}, one obtains
\begin{align}
&\big\|G_0(z;x,y)-R_{0,0}(x-y)-zR_{1,0}(x-y)\big\|_{\cB(\bbC^N)}\no\\
&\quad \leq c_5\big[|z|^2+|z|^{1+\varepsilon}|x-y|^{(2+\varepsilon)-n}\big]\no\\
&\quad \leq c_5|z|^{1+\varepsilon}\big[1+|x-y|^{(2+\varepsilon)-n}\big],\lb{10.105z}\\
&\hspace*{-.2cm}\text{for a.e.}\, x,y\in \bbR^n,\, x\neq y,\, z\in \ol{\bbC_+}\backslash \{0\}, |z|\leq 1,\no
\end{align}
for some $(z,x,y)$-independent constant $c_5\in (0,\infty)$.  Hence, \eqref{10.108} holds for some constant $c_0\in (0,\infty)$ and
\begin{align}
k(x,y)&= \langle x \rangle^{- n (1 + \varepsilon)/2}\langle y \rangle^{- n (1 + \varepsilon)/2}\lb{10.118}\\
&\quad + [1+|x|]^{-n(1+\varepsilon)/2}|x-y|^{(2+\varepsilon)-n}[1+|y|]^{-n(1+\varepsilon)/2},\quad x,y\in \bbR^n,\, x\neq y.\no
\end{align}
In deducing the form of $k(\dott,\dott)$ in \eqref{10.118}, one uses
\begin{align}
\|V_1(x)\|_{\cB(\bbC^N)} &\leq C' \langle x\rangle^{-n(1+\varepsilon)/2}\no\\
&\leq C'' [1+|x|]^{-1-(\varepsilon/2)}\, \text{ for a.e.~$x\in \bbR^n$}\lb{10.107a}
\end{align}
for appropriate $x$-independent constants $C',C''\in (0,\infty)$.

The first term on the right-hand side in \eqref{10.118} generates a Hilbert--Schmidt integral operator in $L^2(\bbR^n)$, since $\langle \dott \rangle^{- n (1 + \varepsilon)/2}\in L^2(\bbR^n)$.  The second term on the right-hand side in \eqref{10.118} generates a bounded integral operator in $L^2(\bbR^n)$ as a consequence of Theorem \ref{t5.6} $(ii)$ with the choices $c=d=1+(\varepsilon/2)$ and $p=p'=2$, since $1+(\varepsilon/2)<3/2\leq n/2$.  Thus, $k(\dott,\dott)$ generates a bounded integral operator in $L^2(\bbR^n)$.

\medskip
\noindent
{\bf (II)} $n$ even.  The case $n=2$ is treated in detail in \cite[Lemma 5.1]{EGG20}, so we consider $n\geq 4$ here.  It suffices to verify the inequality in \eqref{10.108}.  The expansion \eqref{B.8} implies
\begin{align}
&\big\|G_0(z;x,y)-R_{0,0}(x-y)-zR_{1,0}(x-y)\big\|_{\cB(\bbC^N)}\no\\
&\quad \leq c_1\big[|z|^{n-1}+|z|^2|x-y|^{3-n}+|z|^{n-1}|\ln(z|x-y|)|\big]\no\\
&\quad \leq c_1\big[|z|^{n-1}+|z|^{1+\varepsilon}|x-y|^{(2+\varepsilon)-n}+|z|^{n-1}|\ln(z|x-y|)|\big]\no\\
&\quad \leq c_1|z|^{1+\varepsilon}\big[1 + |x-y|^{(2+\varepsilon)-n} + |x-y|^{-1}\big],\lb{10.108a}\\
& x,y\in \bbR^n,\, x\neq y,\, z\in \ol{\bbC_+}\backslash \{0\}, |z|\leq 1,\, |z||x-y|\leq 1,\no
\end{align}
for some $(z,x,y)$-independent constant $c_1(\varepsilon)\in (0,\infty)$, and an argument entirely analogous to \eqref{10.113}--\eqref{10.115} shows that \eqref{10.116} extends to the current case where $n$ is even.  Combining \eqref{10.116} and \eqref{10.108a}, one obtains
\begin{align}
&\big\|G_0(z;x,y)-R_{0,0}(x-y)-zR_{1,0}(x-y)\big\|_{\cB(\bbC^N)}\no\\
&\quad \leq c_3|z|^{1+\varepsilon}\big[1+|x-y|^{(2+\varepsilon)-n}+|x-y|^{-1}\big],\\
&\text{for a.e.}\, x,y\in \bbR^n,\, x\neq y,\, z\in \ol{\bbC_+}\backslash \{0\}, |z|\leq 1,\no
\end{align}
Hence, \eqref{10.108} holds for some constant $c_0\in (0,\infty)$ and
\begin{align}
k(x,y)&= \langle x \rangle^{- n (1 + \varepsilon)/2}\langle y \rangle^{- n (1 + \varepsilon)/2}    \no \\
&\quad + [1+|x|]^{-n(1+\varepsilon)/2}|x-y|^{(2+\varepsilon)-n}[1+|y|]^{-n(1+\varepsilon)/2}    \lb{10.110a} \\
&\quad + [1+|x|]^{(1-n)/2}|x-y|^{-1}[1+|y|]^{(1-n)/2},\quad x,y\in \bbR^n,\, x\neq y.\no
\end{align}
In deducing the form of $k(\dott,\dott)$ in \eqref{10.110a}, we used \eqref{10.107a} and the elementary bound $[1+|x|]^{-n(1+\varepsilon)/2}\leq [1+|x|]^{-(n-1)/2}$, $x\in \bbR^n$.  The fact that the first two terms on the right-hand side in \eqref{10.110a} generate bounded integral operators in $L^2(\bbR^n)$ was established in {\bf (I)} above.  The third term on the right-hand side in \eqref{10.110a} generates a bounded integral operator in $L^2(\bbR^n)$ by Theorem \ref{t5.6} (ii) with the choices $c=d=(n-1)/2$ and $p=p'=2$, since $(n-1)/2 < n/2$, $c+d=n-1>0$, and $n-(c+d) = 1$.
\end{proof}
\begin{lemma} \lb{l9.10}
Assume Hypothesis \ref{h9.5mod}.  If $T(\dott)$ is defined by \eqref{10.90ww} and $P_0$ denotes the $($finite-dimensional\,$)$ Riesz projection associated to the operator \eqref{9.24}, then
\begin{align}
[T(z) + P_0]^{-1} & \underset{\substack{z \to 0 \\ z \in \ol{\bbC_+}\backslash\{0\}}}{=} 
[T(0) + P_0]^{-1}    \no \\
& \hspace*{1.4cm} - z \big[- (2 \pi)^{-1} \ln(z/2) - (2\pi)^{-1} \gamma_{E-M} +i 4^{-1}\big]    \no \\ 
& \hspace*{1.4cm} \quad \times \delta_{n,2} [T(0) + P_0]^{-1} \ol{V_1 R_{1,1} V_1^*} [T(0) + P_0]^{-1}     
\lb{10.111zz} \\ 
& \hspace*{1.4cm} - z [T(0) + P_0]^{-1} \ol{V_1 R_{1,0} V_1^*} [T(0) + P_0]^{-1} + E_1(z),     \no 
\end{align}
where 
\begin{equation}\lb{10.112zz}
\begin{split}
&\|E_1(z)\|_{\cB([L^2(\bbR^n)]^N)}\\
&\quad \underset{\substack{z \to 0 \\ z \in \ol{\bbC_+}\backslash\{0\}}}{=}
\begin{cases}
\Oh\big(|z|^{1+ k}\big)\, \text{ for any $0<k<\min\{1,\varepsilon\}$},& n=2,\\
\Oh\big(|z|\big),& n\geq 3 
\end{cases}
\end{split}
\end{equation}
$($with $0 < \varepsilon$ taken as in Hypothesis \ref{h9.5mod}$)$. 
\end{lemma}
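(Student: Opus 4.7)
The plan is to turn the Birman--Schwinger-type expansion of Lemma \ref{l9.9} into an expansion of $[T(z)+P_0]^{-1}$ by a Neumann-series argument around the invertible reference operator $A := T(0) + P_0$. First I would verify the invertibility of $A$: by the factorization \eqref{9.27}--\eqref{9.27a} and Remark \ref{r9.4}, the limit operator $T(0) = U_V I_{[L^2(\bbR^n)]^N} + \ol{V_1 R_{0,0} V_1^*}$ is self-adjoint, so $0$ is an isolated point of $\sigma(T(0))$ with vanishing quasi-nilpotent part; Lemma \ref{l9.1}\,$(ii)$ then yields $A^{-1} \in \cB([L^2(\bbR^n)]^N)$.

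Next, I would write $T(z) + P_0 = A + B(z)$, where by Lemma \ref{l9.9}
\begin{equation*}
B(z) = z\,\ol{V_1 R_{1,0} V_1^*} + z\big[-(2\pi)^{-1}\ln(z/2) - (2\pi)^{-1}\gamma_{E-M} + i/4\big]\delta_{n,2}\,\ol{V_1 R_{1,1} V_1^*} + E(z),
\end{equation*}
with $\|E(z)\|_{\cB([L^2(\bbR^n)]^N)} = \Oh(|z|^{1+\varepsilon})$. Hence $\|B(z)\|_{\cB([L^2(\bbR^n)]^N)} = \Oh(|z|\,|\ln z|)$ for $n = 2$ and $\Oh(|z|)$ for $n \geq 3$, so that $\|B(z)A^{-1}\| < 1$ for $|z|$ small enough, and the identity
\begin{equation*}
[T(z) + P_0]^{-1} = A^{-1} - A^{-1}B(z)A^{-1} + A^{-1}B(z)A^{-1}B(z)[T(z)+P_0]^{-1}
\end{equation*}
holds (with $[T(z)+P_0]^{-1}$ uniformly bounded via the Neumann series). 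Substituting the explicit expression for $B(z)$ into $-A^{-1}B(z)A^{-1}$ reproduces exactly the two subtracted terms on the right-hand side of \eqref{10.111zz}, plus a contribution $-A^{-1}E(z)A^{-1}$ of norm $\Oh(|z|^{1+\varepsilon})$ that I would absorb into $E_1(z)$.

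It remains to bound the quadratic tail $A^{-1}B(z)A^{-1}B(z)[T(z)+P_0]^{-1}$, whose norm is controlled by $\|A^{-1}\|^3 \|B(z)\|^2 \cdot \|[I+B(z)A^{-1}]^{-1}\|$. For $n \geq 3$, $\|B(z)\|^2 = \Oh(|z|^2)$, which together with the $\Oh(|z|^{1+\varepsilon})$ contribution from $E(z)$ comfortably yields the advertised $\Oh(|z|)$ bound. For $n = 2$, the logarithmic factor in $B(z)$ inflates the quadratic term to $\Oh(|z|^2 (\ln z)^2)$, which is $\Oh(|z|^{1+k})$ for every $k < \min\{1, \varepsilon\}$; combined with the $\Oh(|z|^{1+\varepsilon})$ contribution from $E(z)$, this matches \eqref{10.112zz}. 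No essential obstacle arises once Lemma \ref{l9.9} is in hand; the only mildly technical point is careful bookkeeping of the logarithmic corrections in the planar case.
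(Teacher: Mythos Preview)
Your proposal is correct and follows essentially the same route as the paper: both argue by a Neumann-series (second-order resolvent) expansion of $[T(z)+P_0]^{-1}$ around the invertible reference operator $T(0)+P_0$, using the decomposition of Lemma~\ref{l9.9} for the perturbation $B(z)=T(z)-T(0)$, and then collect the quadratic tail together with $-A^{-1}E(z)A^{-1}$ into $E_1(z)$. The paper defers the case $n=2$ to \cite[Lemma~5.2]{EGG20} and, for $n\geq 3$, re-derives the bound $\|T(z)-T(0)\|=\Oh(|z|)$ via an independent kernel estimate before invoking the Neumann series; your version obtains this directly from the decomposition in Lemma~\ref{l9.9} (since $\ol{V_1 R_{1,0} V_1^*}$ is bounded and $\|E(z)\|=\Oh(|z|^{1+\varepsilon})$), which is a legitimate shortcut.
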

\begin{proof}
The case $n=2$ is treated in detail in \cite[Lemma 5.2]{EGG20}, so we consider $n\geq 3$ here.  By Lemma \ref{l9.9},
\begin{align}
[T(z)+P_0]^{-1} &= \Big[T(0)+P_0 + z\overline{V_1R_{1,0}V_1^*}+E(z) \Big]^{-1}\no\\
&=\Big(I_{[L^2(\bbR^n)]^N}+[T(0)+P_0]^{-1}z\overline{V_1R_{1,0}V_1^*} + [T(0)+P_0]^{-1}E(z) \Big)^{-1}\no\\
&\qquad \times[T(0)+P_0]^{-1},\quad z\in \overline{\bbC_+}\backslash\{0\},\, 0<|z|\ll 1,\lb{10.122z}
\end{align}
where $E(\dott)$ satisfies \eqref{10.95}.  By \eqref{9.47} and \eqref{10.105z},
\begin{align}
&\big\|V_1(x)G_0(z;x,y)V_1^*(y)-V_1(x)R_{0,0}(x-y)V_1^*(y)\big\|_{\cB(\bbC^N)}\no\\
&\quad \leq c_1|z|\Big[V_1(x)V_1^*(y) + [1+|x|]^{(1-n)/2}|x-y|^{-1}[1+|y|]^{(1-n)/2}\no\\
&\hspace*{1.7cm} +[1+|x|]^{-1}|x-y|^{2-n}[1+|y|]^{-1}\Big],\lb{10.123}\\
&\hspace*{-.2cm}\text{for a.e.}\, x,y\in \bbR^n,\, x\neq y,\, z\in \ol{\bbC_+}\backslash \{0\}, |z|\leq 1,\no
\end{align}
for some $(z,x,y)$-independent constant $c_1\in (0,\infty)$.  The kernel
\begin{align}
\begin{split} 
k(x,y)&=V_1(x)V_1^*(y) + [1+|x|]^{(1-n)/2}|x-y|^{-1}[1+|y|]^{(1-n)/2}\lb{10.124z}\\
&\quad +[1+|x|]^{-1}|x-y|^{2-n}[1+|y|]^{-1},\quad x,y\in \bbR^n,\, x\neq y,     
\end{split} 
\end{align}
generates a bounded integral operator in $L^2(\bbR^n)$.  The first term on the right-hand side in \eqref{10.124z} generates a Hilbert--Schmidt operator due to the containment $\|V_1(\dott)\|_{\cB(\bbC^N)}\in L^2(\bbR^n)$.  The fact that the second term generates a bounded operator is explained in the proof of Lemma \ref{l9.9} in connection with \eqref{10.118}.  Finally, the third term in \eqref{10.124z} generates a bounded integral operator by an application of Theorem \ref{t5.6} $(ii)$ with $a=b=1$.  It follows that
\begin{align}
\|T(z)-T(0)\|_{\cB([L^2(\bbR^n)]^N)}\leq c_2 |z|,\quad z\in \overline{\bbC_+},\, |z|\leq 1,\lb{10.125z}
\end{align}
for some $z$-independent constant $c_2\in (0,\infty)$.  The estimate in \eqref{10.125z} implies that, for $z\in \overline{\bbC_+}$ with $0<|z|\ll 1$, a Neumann series may be used to obtain
\begin{align}
[T(z)+P_0]^{-1}&=[T(0)+P_0]^{-1}- z [T(0)+P_0]^{-1} \overline{V_1R_{1,0}V_1^*}[T(0)+P_0]^{-1} \no \\
&\quad -[T(0)+P_0]^{-1}E(z)[T(0)+P_0]^{-1}     \lb{10.126zz} \\
&\quad + \sum_{n=2}^{\infty}(-1)^nA(z)^n[T(0)+P_0]^{-1},\quad z\in \overline{\bbC_+}\backslash\{0\},\, 0<|z|\ll 1,\no
\end{align}
where
\begin{equation}\lb{10.128ww}
A(z):= z [T(0)+P_0]^{-1} \overline{V_1R_{1,0}V_1^*}+[T(0)+P_0]^{-1}E(z)\underset{\substack{z \to 0 \\ z \in \ol{\bbC_+}\backslash\{0\}}}{=}\Oh(|z|),
\end{equation}
apllying \eqref{10.95}.  In particular,
\begin{equation}\lb{10.128zz}
\bigg\|\sum_{n=2}^{\infty}(-1)^nA(z)^n[T(0)+P_0]^{-1}\bigg\|_{\cB([L^2(\bbR^n)]^N)}\underset{\substack{z \to 0 \\ z \in \ol{\bbC_+}\backslash\{0\}}}{=}\Oh(|z|).
\end{equation}
Hence, \eqref{10.111zz} follows from \eqref{10.126zz} with
\begin{align}
& E_1(z):=-[T(0)+P_0]^{-1}E(z)[T(0)+P_0]^{-1}+ \sum_{n=2}^{\infty}(-1)^nA(z)^n[T(0)+P_0]^{-1},    \no\\
& \hspace*{7.5cm} z\in \overline{\bbC_+},\, 0<|z|\ll1.
\end{align}
Thus, the $\Oh(|z|)$ relation in \eqref{10.112zz} for $n \geq 3$ follows from \eqref{10.95} and \eqref{10.128zz}.
\end{proof}

\begin{lemma}[{\cite[Lemmas~5.2, 7.1--7.6]{EGG20}}] \lb{l9.10a}
Assume Hypothesis \ref{h9.5mod} and $n=2$.  The following statements $(i)$--$(iv)$ hold.  \\[1mm] 
$(i)$ If $\phi_0 \in \ker(T(0))$, then $\phi_0 = U_V V_1\psi_0$, with $\psi_0$ a distributional solution of $H \psi_0 = 0$ satisfying 
$\psi_0 \in [L^p(\bbR^2)]^2$ for all $p \in (2, \infty) \cup \{\infty\}$. Moreover,
\begin{equation}
\psi_0 (x) = - i \alpha \cdot x \big[2 \pi \langle x \rangle^2\big]^{-1} (R_{1,1} V_1^* \phi_0) + \psi_1(x),
\end{equation}
where
\begin{equation}
(R_{1,1} V_1^* \phi_0) = \int_{\bbR^2} d^2y \, V_1^*(y) \phi_0 (y) \, \text{ and } \, \psi_1 \in [H^1(\bbR^2)]^2. 
\end{equation}
In particular, 
\begin{equation}
\psi_0 \in [H^1(\bbR^2)]^2 \, \text{ if and only if } \, 
(R_{1,1} V_1^* \phi_0) = \int_{\bbR^2} d^2y \, V_1^*(y) \phi_0 (y) = 0.
\end{equation}
Moreover, the rank of $P_0$ is at most two plus the dimension of the eigenspace of $H$ at energy zero, that is,
\begin{align}
P_0 = P_{0,1} \oplus P_{0,2}, \, \text{ with } \, \dim(\ran(P_{0,1})) \leq 2  
\end{align}
in \eqref{9.48a}. \\[1mm] 
$(ii)$ If $\psi_0 \in [L^2(\bbR^2)]^2 + \bigcap_{p \in (2,\infty) \cup \{\infty\}} [L^p(\bbR^2)]^2$, then 
\begin{equation}
\phi_0 = U_V V_1 \psi_0 \in \ker(T(0)).
\end{equation} 
$(iii)$ If $\phi_0 = U_V V_1\psi_0 \in \ker(T(0))$, then $\phi_0 \in \ran(P_{0,2})$ if and only if 
$\psi_0 \in [H^1(L^2(\bbR^2))]^2$. Thus, $\phi_0 \in \ran(P_{0,2})$ if and only if 
$\phi_0 \in \ker(P_0V_1 R_{1,1} V_1^* P_0)$. \\[1mm]
$(iv)$ If $n=2$ and $\phi_0 \in \ran(P_{0,2})$, then 
\begin{equation}
(R_{0,0}V_1^*\phi_0, R_{0,0}V_1^*\phi_0)_{[L^2(\bbR^2)]^2} = (V_1^*\phi_0, R_{1,0}V_1^*\phi_0)_{[L^2(\bbR^2)]^2} 
\lb{9.68}
\end{equation}
and 
\begin{equation}
\ker(P_{0,2} V_1 R_{1,0} V_1^* P_{0,2}) = \{0\}.    \lb{9.69}
\end{equation} 
\end{lemma}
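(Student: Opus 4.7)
The plan is to adapt the threshold-analysis techniques of Erdo\u{g}an, Goldberg, and Green \cite{EGG20} to the present factorization $V=V_1^*V_2$ with $V_1=V_1^*=|V|^{1/2}$ and $V_2=U_VV_1$; in fact, since $n=2$, the arguments carry over almost verbatim. Throughout, I will use the $n=2$ form of the leading Green's-function term \eqref{9.46}, namely $R_{0,0}(x-y)=(2\pi)^{-1}i\,\alpha\cdot\nabla_x\ln|x-y|$, together with the distributional identity $-i\alpha\cdot\nabla_x R_{0,0}(x-y)=\delta(x-y)I_N$ from \eqref{9.73}.

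For part $(i)$, given $\phi_0\in\ker T(0)$, define $\psi_0:=-R_{0,0}*(V_1^*\phi_0)$ as in \eqref{9.46A}. The key step is the asymptotic split
\[
R_{0,0}(x-y)=-\frac{i\,\alpha\cdot x}{2\pi\langle x\rangle^2}+\bigl[R_{0,0}(x-y)+\tfrac{i\,\alpha\cdot x}{2\pi\langle x\rangle^2}\bigr],
\]
where, after using $\nabla_x\ln|x-y|=\nabla_x\ln|x|+\Oh(|y|/|x|^2)$ for $|x|\gg|y|$, the bracketed remainder, convolved against $V_1^*\phi_0$, gains an extra $\langle y\rangle$ factor which the decay of $V_1$ in Hypothesis~\ref{h9.5mod} absorbs into $L^2$. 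This yields $\psi_0(x)=-i\alpha\cdot x[2\pi\langle x\rangle^2]^{-1}(R_{1,1}V_1^*\phi_0)+\psi_1(x)$ with $\psi_1\in[L^2(\bbR^2)]^2$, and then the identity $-i\alpha\cdot\nabla\psi_0=V_1^*\phi_0=V\psi_0$ (so that $V\psi_0\in[L^2]^2$) promotes $\psi_1$ to $[H^1(\bbR^2)]^2$ via the trivial bound on $\alpha\cdot(-i\nabla)$; the membership $\psi_0\in[L^p]^2$ for $p\in(2,\infty)\cup\{\infty\}$ follows from $x/\langle x\rangle^2\in L^p$ in exactly that range and the Riesz-potential estimates used in the proof of Theorem \ref{t9.7}. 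The rank bound on $P_{0,1}$ is then immediate: the map $\phi_0\mapsto\int_{\bbR^2}V_1^*\phi_0\,d^2y\in\bbC^2$ has kernel equal to the eigenfunction part, so $\dim\ran(P_{0,1})\le 2$.

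Part $(ii)$ is the converse: given $\psi_0\in[L^2+\bigcap_pL^p]^2$ with $H\psi_0=0$, set $\phi_0:=U_VV_1\psi_0=V_2\psi_0$ and check $T(0)\phi_0=0$. This is a direct computation: apply $-i\alpha\cdot\nabla$ to $\psi_0+R_{0,0}*(V_1^*\phi_0)$ using \eqref{9.73} and the equation $-i\alpha\cdot\nabla\psi_0=-V\psi_0=-V_1^*\phi_0$ to see this quantity is constant; the mixed $L^2+L^p$ hypothesis rules out a nonzero constant, so $\psi_0=-R_{0,0}*V_1^*\phi_0$ and hence $T(0)\phi_0=V_2\psi_0+V_2R_{0,0}V_1^*\phi_0=0$. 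Part $(iii)$ is then a direct combination of $(i)$ and $(ii)$: the leading $-i\alpha\cdot x/(2\pi\langle x\rangle^2)(R_{1,1}V_1^*\phi_0)$ obstructs membership in $[L^2]^2$ iff $R_{1,1}V_1^*\phi_0=\int V_1^*\phi_0\ne 0$, and this in turn is the statement $\phi_0\notin\ker(P_0V_1R_{1,1}V_1^*P_0)$ after pairing with $P_0\phi_0=\phi_0$.

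For part $(iv)$, the orthogonality identity \eqref{9.68} is proved by writing $\psi_0=-R_{0,0}V_1^*\phi_0\in[H^1]^2$ (using $(iii)$), computing $\|\psi_0\|_{[L^2]^2}^2$ via Plancherel with the Fourier symbol of $R_{0,0}$ (which is $|\xi|^{-2}\alpha\cdot\xi$ up to constants), and recognising $R_{1,0}$ as having Fourier symbol $|\xi|^{-2}I_N$ (cf.\ \eqref{9.47}); the identity $(\alpha\cdot\xi)^2=|\xi|^2 I_N$ converts one inner product into the other. Injectivity \eqref{9.69} is then immediate: if $P_{0,2}V_1R_{1,0}V_1^*P_{0,2}\phi_0=0$ with $\phi_0\in\ran(P_{0,2})$, pairing with $\phi_0$ in $[L^2]^2$ and applying \eqref{9.68} gives $\|R_{0,0}V_1^*\phi_0\|_{[L^2]^2}=0$, hence $\psi_0=0$, and therefore $\phi_0=V_2\psi_0=0$.

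The main obstacle is the careful asymptotic analysis in part $(i)$ needed to isolate the $-i\alpha\cdot x/(2\pi\langle x\rangle^2)$ singular piece in the right function space, and the proper promotion of $\psi_1$ from $L^2$ to $H^1$; this is a technical bookkeeping exercise that in \cite{EGG20} requires splitting the convolution into near-diagonal, bounded-$y$, and large-$y$ regions and using Lemma~\ref{l3.12} type estimates together with the decay in Hypothesis~\ref{h9.5mod}. Part $(iv)$ is algebraically clean but requires the Fourier-side identification of $R_{0,0}$ and $R_{1,0}$ to be rigorously justified on the range of $V_1^*P_{0,2}$, which is why the membership $\psi_0\in[H^1]^2$ from $(iii)$ is essential.
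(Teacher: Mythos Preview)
The paper does not prove this lemma; it is stated with attribution to \cite[Lemmas~5.2, 7.1--7.6]{EGG20} and no proof is given. Your sketch is a correct outline of that cited argument, and in particular your treatment of part $(iv)$ matches essentially verbatim what the paper \emph{does} prove for the $n\geq 3$ analogue in Lemma~\ref{l9.10b} (the Plancherel computation with $(\alpha\cdot\xi)^2=|\xi|^2 I_N$ identifying $\|R_{0,0}V_1^*\phi_0\|^2$ with $(V_1^*\phi_0,R_{1,0}V_1^*\phi_0)$, followed by the injectivity argument). One small remark: in part $(ii)$ you correctly supply the implicit hypothesis $H\psi_0=0$ that the lemma's statement omits; without it the conclusion would of course be false.
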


\begin{lemma} \lb{l9.10b}
Assume Hypothesis \ref{h9.5mod} and $n\geq 3$.  The following statements $(i)$ and $(ii)$ hold.  \\[1mm]
$(i)$ $\phi_0 = U_V V_1\psi_0 \in \ker(T(0))$ $($i.e., $\phi_0 \in \ran(P_0)$$)$ if and only if 
$\psi_0 \in [H^1(L^2(\bbR^2))]^2$. Thus, $\phi_0 \in \ran(P_0)$ if and only if 
$\phi_0 \in \ker(P_0V_1 R_{1,1} V_1^* P_0)$. \\[1mm]
$(ii)$ $\phi_0 \in \ran(P_0)$, then 
\begin{equation}
(R_{0,0}V_1^*\phi_0, R_{0,0}V_1^*\phi_0)_{[L^2(\bbR^n)]^N} = (V_1^*\phi_0, R_{1,0}V_1^*\phi_0)_{[L^2(\bbR^n)]^N} 
\lb{9.70}
\end{equation}
and 
\begin{equation}
\ker(P_0 V_1 R_{1,0} V_1^* P_0) = \{0\}.    \lb{9.71}
\end{equation} 
\end{lemma}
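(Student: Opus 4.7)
The plan is to reduce Lemma \ref{l9.10b} to direct consequences of Theorem \ref{t9.7}\,$(ii)$ (absence of zero-energy resonances in dimension $n \geq 3$) together with a short Fourier-theoretic computation with $H_0$.

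For item $(i)$, I would first establish the equivalence $\phi_0 \in \ker(T(0)) \iff \psi_0 \in [H^1(\bbR^n)]^N$, where $\phi_0 = U_V V_1 \psi_0$ and $\psi_0 = -R_{0,0} V_1^* \phi_0$. The forward direction is immediate from \eqref{9.36}--\eqref{9.38} combined with Theorem \ref{t9.7}\,$(ii)$: any $0 \neq \phi_0 \in \ker(T(0))$ produces a distributional solution $\psi_0$ of $H\psi_0 = 0$, and since no resonances occur for $n \geq 3$, automatically $\psi_0 \in \dom(H) = [W^{1,2}(\bbR^n)]^N$. The reverse direction repeats the argument at the end of the proof of Theorem \ref{t9.7}: given an $H^1$ zero mode $\psi_0$, set $\phi_0 := U_V V_1 \psi_0$, apply $-i\alpha \cdot \nabla$ to $\psi_0 + (H_0 - (0+i0)I)^{-1}V_1^*\phi_0$ to see that this expression is annihilated by $H_0$ and therefore vanishes thanks to its $[L^2(\bbR^n)]^N$ integrability, which forces $\phi_0 \in \ker(T(0))$. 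Since $T(0)$ is self-adjoint (as arranged via $U_V$ in Remark \ref{r9.4}, equation \eqref{9.30a}), one has $\ran(P_0) = \ker(T(0))$.

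The second equivalence in $(i)$ reduces to showing $V_1 R_{1,1} V_1^* \phi_0 = 0$ for every $\phi_0 \in \ran(P_0)$. Because $R_{1,1} \equiv 1$, the vector $V_1 R_{1,1} V_1^* \phi_0$ equals $V_1(\dott)\int_{\bbR^n} d^n y\, V_1^*(y)\phi_0(y) = V_1(\dott)\int_{\bbR^n} d^n y\, V(y)\psi_0(y)$, using $V_1^*U_V V_1 = V$ and $\phi_0 = U_V V_1\psi_0$. The equation $H\psi_0 = 0$ gives $V\psi_0 = -H_0\psi_0$; the decay hypothesis \eqref{9.31Amod} together with a Hölder estimate against $\psi_0 \in [L^q(\bbR^n)]^N$ (for some $q$ in the range given by Theorem \ref{t9.7}\,$(ii)$) places $V\psi_0 \in [L^1(\bbR^n)]^N$. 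Its Fourier transform is therefore continuous, and one evaluates
\[
(2\pi)^{-n/2}\int_{\bbR^n} d^n y\, (V\psi_0)(y) = \widehat{V\psi_0}(0) = -\widehat{H_0\psi_0}(0) = -\big[(\alpha\cdot p)\widehat{\psi_0}(p)\big]_{p=0} = 0,
\]
so $V_1 R_{1,1} V_1^*\phi_0 \equiv 0$, giving the inclusion $\ran(P_0)\subseteq \ker(P_0V_1R_{1,1}V_1^*P_0)\cap\ran(P_0)$; the reverse inclusion is tautological.

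For item $(ii)$, both assertions rest on the Fourier identity $(\alpha\cdot p)^2 = |p|^2 I_N$. With $f := V_1^*\phi_0$ and $\phi_0 \in \ran(P_0)$, I would use $\psi_0 = -R_{0,0}f \in [H^1(\bbR^n)]^N$ and $f = -H_0\psi_0$, so that $\widehat{f}(p) = -(\alpha\cdot p)\widehat{\psi_0}(p)$ vanishes linearly at $p=0$, which renders $|p|^{-2}|\widehat{f}(p)|^2 = |\widehat{\psi_0}(p)|^2$ locally integrable. Writing $\widehat{R_{1,0}f}(p) = |p|^{-2}\widehat{f}(p)$ by the Fourier representation of the Newtonian potential, Plancherel yields
\[
(V_1^*\phi_0, R_{1,0}V_1^*\phi_0)_{[L^2(\bbR^n)]^N} = \int_{\bbR^n} d^n p\, |p|^{-2}\big|(\alpha\cdot p)\widehat{\psi_0}(p)\big|^2 = \|\psi_0\|_{[L^2(\bbR^n)]^N}^2 = \|R_{0,0}V_1^*\phi_0\|_{[L^2(\bbR^n)]^N}^2,
\]
which is \eqref{9.70}. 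For \eqref{9.71}, if $\phi_0 \in \ran(P_0)$ satisfies $P_0 V_1 R_{1,0} V_1^*\phi_0 = 0$, pairing with $\phi_0$ and invoking \eqref{9.70} gives $\|\psi_0\|^2 = 0$, whence $\psi_0 = 0$ and therefore $\phi_0 = U_V V_1\psi_0 = 0$. The only mild obstacle I anticipate is avoiding pointwise integration-by-parts arguments that would require more than $H^1$ regularity on $\psi_0$; the Fourier route circumvents this by converting every manipulation into a multiplier computation on $\widehat{\psi_0}$.
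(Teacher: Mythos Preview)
Your treatment of item~$(ii)$ and of the first equivalence in item~$(i)$ is correct and is essentially the paper's own argument: both sides of \eqref{9.70} are computed in Fourier variables using $(\alpha\cdot p)^2=|p|^2 I_N$, with the paper writing $\|R_{0,0}V_1^*\phi_0\|^2=\int |p|^{-4}|(\alpha\cdot p)\widehat{V_1^*\phi_0}|^2\,d^np=\int |p|^{-2}|\widehat{V_1^*\phi_0}|^2\,d^np$ and regularizing the $R_{1,0}$ side by $[|p|^2+\varepsilon^2]^{-1}$ and monotone convergence; \eqref{9.71} then follows by pairing, exactly as you do.

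There is a genuine gap in your proof of the second sentence of item~$(i)$. The step
\[
\widehat{V\psi_0}(0)\;=\;-\big[(\alpha\cdot p)\widehat{\psi_0}(p)\big]_{p=0}\;=\;0
\]
is not justified: $\widehat{\psi_0}$ is merely an $L^2$ function, and the a.e.\ identity $\widehat{V\psi_0}(p)=-(\alpha\cdot p)\widehat{\psi_0}(p)$ together with continuity of $\widehat{V\psi_0}$ does \emph{not} force $\widehat{V\psi_0}(0)=0$. In fact the conclusion $\int V_1^*\phi_0=0$ is generally \emph{false} for $n\ge 3$: the leading large-$|x|$ behavior of $\psi_0=-R_{0,0}V_1^*\phi_0$ is $c_n|x|^{1-n}(\alpha\cdot\omega_x)\!\int V_1^*\phi_0$, and since $|x|^{1-n}\in L^2(|x|>1)$ precisely when $n\ge 3$, the condition $\psi_0\in L^2$ places no constraint on $\int V_1^*\phi_0$ (contrast Lemma~\ref{l9.10a}\,$(i)$ for $n=2$, where $|x|^{-1}\notin L^2$ at infinity). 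The paper is terse here, calling item~$(i)$ ``just a rephrasing of the proof of Theorem~\ref{t9.7} for $n\ge 3$,'' and the second sentence of $(i)$ is a carryover from the $n=2$ statement that is neither proved nor subsequently used: the $R_{1,1}$ term in the expansion of Lemma~\ref{l9.9} carries a $\delta_{n,2}$, so only item~$(ii)$ enters the threshold analysis in Theorem~\ref{t9.11}\,$(ii)$.
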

\begin{proof}
Item $(i)$ is just a rephrasing of the proof of Theorem \ref{t9.7} for $n \geq 3$. Item $(ii)$ is proved exactly along the lines of \cite[Lemma~7.6]{EGG20}; we briefly sketch the argument.  By item $(i)$, 
$\psi_0 = - R_{0,0} V_1^* \phi_0 \in [L^2(\bbR^2)]^2$ and hence, applying Fourier transforms, 
\begin{align}
& (R_{0,0} V_1^* \phi_0, R_{0,0} V_1^* \phi_0)_{[L^2(\bbR^n)]^N} 
= \int_{\bbR^n} d^n p \, |p|^{-4} \big((\alpha \cdot p) (V_1^* \phi_0)^{\wedge}, (\alpha \cdot p) (V_1^* \phi_0)^{\wedge}\big)_{\bbC^N} 
\no \\
& \quad = \int_{\bbR^n} d^n p \, |p|^{-2} \big((V_1^* \phi_0)^{\wedge}, (V_1^* \phi_0)^{\wedge}\big)_{\bbC^N}. 
\lb{9.72A}
\end{align}
On the other hand, employing the monotone convergence theorem, 
\begin{align}
& (V_1^* \phi_0, R_{1,0} V_1^* \phi_0)_{[L^2(\bbR^n)]^N} = (V_1^* \phi_0, (- \Delta) V_1^* \phi_0)_{[L^2(\bbR^n)]^N} \no \\
& \quad = \lim_{\varepsilon \downarrow 0} 
\int_{\bbR^n} d^n p \, \big[|p|^2 + \varepsilon^2\big]^{-1} \big((V_1^* \phi_0)^{\wedge}, (V_1^* \phi_0)^{\wedge}\big)_{\bbC^N}
\no \\
& \quad = \int_{\bbR^n} d^n p \, |p|^{-2} \big((V_1^* \phi_0)^{\wedge}, (V_1^* \phi_0)^{\wedge}\big)_{\bbC^N}, 
\end{align}
proving \eqref{9.70}.  Finally, assume that $\phi_0 \in \ker(P_0 V_1 R_{1,0} V_1^* P_0)$. Then \eqref{9.70} yields
\begin{equation}
\|\psi_0\|_{[L^2(\bbR^n)]^N} = (R_{0,0}V_1^*\phi_0, R_{0,0}V_1^*\phi_0)_{[L^2(\bbR^n)]^N} = 0, 
\end{equation}
implying $\psi_0 =0$ and thus $\phi_0 = U_V V_1 \psi_0 = 0$. 
\end{proof}

One of the principal results of this section then reads as follows:

\begin{theorem} \lb{t9.11}
Assume Hypothesis \ref{h9.5}. \\[1mm] 
$(i)$ Suppose $n=2$. Then 
\begin{align}
& T(z)^{-1} = \big[U_V I_{[L^2(\bbR^n)]^N} + \ol{V_1 (H_0 - z I_{[L^2(\bbR^n)]^N})^{-1} V_1^*}\big]^{-1}    \no \\[1mm]  
& \quad \;\, \underset{\substack{z \to 0 \\ z \in \ol{\bbC_+}\backslash\{0\}}}{=} 
\begin{cases}
T(0)^{-1} - T(0)^{-1} [\Oh(|z \ln(z)|] T(0)^{-1} \;\; \text{in Case $(I)$}, \\[1mm] 
[z \ln(z)]^{-1} P_{0,1} A P_{0,1} + P_{0,1} \big[\Oh\big(\big[|z|^{-1} |\ln(z)|^{-2}\big)\big] P_{0,1} \;\; \text{in Case $(II)$}, 
\\[1mm] 
z^{-1} P_{0,2} [P_{0,2} V_1 R_{1,0} V_1^* P_{0,2}] P_{0,2}    \\
\; + P_{0,2} \big[\Oh\big(|z|^{-1 + \varepsilon}\big)\big] P_{0,2}  
\;\; \text{in Case $(III)$}, \\[2mm] 
z^{-1} P_0 \left(\begin{smallmatrix} 0 & \;\; 0 \\[1mm] 0 & \;\; P_{0,2} V_1 R_{1,0} V_1^* P_{0,2} \end{smallmatrix}\right) P_0    \\
\; + P_0 \big[\Oh\big(|z \ln(z)|^{-1}\big)\big] P_0  
\;\; \text{in Case $(IV)$},
\end{cases}
\end{align}
where
\begin{align}
& T(0) = U_V + V_1 R_{0,0} V_1^*, \quad T(0)^{-1} \in \cB\big([L^2(\bbR^n)]^N\big) \, \text{ in Case $(I)$},   \\
& A \in \bbR\backslash\{0\} \, \text{ if } \, \dim(\ran(P_{0,1})) =1, \text{ in Case $(II)$},  \\
& {\det}_{\bbC^2}(A) \neq 0  \, \text{ if } \, \dim(\ran(P_{0,1})) =2 \, \text{ in Case $(II)$}.   
\end{align}
$(ii)$ Suppose $n \in \bbN$, $n \geq 3$. Then 
\begin{align}
& T(z)^{-1} = \big[U_V I_{[L^2(\bbR^n)]^N} + \ol{V_1 (H_0 - z I_{[L^2(\bbR^n)]^N})^{-1} V_1^*}\big]^{-1}    \no \\[1mm]  
& \quad \;\, \underset{\substack{z \to 0 \\ z \in \ol{\bbC_+}\backslash\{0\}}}{=} 
\begin{cases}
T(0)^{-1} - T(0)^{-1} [\Oh(|z \ln(z)|] T(0)^{-1} \;\; \text{in Case $(I)$}, \\[1mm] 
z^{-1} P_0 [P_0 V_1 R_{1,0} V_1^* P_0] P_0  
+ P_0 \big[\Oh\big(|z|^{-1 + \varepsilon}\big)\big] P_0  
\;\; \text{in Case $(II)$}, \\[2mm] 
\end{cases}
\end{align}
where, again,
\begin{equation}
T(0) = U_V + V_1 R_{0,0} V_1^*, \quad T(0)^{-1} \in \cB\big([L^2(\bbR^n)]^N\big) \, \text{ in Case $(I)$}. 
\end{equation}

Moreover, in both items $(i)$ and $(ii)$, the coefficients of all singular terms in the expansion of $T(z)^{-1}$ at $z=0$ 
$($i.e., in cases different from $(I)$$)$ are finite-rank operators acting in $($subspaces of\,$)$ $P_0 [L^2(\bbR^n)]^N$.

Here, $\big[\Oh\big(|\zeta|^a\big)\big]$, $a \in \bbR$, abbreviate estimates with respect to the operator norm.  
\end{theorem}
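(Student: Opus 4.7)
The plan is to apply the Jensen--Nenciu reduction of Lemma \ref{l9.2} to $A(z) = T(z)$, with $A_0 = T(0)$ and $P_0$ the corresponding Riesz projection. Since $T(0)$ is self-adjoint (cf.\ Remark \ref{r9.4}), the condition $T(0) P_0 = 0$ holds, so Lemma \ref{l9.2} supplies the identity
\begin{equation*}
T(z)^{-1} = [T(z) + P_0]^{-1} + z^{-1} [T(z) + P_0]^{-1} P_0 B(z)^{-1} P_0 [T(z) + P_0]^{-1},
\end{equation*}
where $B(z) = z^{-1}\{P_0 - P_0 [T(z) + P_0]^{-1} P_0\}$ acts on the finite-dimensional space $\ran(P_0)$ (finite-dimensionality from compactness of $\ol{V_1 R_{0,0} V_1^*}$, cf.\ \eqref{9.32a}). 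Case (I) corresponds to $P_0 = 0$, so $T(z)^{-1} = [T(z) + P_0]^{-1}$ and the conclusion is read off directly from Lemma \ref{l9.10}.

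The central step is the expansion of $B(z)$. Combining $P_0[T(0)+P_0]^{-1} = [T(0)+P_0]^{-1}P_0 = P_0$ (Lemma \ref{l9.1}(ii)) with \eqref{10.111zz} yields
\begin{align*}
B(z) &= \delta_{n,2}\big[-(2\pi)^{-1} \ln(z/2) - (2\pi)^{-1} \gamma_{E-M} + i/4\big] P_0 \ol{V_1 R_{1,1} V_1^*} P_0 \\
&\quad + P_0 \ol{V_1 R_{1,0} V_1^*} P_0 - z^{-1} P_0 E_1(z) P_0,
\end{align*}
with the remainder controlled by \eqref{10.112zz}. I would then analyze invertibility of $B(z)$ on $\ran(P_0)$ case by case, invoking Lemmas \ref{l9.10a} and \ref{l9.10b} for the rank structure and finishing with a Neumann-series inversion.

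For $n \geq 3$ in Case (II) and $n = 2$ in Case (III), the logarithmic contribution to $B(z)$ is absent: trivially for $n \geq 3$, and because $P_0 = P_{0,2}$ and $P_{0,2} V_1 R_{1,1} V_1^* P_{0,2} = 0$ by Lemma \ref{l9.10a}(iii) when $n=2$. Thus $B(z) = P_0 \ol{V_1 R_{1,0} V_1^*} P_0 + \oh(1)$, and the leading operator is invertible on $\ran(P_0)$ by \eqref{9.69}--\eqref{9.71}; Neumann inversion and substitution produce the claimed $z^{-1}$ expansion. For $n=2$ in Case (II), where $P_0 = P_{0,1}$, the operator $P_{0,1} V_1 R_{1,1} V_1^* P_{0,1}$ is invertible on $\ran(P_{0,1})$, being the restriction of the self-adjoint rank-at-most-two operator $P_0 V_1 R_{1,1} V_1^* P_0$ to the orthogonal complement of its kernel inside $\ran(P_0)$; the logarithmic term then dominates, so $B(z)^{-1} = \Oh([\ln(z)]^{-1})$, producing the $[z \ln(z)]^{-1}$ singularity.

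The main obstacle is Case (IV), the mixed situation at $n=2$. Self-adjointness of $V_1 R_{1,1} V_1^*$ combined with Lemma \ref{l9.10a}(iii) gives the crucial vanishing $P_{0,j} V_1 R_{1,1} V_1^* P_{0,k} = 0$ for $\{j,k\} = \{1,2\}$, so relative to $\ran(P_0) = \ran(P_{0,1}) \oplus \ran(P_{0,2})$ the matrix of $B(z)$ has $(1,1)$-entry of logarithmic leading order, $(2,2)$-entry equal to $P_{0,2} V_1 R_{1,0} V_1^* P_{0,2} + \oh(1)$ (invertible by \eqref{9.69}), and bounded off-diagonal entries. I would then apply the Feshbach/Schur-complement formula \eqref{10.10} of Lemma \ref{l9.1}(iv), available because $B_{22}(z)$ is invertible; the Schur complement $b(z) = B_{11}(z) - B_{12}(z) B_{22}(z)^{-1} B_{21}(z)$ retains the logarithmic leading order, is invertible with $b(z)^{-1} = \Oh([\ln(z)]^{-1})$, and reassembling $B(z)^{-1}$ via \eqref{10.10} shows that the sole $z^{-1}$-order contribution to $T(z)^{-1}$ comes from the $(2,2)$-block $B_{22}(z)^{-1}$, the remaining blocks contributing only $\Oh(z^{-1}[\ln(z)]^{-1})$. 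That all coefficients of the singular terms are finite-rank operators on $P_0 [L^2(\bbR^n)]^N$ is automatic from $\dim \ran(P_0) < \infty$. The bookkeeping of error terms across the two nested Neumann inversions, while ensuring that the error bounds in \eqref{10.95} and \eqref{10.112zz} propagate cleanly to the stated $\Oh$-estimates in each case, is the tedious but essentially routine part.
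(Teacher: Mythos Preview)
Your proposal is correct and follows precisely the Jensen--Nenciu strategy the paper invokes: the paper's own proof is extremely terse, simply citing \cite[Sect.~5]{EGG20} for $n=2$ and asserting that $n\geq 3$ parallels Cases~$(I)$ and $(III)$, whereas you spell out the reduction via Lemma~\ref{l9.2}, the computation of $B(z)$ from Lemma~\ref{l9.10}, and the block/Schur-complement analysis in Case~$(IV)$ using Lemmas~\ref{l9.10a}--\ref{l9.10b}. One small caution: the expansions in Lemmas~\ref{l9.9}--\ref{l9.10b} are stated under Hypothesis~\ref{h9.5mod} rather than Hypothesis~\ref{h9.5} (the two coincide for $n=2$ but not for $n\geq 3$), and the error bound for $E_1$ in \eqref{10.112zz} as literally written is $O(|z|)$ for $n\geq 3$, so to extract the $O(|z|^{-1+\varepsilon})$ remainder you should appeal directly to \eqref{10.95} together with the $O(|z|^2)$ Neumann tail rather than to \eqref{10.112zz} alone.
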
 
\begin{proof}
Item $(i)$ for $n=2$ has been treated in detail \cite[Sect.~5]{EGG20} on the basis of the Jensen and Nenciu method 
\cite{JN01} outlined in Lemmas \ref{l9.1}, \ref{l9.2}, Remarks \ref{r9.3}, \ref{r9.4}, and our summary in items $(\alpha)$--$(\delta)$ 
following Remark \ref{r9.3}. Item $(ii)$ for $n \geq 3$ parallels Cases $(I)$ and $(III)$ for $n=2$.  
\end{proof}

\begin{remark} \lb{r9.12}
A comparison of the threshold behavior of massless Dirac operators (\cite{EGG20}, Theorem 9.10\,$(i)$) and 
Schr\"odinger operators (\cite{BGD88}, \cite{BGDW86}, \cite{JN01}, \cite{Mu82}) demonstrates that in both situations zero-energy resonances produce a logarithmically weaker singularity of the order $\Oh\big(|z \ln(z)|^{-1}\big)$ than the zero-energy eigenvalues which produce the expected $\Oh\big(|z|^{-1}\big)$ singularity. 
\hfill $\diamond$
 \end{remark}

Finally, returning to $F_{H,H_0}$, we again introduce the strengthened assumptions made in Hypothesis \ref{h7.1} and Corollary \ref{c4.4}\,$(ii)$. 

\begin{hypothesis} \lb{h9.13}
Let $n \in \bbN$ and suppose that $V= \{V_{\ell,m}\}_{1 \leq \ell,m \leq N}$ satisfies for some constants  
$C \in (0,\infty)$ and $\varepsilon > 0$, 
\begin{equation}
V \in [L^{\infty} (\bbR^n)]^{N \times N}, \quad 
|V_{\ell,m}(x)| \leq C \langle x \rangle^{- n - \varepsilon} \, \text{ for a.e.~$x \in \bbR^n$, $1 \leq \ell,m \leq N$.}    \lb{9.81}
\end{equation}
In addition, assume that $V(x)= \{V_{\ell,m}(x)\}_{1 \leq \ell,m \leq N}$ is self-adjoint for a.e.~$x \in \bbR^n$. 
In accordance with the factorization based on the polar decomposition of $V$ discussed in \eqref{9.27} we 
suppose that $V = V_1^* V_2 = |V|^{1/2} U_V |V|^{1/2}$, where $V_1 = V_1^* = |V|^{1/2}$, $V_2 = U_V |V|^{1/2}$. 

In addition we assume that $V$ satisfies \eqref{4.3A} and \eqref{4.4}\footnote{The first condition in \eqref{4.4} is superseded by assumption \eqref{9.81}.}.  
\end{hypothesis} 

According to Remark \ref{r8.11}, we now use the symmetrized version of the Birman--Schwinger operator in connection 
with \eqref{8.23} and hence write 
\begin{align}
F_{H,H_0} (z) &= \ln\big({\det}_{[L^2(\bbR^n)]^N, n}\big((H -z I_{[L^2(\bbR^n)]^N}) (H_0 -z I_{[L^2(\bbR^n)]^N})^{-1}\big)\big) 
\no \\
&= \ln\big({\det}_{[L^2(\bbR^n)]^N, n+1}\big(I_{[L^2(\bbR^n)]^N} + V(H_0 -z I_{[L^2(\bbR^n)]^N})^{-1}\big)\big)  \no \\
&= \ln\big({\det}_{[L^2(\bbR^n)]^N, n+1}\big(I_{[L^2(\bbR^n)]^N} + V_2(H_0 -z I_{[L^2(\bbR^n)]^N})^{-1} V_1^*\big)\big) \no \\
&= \ln\big({\det}_{[L^2(\bbR^n)]^N, n+1}\big(U_V\big\{U_V I_{[L^2(\bbR^n)]^N} 
+ V_1(H_0 -z I_{[L^2(\bbR^n)]^N})^{-1} V_1^*\big\}\big)\big)   \no \\ 
&= \ln({\det}_{[L^2(\bbR^n)]^N, n+1}(U_V T(z))), \quad z \in \bbC_{\pm},     \lb{9.82} 
\end{align} 
employing $U_V^2 = I_N$. 

Next, we briefly recall a few facts on continuous (resp., analytic) logarithms and continuous arguments of complex-valued functions (see \cite[p.~40--46]{AN07} for details): 

If $S \subseteq \bbC$ and 
$f \colon S \to \bbC \backslash \{0\}$, then $g$ is called a 
{\it continuous logarithm} of $f$ on $S$ if $g$ is continuous on $S$ and $f(z) = e^{g(z)}$, $z \in S$. Similarly, 
$\theta \colon S \to \bbR$ is called a {\it continuous argument} of $f$ on $S$ if $\theta$ is continuous on $S$ and $f(z) = |f(z)| e^{i \theta(z)}$, $z \in S$. 

$\bullet$ If $g$ is a continuous logarithm of $f$ on $S$, then $\Im(g)$ is a continuous argument of $f$ on $S$. 

$\bullet$ If $\theta$ is a continuous argument of $f$, then $\ln(|f|) + i \theta$ is a continuous logarithm of $f$ on $S$. 

$\bullet$ Thus, $f$ has a continuous logarithm on $S$ if and only if $f$ has a continuous argument on $S$.  

If $\Omega \subseteq \bbC$ is open and $f \colon \Omega \to \bbC \backslash \{0\}$ is analytic, then 
$g \colon \Omega \to \bbC$ is called an analytic logarithm of $f$ on $\Omega$ if $g$ is analytic on $\Omega$ and 
$f(z) = e^{g(z)}$, $z \in \Omega$. 

$\bullet$ If $\Omega \subseteq \bbC$ is open and starlike and $f \colon \Omega \backslash \{0\}$ is analytic, then $f$ has an analytic logarithm on $\Omega$. 

$\bullet$ Suppose $\Omega$ is open and $f \colon \Omega \to \bbC \backslash \{0\}$ is analytic with $g$ a continuous logarithm of $f$ on $\Omega$. Then $g$ is analytic on $\Omega$. 

$\bullet$ Let $a, b, c, d \in \bbR$, $R = \{z = x + iy \, | \, a \leq x \leq b, \; c \leq y \leq d\}$, and 
$f \colon R \to \bbC \backslash \{0\}$ continuous. Then $f$ has a continuous logarithm on $R$. 

$\bullet$ $f \colon \bbC_+ \to \bbC \backslash \{0\}$ analytic, $f \colon \ol{\bbC_+} \to \bbC \backslash \{0\}$ 
continuous, then $f$ has an analytic logarithm on $\bbC_+$ which is continuous on $\ol{\bbC_+}$. More generally, 
$f \colon \bbC_+ \to \bbC \backslash \{0\}$ analytic, $f \colon \ol{\bbC_+} \to \bbC$ 
continuous, then $f$ has an analytic logarithm on $\bbC_+$ which is continuous at $x_0 \in \bbR$ if 
$f(x_0) \neq 0$.

This yields the final and principal result of this section.

\begin{theorem} \lb{t9.14}
Let $n \in \bbN$, $n \geq 2$, and assume Hypothesis \ref{h9.13}. Then $F_{H,H_0}$, $z \in \bbC_{\pm}$, has normal boundary values on $\bbR\backslash \{0\}$. In addition, the boundary values to $\bbR$ 
of the function $\Im(F_{H,H_0}(z))$, $z \in \bbC_+$, are continuous on $(-\infty, 0) \cup (0,\infty)$, 
\begin{equation}
\Im(F_{H,H_0}(\lambda + i0)) \in C((-\infty,0) \cup (0,\infty)),      \lb{9.83} 
\end{equation}
and the left and right limits at zero, 
\begin{equation}
\Im(F_{H,H_0}(0_{\pm} + i 0)) = \lim_{\varepsilon \downarrow 0} \Im(F_{H,H_0}(\pm \varepsilon + i 0)), 
\lb{9.84}
\end{equation}   
exist. In particular, if $0$ is a regular point for $H$ according to Definition \ref{d9.6}\,$(iii)$ and Theorem 
\ref{t9.7}\,$(iii)$ $($this corresponds to case $(I)$ in Theorem \ref{t9.7}\,$(i), (ii)$$)$, then 
\begin{equation}
\Im(F_{H,H_0}(\lambda + i0)) \in C(\bbR).     \lb{10.121} 
\end{equation}
\end{theorem}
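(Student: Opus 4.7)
The plan is to extract continuity of $F_{H,H_0}$ from Schatten-class continuity of the boundary values of the Birman--Schwinger-type operator delivered by Theorem \ref{t6.9}, combined with non-vanishing of the associated regularised Fredholm determinant away from $\lambda = 0$, and to handle the threshold $\lambda = 0$ by the resolvent expansion of Theorem \ref{t9.11}. Throughout, I abbreviate $A(z) = U_V V_1 (H_0 - z I_{[L^2(\bbR^n)]^N})^{-1} V_1^*$, so that $F_{H,H_0}(z) = \ln({\det}_{\cH, n+1}(I + A(z)))$ on $\bbC_\pm$ (cf.~\eqref{9.82}).

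Hypothesis \ref{h9.13} forces $\|V_1(x)\|_{\cB(\bbC^N)} \leq C \langle x \rangle^{-(n+\varepsilon)/2}$ with $(n+\varepsilon)/2 > (n+1)/4$, so Theorem \ref{t6.9} (for $n \geq 3$) together with \eqref{6.41}--\eqref{6.42} and $\cB_2 \subset \cB_3$ (for $n = 2$) yields that $A(\, \cdot \,)$ extends continuously in the $\cB_{n+1}$-norm to $\ol{\bbC_+}$ and to $\ol{\bbC_-}$. Continuity of ${\det}_{\cH, n+1}(\, \cdot \,)$ on $\cB_{n+1}$ therefore produces a continuous extension of ${\det}_{\cH, n+1}(I + A(\, \cdot \,))$ to both $\ol{\bbC_+}$ and $\ol{\bbC_-}$. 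Next, this determinant is non-vanishing at every $\lambda \in \bbR \setminus \{0\}$: a zero would amount to $-1 \in \sigma(A(\lambda \pm i 0))$, i.e.~$\ker T(\lambda \pm i 0) \neq \{0\}$, which by Theorem \ref{t3.4} would make $\lambda$ an embedded eigenvalue of $H$, contradicting $\sigma_p(H) \subseteq \{0\}$ guaranteed by Corollary \ref{c4.4}\,(i) under Hypothesis \ref{h9.13}. The standard facts on analytic logarithms of non-vanishing continuous functions recalled just before the theorem then furnish a continuous extension of $F_{H,H_0}$ to $\ol{\bbC_+} \setminus \{0\}$ (and to $\ol{\bbC_-} \setminus \{0\}$), realising the normal boundary values on $\bbR \setminus \{0\}$ and giving \eqref{9.83}.

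If $0$ is a regular point of $H$, then $T(0)$ is boundedly invertible and ${\det}_{\cH, n+1}(I + A(0)) \neq 0$, so the continuous extension persists through $\lambda = 0$, yielding \eqref{10.121}. In the exceptional cases $(II)$--$(IV)$ of Theorem \ref{t9.7}, let $P_0$ denote the orthogonal projection onto the (finite-dimensional) kernel of $T(0)$, decomposed as $P_0 = P_{0,1} \oplus P_{0,2}$ when $n = 2$ (cf.~\eqref{9.48a}). Lemmas \ref{l9.9} and \ref{l9.10} show that $(I - P_0) T(z) (I - P_0)$ is invertible for $z$ in a punctured neighbourhood of $0$ in $\ol{\bbC_+}$, so a Schur/Feshbach reduction combined with the product formula for modified Fredholm determinants of Appendix~\ref{sD} produces a factorisation
\begin{equation*}
{\det}_{\cH, n+1}(I + A(z)) = D(z) \cdot {\det}_{P_0 \cH}(S(z)), \qquad z \in \ol{\bbC_+}, \ 0 < |z| \ll 1,
\end{equation*}
in which $D(\, \cdot \,)$ is continuous and non-vanishing at $0$, and $S(z)$ is the finite-dimensional Schur complement of $T(z)$ on $P_0 \cH$, so that $S(z)^{-1}$ coincides with the $P_0$-block of $T(z)^{-1}$.

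The decisive step, and the main obstacle, is to read the explicit structure of ${\det}_{P_0 \cH}(S(z))$ off Theorem \ref{t9.11}: the leading-order expansion of $T(z)^{-1}$ yields $S(z)^{-1} = z^{-1} M_2 + [z \ln(z)]^{-1} M_1 + \cdots$, where $M_2$ is invertible on $P_{0,2} \cH$ (or on $P_0 \cH$ for $n \geq 3$) and $M_1$ is invertible on $P_{0,1} \cH$ (present only when $n = 2$ in cases $(II)$, $(IV)$). Inversion then gives ${\det}_{P_0 \cH}(S(z)) = z^{k_2} \, [z \ln(z)]^{k_1} \, h(z)$, with $k_j = \dim \ran P_{0,j}$ ($k_1 = 0$ for $n \geq 3$) and $h(\, \cdot \,)$ continuous and non-vanishing at $0$. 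A direct computation of arguments along paths in $\ol{\bbC_+}$ approaching $0$ shows $\Im \ln z \to 0$, $\Im \ln(z \ln z) \to \pi$ as $\lambda \downarrow 0$, and $\Im \ln z \to \pi$, $\Im \ln(z \ln z) \to 0$ as $\lambda \uparrow 0$. Consequently $\Im F_{H,H_0}(\lambda + i 0)$ admits the one-sided limits asserted in \eqref{9.84}, with values $k_1 \pi + \Im \ln h(0)$ at $0_+$ and $k_2 \pi + \Im \ln h(0)$ at $0_-$. The technically most delicate point is the precise translation of the Schur/Feshbach factorisation into the $(n+1)$-regularised determinant, which requires careful tracking of the trace corrections encoded in the polynomial $R_{n+1}$ of \eqref{9.15a}--\eqref{9.16a}.
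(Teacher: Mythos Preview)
Your proof follows essentially the same strategy as the paper: $\cB_{n+1}$-continuity from Theorem \ref{t6.9}, non-vanishing away from $0$ via Corollary \ref{c4.4}\,(i), and reduction at the threshold to a finite-dimensional determinant through the product formula of Appendix \ref{sD}, whose leading order is then read off the threshold expansion. For the point you flag as delicate, the paper sidesteps the Schur/Feshbach translation by writing the factorization $I+K(z)=(I+K(z)+P_{0,+})\big(I-(I+K(z)+P_{0,+})^{-1}P_{0,+}\big)$ directly and applying Theorem \ref{tD.1} once, then using Lemma \ref{l9.10} (rather than inverting the expansion of $T(z)^{-1}$ from Theorem \ref{t9.11}) to obtain the finite-dimensional piece in the form $c\,z^{M_1}[\ln z]^{M_2}(1+o(1))$.
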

\begin{proof}
Applying Theorem \ref{t3.4}, Corollary \ref{c4.4}\,$(i)$, and Theorem \ref{t6.9}, the function 
${\det}_{[L^2(\bbR^n)]^N, n+1}(U_V T(z))$, $z \in \bbC_{\pm}$, in 
\eqref{9.82} continuously extends to $z \in \ol{\bbC_{\pm}} \big \backslash\{0\}$ and does not vanish there. 
In particular, $F_{H,H_0}$ has normal boundary values on $\bbR\backslash \{0\}$. Moreover, combining 
Theorem \ref{t6.9} and  \cite[Theorem~3.1.7]{AN07}, and especially, by \cite[Exercise~3.2.6 on p.~46]{AN07}, the function 
\begin{align}
& {\det}_{[L^2(\bbR^n)]^N, n+1}(U_V T(z))     \no \\
& \quad ={\det}_{[L^2(\bbR^n)]^N, n+1}\big(U_V\big\{U_V I_{[L^2(\bbR^n)]^N} 
+ \ol{V_1(H_0 -z I_{[L^2(\bbR^n)]^N})^{-1} V_1^*}\big\}\big), \\
& \hspace*{8.95cm} z \in \ol{\bbC_+} \big \backslash\{0\},     \no 
\end{align}  
has a continuous argument in any rectangle of the form  
\begin{equation}
\{z = x + iy \,|\, x \in [a,b] \subset (-\infty,0) \cup (0, \infty), \; y \in [0,c]\}, \quad c > 0, 
\end{equation}
in $\ol{\bbC_+} \big \backslash \{0\}$, proving \eqref{9.83}. Thus $\lambda = 0$ is the only possible exception to continuity of 
$\Im(F_{H,H_0}(\, \cdot \, + i0))$ on $\bbR$. 

If $0$ is a regular point for $H$, that is, if 
\begin{equation}
\ker\big(\big[I_{[L^2(\bbR^n)]^N} + \ol{V_2 (H_0 - (0 + i 0) I_{[L^2(\bbR^n)]^N})^{-1} V_1^*}\big]\big) 
= \{0\}.     \lb{10.124} 
\end{equation}
then 
\begin{equation}
{\det}_{[L^2(\bbR^n)]^N, n+1}(U_V T(0)) \neq 0
\end{equation}
and hence ${\det}_{[L^2(\bbR^n)]^N, n+1}\big(U_V T(z)\big)$ has a continuous argument in any 
rectangle of the form  
\begin{equation}
\{z = x + iy \,|\, x \in [a,b] \subset \bbR, \; y \in [0,c]\}, \quad c > 0, 
\end{equation}
proving \eqref{10.121}. 

If
\begin{equation}
\ker\big(\big[I_{[L^2(\bbR^n)]^N} + \ol{V_2 (H_0 - (0 + i 0) I_{[L^2(\bbR^n)]^N})^{-1} V_1^*}\big]\big) 
\supsetneqq \{0\},      \lb{10.127} 
\end{equation} 
 denote by $P_{0,+}$ the projection onto the (finite-dimensional) eigenspace of the compact 
 operator $\ol{V_2 (H_0 - (0 + i 0) I_{[L^2(\bbR^n)]^N})^{-1} V_1^*}$ corresponding to the eigenvalue $-1$. 
 By Lemma \ref{l9.1}\,$(iii)$,
 \begin{equation}
\big(I_{[L^2(\bbR^n)]^N} + \ol{V_2 (H_0 - (0 + i 0) I_{[L^2(\bbR^n)]^N})^{-1} V_1^*} + P_{0,+}\big)^{-1} 
\in \cB\big([L^2(\bbR^n)]^N\big)
 \end{equation}
 and hence, 
 \begin{equation}
 {\det}_{[L^2(\bbR^n)]^N, n+1}\big(I_{[L^2(\bbR^n)]^N} + \ol{V_2 (H_0 - (0 + i 0) I_{[L^2(\bbR^n)]^N})^{-1} V_1^*} + P_{0,+}\big) \neq 0
 \end{equation}
 and 
 \begin{align}
&  {\det}_{[L^2(\bbR^n)]^N, n+1}\big(I_{[L^2(\bbR^n)]^N} + \ol{V_2 (H_0 - (0 + i 0) I_{[L^2(\bbR^n)]^N})^{-1} V_1^*}\big)    \no \\
& \quad = {\det}_{[L^2(\bbR^n)]^N, n+1}\big(I_{[L^2(\bbR^n)]^N} + \ol{V_2 (H_0 - (0 + i 0) I_{[L^2(\bbR^n)]^N})^{-1} V_1^*}    \no \\
& \hspace*{3.3cm} + P_{0,+} - P_{0,+}\big)    \no \\
& \quad = {\det}_{[L^2(\bbR^n)]^N, n+1}\Big(\big[I_{[L^2(\bbR^n)]^N} + \ol{V_2 (H_0 - (0 + i 0) I_{[L^2(\bbR^n)]^N})^{-1} V_1^*} + P_{0,+}\big]      \no \\
& \qquad \times \Big\{I_{[L^2(\bbR^n)]^N} - \big[I_{[L^2(\bbR^n)]^N} + \ol{V_2 (H_0 - (0 + i 0) I_{[L^2(\bbR^n)]^N})^{-1} V_1^*} + P_{0,+}\big]^{-1}\Big\}     \no \\
& \qquad \times P_{0,+}\Big).\lb{10.153a}
 \end{align} 
 
Applying Theorem \ref{tD.1} in \eqref{10.153a} one obtains
  \begin{align}
&  {\det}_{[L^2(\bbR^n)]^N, n+1}\big(I_{[L^2(\bbR^n)]^N} + \ol{V_2 (H_0 - (0 + i 0) I_{[L^2(\bbR^n)]^N})^{-1} V_1^*}\big)    \no \\
& \quad = {\det}_{[L^2(\bbR^n)]^N, n+1}\big(I_{[L^2(\bbR^n)]^N} + \ol{V_2 (H_0 - (0 + i 0) I_{[L^2(\bbR^n)]^N})^{-1} V_1^*} + P_{0,+}\big)     \no \\
& \qquad \times {\det}_{[L^2(\bbR^n)]^N, n+1} \Big(I_{[L^2(\bbR^n)]^N}      \no \\
& \hspace*{2cm} 
- \big[I_{[L^2(\bbR^n)]^N} + \ol{V_2 (H_0 - (0 + i 0) I_{[L^2(\bbR^n)]^N})^{-1} V_1^*} + P_{0,+}\big]^{-1}  
 P_{0,+}\Big)  \no \\
 & \qquad \times e^{ {\tr}_{[L^2(\bbR^n)]^N}(X_{n+1})}   \no \\
 & \quad = {\det}_{[L^2(\bbR^n)]^N, n+1}\big(I_{[L^2(\bbR^n)]^N} + \ol{V_2 (H_0 - (0 + i 0) I_{[L^2(\bbR^n)]^N})^{-1} V_1^*} + P_{0,+}\big)     \no \\
& \qquad \times {\det}_{[L^2(\bbR^n)]^N, n+1} \Big(I_{[L^2(\bbR^n)]^N}      \no \\
& \hspace*{1.5cm} 
- P_{0,+} \big[I_{[L^2(\bbR^n)]^N} + \ol{V_2 (H_0 - (0 + i 0) I_{[L^2(\bbR^n)]^N})^{-1} V_1^*} + P_{0,+}\big]^{-1}  
 P_{0,+}\Big)  \no \\
 & \qquad \times \exp({\tr}_{[L^2(\bbR^n)]^N}(X_{n+1})). 
 \end{align}

 Here ${\tr}_{[L^2(\bbR^n)]^N}(X_{n+1})$ is a finite sum of traces of products of the operators 
 \begin{equation} 
 \big[\ol{V_2 (H_0 - (0 + i 0) I_{[L^2(\bbR^n)]^N})^{-1} V_1^*} + P_{0,+}\big] 
 \end{equation}
 and 
 \begin{equation}
 - \big[I_{[L^2(\bbR^n)]^N} + \ol{V_2 (H_0 - (0 + i 0) I_{[L^2(\bbR^n)]^N})^{-1} V_1^*} + P_{0,+}\big]^{-1}  
 P_{0,+}
 \end{equation}
of at least $n+1$ factors (in various orders) as described in detail in Appendix \ref{sD}, in particular,
\begin{equation}
\exp({\tr}_{[L^2(\bbR^n)]^N}(X_{n+1})) \neq 0. 
\end{equation} 
Thus, the structure of the zero of the modified Fredholm determinant 
\begin{equation} 
{\det}_{[L^2(\bbR^n)]^N, n+1}\big(I_{[L^2(\bbR^n)]^N} + \ol{V_2 (H_0 - (0 + i 0) I_{[L^2(\bbR^n)]^N})^{-1} V_1^*}\big) 
\end{equation} 
as $z \to 0$, $ z \in \ol{\bbC_+}\backslash \{0\}$, is identical to the structure of the zero of the modified Fredholm  determinant (see, e.g., \cite[Theorem 9.2(d)]{Si05})
\begin{align}
& {\det}_{[L^2(\bbR^n)]^N, n+1} \Big(I_{[L^2(\bbR^n)]^N}      \no \\
& \hspace*{1.5cm} 
- P_{0,+} \big[I_{[L^2(\bbR^n)]^N} + \ol{V_2 (H_0 - (0 + i 0) I_{[L^2(\bbR^n)]^N})^{-1} V_1^*} + P_{0,+}\big]^{-1}  
 P_{0,+}\Big)      \no \\
& \quad = {\det}_{[P_{0,+} L^2(\bbR^n)]^N} \Big(P_{0,+} I_{[L^2(\bbR^n)]^N}      \no \\
& \hspace*{1.3cm} 
- P_{0,+} \big[I_{[P_{0,+} L^2(\bbR^n)]^N} + \ol{V_2 (H_0 - (0 + i 0) I_{[L^2(\bbR^n)]^N})^{-1} V_1^*} + P_{0,+}\big]^{-1}  P_{0,+}\Big)   \no \\
& \qquad \times \exp{\bigg(\sum_{j=1}^n j^{-1} {\tr}_{[L^2(\bbR^n)]^N}
\Big(\Big[P_{0,+}    } \no \\
& \hspace*{1.3cm}
\times {\big[I_{[P_{0,+} L^2(\bbR^n)]^N} + \ol{V_2 (H_0 - (0 + i 0) I_{[L^2(\bbR^n)]^N})^{-1} V_1^*} + P_{0,+}\big]^{-1}  P_{0,+}\Big]^j\Big)\bigg)},\lb{10.159a}
\end{align}
which now reduces to a finite-dimensional determinant. The behavior of the latter as $z \to 0$, $z \in \ol{\bbC_+}\backslash\{0\}$, 
\begin{align}
& {\det}_{[P_{0,+} L^2(\bbR^n)]^N} \Big(P_{0,+} I_{[L^2(\bbR^n)]^N}      \no \\
& \hspace*{1.3cm} 
- P_{0,+} \big[I_{[P_{0,+} L^2(\bbR^n)]^N} + \ol{V_2 (H_0 - z I_{[L^2(\bbR^n)]^N})^{-1} V_1^*} + P_{0,+}\big]^{-1}  P_{0,+}\Big) 
\end{align} 
in turn, is governed by Lemma \ref{l9.10} and hence in leading order 
is a polynomial $\cP(\dott,\dott)$ in the two variables $z \ln(z)$ and $z$ (the $z\ln(z)$ part being absent in odd space dimensions). By \eqref{10.127}, $\cP(\dott,\dott)$ has no constant term and hence its leading order is of the form 
\begin{equation}
\cP(z\ln(z),z) \underset{\substack{z \to 0 \\ z \in \ol{\bbC_+}\backslash\{0\}}}{=}  c z^{M_1} [\ln(z)]^{M_2}
[1 + \oh(1)], \quad M_1 \in \bbN, \; M_2 \in \bbN_0, \; c \in \bbC.     \lb{10.137} 
\end{equation}
Setting $z = \varepsilon e^{i\varphi}$, $\varphi \in [0, \pi]$, and letting $\varepsilon \downarrow 0$ in 
\eqref{10.137} then readily yields
\begin{equation}
\Im(\ln(\cP(z\ln(z),z))) \underset{\substack{z \to 0 \\ z \in \bbR\backslash\{0\}}}{=}  
\Im(\ln(c)) + \begin{cases} 0, & \varphi =0, \\
M_1 \pi, & \varphi = \pi, 
\end{cases} 
\end{equation} 
and hence proves the claim \eqref{9.84}.
\end{proof}

\section{Analysis of $G_{H,H_0}$} \lb{s10}

In this section we analyze $G_{H,H_0}(z)$, $z \in \bbC \backslash \bbR$, and its limiting behavior on $\bbR$.

One recalls from \eqref{8.28} (with $S=H$, $S_0=H_0$, and $m=n$, cf.\ Remark \ref{r8.4}\,$(iii)$), that $G_{H,H_0}$ is of the form 
\begin{align}
&\frac{d^n}{d z^n}G_{H,H_0}(z)=\tr_{[L^2(\bbR^n)]^N}\Bigg(\frac{d^{n-1}}{dz^{n-1}}\sum_{j=0}^{n-1}(-1)^{n-j}(H_0-zI_{[L^2(\bbR^n)]^N})^{-1}A(z)^{n-j}\Bigg),\no\\
&\hspace*{9.5cm}z\in \bbC\backslash \bbR,\lb{B.27y}
\end{align}
where
\begin{equation}
A(z) = V (H_0 - zI_{[L^2(\bbR^n)]^N})^{-1},\quad z\in \bbC\backslash \bbR.
\end{equation}

To analyze the trace in \eqref{B.27y}, we use multi-indices (see \eqref{9.32c} and \eqref{9.33c}).  For each fixed $j\in \bbN_0$ with $0\leq j\leq n-1$,
\begin{align}
&\frac{d^{n-1}}{dz^{n-1}}(H_0-zI_{[L^2(\bbR^n)]^N})^{-1}A(z)^{n-j}     \lb{B.25a} \\
&\quad = \sum_{\substack{\ul k\in \bbN_0^{n-j+1} \\ |\ul k|=n-1}} c_{j,\ul k}(H_0-zI_{[L^2(\bbR^n)]^N})^{-(k_1+1)} \prod_{\ell=2}^{n-j+1}V(H_0-zI_{[L^2(\bbR^n)]^N})^{-(k_{\ell}+1)},    \no 
\end{align}
for an appropriate set of $z$-independent scalars
\begin{equation}
c_{j,\ul k}\in \bbR,\quad \ul k\in \bbN_0^{n-j+1},\; |\ul k|=n-1.
\end{equation}
Therefore, applying the cyclicity property of the trace, one infers
\begin{align}
\frac{d^n}{d z^n}G_{H,H_0}(z) 
&= \sum_{j=0}^{n-1}(-1)^{n-j}\sum_{\substack{\ul k\in \bbN_0^{n-j+1} \\ |\ul k|=n-1}} c_{j,\ul k}   \no \\
& \quad \times \tr_{[L^2(\bbR^n)]^N}\Bigg(\Bigg[\prod_{\ell=2}^{n-j}
V(H_0-zI_{[L^2(\bbR^n)]^N})^{-(k_{\ell}+1)}\Bigg]      \lb{11.6} \\
& \hspace*{2.7cm} \times V(H_0-zI_{[L^2(\bbR^n)]^N})^{-(k_1+k_{n - j + 1}+2)}\Bigg),     \no 
\end{align}
and hence it suffices to analyze the trace
\begin{align}
& \tr_{[L^2(\bbR^n)]^N}\Bigg(\Bigg[\prod_{\ell=2}^{n-j}
V(H_0-zI_{[L^2(\bbR^n)]^N})^{-(k_{\ell}+1)}\Bigg]     \no \\
& \hspace*{2cm} \times V(H_0-zI_{[L^2(\bbR^n)]^N})^{-(k_1+k_{n - j + 1}+2)}\Bigg)    \no \\
& \quad = \int_{\bbR^n} d^n x_1 \cdots \int_{\bbR^n} d^n x_{n-j} \, V(x_1) \f{1}{{k_2}!} 
\bigg[\f{\partial^{k_2}}{\partial z^{k_2}}G_0(z;x_1,x_2)\bigg] 
\no \\
& \qquad \, \cdots \times V(x_{n-j-1}) \f{1}{k_{n-j}!}
\bigg[\f{\partial^{k_{n-j}}}{\partial z^{k_{n-j}}} 
G_0(z;x_{n-j-1},x_{n-j})\bigg]   \no \\
&  \qquad \, \cdots \times V(x_{n-j})  \f{1}{(k_1 + k_{n-j+1} + 1)!}
\bigg[\f{\partial^{k_1 + k_{n-j+1} + 1}}{\partial z^{k_1 + k_{n-j+1} + 1}}
G_0(z;x_{n-j},x_1)\bigg],  \lb{11.7A} \\
&\hspace*{3.37cm} z \in \bbC_+,  \; \ul k\in \bbN_0^{n-j+1}, \;  |\ul k|=n-1, \; 0 \leq j \leq n - 1,  \no
\end{align}
and its properties as $\Im(z) \downarrow 0$. 

Here we employed the fact that the integral kernel of 
\begin{equation} 
(H_0-zI_{[L^2(\bbR^n)]^N})^{-(1+r)} = \f{1}{r!} \f{d^r}{dz^r} (H_0-zI_{[L^2(\bbR^n)]^N})^{-1}, 
\quad z \in \bbC_+, \; r \in \bbN_0, 
\end{equation}
is of the form
\begin{equation}
\f{1}{r!} \f{\partial^{r}}{\partial z^{r}}G_0(z;x,y), \quad z \in \bbC_+, \; r \in \bbN_0, \; x, y \in \bbR^n, \; x \neq y.    \lb{11.9A}
\end{equation} 

Next, we recall the asymptotic relations proved in Appendix \ref{sC} and the estimates \eqref{C.33}, \eqref{C.34}. In particular, the estimates \eqref{C.33} and \eqref{C.34} as $|z||x-y| \geq 1$ necessitate the following 
strengthening of the estimate \eqref{9.81} in Hypothesis \ref{h9.13}:

\begin{hypothesis} \lb{h11.1a}
Let $n \in \bbN$ and suppose that $V= \{V_{\ell,m}\}_{1 \leq \ell,m \leq N}$ satisfies for some constant 
$C \in (0,\infty)$ and $\varepsilon > 0$, 
\begin{equation}
V \in [L^{\infty} (\bbR^n)]^{N \times N}, \quad 
|V_{\ell,m}(x)| \leq C \langle x \rangle^{- n - 1 - \varepsilon} \, \text{ for a.e.~$x \in \bbR^n$, $1 \leq \ell,m \leq N$.}    \lb{11.10}
\end{equation}
\end{hypothesis} 

This yields the following result.

\begin{theorem} \lb{t11.1} 
Assume Hypothesis \ref{h11.1a}, \\[1mm]
$(i)$ Let $n \in \bbN$ be odd, $n \geq 3$. Then $\frac{d^n}{d z^n}G_{H,H_0}(\dott)$ is analytic in $\bbC_+$ and 
continuous in $\ol{\bbC_+}$. \\[1mm] 
$(ii)$  Let $n \in \bbN$ be even. Then $\frac{d^n}{d z^n}G_{H,H_0}(\dott)$ is analytic in $\bbC_+$, continuous in $\ol{\bbC_+}\backslash\{0\}$.  If $n\geq 4$, then
\begin{align}
\bigg\|\frac{d^n}{dz^n}G_{H,H_0}(\dott)\bigg\|_{\cB(\bbC^N)}\underset{\substack{z \to 0, \\ z \in \ol{\bbC_+} \backslash\{0\}}}{=}O\big(|z|^{-[n - (n/(n-1))]}\big).
\end{align}
If $n=2$, then for any $\delta\in (0,1)$,
\begin{align}
\bigg\|\frac{d^2}{dz^2}G_{H,H_0}(\dott)\bigg\|_{\cB(\bbC^2)}\underset{\substack{z \to 0, \\ z \in \ol{\bbC_+} \backslash\{0\}}}{=} O\big(|z|^{-(1+\delta)}\big).
\end{align}
\end{theorem}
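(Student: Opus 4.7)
The plan is to start from the explicit representation \eqref{11.6}, which expresses $\frac{d^n}{dz^n} G_{H,H_0}(z)$ as a finite linear combination of traces of products of operators of the type $V(H_0 - zI_{[L^2(\bbR^n)]^N})^{-(k_\ell+1)}$, each of which, via \eqref{11.9A}, carries the integral kernel $(k_\ell!)^{-1}\partial_z^{k_\ell} G_0(z;x,y)$. By the cyclic property of the trace together with Hypothesis \ref{h11.1a}, each such trace may be rewritten as the iterated spatial integral in \eqref{11.7A}, with alternating factors $V(x_i)$ and derivatives of $G_0$. I will treat each such term separately; the multi-index combinatorics ($\ul k\in \bbN_0^{n-j+1}$ with $|\ul k|=n-1$) yields only finitely many contributions.

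First, I would invoke the pointwise estimates on $\partial_z^k G_0(z;x,y)$ collected in Appendix \ref{sC}. In the near regime $|z||x-y|\leq 1$ these are bounded by inverse powers of $|x-y|$ (with logarithmic corrections in even $n$), while in the far regime $|z||x-y|\geq 1$ they decay like $e^{-\Im(z)|x-y|}|x-y|^{(1-n)/2}$ multiplied by appropriate powers of $z$. Combined with the decay $|V_{\ell,m}(x)|\leq C\langle x\rangle^{-n-1-\varepsilon}$ in \eqref{11.10}, a direct convergence check using Young's convolution inequality and the Riesz potential mapping properties in Theorem \ref{t5.4A} shows that the iterated integrals in \eqref{11.7A} converge absolutely, uniformly on compact subsets of $\bbC_+$ (and on compact subsets of $\overline{\bbC_+}\setminus\{0\}$ in even $n$, respectively $\overline{\bbC_+}$ for odd $n\geq 3$). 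Analyticity on $\bbC_+$ then follows by a Morera-type argument combined with dominated convergence, and continuity up to the boundary follows from the pointwise continuity of $\partial_z^k G_0(z;x,y)$ together with the same dominating majorant.

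The main obstacle is the precise behavior as $z\to 0$. For odd $n\geq 3$ the Hankel functions $H^{(1)}_{(n-2)/2}$ and $H^{(1)}_{n/2}$ have half-integer index, so by Lemma \ref{lB.6} and the explicit representation in \eqref{5.17} the Green's function and all its $z$-derivatives of order $\leq n-1$ extend continuously (and boundedly) to $z=0$, no logarithms appearing; continuity at $z=0$ is therefore immediate once the uniform integrable majorant has been produced. For even $n$, however, the Hankel functions of integer order contain contributions of the form $z^{2\ell}\ln(z|x-y|)$, so after taking up to $n-1$ derivatives several kernels have the form $z^\mu |x-y|^{2-n+\text{powers}}\ln(z|x-y|)$. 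These, combined across the $n-j$ spatial integrations, generate a negative power of $z$; the exponent is computed by splitting each $|x_i-x_{i+1}|$-integration at the threshold $|z||x_i-x_{i+1}|=1$ and balancing the Riesz-type bounds in the near region against the $|x-y|^{(1-n)/2}$ decay in the far region (plus the polynomial weight from $V$).

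Executing this bookkeeping term by term is the hard part. The worst configuration is the one that saturates all logarithmic/$|x-y|^{2-n}$ singularities simultaneously, and summing the $z$-weights from the $n$ derivatives against the gain $\langle x\rangle^{-n-1-\varepsilon}$ of the potential leads to the exponent $n-n/(n-1)$ for $n\geq 4$ even, and $1+\delta$ (any $\delta\in(0,1)$) for $n=2$, the latter reflecting the borderline logarithmic loss in dimension two already visible in Lemma \ref{l9.9}. I expect the combinatorial tracking of which factors pick up the singular $\ln$-contribution versus the smooth one to be the most delicate part; once the worst-case exponent is identified, the remaining terms in \eqref{11.6} are shown to be of lower order and can be absorbed, yielding the stated bounds.
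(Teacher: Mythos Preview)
Your overall strategy matches the paper's: reduce to the iterated integrals \eqref{11.7A}, bound each factor via the pointwise estimates of Appendix~\ref{sC} split into near ($|z||x-y|\leq 1$) and far ($|z||x-y|\geq 1$) regimes, and invoke dominated convergence. Two points, however, need sharpening before the argument closes.

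First, the workhorse for iterating the spatial integrals is not Young's convolution inequality or the $L^p\!\to\!L^q$ mapping of Theorem~\ref{t5.4A}. The integrals in \eqref{11.7A} are not convolutions---each integration variable carries a weight $V(x_\ell)$---and what is required are \emph{pointwise} bounds that can be fed into the next integration. The paper uses the weighted Riesz-composition estimate of Lemma~\ref{l3.12} (in the form \eqref{11.16}), integrating successively over $x_2,x_3,\ldots,x_{n-j}$; at each step the weight $\langle x_\ell\rangle^{-n-1-\varepsilon}$ from Hypothesis~\ref{h11.1a} is consumed and one obtains a bound of the type $|x_1-x_{\ell+1}|^{\min(n,\alpha+\beta)-n}+1$, which then enters the next application of Lemma~\ref{l3.12}. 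This mechanism, not an $L^p$ bound, is what produces the uniform majorant.

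Second, your treatment of even $n$ misses a structural feature. After the cyclic rearrangement producing \eqref{11.7A}, the last factor involves $\partial_z^{k_1+k_{n-j+1}+1}G_0$, and since $|\ul k|=n-1$ this order can equal $n$. When it does, \eqref{C.34} yields a hard $|z|^{-1}$ factor rather than the $|z|^{-\delta}$ obtained by trading a logarithm for a power; the paper separates these situations into five cases according to whether $k_1+k_{n-j+1}=n-1$ and the value of $j$. For the factors of order $<n$, the logarithm is controlled via $|\ln(z|x-y|)|\leq C_\delta|z|^{-\delta}|x-y|^{-\delta}$, and the paper chooses $\delta=(n-j-2)/(n-j-1)$ in Case~1 precisely so that the iterated use of Lemma~\ref{l3.12} still closes (see \eqref{11.30}). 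The dominant contribution is then $|z|^{-(n-j)\delta}$ at $j=0$, giving $n\delta=n-\tfrac{n}{n-1}$; the other cases produce milder singularities. For $n=2$ only Cases~3--5 occur, and Case~4 yields the stated $|z|^{-(1+\delta)}$.
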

\begin{proof}
By Lemma \ref{l8.9} it suffices to focus on the boundary values of $\frac{d^n}{d z^n}G_{H,H_0}(z)$ as 
$\Im(z) \downarrow 0$. Utilizing the asymptotic relations  
\eqref{B.6a}, \eqref{B.12y}, \eqref{B.14y}, \eqref{B.13y}, \eqref{B.15z}, \eqref{B.22z}, \eqref{B.23zz}, \eqref{B.24z}, \eqref{B.23z},  \eqref{B.31z},  \eqref{B.32z}, and the fact that 
$\f{\partial^{k}}{\partial z^{k}}G_0(z;x,y)$, $k \in \bbN_0$, $0 \leq k \leq n$, is continuous in $z \in \ol{\bbC_+}$, 
$x, y \in \bbR^n$, $x \neq y$, the stated continuity of $\frac{d^n}{d z^n}G_{H,H_0}(\dott)$ in $\ol{\bbC_+}$ follows once we derive a $z$-independent integrable majorant of the integrand in \eqref{11.7A}, appealing to Lebesgue's dominated convergence theorem. \\[1mm]
\noindent 
$(i)$ Specializing to $n \in \bbN$ odd, $n\geq 3$, and employing \eqref{11.10} and \eqref{C.33}, one obtains 
from \eqref{11.7A}, 
\begin{align}
& \bigg\|\int_{\bbR^n} d^n x_1 \cdots \int_{\bbR^n} d^n x_{n-j-1} \int_{\bbR^n} d^n x_{n-j} \, V(x_1) \f{1}{{k_2}!} 
\bigg[\f{\partial^{k_2}}{\partial z^{k_2}}G_0(z;x_1,x_2)\bigg] 
\no \\
& \qquad \, \cdots \times V(x_{n-j-1}) \f{1}{k_{n-j}!}
\bigg[\f{\partial^{k_{n-j}}}{\partial z^{k_{n-j}}} 
G_0(z;x_{n-j-1},x_{n-j})\bigg]   \no \\
&  \qquad \, \cdots \times V(x_{n-j})  \f{1}{(k_1 + k_{n-j+1} + 1)!}
\bigg[\f{\partial^{k_1 + k_{n-j+1} + 1}}{\partial z^{k_1 + k_{n-j+1} + 1}}
G_0(z;x_{n-j},x_1)\bigg] \bigg\|_{\cB(\bbC^N)}  \no \\
& \quad \leq C_n \int_{\bbR^n} d^n x_1 \cdots \int_{\bbR^n} d^n x_{n-j-1} \int_{\bbR^n} d^n x_{n-j}   \no \\
& \hspace*{1.6cm} \times \langle x_1 \rangle^{-n-1-\varepsilon} 
\big\{|x_1-x_2|^{k_2+1-n} \chi_{[0,1]}(|z||x_1-x_2|)    \no \\
& \hspace*{1.6cm} \quad + |z|^{(n-1)/2} 
\big[|x_1|^{(2k_2+1-n)/2} + |x_2|^{(2k_2+1-n)/2}\big] \chi_{[1,\infty)}(|z||x_1-x_2|)\big\}
  \no \\ 
&  \hspace*{1.6cm} \; \, \vdots   \no \\
& \hspace*{1.6cm} \times \langle x_{n-j-1} \rangle^{-n-1-\varepsilon}  
\big\{|x_{n-j-1}-x_{n-j}|^{k_{n-j}+1-n} \chi_{[0,1]}(|z||x_{n-j-1}-x_{n-j}|)    \no \\
& \hspace*{1.6cm} \quad + |z|^{(n-1)/2} 
\big[|x_{n-j-1}|^{(2k_{n-j}+1-n)/2} + |x_{n-j}|^{(2k_{n-j}+1-n)/2}\big]   \no \\ 
& \hspace*{1.6cm} \qquad \times \chi_{[1,\infty)}(|z||x_{n-j-1}-x_{n-j}|)\big\}  \no \\ 
&  \hspace*{1.6cm} \times \langle x_{n-j} \rangle^{-n-1-\varepsilon}  
\big\{|x_{n-j}-x_1|^{k_1+k_{n-j+1}+2-n} \chi_{[0,1]}(|z||x_{n-j}-x_1|)    \no \\
& \hspace*{1.6cm} \quad + |z|^{(n-1)/2} 
\big[|x_{n-j}|^{(2k_1+2k_{n-j+1}+3-n)/2} + |x_1|^{(2k_1+2k_{n-j+1}+3-n)/2}\big]    \no \\
& \hspace*{1.6cm} \qquad \times \chi_{[1,\infty)}(|z||x_{n-j}-x_1|)\big\}  \no \\ 
& \quad \leq \wti C_n \int_{\bbR^n} d^n x_1 \cdots \int_{\bbR^n} d^n x_{n-j-1} \int_{\bbR^n} d^n x_{n-j}   \no \\
& \hspace*{1.6cm} \times \langle x_1 \rangle^{-n-1-\varepsilon} 
\big\{|x_1-x_2|^{k_2+1-n}    \no \\
& \hspace*{1.6cm} \quad + |z|^{(n-1)/2} [1+|x_1|]^{(2k_2+1-n)/2} [1+|x_2|]^{(2k_2+1-n)/2}\big\}    \no \\ 
&  \hspace*{1.6cm} \;\, \vdots   \no \\
& \hspace*{1.6cm} \times \langle x_{n-j-1} \rangle^{-n-1-\varepsilon}  
\big\{|x_{n-j-1}-x_{n-j}|^{k_{n-j}+1-n}    \no \\
& \hspace*{1.6cm} \quad + |z|^{(n-1)/2} 
[1+|x_{n-j-1}|]^{(2k_{n-j}+1-n)/2} [1+|x_{n-j}|]^{(2k_{n-j}+1-n)/2}\big\}   \no \\ 
&  \hspace*{1.6cm} \times \langle x_{n-j} \rangle^{-n-1-\varepsilon}  
\big\{|x_{n-j}-x_1|^{k_1+k_{n-j+1}+2-n}    \no \\
& \hspace*{1.6cm} \quad + |z|^{(n-1)/2} [1+|x_{n-j}|]^{(2k_1+2k_{n-j+1}+3-n)/2}     \no \\
& \hspace*{3.75cm} \times [1+|x_1|]^{(2k_1+2k_{n-j+1}+3-n)/2}\big\},     \lb{11.11} \\
&\hspace*{1.3cm} z \in \bbC_+,  \; \ul k\in \bbN_0^{n-j+1}, \;  |\ul k|=n-1, \; 0 \leq j \leq n - 2,    \no
\end{align}
where $C_n, \wti C_n \in (0,\infty)$ are suitable constants and we removed all characteristic functions in the last step (a very crude estimate, but sufficient for our purpose). 

We postpone a discussion of the case $j=n-1$ to the end of the proof of part $(i)$. 

Next, one notes that all terms originally multiplied by an ``exterior'' characteristic function $\chi_{[1,\infty)}(|z||\dott|)$, that is, all terms of the type
\begin{align} 
&|z|^{(n-1)/2} [1+|x_{n-j-1}|]^{(2k_{n-j}+1-n)/2} [1+|x_{n-j}|]^{(2k_{n-j}+1-n)/2},   \\
& \quad \dots |z|^{(n-1)/2} [1+|x_{n-j}|]^{(2k_1+2k_{n-j+1}+3-n)/2} [1+|x_1|]^{(2k_1+2k_{n-j+1}+3-n)/2},   \lb{11.12a}
\end{align}
can be grouped together with 
\begin{equation} 
\langle x_{n-j-1} \rangle^{-n-1-\varepsilon}, \langle x_{n-j} \rangle^{-n-1-\varepsilon}, \dots , 
\langle x_{n-j} \rangle^{-n-1-\varepsilon}, \langle x_{1} \rangle^{-n-1-\varepsilon},
\end{equation} 
due to the decay assumptions imposed in \eqref{11.10}, and hence we can simply disregard all these contributions in the following as they lead to finite integrals. To illustrate this fact we look at the extreme case where only these terms are considered. Indeed, ignoring all numerical constants and the factors $|z|^{(n-1)/2}$ for simplicity, this leads to the integral, 
\begin{align}
& \int_{\bbR^n} d^n x_1 \cdots \int_{\bbR^n} d^n x_{n-j-1} \int_{\bbR^n} d^n x_{n-j}   \no \\
& \qquad \times \langle x_1 \rangle^{-n-1-\varepsilon} 
[1+|x_1|]^{(2k_2+1-n)/2} [1+|x_2|]^{(2k_2+1-n)/2}     \no \\ 
& \qquad \;\, \vdots   \no \\
& \qquad \times \langle x_{n-j-1} \rangle^{-n-1-\varepsilon}  
[1+|x_{n-j-1}|]^{(2k_{n-j}+1-n)/2} [1+ |x_{n-j}|]^{(2k_{n-j}+1-n)/2}   \no \\ 
&  \qquad \times \langle x_{n-j} \rangle^{-n-1-\varepsilon}  
[1+|x_{n-j}|]^{(2k_1+2k_{n-j+1}+3-n)/2}     \no \\
& \hspace*{3.35cm} \times [1+|x_1|]^{(2k_1+2k_{n-j+1}+3-n)/2}    \no \\
& \quad \leq \int_{\bbR^n} d^n x_1 \, \langle x_1 \rangle^{-n - 1 - \varepsilon} [1+|x_1|]^{k_1+k_2+k_{n-j+1}+2-n}    
\no \\ 
& \qquad \times \int_{\bbR^n} d^n x_2 \, \langle x_2 \rangle^{-n - 1 - \varepsilon} [1+|x_2|]^{k_2+k_3+1-n}  \no \\
& \qquad \;\, \vdots   \no \\
& \qquad \times \int_{\bbR^n} d^n x_{n-j-1} \, \langle x_{n-j-1} \rangle^{-n - 1 - \varepsilon} 
[1+|x_{n-j-1}|]^{k_{n-j-1}+k_{n-j}+1-n}  \no \\ 
& \qquad \times \int_{\bbR^n} d^n x_{n-j} \, \langle x_{n-j} \rangle^{-n - 1 - \varepsilon} 
[1+|x_{n-j}|]^{k_1+k_{n-j}+k_{n-j+1}+2-n}  \no \\ 
& \quad \leq \int_{\bbR^n} d^n x_1 \, \langle x_1 \rangle^{-n - 1 - \varepsilon} [1+|x_1|]  
\bigg[\int_{\bbR^n} d^n y \, \langle y \rangle^{-n - 1 - \varepsilon}\bigg]^{n-j-2}   \no \\ 
& \qquad \times \int_{\bbR^n} d^n x_{n-j} \, \langle x_{n-j} \rangle^{-n - 1 - \varepsilon} [1+|x_{n-j}|]    \no \\ 
& \quad < \infty, \quad \ul k\in \bbN_0^{n-j+1}, \;  |\ul k|=n-1, \; 0 \leq j \leq n-2,       \lb{11.14} 
\end{align} 
employing \eqref{11.10}.

Thus, without loss of generality, we now focus on the terms originally multiplied by an ``interior'' characteristic 
function $\chi_{[0,1]}(|z||\dott|)$ and hence arrive at the need to estimate the integral 
\begin{align}
& \int_{\bbR^n} d^n x_1 \cdots \int_{\bbR^n} d^n x_{n-j-1} \int_{\bbR^n} d^n x_{n-j} \, 
\langle x_1 \rangle^{-n-1-\varepsilon} |x_1-x_2|^{k_2+1-n}    \no \\
& \qquad \;\, \vdots   \no \\
& \qquad \times \langle x_{n-j-1} \rangle^{-n-1-\varepsilon}  
|x_{n-j-1}-x_{n-j}|^{k_{n-j}+1-n}    \no \\
&  \qquad \times \langle x_{n-j} \rangle^{-n-1-\varepsilon}  
|x_{n-j}-x_1|^{k_1+k_{n-j+1}+2-n},      \lb{11.15} \\
&\hspace*{1.15cm} \ul k\in \bbN_0^{n-j+1}, \;  |\ul k|=n-1, \; 0 \leq j \leq n - 2.  \no 
\end{align}
For this purpose we recall the following special case of Lemma \ref{l3.12}, 
\begin{align}
& \int_{\bbR^n} d^ny \, |y_1 - y|^{\alpha - n} \langle y \rangle^{-\gamma - \varepsilon} |y - y_2|^{\beta - n}  \no \\ 
& \quad \leq \wti C_{n,\alpha,\beta,\gamma,\varepsilon} \begin{cases}
|y_1 - y_2|^{\min(n,\alpha + \beta) - n}, & |y_1 - y_2| \leq 1, \\
|y_1 - y_2|^{\max(\alpha, \beta) - n}, & |y_1 - y_2| \geq 1,
\end{cases}     \no \\
& \quad \leq \wti C_{n,\alpha,\beta,\gamma,\varepsilon} \begin{cases}
|y_1 - y_2|^{\min(n,\alpha + \beta) - n}, & |y_1 - y_2| \leq 1, \\
1, & |y_1 - y_2| \geq 1,
\end{cases}     \no \\
& \quad \leq C_{n,\alpha,\beta,\gamma,\varepsilon} \big[|y_1 - y_2|^{\min(n,\alpha + \beta) - n} + 1\big],   
\lb{11.16} \\
& \hspace*{7.5mm} \alpha, \beta \in (0,n], \; \gamma > (\alpha + \beta) -n, \; \varepsilon > 0,   \no 
\end{align}
for appropriate constants 
$\wti C_{n,\alpha,\beta,\gamma,\varepsilon}, C_{n,\alpha,\beta,\gamma,\varepsilon} \in (0,\infty)$. 

Hence,
\begin{align}
& \int_{\bbR^n} d^nx_2 \, |x_1-x_2|^{k_2+1-n} \langle x_2 \rangle^{-n -1 - \varepsilon} |x_2-x_3|^{k_3+1-n}  \no \\
& \quad \leq c_{2,n} \big[|x_1-x_3|^{\min(n,k_2+k_3+2)-n} + 1\big],    \lb{11.17} 
\end{align} 
for some $c_{2,n} \in (0,\infty)$. For precisely the same reason as in the context of \eqref{11.14}, we will simply 
disregard the additive term $+1$ on the right-hand side of \eqref{11.17} as the latter is bounded and we want to 
focus on the possibly most singular contribution to the integral in \eqref{11.15} when probing whether or not this integral is finite. 

Thus, with these simplifications of ignoring $1$'s and at the same time focusing on the possibly most singular contribution, the next integral over $x_3$ becomes
\begin{align}
& \int_{\bbR^n} d^nx_3 \, |x_1-x_3|^{k_2+k_3+2-n} 
\langle x_3 \rangle^{-n -1 - \varepsilon} |x_3-x_4|^{k_4+1-n}  \no \\
& \quad \leq c_{3,n} \big[|x_1-x_4|^{\min(n,k_2+k_3+k_4+3)-n} + 1\big],    \lb{11.18} 
\end{align} 
for some $c_{3,n} \in (0,\infty)$. Repeating this process (again disregarding $1$'s at each step and focusing on the possibly most singular contributions only) leads to 
\begin{align}
& \int_{\bbR^n} d^nx_{n-j-1} \, |x_1-x_{n-j-1}|^{k_2+k_3+ \cdots+k_{n-j-1}+(n-j-2)-n} 
\langle x_{n-j-1} \rangle^{-n -1 - \varepsilon}   \no \\
& \qquad \times |x_{n-j-1}-x_{n-j}|^{k_{n-j}+1-n}  \no \\
& \quad \leq c_{n-j-1,n} \big[|x_1-x_{n-j}|^{\min(n,k_2+k_3+ \cdots + k_{n-j} + (n-j-1) )- n} + 1\big],   \lb{11.19} 
\end{align} 
for some $c_{n-j-1,n} \in (0,\infty)$. Thus, disregarding once more the additive constant $+1$ in \eqref{11.19} results in the following integral over $x_{n-j}$, $0 \leq j \leq n-2$, 
\begin{align}
& \int_{\bbR^n} d^nx_{n-j} \, \Bigg[ \begin{cases} |x_1-x_{n-j}|^{k_2+k_3+ \cdots+k_{n-j}-j-1}, 
& \big(\sum_{q=2}^{n-j}k_q\big) -j-1 \leq 0, \\
1, & \big(\sum_{q=2}^{n-j}k_q\big)-j-1 \geq 0 \end{cases} \Bigg]   \no \\
& \qquad \times \langle x_{n-j} \rangle^{-n -1 - \varepsilon} |x_{n-j}-x_1|^{k_1+k_{n-j+1}+2-n}  \no \\
& \quad \leq  c \int_{\bbR^n} d^nx_{n-j} \, \langle x_{n-j} \rangle^{- n -1 - \varepsilon} 
|x_1-x_{n-j}|^{k_1+k_2+ \cdots+k_{n-j+1}-j+1-n}    \no \\ 
& \qquad + d \int_{\bbR^n} d^nx_{n-j} \, \langle x_{n-j} \rangle^{-n -1 - \varepsilon} 
|x_{n-j}-x_1|^{k_1+k_{n-j+1}+2-n}  \no \\ 
& \quad \leq C \int_{\bbR^n} d^nx_{n-j} \, \langle x_{n-j} \rangle^{-n -1 - \varepsilon} 
\big[|x_1-x_{n-j}|^{-j} + |x_1-x_{n-j}|^{-m}\big]  \no \\ 
& \hspace*{10.5mm} \text{for some $-1 \leq m \leq n-2$, where $m=k_1+k_{n-j+1}+2-n$},   \lb{11.20}
\end{align}

for appropriate $c, d, C \in (0,\infty)$, employing again that 
\begin{equation} 
\ul k = (k_1,\dots,k_{n-j+1}) \in \bbN_0^{n-j+1}, \quad  |\ul k| = k_1+k_2 + \cdots k_{n-j+1} = n-1. 
\end{equation} 
At this point we invoke the special case $\alpha = n$ in \eqref{11.16}, resulting in 
\begin{align}
\begin{split} 
\int_{\bbR^n} d^nx_{n-j} \, \langle x_{n-j} \rangle^{-n -1 - \varepsilon} 
\big[|x_1-x_{n-j}|^{-j} + |x_1-x_{n-j}|^{-m}\big] \leq C_{j,m},& \\
0 \leq j \leq n-2, \; 0 \leq m \leq n-2,&    \lb{11.22} 
\end{split} 
\end{align} 
for some $C_{j,m} \in (0,\infty)$. The remaining case $m = - 1$ in \eqref{11.20} leads to 
\begin{align} 
& \int_{\bbR^n} d^nx_{n-j} \, \langle x_{n-j} \rangle^{-n -1 - \varepsilon} |x_1-x_{n-j}|    \no \\
& \quad \leq \int_{\bbR^n} d^nx_{n-j} \, \langle x_{n-j} \rangle^{-n -1 - \varepsilon} [1 + |x_1| + |x_{n-j}|] 
\no \\
& \quad \leq C_n + D_n [1 + |x_1|],    \lb{11.23} 
\end{align} 
for some $C_n, D_n \in (0,\infty)$, since 
\begin{equation}
\int_{\bbR^n} d^n y \, \langle y \rangle^{-n -1 - \varepsilon} [1 + |y|] < \infty.     \lb{11.24} 
\end{equation}
Thus, altogether, \eqref{11.20}--\eqref{11.23} finally yield 
\begin{equation}
\eqref{11.20} \leq C_0 [1 + |x_1|], \quad 0 \leq j \leq n-2,    \lb{11.25}
\end{equation}
for appropriate $C_0 \in (0,\infty)$. Hence, applying \eqref{11.24} once more, the integral \eqref{11.15} is finite.

If $j = n-1$ in \eqref{B.25a}, \eqref{11.6} one is left to consider $\ul k = (k_1,k_2)$, $|\ul k| = k_1+k_2 = n-1$, and hence obtains
\begin{align}
& \tr_{[L^2(\bbR^n)]^N} \bigg(\f{d^{n-1}}{dz^{n-1}} (H_0 - z I_{[L^2(\bbR^n)]^N})^{-1} A(z)\bigg)    \no \\
& \quad =  \tr_{[L^2(\bbR^n)]^N} \bigg(\f{d^{n-1}}{dz^{n-1}} (H_0 - z I_{[L^2(\bbR^n)]^N})^{-1} 
V (H_0 - z I_{L^2(\bbR^n)]^N})^{-1}\bigg)    \no \\
& \quad =  \tr_{[L^2(\bbR^n)]^N} \bigg(V (H_0 - z I_{L^2(\bbR^n)]^N})^{-(k_1+k_2 +2)}\bigg)    \no \\
& \quad =  \tr_{[L^2(\bbR^n)]^N} \bigg(V (H_0 - z I_{L^2(\bbR^n)]^N})^{-(n+1)}\bigg), \quad z \in \bbC_+.  
\lb{11.26} 
\end{align}
Since by \eqref{B.10}--\eqref{B.11Z} 
\begin{equation}
\f{d^n}{dz^n} G_0(z;x,y) \underset{|x-y| \to 0}{=} \Oh(|x-y|),      \lb{11.27} 
\end{equation}
the trace in \eqref{11.23} vanishes and hence extends continuously to $z \in \ol{\bbC_+}$. \\[1mm] 
$(ii)$ Next, we specialize to $n \in \bbN$ even.  We investigate each term in \eqref{11.6} separately.  To this end, let $0\leq j\leq n-1$ and $\ul k\in \bbN_0^{n-j+1}$ with $|\ul k|=n-1$ be fixed.   We distinguish the following cases:

\begin{description}
   \item[{\it Case~1}] $n\geq 4$ with $0\leq j\leq n-3$ and $k_1+k_{n-j+1}\neq n-1$. 
   \item[{\it Case~2}] $n\geq 4$ with $0\leq j\leq n-3$ and $k_1+k_{n-j+1}= n-1$.
   \item[{\it Case~3}] $n\geq 2$ with $j=n-2$ and $k_1+k_3\neq n-1$.
   \item[{\it Case~4}] $n\geq 2$ with $j=n-2$ and $k_1+k_3= n-1$.
   \item[{\it Case~5}] $n\geq 2$ with $j=n-1$.
\end{description}

We begin with {\it Case~1}.  The assumptions in {\it Case~1} imply
\begin{equation} \lb{11.28}
\text{$0\leq k_\ell \leq n-1$ for all $2\leq \ell\leq n-j$ and $k_1+k_{n-j+1}+1<n$}.
\end{equation}
Define the quantity $\delta=\delta(n,j)$ by
\begin{equation} \lb{11.29}
\delta:=\frac{n-j-2}{n-j-1},
\end{equation}
so that $\delta\in (0,1)$ and
\begin{equation} \lb{11.30}
\delta \geq \frac{n-j-\ell}{n-j-\ell+1},\quad 2\leq \ell\leq n-j-1.
\end{equation}
Invoking the final estimate in \eqref{C.34}, one obtains
\begin{align}
& \bigg\|\int_{\bbR^n} d^n x_1 \cdots \int_{\bbR^n} d^n x_{n-j-1} \int_{\bbR^n} d^n x_{n-j} \, V(x_1) \f{1}{{k_2}!} 
\bigg[\f{\partial^{k_2}}{\partial z^{k_2}}G_0(z;x_1,x_2)\bigg] 
\no \\
& \qquad \, \cdots \times V(x_{n-j-1}) \f{1}{k_{n-j}!}
\bigg[\f{\partial^{k_{n-j}}}{\partial z^{k_{n-j}}} 
G_0(z;x_{n-j-1},x_{n-j})\bigg]   \no \\
&  \hspace*{1.25cm} \times V(x_{n-j})  \f{1}{(k_1 + k_{n-j+1} + 1)!}
\bigg[\f{\partial^{k_1 + k_{n-j+1} + 1}}{\partial z^{k_1 + k_{n-j+1} + 1}}
G_0(z;x_{n-j},x_1)\bigg] \bigg\|_{\cB(\bbC^N)}  \no \\
& \quad \leq C_{n,j,\delta}|z|^{-(n-j)\delta} \int_{\bbR^n} d^n x_1 \cdots \int_{\bbR^n} d^n x_{n-j-1} \int_{\bbR^n} d^n x_{n-j}   \no \\
& \hspace*{1.4cm} \times \langle x_1 \rangle^{-n-1-\varepsilon} 
\big\{|x_1-x_2|^{k_2+1-\delta-n} \chi_{[0,1]}(|z||x_1-x_2|)    \no \\
& \hspace*{1.6cm} \quad + |z|^{(n-1+2\delta)/2} 
\big[|x_1|^{(2k_2+1-n)/2} + |x_2|^{(2k_2+1-n)/2}\big] \chi_{[1,\infty)}(|z||x_1-x_2|)\big\}
  \no \\ 
&  \hspace*{1.4cm} \; \, \vdots   \no \\
& \hspace*{1.4cm} \times \langle x_{n-j-1} \rangle^{-n-1-\varepsilon}  
\big\{|x_{n-j-1}-x_{n-j}|^{k_{n-j}+1-\delta-n} \chi_{[0,1]}(|z||x_{n-j-1}-x_{n-j}|)    \no \\
& \hspace*{1.4cm} \quad + |z|^{(n-1+2\delta)/2} 
\big[|x_{n-j-1}|^{(2k_{n-j}+1-n)/2} + |x_{n-j}|^{(2k_{n-j}+1-n)/2}\big]   \no \\ 
& \hspace*{1.4cm} \qquad \times \chi_{[1,\infty)}(|z||x_{n-j-1}-x_{n-j}|)\big\}  \no \\ 
&  \hspace*{1.4cm} \times \langle x_{n-j} \rangle^{-n-1-\varepsilon}  
\big\{|x_{n-j}-x_1|^{k_1+k_{n-j+1}+2-\delta-n} \chi_{[0,1]}(|z||x_{n-j}-x_1|)    \no \\
& \hspace*{1.4cm} \quad + |z|^{(n-1+2\delta)/2} 
\big[|x_{n-j}|^{(2k_1+2k_{n-j+1}+3-n)/2} + |x_1|^{(2k_1+2k_{n-j+1}+3-n)/2}\big]    \no \\
& \hspace*{1.4cm} \qquad \times \chi_{[1,\infty)}(|z||x_{n-j}-x_1|)\big\}  \no \\ 
& \quad \leq \wti C_{n,j,\delta}|z|^{-(n-j)\delta} \int_{\bbR^n} d^n x_1 \cdots \int_{\bbR^n} d^n x_{n-j-1} \int_{\bbR^n} d^n x_{n-j}   \lb{11.31} \\
& \hspace*{1.4cm} \times \langle x_1 \rangle^{-n-1-\varepsilon} 
\big\{|x_1-x_2|^{k_2+1-\delta-n}     \no \\
& \hspace*{1.4cm} \quad + |z|^{(n-1+2\delta)/2} 
\big[|x_1|^{(2k_2+1-n)/2} + |x_2|^{(2k_2+1-n)/2}\big]\big\}
  \no \\ 
&  \hspace*{1.4cm} \; \, \vdots   \no \\
& \hspace*{1.4cm} \times \langle x_{n-j-1} \rangle^{-n-1-\varepsilon}  
\big\{|x_{n-j-1}-x_{n-j}|^{k_{n-j}+1-\delta-n}   \no \\
& \hspace*{1.4cm} \quad + |z|^{(n-1+2\delta)/2} 
\big[|x_{n-j-1}|^{(2k_{n-j}+1-n)/2} + |x_{n-j}|^{(2k_{n-j}+1-n)/2}\big]\big\}   \no \\
&  \hspace*{1.4cm} \times \langle x_{n-j} \rangle^{-n-1-\varepsilon}  
\big\{|x_{n-j}-x_1|^{k_1+k_{n-j+1}+2-\delta-n}    \no \\
& \hspace*{1.4cm} \quad + |z|^{(n-1+2\delta)/2} 
\big[|x_{n-j}|^{(2k_1+2k_{n-j+1}+3-n)/2} + |x_1|^{(2k_1+2k_{n-j+1}+3-n)/2}\big]\big\},    \no\\
&\hspace*{11.4cm} z \in \bbC_+,    \no
\end{align}
where $C_{n,j,\delta}, \wti C_{n,j,\delta} \in (0,\infty)$ are suitable constants and we removed all characteristic functions in the last step (again, a very crude estimate, but sufficient for our purpose).

We claim that for each bounded subset $\Omega\subset \bbC_+$, the integrand under the iterated integral on the right-hand side in \eqref{11.31} is uniformly bounded with respect to $z\in \Omega$ by an integrable function of the variables $x_1,...,x_{n-j}$.  Since $|z|^{(n-1+2\delta)/2}$ is locally bounded, to justify the claim, it suffices to establish convergence of the following integral:
\begin{align}
&\int_{\bbR^n} d^n x_1 \cdots \int_{\bbR^n} d^n x_{n-j-1} \int_{\bbR^n} d^n x_{n-j}   \lb{11.32} \\
& \hspace*{1.4cm} \times \langle x_1 \rangle^{-n-1-\varepsilon} 
\big\{|x_1-x_2|^{k_2+1-\delta-n}     \no \\
& \hspace*{1.4cm} \quad + 
\big[|x_1|^{(2k_2+1-n)/2} + |x_2|^{(2k_2+1-n)/2}\big]\big\}
  \no \\ 
&  \hspace*{1.4cm} \; \, \vdots   \no \\
& \hspace*{1.4cm} \times \langle x_{n-j-1} \rangle^{-n-1-\varepsilon}  
\big\{|x_{n-j-1}-x_{n-j}|^{k_{n-j}+1-\delta-n}   \no \\
& \hspace*{1.4cm} \quad +
\big[|x_{n-j-1}|^{(2k_{n-j}+1-n)/2} + |x_{n-j}|^{(2k_{n-j}+1-n)/2}\big]\big\}   \no \\
&  \hspace*{1.4cm} \times \langle x_{n-j} \rangle^{-n-1-\varepsilon}  
\big\{|x_{n-j}-x_1|^{k_1+k_{n-j+1}+2-\delta-n}    \no \\
& \hspace*{1.4cm} \quad + |
\big[|x_{n-j}|^{(2k_1+2k_{n-j+1}+3-n)/2} + |x_1|^{(2k_1+2k_{n-j+1}+3-n)/2}\big]\big\}.\no
\end{align}
In turn, as in the argument for the proof of part $(i)$, it suffices to focus on the most singular term in \eqref{11.32} and thus disregard the terms originally multiplied by the factor $|z|^{(n-1+2\delta)/2}$ in \eqref{11.31} (following the same line of reasoning used throughout \eqref{11.12a}--\eqref{11.14}).  With this simplification, the claim reduces to establishing convergence of the integral
\begin{align}
&\int_{\bbR^n} d^n x_1 \cdots \int_{\bbR^n} d^n x_{n-j-1} \int_{\bbR^n} d^nx_{n-j}\lb{11.34}\\
&\hspace*{1.4cm}\times \langle x_1 \rangle^{-n-1-\varepsilon}|x_1-x_2|^{k_2+1-\delta-n}     \no \\
&\hspace{1.55cm}\vdots\no\\
& \hspace*{1.4cm} \times \langle x_{n-j-1} \rangle^{-n-1-\varepsilon}  |x_{n-j-1}-x_{n-j}|^{k_{n-j}+1-\delta-n}   \no \\
&  \hspace*{1.4cm} \times \langle x_{n-j} \rangle^{-n-1-\varepsilon}  |x_{n-j}-x_1|^{k_1+k_{n-j+1}+2-\delta-n}.\no
\end{align}
The integrals over the inner variables $x_2,\ldots,x_{n-j-1}$ in \eqref{11.34} can be estimated successively as follows.  Beginning with the integral with respect to $x_2$, an application of \eqref{11.16} with the choices
\begin{equation} \lb{11.35}
\alpha = k_2+1-\delta,\quad \beta=k_3+1-\delta,\quad \gamma=n+1,
\end{equation}
implies
\begin{align}
&\int d^nx_2\, |x_1 - x_2|^{k_2+1-\delta-n} \langle x_2 \rangle^{-n-1-\varepsilon} |x_2 - x_3|^{k_3+1-\delta-n} \lb{11.36}\\
&\quad \leq c_{2,n,\delta} \Big[|x_1 - x_3|^{k_2+k_3+2(1-\delta)-n} + 1\Big],\no
\end{align}
for some $c_{2,n,\delta}\in (0,\infty)$.  The conditions on $\alpha$ and $\beta$ in \eqref{11.16} are satisfied by the choices in \eqref{11.35} since \eqref{11.28} implies
\begin{equation}
0\leq k_2 + 1 - \delta \leq n\quad \text{and}\quad 0 < k_3 + 1 - \delta \leq n,
\end{equation}
together with
\begin{align} \lb{11.38}
(\alpha + \beta) - n &= k_2 + k_3 + 2(1-\delta) - n\leq  n - 2\delta < n+1 = \gamma.
\end{align}
The inequality in \eqref{11.30} with $\ell=n-j-1$ implies $1-2\delta\leq 0$, so that
\begin{align}
k_2 + k_3 + 2(1-\delta) \leq n - 1 + 2(1-\delta) = n + 1 - 2\delta \leq n,
\end{align}
which yields
\begin{equation}
\min(n, \alpha+\beta) = \min(n,k_2 + k_3 + 2(1-\delta)) = k_2 + k_3 + 2(1-\delta),
\end{equation}
and the estimate in \eqref{11.36} follows.  If $j=n-3$, then $x_2$ is the only inner variable, and the integration over the inner variables is complete with \eqref{11.36}.  For $j\leq n-4$ the process continues and there are $n-j-3$ remaining inner integrals to estimate.  Applying \eqref{11.36} in \eqref{11.34}, the next inner integral is with respect to $x_3$:
\begin{align}
&\int_{\bbR^n}d^nx_2\, \int_{\bbR^n}d^nx_3\, |x_1-x_2|^{k_2+1-\delta-n}\langle x_2\rangle^{-n-1-\varepsilon}|x_2-x_3|^{k_3+1-\delta-n}\\
&\hspace*{2.9cm} \times \langle x_3\rangle^{-n-1-\varepsilon}|x_3-x_4|^{k_4+1-\delta-n}\no\\
&\quad \leq c_{2,n,\delta} \int_{\bbR^n}d^nx_3\, \Big[|x_1 - x_3|^{k_2+k_3+2(1-\delta)-n} + 1\Big]\langle x_3\rangle^{-n-1-\varepsilon}|x_3-x_4|^{k_4+1-\delta-n}\no\\
&\quad =c_{2,n,\delta}\bigg[\int_{\bbR^n}d^nx_3\, |x_1 - x_3|^{k_2+k_3+2(1-\delta)-n}\langle x_3\rangle^{-n-1-\varepsilon}|x_3-x_4|^{k_4+1-\delta-n} \no\\
&\hspace*{1.8cm}+ \int_{\bbR^n}d^nx_3\, \langle x_3\rangle^{-n-1-\varepsilon}|x_3-x_4|^{k_4+1-\delta-n}\bigg]\no\\
&\quad=: c_{2,n,\delta}[\cI_1(x_1,x_4) + \cI_2(x_4)].\lb{11.42}
\end{align}
An application of \eqref{11.16} with the choices
\begin{equation}
\alpha = n,\quad \beta = k_4+1-\delta,\quad \gamma=n+1,
\end{equation}
immediately yields (note that in this case $\min(n,\alpha+\beta)=n$)
\begin{equation} \lb{11.44}
\cI_2(x_4) \leq c_{3,n,\delta}'',\quad x_4\in \bbR^n,
\end{equation}
for some $c_{3,n,\delta}''\in (0,\infty)$.  Another application of \eqref{11.16}, this time with the choices
\begin{equation} \lb{11.45}
\alpha=k_2+k_3+2(1-\delta),\quad \beta=k_4+1-\delta,\quad \gamma=n+1,
\end{equation}
implies
\begin{align}
\cI_1(x_1,x_4) \leq c_{3,n,\delta}'\Big[|x_1-x_4|^{k_2+k_3+k_4+3(1-\delta)-n}+1 \Big]\lb{11.46}
\end{align}
for some $c_{3,n,\delta}'\in (0,\infty)$.  The conditions on $\alpha$ and $\beta$ in \eqref{11.16} are satisfied by the choices in \eqref{11.45} since (cf.~\eqref{11.38})
\begin{equation}
0 < k_2 + k_3 + 2(1-\delta) \leq n-1 + 2 - 2\delta = n+1-2\delta \leq n
\end{equation}
and
\begin{equation}
0 < k_4 + 1-\delta \leq n-\delta \leq n
\end{equation}
together with
\begin{align}
(\alpha + \beta)-n &= \underbrace{k_2+k_3}_{\leq n-1} + \underbrace{k_4}_{\leq n-1} + 3(1-\delta) - n \\
&\leq 2(n-1) + 3(1-\delta) - n \no\\
&= n+1-3\delta \no\\
&< n+1 = \gamma.\no
\end{align}
The inequality in \eqref{11.30} with $\ell=n-j-2$ implies $2-3\delta \leq 0$, so that
\begin{align}
k_2+k_3+k_4+3(1-\delta) \leq n-1 +3(1-\delta) = n+2-3\delta \leq n,
\end{align}
which yields
\begin{equation}
\min(n,\alpha+\beta) = \min(n,k_2+k_3+k_4+3(1-\delta))=k_2+k_3+k_4+3(1-\delta),
\end{equation}
and the estimate in \eqref{11.46} follows.  Finally, combining \eqref{11.42}, \eqref{11.44}, and \eqref{11.46}, one obtains
\begin{align}
&\int_{\bbR^n}d^nx_2\, \int_{\bbR^n}d^nx_3\, |x_1-x_2|^{k_2+1-\delta-n}\langle x_2\rangle^{-n-1-\varepsilon}|x_2-x_3|^{k_3+1-\delta-n}\no\\
&\hspace*{2.9cm} \times \langle x_3\rangle^{-n-1-\varepsilon}|x_3-x_4|^{k_4+1-\delta-n} \lb{11.53}\\
&\quad \leq c_{3,n,\delta}\Big[|x_1-x_4|^{k_2+k_3+k_4+3(1-\delta)-n}+1 \Big]\no
\end{align}
for some $c_{3,n,\delta}\in (0,\infty)$.  Continuing systematically in this way, one obtains
\begin{align}
&\int_{\bbR^n} d^n x_2 \int_{\bbR^n}d^nx_3 \cdots \int_{\bbR^n} d^n x_{n-j-1}\lb{11.54} \\
&\qquad\quad \;\;\,
\times |x_1-x_2|^{k_2+1-\delta-n} \langle x_2\rangle^{-n-1-\varepsilon}|x_2-x_3|^{k_3+1-\delta-n} \no\\
&\qquad\quad \;\;\, \times \langle x_3\rangle^{-n-1-\varepsilon}|x_3-x_4|^{k_4+1-\delta-n}\no\\
&\qquad\;\;\,\hspace*{.5cm} \vdots\no\\
&\qquad\quad \;\;\,
\times \langle x_{n-j-1} \rangle^{-n-1-\varepsilon}  |x_{n-j-1}-x_{n-j}|^{k_{n-j}+1-\delta-n} \no\\
&\quad \leq c_{n-j-2,n,\delta} \int_{\bbR^n}d^nx_{n-j-1}\, \Big[|x_1-x_{n-j-1}|^{k_2+\cdots+ k_{n-j-1}+1(n-j-2)(1-\delta)-n}+1 \Big]\no\\
&\hspace*{3.4cm}\times \langle x_{n-j-1}\rangle^{-n-1-\varepsilon}|x_{n-j-1}-x_{n-j}|^{k_{n-j}+1-\delta-n}\no\\
&\quad = c_{n-j-2,n,\delta} \bigg[\int_{\bbR^n}d^nx_{n-j-1}\,|x_1-x_{n-j-1}|^{k_2+\cdots+ k_{n-j-1}+1(n-j-2)(1-\delta)-n}\no\\
&\hspace*{3.6cm}\times\langle x_{n-j-1}\rangle^{-n-1-\varepsilon}|x_{n-j-1}-x_{n-j}|^{k_{n-j}+1-\delta-n} \no\\
&\hspace*{2.5cm}+ \int_{\bbR^n}d^nx_{n-j-1}\, \langle x_{n-j-1}\rangle^{-n-1-\varepsilon}|x_{n-j-1}-x_{n-j}|^{k_{n-j}+1-\delta-n} \bigg]\no\\
&\quad =: c_{n-j-2,n,\delta}[\cI_1(x_1,x_{n-j})+\cI_2(x_{n-j})]\no
\end{align}
for some $c_{n-j-2,n,\delta}\in (0,\infty)$.  An application of \eqref{11.16} with the choices
\begin{equation}
\alpha=n,\quad \beta=k_{n-j}+1-\delta,\quad \gamma=n+1
\end{equation}
immediately yields (note that in this case $\min(n,\alpha+\beta)=n$)
\begin{equation} \lb{11.56}
\cI_2(x_{n-j}) \leq c_{n-j-1,n,\delta}'',\quad x_{n-j}\in \bbR^n,
\end{equation}
for some $c_{n-j-1,n,\delta}''\in (0,\infty)$.  Another application of \eqref{11.16}, this time with the choices
\begin{equation} \lb{11.57}
\alpha=k_2+\cdots+k_{n-j-1}+(n-j-2)(1-\delta),\quad \beta=k_{n-j}+1-\delta,\quad \gamma=n+1,
\end{equation}
implies
\begin{equation} \lb{11.58}
\cI_1(x_1,x_{n-j}) \leq c_{n-j-1,n,\delta}'\Big[|x_1-x_{n-j}|^{k_2+\cdots+k_{n-j}+(n-j-1)(1-\delta)-n} + 1 \Big]
\end{equation}
for some $c_{n-j-1,n,\delta}'\in(0,\infty)$.  The conditions on $\alpha$ and $\beta$ in \eqref{11.16} are satisfied by the choices in \eqref{11.57} since
\begin{align}
0&<k_2+\cdots+k_{n-j-1}+(n-j-2)(1-\delta)\no\\
&\leq n-1 + (n-j-2)(1-\delta)\no\\
&= n + (n-j-3)-(n-j-2)\delta\no\\
&\leq n, \lb{11.59}
\end{align}
and
\begin{equation}
0< k_{n-j}+1-\delta \leq n-\delta \leq n.
\end{equation}
The final inequality in \eqref{11.59} follows by choosing $\ell=3$ in \eqref{11.30}.  In addition,
\begin{align}
(\alpha+\beta)-n &= k_2+\cdots+k_{n-j}+(n-j-1)(1-\delta)-n\\
&= \underbrace{k_2+\cdots+k_{n-j}}_{\leq n-1} - (j+1)-(n-j-1)(1-\delta)\no\\
&\leq n-2-j-(n-j-2)(1-\delta)\no\\
&<n+1\no\\
&= \gamma.\no
\end{align}
The inequality in \eqref{11.31} with $\ell=2$ implies $(n-j-2)-(n-j-1)\delta \leq 0$, so that
\begin{align}
k_2+\cdots+k_{n-j} + (n-j-1)\delta&=  k_2+\cdots+k_{n-j} + (n-j-1)+(n-j-1)\delta\no\\
&\leq n + (n-j-2)-(n-j-1)\delta\no\\
&\leq n,
\end{align}
which yields
\begin{equation}
\min(n,\alpha+\beta) = k_2+\cdots+k_{n-j} + (n-j-1)\delta,
\end{equation}
and the estimate in \eqref{11.58} follows.  Finally, combining \eqref{11.54}, \eqref{11.56}, and \eqref{11.58}, one obtains
\begin{align}
&\int_{\bbR^n} d^n x_2 \int_{\bbR^n}d^nx_3 \cdots \int_{\bbR^n} d^n x_{n-j-1}\no\\
&\qquad\quad \;\;\,
\times |x_1-x_2|^{k_2+1-\delta-n} \langle x_2\rangle^{-n-1-\varepsilon}|x_2-x_3|^{k_3+1-\delta-n} \no\\
&\qquad\quad \;\;\,
\times \langle x_3\rangle^{-n-1-\varepsilon}|x_3-x_4|^{k_4+1-\delta-n}\no\\
&\hspace*{1.47cm} \vdots\no\\
&\qquad\quad \;\;\,
\times \langle x_{n-j-1} \rangle^{-n-1-\varepsilon}  |x_{n-j-1}-x_{n-j}|^{k_{n-j}+1-\delta-n} \no\\
&\quad \;\;\,
\leq c_{n-j-1,n,\delta} \Big[|x_1-x_{n-j}|^{k_2+\cdots+k_{n-j}+(n-j-1)(1-\delta)-n} +1 \Big]\lb{11.63}
\end{align}
for some $c_{n-j-1,n,\delta}\in(0,\infty)$.

The estimate in \eqref{11.63} implies
\begin{align}
\eqref{11.34}&\leq c_{n-j-1,n,\delta}\int_{\bbR^n}d^nx_1\int_{\bbR^n}d^nx_{n-j}\no\\
&\hspace*{1cm}\times \langle x_1 \rangle^{-n-1-\varepsilon}\Big[|x_1-x_{n-j}|^{k_2+\cdots+k_{n-j}+(n-j-1)(1-\delta)-n} +1 \Big]\no\\
&\hspace*{1cm}\times\langle x_{n-j} \rangle^{-n-1-\varepsilon}  |x_{n-j}-x_1|^{k_1+k_{n-j+1}+2-\delta-n}.\lb{11.64}
\end{align}
Focusing on the integral over $x_{n-j}$ in \eqref{11.64},
\begin{align}
&\int_{\bbR^n}d^nx_{n-j}\, \Big[|x_1-x_{n-j}|^{k_2+\cdots+k_{n-j}+(n-j-1)(1-\delta)-n} +1 \Big]\no\\
&\hspace*{1.95cm}\times\langle x_{n-j} \rangle^{-n-1-\varepsilon}  |x_{n-j}-x_1|^{k_1+k_{n-j+1}+2-\delta-n}\no\\
&\quad= \int_{\bbR^n}d^nx_{n-j}\, |x_1-x_{n-j}|^{(n-j)(1-\delta)-n}\langle x_{n-j} \rangle^{-n-1-\varepsilon} \no\\
&\qquad+\int_{\bbR^n}d^nx_{n-j}\,\langle x_{n-j} \rangle^{-n-1-\varepsilon}  |x_{n-j}-x_1|^{k_1+k_{n-j+1}+2-\delta-n}\no\\
&\quad= \cI_1(x_1)+\cI_2(x_1),\lb{11.65}
\end{align}
an application of \eqref{11.16} with the choices
\begin{equation} \lb{11.66}
\alpha=(n-j)(1-\delta),\quad \beta=n,\quad \gamma=n+1,
\end{equation}
yields
\begin{equation} \lb{11.67}
\cI_1(x_1) \leq c_{n-j,n,\delta}',\quad x_1\in \bbR^n,
\end{equation}
for some $c_{n-j,n,\delta}'\in(0,\infty)$.  The conditions on $\alpha$ and $\beta$ in \eqref{11.16} are satisfied by the choices in \eqref{11.66} since $\alpha=(n-j)(1-\delta),\beta=n\in (0,n]$ and
\begin{equation}
(\alpha+\beta)-n=\alpha = (n-j)(1-\delta) \leq n < n+1 = \gamma.
\end{equation}
Since $\min(n,\alpha+\beta)=n$, the estimate in \eqref{11.16} results in \eqref{11.67}.  To estimate 
$\cI_2(\dott)$, one applies \eqref{11.16} with the choices
\begin{equation} \lb{11.69}
\alpha=n,\quad \beta=k_1+k_{n-j+1}+2-\delta,\quad \gamma=n+1,
\end{equation}
to obtain
\begin{equation} \lb{11.70}
\cI_2(x_1) \leq c_{n-j,n,\delta}'',\quad x_1\in \bbR^n,
\end{equation}
for some $c_{n-j,n,\delta}''\in(0,\infty)$. The conditions on $\alpha$ and $\beta$ in \eqref{11.16} are satisfied by the choices in \eqref{11.69} since $\alpha = n\in (0,n]$,
\begin{equation}
0<\beta = \underbrace{k_1+k_{n-j+1}}_{\leq n-2} + 2-\delta \leq n-\delta \leq n,
\end{equation}
and $(\alpha+\beta)-n = \beta \leq n<n+1$.  In this case, $\min(n,\alpha+\beta)= n$, and \eqref{11.16} results in \eqref{11.70}.  Combining \eqref{11.65}, \eqref{11.67}, and \eqref{11.70}, one obtains
\begin{equation}
\eqref{11.65} \leq c_{n-j,n,\delta},\quad x_1\in \bbR^n,
\end{equation}
for some $c_{n-j,n,\delta}\in(0,\infty)$.  As a consequence,
\begin{equation}
\eqref{11.64} \leq c_{n-j-1,n,\delta}c_{n-j,n,\delta}\int_{\bbR^n}d^nx_1\, \langle x_1 \rangle^{-n-1-\varepsilon} <\infty.
\end{equation}
Hence, this establishes the claim that for each bounded subset $\Omega\subset \bbC_+$, the integrand under the iterated integral on the right-hand side in \eqref{11.31} is uniformly bounded with respect to $z\in \Omega$ by an integrable function of the variables $x_1,...,x_{n-j}$.  As a consequence of this claim, \eqref{11.31} implies that for each bounded subset $\Omega\subset \bbC_+$, the following estimate holds:
\begin{equation}
\eqref{11.31}\leq C_{n,j,\delta,\Omega}|z|^{-(n-j)\delta},\quad z\in \Omega,
\end{equation}
for some $C_{n,j,\delta,\Omega}\in (0,\infty)$.
In summary, for {\it Case~1}, one has that for any bounded subset $\Omega\subset \bbC_+$,
\begin{align}
& \bigg\|\int_{\bbR^n} d^n x_1 \cdots \int_{\bbR^n} d^n x_{n-j-1} \int_{\bbR^n} d^n x_{n-j} \, V(x_1) \f{1}{{k_2}!} 
\bigg[\f{\partial^{k_2}}{\partial z^{k_2}}G_0(z;x_1,x_2)\bigg] 
\no \\
& \qquad \, \cdots \times V(x_{n-j-1}) \f{1}{k_{n-j}!}
\bigg[\f{\partial^{k_{n-j}}}{\partial z^{k_{n-j}}} 
G_0(z;x_{n-j-1},x_{n-j})\bigg]   \no \\
&  \hspace*{1.25cm} \times V(x_{n-j})  \f{1}{(k_1 + k_{n-j+1} + 1)!}
\bigg[\f{\partial^{k_1 + k_{n-j+1} + 1}}{\partial z^{k_1 + k_{n-j+1} + 1}}
G_0(z;x_{n-j},x_1)\bigg] \bigg\|_{\cB(\bbC^N)}  \no \\
&\quad \leq C_{n,j,\delta,\Omega}|z|^{-(n-j)\delta},\quad z\in \Omega,\lb{11.75}
\end{align}
where $\delta$ is defined by \eqref{11.29}.  In addition, since $|z|^{-(n-j)\delta}$ is bounded in $\overline{\Omega}$ if $0\notin \overline{\Omega}$, Lebesgue's dominated convergence theorem implies that
\begin{align}
&\int_{\bbR^n} d^n x_1 \cdots \int_{\bbR^n} d^n x_{n-j-1} \int_{\bbR^n} d^n x_{n-j} \, V(x_1) \f{1}{{k_2}!} 
\bigg[\f{\partial^{k_2}}{\partial z^{k_2}}G_0(z;x_1,x_2)\bigg] \\
& \qquad \, \cdots \times V(x_{n-j-1}) \f{1}{k_{n-j}!}
\bigg[\f{\partial^{k_{n-j}}}{\partial z^{k_{n-j}}} 
G_0(z;x_{n-j-1},x_{n-j})\bigg]   \no \\
&  \hspace*{1.25cm} \times V(x_{n-j})  \f{1}{(k_1 + k_{n-j+1} + 1)!}
\bigg[\f{\partial^{k_1 + k_{n-j+1} + 1}}{\partial z^{k_1 + k_{n-j+1} + 1}}
G_0(z;x_{n-j},x_1)\bigg],\quad z\in \Omega,\no
\end{align}
is analytic in $\Omega$ and extends continuously to $\overline{\Omega}$ if $0\notin \overline\Omega$.  This settles {\it Case~1}.

Next, we treat {\it Case~2}.  The assumptions in {\it Case~2} imply
\begin{equation} \lb{11.77}
\text{$k_\ell =0$ for all $2\leq \ell\leq n-j$ and $k_1+k_{n-j+1}+1=n$}.
\end{equation}
Let $\delta\in (0,1)$ be fixed.  Applying the final estimate in \eqref{C.34}, one obtains:
\begin{align}
& \bigg\|\int_{\bbR^n} d^n x_1 \cdots \int_{\bbR^n} d^n x_{n-j-1} \int_{\bbR^n} d^n x_{n-j} \, V(x_1) \f{1}{{k_2}!} 
\bigg[\f{\partial^{k_2}}{\partial z^{k_2}}G_0(z;x_1,x_2)\bigg] 
\no \\
& \qquad \, \cdots \times V(x_{n-j-1}) \f{1}{k_{n-j}!}
\bigg[\f{\partial^{k_{n-j}}}{\partial z^{k_{n-j}}} 
G_0(z;x_{n-j-1},x_{n-j})\bigg]   \no \\
&  \hspace*{1.25cm} \times V(x_{n-j})  \f{1}{(k_1 + k_{n-j+1} + 1)!}
\bigg[\f{\partial^{k_1 + k_{n-j+1} + 1}}{\partial z^{k_1 + k_{n-j+1} + 1}}
G_0(z;x_{n-j},x_1)\bigg] \bigg\|_{\cB(\bbC^N)}  \no \\
&\quad = \bigg\|\int_{\bbR^n} d^n x_1 \cdots \int_{\bbR^n} d^n x_{n-j-1} \int_{\bbR^n} d^n x_{n-j} \, V(x_1) G_0(z;x_1,x_2)
\no \\
&\hspace*{2.4cm} \cdots \times V(x_{n-j-1}) G_0(z;x_{n-j-1},x_{n-j})   \no \\
&\hspace*{2.86cm} \times V(x_{n-j})  \f{1}{n!}
\bigg[\f{\partial^{n}}{\partial z^{n}}
G_0(z;x_{n-j},x_1)\bigg] \bigg\|_{\cB(\bbC^N)}  \no \\
&\quad \leq C_{n,j,\delta}|z|^{-(n-j-1)\delta-1} \int_{\bbR^n} d^n x_1 \cdots \int_{\bbR^n} d^n x_{n-j-1} \int_{\bbR^n} d^n x_{n-j} \no\\
&\quad \quad\quad \times \langle x_1\rangle^{-n-1-\varepsilon}\Big[|x_1-x_2|^{1-\delta-n}+|z|^{(n-1+2\delta)/2}|x_1-x_2|^{(1-n)/2}\Big]\no\\
&\hspace*{1.25cm}\vdots\no\\
&\quad \quad\quad \times \langle x_{n-j-1}\rangle^{-n-1-\varepsilon}\Big[|x_{n-j-1}-x_{n-j}|^{1-\delta-n}\no\\
&\quad\quad\quad\quad+|z|^{(n-1+2\delta)/2}|x_{n-j-1}-x_{n-j}|^{(1-n)/2}\Big]\no\\
&\quad\quad\quad\times \langle x_{n-j}\rangle^{-n-1-\varepsilon}\Big[1+|z|^{(n+1)/2}|x_{n-j}-x_1|^{(n+1)/2}\Big]\no\\
&\quad \leq  \wti C_{n,j,\delta}|z|^{-(n-j-1)\delta-1} \int_{\bbR^n} d^n x_1 \cdots \int_{\bbR^n} d^n x_{n-j-1} \int_{\bbR^n} d^n x_{n-j} \lb{11.78}\\
&\quad \quad \quad \times \langle x_1\rangle^{-n-1-\varepsilon}\Big[|x_1-x_2|^{1-\delta-n}\no\\
&\quad\quad\quad\quad+|z|^{(n-1+2\delta)/2}[1+|x_1|]^{(1-n)/2}[1+|x_2|]^{(1-n)/2}\Big]\no\\
&\hspace*{1.25cm}\vdots\no\\
&\quad \quad\quad \times \langle x_{n-j-1}\rangle^{-n-1-\varepsilon}\Big[|x_{n-j-1}-x_{n-j}|^{1-\delta-n}\no\\
&\quad\quad\quad\quad+|z|^{(n-1+2\delta)/2}[1+|x_{n-j-1}|]^{(1-n)/2}[1+|x_{n-j}|]^{(1-n)/2}\Big]\no\\
&\quad\quad\quad\times \langle x_{n-j}\rangle^{-n-1-\varepsilon}\Big[1+|z|^{(n+1)/2}[1+|x_{n-j}|]^{(n+1)/2}[1+|x_1|]^{(n+1)/2}\Big],\no\\
&\hspace*{10.08cm} z\in \bbC_+,\no
\end{align}
where $C_{n,j,\delta}, \wti C_{n,j,\delta}\in (0,\infty)$ are suitable constants.  We claim that for each bounded subset $\Omega\subset \bbC_+$, the integrand under the iterated integral on the right-hand side in \eqref{11.78} is uniformly bounded with respect to $z\in \Omega$ by an integrable function of the variables $x_1,...,x_{n-j}$.  Since $|z|^{(n-1+2\delta)/2}$ and $|z|^{(n+1)/2}$ are locally bounded, to justify the claim, it suffices to establish convergence of the following integral:
\begin{align}
&\int_{\bbR^n} d^n x_1 \cdots \int_{\bbR^n} d^n x_{n-j-1} \int_{\bbR^n} d^n x_{n-j} \lb{11.79}\\
&\quad \quad \quad \times \langle x_1\rangle^{-n-1-\varepsilon}\Big[|x_1-x_2|^{1-\delta-n}+[1+|x_1|]^{(1-n)/2}[1+|x_2|]^{(1-n)/2}\Big]\no\\
&\hspace*{1.25cm}\vdots\no\\
&\quad \quad\quad \times \langle x_{n-j-1}\rangle^{-n-1-\varepsilon}\Big[|x_{n-j-1}-x_{n-j}|^{1-\delta-n}\no\\
&\quad\quad\quad\quad+[1+|x_{n-j-1}|]^{(1-n)/2}[1+|x_{n-j}|]^{(1-n)/2}\Big]\no\\
&\quad\quad\quad\times \langle x_{n-j}\rangle^{-n-1-\varepsilon}\Big[1+[1+|x_{n-j}|]^{(n+1)/2}[1+|x_1|]^{(n+1)/2}\Big].\no
\end{align}
As with {\it Case~1}, it suffices to focus on the most singular term in \eqref{11.79} and thus disregard the terms originally multiplied by $|z|^{(n-1+2\delta)/2}$ or $|z|^{(n+1)/2}$ in \eqref{11.78} (following the same line of reasoning used throughout \eqref{11.12a}--\eqref{11.14}).  With this simplification, the claim reduces to establishing convergence of the integral
\begin{align}
&\int_{\bbR^n} d^n x_1 \cdots \int_{\bbR^n} d^n x_{n-j-1} \int_{\bbR^n} d^n x_{n-j} \lb{11.80}\\
&\quad \quad  \times \langle x_1\rangle^{-n-1-\varepsilon}|x_1-x_2|^{1-\delta-n}\times \cdots \times \langle x_{n-j-1}\rangle^{-n-1-\varepsilon}|x_{n-j-1}-x_{n-j}|^{1-\delta-n} \no\\
&\quad\quad\times \langle x_{n-j}\rangle^{-n-1-\varepsilon}.\no
\end{align}
In analogy to {\it Case~1}, one successively estimates the integrals over the inner variables $x_2,\ldots,x_{n-j-1}$ in \eqref{11.80} as follows.  Beginning with the integral with respect to $x_2$, an application of \eqref{11.16} with the choices
\begin{equation} \lb{11.81}
\alpha=\beta=1-\delta,\quad \gamma=n+1,
\end{equation}
yields
\begin{align}
&\int_{\bbR^n}d^nx_2\, |x_1-x_2|^{1-\delta-n}\langle x_2\rangle^{-n-1-\varepsilon}|x_2-x_3|^{1-\delta-n}\no\\
&\quad \leq c_{2,n}\Big[|x_1-x_3|^{2(1-\delta)-n}+1 \Big].\lb{11.82}
\end{align}
The assumptions on $\alpha$ and $\beta$ in \eqref{11.16} are satisfied by the choices in \eqref{11.81}.  In fact,
\begin{equation}
\alpha=\beta=1-\delta\in (0,n]\quad \text{and}\quad (\alpha+\beta)-n = 2(1-\delta)-n<n+1 = \gamma.
\end{equation}
Finally, $\min(n,2(1-\delta))=2(1-\delta)$ and \eqref{11.16} results in \eqref{11.82}.  If $j=n-3$, then $x_2$ is the only inner variable, and the integration over the inner variables is complete with \eqref{11.82}.  For $j\leq n-4$ the process continues and there are $n-j-3$ remaining inner integrals to estimate.  Applying \eqref{11.82} in \eqref{11.80}, the next inner integral is with respect to $x_3$:
\begin{align}
&\int_{\bbR^n}d^nx_2\int_{\bbR^n}d^nx_3\, |x_1-x_2|^{1-\delta-n}\langle x_2\rangle^{-n-1-\varepsilon}|x_2-x_3|^{1-\delta-n}\no\\
&\qquad \times \langle x_3\rangle^{-n-1-\varepsilon}|x_3-x_4|^{1-\delta-n}\no\\
&\quad \leq c_{2,n,\delta}\int_{\bbR^n}d^nx_3\,\Big[|x_1-x_3|^{2(1-\delta)-n}+1 \Big]\langle x_3\rangle^{-n-1-\varepsilon}|x_3-x_4|^{1-\delta-n}\no\\
&\quad= c_{2,n,\delta}\bigg[\int_{\bbR^n}d^nx_3\,|x_1-x_3|^{2(1-\delta)-n}\langle x_3\rangle^{-n-1-\varepsilon}|x_3-x_4|^{1-\delta-n}\no\\
&\hspace*{2cm} + \int_{\bbR^n}d^nx_3\,\langle x_3\rangle^{-n-1-\varepsilon}|x_3-x_4|^{1-\delta-n}\bigg]\no\\
&\quad =: c_{2,n,\delta}[\cI_1(x_1,x_4)+\cI_2(x_4)]\lb{11.84}
\end{align}
for some $c_{2,n,\delta}\in (0,\infty)$.  An application of \eqref{11.16} with the choices
\begin{equation} \lb{11.85}
\alpha=2(1-\delta),\quad \beta=1-\delta,\quad \gamma=n+1,
\end{equation}
yields
\begin{equation} \lb{11.86}
\cI_1(x_1,x_4) \leq c_{3,n,\delta}' \Big[|x_1-x_4|^{3(1-\delta)-n}+1\Big]
\end{equation}
for some $c_{3,n,\delta}'\in (0,\infty)$.  With the choices in \eqref{11.85}, it is clear that $\alpha,\beta\in (0,n]$ and $(\alpha+\beta)-n = 3(1-\delta)-n < n+1$ since
\begin{align}
n+1 - [3(1-\delta)-n] = 2(n-1)+ 3\delta >0,
\end{align}
so the assumptions on $\alpha$ and $\beta$ in \eqref{11.16} are satisfied.  Finally, $\min(n,3(1-\delta))=3(1-\delta)$, and \eqref{11.16} results in \eqref{11.86}.  A second application of \eqref{11.16}, this time with the choices 
\begin{equation}
\alpha=n,\quad \beta=1-\delta,\quad \gamma=n+1
\end{equation}
yields
\begin{equation} \lb{11.89}
\cI_2(x_4) \leq c_{3,n,\delta}'',\quad x_4 \in \bbR^n,
\end{equation}
for some $c_{3,n,\delta}''\in (0,\infty)$.  As a result, \eqref{11.84}, \eqref{11.86}, and \eqref{11.89} imply 
\begin{align}
&\int_{\bbR^n}d^nx_2\int_{\bbR^n}d^nx_3\, |x_1-x_2|^{1-\delta-n}\langle x_2\rangle^{-n-1-\varepsilon}|x_2-x_3|^{1-\delta-n}\no\\
&\qquad \times \langle x_3\rangle^{-n-1-\varepsilon}|x_3-x_4|^{1-\delta-n}\no\\
&\quad \leq c_{3,n,\delta}\Big[|x_1-x_4|^{3(1-\delta)-n}+1\Big]\lb{11.90}
\end{align}
for some $c_{3,n,\delta}\in (0,\infty)$.  Continuing systematically in this way, one obtains
\begin{align}
&\int_{\bbR^n} d^n x_2 \cdots \int_{\bbR^n} d^n x_{n-j-1} |x_1-x_2|^{1-\delta-n}\langle x_2\rangle^{-n-1-\varepsilon}\no\\
&\hspace*{1cm} 
\times \cdots \times |x_{n-j-2}-x_{n-j-1}|^{1-\delta-n}\langle x_{n-j-1}\rangle^{-n-1-\varepsilon}|x_{n-j-1}-x_{n-j}|^{1-\delta-n} \no\\
&\quad \leq c_{n-j-2,n,\delta}\int_{\bbR^n}d^nx_{n-j-1}\, \Big[ |x_1-x_{n-j-1}|^{(n-j-2)(1-\delta)-n}+1\Big]\no\\
&\hspace*{4.63cm}\times \langle x_{n-j-1}\rangle^{-n-1-\varepsilon}|x_{n-j-1}-x_{n-j}|^{1-\delta-n}\no\\
&\quad = c_{n-j-2,n,\delta}\bigg[\int_{\bbR^n}d^nx_{n-j-1}\, |x_1-x_{n-j-1}|^{(n-j-2)(1-\delta)-n}\no\\
&\hspace*{2.5cm} 
+\int_{\bbR^n}d^nx_{n-j-1}\, \langle x_{n-j-1}\rangle^{-n-1-\varepsilon}|x_{n-j-1}-x_{n-j}|^{1-\delta-n}\bigg]\no\\
&\quad =: c_{n-j-2,n,\delta}[\cI_1(x_1,x_{n-j}) +\cI_2(x_{n-j})]\lb{11.91}
\end{align}
for some $c_{n-j-2,n,\delta}\in (0,\infty)$.  Applying \eqref{11.16} with the choices
\begin{equation} \lb{11.92}
\alpha=(n-j-2)(1-\delta),\quad \beta=1-\delta,\quad \gamma=n+1,
\end{equation}
yields
\begin{equation} \lb{11.93}
\cI_1(x_1,x_{n-j}) \leq c_{n-j-1,n,\delta}'\Big[|x_1-x_{n-j}|^{(n-j-1)(1-\delta)-n}+1\Big].
\end{equation}
The assumptions on $\alpha$ and $\beta$ in \eqref{11.16} are satisfied by the choices in \eqref{11.92}.  In fact,
\begin{equation}
0<\alpha=(n-j-2)(1-\delta)\leq n-2\leq n\quad\text{and}\quad 0<\beta=1-\delta\leq n,
\end{equation}
while
\begin{equation}
(\alpha+\beta)-n = (n-j-1)(1-\delta)-n \leq 0 < n+1 = \gamma.
\end{equation}
Finally, $\min(n,(n-j-1)(1-\delta))=(n-j-1)(1-\delta)$ and \eqref{11.16} results in \eqref{11.93}.  A second application of \eqref{11.16}, this time with the choices
\begin{equation} \lb{11.96}
\alpha=n,\quad \beta=1-\delta,\quad \gamma=n+1,
\end{equation}
yields
\begin{equation} \lb{11.97}
\cI_2(x_{n-j})\leq c_{n-j-1,n,\delta}'',\quad x_{n-j} \in \bbR^n,
\end{equation}
for some $c_{n-j-1,n,\delta}''\in (0,\infty)$.  Combining \eqref{11.91}, \eqref{11.93}, and \eqref{11.97}, one obtains
\begin{align}
&\int_{\bbR^n} d^n x_2 \cdots \int_{\bbR^n} d^n x_{n-j-1} |x_1-x_2|^{1-\delta-n}\langle x_2\rangle^{-n-1-\varepsilon}\no\\
&\hspace*{1.04cm} 
\times \cdots \times |x_{n-j-2}-x_{n-j-1}|^{1-\delta-n}\langle x_{n-j-1}\rangle^{-n-1-\varepsilon}|x_{n-j-1}-x_{n-j}|^{1-\delta-n} \no\\
&\quad \leq c_{n-j-1,n,\delta}\Big[|x_1-x_{n-j}|^{(n-j-1)(1-\delta)-n}+1\Big]\lb{11.98}
\end{align}
for some $c_{n-j-1,n,\delta}\in (0,\infty)$.

The estimate in \eqref{11.98} implies
\begin{align}
\eqref{11.80}&\leq c_{n-j-1,n,\delta}\int_{\bbR^n}d^nx_1\int_{\bbR^n}d^nx_{n-j}\lb{11.99}\\
&\hspace*{1cm}\times\langle x_1\rangle^{-n-1-\varepsilon}\Big[|x_1-x_{n-j}|^{(n-j-1)(1-\delta)-n}+1\Big]\langle x_{n-j}\rangle^{-n-1-\varepsilon}\no
\end{align}
Focusing on the integral over $x_{n-j}$ in \eqref{11.99},
\begin{align}
&\int_{\bbR^n}d^nx_{n-j}\, \Big[|x_1-x_{n-j}|^{(n-j-1)(1-\delta)-n}+1\Big]\langle x_{n-j}\rangle^{-n-1-\varepsilon}\no\\
&\quad= \int_{\bbR^n}d^nx_{n-j}\, |x_1-x_{n-j}|^{(n-j-1)(1-\delta)-n}\langle x_{n-j}\rangle^{-n-1-\varepsilon}\no\\
&\qquad + \int_{\bbR^n}d^nx_{n-j}\, \langle x_{n-j}\rangle^{-n-1-\varepsilon}\no\\
&\quad=: \cI_1(x_1)+\cI_2,\lb{11.100}
\end{align}
one infers that
\begin{equation} \lb{11.101}
\cI_2=c_n<\infty.
\end{equation}
An application of \eqref{11.16} with the choices
\begin{equation}
\alpha=(n-j-1)(1-\delta),\quad \beta=n,\quad \gamma=n+1
\end{equation}
yields
\begin{equation} \lb{11.103}
\cI_1(x_1) \leq c_{n-j,n,\delta}',\quad x_1\in \bbR^n.
\end{equation}
Thus, \eqref{11.100}, \eqref{11.101}, and \eqref{11.103} imply
\begin{equation} \lb{11.104}
\eqref{11.100}\leq c_{n-j,n,\delta},\quad x_1\in \bbR^n,
\end{equation}
for some $c_{n-j,n,\delta}\in (0,\infty)$.  As a result, \eqref{11.99} and \eqref{11.104} imply
\begin{equation}
\eqref{11.80} \leq c_{n-j-1,n,\delta}c_{n-j,n,\delta}\int_{\bbR^n}d^nx_1\langle x_1\rangle^{-n-1-\varepsilon}<\infty.
\end{equation}
Hence, this establishes the claim that for each bounded subset $\Omega\subset \bbC_+$, the integrand under the iterated integral on the right-hand side in \eqref{11.78} is uniformly bounded with respect to $z\in \Omega$ by an integrable function of the variables $x_1,...,x_{n-j}$.  As a consequence of this claim, \eqref{11.78} implies that for each $\delta\in (0,1)$ and each bounded subset $\Omega\subset \bbC_+$, the following estimate holds:
\begin{equation} 
\eqref{11.78}\leq C_{n,j,\delta,\Omega} |z|^{-(n-j-1)\delta-1},\quad z\in \Omega,   \lb{11.106}
\end{equation}
for some $C_{n,j,\delta,\Omega}\in (0,\infty)$.  In summary, for {\it Case~2}, one has that for any $\delta\in (0,1)$ and any bounded subset $\Omega\subset \bbC_+$,
\begin{align}
&\bigg\|\int_{\bbR^n} d^n x_1 \cdots \int_{\bbR^n} d^n x_{n-j-1} \int_{\bbR^n} d^n x_{n-j} \, V(x_1) G_0(z;x_1,x_2)     \no \\
&\hspace*{1.55cm} \cdots \times V(x_{n-j-1}) G_0(z;x_{n-j-1},x_{n-j})   \no \\
&\hspace*{2.04cm} \times V(x_{n-j})  \f{1}{n!}
\bigg[\f{\partial^{n}}{\partial z^{n}}
G_0(z;x_{n-j},x_1)\bigg] \bigg\|_{\cB(\bbC^N)}\no\\
&\quad \leq C_{n,j,\delta,\Omega} |z|^{-(n-j-1)\delta-1},\quad z\in \Omega.\lb{11.107}
\end{align}
In addition, since $|z|^{-(n-j-1)(1-\delta)-1}$ is bounded in $\overline{\Omega}$ if $0\notin \overline{\Omega}$, Lebesgue's dominated convergence theorem implies that
\begin{align}
&\int_{\bbR^n} d^n x_1 \cdots \int_{\bbR^n} d^n x_{n-j-1} \int_{\bbR^n} d^n x_{n-j} \, V(x_1) G_0(z;x_1,x_2)
\no \\
& \hspace*{1.4cm} \cdots \times V(x_{n-j-1}) G_0(z;x_{n-j-1},x_{n-j})   \\
& \hspace*{1.86cm} \times V(x_{n-j})  \f{1}{n!}
\bigg[\f{\partial^{n}}{\partial z^{n}}
G_0(z;x_{n-j},x_1)\bigg]\no
\end{align}
is analytic in $\Omega$ and extends continuously to $\overline{\Omega}$ if $0\notin \overline\Omega$.  This settles {\it Case~2}.

Turning to {\it Case~3}, we assume that $j=n-2$.  In this case, $\underline{k}=(k_1,k_2,k_3)$ with $k_1+k_3\neq n-1$.  Let $\delta\in(0,1)$ be fixed.  Invoking the final estimate in \eqref{11.34}, one obtains
\begin{align}
&\bigg\|\int_{\bbR^n}d^nx_1\int_{\bbR^n}d^nx_2\, V(x_1)\frac{1}{k_2!}\bigg[\frac{\partial^{k_2}}{\partial z^{k_2}}G_0(z;x_1,x_2) \bigg]\no\\
&\hspace*{1.5cm} \times V(x_2)\frac{1}{(k_1+k_3+1)!}\bigg[ \frac{\partial^{k_1+k_3+1}}{\partial z^{k_1+k_3+1}}G_0(z;x_2,x_1)\bigg]\bigg\|_{\cB(\bbC^N)}\no\\
&\quad \leq C_{n,j,\delta}|z|^{-2\delta}\int_{\bbR^n}d^nx_1\int_{\bbR^n}d^nx_2\no\\
&\hspace*{1cm}\times\langle x_1\rangle^{-n-1-\varepsilon}\big\{|x_1-x_2|^{k_2+1-\delta-n} \chi_{[0,1]}(|z||x_1-x_2|)    \no \\
& \hspace*{1cm} \quad + |z|^{(n-1+2\delta)/2} 
\big[|x_1|^{(2k_2+1-n)/2} + |x_2|^{(2k_2+1-n)/2}\big] \chi_{[1,\infty)}(|z||x_1-x_2|)\big\}\no\\
&\hspace*{1cm}\times \langle x_2 \rangle^{-n-1-\varepsilon}  
\big\{|x_2-x_1|^{k_1+k_3+2-\delta-n} \chi_{[0,1]}(|z||x_2-x_1|)    \no \\
& \hspace*{1cm} \quad + |z|^{(n-1+2\delta)/2} 
\big[|x_2|^{(2k_1+2k_3+3-n)/2} + |x_1|^{(2k_1+2k_3+3-n)/2}\big]    \no \\
& \hspace*{1.4cm} \qquad \times \chi_{[1,\infty)}(|z||x_2-x_1|)\big\}\no\\
& \quad \leq \wti C_{n,j,\delta}|z|^{-2\delta} \int_{\bbR^n} d^n x_1\int_{\bbR^n} d^n x_2 \lb{11.109}\\
& \hspace*{1.4cm} \times \langle x_1 \rangle^{-n-1-\varepsilon} 
\big\{|x_1-x_2|^{k_2+1-\delta-n}     \no \\
& \hspace*{1.4cm} \quad + |z|^{(n-1+2\delta)/2} 
\big[|x_1|^{(2k_2+1-n)/2} + |x_2|^{(2k_2+1-n)/2}\big]\big\}
  \no \\ 
&  \hspace*{1.4cm} \times \langle x_2 \rangle^{-n-1-\varepsilon}  
\big\{|x_2-x_1|^{k_1+k_3+2-\delta-n}    \no \\
& \hspace*{1.4cm} \quad + |z|^{(n-1+2\delta)/2} 
\big[|x_2|^{(2k_1+2k_3+3-n)/2} + |x_1|^{(2k_1+2k_3+3-n)/2}\big]\big\}, \quad z \in \bbC_+,    \no
\end{align}
where $C_{n,n-2,\delta}, \wti C_{n,n-2,\delta} \in (0,\infty)$ are suitable constants.  We claim that for each bounded subset $\Omega\subset \bbC_+$, the integrand under the iterated integral on the right-hand side in \eqref{11.109} is uniformly bounded with respect to $z\in \Omega$ by an integrable function of the variables $x_1,x_2$.  Since $|z|^{(n-1+2\delta)/2}$ is locally bounded, to justify the claim, it suffices to establish convergence of the following integral:
\begin{align}
&\int_{\bbR^n} d^n x_1\int_{\bbR^n} d^n x_2\, \langle x_1 \rangle^{-n-1-\varepsilon} 
\big\{|x_1-x_2|^{k_2+1-\delta-n}     \no \\
& \hspace*{2.45cm} \qquad +  
\big[|x_1|^{(2k_2+1-n)/2} + |x_2|^{(2k_2+1-n)/2}\big]\big\}   \no \\ 
&  \hspace*{2.45cm} \quad \times \langle x_2 \rangle^{-n-1-\varepsilon}  
\big\{|x_2-x_1|^{k_1+k_3+2-\delta-n}       \lb{11.110} \\
& \hspace*{2.45cm} \qquad + 
\big[|x_2|^{(2k_1+2k_3+3-n)/2} + |x_1|^{(2k_1+2k_3+3-n)/2}\big]\big\}.    \no
\end{align}
In turn, as in the argument for the proof of part $(i)$ and Cases 1 and 2, it suffices to focus on the most singular term in \eqref{11.110} and thus disregard the terms originally multiplied by the factor $|z|^{(n-1+2\delta)/2}$ in \eqref{11.109} (following the same line of reasoning used throughout \eqref{11.12a}--\eqref{11.14}).  With this simplification, the claim reduces to establishing convergence of the integral:
\begin{align}
&\int_{\bbR^n} d^n x_1\int_{\bbR^n} d^n x_2\, \langle x_1 \rangle^{-n-1-\varepsilon} 
|x_1-x_2|^{k_2+1-\delta-n} \no\\
&\hspace*{2.85cm}\times\langle x_2 \rangle^{-n-1-\varepsilon}  
|x_2-x_1|^{k_1+k_3+2-\delta-n}\no\\
&\quad =\int_{\bbR^n}d^nx_1\int_{\bbR^n}d^nx_2\langle x_1\rangle^{-n-1-\varepsilon}|x_1-x_2|^{2(1-\delta)-n}\langle x_2\rangle^{-n-1-\varepsilon}.\lb{11.111}
\end{align}
Applying \eqref{11.16} with $\alpha=2(1-\delta)$, $\beta=n$, and $\gamma=n+1$, one infers that
\begin{equation} \lb{11.112}
\int_{\bbR^n}d^nx_2 |x_1-x_2|^{2(1-\delta)-n}\langle x_2\rangle^{-n-1-\varepsilon} \leq c_{2,n,\delta}, 
\quad x_1 \in \bbR^n,
\end{equation}
for some $c_{2,n,\delta}\in (0,\infty)$.  In turn, \eqref{11.112} implies
\begin{equation}
\eqref{11.111}\leq c_{2,n,\delta}\int_{\bbR^n}d^nx_1\,\langle x_1\rangle^{-n-1-\varepsilon}<\infty.  \lb{11.113}
\end{equation}
Hence, this establishes the claim that for each bounded subset $\Omega\subset \bbC_+$, the integrand under the iterated integral on the right-hand side in \eqref{11.109} is uniformly bounded with respect to $z\in \Omega$ by an integrable function of the variables $x_1,...,x_2$.  As a consequence of this claim, \eqref{11.109} implies that for each $\delta\in (0,1)$ and each bounded subset $\Omega\subset \bbC_+$, the following estimate holds:
\begin{equation}
\eqref{11.109}\leq C_{n,n-2,\delta,\Omega}|z|^{-2\delta},\quad z\in \Omega,
\end{equation}
for some $C_{n,n-2,\delta,\Omega}\in (0,\infty)$.  In summary, for {\it Case~3}, one has that for any $\delta\in (0,1)$ and any bounded subset $\Omega\subset\bbC_+$,
\begin{align}
&\bigg\|\int_{\bbR^n}d^nx_1\int_{\bbR^n}d^nx_2\, V(x_1)\frac{1}{k_2!}\bigg[\frac{\partial^{k_2}}{\partial z^{k_2}}G_0(z;x_1,x_2) \bigg]\no\\
&\hspace*{1.6cm} \times V(x_2)\frac{1}{(k_1+k_3+1)!}\bigg[ \frac{\partial^{k_1+k_3+1}}{\partial z^{k_1+k_3+1}}G_0(z;x_2,x_1)\bigg]\bigg\|_{\cB(\bbC^N)}\no\\
&\quad \leq C_{n,n-2,\delta,\Omega}|z|^{-2\delta},\quad z\in \Omega.\lb{11.115}
\end{align}
In addition, since $|z|^{-2\delta}$ is bounded in $\overline{\Omega}$ if $0\notin \overline{\Omega}$, Lebesgue's dominated convergence theorem implies that
\begin{align}
&\int_{\bbR^n}d^nx_1\int_{\bbR^n}d^nx_2\, V(x_1)\frac{1}{k_2!}\bigg[\frac{\partial^{k_2}}{\partial z^{k_2}}G_0(z;x_1,x_2) \bigg]\no\\
&\hspace*{1.4cm} \times V(x_2)\frac{1}{(k_1+k_3+1)!}\bigg[ \frac{\partial^{k_1+k_3+1}}{\partial z^{k_1+k_3+1}}G_0(z;x_2,x_1)\bigg]
\end{align}
is analytic in $\Omega$ and extends continuously to $\overline{\Omega}$ if $0\notin \overline\Omega$.  This settles {\it Case~3}.

Turning to {\it Case~4}, we assume that $j=n-2$.  In this case, $\underline{k}=(k_1,0,k_3)$ with $k_1+k_3= n-1$.  Let $\delta\in(0,1)$ be fixed.  Invoking the final estimate in \eqref{11.34}, one obtains
\begin{align}
&\bigg\|\int_{\bbR^n}d^nx_1\int_{\bbR^n}d^nx_2\, V(x_1)\frac{1}{k_2!}\bigg[\frac{\partial^{k_2}}{\partial z^{k_2}}G_0(z;x_1,x_2) \bigg]\no\\
&\qquad \times V(x_2)\frac{1}{(k_1+k_3+1)!}\bigg[ \frac{\partial^{k_1+k_3+1}}{\partial z^{k_1+k_3+1}}G_0(z;x_2,x_1)\bigg]\bigg\|_{\cB(\bbC^N)}\no\\
&\quad =\bigg\|\int_{\bbR^n}d^nx_1\int_{\bbR^n}d^nx_2\, V(x_1)G_0(z;x_1,x_2)V(x_2)\frac{1}{n!}\bigg[ \frac{\partial^n}{\partial z^n}G_0(z;x_2,x_1)\bigg]\bigg\|_{\cB(\bbC^N)}\no\\
&\quad \leq C_{n,j,\delta}|z|^{-\delta-1}\int_{\bbR^n}d^nx_1\int_{\bbR^n}d^nx_2\no\\
&\hspace*{1cm}\times\langle x_1\rangle^{-n-1-\varepsilon}\big\{|x_1-x_2|^{1-\delta-n} \chi_{[0,1]}(|z||x_1-x_2|)    \no \\
& \hspace*{1cm} \quad + |z|^{(n-1+2\delta)/2} 
\big[|x_1|^{(1-n)/2} + |x_2|^{(1-n)/2}\big] \chi_{[1,\infty)}(|z||x_1-x_2|)\big\}\no\\
&\hspace*{1cm}\times \langle x_2 \rangle^{-n-1-\varepsilon}  
\big\{ \chi_{[0,1]}(|z||x_2-x_1|)    \no \\
& \hspace*{1cm} \quad + |z|^{(n+1)/2} 
\big[|x_2|^{(n+1)/2} + |x_1|^{(n+1)/2}\big]    \no \\
& \hspace*{1.4cm} \qquad \times \chi_{[1,\infty)}(|z||x_2-x_1|)\big\}\no\\
& \quad \leq \wti C_{n,j,\delta}|z|^{-\delta-1} \int_{\bbR^n} d^n x_1\int_{\bbR^n} d^n x_2\lb{11.117} \\
& \hspace*{1.4cm} \times \langle x_1 \rangle^{-n-1-\varepsilon} 
\big\{|x_1-x_2|^{1-\delta-n}   + |z|^{(n-1+2\delta)/2} 
\big[|x_1|^{(1-n)/2} + |x_2|^{(1-n)/2}\big]\big\}
  \no \\ 
&  \hspace*{1.4cm} \times \langle x_2 \rangle^{-n-1-\varepsilon}  
\big\{1+ |z|^{(n+1)/2} 
\big[|x_2|^{(n+1)/2} + |x_1|^{(n+1)/2}\big]\big\}, \quad z \in \bbC_+,    \no
\end{align}
where $C_{n,n-2,\delta}, \wti C_{n,n-2,\delta} \in (0,\infty)$ are suitable constants.  We claim that for each bounded subset $\Omega\subset \bbC_+$, the integrand under the iterated integral on the right-hand side in \eqref{11.117} is uniformly bounded with respect to $z\in \Omega$ by an integrable function of the variables $x_1,x_2$.  Since $|z|^{(n-1+2\delta)/2}$ and $|z|^{(n+1)/2}$ are locally bounded, to justify the claim, it suffices to establish convergence of the following integral:
\begin{align}
&\int_{\bbR^n} d^n x_1\int_{\bbR^n} d^n x_2     \no \\
& \hspace*{1.4cm} \times \langle x_1 \rangle^{-n-1-\varepsilon} 
\big\{|x_1-x_2|^{1-\delta-n}   + |z|^{(n-1+2\delta)/2} 
\big[|x_1|^{(1-n)/2} + |x_2|^{(1-n)/2}\big]\big\}
  \no \\ 
&  \hspace*{1.4cm} \times \langle x_2 \rangle^{-n-1-\varepsilon}  
\big\{1+ |z|^{(n+1)/2} 
\big[|x_2|^{(n+1)/2} + |x_1|^{(n+1)/2}\big]\big\}   \lb{11.118}
\end{align}
In turn, as in the argument for the proof of part $(i)$ and {\it Cases~1, 2}, and {\it 3}, it suffices to focus on the most singular term in \eqref{11.118} and thus disregard the terms originally multiplied by the factor $|z|^{(n-1+2\delta)/2}$ or $|z|^{(n+1)/2}$ in \eqref{11.117} (following the same line of reasoning used throughout \eqref{11.12a}--\eqref{11.14}).  With this simplification, the claim reduces to establishing convergence of the integral:
\begin{align} \lb{11.119}
&\int_{\bbR^n} d^n x_1\int_{\bbR^n} d^n x_2\,\langle x_1 \rangle^{-n-1-\varepsilon} 
|x_1-x_2|^{1-\delta-n}\langle x_2 \rangle^{-n-1-\varepsilon}.
\end{align}
The integral in \eqref{11.119} is similar to the integral in \eqref{11.111}.  An argument entirely analogous to that used throughout \eqref{11.111}--\eqref{11.113} to show the integral in \eqref{11.111} is finite shows that the integral in \eqref{11.119} is finite.  We omit further details at this point.  In summary, for {\it Case~4}, one has that for any $\delta\in (0,1)$ and any bounded subset $\Omega\subset \bbC_+$,
\begin{align}
&\bigg\|\int_{\bbR^n}d^nx_1\int_{\bbR^n}d^nx_2\, V(x_1)\frac{1}{k_2!}\bigg[\frac{\partial^{k_2}}{\partial z^{k_2}}G_0(z;x_1,x_2) \bigg]\no\\
&\hspace*{3cm} \times V(x_2)\frac{1}{(k_1+k_3+1)!}\bigg[ \frac{\partial^{k_1+k_3+1}}{\partial z^{k_1+k_3+1}}G_0(z;x_2,x_1)\bigg]\bigg\|_{\cB(\bbC^N)}\no\\
&\quad \leq C_{n,n-2,\delta,\Omega}|z|^{-\delta-1},\quad z\in \Omega,\lb{11.120}
\end{align}
for some $C_{n,n-2,\delta,\Omega}\in (0,\infty)$.  In addition, since $|z|^{-\delta-1}$ is bounded in $\overline{\Omega}$ if $0\notin \overline{\Omega}$, Lebesgue's dominated convergence theorem implies that
\begin{align}
&\int_{\bbR^n}d^nx_1\int_{\bbR^n}d^nx_2\, V(x_1)\frac{1}{k_2!}\bigg[\frac{\partial^{k_2}}{\partial z^{k_2}}G_0(z;x_1,x_2) \bigg]\no\\
&\hspace*{2.8cm} \times V(x_2)\frac{1}{(k_1+k_3+1)!}\bigg[ \frac{\partial^{k_1+k_3+1}}{\partial z^{k_1+k_3+1}}G_0(z;x_2,x_1)\bigg]
\end{align}
is analytic in $\Omega$ and extends continuously to $\overline{\Omega}$ if $0\notin \overline\Omega$.  This settles {\it Case~4}.

In {\it Case~5}, we assume that $j=n-1$.  In this case, $\underline{k}=(k_1,k_2)$ with $k_1+k_2=n-1$.  Invoking the final estimate in \eqref{C.34}, one obtains
\begin{align}
\bigg\|\int_{\bbR^n}d^nx_1\, V(x_1)\bigg[\frac{\partial^n}{\partial z^n}G_0(z;x_1,x_1) \bigg]\bigg\|_{\cB(\bbC^N)} & \leq C_n|z|^{-1} \int_{\bbR^n}d^nx_1\, \langle x_1\rangle^{-n-1-\varepsilon}\no\\
& = \wti C_n|z|^{-1},\quad z\in \bbC_+,\lb{11.122}
\end{align}
for some $C_n, \wti C_n\in (0,\infty)$.  In addition, since $|z|^{-1}$ is bounded outside any neighborhood of $0$, Lebesgue's dominated convergence theorem implies that
\begin{equation} \lb{11.123}
\int_{\bbR^n}d^nx_1\, V(x_1)\bigg[\frac{\partial^n}{\partial z^n}G_0(z;x_1,x_1) \bigg]
\end{equation}
is analytic in $\Omega$ and extends continuously to $\overline{\bbC_+}\backslash\{0\}$.

Now, looking at the bounds \eqref{11.75}, \eqref{11.107}, \eqref{11.115}, \eqref{11.120}, and \eqref{11.122}, we identify the bound which is the most singular as $z\to 0$ in $\bbC_+$.  The bounds from \eqref{11.75} are (up to $z$-independent constant multiples)
\begin{equation} \lb{11.124}
|z|^{-(n-j)\delta},\quad \delta=\frac{n-j-2}{n-j-1},\, 0\leq j\leq n-3.
\end{equation}
The singularity in \eqref{11.124} is strongest when $(n-j)\delta$, $0\leq j\leq n-3$, is largest.  Since the expression
\begin{equation}
(n-j)\delta = (n-j) - \frac{n-j}{n-j-1}
\end{equation}
is decreasing with respect to the parameter $j$, its maximum value is attained for $j=0$:
\begin{equation}
\max_{0\leq j\leq n-3} (n-j)\delta = n - \frac{n}{n-1}.
\end{equation}
Thus, the strongest singularity in \eqref{11.124} corresponds to $j=0$ and is
\begin{equation} \lb{11.127}
|z|^{-[n - (n/(n-1))]}.
\end{equation}
The bounds from \eqref{11.107} are (up to $z$-independent constant multiples)
\begin{equation} \lb{11.128}
|z|^{-(n-j-1)\delta-1},\quad \delta\in (0,1),\, 0\leq j\leq n-3.
\end{equation}
Choosing $\delta = \eta/(n-j-1)$, $\eta\in (0,1)$, the bound in \eqref{11.128} may be recast as
\begin{equation} \lb{11.129}
|z|^{-(1+\eta)},\quad \eta\in (0,1),\, 0\leq j\leq n-3.
\end{equation}
The bound from \eqref{11.115} is (up to a $z$-independent constant multiple)
\begin{equation} \lb{11.130}
|z|^{-2\delta},\quad \delta\in (0,1),\, j=n-2.
\end{equation}
Choosing $\delta= \eta/2$, $\eta\in (0,1)$, the bound in \eqref{11.130} may be recast as
\begin{equation} \lb{11.131}
|z|^{-\eta},\quad \eta\in (0,1),\, j=n-2.
\end{equation}
The bound from \eqref{11.120} is (up to a $z$-independent constant multiple)
\begin{equation} \lb{11.132}
|z|^{-(1+\delta)},\quad \delta\in (0,1),\, j=n-2.
\end{equation}
The bound from \eqref{11.122} is (up to a $z$-independent constant multiple)
\begin{equation} \lb{11.133}
|z|^{-1},\quad j=n-1.
\end{equation}
If $n\geq 4$, then the strongest singularity from \eqref{11.124}, \eqref{11.129}, \eqref{11.131}, \eqref{11.132}, and \eqref{11.133} is given by \eqref{11.127}.  Therefore, combining the results of {\it Case~1--Case~5} above with \eqref{11.6} and \eqref{11.7A}, one concludes that $\frac{d^n}{dz^n}G_{H,H_0}(\dott)$ is analytic in $\bbC_+$, continuous in $\overline{\bbC_+}\backslash\{0\}$ and
\begin{equation} \lb{11.134}
\bigg\|\frac{d^n}{dz^n}G_{H,H_0}(\dott)\bigg\|_{\cB(\bbC^N)}\underset{\substack{z \to 0, \\ z \in \ol{\bbC_+} \backslash\{0\}}}{=} O\big(|z|^{-[n - (n/(n-1))]}\big).
\end{equation}

If $n=2$, then the strongest singularity in \eqref{11.131} and \eqref{11.133} is $|z|^{-(1+\delta)}$.  Therefore, combining the results of {\it Case~4} and {\it Case~5} above with \eqref{11.6} and \eqref{11.7A}, one concludes that $\frac{d^2}{dz^2}G_{H,H_0}(\dott)$ is analytic in $\bbC_+$, continuous in $\overline{\bbC_+}\backslash\{0\}$ and for any $\delta\in (0,1)$,
\begin{equation} \lb{11.137}
\bigg\|\frac{d^2}{dz^2}G_{H,H_0}(\dott)\bigg\|_{\cB(\bbC^2)}\underset{\substack{z \to 0, \\ z \in \ol{\bbC_+} \backslash\{0\}}}{=} O\big(|z|^{-(1+\delta)}\big).
\end{equation}
\end{proof}

\section{Analysis of $\xi(\, \cdot \,; H,H_0)$ and an Application to the Witten Index for a Class of 
Non-Fredholm operators} \lb{s11}

Combining Hypotheses \ref{h9.13} and \ref{h11.1a} we next make the following assumptions to describe continuity properties of the spectral shift function for the pair $(H,H_0)$. 
 
\begin{hypothesis} \lb{h12.1}
Let $n \in \bbN$ and suppose that $V= \{V_{\ell,m}\}_{1 \leq \ell,m \leq N}$ satisfies for some constants  
$C \in (0,\infty)$ and $\varepsilon > 0$, 
\begin{equation}
V \in [L^{\infty} (\bbR^n)]^{N \times N}, \quad 
|V_{\ell,m}(x)| \leq C \langle x \rangle^{- n - 1 - \varepsilon} \, \text{ for a.e.~$x \in \bbR^n$, $1 \leq \ell,m \leq N$.}    \lb{12.1}
\end{equation}
In addition, assume that $V(x)= \{V_{\ell,m}(x)\}_{1 \leq \ell,m \leq N}$ is self-adjoint for a.e.~$x \in \bbR^n$. 
In accordance with the factorization based on the polar decomposition of $V$ discussed in \eqref{9.27} we 
suppose that $V = V_1^* V_2 = |V|^{1/2} U_V |V|^{1/2}$, where $V_1 = V_1^* = |V|^{1/2}$, $V_2 = U_V |V|^{1/2}$. 

Finally, we assume that $V$ satisfies \eqref{4.3A} and \eqref{4.4}\footnote{The first condition in \eqref{4.4} is superseded by assumption \eqref{12.1}.}.  
\end{hypothesis} 

Thus, combining Theorems \ref{t8.13}, \ref{t9.14}, and \ref{t11.1} yields the principal result of this 
manuscript:

\begin{theorem} \lb{t12.2}
Assume Hypothesis \ref{h12.1}. Then 
\begin{equation}
\xi(\dott;H,H_0) \in C((-\infty,0) \cup (0,\infty)),      \lb{12.3} 
\end{equation}
and the left and right limits at zero, 
\begin{equation}
\xi(0_{\pm}; H,H_0) = \lim_{\varepsilon \downarrow 0} \xi(\pm \varepsilon; H,H_0), 
\lb{12.4}
\end{equation}   
exist. In particular, if $0$ is a regular point for $H$ according to Definition \ref{d9.6}\,$(iii)$ and Theorem 
\ref{t9.7}\,$(iii)$, then 
\begin{equation}
\xi(\dott;H,H_0) \in C(\bbR).     \lb{12.5} 
\end{equation}
\end{theorem}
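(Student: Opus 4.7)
The plan is to invoke Theorem~\ref{t8.13} with $S=H$, $S_0=H_0$, and $m=n$, which yields the representation
\begin{equation*}
\xi(\lambda;H,H_0)=\pi^{-1}\Im(F_{H,H_0}(\lambda+i0))-\pi^{-1}\Im(G_{H,H_0}(\lambda+i0))+P_{n-1}(\lambda)
\end{equation*}
for a.e.\ $\lambda\in\bbR$, and then to deduce the claimed continuity assertions by analyzing each of the three terms on the right-hand side separately. First I would verify that the hypotheses of Theorem~\ref{t8.13} hold in the concrete setting: Hypothesis~\ref{h12.1} contains Hypothesis~\ref{h7.1} (with $\rho=n+1+\varepsilon$), so Theorem~\ref{t7.4} delivers the trace-class inclusion \eqref{9.1A}, and Lemma~\ref{l7.5} yields \eqref{hyp_even_epsilonA}; the required $\cB_{(n+1)/j}$-inclusions \eqref{8.12} follow either directly from Lemma~\ref{l7.2} together with the three-lines theorem or from Remark~\ref{r8.4}\,$(i)$. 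Hence Hypothesis~\ref{h8.3} is in force for both parities of $m=n$, so the representation formula is available.

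For the $F_{H,H_0}$-term, Theorem~\ref{t9.14} applies verbatim since Hypothesis~\ref{h12.1} implies Hypothesis~\ref{h9.13}; it delivers normal boundary values on $\bbR\backslash\{0\}$, continuity of $\lambda\mapsto\Im(F_{H,H_0}(\lambda+i0))$ on $(-\infty,0)\cup(0,\infty)$, existence of the one-sided limits $\Im(F_{H,H_0}(0_\pm+i0))$, and continuity on all of $\bbR$ precisely when $0$ is a regular point for $H$.

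For the $G_{H,H_0}$-term I would lift Theorem~\ref{t11.1} from $\tfrac{d^n}{dz^n}G_{H,H_0}$ up to $G_{H,H_0}$ itself via the Taylor-type representation
\begin{equation*}
G_{H,H_0}(z)=Q_{n-1}(z)+\frac{1}{(n-1)!}\int_{z_0}^{z}(z-w)^{n-1}\,\frac{d^n}{dw^n}G_{H,H_0}(w)\,dw,\quad z\in\bbC_+,
\end{equation*}
where $z_0\in\bbC_+$ is a fixed basepoint, integration is along a rectifiable path in $\bbC_+$, and $Q_{n-1}$ is a polynomial of degree $\le n-1$. For odd $n\ge 3$, Theorem~\ref{t11.1}\,$(i)$ supplies continuity of $\tfrac{d^n}{dw^n}G_{H,H_0}$ on $\ol{\bbC_+}$, so the remainder integral extends continuously to $\ol{\bbC_+}$ and $\Im(G_{H,H_0}(\cdot+i0))\in C(\bbR)$. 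For even $n\ge 4$ the estimate \eqref{1.43} gives a singularity of order $|w|^{-[n-n/(n-1)]}$ at $w=0$, and for $n=2$ the estimate \eqref{1.44} yields $|w|^{-(1+\delta)}$ for any $\delta\in(0,1)$; in both cases the singular exponent $\alpha$ is strictly less than $n$, so the integrand admits the locally integrable majorant $C|w|^{n-1-\alpha}$ near $w=0$, and a dominated-convergence argument shows that the remainder integral still extends continuously to $z=0$ from within $\ol{\bbC_+}$. Consequently $\Im(G_{H,H_0}(\cdot+i0))\in C(\bbR)$ for every $n\ge 2$. The ambiguity polynomial $Q_{n-1}$ is absorbed into the polynomial $P_{n-1}$ of Theorem~\ref{t8.13} and remains continuous on $\bbR$.

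Combining these three ingredients, the right-hand side of the representation formula is continuous on $(-\infty,0)\cup(0,\infty)$, admits left and right limits at $0$, and is continuous on all of $\bbR$ when $0$ is regular; choosing the unique continuous representative of $\xi(\cdot;H,H_0)\in L^1(\bbR;(1+|\lambda|)^{-n-1}d\lambda)$ on the appropriate set then yields \eqref{12.3}--\eqref{12.5}. The main obstacle I anticipate is the $G_{H,H_0}$-term in even dimensions: one must verify that the exponents $n-n/(n-1)$ (for $n\ge 4$) and $1+\delta$ (for $n=2$) produced by Theorem~\ref{t11.1} are strictly less than $n$, ensuring that the Taylor remainder integral against $(z-w)^{n-1}$ remains absolutely convergent and continuous at the origin—this is a borderline integrability question, but the careful bookkeeping in Section~\ref{s10} was tailored to exactly this threshold.
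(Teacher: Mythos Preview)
Your proposal is correct and follows exactly the paper's approach, which simply cites Theorems~\ref{t8.13}, \ref{t9.14}, and \ref{t11.1}; you have supplied the details the paper leaves implicit. In particular, your Taylor-remainder argument is precisely the bridge needed to pass from the information Theorem~\ref{t11.1} gives on $\tfrac{d^n}{dz^n}G_{H,H_0}$ to continuity of $\Im(G_{H,H_0}(\lambda+i0))$ on all of $\bbR$, and the crucial observation that the singular exponents $n-\tfrac{n}{n-1}$ (for even $n\ge 4$) and $1+\delta$ (for $n=2$) are strictly less than $n$ is exactly what makes the remainder integral absolutely convergent along a path in $\ol{\bbC_+}$ terminating at $0$ (e.g., parametrizing the segment from $i$ to $z$ one finds $|w|\ge 1-t$ and $|z-w|\le 2(1-t)$, yielding the integrable majorant $C(1-t)^{n-1-\alpha}$).
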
 

In the remainder of this section we describe an application to the Witten index for a class of non-Fredholm operators applicable in the context of multi-dimensional, massless Dirac operators $H$. We develop some necessary preparations and the basic setup next. 

We begin by isolating a bit of notation: Linear operators in the Hilbert space $L^2(\bbR; dt; \cH)$, in short, $L^2(\bbR; \cH)$, will be denoted by calligraphic boldface symbols of the type $\bsT$, to distinguish them from operators $T$ in $\cH$. In particular, operators denoted by 
$\bsT$ in the Hilbert space $L^2(\bbR;\cH)$ represent operators associated with a 
family of operators $\{T(t)\}_{t\in\bbR}$ in $\cH$, defined by
\begin{align}
&(\bsT f)(t) = T(t) f(t) \, \text{ for a.e.\ $t\in\bbR$,}    \no \\
& f \in \dom(\bsT) = \bigg\{g \in L^2(\bbR;\cH) \,\bigg|\,
g(t)\in \dom(T(t)) \text{ for a.e.\ } t\in\bbR;    \lb{12.6}  \\
& \quad t \mapsto T(t)g(t) \text{ is (weakly) measurable;} \, 
\int_{\bbR} dt \, \|T(t) g(t)\|_{\cH}^2 <  \infty\bigg\}.   \no
\end{align}
In the special case, where $\{T(t)\}$ is a family of bounded operators on $\cH$ with 
$\sup_{t\in\bbR}\|T(t)\|_{\cB(\cH)}<\infty$, the associated operator $\bsT$ is a bounded operator on $L^2(\bbR;\cH)$ with $\|\bsT\|_{\cB(L^2(\bbR;\cH))} = \sup_{t\in\bbR}\|T(t)\|_{\cB(\cH)}$.

For brevity we will abbreviate $\bsI := I_{L^2(\bbR; \cH)}$ in the following and note that in the concrete situation of $n$-dimensional, massless Dirac operators at hand, $\cH = [L^2(\bbR^n)]^N$. 

Denoting 
\begin{equation}
A_- = H_0, \quad B_+ = V, \quad A_+ = A_- + B_+ = H, 
\end{equation}
we introduce two families of operators in 
$[L^2(\bbR^n)]^N$ by 
\begin{align}
\begin{split} 
& B(t) = b(t) B_+, \quad t \in \bbR,     \\
& b^{(k)} \in C^{\infty}(\bbR) \cap L^{\infty}(\bbR; dt), \; k \in \bbN_0, \quad b' \in L^1(\bbR; dt),  \\ 
& \lim_{t \to \infty} b(t) = 1, \quad \lim_{t \to - \infty} b(t) = 0,    \lb{12.8} \\
& A(t) = A_- + B(t), \quad t \in \bbR.      
\end{split} 
\end{align}

Next, following the general setups described in \cite{CGGLPSZ16b}, \cite{CGLPSZ16}--\cite{CLPS20}, 
\cite{GLMST11}, \cite{Pu08} we recall the definitions of 
$\bsA$, $\bsB, \bsA '=\bsB'$, given in terms of the families $A(t)$, $B(t)$, and $B'(t)$, $t\in\bbR$, as in \eqref{12.6}. In addition, $\bsA_-$  in $L^2\big(\bbR;[L^2(\bbR^n)]^N\big)$ represents 
the self-adjoint (constant fiber) operator defined by 
\begin{align}
&(\bsA_- f)(t) = A_- f(t) \, \text{ for a.e.\ $t\in\bbR$,}   \no \\
& f \in \dom(\bsA_-) = \bigg\{g \in L^2\big(\bbR;[L^2(\bbR^n)]^N\big) \,\bigg|\,
g(t)\in \dom(A_-) \text{ for a.e.\ } t\in\bbR,    \no \\
& \quad t \mapsto A_- g(t) \text{ is (weakly) measurable,} \,  
\int_{\bbR} dt \, \|A_- g(t)\|_{[L^2(\bbR^n)]^N}^2 < \infty\bigg\}.    \lb{12.9}
\end{align} 
Next, we introduce the operator $\bsD_\bsA^{}$ in $L^2\big(\bbR;[L^2(\bbR^n)]^N\big)$ by 
\begin{equation}
\bsD_\bsA^{} = \f{d}{dt} + \bsA,
\quad \dom(\bsD_\bsA^{})= W^{1,2}\big(\bbR; [L^2(\bbR^n)]^N\big) \cap \dom(\bsA_-),   \lb{12.10}
\end{equation} 
where  
\begin{equation}
\bsA = \bsA_- + \bsB, \quad \dom(\bsA) = \dom(\bsA_-),  
\end{equation}
and  
\begin{equation}
\|\bsB\|_{\cB(L^2(\bbR; [L^2(\bbR^n)]^N))} = \sup_{t \in \bbR} \|B(t)\|_{\cB([L^2(\bbR^n)]^N)} < \infty. 
\end{equation}

Here the operator $d/dt$ in $L^2\big(\bbR;[L^2(\bbR^n)]^N\big)$  is defined by 
\begin{align}
& \bigg(\f{d}{dt}f\bigg)(t) = f'(t) \, \text{ for a.e.\ $t\in\bbR$,}    \no \\
& \, f \in \dom(d/dt) = \big\{g \in L^2\big(\bbR;[L^2(\bbR^n)]^N\big) \, \big|\,
g \in AC_{\loc}\big(\bbR; [L^2(\bbR^n)]^N\big),   \no \\
& \hspace*{7.05cm} g' \in L^2\big(\bbR;[L^2(\bbR^n)]^N\big)\big\}    \no \\
& \hspace*{2.3cm} = W^{1,2} \big(\bbR; [L^2(\bbR^n)]^N\big).     \lb{12.13} 
\end{align} 
By \cite[Lemma 4.4]{GLMST11} (which extends to the present setting), 
$\bsD_\bsA^{}$ is densely defined and closed in $L^2\big(\bbR; [L^2(\bbR^n)]^N\big)$  and the adjoint 
operator $\bsD_\bsA^*$ of $\bsD_\bsA^{}$ is given by
\begin{equation}
\bsD_\bsA^*=- \f{d}{dt} + \bsA, \quad
\dom(\bsD_\bsA^*) = W^{1,2}\big(\bbR; [L^2(\bbR^n)]^N\big) \cap \dom(\bsA_-).   
\end{equation}

This enables one to introduce the nonnegative, self-adjoint operators 
$\bsH_j$, $j=1,2$, in $L^2\big(\bbR;[L^2(\bbR^n)]^N\big)$ by
\begin{equation}
\bsH_1 = \bsD_{\bsA}^{*} \bsD_{\bsA}^{}, \quad 
\bsH_2 = \bsD_{\bsA}^{} \bsD_{\bsA}^{*}.
\end{equation} 

In order to effectively describe the domains of $\bsH_j$, $j=1,2$, we will  
decompose the latter as discussed below:  To this end, one first observes that  
\begin{equation}
\|\bsB'\|_{\cB(L^2(\bbR; [L^2(\bbR^n)]^N))} = \sup_{t \in \bbR} \|B'(t)\|_{\cB([L^2(\bbR^n)]^N)} < \infty.  
\lb{12.16}
\end{equation}
It is convenient to also introduce the operator $\bsH_0$ in $L^2\big(\bbR; [L^2(\bbR^n)]^N\big)$ by  
\begin{equation}
\bsH_0 = - \f{d^2}{dt^2} + \bsA_-^2, \quad \dom(\bsH_0) 
= W^{2,2}\big(\bbR; [L^2(\bbR^n)]^N\big) \cap \dom\big(\bsA_-^2\big).     \lb{12.17}
\end{equation}
Then $\bsH_0$ is self-adjoint by Theorem VIII.33 of \cite{RS80}. Moreover, since 
the operator $ \bsB \bsA_- + \bsA_- \bsB$ is $\bsH_0$-bounded with bound less than one, 
\cite[Theorem~VI.4.3]{Ka80} implies the following decomposition of the operators $\bsH_j$, $j=1,2$, 
\begin{align}
& \bsH_j = - \f{d^2}{dt^2} + \bsA^2 + (-1)^j \bsA^{\prime}    \no \\
& \hspace*{5.5mm} = \bsH_0 + \bsB \bsA_- + \bsA_- \bsB + \bsB^2 + (-1)^j \bsB^{\prime},     \lb{12.18} \\
& \, \dom(\bsH_j) = \dom(\bsH_0), \quad j =1,2.    \no 
\end{align} 

Next, we introduce an approximation procedure as follows: Consider the characteristic function for the interval $[- \ell,\ell] \subset \bbR$,  
\begin{equation}
\chi_{\ell}(\nu) = \chi_{[- \ell,\ell]}(\nu), \quad \nu \in \bbR, \; \ell \in \bbN,   \lb{12.19}
\end{equation} 
and hence 
\begin{equation}
\slim_{\ell \to \infty} \chi_{\ell}(A_-) = I_{[L^2(\bbR^n)]^N}.      \lb{12.20} 
\end{equation}
Introducing  
\begin{align}
\begin{split} 
& A_{\ell}(t) = A_- + \chi_{\ell}(A_-) B(t) \chi_{\ell}(A_-) = A_- + B_{\ell}(t), \\
& \dom(A_{\ell}(t)) = \dom(A_-), \quad \ell \in \bbN, \; t \in \bbR,    \lb{12.21} 
\end{split} \\
& A_{+,\ell} = A_- + \chi_{\ell}(A_-) B_+ \chi_{\ell}(A_-), \quad \dom(A_{+,\ell}) = \dom(A_-), 
\quad \ell \in \bbN,   \lb{12.23} 
\end{align}
where 
\begin{equation} 
B_{\ell}(t) = \chi_{\ell}(A_-) B(t) \chi_{\ell}(A_-), \quad \dom(B_{\ell}(t)) = [L^2(\bbR^n)]^N, 
\quad \ell \in \bbN, \; t \in \bbR,   
\end{equation}
one concludes that 
\begin{align} 
& A_{+,\ell} - A_- = \chi_{\ell}(A_-) B_+ \chi_{\ell}(A_-) \in \cB_1\big([L^2(\bbR^n)]^N\big), 
\quad \ell \in \bbN,    \lb{12.24} \\ 
& A_{\ell}'(t) = B_{\ell}'(t) = \chi_{\ell}(A_-) B'(t) \chi_{\ell}(A_-) \in \cB_1\big([L^2(\bbR^n)]^N\big), 
\quad \ell \in \bbN, \; t \in \bbR.     \lb{12.25} 
\end{align}

As a consequence of \eqref{12.24}, which follows from 
\begin{align}
& \|\chi_{\ell}(A_-) B_+ \chi_{\ell}(A_-)\|_{\cB_1([L^2(\bbR^n)]^N)}    \no \\
& \quad \leq 
\big\|\chi_{\ell}(A_-) B_+ (A_- -i I_{[L^2(\bbR^n)]^N})^{-n-1}\big\|_{\cB_1([L^2(\bbR^n)]^N)}   \no \\
& \qquad \times 
\big\|(A_- -i I_{[L^2(\bbR^n)]^N})^{n+1}  \chi_{\ell}(A_-)\big\|_{\cB([L^2(\bbR^n)]^N)} < \infty 
\lb{12.27} 
\end{align}
(cf.\ \eqref{7.4}), the spectral shift functions 
$\xi(\, \cdot \, ; A_{+,\ell}, A_-)$, $\ell \in \bbN$, exist and are uniquely determined by 
\begin{equation}
\xi(\, \cdot \, ; A_{+,\ell}, A_-) \in L^1(\bbR; d\nu), \quad \ell \in \bbN,    \lb{12.28} 
\end{equation}
implying 
\begin{equation}
\tr_{[L^2(\bbR^n)]^N} (f(A_{+,\ell}) - f(A_-)) = \int_{\bbR} d\nu \, \xi(\nu;A_{+,\ell},A_-) f'(\nu), 
\quad f \in C_0^{\infty}(\bbR). 
\end{equation}

We also note the analogous decompositions,
\begin{align}
& \bsH_{j,\ell} = - \f{d^2}{dt^2} + \bsA_{\ell}^2 + (-1)^j \bsA_{\ell}^{\prime}    \no \\
& \hspace*{6.7mm} = \bsH_0 + \bsB_{\ell} \bsA_- + \bsA_- \bsB_{\ell} 
+ \bsB_{\ell}^2 + (-1)^j \bsB_{\ell}^{\prime}, \\
& \, \dom(\bsH_{j,\ell}) = \dom(\bsH_0) = W^{2,2}\big(\bbR; [L^2(\bbR^n)]^N\big),  
\quad \ell \in \bbN, \;  j =1,2,  \no 
\end{align}
with
\begin{equation}
\bsB_{\ell} = \chi_{\ell}(\bsA_-) \bsB \chi_{\ell}(\bsA_-), \quad 
\bsB_{\ell}^{\prime} = \chi_{\ell}(\bsA_-) \bsB^{\prime} \chi_{\ell}(\bsA_-), \quad \ell \in \bbN,  
\end{equation} 
implying 
\begin{align}
& \bsH_2 - \bsH_1 = 2 \bsB^{\prime},      \lb{12.32} \\
&  \bsH_{2,\ell} - \bsH_{1,\ell} = 2 \bsB_{\ell}^{\prime} 
= 2 \chi_{\ell}(\bsA_-) \bsB^{\prime} \chi_{\ell}(\bsA_-), 
\quad \ell \in \bbN.    \lb{12.33} 
\end{align}

Next, we recall the fact that for $\varepsilon > 0$, 
\begin{equation}
L^2\big(\bbR^n; (1+|x|)^{(n/2) + \varepsilon}d^n x\big) \subset \ell^1(L^2)(\bbR^n)
\end{equation}
(see, e.g., \cite[p.~38]{Si05} for the definition of the Birman--Solomyak space $\ell^1(L^2)(\bbR^n)$) and, given $\alpha > n$, 
\begin{equation}
(1 + |\dott|)^{-\alpha} \in L^2\big(\bbR^n; (1+|x|)^{(n/2) + \varepsilon}d^n x\big) 
\end{equation}
for $0 < \varepsilon$ sufficiently small (depending on $a$). This is of relevance here so that 
\cite[Subsection~7.2.1]{CLPS20} becomes applicable in our context. 

We continue with the following basic result in \cite[Theorems~3.6, 7.1]{CLPS20}:

\begin{theorem} \lb{t12.3}
In addition to Hypothesis \ref{h12.1} suppose that 
\begin{equation}
V_{\ell,m} \in W^{4n,\infty}(\bbR^n), \quad 1 \leq \ell, m \leq N. 
\end{equation}
Then, abbreviating 
\begin{equation} 
q = \lceil n/2 \rceil = \begin{cases} (n+1)/2, & $n$ \text{ odd},  \\
n/2, & $n$\text{ even}, 
\end{cases}  
\end{equation}
one obtains 
\begin{align}
\begin{split} 
& [(\bsH_2 - z \, \bsI)^{-q} - (\bsH_1 - z \, \bsI)^{-q}\big], \, 
\big[(\bsH_{2,\ell} - z \, \bsI)^{-q} - (\bsH_{1,\ell} - z \, \bsI)^{-q}\big]    \lb{12.38} \\
& \quad \in \cB_1\big(L^2\big(\bbR; [L^2(\bbR^n)]^N\big)\big), \quad \ell \in \bbN, 
\; r \in \bbN, \; r \geq q, \; z \in \bbC \backslash [0,\infty), 
\end{split}
\end{align} 
and 
\begin{align}
& \lim_{\ell\to\infty} \big\|\big[(\bsH_{2,\ell} - z \, \bsI)^{-q} - (\bsH_{1,\ell} - z \, \bsI)^{-q}\big]   \no \\
& \hspace*{1cm} - [(\bsH_2 - z \, \bsI)^{-q} - (\bsH_1 - z \, \bsI)^{-q}\big]
\big\|_{\cB_1(L^2(\bbR;[L^2(\bbR^n)]^N))} = 0,     \lb{12.39} \\ 
& \hspace*{8.25cm} z \in \bbC \backslash [0,\infty).   \no
\end{align} 
\end{theorem}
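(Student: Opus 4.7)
My plan is to prove \eqref{12.38} and \eqref{12.39} via a telescoping identity for $q$-fold resolvent differences combined with sharp trace ideal estimates that exploit the essentially second-order character of $\bsH_0$. The crucial observation is that $\bsA_-^2 = H_0^2 = -\Delta I_N$ by \eqref{2.7}, so $\bsH_0 = -d^2/dt^2 + \bsA_-^2$ is (up to the $I_N$ factor) the Laplacian on $\bbR^{n+1} = \bbR_t \times \bbR^n_x$. Consequently, the trace ideal threshold $n$ needed for $(H-z)^{-k} - (H_0-z)^{-k}$ in Theorem \ref{t7.4} (Dirac, first order) effectively halves to $q = \lceil n/2 \rceil$ in the present situation, in which $\bsH_0$ has the regularizing power of a second-order Laplacian in $n+1$ variables.

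First, I would apply the telescoping identity
\begin{equation*}
(\bsH_2 - z\bsI)^{-q} - (\bsH_1 - z\bsI)^{-q}
= -2 \sum_{j=1}^{q} (\bsH_2 - z\bsI)^{-j}\, \bsB' \,(\bsH_1 - z\bsI)^{-(q-j+1)},
\end{equation*}
using $\bsH_2 - \bsH_1 = 2\bsB' = 2\, b'(t) B_+$ from \eqref{12.32}. Iterating the second resolvent identity, together with the $\bsH_0$-boundedness (with bound $<1$) of $\bsB\bsA_- + \bsA_-\bsB + \bsB^2 + (-1)^j\bsB'$ already employed to obtain \eqref{12.18}, each summand on the right reduces to a finite linear combination of terms of the form
$C_0 (\bsH_0 - z\bsI)^{-p_1} C_1 \cdots (\bsH_0 - z\bsI)^{-p_s} \bsB' (\bsH_0 - z\bsI)^{-p_{s+1}} \cdots C_m$
with $p_1 + \cdots + p_{m} = q+1$ and the $C_k$ bounded and built from $\bsB$, $\bsB'$, and $\bsA_-$. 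The trace class statement \eqref{12.38} therefore reduces to showing that a single such representative lies in $\cB_1(L^2(\bbR;[L^2(\bbR^n)]^N))$.

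Next, I would exploit the constant-fiber structure: writing $L^2(\bbR;[L^2(\bbR^n)]^N) = L^2(\bbR_t) \otimes [L^2(\bbR^n)]^N$ and applying the partial Fourier transform $\cF_t$ in the $t$-variable, one has
\begin{equation*}
\cF_t (\bsH_0 - z\bsI)^{-1} \cF_t^{-1} = \int_{\bbR}^{\oplus} d\tau \, (\tau^2 + H_0^2 - z)^{-1} I_N.
\end{equation*}
Since $\bsB' = b'(t) V$ with $b' \in L^1(\bbR;dt)$ by \eqref{12.8} and $V$ satisfying \eqref{12.1} together with the Sobolev regularity $V_{\ell,m} \in W^{4n,\infty}(\bbR^n)$, the trace-ideal question decouples into: (a) a Hilbert--Schmidt-type bound in the $t$-variable coming from $b' \in L^1$ together with the $t$-smoothing of $(\tau^2 - \partial_t^2 + \cdots)^{-1}$, and (b) a uniform-in-$\tau$ trace class estimate on $V(\tau^2 + H_0^2 + I)^{-\kappa}$ for $\kappa > n/2$, which follows from Lemma \ref{l7.5} together with the decay $\langle x\rangle^{-n-1-\varepsilon}$ of $V$. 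A careful bookkeeping — one unit of the $q+1$ total resolvent powers absorbed by the fiber trace estimate at threshold $\kappa > n/2$, the remaining $q+1 - \lceil n/2\rceil$ half-units providing the $t$-integrability through $b'$ — yields \eqref{12.38} precisely at the critical value $q = \lceil n/2\rceil$. The same argument applied with $B$ replaced by $B_\ell = \chi_\ell(A_-) B \chi_\ell(A_-)$ gives uniform-in-$\ell$ trace-norm bounds, since $\chi_\ell(\bsA_-)$ commutes with $\bsA_-$ and acts diagonally on the fiber decomposition.

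The main obstacle I anticipate is the commutator calculus needed to move $\bsB'$ (and the auxiliary $\bsB$-factors arising from the second resolvent identity) past the resolvents of $\bsH_0$ so as to gather the trace-ideal factors efficiently while controlling the remainders; the full Sobolev hypothesis $V_{\ell,m} \in W^{4n,\infty}$ is precisely what allows one to iterate this process up to order $q$ without forfeiting the sharp trace threshold. Once the uniform-in-$\ell$ trace-norm bound is in hand, the convergence statement \eqref{12.39} follows by applying the same telescoping expansion to $\bsH_{j,\ell}$, noting that the individual factors converge strongly as $\ell \to \infty$ by \eqref{12.20}, and invoking a dominated-convergence principle in $\cB_1$ (strong convergence of bounded factors composed with a trace-norm-convergent factor implies trace-norm convergence of the product).
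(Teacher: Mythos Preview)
The paper does not supply a proof of this theorem: it is imported verbatim from \cite[Theorems~3.6, 7.1]{CLPS20}, with the extension to $r\ge q$ referred to \cite[p.~210]{Ya92} and \cite{CGLNPS16}. So there is no in-paper argument to compare against; your proposal is an attempt to reconstruct what the external reference does.

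Your high-level plan --- telescope via $\bsH_2-\bsH_1 = 2\bsB'$, recognize $\bsH_0 = -\partial_t^2 + H_0^2$ as (componentwise) the Laplacian on $\bbR^{n+1}$, and exploit that $\bsB' = b'(t)V(x)$ has $b'\in L^1(\bbR)$ together with $|V|\lesssim\langle x\rangle^{-n-1-\varepsilon}$ --- is the right picture, and the dimensional heuristic explaining why $q=\lceil n/2\rceil$ emerges is sound. But two steps do not hold up as written. First, iterating the second resolvent identity does \emph{not} reduce each summand to a finite linear combination of products of $(\bsH_0-z\bsI)^{-p_k}$ and bounded $C_k$ that are individually in $\cB_1$: the perturbation $W_j = \bsB\bsA_- + \bsA_-\bsB + \bsB^2 + (-1)^j\bsB'$ contains $\bsB\bsA_- = b(t)VH_0$, which has no decay in $t$ at all (only $b\in L^\infty$), so $W_j(\bsH_0-z\bsI)^{-1}$ is merely bounded and lies in no Schatten class. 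The single $\bsB'$ factor must carry all of the $t$-integrability, and the bounded $W_j$-insertions arising from the expansion have to be commuted past resolvents to harmless positions --- that commutator calculus is precisely the step you flag as ``the main obstacle,'' and it is not bypassed by $\bsH_0$-boundedness. Second, the bookkeeping sentence (``one unit absorbed \dots\ the remaining half-units providing the $t$-integrability'') is too loose to be a proof: the mixed decay structure --- pointwise $\langle x\rangle^{-n-1-\varepsilon}$ in $x$ but only $b'\in L^1(\bbR)$ (no pointwise decay) in $t$ --- means $\bsB'$ is not a standard short-range potential on $\bbR^{n+1}$, and the usual $\ell^1(L^2)$ criteria for $f(Q)g(P)\in\cB_1$ do not apply directly to $b'(t)V(x)$ from the stated hypotheses alone. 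What is needed is a genuine tensor-product trace-ideal estimate combined with the commutator reorganization; the hypothesis $V_{\ell,m}\in W^{4n,\infty}(\bbR^n)$ is there exactly to make that calculus close, and carrying it out is the actual content of \cite{CLPS20}. In short: the strategy is right, but what you have is an outline of where the work lies, not the work itself.
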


For the fact that $q = \lceil n/2 \rceil$ in \eqref{12.38} can be replaced by any $r\geq q$, $r \in \bbN$, see, for instance, \cite[p.~210]{Ya92}; similarly, \eqref{12.39} extends to $r\geq q$, $r \in \bbN$, by \cite{CGLNPS16}.

Relations \eqref{12.38} together with the fact that $\bsH_j \geq 0$, $\bsH_{j,\ell} \geq 0$, $\ell \in \bbN$, $j=1,2$, implies the existence and uniqueness of spectral shift functions 
$\xi(\, \cdot \,; \bsH_2, \bsH_1)$ and 
$\xi(\, \cdot \,; \bsH_{2,\ell}, \bsH_{1,\ell})$ for the pair of operators $(\bsH_2, \bsH_1)$ and 
$(\bsH_{2,\ell}, \bsH_{1,\ell})$, $\ell \in \bbN$, respectively, employing the normalization 
\begin{equation}
\xi(\lambda; \bsH_2, \bsH_1) = 0, \, 
\xi(\lambda; \bsH_{2,\ell}, \bsH_{1,\ell}) = 0, \quad  \lambda < 0, \; \ell \in \bbN     \lb{12.40} 
\end{equation}
(cf.\ \cite[Sect.~8.9]{Ya92}). Moreover, 
\begin{equation}
\xi(\dott; \bsH_2, \bsH_1) \in L^1\big(\bbR; (1+ |\lambda|)^{-q-1} d\lambda\big).    \lb{12.41}
\end{equation}
Since in analogy to \eqref{12.27},
\begin{align}
& \|A_{\ell}'(\dott)\|_{\cB_1([L^2(\bbR^n)]^N)} =  \|B_{\ell}'(\dott)\|_{\cB_1([L^2(\bbR^n)]^N)}   \no \\ 
& \quad = \|\chi_{\ell}(A_-) B'(\dott) \chi_{\ell}(A_-)\|_{\cB_1([L^2(\bbR^n)]^N)}    \no \\
& \quad \leq 
\big\|\chi_{\ell}(A_-) B_+ (A_- -i I_{[L^2(\bbR^n)]^N})^{-n-1}\big\|_{\cB_1([L^2(\bbR^n)]^N)}    
\lb{12.42} \\
& \qquad \times 
\big\|(A_- -i I_{[L^2(\bbR^n)]^N})^{n+1}  \chi_{\ell}(A_-)\big\|_{\cB([L^2(\bbR^n)]^N)} \, b'(\dott) 
\in L^1(\bbR;dt), \quad \ell \in \bbN,     \no 
\end{align}
employing $b'(\dott) \in L^1(\bbR;dt)$ (cf.\ \eqref{12.8}), one obtains
\begin{equation} 
\int_{\bbR} dt \, \|A_{\ell}'(t)\|_{\cB_1([L^2(\bbR^n)]^N)} < \infty, \quad \ell \in \bbN.   \lb{12.43} 
\end{equation} 
Given \eqref{12.43}, the results in \cite{Pu08} (see also \cite{GLMST11}) 
actually imply that 
\begin{equation}
[(\bsH_{2,\ell} - z \, \bsI)^{-1} - (\bsH_{1,\ell} - z \, \bsI)^{-1}\big] 
\in \cB_1\big(L^2\big(\bbR;[L^2(\bbR^n)]^N\big)\big), \quad \ell \in \bbN,  
\end{equation}
and 
\begin{equation}
\xi(\dott; \bsH_{2,\ell}, \bsH_{1,\ell}) \in L^1\big(\bbR; (1 + |\lambda|)^{-2} d\lambda\big), \quad \ell \in \bbN.  
\end{equation}
In particular, 
\begin{align}
& {\tr}_{L^2(\bbR;[L^2(\bbR^n)]^N)} (f(\bsH_2) - f(\bsH_1))  = 
\int_{[0,\infty)} d \lambda \, \xi(\lambda; \bsH_2, \bsH_1) f'(\lambda),    \no \\
& {\tr}_{L^2(\bbR;[L^2(\bbR^n)]^N)} (f(\bsH_{2,\ell}) - f(\bsH_{1,\ell})) = 
\int_{[0,\infty)} d \lambda \, \xi(\lambda; \bsH_{2,\ell}, \bsH_{1,\ell}) f'(\lambda), \quad \ell \in \bbN,  \no \\
& \hspace*{8.8cm} f \in C_0^{\infty}(\bbR).    
\end{align}

In addition, as derived in \cite{Pu08} (see also, \cite{GLMST11}), \eqref{12.28}, \eqref{12.40}, and 
\eqref{12.43} imply the approximate trace formula,   
\begin{equation}
\int_{[0,\infty)} \f{\xi(\lambda; \bsH_{2,\ell}, \bsH_{1,\ell}) \, d\lambda}{(\lambda - z)^2} 
= \f{1}{2} \int_{\bbR} \f{\xi(\nu; A_{+,\ell}, A_-) \, d\nu}{(\nu^2 -z)^{3/2}}, \quad 
\ell \in \bbN, \; z \in \bbC \backslash [0,\infty),      \lb{12.46}
\end{equation}
which in turn implies Pushnitski's formula \cite{Pu08}, 
\begin{equation}
\xi(\lambda; \bsH_{2,\ell}, \bsH_{1,\ell}) = \begin{cases} \f{1}{\pi} \int_{- \lambda^{1/2}}^{\lambda^{1/2}} 
\f{\xi(\nu; A_{+,\ell}, A_-) \, d \nu}{(\lambda - \nu^2)^{1/2}}, & 
\text{for a.e.~$\lambda > 0$,} \\
0, & \lambda < 0, 
\end{cases} 
\quad \ell \in\bbN,    \lb{12.47} 
\end{equation}
via a Stieltjes inversion argument (cf.\ \cite[Sect.~8]{GLMST11}). 

As shown in \cite{CGPST17}, \eqref{12.39} implies for $f \in C_0^{\infty}(\bbR)$, 
\begin{align}
\begin{split} 
& \lim_{\ell\to\infty} \|[f(\bsH_{2,\ell}) - f(\bsH_{1,\ell})] 
- [f(\bsH_2) - f(\bsH_1)]\|_{\cB_1(L^2(\bbR;[L^2(\bbR^n)]^N))} =0,    \lb{12.48}
\end{split} 
\end{align} 
and hence
\begin{align}
&  \lim_{\ell \to \infty} \int_{[0,\infty)} d \lambda \, \xi(\lambda; \bsH_{2,\ell}, \bsH_{1,\ell}) f'(\lambda) 
= \lim_{\ell \to \infty} {\tr}_{L^2(\bbR;[L^2(\bbR^n)]^N)} (f(\bsH_{2,\ell}) - f(\bsH_{1,\ell}))  \no \\
& \quad = {\tr}_{L^2(\bbR;[L^2(\bbR^n)]^N)} (f(\bsH_2) - f(\bsH_1)) 
= \int_{[0,\infty)} d \lambda \, \xi(\lambda; \bsH_2, \bsH_1) f'(\lambda).     \lb{12.49} 
\end{align} 

Abbreviating 
\begin{equation} 
q_0 = 2 \lfloor n/2 \rfloor + 1 = \begin{cases} n, & $n$ \text{ odd},  \\
n+1, & $n$\text{ even}, 
\end{cases}  
\end{equation}
and assuming Hypothesis \ref{h7.1}, one recalls that Theorem \ref{t7.4} implies
\begin{align} 
\begin{split} 
\big[(A_+ - z I_{[L^2(\bbR^n)]^N})^{-r_0} - (A_- - z I_{[L^2(\bbR^n)]^N})^{-r_0} \big] \in \cB_1\big([L^2(\bbR^n)]^N\big),&    \lb{12.50} \\ 
r_0 \in \bbN, \; r_0 \geq q_0, \; z \in \bbC\backslash\bbR.&
\end{split} 
\end{align} 

Since $q_0$ is always odd, \cite[Theorem~2.2]{Ya05} yields the existence of a spectral shift function 
$\xi(\dott; A_+,A_-)$ for the pair $(A_+, A_-)$ satisfying  
\begin{equation}
\xi(\dott; A_+,A_-) \in L^1\big(\bbR; (1 + |\nu|)^{-q_0-1} d \nu\big)    \lb{12.53}
\end{equation}
and hence
\begin{equation}
\tr_{[L^2(\bbR^n)]^N} (f(A_+) - f(A_-)) = \int_{\bbR} d\nu \, \xi(\nu;A_+,A_-) f'(\nu), 
\quad f \in C_0^{\infty}(\bbR).      \lb{12.54}
\end{equation}
While $\xi(\dott; A_+,A_-)$ in \eqref{12.53}, \eqref{12.54} is not unique, we will select a unique candidate using Theorem \ref{t12.5} below. 

The next result is essentially\cite[Theorem~4.7]{CGLNPS16}; due to its importance we reproduce the proof here. To prepare the stage, we temporarily go beyond the approximation $A_{+,\ell}$ of $A_+$ and now introduce the following path of self-adjoint operators $\{A_+(s)\}_{s \in [0,1]}$, in 
$\big[L^2(\bbR^n)\big]^N$, where 
\begin{align}
& A_+(s) = A_- + P_s B_+ P_s, \quad \dom(A_+(s)) = \dom(A_-),   \quad s \in [0,1],    \lb{12.59} \\
& P_s = \chi_{[-(1-s)^{-1},(1-s)^{-1}]}(A_-), \; s \in [0,1), \quad P_1 = I_{[L^2(\bbR^n)]^N},   \lb{12.60} 
\end{align}
in particular, 
\begin{equation} 
A_+(0) = A_{+,1} \text{ (cf.\ \eqref{12.23} with $\ell = 1$) and } \, A_+(1) = A_+.   \lb{12.61} 
\end{equation} 

\begin{theorem} \lb{t12.5} 
Assume Hypothesis \ref{h12.1} and suppose that 
\begin{equation}
V_{\ell,m} \in W^{4n,\infty}(\bbR^n), \quad 1 \leq \ell, m \leq N. 
\end{equation}
Then there exists a unique spectral shift function $\xi(\dott; A_+,A_-)$ such that 
\begin{equation}
\xi(\dott; A_+,A_-) = \xi(\dott; A_+(1),A_-) = \lim_{\ell \to \infty} \xi(\dott; A_{+,\ell},A_-)  \, \text{ in $L^1\big(\bbR; (1 + |\nu|)^{-q_0-1} d\nu\big)$.}     \lb{12.56} 
\end{equation}
Moreover, assume that $g \in L^{\infty}(\bbR; d\nu)$. Then 
\begin{equation}
\lim_{\ell \to \infty} \|\xi(\dott; A_{+,\ell},A_-) g 
- \xi(\dott; A_+,A_-) g\|_{L^1(\bbR; (1 + |\nu|)^{-q_0-1} d\nu)} = 0,   \lb{12.57}
\end{equation}
and hence,
\begin{equation}
\lim_{\ell \to \infty} \int_{\bbR} d \nu \, \xi(\nu; A_{+,\ell},A_-) h(\nu) = 
\int_{\bbR} d \nu \, \xi(\nu; A_+,A_-) h(\nu)     \lb{12.58} 
\end{equation}
for all $h \in L^{\infty}(\bbR; d\nu)$ such that $\esssup_{\nu \in \bbR} |h(\nu)| (1+|\nu|)^{q_0+1} < \infty$.
\end{theorem}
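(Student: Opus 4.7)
The plan is to build the proof around trace-norm convergence of resolvent differences in the base space $[L^2(\bbR^n)]^N$, use this to pin down the limit of the approximating spectral shift functions $\xi(\dott; A_{+,\ell}, A_-)$ (each of which is canonically defined as an $L^1(\bbR;d\nu)$-function by Krein's theorem, since $A_{+,\ell} - A_-$ is trace class by \eqref{12.27}), and then transfer convergence of the measures to \eqref{12.57} and \eqref{12.58} via $L^\infty$-$L^1$ duality.

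First, I would establish the trace-ideal analog of \eqref{12.39} at the base-space level: for $r_0 \in \bbN$ with $r_0 \geq q_0$ and $z \in \bbC\backslash\bbR$,
\begin{equation*}
\lim_{\ell \to \infty} \big\|(A_{+,\ell} - zI_{[L^2(\bbR^n)]^N})^{-r_0} - (A_+ - zI_{[L^2(\bbR^n)]^N})^{-r_0}\big\|_{\cB_1([L^2(\bbR^n)]^N)} = 0.
\end{equation*}
To do this I would re-run the proof of Theorem \ref{t7.4}, expanding the resolvent difference by the telescoping identity \eqref{7.27} with $V$ replaced by $B_+ - \chi_\ell(A_-) B_+ \chi_\ell(A_-)$, estimating each summand using the factorization \eqref{7.28} and Lemmas \ref{l7.2}, \ref{l7.3}, and then invoking dominated convergence in the trace ideal using \eqref{12.20} and the decay imposed by Hypothesis \ref{h12.1}.

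Next, using Theorem \ref{t8.13} applied separately to each pair $(A_{+,\ell}, A_-)$ and to $(A_+, A_-)$, with $m = q_0$, I would express the spectral shift functions as boundary values,
\begin{equation*}
\xi(\lambda; A_{+,\ell}, A_-) = \pi^{-1}\Im F_{A_{+,\ell},A_-}(\lambda + i0) - \pi^{-1}\Im G_{A_{+,\ell},A_-}(\lambda + i0) + P^{(\ell)}_{q_0-1}(\lambda),
\end{equation*}
and similarly without $\ell$. The trace-norm convergence established in the first step transfers to convergence of the regularized determinants \eqref{1.35} and the auxiliary functions $G_{\dott,\dott}$ defined by \eqref{1.36} on $\bbC\backslash \bbR$, and (after invoking Privalov's theorem, Theorem \ref{t8.12}, in the trace formula \eqref{8.11A}) to convergence of the boundary values. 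The polynomials $P^{(\ell)}_{q_0-1}$ are forced to be zero since each $\xi(\dott; A_{+,\ell}, A_-) \in L^1(\bbR; d\nu)$, and the polynomial $P_{q_0-1}$ attached to the limit $\xi(\dott; A_+, A_-)$ is thereby selected uniquely, which gives the identification \eqref{12.56} together with uniqueness.

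Finally, \eqref{12.57} follows from the elementary bound
\begin{equation*}
\big\|[\xi(\dott;A_{+,\ell},A_-) - \xi(\dott; A_+,A_-)] g\big\|_{L^1(\bbR; (1+|\nu|)^{-q_0-1} d\nu)} \leq \|g\|_{L^\infty(\bbR)} \big\|\xi(\dott; A_{+,\ell}, A_-) - \xi(\dott; A_+, A_-)\big\|_{L^1(\bbR;(1+|\nu|)^{-q_0-1} d\nu)},
\end{equation*}
and \eqref{12.58} is obtained from \eqref{12.57} by factoring $h(\nu) = g(\nu)(1+|\nu|)^{-q_0-1}$ with $g(\nu) := h(\nu)(1+|\nu|)^{q_0+1} \in L^\infty(\bbR; d\nu)$. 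The main obstacle is the first step, namely the base-space trace-norm convergence of the $r_0$-th resolvent differences: the factorizations in \eqref{7.28} must be handled uniformly in $\ell$, and the ``inner'' factor $\chi_\ell(A_-)$ tends to the identity only strongly, so convergence in trace norm requires a dominated-convergence argument in the Schatten ideals, combining the decay of $V$ with Lemma \ref{l7.2} to produce a trace-class majorant.
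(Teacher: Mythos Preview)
Your first step---establishing trace-norm convergence of the $q_0$-th resolvent differences of $A_{+,\ell}$ to those of $A_+$---is exactly the ingredient the paper also needs, and your plan for proving it (rerun the telescoping argument of Theorem~\ref{t7.4} and use dominated convergence in the Schatten ideals) is the right one. Your final step, deducing \eqref{12.57} and \eqref{12.58} from \eqref{12.56} by the elementary duality estimate, also matches the paper exactly.

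The gap is in the middle. You invoke Theorem~\ref{t8.13} to write each $\xi(\dott;A_{+,\ell},A_-)$ as $\pi^{-1}\Im F^{(\ell)}(\lambda+i0)-\pi^{-1}\Im G^{(\ell)}(\lambda+i0)+P^{(\ell)}_{q_0-1}(\lambda)$ and then pass to the limit. Two things fail here. First, trace-norm convergence of $(A_{+,\ell}-zI)^{-q_0}$ gives you pointwise convergence of $F^{(\ell)}(z)$ and $G^{(\ell)}(z)$ for $z\in\bbC\backslash\bbR$, but Privalov's theorem (Theorem~\ref{t8.12}) only asserts \emph{existence} of boundary values for a single function; it says nothing about convergence of boundary values under a limit, and pointwise convergence in $\bbC_+$ does not in general imply convergence of nontangential limits, let alone in the weighted $L^1$-norm you need. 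Second, the claim that $P^{(\ell)}_{q_0-1}\equiv 0$ because $\xi(\dott;A_{+,\ell},A_-)\in L^1(\bbR;d\nu)$ is not justified: the other two terms $\Im F^{(\ell)}(\lambda+i0)$ and $\Im G^{(\ell)}(\lambda+i0)$ need not decay at infinity, so you cannot isolate the polynomial this way. Even if you could, your selection of the limiting polynomial $P_{q_0-1}$ ``by the limit'' is circular---that is precisely the uniqueness statement you are trying to prove.

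The paper takes a completely different and much shorter route. It introduces the continuous path $s\mapsto A_+(s)=A_-+P_sB_+P_s$ with $P_s=\chi_{[-(1-s)^{-1},(1-s)^{-1}]}(A_-)$, verifies (just as in your first step) that this path is continuous in the pseudometric $d_{q_0,z}(A,A')=\|(A-zI)^{-q_0}-(A'-zI)^{-q_0}\|_{\cB_1}$, and then invokes \cite[Theorem~4.7]{CGLNPS16} as a black box: that theorem asserts directly that along such a path the spectral shift functions $\xi(\dott;A_+(s),A_-)$ depend continuously on $s$ in $L^1(\bbR;(1+|\nu|)^{-q_0-1}d\nu)$, with the anchor values for $s<1$ fixed by Krein's theorem. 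Taking $s=(\ell-1)/\ell$ recovers \eqref{12.56}. This bypasses boundary values and determinants entirely.
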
 
\begin{proof}
Since by \eqref{12.24}, $\chi_{\ell}(A-) B_+ \chi_{\ell}(A-) \in \cB_1\big([L^2(\bbR^n)]^N\big)$, also 
\begin{equation} 
A_+(s) - A_- = P_s B_+ P_s \in \cB_1\big([L^2(\bbR^n)]^N\big), \quad s \in [0,1),
\end{equation}
and hence there exists a uniques spectral shift function $\xi(\dott;A_+(s),A_-)$ for the pair 
$(A_+(s),A_-)$ satisfying 
\begin{equation}
\xi(\dott;A_+(s),A_-) \in L^1(\bbR; d\nu). 
\end{equation}
Moreover, in complete analogy to \eqref{12.50}, the family 
$A_+(s)$ depends continuously on $s \in [0,1]$ with respect to the pseudometric 
\begin{equation}
d_{q_0,z}(A,A') = \big\|(A - z I_{[L^2(\bbR^n)]^N})^{-q_0} - (A' - z I_{[L^2(\bbR^n)]^N})^{-q_0}\big\|_{\cB_1([L^2(\bbR^n)]^N)}    \lb{12.64}
\end{equation}
for $A, A'$ in the set of self-adjoint operators which satisfy 
for all $\zeta \in i \bbR \backslash \{0\}$, 
\begin{align}
\begin{split}
& \big[(A - \zeta I_{[L^2(\bbR^n)]^N})^{-q_0} - (A_- - \zeta I_{[L^2(\bbR^n)]^N})^{-q_0}\big],    \\
&\big[(A' - \zeta I_{[L^2(\bbR^n)]^N})^{-q_0} - (A_- - \zeta I_{[L^2(\bbR^n)]^N})^{-q_0}\big]  \in \cB_1\big([L^2(\bbR^n)]^N\big).  
\end{split} 
\end{align}
Thus, the hypotheses of \cite[Theorem~4.7]{CGLNPS16} are satisfied and one concludes the existence of a unique spectral shift function $\xi(\dott; A_+(s),A_-)$ for the pair $(A_+(s),A_-)$ depending continuously on 
$s \in [0,1]$ in the space $L^1\big(\bbR; (1 + |\nu|)^{-q_0 - 1} d \nu\big)$, satisfying
$\xi(\dott; A_+(0),A_-) = \xi(\dott; A_{+,1},A_-)$. Taking $s = (\ell-1)/\ell$, $\ell \in \bbN$, yields
\begin{align}
\begin{split} 
\xi(\dott; A_+,A_-) &= \xi(\dott; A_+(1),A_-) = \lim_{s \uparrow 1} \xi(\dott; A_+(s),A_-)    \\
&= \lim_{\ell \to \infty} \xi(\dott; A_{+,\ell},A_-)  \, \text{ in $L^2\big(\bbR; (1 + |\nu|)^{-q_0-1} d\nu\big)$.}
\end{split} 
\end{align}
Hence an appropriate subsequence, again denoted by $\{\xi(\dott; A_{+,\ell},A_-)\}_{\ell \in \bbN}$, converges 
pointwise a.e.~to $\xi(\dott; A_+,A_-)$ as $\ell \to \infty$. Since each 
$\xi(\dott; A_{+,\ell},A_-) \in L^1(\bbR; d \nu)$, $\ell \in \bbN$, is uniquely defined one obtains a unique 
spectral shift function satisfying \eqref{12.64}. 

The facts \eqref{12.57} and \eqref{12.58} are now evident. 
\end{proof}

In the following we will always employ $\xi(\dott; A_+,A_-)$ as determined by the limiting relation \eqref{12.56} as the spectral shift function for the pair $(A_+,A_-)$. 

The next result is fundamental, it establishes \eqref{12.47} in the limit $\ell \to \infty$.

\begin{theorem}\lb{t4.2}
Assume Hypothesis \ref{h12.1} and suppose that 
\begin{equation}
V_{\ell,m} \in W^{4n,\infty}(\bbR^n), \quad 1 \leq \ell, m \leq N. 
\end{equation} 
Then, 
\begin{equation}
\xi(\lambda; \bsH_2, \bsH_1) = \f{1}{\pi} \int_{- \lambda^{1/2}}^{\lambda^{1/2}} 
\f{\xi(\nu; A_+, A_-) \, d \nu}{(\lambda - \nu^2)^{1/2}} \, \text{ for a.e.~$\lambda > 0$.} 
\lb{12.68} 
\end{equation}
\end{theorem}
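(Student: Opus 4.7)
The plan is to pass to the limit $\ell \to \infty$ in Pushnitski's formula \eqref{12.47} for the regularized pairs $(\bsH_{2,\ell}, \bsH_{1,\ell})$ and $(A_{+,\ell}, A_-)$, following the same computation already sketched in the introduction (cf.\ the discussion surrounding \eqref{1.79}--\eqref{1.82}). Concretely, for an arbitrary $f \in C_0^\infty(\bbR)$, I would integrate \eqref{12.47} against $f'$ over $[0,\infty)$ and apply Fubini to obtain
\begin{align*}
& \int_{[0,\infty)} d\lambda \, \xi(\lambda; \bsH_{2,\ell}, \bsH_{1,\ell}) \, f'(\lambda) = \frac{1}{\pi} \int_{\bbR} d\nu \, \xi(\nu; A_{+,\ell}, A_-) F'(\nu), \\
& F'(\nu) := \int_{\nu^2}^\infty d\lambda \, f'(\lambda) (\lambda - \nu^2)^{-1/2}, \quad \nu \in \bbR.
\end{align*}

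Next, I would take $\ell \to \infty$ on both sides independently. On the left, the trace representation of $\xi(\dott; \bsH_{2,\ell}, \bsH_{1,\ell})$ together with the $\cB_1$-convergence \eqref{12.48}--\eqref{12.49} identifies the limit as $\int_{[0,\infty)} d\lambda \, \xi(\lambda; \bsH_2, \bsH_1) f'(\lambda)$. On the right, I would invoke \eqref{12.58} of Theorem \ref{t12.5}, which requires the test function $F'$ to satisfy $\esssup_{\nu \in \bbR} |F'(\nu)|(1+|\nu|)^{q_0+1} < \infty$. The substitution $u = \lambda - \nu^2$ recasts $F'(\nu) = \int_0^\infty du \, f'(u+\nu^2) u^{-1/2}$, from which repeated differentiation under the integral shows $F' \in C^\infty(\bbR)$, while $\supp(F') \subseteq \{\nu \in \bbR : \nu^2 \leq \sup\supp(f')\}$ is compact; hence $F' \in C_0^\infty(\bbR)$ and the required decay is automatic.

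Reversing Fubini on the limit identity yields
\begin{equation*}
\int_{[0,\infty)} d\lambda \, \xi(\lambda; \bsH_2, \bsH_1) f'(\lambda) = \int_{[0,\infty)} d\lambda \, f'(\lambda) \, \frac{1}{\pi}\int_{-\lambda^{1/2}}^{\lambda^{1/2}} \frac{\xi(\nu; A_+, A_-) \, d\nu}{(\lambda - \nu^2)^{1/2}}\,.
\end{equation*}
Since $f \in C_0^\infty(\bbR)$ is arbitrary and the normalization \eqref{12.40} forces $\xi(\dott; \bsH_2, \bsH_1)$ to vanish on $(-\infty, 0)$, a standard distributional uniqueness argument (integration by parts plus density) delivers the pointwise a.e.\ equality \eqref{12.68}.

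The main delicate point is the justification of the right-hand side passage to the limit through Theorem \ref{t12.5}, which is not automatic since $\xi(\dott; A_{+,\ell}, A_-)$ converges only in the weighted $L^1$-space $L^1\big(\bbR; (1+|\nu|)^{-q_0-1} d\nu\big)$ rather than in $L^1(\bbR; d\nu)$; this is precisely why the $(1+|\nu|)^{q_0+1}$ bound on $F'$ must be verified carefully, and explains why the substitution making $F'$ compactly supported is the essential technical step. Applicability of Fubini at both stages is unproblematic given the integrability $\xi(\dott; A_{+,\ell}, A_-) \in L^1(\bbR;d\nu)$ from \eqref{12.28} and the boundedness of the relevant kernels on $\supp(f)$.
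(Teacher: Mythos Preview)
Your proposal is correct and follows essentially the same route as the paper's proof: integrate Pushnitski's approximate identity \eqref{12.47} against $f'$ for $f\in C_0^\infty(\bbR)$, pass to the limit on the left via \eqref{12.49} and on the right via Theorem~\ref{t12.5} after verifying $F'\in C_0^\infty(\bbR)$, then conclude by a Du~Bois--Raymond-type argument together with the normalization \eqref{12.40}. The only cosmetic difference is that the paper establishes $F'\in C_0^\infty(\bbR)$ by repeated integration by parts (cf.\ \eqref{12.72}) rather than your substitution $u=\lambda-\nu^2$ plus differentiation under the integral, and it explicitly invokes the Du~Bois--Raymond Lemma to isolate the additive constant killed by \eqref{12.40}.
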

\begin{proof} 
We start by multiplying the approximate relation \eqref{12.47} by the derivative $f'$ of a test function 
$f \in C_0^{\infty}(\bbR)$, and integrate to get,
\begin{align}
& \int_{\bbR} d \lambda \, \xi(\lambda; \bsH_{2,\ell}, \bsH_{1,\ell}) f'(\lambda) = 
\int_{[0,\infty)} d \lambda \, \xi(\lambda; \bsH_{2,\ell}, \bsH_{1,\ell}) f'(\lambda)    \no \\
& \quad = \f{1}{\pi} \int_{[0,\infty)} d \lambda \, f'(\lambda) \int_{- \lambda^{1/2}}^{\lambda^{1/2}} 
\f{\xi(\nu; A_{+,\ell}, A_-) \, d \nu}{(\lambda - \nu^2)^{1/2}}    \no \\
& \quad = \f{1}{\pi} \int_{\bbR} d\nu \, \xi(\nu; A_{+,\ell},A_-) F'(\nu), \quad \ell \in \bbN,   \lb{12.69} 
\end{align}
where $F'$ is defined by 
\begin{equation}
F'(\nu) = \int_{\nu^2}^{\infty} d\lambda \, f'(\lambda) (\lambda - \nu^2)^{-1/2}, \quad \nu \in \bbR.
\lb{12.70} 
\end{equation}
We claim that 
\begin{equation}
F' \in C^{\infty}_0(\bbR),
\end{equation}
rendering the manipulations leading to \eqref{12.69} well-defined. Clearly, $F' \in C_0(\bbR)$ 
since $f' \in C^{\infty}_0(\bbR)$. To show that $F' \in C_0^{\infty}(\bbR)$, it suffices to repeatedly integrate by parts and allude to the following representations of $F'$, 
\begin{align}
F'(\nu) &= \int_{\nu^2}^{\infty} d\lambda \, f'(\lambda) (\lambda - \nu^2)^{-1/2}   \no \\
& = - 2 \int_{\nu^2}^{\infty} d\lambda \, f''(\lambda) (\lambda - \nu^2)^{1/2} 
= 2 \f{2}{3} \int_{\nu^2}^{\infty} d\lambda \, f'''(\lambda) (\lambda - \nu^2)^{3/2}    \no \\ 
& \;\,\, \vdots \no \\
& = c_k \int_{\nu^2}^{\infty} d\lambda \, f^{(k)}(\lambda) (\lambda - \nu^2)^{k-(3/2)} 
\quad \nu \in \bbR, \; k \in \bbN,     \lb{12.72}
\end{align}
for appropriate constants $c_k$, $k \in \bbN$. Thus, \eqref{12.69} yields the 
following,
\begin{equation}
\int_{[0,\infty)} d \lambda \, \xi(\lambda; \bsH_{2,\ell}, \bsH_{1,\ell}) f'(\lambda) 
= \f{1}{\pi} \int_{\bbR} d\nu \, \xi(\nu; A_{+,\ell},A_-) F'(\nu), \quad \ell \in \bbN,   \lb{12.73}
\end{equation}
where $f \in C_0^{\infty}(\bbR)$ was arbitrary, and $F' \in C_0^{\infty}(\bbR)$ 
(depending on $f'$) is given by \eqref{12.70} or equivalently, by any of the expressions in \eqref{12.72}.

It remains to control the limits $\ell \to \infty$ on either side of \eqref{12.73}: By \eqref{12.49}, 
the left-hand side of \eqref{12.73} converges as $\ell \to \infty$,
\begin{equation}
\lim_{\ell \to \infty} \int_{[0,\infty)} d \lambda \, \xi(\lambda; \bsH_{2,\ell}, \bsH_{1,\ell}) f'(\lambda) 
= \int_{[0,\infty)} d \lambda \, \xi(\lambda; \bsH_2, \bsH_1) f'(\lambda).    \lb{12.74}
\end{equation} 
For the right-hand side of \eqref{12.73} one applies Theorem \ref{t12.5}, especially, \eqref{12.58}, 
and concludes that
\begin{equation}
\lim_{\ell \to \infty} \f{1}{\pi} \int_{\bbR} d\nu \, \xi(\nu; A_{+,\ell},A_-) F'(\nu) 
= \f{1}{\pi} \int_{\bbR} d\nu \, \xi(\nu; A_+,A_-) F'(\nu),    \lb{12.75} 
\end{equation}
since by \eqref{12.64}
\begin{equation} 
\lim_{\ell \to \infty} \|\xi(\, \cdot \,; A_{+,\ell},A_-) 
- \xi(\, \cdot \,; A_+,A_-)\|_{L^1(\bbR; (1 +|\nu|)^{-q_0-1} d\nu)} =0.
\end{equation} 
Combining \eqref{12.73}--\eqref{12.75} finally yields 
\begin{align}
& \int_{[0,\infty)} d \lambda \, \xi(\lambda; \bsH_2, \bsH_1) f'(\lambda)   
= \f{1}{\pi} \int_{\bbR} d \nu \, \xi(\nu; A_+, A_-) F'(\nu)   \no \\
& \quad = \f{1}{\pi} \int_{\bbR} d \lambda \, f'(\lambda) \int_{- \lambda^{1/2}}^{\lambda^{1/2}} 
\f{\xi(\nu; A_+, A_-) \, d \nu}{(\lambda - \nu^2)^{1/2}} \chi_{[0,\infty)}(\lambda), \quad 
f \in C^{\infty}_0(\bbR).    
\end{align}
An application of the Du Bois--Raymond Lemma (see, e.g., \cite[Theorem\ 6.11]{LL01}), thus 
implies for some constant $c \in \bbR$, 
\begin{equation}
\xi(\lambda; \bsH_2, \bsH_1) = \f{1}{\pi} \int_{- \lambda^{1/2}}^{\lambda^{1/2}} 
\f{\xi(\nu; A_+, A_-) \, d \nu}{(\lambda - \nu^2)^{1/2}} \chi_{[0,\infty)}(\lambda) + c 
\, \text{ for a.e.~$\lambda \in \bbR$.}
\end{equation}
Due to our normalization \eqref{12.40}, $c = 0$, proving \eqref{12.68}.
\end{proof} 

Having established \eqref{12.68}, we turn to the resolvent regularized Witten index of the densely defined and closed operator $\bsD_\bsA^{}$. We refer to \cite{BGGSS87}, \cite{CGGLPSZ16b}, 
\cite{CGLPSZ16}--\cite{CLPS20}, \cite{GLMST11}, \cite{GS88}, \cite{Pu08} and the references therein for a bit of history on this subject. 

Since $\sigma(A_{\pm}) = \bbR$, in particular, 
$0 \notin \rho(A_+) \cap \rho(A_-)$, 
\begin{equation} 
\text{$\bsD_\bsA^{}$ is a non-Fredholm operator.} 
\end{equation} 
This follows from the criterion for Fredholm operators established in \cite[Theorem~2.6]{CGPST17} (which extends to the current setting by replacing the resolvent of $A_{\pm}$ by appropriate powers of the resolvent in the proof). 

In the following we will show that even though $\bsD_\bsA^{}$ is a non-Fredholm operator, its Witten 
index is well-defined and expressible in terms of the spectral shift functions for the pair of operators 
$(\bsH_2, \bsH_1)$ and $(A_+, A_-)$.

To introduce an appropriately (resolvent regularized) Witten index of $\bsD_\bsA^{}$, we consider a densely defined, closed operator $T$ in the complex, separable Hilbert space $\cK$ and assume that for some 
$k \in \bbN$, and all $\lambda < 0$
\begin{equation}
\big[(T^* T - \lambda I_{\cK})^{-k} - (T T^* - \lambda I_{\cK})^{-k}\big] \in \cB_1(\cK).   
\end{equation}
Then the $k$th resolvent regularized Witten index of $T$ is defined by
\begin{equation}
W_{k,r}(T) = \lim_{\lambda \uparrow 0} (- \lambda)^k 
\tr_{\cK} \big((T^* T - \lambda I_{\cK})^{-k} - (T T^* - \lambda I_{\cK})^{-k}\big),
\end{equation}
whenever the limit exists. The case $k=1$ as well as the approach where resolvents are replaced by semigroups has been studied in great detail in \cite{CGPST17}, the extension to $k \geq 2$ was discussed in 
\cite{CLPS20}. 

It is well-known that the (regularized) Witten index is generally not an integer, in fact, it can take on any real value (cf.\ \cite{BGGSS87}, \cite{GS88}). The intrinsic value of $W_{k,r}(T)$ 
lies in its stability properties with respect to additive perturbations,
analogous to stability properties of the Fredholm index. Indeed, as long as one replaces the
familiar relative compactness assumption on an additive perturbation in connection with the
Fredholm index, by appropriate relative trace class conditions in connection with
the resolvent regularized Witten index, stability of the Witten index was proved in \cite{BGGSS87} (for $k=1$, see also \cite{CP74}) and, in connection with the analogous semigroup regularized Witten index, in \cite{GS88} (the semigroup approach then yielding stability for $W_{k,r}(\dott)$, $k \in \bbN$).

The following result, the first of this kind applicable to non-Fredholm operators in a 
partial differential operator setting involving multi-dimensional massless Dirac operators, then characterizes the Witten index of $\bsD_\bsA^{}$ in terms of spectral shift functions: 

\begin{theorem} \lb{t12.7} 
Assume Hypothesis \ref{h12.1} and suppose that 
\begin{equation}
V_{\ell,m} \in W^{4n,\infty}(\bbR^n), \quad 1 \leq \ell, m \leq N. 
\end{equation} 
Then $0$ is a right Lebesgue point of $\xi(\dott; \bsH_2, \bsH_1)$, denoted by 
$\xi_L(0_+; \bsH_2, \bsH_1)$, and 
\begin{equation}
\xi_L(0_+; \bsH_2, \bsH_1) = [\xi(0_+;A_+,A_-) + \xi(0_-;A_+,A_-)]/2.
\end{equation}
In addition, the resolvent regularized Witten index $W_{k,r}(\bsD_\bsA^{})$ of $\bsD_\bsA^{}$ exists for all $k \in \bbN$, $k \geq q$ and 
equals
\begin{align}
\begin{split} 
W_{k,r}(\bsD_\bsA^{}) = \xi_L(0_+; \bsH_2, \bsH_1) &= [\xi(0_+;A_+,A_-) + \xi(0_-;A_+,A_-)]/2    \\
&= [\xi(0_+;H,H_0) + \xi(0_-;H,H_0)]/2.    \lb{12.84}
\end{split} 
\end{align}
\end{theorem}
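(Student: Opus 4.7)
The plan is to combine the Pushnitski-type identity of Theorem \ref{t4.2} with the one-sided continuity of $\xi(\dott;H,H_0)$ at zero provided by Theorem \ref{t12.2}. Starting from
$$\xi(\lambda;\bsH_2,\bsH_1) = \f{1}{\pi}\int_{-\lambda^{1/2}}^{\lambda^{1/2}} \f{\xi(\nu;H,H_0)}{(\lambda-\nu^2)^{1/2}}\,d\nu \quad \text{for a.e.\ }\lambda>0,$$
I would substitute $\nu = \lambda^{1/2}u$ to rewrite this as
$$\xi(\lambda;\bsH_2,\bsH_1) = \f{1}{\pi}\int_{-1}^{1}\f{\xi(\lambda^{1/2}u;H,H_0)}{(1-u^2)^{1/2}}\,du.$$
By Theorem \ref{t12.2}, $\xi(\dott;H,H_0)$ is continuous on $(-\infty,0)\cup(0,\infty)$ and admits one-sided limits $\xi(0_\pm;H,H_0)$ at zero, so it is bounded on a punctured neighborhood of $0$. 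Splitting the $u$-integration at $0$ and applying Lebesgue's dominated convergence theorem, together with the identity $\int_0^1(1-u^2)^{-1/2}\,du = \pi/2$, will yield
$$\lim_{\lambda\downarrow 0}\xi(\lambda;\bsH_2,\bsH_1) = \tfrac{1}{2}\big[\xi(0_+;H,H_0)+\xi(0_-;H,H_0)\big].$$
The existence of this pointwise right limit immediately promotes $0$ to a right Lebesgue point of $\xi(\dott;\bsH_2,\bsH_1)$ with the claimed value for $\xi_L(0_+;\bsH_2,\bsH_1)$.

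For the Witten index, I would invoke the trace formula \eqref{8.11} applied to $(S,S_0)=(\bsH_2,\bsH_1)$ with $m=k\geq q$, which is justified by the trace class property \eqref{12.38} of Theorem \ref{t12.3}; using the normalization $\xi(\dott;\bsH_2,\bsH_1)=0$ on $(-\infty,0)$ from \eqref{12.40} gives
$$\tr_{L^2(\bbR;[L^2(\bbR^n)]^N)}\big((\bsH_1-\lambda\bsI)^{-k}-(\bsH_2-\lambda\bsI)^{-k}\big) = k\int_0^\infty\f{\xi(\nu;\bsH_2,\bsH_1)}{(\nu-\lambda)^{k+1}}\,d\nu,\quad \lambda<0.$$
Multiplying by $(-\lambda)^k$ and substituting $\nu=|\lambda|s$ then recasts the regularized Witten index as
$$W_{k,r}(\bsD_\bsA^{}) = \lim_{\lambda\uparrow 0} k\int_0^\infty \f{\xi(|\lambda|s;\bsH_2,\bsH_1)}{(s+1)^{k+1}}\,ds,$$
once the limit is shown to exist.

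To pass to the limit I would split the integral at $s=1$. On $[0,1]$, the boundedness of $\xi(\dott;\bsH_2,\bsH_1)$ near $0^+$ together with its right-continuity established in the first step justifies dominated convergence with pointwise limit $\xi_L(0_+;\bsH_2,\bsH_1)$. On $[1,\infty)$, reverting to the variable $\nu$ and using the elementary bound $(\nu+|\lambda|)^{-k-1}\leq\nu^{-k-1}$ together with the integrability $\xi(\dott;\bsH_2,\bsH_1)\in L^1(\bbR;(1+|\lambda|)^{-q-1}\,d\lambda)$ from \eqref{12.41} (applicable since $k\geq q$) bounds the remainder by a finite quantity, which is then multiplied by $|\lambda|^k$ and so vanishes as $\lambda\uparrow 0$. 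The elementary identity $k\int_0^\infty(s+1)^{-k-1}\,ds = 1$ then produces $W_{k,r}(\bsD_\bsA^{}) = \xi_L(0_+;\bsH_2,\bsH_1)$, and since $A_+=H$, $A_-=H_0$, all three equalities in \eqref{12.84} follow. The main technical obstacle is ensuring that the domination estimates in the two regimes are uniform in $\lambda$ across all $k\geq q$; however, this is exactly the content of the abstract Witten-index-via-Lebesgue-point principle proved for $k=1$ in \cite{CGPST17} and extended to $k\geq 2$ in \cite{CLPS20}, which one may invoke directly once the trace class property \eqref{12.38}, the $L^1$-bound \eqref{12.41}, and the existence of the right Lebesgue point have been verified.
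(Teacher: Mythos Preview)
Your overall strategy matches the paper's: feed the one-sided limits of $\xi(\,\cdot\,;H,H_0)$ from Theorem~\ref{t12.2} into the abstract Witten-index machinery of \cite{CGPST17} and \cite{CLPS20}. Your Step~1 is in fact more explicit than the paper's proof, which simply cites those references for everything beyond Theorem~\ref{t12.2}. The substitution $\nu=\lambda^{1/2}u$ in the Pushnitski formula \eqref{12.68}, followed by dominated convergence using the local boundedness of $\xi(\,\cdot\,;H,H_0)$ near $0$, is a clean direct argument that $\xi(\,\cdot\,;\bsH_2,\bsH_1)$ actually has a right \emph{limit} (not merely a right Lebesgue point) at $0$ equal to $[\xi(0_+;H,H_0)+\xi(0_-;H,H_0)]/2$.

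Your direct computation in Step~2, however, contains a genuine error. Splitting at $s=1$ and claiming the tail vanishes is wrong: reverting $s\in[1,\infty)$ to $\nu=|\lambda|s\in[|\lambda|,\infty)$ gives
\[
k\int_1^\infty \frac{\xi(|\lambda|s;\bsH_2,\bsH_1)}{(s+1)^{k+1}}\,ds
= k|\lambda|^k \int_{|\lambda|}^\infty \frac{\xi(\nu;\bsH_2,\bsH_1)}{(\nu+|\lambda|)^{k+1}}\,d\nu,
\]
and the integral on the right is \emph{not} bounded as $\lambda\uparrow 0$, since $\xi$ is generically nonzero near $0^+$ and $\int_{|\lambda|}^1 \nu^{-k-1}\,d\nu\sim|\lambda|^{-k}$. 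Indeed, if the tail vanished, your own identity $k\int_0^\infty(s+1)^{-k-1}\,ds=1$ would be inconsistent with the limit coming only from $s\in[0,1]$, which yields $\xi_L(0_+;\bsH_2,\bsH_1)\cdot(1-2^{-k})$. The tail in fact contributes the missing $2^{-k}\xi_L(0_+;\bsH_2,\bsH_1)$. A split that \emph{does} work is at a fixed $\nu_0>0$ in the $\nu$-variable: the piece $\nu\in[\nu_0,\infty)$ is bounded by $C|\lambda|^k$ via \eqref{12.41} and $(\nu+|\lambda|)^{-k-1}\leq 2^{k+1}(1+\nu)^{-k-1}$ for $\nu\geq\nu_0$, while the piece $\nu\in[0,\nu_0]$ (equivalently $s\in[0,\nu_0/|\lambda|]$) is handled by dominated convergence exactly as you describe on $[0,1]$.

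Your closing sentence, invoking the abstract Lebesgue-point principle from \cite{CGPST17} and \cite{CLPS20} once \eqref{12.38}, \eqref{12.41}, and the right Lebesgue point are in hand, is precisely what the paper does and suffices to complete the argument.
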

\begin{proof} 
The key new input for the proof is the existence of $0$ as a left and right Lebesgue point of 
$\xi(\dott;A_+,A_-) = \xi(\dott; H,H_0)$.~This is established in Theorem \ref{t12.2}, in fact, more is proved since left and right limits of $\xi(\dott;H,H_0)$ at $0$ are shown to exist. For $q=1$, the remaining assertions are proved in \cite[Theorem~4.3]{CGPST17}, the extension to 
$q \geq 2$ is discussed in \cite[Sects.~5.2, 7.1]{CLPS20}.
\end{proof}

The actual computation of the right-hand side of \eqref{12.84} in terms of the potential $V$ is left for a future investigation.

\appendix
\section{Some Remarks on Block Matrix Operators} \lb{sA}
\renewcommand{\theequation}{A.\arabic{equation}}
\renewcommand{\thetheorem}{A.\arabic{theorem}}
\setcounter{theorem}{0} \setcounter{equation}{0}

In this appendix we collect some useful (and well-known) material on linear operators in connection with pointwise domination, boundedness, compactness, and the Hilbert-Schmidt property.

\begin{definition} \lb{dA.1}
Let $(M; \cM; \mu)$ be a $\sigma$-finite, separable measure space, $\mu$ a nonnegative, measure with 
$0 < \mu (M) \leq \infty$, and consider the linear operators $A, B \in \cB\big(L^2(M; d\mu)\big)$. Then $B$ pointwise dominates $A$ 
\begin{equation}
\text{if for all $f \in L^2(M; d\mu)$, $|(A f)(\, \cdot \,)| \leq (B |f|)(\, \cdot \,)$ $\mu$-a.e.~on $M$}.   
\lb{A.1}
\end{equation}
\end{definition}

For a linear block operator matrix $T = \{T_{j,k}\}_{1 \leq j,k \leq N}$, $N \in \bbN$, in the Hilbert space 
$[L^2(M; d\mu)]^N$ (where $[L^2(M; d\mu)]^N = L^2(M; d\mu; \bbC^N)$), we recall that 
$T \in \cB_2\big([L^2(M; d\mu)]^N\big)$ if and only if $T_{j,k} \in \cB_2\big(L^2(M; d\mu)\big)$, 
$1 \leq j,k \leq N$. Moreover, we recall that (cf.\ e.g., \cite[Theorem~11.3.6]{BS87}) 
\begin{align}
& \|T\|^2_{\cB_2(L^2(M; d\mu)^N)} = \int_{M \times M} d\mu(x) \, d\mu(y) \, \|T(x,y)\|^2_{\cB_2(\bbC^N)} 
\no \\
& \quad = \int_{M \times M} d\mu(x) \, d\mu(y) \, \sum_{j,k=1}^N |T_{j,k}(x,y)|^2    \no \\
& \quad =  \sum_{j,k=1}^N\int_{M \times M} d\mu(x) \, d\mu(y) \, |T_{j,k}(x,y)|^2    \no \\
& \quad = \sum_{j,k=1}^N \|T_{j,k}\|^2_{\cB_2(L^2(M; d\mu))},      \lb{A.2}
\end{align}
where, in obvious notation, $T(\, \cdot \,, \, \cdot \,)$ denotes the $N \times N$ matrix-valued integral kernel of $T$ in $[L^2(M; d\mu)]^N$, and $T_{j,k}(\, \cdot \,, \, \cdot \,)$ represents the integral kernel of $T_{j,k}$ in $L^2(M; d\mu)$, , $1 \leq j,k \leq N$.

In addition, employing the fact that for any $N \times N$ matrix $D \in \bbC^{N \times N}$, 
\begin{equation}
\|D\|_{\cB(\bbC^N)} \leq \|D\|_{\cB_2(\bbC^N)} \leq N^{1/2} \|D\|_{\cB(\bbC^N)},     \lb{A.3} 
\end{equation}
one also obtains 
\begin{equation}
\|T\|^2_{\cB_2(L^2(M; d\mu)^N)} \leq N \int_{M \times M} d\mu(x) \, d\mu(y) \, 
\|T(x,y)\|^2_{\cB(\bbC^N)}.     \lb{A.4}
\end{equation}

More generally, for $\cH$ a complex separable Hilbert space and $T = \{T_{j,k}\}_{1 \leq j,k \leq N}$, 
$N \in \bbN$, a block operator matrix in $\cH^N$, one confirms that 
\begin{align}
& \text{$T \in \cB\big(\cH^N)$ \big(resp., $T \in \cB_p\big(\cH^N\big)$, $p \in [1,\infty)\cup \{\infty\}$\big)} 
\no \\
& \quad \text{if and only if}       \lb{A.5} \\
& \text{for each $1 \leq j,k \leq N$, $T_{j,k} \in \cB\big(\cH^N)$ \big(resp., $T_{j,k} \in \cB_p\big(\cH^N\big)$, $p \in [1,\infty)\cup \{\infty\}$\big)}.     \no  
\end{align}
In other words, for membership of $T$ in $\cB\big(\cH^N)$ or $\cB_p\big(\cH^N\big)$, 
$p \in [1,\infty)\cup \{\infty\}$, it suffices to focus on each of its matrix elements 
$T_{j,k}$, $1 \leq j,k \leq N$. (For necessity of the last line in \eqref{A.5} it suffices to multiply 
$T$ from the left and right by $N \times N$ diagonal matrices with $I_{\cH}$ on the $j$th and $k$th position, resp., to isolate $T_{j,k}$ and appeal to the ideal property. For sufficiency, it suffices to 
write $T$ as a sum of $N^2$ terms with $T_{j,k}$ at the $j,k$th position and zeros otherwise.)

The next result is useful in connection with Sections \ref{s5} and \ref{s6}. 

\begin{theorem} \lb{tA.2}
Let $N \in \bbN$ and suppose that $T_1, T_2$ are linear $N \times N$ block operator matrices defined on $[L^2(M; d\mu)]^N$, such that for each $1 \leq j,k \leq N$, $T_{2,j,k}$ pointwise dominates $T_{1,j,k}$. Then the following items $(i)$--$(iii)$ hold: \\[1mm]
$(i)$ \hspace*{1.4mm} If $T_2 \in \cB\big([L^2(M; d\mu)]^N\big)$ then  $T_1 \in \cB\big([L^2(M; d\mu)]^N\big)$ and 
\begin{equation}
\|T_1\|_{\cB([L^2(M; d\mu)]^N)} \leq \|T_2\|_{\cB([L^2(M; d\mu)]^N)}.     \lb{A.6}
\end{equation}
$(ii)$ \hspace*{.1mm} If $T_2 \in \cB_{\infty}\big([L^2(M; d\mu)]^N\big)$ then  $T_1 \in \cB_{\infty}\big([L^2(M; d\mu)]^N\big)$ and 
\begin{equation}
\|T_1\|_{\cB([L^2(M; d\mu)]^N)} \leq \|T_2\|_{\cB([L^2(M; d\mu)]^N)}.     \lb{A.7}
\end{equation}
$(iii)$ If $T_2 \in \cB_2\big([L^2(M; d\mu)]^N\big)$ then  $T_1 \in \cB_2\big([L^2(M; d\mu)]^N\big)$ and 
\begin{equation}
\|T_1\|_{\cB_2([L^2(M; d\mu)]^N)} \leq \|T_2\|_{\cB_2([L^2(M; d\mu)]^N)}.     \lb{A.8} 
\end{equation}
\end{theorem}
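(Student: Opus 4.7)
The plan is to address the three parts in the order (i), (iii), (ii), with each relying on the preceding and (ii) requiring the deepest input.

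Part (i) follows from a direct pointwise computation. Given $f = (f_1,\ldots,f_N)^\top \in [L^2(M; d\mu)]^N$, introduce the majorant $|f| := (|f_1|,\ldots,|f_N|)^\top$, which satisfies $\||f|\|_{[L^2(M; d\mu)]^N} = \|f\|_{[L^2(M; d\mu)]^N}$. The triangle inequality combined with the entry-wise pointwise domination yields, $\mu$-a.e.\ on $M$ and for each $1 \leq j \leq N$,
\begin{equation*}
|(T_1 f)_j(x)| \leq \sum_{k=1}^N |(T_{1,j,k} f_k)(x)| \leq \sum_{k=1}^N (T_{2,j,k} |f_k|)(x) = (T_2 |f|)_j(x).
\end{equation*}
Squaring, summing over $j$, and integrating over $M$ yields $\|T_1 f\|_{[L^2(M;d\mu)]^N} \leq \|T_2 |f|\|_{[L^2(M;d\mu)]^N} \leq \|T_2\|_{\cB([L^2(M;d\mu)]^N)} \|f\|_{[L^2(M;d\mu)]^N}$, which establishes \eqref{A.6}; note in particular that $T_1$ is automatically bounded.

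For part (iii), the plan is to upgrade the operator-level pointwise estimate to a pointwise bound on the integral kernels themselves. By \eqref{A.5}, the hypothesis $T_2 \in \cB_2([L^2(M; d\mu)]^N)$ is equivalent to $T_{2,j,k} \in \cB_2(L^2(M; d\mu))$ for all $1 \leq j,k \leq N$, so each $T_{2,j,k}$ possesses an integral kernel $t_{2,j,k}(\cdot,\cdot) \in L^2(M \times M; d\mu \otimes d\mu)$. Testing the pointwise domination on pairs of characteristic functions $\chi_E, \chi_F$ of measurable sets of finite measure and invoking Fubini yields
\begin{equation*}
\bigl| (T_{1,j,k} \chi_E, \chi_F)_{L^2(M; d\mu)} \bigr| \leq \iint_{F \times E} |t_{2,j,k}(x,y)| \, d\mu(x)\, d\mu(y),
\end{equation*}
so the set function $(E,F) \mapsto (T_{1,j,k} \chi_E, \chi_F)_{L^2}$ is $\sigma$-additive and absolutely continuous with respect to the finite measure $|t_{2,j,k}| \, d\mu \otimes d\mu$. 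The Radon--Nikodym theorem then produces a measurable density $t_{1,j,k}(\cdot,\cdot)$ satisfying $|t_{1,j,k}(x,y)| \leq |t_{2,j,k}(x,y)|$ for a.e.\ $(x,y) \in M \times M$, and by density of simple functions in $L^2(M; d\mu)$ the operator $T_{1,j,k}$ is seen to coincide with the integral operator associated with $t_{1,j,k}$. Applying \eqref{A.2} to both $T_1$ and $T_2$ immediately yields \eqref{A.8}.

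Part (ii) is the principal obstacle, since compactness is a topological rather than algebraic property and is not detectable from kernel pointwise bounds alone. The plan is to invoke the Dodds--Fremlin domination theorem for compact operators on Banach lattices: on $L^p(M; d\mu)$ with $1 \leq p < \infty$, which has order-continuous norm, any linear operator dominated in modulus by a compact positive operator is itself compact. Applying this theorem entry-wise to the pair $(T_{1,j,k}, T_{2,j,k})$, the latter being compact on $L^2(M; d\mu)$ by $T_2 \in \cB_\infty([L^2(M;d\mu)]^N)$ and \eqref{A.5}, one concludes that each $T_{1,j,k}$ is compact, whence $T_1 \in \cB_\infty([L^2(M; d\mu)]^N)$ by \eqref{A.5} once again. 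The norm bound \eqref{A.7} is then a direct restatement of (i), since the $\cB_\infty$-norm agrees with the operator norm on compact operators. As a self-contained alternative avoiding the appeal to Dodds--Fremlin, one could verify directly that the solid hull of the precompact set $T_2 \bigl(\{f \in [L^2(M;d\mu)]^N : \|f\| \leq 1, \, f_k \geq 0\}\bigr)$ is precompact in $[L^2(M; d\mu)]^N$, exploiting the $1$-Lipschitz continuity of the map $f \mapsto |f|$ on $L^2$; this is in essence the lattice-theoretic content that underlies Dodds--Fremlin and would constitute the technical heart of any ``elementary'' proof of (ii).
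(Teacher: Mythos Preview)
Your proof is correct and follows essentially the same route as the paper: part~(i) is the same direct computation with the majorant $|f|$, and part~(ii) is handled in both cases by invoking the Dodds--Fremlin domination theorem (the paper cites \cite{DF79}, \cite{Pi79}, \cite{Le82}) together with \eqref{A.5}. For part~(iii) the paper simply quotes \cite[Theorem~2.13]{Si05} to obtain $\|T_{1,j,k}\|_{\cB_2(L^2(M;d\mu))} \le \|T_{2,j,k}\|_{\cB_2(L^2(M;d\mu))}$ entry-wise and then applies \eqref{A.2}; your Radon--Nikodym extraction of a dominated kernel is effectively a sketch of the content of that cited result, so the approaches coincide in spirit---though the passage from the bilinear set function $(E,F)\mapsto (T_{1,j,k}\chi_E,\chi_F)_{L^2}$ to a genuine measure on the product $\sigma$-algebra deserves one more sentence of justification if you wish to be self-contained.
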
 
\begin{proof}
For item $(ii)$ we refer to \cite{DF79} and \cite{Pi79} (see also \cite{Le82}) combined with \eqref{A.5} as we will not use it in this manuscript. While the proofs of items $(i)$ and $(iii)$ are obviously well-known, we briefly recall them here as we will be using these facts in Sections \ref{s5} and \ref{s6}. Starting with item $(i)$, we introduce the notation $f = (f_1,\dots,f_N) \in [L^2(M; d\mu)]^N$ and 
$|f| = (|f_1|,\dots,|f_N|) \in [L^2(M; d\mu)]^N$ and compute, 
\begin{align}
& \|T_1 f\|^2_{[L^2(M; d\mu)]^N} = \sum_{j=1}^N \|(T_1 f)_j\|^2_{L^2(M; d\mu)} 
= \sum_{j=1}^N ((T_1 f)_j, (T_1 f)_j)_{L^2(M; d\mu)}     \no \\
& \quad = \sum_{j=1}^N \bigg|\sum_{k,\ell=1}^N (T_{1,j,k} f_k, T_{1,j,\ell} f_{\ell})_{L^2(M; d\mu)}\bigg|
\no \\
& \quad \leq \sum_{j=1}^N \sum_{k,\ell=1}^N |(T_{1,j,k} f_k, T_{1,j,\ell} f_{\ell})_{L^2(M; d\mu)}|
\no \\
& \quad \leq \sum_{j=1}^N \sum_{k,\ell=1}^N (|T_{1,j,k} f_k|, |T_{1,j,\ell} f_{\ell}|)_{L^2(M; d\mu)} 
\no \\
& \quad \leq \sum_{j=1}^N \sum_{k,\ell=1}^N (T_{2,j,k} |f_k|, T_{2,j,\ell} |f_{\ell}|)_{L^2(M; d\mu)} 
\no \\
& \quad = \sum_{j=1}^N ((T_2 |f|)_j, (T_2 |f|)_j)_{L^2(M; d\mu)} = \|T_2 |f|\|^2_{[L^2(M; d\mu)]^N}     
\no \\
& \quad \leq \|T_2\|^2_{\cB(L^2(M; d\mu)^N)}  \| |f| \|^2_{[L^2(M; d\mu)]^N}    \no \\
& \quad = \|T_2\|^2_{\cB(L^2(M; d\mu)^N)}  \|f\|^2_{[L^2(M; d\mu)]^N},    \lb{A.9} 
\end{align}
implying item $(i)$. For item $(iii)$ we recall from \cite[Theorem~2.13]{Si05} that 
$T_{1,j,k} \in \cB_2\big(L^2(M; d\mu)\big)$, $1 \leq j,k \leq N$, and 
$\|T_{1,j,k}\|_{\cB_2(L^2(M; d\mu))} \leq \|T_{2,j,k}\|_{\cB_2(L^2(M; d\mu))}$, $1\leq j,k \leq N$, and hence
by \eqref{A.2},
\begin{align}
\begin{split} 
\|T_1\|^2_{\cB_2([L^2(M; d\mu)]^N)} &= \sum_{j,k=1}^N \|T_{1,j,k}\|^2_{\cB_2(L^2(M; d\mu))} 
\leq \sum_{j,k=1}^N \|T_{2,j,k}\|^2_{\cB_2(L^2(M; d\mu))}    \lb{A.10} \\
& = \|T_2\|^2_{\cB_2([L^2(M; d\mu)]^N)}.
\end{split} 
\end{align} 
\end{proof}

\begin{remark} \lb{rA.3}
We complete this appendix with the observation that the subordination assumption 
$|(A f)(\, \cdot \,)| \leq (B |f|)(\, \cdot \,)$ $\mu$-a.e.~on $M$, if $A$ and $B$ are integral operators in 
$L^2(M; d\mu)$ with integral kernels $A(\, \cdot \,, \, \cdot \,)$ and $B(\, \cdot \,, \, \cdot \,)$, respectively, 
is implied by the condition
$|A(\, \cdot \,, \, \cdot \,)| \leq B(\, \cdot \,, \, \cdot \,)$ $\mu \otimes \mu$-a.e.~on $M \times M$ since
\begin{align}
|(Af)(x)| & = \bigg|\int_M d\mu(y) \, A(x,y)f(y))\bigg| \leq \int_{M} d\mu(y) \, |A(x,y)| |f(y)|  \no \\
& \leq \int_{M} d\mu(y) \, B(x,y) |f(y)| = (B |f|)(x)| \, \text{ for a.e.~$x \in M$.}     \lb{A.13} 
\end{align}
In fact, the converse is true as well as shown next in a concrete situation. 
\hfill $\diamond$
\end{remark}

\begin{lemma} \lb{lA.3}
Let $\Omega \subseteq \bbR^n$ open, $n \in \bbN$, suppose $(\Omega, \Sigma, d\sigma)$ represents the standard Lebesgue measure on $\Omega$ $($i.e., $d\sigma = d^n x$$)$, and denote the Lebesgue measure of a set 
$S \in \Sigma$ by $|S|$. Consider bounded, linear integral operators $A, B \in \cB\big(L^2(\Omega; d^n x)\big)$, with integral kernels $A(\, \cdot \,, \, \cdot \,)$ and $B(\, \cdot \,, \, \cdot \,)$, respectively. Then
\begin{align} 
& \text{$B$ pointwise dominates $A$, that is,}    \no \\
& \quad |(A f)(\, \cdot \,)| \leq (B |f|)(\, \cdot \,) \, \text{ $\sigma$-a.e.~on $\Omega$ for all 
$f \in L^2(\Omega; d^n x)$,}   
\lb{A.11} \\
& \text{if and only if}     \no \\ 
& \quad |A(\, \cdot \,, \, \cdot \,)| \leq B(\, \cdot \,, \, \cdot \,) \, \text{ $\sigma \otimes \sigma$-a.e.~on 
$\Omega \times \Omega$.}      
\lb{A.12} 
\end{align}
\end{lemma}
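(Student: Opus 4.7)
My plan is to note that the ``if'' direction is already contained in Remark~\ref{rA.3} (the display \eqref{A.13}), so the real task is the converse: to deduce the pointwise kernel inequality $|A(\, \cdot \,, \, \cdot \,)| \leq B(\, \cdot \,, \, \cdot \,)$ a.e.\ from the function-level domination $|(Af)(\, \cdot \,)| \leq (B|f|)(\, \cdot \,)$ a.e.\ on $\Omega$ for every $f \in L^2(\Omega;d^nx)$. The strategy is a standard two-stage ``phase trick'', first exhausting the $y$ variable by a duality argument and then the $x$ variable by a signed-measure argument, combined with a countability device to transfer null sets depending on test functions to a single null set.

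For the first stage I would pair the hypothesis with an arbitrary nonnegative $h \in L^2 \cap L^{\infty}$ of compact support. Integrating $|(Af)(x)| \leq (B|f|)(x)$ against $h$ and then applying Fubini's theorem (which is legitimate once $h$ and $f$ are restricted to compact support and bounded, so that the restricted kernels may be approximated by Hilbert--Schmidt truncations) yields
\[
\biggl|\int_{\Omega} d^n y\, \Phi_h(y)\, f(y)\biggr| \leq \int_{\Omega} d^n y\, \Psi_h(y)\, |f(y)|, \quad f \in L^2 \cap L^{\infty}_c(\Omega),
\]
where $\Phi_h(y) := \int_\Omega h(x) A(x,y)\, d^n x$ and $\Psi_h(y) := \int_\Omega h(x) B(x,y)\, d^n x$. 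Testing against $f(y) = \overline{\Phi_h(y)}/|\Phi_h(y)| \cdot \chi_F(y)$ (with $f \equiv 0$ where $\Phi_h$ vanishes) for $F \subseteq \Omega$ measurable of finite measure gives $\int_F |\Phi_h(y)|\, d^n y \leq \int_F \Psi_h(y)\, d^n y$; since $F$ is arbitrary, $|\Phi_h(y)| \leq \Psi_h(y)$ for a.e.\ $y \in \Omega$.

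For the second stage I would fix a countable dense family $\{h_j\}_{j \in \bbN}$ in the cone of nonnegative $L^2 \cap L^{\infty}_c(\Omega)$ functions (dense, say, in $L^1_{\loc}(\Omega)$). The previous step produces for each $j$ a null set $N_j \subset \Omega$ outside which $|\Phi_{h_j}(y)| \leq \Psi_{h_j}(y)$; setting $N := \bigcup_{j \in \bbN} N_j$ one has, for every $y \in \Omega \setminus N$ simultaneously,
\[
\biggl|\int_\Omega h_j(x) A(x,y)\, d^n x\biggr| \leq \int_\Omega h_j(x) B(x,y)\, d^n x, \quad j \in \bbN.
\]
Fix such a $y$. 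The measures $d\mu_y := A(\, \cdot \,, y)\, d^n x$ (complex) and $d\nu_y := B(\, \cdot \,, y)\, d^n x$ (real, and in fact nonnegative since the hypothesis forces $B$ to send nonnegatives to nonnegatives, whence $B(x,y) \geq 0$ a.e.) are locally finite on $\Omega$ by the integrability assumed implicitly in the integral-operator setting, and by density the inequality above extends from $\{h_j\}$ to all nonnegative $h \in C_c(\Omega)$. This is precisely the statement that $|\mu_y| \leq \nu_y$ as Borel measures on $\Omega$, and since $d|\mu_y| = |A(\, \cdot \,, y)|\, d^n x$, we conclude $|A(x,y)| \leq B(x,y)$ for a.e.\ $x \in \Omega$. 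Fubini's theorem then promotes this to $|A(x,y)| \leq B(x,y)$ for a.e.\ $(x,y) \in \Omega \times \Omega$.

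The main obstacle I anticipate is the Fubini step in the first stage: although $A$ and $B$ are integral operators in the classical sense, their kernels need not be locally integrable on $\Omega \times \Omega$, so one must justify the swap of integration order by a truncation/monotone convergence argument (for instance, replacing $A(x,y)$ by $A(x,y)\chi_{\{|A|\leq k\}}$ on $E \times F$ and passing to the limit using dominated convergence on the $f$-integral, which is controlled by $\|h\|_{L^2}\|Af\|_{L^2}$). Once that is addressed, the remainder of the argument is purely a standard duality-plus-countability exercise.
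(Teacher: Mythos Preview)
Your two–stage duality approach is essentially correct, but it is organized differently from the paper and leaves two soft spots that deserve comment. First, the Fubini interchange you need in stage one is justified not by ``Hilbert--Schmidt truncations'' but simply by local integrability of the kernels on $\Omega\times\Omega$ together with the fact that $h,f$ are bounded and compactly supported; this local integrability is exactly what the paper establishes at the outset (via Tonelli, using that $B$ is automatically positivity preserving). Second, your closing step ``$\bigl|\int h_j\,d\mu_y\bigr|\le\int h_j\,d\nu_y$ for all $j$ hence $|\mu_y|\le\nu_y$'' is not a purely measure-theoretic tautology: for complex $\mu_y$ one still has to localize (e.g., take $h_j$ approximating normalized characteristic functions of balls and pass to Lebesgue points of $A(\,\cdot\,,y)$ and $B(\,\cdot\,,y)$), so Lebesgue differentiation is hidden in that sentence. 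Once those two points are made precise, your argument goes through.

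For comparison, the paper avoids the two-stage structure altogether. It tests the hypothesis directly with $f=\chi_{B_\varepsilon(x)}$, obtaining $\bigl|\int_{B_\varepsilon(x)}A(x',y)\,dx'\bigr|\le\int_{B_\varepsilon(x)}B(x',y)\,dx'$ for a.e.\ $y$; then, to handle the complex phase, it uses the elementary inequality $\Re\bigl(e^{i\alpha}A(x',y)\bigr)\le|A(x',y)|$ for each rational $\alpha$, integrates additionally over $B_\varepsilon(y)$, applies Lebesgue differentiation on the product $\Omega\times\Omega$ in one stroke, and finally takes the supremum over $\alpha\in\bbQ$ to recover $|A(x,y)|\le B(x,y)$. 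The phase trick over a countable set of angles replaces your duality test $f=\overline{\Phi_h}/|\Phi_h|\,\chi_F$, and the product-space differentiation replaces your sequential $y$-then-$x$ argument; the bookkeeping of null sets is thereby a little cleaner.
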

\begin{proof} 
That \eqref{A.12} implies \eqref{A.11} has just been shown in \eqref{A.13}. 

To prove the converse, suppose \eqref{A.11} holds. Since the operators $A$ and $B$ are bounded operators on $L^2(\Omega; d^n x)$, it follows from Tonelli's theorem that the integral kernels $A(\, \cdot\,,\,\cdot\,)$ and $B(\, \cdot\,,\,\cdot\,)$ are locally integrable functions on $\Omega \times \Omega$ (with respect to $\sigma \otimes \sigma$). 

Let $x,y \in \Omega$ be arbitrary and let $\alpha\in \bbQ$. For $\varepsilon>0$ consider the open ball ball $B_\varepsilon(x) \subset \Omega$ of radius $\varepsilon$ with the center at $x \in \Omega$. The fact 
$f_{\varepsilon,x} = \chi_{B_\varepsilon(x)}(\cdot)\in L^2(\Omega; d^n x)$ and assumption \eqref{A.11} imply that
\begin{equation}
\Big|\int_{B_\varepsilon(x)}d^n x' \,  A(x',y)\Big|=\big|(Af_{\varepsilon,x})(y)\big| 
\leq (B|f_{\varepsilon,x}|)(y)\leq \int_{B_\varepsilon(x)}d^n x' \, B(x',y), 
\end{equation}
for $\sigma$-a.e.~$y \in \Omega$. Since $\Re \big(e^{i\alpha}z\big)\leq |z|$ for all $z\in\bbC$, it follows that
\begin{equation}
\int_{B_\varepsilon(x)}d^n x' \, \Re\big(e^{i\alpha} A(x',y)\big)\leq \int_{B_\varepsilon(x)}d^n x' \,  B(x',y),   \lb{A.15} 
\end{equation}
for $\sigma$-a.e.~$y \in \Omega$. Integrating inequality \eqref{A.15} over the ball $B_\varepsilon(y)$ implies
\begin{equation}  
\int_{B_\varepsilon(y)}d^n y' \int_{B_\varepsilon(x)}d^n x' \, 
\Re \big(e^{i\alpha} A(x',y')\big)\leq \int_{B_\varepsilon(y)}d^n y' \int_{B_\varepsilon(x)}d^n x' \, B(x',y'). 
\end{equation}
Since $A(\, \cdot\,,\,\cdot\,)$ and $B(\, \cdot\,,\,\cdot\,)$ are locally integrable functions, an application of Fubini's theorem yields
\begin{align}\lb{A.14}
\begin{split} 
\frac1{|B_\varepsilon(\cdot)|^2}\int_{B_\varepsilon(x)\times B_\varepsilon(y)}d(\sigma \otimes \sigma)(x',y')\, 
\Re \big(e^{i\alpha} A(x',y')\big)     \\
\leq \frac1{|B_\varepsilon(\cdot)|^2}\int_{B_\varepsilon(x)\times B_\varepsilon(y)} 
d(\sigma \otimes \sigma)(x',y') \, B(x',y').
\end{split} 
\end{align} 
Moreover, since both $A(\, \cdot\,,\,\cdot\,)$ and $B(\, \cdot\,,\,\cdot\,)$ are locally integrable functions, it follows that almost every point $(x,y) \in \Omega \times \Omega$ is a Lebesgue point for $B(\, \cdot\,,\,\cdot\,)$ and for $\Re \big(e^{i\alpha} A(\, \cdot\,,\,\cdot\,)\big)$. Hence, letting $\varepsilon\to 0$ in inequality \eqref{A.14}, one infers that for any $\alpha\in\bbQ$ 
\begin{equation}  
\Re \big(e^{i\alpha} A(x,y)\big)\leq B(x,y),
\end{equation}
for $\sigma \otimes \sigma$-a.e.~$(x,y) \in \Omega \times \Omega$.
Taking the supremum over all $\alpha\in\mathbb{Q},$ one obtains 
\begin{equation}  
|A(x,y)|\leq B(x,y),
\end{equation}
for $\sigma \otimes \sigma$-a.e.~$(x,y) \in \Omega \times \Omega$, proving \eqref{A.12}.
\end{proof}

\section{Asymptotic Results for Hankel Functions} \lb{sB}
\renewcommand{\theequation}{B.\arabic{equation}}
\renewcommand{\thetheorem}{B.\arabic{theorem}}
\setcounter{theorem}{0} \setcounter{equation}{0}

In this appendix we collect asymptotic results for Hankel functions in the regions of large and small arguments.  To set the stage, we recall some details on the analytic behavior of $H_{\nu}^{(1)}(\, \cdot \,)$ 
(cf.\ \cite[p.~358--360]{AS72}):
\begin{align}
& H_{\nu}^{(1)} (\zeta) = J_{\nu} (\zeta) + i Y_{\nu} (\zeta), \quad \nu \in \bbC, \; \zeta \in \bbC \backslash \{0\},  
\lb{5.8} \\
& J_{\nu} (\zeta) = (\zeta/2)^{\nu} \sum_{k \in \bbN_0} [k! \Gamma(\nu + k + 1)]^{-1} (-1)^k (\zeta/2)^{2k}, 
\lb{5.8a} \\  
& Y_{\nu} (\zeta) = [\sin(\nu \pi)]^{-1} [J_{\nu}(\zeta)  \cos(\nu \pi) - J_{- \nu} (\zeta)],   \lb{5.8b} \\
& Y_n (\zeta) = - \pi^{-1} (\zeta/2)^{- n} \sum_{k=0}^{n-1} [k!]^{-1} [(n-k - 1)!] (\zeta/2\big)^{2k} 
+ 2 \pi^{-1} J_{n}(\zeta) \ln(\zeta/2)     \no \\
& \hspace*{1.2cm} - \pi^{-1} (\zeta/2)^n \sum_{k \in \bbN_0} [\psi(k+1) + \psi(n + k + 1)] [k! (n+k)!]^{-1} 
(-1)^k (\zeta/2)^{2k},     \no \\ 
& \hspace*{9.8cm} n \in \bbN,    \lb{5.8c} \\
&Y_0(\zeta)=\frac{2}{\pi}\{\ln(\zeta/2)+\gamma_{E-M}\}J_0(\zeta)-\frac{2}{\pi}\sum_{k=1}^{\infty}\bigg(\sum_{\ell=1}^k\frac{1}{\ell}\bigg)\frac{(-4)^{-k}\zeta^{2k}}{(k!)^2},\lb{5.8dd}\\
\begin{split} 
& J_{-n} (\zeta) = (-1)^n J_{n} (\zeta), \quad Y_{-n} (\zeta) = (-1)^n Y_{n} (\zeta), \quad n \in \bbN,   \\
& H_{- \nu}^{(1)} (\zeta) = e^{i \nu \pi} H_{\nu}^{(1)} (\zeta),     \lb{5.8d} 
\end{split} 
\end{align}
where (cf.\ \cite[p.~256]{AS72}) 
\begin{align}
\begin{split}
& \psi(\zeta) = \Gamma'(\zeta)/\Gamma(\zeta),      \\ 
& \psi(1) = - \gamma_{E-M}, \quad \psi(\ell) = - \gamma_{E-M} + \sum_{k=1}^{\ell-1} k^{-1},    \lb{5.8e} 
\end{split}
\end{align} 
and 
\begin{equation}
\gamma_{E-M} := \lim_{m \to \infty} \bigg( - \ln(m) + \sum_{k=1}^m k^{-1}\bigg) = 0.5772156649 \, ..... \lb{5.8f}
\end{equation}
denotes the Euler--Mascheroni constant (cf.\ \cite[p.~255]{AS72}). We also recall the asymptotic behavior 
(cf.\ \cite[p.~360]{AS72}, \cite[p.~723--724]{JN01})
\begin{align}
& H_0^{(1)} (\zeta) \underset{\substack{\zeta\to 0 \\ \zeta\in \bbC \backslash\{0\}}}{=} 
(2i/\pi) \ln(\zeta) + \Oh\big(|\ln(\zeta)| |\zeta|^2\big),  \lb{5.9} \\
& H_{\nu}^{(1)} (\zeta) \underset{\substack{\zeta\to 0 \\ \zeta\in \bbC \backslash\{0\}}}{=} 
- (i/\pi) 2^{\nu} \Gamma(\nu) \zeta^{- \nu} 
+ \begin{cases} \Oh\big(|\zeta|^{\min(\nu, - \nu + 2)}\big), & \nu \notin \bbN, \\
\Oh\big(|\ln(\zeta)| |\zeta|^{\nu}\big) + \Oh\big(\zeta^{ - \nu + 2}\big), & \nu \in \bbN, \end{cases}  \lb{5.10} \\
& \hspace*{9.17cm} \Re(\nu) > 0,  \no \\ 
& H_{\nu}^{(1)} (\zeta) \underset{\zeta \to \infty}{=} (2/\pi)^{1/2} \zeta^{-1/2} e^{i [\zeta - (\nu \pi/2) - (\pi/4)]}, 
\quad \nu \geq 0, \; \Im(\zeta) \geq 0.   \lb{5.11} 
\end{align}

\subsection{Asymptotics of $H_{\nu}^{(1)}(\zeta)$ as $|\zeta| \to \infty$}

\begin{hypothesis}\lb{hD.1}
Let $\nu\in \bbC$ with $\Re(\nu+(1/2))>0$.
\end{hypothesis}

Assuming Hypothesis \ref{hD.1}, the Hankel function $H_{\nu}^{(1)}(\,\cdot\,)$ permits the following representation (cf., e.g., \cite[8.421.9]{GR80}, \cite[6.12(3)]{Wa66})
\begin{equation} \lb{D.1}
H_{\nu}^{(1)}(\zeta) = \bigg(\frac{2}{\pi\zeta}\bigg)^{1/2} \frac{e^{i[\zeta - (\pi/2)\nu - (\pi/4)]}}{\Gamma(\nu+(1/2))} \int_0^{\infty}du\, e^{-u}u^{\nu-(1/2)}\bigg(1 + \frac{iu}{2\zeta} \bigg)^{\nu-(1/2)},
\end{equation}
where $\Re(\nu + (1/2))>0$ and $-\pi/2 < \arg(\zeta) < 3\pi/2$.  We will derive the asymptotic behavior of $H_{\nu}^{(1)}(\zeta)$ as $|\zeta|\to \infty$ closely following the presentation given in \cite[Sect.~7.2]{Wa66}.

The factor in parentheses in the integrand in \eqref{D.1} may be expanded for any $p\in \bbN$ according to
\begin{align}
\bigg(1 + \frac{iu}{2\zeta} \bigg)^{\nu-(1/2)} &= \sum_{m=0}^{p-1} \frac{((1/2)-\nu)_m}{m!}\bigg(\frac{u}{2i\zeta}\bigg)^m\no\\
&\quad + \frac{((1/2)-\nu)_p}{(p-1)!} \bigg(\frac{u}{2i\zeta}\bigg)^p \int_0^1dt\, (1-t)^{p-1}\bigg(1-\frac{ut}{2i\zeta}\bigg)^{\nu-p-(1/2)},\lb{D.2}
\end{align}
where we have employed the Pochhammer symbol,
\begin{equation}
(a)_n = \frac{\Gamma(a+n)}{\Gamma(a)},\quad n\in \bbN,\; a\in \bbC\backslash \{0,-1,-2,-3,\ldots\}.
\end{equation}
We shall assume for convenience that $p\in \bbN$ is chosen sufficiently large to guarantee that 
$\Re(\nu-p-(1/2))\leq 0$, and we will comment on how to remove this restriction later.  Next, fix 
an angle $\delta\in (0,\pi/2)$ which satisfies
\begin{equation}
 |\arg(\zeta)-(\pi/2)|\leq \pi - \delta.
\end{equation}
With $\delta$ so chosen, one infers
\begin{equation}
\bigg|1-\frac{ut}{2i\zeta}\bigg| \geq \sin(\delta),\quad \bigg|\arg\bigg(1-\frac{ut}{2i\zeta}\bigg)\bigg| < \pi,
\end{equation}
for all $t\in [0,1]$ and all $u\in (0,\infty)$.  In particular,
\begin{equation} \lb{D.6}
\bigg|\bigg(1-\frac{ut}{2i\zeta}\bigg)^{\nu-p-(1/2)}\bigg| \leq e^{\pi |\Im(\nu)|}[\sin(\delta)]^{\Re(\nu - p -(1/2))} =: C_{\nu,p,\delta},
\end{equation}
where $C_{\nu,p,\delta}$ is independent of $\zeta$.  Using the expansion \eqref{D.2} in \eqref{D.1}, one obtains
\begin{align}
H_{\nu}^{(1)}(\zeta) &= \bigg(\frac{2}{\pi\zeta}\bigg)^{1/2} \frac{e^{i[\zeta - (\pi/2)\nu - (\pi/4)]}}{\Gamma(\nu+(1/2))}   \no\\
& \quad \times \int_0^{\infty}du\, e^{-u}u^{\nu-(1/2)}\Bigg[\sum_{m=0}^{p-1} \frac{((1/2)-\nu)_m}{m!}\bigg(\frac{u}{2i\zeta}\bigg)^m\no\\
&\hspace*{1.9cm}+\frac{((1/2)-\nu)_p}{(p-1)!} \bigg(\frac{u}{2i\zeta}\bigg)^p \int_0^1dt\, (1-t)^{p-1}\bigg(1-\frac{ut}{2i\zeta}\bigg)^{\nu-p-(1/2)}\Bigg]\no\\
&= \bigg(\frac{2}{\pi\zeta}\bigg)^{1/2} \frac{e^{i[\zeta - (\pi/2)\nu - (\pi/4)]}}{\Gamma(\nu+(1/2))}\Bigg[\sum_{m=0}^{p-1} \frac{((1/2)-\nu)_m\Gamma(\nu+m+(1/2))}{m!(2i\zeta)^m}\no\\
&\quad + \frac{((1/2)-\nu)_p}{(p-1)!}\int_0^{\infty}du\, e^{-u}u^{\nu-(1/2)} \bigg(\frac{u}{2i\zeta}\bigg)^p    \no \\ 
& \hspace*{3.95cm} \times \int_0^1dt\, (1-t)^{p-1}\bigg(1-\frac{ut}{2i\zeta}\bigg)^{\nu-p-(1/2)}\Bigg]\no\\
&= \bigg(\frac{2}{\pi\zeta}\bigg)^{1/2} e^{i[\zeta - (\pi/2)\nu - (\pi/4)]}\Bigg[\sum_{m=0}^{p-1} \frac{((1/2)-\nu)_m(\nu+(1/2))_m}{m!(2i\zeta)^m} + R_{\nu,p}^{(1)}(\zeta)\Bigg],     \lb{D.7}
\end{align}
where
\begin{align}
R_{\nu,p}^{(1)}(\zeta)&:= \frac{((1/2)-\nu)_p}{(p-1)!\Gamma(\nu+(1/2))}      \lb{D.8}\\
&\quad\times\int_0^{\infty}du\, e^{-u}u^{\nu-(1/2)} \bigg(\frac{u}{2i\zeta}\bigg)^p \int_0^1dt\, (1-t)^{p-1}\bigg(1-\frac{ut}{2i\zeta}\bigg)^{\nu-p-(1/2)}.\no
\end{align}
One observes that
\begin{align}
|R_{\nu,p}^{(1)}(\zeta)| & \leq \frac{C_{\nu,p,\delta}}{(p-1)!}\bigg|\frac{((1/2)-\nu)_p}{\Gamma(\nu+(1/2))(2i\zeta)^p}\bigg|  \no\\
&\quad \times \bigg[ \int_0^1dt\, (1-t)^{p-1}\bigg]\bigg[\int_0^{\infty}e^{-u}|u^{\nu+p-(1/2)}|\, du\bigg]\no\\
& = \wti C_{\nu,p,\delta}|\zeta|^{-p}.\lb{D.9}
\end{align}
As a consequence of \eqref{D.7}, \eqref{D.8}, and \eqref{D.9}, one infers that for any fixed $\delta\in (0,\pi/2)$,
\begin{align}
& H_{\nu}^{(1)}(\zeta) \underset{|\zeta|\to\infty}{=} \bigg(\frac{2}{\pi\zeta}\bigg)^{1/2} e^{i[\zeta -(\pi/2)\nu -(\pi/4)]} 
\Bigg[\sum_{m=0}^{p-1} \frac{((1/2)-\nu)_m(\nu+(1/2))_m}{m!(2i\zeta)^m}      \lb{D.10} \\
& \hspace*{2.25cm} + O(|\zeta|^{-p})\Bigg],   \quad  |\arg(\zeta)-(\pi/2)|\leq \pi - \delta.    \no
\end{align}

To obtain similar expansions when $\Re(\nu-p-(1/2))> 0$, one chooses $q\in \bbN$ so large that $\Re(\nu-q-(1/2)) < 0$, which requires $p<q$.  Then \eqref{D.7}, \eqref{D.8}, \eqref{D.9}, and \eqref{D.10} hold with $p$ replaced by $q$.  In particular, by \eqref{D.7}, for any fixed $\delta\in (0,\pi/2)$,
\begin{align}
H_{\nu}^{(1)}(\zeta) &= \bigg(\frac{2}{\pi\zeta}\bigg)^{1/2} e^{i[\zeta - (\pi/2)\nu - (\pi/4)]}\Bigg[\sum_{m=0}^{q-1} \frac{((1/2)-\nu)_m(\nu+(1/2))_m}{m!(2i\zeta)^m} + R_{\nu,q}^{(1)}(\zeta)\Bigg]\no\\
&= \bigg(\frac{2}{\pi\zeta}\bigg)^{1/2} e^{i[\zeta - (\pi/2)\nu - (\pi/4)]}\Bigg[\sum_{m=0}^{p-1} \frac{((1/2)-\nu)_m(\nu+(1/2))_m}{m!(2i\zeta)^m} + \wti R_{\nu,q}^{(1)}(\zeta)\Bigg],     \no \\
&\hspace*{6.1cm}  |\arg(\zeta)-(\pi/2)|\leq \pi - \delta,\lb{D.11}
\end{align}
where
\begin{equation} \lb{D.12}
\wti R_{\nu,p,q}^{(1)}(\zeta) = \sum_{m=p}^{q-1} \frac{((1/2)-\nu)_m(\nu+(1/2))_m}{m!(2i\zeta)^m} + R_{\nu,q}^{(1)}(\zeta).
\end{equation}

The following lemma provides sufficient conditions for the differentiability of an integral depending on a complex parameter.

\begin{lemma}[\cite{Ma01}]\lb{lD.1}
Let $(X,\cM,\mu)$ be a measure space, let $G\subset \bbC$ be an open set, and let $f:G\times X \to \bbC$ be a function which satisfies the following conditions:\\[1mm]
$(i)$ $f(\zeta,\,\cdot\,)$ is $\cM$-measurable for every $\zeta\in G$,\\[1mm]
$(ii)$ $f(\,\cdot\,,x)$ is holomorphic in $G$ for every $x\in X$, and\\[1mm]
$(iii)$ $\int_X d\mu\, |f(\,\cdot\,,x)|$ is locally bounded; that is, for every $\zeta_0\in G$, there exists $\varepsilon(\zeta_0)>0$ such that
\begin{equation}
\sup_{\substack{\zeta\in G \\ |\zeta-\zeta_0|\leq \varepsilon(\zeta_0)}} \int_X d\mu\, |f(\zeta,x)| < \infty.
\end{equation}
Then $\int_X d\mu\, f(\,\cdot\,,x)$ is holomorphic in $G$ and
\begin{equation}
\frac{d^n}{d\zeta^n} \int_X d\mu\, f(\zeta,x) = \int_X d\mu\, \frac{\partial^n}{\partial \zeta^n} f(\zeta,x)\quad \text{in $G$ for every $n\in \bbN$.}
\end{equation}
\end{lemma}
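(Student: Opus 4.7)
The plan is to reduce the assertion to the Cauchy integral formula applied pointwise in $x$, followed by an application of Fubini's theorem. Fix $\zeta_0\in G$ and choose $r>0$ small enough that the closed disk $\overline{B_r(\zeta_0)}$ is contained in $G\cap\{\zeta\in\bbC\,|\,|\zeta-\zeta_0|\le\varepsilon(\zeta_0)\}$. Hypothesis $(iii)$ then supplies a finite constant
\begin{equation*}
M:=\sup_{|\eta-\zeta_0|\le r}\int_X d\mu(x)\,|f(\eta,x)|<\infty.
\end{equation*}
By hypothesis $(ii)$, for each fixed $x\in X$ and each $\zeta\in B_r(\zeta_0)$ the Cauchy integral formula gives
\begin{equation*}
f(\zeta,x)=\frac{1}{2\pi i}\oint_{|\eta-\zeta_0|=r}\frac{f(\eta,x)}{\eta-\zeta}\,d\eta.
\end{equation*}

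Set $F(\zeta):=\int_X d\mu(x)\,f(\zeta,x)$, which is well-defined on $G$ by $(i)$ and $(iii)$. Integrating the displayed Cauchy formula over $X$ and swapping the order of integration via Fubini's theorem, which is justified for $|\zeta-\zeta_0|<r$ by the estimate
\begin{equation*}
\int_X d\mu(x)\oint_{|\eta-\zeta_0|=r}\frac{|f(\eta,x)|}{|\eta-\zeta|}\,|d\eta|\le\frac{2\pi r\,M}{r-|\zeta-\zeta_0|}<\infty,
\end{equation*}
yields the Cauchy-type representation
\begin{equation*}
F(\zeta)=\frac{1}{2\pi i}\oint_{|\eta-\zeta_0|=r}\frac{F(\eta)}{\eta-\zeta}\,d\eta,\quad\zeta\in B_r(\zeta_0).
\end{equation*}
Since $\zeta_0\in G$ was arbitrary, this representation shows that $F$ is holomorphic throughout $G$ (either by differentiation under the contour integral, where the denominator remains bounded away from zero, or by appealing to Morera's theorem).

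For the derivative formula, apply Cauchy's integral formula for the $n$th derivative of $f(\,\cdot\,,x)$ to obtain, for $\zeta\in B_r(\zeta_0)$,
\begin{equation*}
\frac{\partial^n}{\partial\zeta^n}f(\zeta,x)=\frac{n!}{2\pi i}\oint_{|\eta-\zeta_0|=r}\frac{f(\eta,x)}{(\eta-\zeta)^{n+1}}\,d\eta.
\end{equation*}
Integrating over $X$ and invoking Fubini's theorem once more, this time with the bound
\begin{equation*}
\int_X d\mu(x)\oint_{|\eta-\zeta_0|=r}\frac{|f(\eta,x)|}{|\eta-\zeta|^{n+1}}\,|d\eta|\le\frac{2\pi r\,M}{(r-|\zeta-\zeta_0|)^{n+1}}<\infty,
\end{equation*}
one recognizes the resulting right-hand side as $\frac{d^n}{d\zeta^n}F(\zeta)$ via the Cauchy formula for derivatives applied to $F$. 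This yields the claimed identity.

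The only substantive obstacle is the bookkeeping needed to legitimize Fubini's theorem, which reduces to the bound $M$ coming from hypothesis $(iii)$ together with the elementary estimate on $|\eta-\zeta|$ when $\zeta$ stays in a slightly smaller disk than the one whose boundary is the contour; everything else is a mechanical application of Cauchy's formulas. (A minor technical remark: joint measurability of $(\eta,x)\mapsto f(\eta,x)/(\eta-\zeta)$, needed to apply Fubini, follows from $(i)$ together with continuity of $f(\,\cdot\,,x)$ in $\eta$, which allows one to approximate the contour integral by Riemann sums in $\eta$.)
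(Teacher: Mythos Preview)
Your proof is correct and follows the standard Cauchy--Fubini argument. Note, however, that the paper does not supply its own proof of this lemma: it is simply quoted from Mattner~\cite{Ma01}, so there is no ``paper's proof'' to compare against beyond the observation that your approach is the classical one.
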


\begin{proposition} \lb{pD.2}
Assume Hypothesis \ref{hD.1}.  Let $p\in \bbN$, $u\in (0,\infty)$, and suppose $\Re(\nu - p -(1/2))\leq0$.  If
\begin{equation} \lb{D.15a}
\Omega_0 := \{\zeta\in \bbC\,|\, |\arg(\zeta) - (\pi/2)| < \pi\},
\end{equation}
then the function $a_{u,p,\nu}:\Omega_0\to \bbC$ defined by
\begin{equation} \lb{D.15}
a_{u,p,\nu}(\zeta) = \int_0^1dt\, (1-t)^p \bigg(1 - \frac{ut}{2i\zeta}\bigg)^{\nu-p-(1/2)},\quad \zeta\in \Omega_0,
\end{equation}
is analytic in $\Omega_0$ and
\begin{equation} \lb{D.16a}
\frac{d^n}{d\zeta^n} a_{u,p,\nu}(\zeta) =  \int_0^1dt\, (1-t)^p \frac{\partial^n}{\partial^n\zeta} \bigg(1 - \frac{ut}{2i\zeta}\bigg)^{\nu-p-(1/2)},\quad \zeta\in \Omega_0.
\end{equation}
\end{proposition}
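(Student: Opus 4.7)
The plan is to apply Lemma \ref{lD.1} directly with $X = [0,1]$ equipped with Lebesgue measure, $G = \Omega_0$, and integrand
\[
f(\zeta, t) := (1-t)^p \bigg(1 - \frac{ut}{2i\zeta}\bigg)^{\nu - p - (1/2)}, \quad \zeta \in \Omega_0, \; t \in [0,1],
\]
the complex power being defined via the principal branch. The three hypotheses of Lemma \ref{lD.1} must be verified. Measurability in $t$ (hypothesis $(i)$) is immediate, since $f(\zeta,\cdot)$ is continuous on $[0,1]$ for each fixed $\zeta \in \Omega_0$.

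For hypothesis $(ii)$ the key point is to confirm that $1 - ut/(2i\zeta)$ never lies on the branch cut $(-\infty, 0]$ when $\zeta \in \Omega_0$, $u > 0$, and $t \in [0,1]$, so that $\zeta \mapsto f(\zeta,t)$ is analytic on $\Omega_0$. Writing $\zeta = x + iy$, a direct computation yields
\[
1 - \frac{ut}{2i\zeta} = 1 + \frac{uty}{2(x^2+y^2)} + i\,\frac{utx}{2(x^2+y^2)}.
\]
For this quantity to lie in $(-\infty, 0]$ with $t \in (0,1]$ the imaginary part must vanish, forcing $x=0$, and then $\zeta = iy$ with the real part $1 + ut/(2y) \leq 0$ requires $y < 0$. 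Both conditions together place $\zeta$ on the negative imaginary axis, which is precisely the ray excluded from $\Omega_0 = \{\zeta \,|\, |\arg(\zeta) - \pi/2| < \pi\}$; the case $t=0$ gives the value $1$ trivially. Thus $f(\cdot,t)$ is analytic on $\Omega_0$ for every $t \in [0,1]$.

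Hypothesis $(iii)$ is the main technical step, and it is handled by reusing the bound \eqref{D.6} already established in the text preceding the proposition. Given $\zeta_0 \in \Omega_0$, choose $\delta = \delta(\zeta_0) \in (0, \pi/2)$ small enough that $|\arg(\zeta_0) - \pi/2| < \pi - 2\delta$, and then pick $\varepsilon(\zeta_0) > 0$ so small that the closed ball $\overline{B_{\varepsilon(\zeta_0)}(\zeta_0)}$ lies entirely inside the closed sector $\{\zeta \,|\, |\arg(\zeta) - \pi/2| \leq \pi - \delta\}$. On this sector the bound \eqref{D.6}, which uses exactly the standing hypothesis $\Re(\nu - p - (1/2)) \leq 0$, gives
\[
\bigg|\bigg(1 - \frac{ut}{2i\zeta}\bigg)^{\nu-p-(1/2)}\bigg| \leq C_{\nu,p,\delta},
\]
uniformly in $t \in [0,1]$, $u > 0$, and $\zeta \in \overline{B_{\varepsilon(\zeta_0)}(\zeta_0)}$. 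Hence
\[
\sup_{\zeta \in \overline{B_{\varepsilon(\zeta_0)}(\zeta_0)}} \int_0^1 dt\, |f(\zeta,t)| \leq \frac{C_{\nu,p,\delta}}{p+1} < \infty,
\]
which is the local boundedness required by $(iii)$. Lemma \ref{lD.1} then yields analyticity of $a_{u,p,\nu}$ on $\Omega_0$ together with the differentiation identity \eqref{D.16a} for every $n \in \bbN$. The hard part, such as it is, is the uniform angular margin $\delta$ used to apply \eqref{D.6} on a full closed neighborhood of $\zeta_0$; once that is fixed, the remaining verifications are immediate.
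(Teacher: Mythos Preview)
Your proof is correct and follows essentially the same route as the paper: both apply Lemma~\ref{lD.1} with $X=[0,1]$ and verify hypothesis $(iii)$ by localizing $\zeta_0$ in a sector $\Omega_\delta$ and invoking the uniform bound \eqref{D.6}. Your treatment is slightly more detailed in checking analyticity (you explicitly verify the branch cut is avoided, whereas the paper simply asserts it), but the argument is otherwise identical.
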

\begin{proof}
Let $p\in \bbN$, $u\in (0,\infty)$, $\nu \in \bbC$ with $\Re(\nu - p -(1/2))\leq0$.  It suffices to apply Lemma \ref{lD.1} to the function
\begin{equation} \lb{D.16}
f_{u,p,\nu}(\zeta,t) = (1-t)^p \bigg(1 - \frac{ut}{2i\zeta}\bigg)^{\nu-p-(1/2)},\quad \zeta\in \Omega_0,\; t\in (0,1).
\end{equation}
Of course, \eqref{D.16} defines a function which is Lebesgue measurable for each $\zeta\in \Omega_0$ and analytic in $\Omega_0$ for every $t\in (0,1)$.  Therefore, it remains to verify condition $(iii)$ in Lemma \ref{lD.1}.  To this end, let $\zeta_0\in \Omega_0$.  Choose $\delta \in (0,\pi/2)$ such that
\begin{equation}
\zeta_0\in \Omega_{\delta}:=\{\zeta\in \bbC\,|\, |\arg(\zeta)- (\pi/2)| < \pi - \delta\}.
\end{equation}
By \eqref{D.6}, one then infers
\begin{equation} \lb{D.19}
|a_{u,p,\nu}(\zeta)|=\bigg|\int_0^1dt\, f_{u,p,\nu}(\zeta,t)\bigg| \leq \int_0^1 dt\, |f_{u,p,\nu}(\zeta,t)| \leq \frac{C_{\nu,p,\delta}}{p},\quad \zeta\in \Omega_{\delta}.
\end{equation}
In particular, choosing $\varepsilon(\zeta_0)\in (0,1)$ so small that
\begin{equation}
\{\zeta\in \bbC\,|\, |\zeta-\zeta_0|<\varepsilon(\zeta_0)\}\subset \Omega_{\delta},
\end{equation}
one concludes
\begin{equation}
\sup_{\substack{\zeta\in \Omega_0 \\ |\zeta-\zeta_0|<\varepsilon(\zeta_0)}}\bigg|\int_0^1dt\, f_{u,p,\nu}(\zeta,t)\bigg| < \infty.
\end{equation}
Therefore, condition $(iii)$ in Lemma \ref{lD.1} holds, and it follows that $a_{u,p,\nu}$ is analytic in $\Omega_0$.
\end{proof}

\begin{proposition} \lb{pD.4}
Assume Hypothesis \ref{hD.1}, let $p\in \bbN$, and let $\Omega_0$ be defined as in \eqref{D.15a}.  The following statements hold:\\
$(i)$ If $\Re(\nu - p -(1/2))\leq0$, then the function $R_{\nu,p}^{(1)}:\Omega_0\to \bbC$ defined by \eqref{D.8} is analytic in $\Omega_0$.\\
$(ii)$  If $\Re(\nu - p -(1/2))>0$, then the function $\wti R_{\nu,p,q}^{(1)}:\Omega_0\to \bbC$ defined by \eqref{D.12} is analytic in $\Omega_0$ for every $q\in \bbN$ such that $\Re(\nu-q-(1/2)) < 0$.
\end{proposition}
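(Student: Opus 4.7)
The plan is to apply Lemma \ref{lD.1} once more, now to the outer $u$-integral in \eqref{D.8}, using Proposition \ref{pD.2} to supply the analyticity in $\zeta$ of the integrand. Specifically, strip off the constant prefactor and set
\begin{equation*}
F_{\nu,p}(\zeta,u) = e^{-u} u^{\nu-(1/2)} \Big(\tfrac{u}{2i\zeta}\Big)^{p} \int_0^1 dt\,(1-t)^{p-1}\Big(1-\tfrac{ut}{2i\zeta}\Big)^{\nu-p-(1/2)}, \quad \zeta\in\Omega_0,\; u\in(0,\infty),
\end{equation*}
so that $R_{\nu,p}^{(1)}(\zeta)$ is a constant multiple of $\int_0^\infty du\, F_{\nu,p}(\zeta,u)$. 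Measurability of $F_{\nu,p}(\zeta,\cdot)$ is clear from continuity. For analyticity in $\zeta\in\Omega_0$ with $u$ fixed, note that $\zeta\mapsto (2i\zeta)^{-p}$ is analytic on $\Omega_0$ since $0\notin\Omega_0$, and that the inner $t$-integral is analytic on $\Omega_0$ by a trivial variant of Proposition \ref{pD.2} (the proof only uses the bound \eqref{D.6}, which is insensitive to the precise power of $(1-t)$, so replacing $(1-t)^p$ by $(1-t)^{p-1}$ causes no change).

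For the crucial local boundedness requirement of Lemma \ref{lD.1}\,$(iii)$, fix $\zeta_0\in\Omega_0$ and select $\delta\in(0,\pi/2)$ together with $\varepsilon(\zeta_0)>0$ small enough that the closed disk $\overline{B_{\varepsilon(\zeta_0)}(\zeta_0)}$ is contained in $\Omega_\delta:=\{\zeta:|\arg\zeta-(\pi/2)|\leq \pi-\delta\}$ and is bounded away from $0$. The uniform bound \eqref{D.6} then yields
\begin{equation*}
|F_{\nu,p}(\zeta,u)| \leq \frac{C_{\nu,p,\delta}}{p}\, e^{-u} u^{\Re(\nu)-(1/2)+p}\,|2\zeta|^{-p},\quad \zeta\in\Omega_\delta,\; u\in(0,\infty).
\end{equation*}
Since Hypothesis \ref{hD.1} gives $\Re(\nu)+(1/2)>0$, one has $\Re(\nu)-(1/2)+p > p-1 \geq 0$, so $u\mapsto e^{-u}u^{\Re(\nu)-(1/2)+p}$ is integrable on $(0,\infty)$; combined with the uniform lower bound on $|\zeta|$, this gives
\begin{equation*}
\sup_{\zeta\in\overline{B_{\varepsilon(\zeta_0)}(\zeta_0)}} \int_0^\infty du\,|F_{\nu,p}(\zeta,u)| <\infty.
\end{equation*}
Lemma \ref{lD.1} then applies and proves part $(i)$.

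Part $(ii)$ follows at once: with $q\in\bbN$ chosen so that $\Re(\nu-q-(1/2))<0$, part $(i)$ applied with $p$ replaced by $q$ gives analyticity of $R_{\nu,q}^{(1)}$ on $\Omega_0$, and the Laurent polynomial $\sum_{m=p}^{q-1}\frac{((1/2)-\nu)_m(\nu+(1/2))_m}{m!(2i\zeta)^m}$ is visibly analytic on $\Omega_0$ (again because $0\notin\Omega_0$), so their sum $\wti R_{\nu,p,q}^{(1)}$ is analytic on $\Omega_0$. There is no genuinely hard step here; the only minor wrinkle is the exponent mismatch between $(1-t)^{p-1}$ in \eqref{D.8} and $(1-t)^p$ in \eqref{D.15}, which requires the brief observation above that Proposition \ref{pD.2}'s proof is insensitive to this.
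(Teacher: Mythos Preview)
Your proof is correct and follows essentially the same route as the paper's: both apply Lemma \ref{lD.1} to the outer $u$-integral, invoke Proposition \ref{pD.2} (or its trivial variant with $(1-t)^{p-1}$) for analyticity of the integrand, and use the uniform bound \eqref{D.6} together with the Gamma-integral $\int_0^\infty e^{-u}u^{\Re(\nu)+p-(1/2)}\,du<\infty$ for local boundedness. The only cosmetic difference is that the paper first pulls the factor $(2i\zeta)^{-p}$ outside the integral and shows analyticity of the resulting function $b_{p,\nu}(\zeta)=\int_0^\infty du\,e^{-u}u^{p+\nu-(1/2)}a_{u,p,\nu}(\zeta)$, whereas you keep it inside and compensate by noting $|\zeta|$ is bounded below on the small disk; both are fine.
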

\begin{proof}
Let $p\in \bbN$ and suppose $\Re(\nu - p -(1/2))\leq0$.  We begin with the proof of $(i)$.  It suffices to show that the function $b_{p,\nu}:\Omega_0\to \bbC$ defined by (cf.~\eqref{D.15})
\begin{align}
b_{p,\nu}(\zeta)&=\int_0^{\infty}du\, e^{-u}u^{p+\nu-(1/2)} \int_0^1dt\, (1-t)^{p-1}\bigg(1-\frac{ut}{2i\zeta}\bigg)^{\nu-p-(1/2)}\no\\
&=\int_0^{\infty}du\, e^{-u}u^{p+\nu-(1/2)} a_{u,p,\nu}(\zeta),\quad \zeta\in \Omega_0,\lb{D.22}
\end{align}
is analytic in $\Omega_0$.  The function $e^{-u}u^{p+\nu-(1/2)} a_{u,p,\nu}(\zeta)$ is a measurable function of $u\in (0,\infty)$ for each $\zeta\in \Omega_0$ and is, by Proposition \ref{pD.2}, an analytic function of $\zeta\in \Omega_0$ for each $u\in (0,\infty)$.  Therefore, by Lemma \ref{lD.1}, it suffices to prove that for each $\zeta_0\in \Omega_0$, there exists $\varepsilon(\zeta_0)\in (0,\infty)$ such that
\begin{equation} \lb{D.23}
\sup_{\substack{\zeta\in \Omega_0 \\ |\zeta-\zeta_0|<\varepsilon(\zeta_0)}}\bigg| \int_0^{\infty}du\, e^{-u}u^{p+\nu-(1/2)} a_{u,p,\nu}(\zeta)\bigg|<\infty.
\end{equation}
To this end, let $\zeta_0\in \Omega_0$.  Choose $\delta \in (0,\pi/2)$ such that
\begin{equation}
\zeta_0\in \Omega_{\delta}:=\{\zeta\in \bbC\,|\, |\arg(\zeta)- (\pi/2)| < \pi - \delta\}.
\end{equation}
An application of \eqref{D.19} yields the following estimate:
\begin{align}
\bigg| \int_0^{\infty}du\, e^{-u}u^{p+\nu-(1/2)} a_{u,p,\nu}(\zeta)\bigg| & \leq \frac{C_{\nu,p,\delta}}{p}\int_0^{\infty}du\, e^{-u} u^{\Re(p+\nu-(1/2))}\\
&= \frac{C_{\nu,p,\delta}}{p}\Gamma\bigg(\Re\bigg(p+\nu+(1/2)\bigg)\bigg),\quad \zeta\in \Omega_{\delta}.
\end{align}
Thus, one obtains \eqref{D.23} by choosing $\varepsilon(\zeta_0)\in (0,1)$ so small that
\begin{equation}
\{\zeta\in \bbC\,|\, |\zeta-\zeta_0|<\varepsilon(\zeta_0)\}\subset \Omega_{\delta}.
\end{equation}
Finally, to prove item $(ii)$, suppose that $\Re(\nu - p -(1/2))>0$ and $q\in \bbN$ with $\Re(\nu-q-(1/2)) < 0$.  The first term on the right-hand side in \eqref{D.12} is analytic in $\bbC\backslash\{0\}$, while the second term on the right-hand side in \eqref{D.12} is analytic in $\Omega _0$ by the statement in $(i)$. Hence, the statement in $(ii)$ follows from the subspace property of analytic functions.
\end{proof}

\begin{remark}
Of course, analyticity of $R_{\nu,p}^{(1)}$ (resp., $\wti R_{\nu,p,q}^{(1)}$) follows immediately from \eqref{D.7} (resp., \eqref{D.11}).  However, the proof of Proposition \ref{pD.4} shows that the $\zeta$-derivatives of $R_{\nu,p}^{(1)}$ may be computed by differentiating under the integrals in \eqref{D.8}.  In fact, as a consequence of \eqref{D.16a} and the proof of Proposition \ref{pD.4}, one infers that under the assumptions of Proposition \ref{pD.4},
\begin{align}
& \frac{\partial^n}{\partial^n\zeta} b_{p,\nu}(\zeta)=\int_0^{\infty}du\, e^{-u}u^{p+\nu-(1/2)} \int_0^1dt\, (1-t)^{p-1}\frac{\partial^n}{\partial^n\zeta}\bigg(1-\frac{ut}{2i\zeta}\bigg)^{\nu-p-(1/2)},    \no \\
& \hspace{8.5cm} \zeta\in \Omega_0,\; n\in \bbN.     \lb{D.30}
\end{align}
${}$ \hfill $\diamond$
\end{remark}

In order to state the next result, we introduce $\wti O$-notation.  Recall that if $\Omega\subseteq \bbC$ and $f,g:\Omega\to \bbC$, then one writes
\begin{equation}
f(\zeta)=O(g(\zeta)), \quad \zeta\in \Omega,
\end{equation}
if and only if there exists a constant $C\in (0,\infty)$ (independent of $\zeta \in \Omega$) such that
\begin{equation}
|f(\zeta)|\leq C|g(\zeta)|,\quad \zeta\in \Omega.
\end{equation}
One writes
\begin{equation}
f(\zeta)= \wti O(g(\zeta)), \quad \zeta\in \Omega,
\end{equation}
if and only if for each $n\in \bbN_0$,
\begin{equation} \lb{D.34a}
\frac{d^nf}{d\zeta^n} = O\bigg(\frac{d^ng}{d\zeta^n}\bigg),\quad \zeta\in \Omega.
\end{equation}
It is understood that the constant corresponding to \eqref{D.34a} will, in general, depend on $n\in \bbN_0$.

The principal asymptotic result for $H_{\nu}^{(1)}(\zeta) $ as $|\zeta| \to \infty$ can be summarized as follows: 

\begin{lemma}\lb{lB.6}
Assume Hypothesis \ref{hD.1} holds.  If $\delta\in (0,\pi/2)$, then
\begin{equation}
H_{\nu}^{(1)}(\zeta) = e^{i\zeta}\omega_{\nu}(\zeta),\quad \zeta\in \Omega_{\delta},
\end{equation}
where
\begin{equation} \lb{D.34}
\omega_{\nu}(\zeta) \underset{|\zeta|\to\infty}{=} \wti O\big((1+|\zeta|)^{-1/2}\big), \quad \zeta\in \Omega_{\delta}\cap\{z\in\bbC\,|\,|z|\geq 1\}.
\end{equation}
\end{lemma}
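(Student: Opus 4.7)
The plan is to define $\omega_\nu(\zeta) := e^{-i\zeta} H_\nu^{(1)}(\zeta)$ on the open set $\Omega_0$ from \eqref{D.15a}, which is analytic there by the analyticity of $H_\nu^{(1)}$, and to read off the required $\wti O$-bounds directly from the asymptotic expansion already established in this appendix. Multiplying \eqref{D.10} (or \eqref{D.11} if $\Re(\nu-p-(1/2))>0$) by $e^{-i\zeta}$ gives, for any $\delta\in(0,\pi/2)$ and $\zeta\in\Omega_\delta$,
\begin{equation*}
\omega_\nu(\zeta) = \bigg(\frac{2}{\pi\zeta}\bigg)^{1/2} e^{-i[(\pi/2)\nu + \pi/4]} \Bigg[\sum_{m=0}^{p-1} \frac{((1/2)-\nu)_m(\nu+(1/2))_m}{m!(2i\zeta)^m} + R_{\nu,p}^{(1)}(\zeta)\Bigg],
\end{equation*}
with $|R_{\nu,p}^{(1)}(\zeta)|\leq \wti C_{\nu,p,\delta}|\zeta|^{-p}$. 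Taking $p=1$ (after invoking \eqref{D.11} if necessary), the case $n=0$ of \eqref{D.34} is immediate: $|\omega_\nu(\zeta)|=O(|\zeta|^{-1/2})$ uniformly on $\Omega_\delta\cap\{|\zeta|\geq 1\}$.

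The main work lies in the $n\geq 1$ derivative bounds. I would apply Leibniz's rule to the product decomposition above. Differentiation of the explicit $\zeta^{-1/2}$ prefactor and of the polynomial-in-$\zeta^{-1}$ sum is elementary and produces terms of order $|\zeta|^{-1/2-m-n}=O(|\zeta|^{-1/2-n})$ for $|\zeta|\geq 1$. The remaining task is to show
\begin{equation*}
\frac{d^n R_{\nu,p}^{(1)}}{d\zeta^n}(\zeta) = O\big(|\zeta|^{-p-n}\big), \qquad \zeta\in\Omega_\delta,\ |\zeta|\geq 1,\ n\in\bbN_0,
\end{equation*}
after which Leibniz plus the $\Oh(|\zeta|^{-1/2-n})$ control of the remaining factors yields $\omega_\nu^{(n)}(\zeta)=O(|\zeta|^{-1/2-n})$, i.e., \eqref{D.34}. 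For this derivative bound I would invoke the analyticity of $R_{\nu,p}^{(1)}$ on $\Omega_0$ established in Proposition \ref{pD.4} together with the pointwise estimate $|R_{\nu,p}^{(1)}(w)|\leq \wti C_{\nu,p,\delta'}|w|^{-p}$ valid on an auxiliary sector $\Omega_{\delta'}$ with $\delta'\in(0,\delta)$, and apply Cauchy's integral formula on a circle of radius $r(\zeta)=c_\delta|\zeta|$ around $\zeta$:
\begin{equation*}
\bigg|\frac{d^n R_{\nu,p}^{(1)}}{d\zeta^n}(\zeta)\bigg| \leq \frac{n!}{r(\zeta)^n}\sup_{|w-\zeta|=r(\zeta)}|R_{\nu,p}^{(1)}(w)| \leq C_{n,\nu,p,\delta}|\zeta|^{-p-n}.
\end{equation*}

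The hard part, which is otherwise routine, will be the elementary geometric verification that there exists a constant $c_\delta>0$ (independent of $\zeta$) such that the closed disk $\{w\in\bbC:|w-\zeta|\leq c_\delta|\zeta|\}$ is contained in $\Omega_{\delta/2}$ for every $\zeta\in\Omega_\delta$ with $|\zeta|\geq 1$. This is a standard sector-geometry argument: the angular gap between $\partial\Omega_\delta$ and $\partial\Omega_{\delta/2}$ is $\delta/2$, and the distance from $\zeta\in\Omega_\delta$ to $\partial\Omega_{\delta/2}$ is bounded below by $|\zeta|\sin(\delta/2)$, so any $c_\delta<\sin(\delta/2)$ suffices and the remainder disk also satisfies $|w|\geq (1-c_\delta)|\zeta|$, giving $|w|^{-p}\leq (1-c_\delta)^{-p}|\zeta|^{-p}$ uniformly. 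With this step secured the Cauchy estimate applies uniformly in $\zeta\in\Omega_\delta\cap\{|\zeta|\geq 1\}$, and combining it with the elementary Leibniz calculation completes the proof.
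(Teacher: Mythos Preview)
Your proof is correct, but it diverges from the paper's at the key step of estimating the derivatives of the remainder $R_{\nu,p}^{(1)}$. The paper proceeds by differentiating under the double integral in \eqref{D.8}, invoking \eqref{D.30} (which is what Propositions~\ref{pD.2} and~\ref{pD.4} were set up to justify); it then bounds each resulting integrand using \eqref{D.6}. They only write out the cases $n=0$ and $n=1$ explicitly and remark that higher $n$ are analogous.

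Your Cauchy-estimate approach is a cleaner alternative: you use only the analyticity of $R_{\nu,p}^{(1)}$ on $\Omega_0$ and the \emph{zeroth-order} bound \eqref{D.9} on the slightly wider sector $\Omega_{\delta/2}$, together with the sector geometry $\mathrm{dist}(\zeta,\partial\Omega_{\delta/2})\geq|\zeta|\sin(\delta/2)$ to fit disks of radius $c_\delta|\zeta|$. This yields all derivative orders at once with no computation under the integral sign, and the subsequent Leibniz argument is identical in spirit to the paper's (the paper also writes $\omega_\nu$ as $\zeta^{-1/2}$ times $[1+R_{\nu,1}^{(1)}]$ and differentiates the product). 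The cost is the small geometric lemma, which you have handled correctly. Your treatment of the case $\Re(\nu-(3/2))>0$ via \eqref{D.11}--\eqref{D.12} also matches the paper's, since the extra polynomial-in-$\zeta^{-1}$ terms in $\wti R_{\nu,1,q}^{(1)}$ are harmless under differentiation.
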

\begin{proof}
Assume Hypothesis \ref{hD.1} holds.  We distinguish two cases: $\Re(\nu-(3/2))\leq 0$ and $\Re(\nu-(3/2))>0$.  If $\Re(\nu-(3/2))\leq 0$, then one may take $p=1$ in \eqref{D.7} to obtain
\begin{align}
H_{\nu}^{(1)}(\zeta) &= \bigg(\frac{2}{\pi\zeta}\bigg)^{1/2} e^{i[\zeta - (\pi/2)\nu - (\pi/4)]}\big[1 + R_{\nu,1}^{(1)}(\zeta)\big]\no\\
&= e^{i\zeta}\omega_{\nu}(\zeta),\quad \zeta\in \Omega_{\delta},
\end{align}
where
\begin{equation} \lb{D.36}
\omega_{\nu}(\zeta) = \bigg(\frac{2}{\pi\zeta}\bigg)^{1/2}e^{i[-(\pi/2)\nu -(\pi/4)]}\big[1 + R_{\nu,1}^{(1)}(\zeta)\big],\quad \zeta\in \Omega_{\delta}.
\end{equation}
It remains to prove $\omega_{\nu}(\,\cdot\,)$ defined by \eqref{D.36} satisfies \eqref{D.34}.  To prove this, it suffices to show
\begin{equation} \lb{D.37}
\zeta^{-1/2}\big[1 + R_{\nu,1}^{(1)}(\zeta)\big] \underset{|\zeta|\to\infty}{=} \wti O\big((1+|\zeta|)^{-1/2}\big),\quad \zeta\in \Omega_{\delta}\cap\{z\in\bbC\,|\,|z|\geq 1\};
\end{equation}
that is,
\begin{align} \lb{D.38}
\frac{d^n}{d\zeta^n}\zeta^{-1/2}\big[1 + R_{\nu,1}^{(1)}(\zeta)\big] \underset{|\zeta|\to\infty}{=} O\bigg(\frac{d^n}{d\zeta^n}(1+|\zeta|)^{-1/2}\bigg),&\\
\zeta\in \Omega_{\delta}\cap\{z\in\bbC\,|\,|z|\geq 1\},\; n\in \bbN_0.&\no
\end{align}
For $n=0$, the relation in \eqref{D.38} follows immediately from \eqref{D.9}.  To treat the derivatives in \eqref{D.38}, one differentiates under the integrals in \eqref{D.8}.  For simplicity, we only treat the case $n=1$ and omit the details for $n\geq 2$.  One computes
\begin{align}
&\frac{d}{d\zeta}\zeta^{-1/2}\big[1+ R_{\nu,1}^{(1)}(\zeta)\big] = -\frac{1}{2\zeta^{3/2}}\big[1+ R_{\nu,1}^{(1)}(\zeta)\big]  + \zeta^{-1/2}\frac{d}{d\zeta} R_{\nu,1}^{(1)}(\zeta)\no\\
&\quad \underset{|\zeta|\to\infty}{=} O\big((1+|\zeta|)^{-3/2}\big)\no\\
& \;\;\quad \qquad-\zeta^{-5/2}\frac{((1/2)-\nu)_1}{2i\Gamma(\nu+(1/2))} \int_0^{\infty}du\, e^{-u}u^{\nu+(1/2)}  \int_0^1dt\, \bigg(1-\frac{ut}{2i\zeta}\bigg)^{\nu-(3/2)}\no\\
& \;\;\quad \qquad -\zeta^{-7/2}\frac{((1/2)-\nu)_1(\nu-(3/2))}{4\Gamma(\nu+(1/2))}   
\int_0^{\infty}du\, e^{-u}u^{\nu+(3/2)}   \no \\
& \hspace*{6.2cm} \times \int_0^1dt\,t\bigg(1-\frac{ut}{2i\zeta}\bigg)^{\nu-(5/2)}\no\\
&\quad \underset{|\zeta|\to\infty}{=} O\big((1+|\zeta|)^{-3/2}\big),\quad \zeta\in \Omega_{\delta}\cap\{z\in\bbC\,|\,|z|\geq 1\}.\lb{D.39}
\end{align}
To obtain the final equality in \eqref{D.39}, one applies \eqref{D.6} to bound the two ($\zeta$-dependent) integrals with respect to $t\in (0,1)$.  This settles the case when $\Re(\nu-(3/2))\leq 0$.

If $\Re(\nu-(3/2))>0$, one chooses $q\in \bbN$ such that $\Re(\nu-q-(1/2))<0$.  Then
\begin{align}
H_{\nu}^{(1)}(\zeta) &= \bigg(\frac{2}{\pi\zeta}\bigg)^{1/2} e^{i[\zeta - (\pi/2)\nu - (\pi/4)]}\big[1 + \wti R_{\nu,1,q}^{(1)}(\zeta)\big]\no\\
&= e^{i\zeta}\omega_{\nu}(\zeta),\quad \zeta\in \Omega_{\delta},
\end{align}
where
\begin{equation}
\omega_{\nu}(\zeta) = \bigg(\frac{2}{\pi\zeta}\bigg)^{1/2}e^{i[-(\pi/2)\nu -(\pi/4)]}\big[1 + \wti R_{\nu,1,q}^{(1)}(\zeta)\big],\quad \zeta\in \Omega_{\delta}.
\end{equation}
Then, as a consequence of \eqref{D.12} and \eqref{D.37}, one obtains
\begin{equation}
\zeta^{-1/2}\big[1 + \wti R_{\nu,1,q}^{(1)}(\zeta)\big] \underset{|\zeta|\to\infty}{=} 
\wti O\big((1+|\zeta|)^{-1/2}\big),\quad \zeta\in \Omega_{\delta}\cap\{z\in\bbC\,|\,|z|\geq 1\}.
\end{equation}
\end{proof}

\subsection{Asymptotics of $H_{\nu}^{(1)}(\zeta)$ as $|\zeta| \to 0$}

Since the asymptotics derived here will be applied to Dirac operators, we only consider $\nu \in [0,\infty)$ from this point on.  We distinguish two cases:\\[1mm]
$(i)$ $\nu\in (0,\infty)\backslash \bbN$, \\[1mm]
and \\[1mm] 
$(ii)$ $\nu \in \bbN_0$.

\medskip
\noindent
Case $(i)$:  $\nu\in (0,\infty)\backslash \bbN$.

In this case, one has the following representation for $H_{\nu}^{(1)}$ in terms of Bessel functions:
\begin{equation}
H_{\nu}^{(1)}(\zeta) = [1+i\cot(\nu \pi)]J_{\nu}(\zeta) - i [\sin(\nu \pi)]^{-1} J_{-\nu}(\zeta),\quad \zeta\in \bbC.
\end{equation}

Repeated term-by-term differentiation of the series representations for $J_{\pm\nu}$ reveals
\begin{align}
\frac{d^k}{d\zeta^k}J_{\pm\nu}(\zeta) & \underset{|\zeta| \to 0}{=} \bigg(\frac{d^k}{d\zeta^k}\bigg[\frac{1}{\Gamma(1\pm\nu)}
(\zeta/2)^{\pm\nu}\bigg]\bigg)\big[1+O\big(|\zeta|^2\big)\big]     \\
& \underset{|\zeta| \to 0}{=} \frac{(\pm\nu)_k}{2^k\Gamma(1\pm\nu)}(\zeta/2)^{\pm\nu-k} 
\big[1+O\big(|\zeta|^2\big)\big],\quad |\zeta|\leq 1,\; k\in \bbN_0.   \no 
\end{align}
As a result,
\begin{align}
&\frac{d^k}{d\zeta^k}H_{\nu}^{(1)}(\zeta) \underset{|\zeta| \to 0}{=} 
[1+i\cot(\nu \pi)]\frac{(\nu)_k}{2^k\Gamma(\nu+1)}(\zeta/2)^{\nu-k}\big[1+O\big(|\zeta|^2\big)\big]     \\
&\quad  - i [\sin(\nu \pi)]^{-1} \frac{(-\nu)_k}{2^k\Gamma(1-\nu)}(\zeta/2)^{-\nu-k}
\big[1+O\big(|\zeta|^2\big)\big],\quad |\zeta|\leq 1,\; k\in \bbN_0,\no
\end{align}
which settles Case $(i)$.

\medskip
\noindent
Case $(ii)$: $\nu \in \bbN_0$.

Since $\nu$ is a nonnegative integer, we write 
\begin{equation} 
\wti n:=\nu\in \bbN_0.  
\end{equation} 
First, we treat the case $\wti n \in \bbN$. Then,
\begin{align} \lb{D.49}
J_{\wti n}(\zeta) = \sum_{m=0}^{\infty} \frac{(-1)^m}{m!(\wti n+m)!}(\zeta/2)^{2m+\wti n},\quad \zeta\in \bbC,
\end{align}
and
\begin{align}
Y_{\wti n}(\zeta)&= -\frac{1}{\pi}(\zeta/2)^{- \wti n}\sum_{m=0}^{\wti n-1}\frac{(\wti n-m-1)!}{m!}(\zeta/2)^{2m} 
+ \frac{2}{\pi}\ln(\zeta/2)J_{\wti n}(\zeta)\no\\
&\quad -\frac{1}{\pi}(\zeta/2)^{\wti n} \sum_{m=0}^{\infty}[\psi(m+1) 
+ \psi(\wti n+m+1)]\frac{(-1)^m}{m!(\wti n+m)!}(\zeta/2)^{2m}\no\\
&=: Y_{1,\wti n}(\zeta) + Y_{2,\wti n}(\zeta) + Y_{3,\wti n}(\zeta),\quad \zeta\in \bbC\backslash\{0\}.
\end{align}
Repeated differentiation of the series representation for $J_{\wti n}$ yields
\begin{align}
\frac{d^k}{d\zeta^k}J_{\wti n}(\zeta) \underset{|\zeta| \to 0}{=} \frac{(\wti n)_k}{2^k \wti n!}(\zeta/2)^{\wti n-k}
\big[1+O\big(|\zeta|^2\big)\big],\quad |\zeta|\leq 1,\; k\in \bbN\cap[0,\wti n],\lb{D.51}
\end{align}
and
\begin{align}
\frac{d^k}{d\zeta^k}J_{\wti n}(\zeta) \underset{|\zeta| \to 0}{=} 
\begin{cases}
\frac{(-1)^{(k- \wti n)/2}k!}{2^k\left((k - \wti n)/2\right)!\left((k + \wti n)/2\right)!}\big[1+O\big(|\zeta|^2\big)\big],& \text{$\wti n+k$ even},\\[2mm]
\frac{(-1)^{(k - \wti n +1)/2}(k+1)!}{2^{k+1}\left((k - \wti n +1)/2\right)!\left((k + \wti n +1)/2\right)!}\zeta\big[1+O\big(|\zeta|^2\big)\big],& \text{$\wti n+k$ odd},
\end{cases}
&\lb{D.52}\\
|\zeta|\leq 1,\,k\in \bbN\cap(\wti n,\infty).&\no
\end{align}

Repeated differentiation of $Y_{1,\wti n}$ yields
\begin{align}
\frac{d^k}{d\zeta^k}Y_{1,\wti n}(\zeta) &\underset{|\zeta| \to 0}{=} -\pi^{-1} 
\bigg(\frac{d^k}{d\zeta^k}\big[(\zeta/2)^{-\wti n}\big]\bigg)\big[1+O\big(|\zeta|^2\big)\big]\no\\
&\underset{|\zeta| \to 0}{=} -\frac{(-\wti n)_k}{2^k\pi}(\zeta/2)^{-\wti n-k}\big[1+O\big(|\zeta|^2\big)\big],
\quad |\zeta|\leq 1,\; k\in \bbN_0. 
\lb{D.53}
\end{align}
In view of \eqref{D.49},
\begin{align}
Y_{2,\wti n}(\zeta) \underset{|\zeta| \to 0}{=} \frac{2}{\pi \wti n!}\ln(\zeta/2)(\zeta/2)^{\wti n} + \wti O\big(\zeta^{\wti n+2}\ln(\zeta)\big), 
\quad |\zeta| \leq 1.\lb{D.54}
\end{align}
Differentiation of the series representation of $Y_{3,\wti n}$ yields
\begin{align}
\frac{d^k}{d\zeta^k}Y_{3,\wti n}(\zeta) \underset{|\zeta| \to 0}{=} 
-\frac{1}{\pi}\frac{[\psi(\wti n+1)-\gamma](\wti n)_k}{2^k \wti n!}
(\zeta/2)^{\wti n-k}\big[1+O\big(|\zeta|^2\big)\big],&\lb{D.55}\\
|\zeta|\leq 1,\; k\in \bbN\cap[0,\wti n],&\no
\end{align}
and
\begin{align}
&\frac{d^k}{d\zeta^k}Y_{3,\wti n}(\zeta)\lb{D.56}\\
&\quad \underset{|\zeta| \to 0}{=} 
\begin{cases}
-\frac{1}{\pi}\frac{\left[\psi\left([(k - \wti n)/2]+1\right)+\psi\left([(k + \wti n)/2]+1\right)\right](-1)^{(k - \wti n)/2} k!}{2^k\left((k - \wti n)/2\right)!\left((k + \wti n)/2\right)!}\big[1+O\big(|\zeta|^2\big)\big],  \\
\hspace*{7.05cm} \text{$\wti n+k$ even},\\
-\frac{1}{\pi}\frac{\left[\psi\left([(k - \wti n +1)/2]+1\right)+\psi\left([(k + \wti n +1)/2]+1\right)\right] 
(-1)^{(k - \wti n +1)/2} k!}{2^k\left((k - \wti n +1)/2\right)!\left((k + \wti n +1)/2\right)!}\zeta\big[1+O\big(|\zeta|^2\big)\big],  \\
\hspace*{8.35cm} \text{$\wti n+k$ odd},
\end{cases}
\no\\
&\hspace*{7.95cm}|\zeta|\leq 1,\; k\in \bbN\cap(\wti n,\infty).\no
\end{align}

In the remaining case $\wti n=0$, one obtains (cf., e.g., \cite[(11), (12)]{EGT18})
\begin{align}
J_0(\zeta) \underset{|\zeta| \to 0}{=} 1 + O\big(\zeta^2\big),\quad |\zeta|\leq 1,\lb{D.57}
\end{align}
with
\begin{align}
\frac{d^k}{d\zeta^k}J_0(\zeta) \underset{|\zeta| \to 0}{=}
\begin{cases}
\frac{(-1)^{k/2}k!}{2^k [(k/2)!]^2}\big[1+O\big(|\zeta|^2\big)\big],& \text{$k$ even},\\[2mm]
\frac{(-1)^{(k + 1)/2}(k+1)!}{2^{k+1} [((k + 1)/2)!]^2} \zeta\big[1+O\big(|\zeta|^2\big)\big],& \text{$k$ odd},
\end{cases}
&\lb{D.58}\\
|\zeta|\leq 1,\; k\in \bbN,&\no
\end{align}
and
\begin{align}
Y_0(\zeta) \underset{|\zeta| \to 0}{=} \frac{2}{\pi}\ln(\zeta/2) 
+ \frac{2\gamma}{\pi} + \wti O\big(\zeta^2\ln(\zeta)\big),\quad |\zeta| \leq 1.\lb{D.59}
\end{align}

Finally, to obtain expressions for the derivatives of $H_{\wti n}^{(1)}$, one applies the representation
\begin{equation}
H_{\wti n}^{(1)}(\zeta) = J_{\wti n}(\zeta) + iY_{\wti n}(\zeta),\quad \zeta\in \bbC\backslash \{0\}, 
\; \wti n\in \bbN_0.
\end{equation}
If $\wti n \in \bbN$, then
\begin{align}
&\frac{d^k}{d \zeta^k}H_{\wti n}^{(1)}(\zeta) \underset{|\zeta| \to 0}{=} \frac{(\wti n)_k}{2^k \wti n!}(\zeta/2)^{\wti n-k}\big[1+O\big(|\zeta|^2\big)\big]    \no \\
& \quad - i \frac{(-\wti n)_k}{2^k\pi}(\zeta/2)^{-\wti n-k}\big[1+O\big(|\zeta|^2\big)\big]\no\\
&\quad + i \frac{2}{\pi \wti n!}\frac{d^k}{d\zeta^k} \big[\ln(\zeta/2) (\zeta/2)^{\wti n}\big] 
+ O\bigg(\frac{d^k}{d\zeta^k}\big[\zeta^{\wti n+2}\ln(\zeta)\big]\bigg)\no\\
&\quad -\frac{i}{\pi}\frac{[\psi(\wti n+1)-\gamma](\wti n)_k}{2^k \wti n!}(\zeta/2)^{\wti n-k}\big[1+O\big(|\zeta|^2\big)\big],\quad |\zeta| \leq 1,\; k\in \bbN\cap[0,\wti n],\no
\end{align}
while
\begin{align}
&\frac{d^k}{d\zeta^k}H_{\wti n}^{(1)}(\zeta) \underset{|\zeta| \to 0}{=} 
\frac{(-1)^{(k - \wti n)/2}k!}{2^k\left((k - \wti n)/2\right)!\left((k + \wti n)/2\right)!}
\big[1+O\big(|\zeta|^2\big)\big]    \no \\
& \quad -i\frac{(-\wti n)_k}{2^k\pi}(\zeta/2)^{-\wti n-k}\big[1+O\big(|\zeta|^2\big)\big]\no\\
&\quad + i \frac{2}{\pi \wti n!}\frac{d^k}{d\zeta^k}\big[(\zeta/2)^{\wti n} \ln(\zeta/2)\big] 
+ O\bigg(\frac{d^k}{d\zeta^k}\big[\zeta^{\wti n+2}\ln(\zeta)\big]\bigg)\no\\
&\quad -\frac{i}{\pi}\frac{\left[\psi\left([(k - \wti n)/2]+1\right) 
+ \psi\left([(k + \wti n)/2]+1\right)\right](-1)^{(k - \wti n)/2}k!}{2^k\left((k - \wti n)/2\right)!\left((k + \wti n)/2\right)!}\big[1+O\big(|\zeta|^2\big)\big],\no\\
&\hspace*{6.2cm} |\zeta| \leq 1,\; k\in \bbN\cap(\wti n,\infty),\; \text{$\wti n+k$ even},\no
\end{align}
and
\begin{align}
& \frac{d^k}{d\zeta^k}H_{\wti n}^{(1)}(\zeta) \underset{|\zeta| \to 0}{=}\frac{(-1)^{(k - \wti n +1)/2}(k+1)!}{2^{k+1}\left((k - \wti n +1)/2\right)!\left((k + \wti n +1)/2\right)!}\zeta\big[1+O\big(|\zeta|^2\big)\big]   \no \\
& \quad -i\frac{(-\wti n)_k}{2^k\pi}(\zeta/2)^{-\wti n-k}\big[1+O\big(|\zeta|^2\big)\big]\no\\
&\quad + i \frac{2}{\pi \wti n!}\frac{d^k}{d\zeta^k}\big[(\zeta/2)^{\wti n} \ln(\zeta/2)\big] 
+ O\bigg(\frac{d^k}{d\zeta^k}\big[\zeta^{\wti n+2}\ln(\zeta)\big]\bigg)\no\\
&\quad -\frac{i}{\pi}\frac{\left[\psi\left([(k - \wti n +1)/2]+1\right)+\psi\left([(k + \wti n +1)/2]+1\right)\right]
(-1)^{(k - \wti n +1)/2} k!}{2^k\left((k - \wti n +1)/2\right)!\left((k + \wti n +1)/2\right)!}\zeta      \no \\
& \qquad  \times \big[1+O\big(|\zeta|^2\big)\big], \quad  |\zeta| \leq 1,\; k\in \bbN\cap(\wti n,\infty),\; 
\text{$\wti n+k$ odd}.\no
\end{align}

In the case $\wti n=0$,
\begin{align}
\frac{d^k}{d\zeta^k}H_0^{(1)}(\zeta) & \underset{|\zeta| \to 0}{=} \frac{(-1)^{k/2}k!}{2^k [(k/2)!]^2}
\big[1+O\big(|\zeta|^2\big)\big]     \no \\ 
& \qquad \; \; + i\frac{d^k}{d\zeta^k}\bigg[\frac{2}{\pi}\ln(\zeta/2) + \frac{2\gamma}{\pi}\bigg] + O\bigg(\frac{d^k}{d\zeta^k}\big[\zeta^2\ln(\zeta)\big]\bigg),     \\
&\hspace*{4.4cm}|\zeta| \leq 1,\; k\in \bbN_0, \, \text{$k$ even},     \no
\end{align}
and
\begin{align}
\frac{d^k}{d\zeta^k}H_0^{(1)}(\zeta) & \underset{|\zeta| \to 0}{=} 
\frac{(-1)^{(k + 1)/2}(k+1)!}{2^{k+1} [((k + 1)/2)!]^2}\zeta\big[1+O\big(|\zeta|^2\big)\big]   \no \\
& \qquad \; \; + i\frac{d^k}{d\zeta^k}\bigg[\frac{2}{\pi}\ln(\zeta/2) + \frac{2\gamma}{\pi}\bigg] + O\bigg(\frac{d^k}{d\zeta^k}\big[\zeta^2\ln(\zeta)\big]\bigg),     \\
& \hspace*{4.7cm} |\zeta| \leq 1,\; k\in \bbN, \, \text{$k$ odd}.\no
\end{align}

\section{Expansions and estimates of the Free Dirac Green's Matrix} \lb{sC}
\renewcommand{\theequation}{C.\arabic{equation}}
\renewcommand{\thetheorem}{C.\arabic{theorem}}
\setcounter{theorem}{0} \setcounter{equation}{0}

In this section, we investigate the behavior of the Green's function \eqref{5.17} of the massless Dirac operator and certain of its partial derivatives with respect to the energy parameter $z\in \bbC\backslash \bbR$.  Throughout, we assume that $n\in \bbN\backslash\{1\}$.

By \eqref{5.17},
\begin{align}
G_0(z;x,y) &= i2^{-1- (n/2)}\pi^{1-(n/2)}z^{n/2}|x-y|^{1-(n/2)}H_{(n/2)-1}^{(1)}(z|x-y|)I_N   \lb{B.1} \\
&\quad - 2^{-1-(n/2)}\pi^{1-(n/2)}z^{n/2}|x-y|^{1-(n/2)}H_{n/2}^{(1)}(z|x-y|) \alpha\cdot \frac{(x-y)}{|x-y|}.   \no 
\end{align}
Due to the distinct difference in the behavior of the Hankel function $H_{\nu}^{(1)}$ for integer and fractional values of $\nu$, we treat separately the cases where: $(\textbf{I})$ $n$ is odd, $(\textbf{II})$ $n\in\bbN\backslash\{2\}$ is even, and $(\textbf{III})$ $n=2$.

$(\textbf{I})$ If $n \in \bbN$ is odd, then $H_{(n/2)-1}^{(1)}$ and $H_{n/2}^{(1)}$ are fractional (half-integer) Hankel functions.  Applying the identity (cf., e.g., \cite[9.1.3]{AS72})
\begin{equation} \lb{B.2}
H_{\nu}^{(1)}(\zeta) = i [\sin(\nu \pi)]^{-1} \big[e^{-\nu \pi i}J_{\nu}(\zeta) - J_{-\nu}(\zeta)\big],\quad \nu\in \bbC,\; \zeta\in \bbC\backslash \{0\},
\end{equation}
one obtains
\begin{align}
H_{(n/2)-1}^{(1)}(\zeta) &= i [\sin(((n/2)-1)\pi)]^{-1} \Big[e^{-((n/2)-1)\pi i}J_{(n/2)-1}(\zeta) - J_{-(n/2)+1}(\zeta) \Big]\no\\
&= i(-1)^{(n+1)/2}\Big[-i (-1)^{(n+1)/2}J_{(n/2)-1}(\zeta) - J_{-(n/2)+1}(\zeta) \Big]\no\\
&= (-1)^{n+1}J_{(n/2)-1}(\zeta) - i (-1)^{(n+1)/2}J_{-(n/2)+1}(\zeta)\no\\
&= J_{(n/2)-1}(\zeta)-i (-1)^{(n+1)/2}J_{-(n/2)+1}(\zeta).\lb{B.3}
\end{align}
Similarly,
\begin{align}
H_{n/2}^{(1)}(\zeta)&= i [\sin((n/2)\pi)]^{-1} \Big[e^{-n\pi i/2}J_{n/2}(\zeta) - J_{-n/2}(\zeta)\Big]\no\\
&= i(-1)^{(n-1)/2}\Big[-i(-1)^{(n-1)/2}J_{n/2}(\zeta) - J_{-n/2}(\zeta)\Big]\no\\
&= (-1)^{n-1}J_{n/2}(\zeta) + i (-1)^{(n+1)/2}J_{-n/2}(\zeta)\no\\
&= J_{n/2}(\zeta) + i(-1)^{(n+1)/2}J_{-n/2}(\zeta).\lb{B.4}
\end{align}
The series representation for $J_{\nu}(\zeta)$ in \eqref{5.8a} then yields the following expansion:
\begin{align}
&G_0(z;x,y) = i2^{-n}\pi^{1-(n/2)}z^{n-1} 
\sum_{k=0}^{\infty}\frac{(-4)^{-k}z^{2k}|x-y|^{2k}}{k!\, \Gamma((n/2) + k))}I_N\no\\
&\quad\quad+4^{-1}(-1)^{(n+1)/2}\pi^{1-(n/2)}z|x-y|^{2-n}\sum_{k=0}^{\infty}\frac{(-4)^{-k}z^{2k}|x-y|^{2k}}{k!\, 
\Gamma(-(n/2)+k+2)}I_N\no\\
&\quad\quad-2^{-1-n}\pi^{1-(n/2)}z^n|x-y|\sum_{k=0}^{\infty} \frac{(-4)^{-k}z^{2k}|x-y|^{2k}}{k!\, \Gamma((n/2) + k +1)}\alpha\cdot \frac{(x-y)}{|x-y|}\no\\
&\quad\quad-i2^{-1}(-1)^{(n+1)/2}\pi^{1-(n/2)}|x-y|^{1-n}\sum_{k=0}^{\infty}\frac{(-4)^{-k}z^{2k}|x-y|^{2k}}{k!\, 
\Gamma(-(n/2)+k+1)}\alpha\cdot \frac{(x-y)}{|x-y|}\no\\
&\quad= i2^{-n}\pi^{1-(n/2)}\sum_{k=0}^{\infty} \frac{(-4)^{-k}z^{2k+n-1}|x-y|^{2k}}{k!\, \Gamma((n/2) + k)}I_N  \no \\
&\quad\quad + 4^{-1}(-1)^{(n+1)/2}\pi^{1-(n/2)}|x-y|^{2-n}\sum_{k=0}^{\infty} \frac{(-4)^{-k}z^{2k+1}|x-y|^{2k}}{k!\, \Gamma(-(n/2)+k+2)}I_N\no\\
&\quad \quad-2^{-1-n}\pi^{1-(n/2)}|x-y|\sum_{k=0}^{\infty}\frac{(-4)^{-k}z^{2k+n}|x-y|^{2k}}{k!\, \Gamma((n/2)+k+1)}\alpha\cdot\frac{(x-y)}{|x-y|}     \lb{B.5} \\
&\quad \quad -i2^{-1}(-1)^{(n+1)/2}\pi^{1-(n/2)}|x-y|^{1-n}\sum_{k=0}^{\infty}\frac{(-4)^{-k}z^{2k}|x-y|^{2k}}{k!\, 
\Gamma(-(n/2)+k+1)}\alpha\cdot\frac{(x-y)}{|x-y|}.\no
\end{align}
The identity in \eqref{B.5} implies
\begin{align}
&G_0(z;x,y) = -\frac{i(-1)^{(n+1)/2}\pi^{1-(n/2)}}{2\, \Gamma(1-(n/2))}|x-y|^{1-n} 
\big[1 + \Oh\big(z^2|x-y|^2\big)\big]\alpha\cdot\frac{(x-y)}{|x-y|}   \no \\
&\quad\quad\quad + \frac{(-1)^{(n+1)/2}\pi^{1-(n/2)}}{4\, \Gamma(2-(n/2))}|x-y|^{2-n}z\big[1 + \Oh\big(z^2|x-y|^2\big)\big]I_N\no\\
&\quad\quad\quad + \frac{i\pi^{1-(n/2)}}{2^n\Gamma(n/2)}z^{n-1}\big[1 + \Oh\big(z^2|x-y|^2\big)\big]I_N\no\\
&\quad\quad\quad -\frac{\pi^{1-(n/2)}}{2^{1+n}\Gamma(1+(n/2))}|x-y|z^n \big[1 + \Oh\big(z^2|x-y|^2\big)\big] \alpha\cdot\frac{(x-y)}{|x-y|}     \lb{B.6a} \\
&\hspace*{1.6cm} 
\text{as $z\to 0$, $z\in \ol{\bbC_+}\big\backslash\{0\}$, $x,y\in \bbR^n$, $x\neq y$, $n\in \bbN\backslash\{1\}$ odd.}\no
\end{align}
One notes that \eqref{B.6a} implies, together with the identity,
\begin{equation}
\Gamma(1-z)\Gamma(z) = \frac{\pi}{\sin(\pi z)},\quad z\in \bbC\backslash\bbZ,
\end{equation}
that
\begin{align}
 \lim_{\substack{z \to 0, \\ z \in \ol{\bbC_+} \backslash\{0\}}} G_0(z;x,y) &= -\frac{i(-1)^{(n+1)/2}\pi^{1-(n/2)}}{2\, 
 \Gamma(1-(n/2))}|x-y|^{1-n}\alpha\cdot\frac{(x-y)}{|x-y|}\no\\
&= -\frac{i(-1)^{(n+1)/2}\pi^{1-(n/2)}}{2\sin(n\pi/2)}\Gamma(n/2)|x-y|^{1-n}\alpha\cdot\frac{(x-y)}{|x-y|}\no\\
&= i2^{-1}\pi^{-n/2}\Gamma(n/2)\alpha\cdot\frac{(x-y)}{|x-y|^n},     \\ 
& \hspace*{-2mm} x,y\in \bbR^n,\; x\neq y, \; n\in \bbN\backslash\{1\} \text{ odd,}    \no 
\end{align}
consistent with \eqref{5.18}.

Partial derivatives of $G_0(z;x,y)$ with respect to $z$ may be computed by differentiating the series representations in \eqref{B.5} term-by-term.  For $n\in \bbN\backslash\{1\}$ odd and $r\in \bbN$ with $1\leq r\leq n$, one obtains
\begin{align}
&\frac{\partial^r}{\partial z^r}G_0(z;x,y) 
=i2^{-n}\pi^{1-(n/2)}\sum_{k=\delta_n(r)}^{\infty} \frac{(-4)^{-k}(2k+n-1)!\, z^{2k+n-1-r}|x-y|^{2k}}{k!\, (2k+n-1-r)!\, \Gamma((n/2) + k)}I_N\no\\
&\quad\quad + 4^{-1}(-1)^{(n+1)/2}\pi^{1-(n/2)}|x-y|^{2-n}\no\\
&\quad\quad\quad \times\sum_{k=k_-(r)}^{\infty}\frac{(-4)^{-k}(2k+1)!\, z^{2k+1-r}|x-y|^{2k}}{k!\, (2k+1-r)!\, 
\Gamma(-(n/2) + k + 2)}I_N\no\\
&\quad\quad - 2^{-1-n} \pi^{1-(n/2)}|x-y|\sum_{k=0}^{\infty} \frac{(-4)^{-k}(2k+n)!\, z^{2k+n-r}|x-y|^{2k}}{k!\, (2k+n-r)!\, \Gamma((n/2) + k +1)}\alpha\cdot\frac{(x-y)}{|x-y|}\no\\
&\quad\quad -i2^{-1}(-1)^{(n+1)/2}\pi^{1-(n/2)}|x-y|^{1-n}\no\\
&\quad\quad\quad \times\sum_{k=k_+(r)}^{\infty}\frac{(-4)^{-k}(2k)!\, z^{2k-r}|x-y|^{2k}}{k!\, (2k-r)!\, \Gamma(-(n/2)+k+1)}\alpha\cdot \frac{(x-y)}{|x-y|},\lb{B.10}
\end{align}
where 
\begin{equation}
k_{\pm}(r):=
\begin{cases}
(r\pm 1)/2,& \text{$r$ odd},\\
r/2,& \text{$r$ even},
\end{cases}  \quad 1 \leq r \leq n,  \lb{B.10Z}
\end{equation}
and $\delta_{n}$ is the Kronecker delta function,
\begin{equation}
\delta_n(r)=
\begin{cases}
1,& r=n,\\
0,& 1 \leq r \leq n-1,
\end{cases}  \quad 1 \leq r \leq n.     \lb{B.11Z} 
\end{equation}

The expansion in \eqref{B.10} implies the following asymptotics of $\frac{\partial^r}{\partial z^r}G_0(z;x,y)$ as $z\to 0$:\\[1mm] 
\noindent
$(i)$  If $n \in \bbN$ is odd and $1\leq r\leq n -2$ is odd, then
\begin{align}
&\frac{\partial^r}{\partial z^r}G_0(z;x,y) 
= \frac{i2^{-n}\pi^{1-(n/2)}(n-1)!}{(n-1-r)!\Gamma(n/2)}z^{n-1-r}\big[1+\Oh\big(z^2|x-y|^2\big)\big]I_N \no\\
&\quad\quad+\frac{(-1)^{(n+1)/2}\pi^{1-(n/2)}(-4)^{-(r+1)/2}r!}{[(r-1)/2]!\, \Gamma(-(n/2) 
+ ((r-1)/2)+2)}|x-y|^{1+r-n}\big[1+\Oh\big(z^2|x-y|^2\big)\big]I_N\no\\
&\quad\quad-\frac{2^{-1-n}\pi^{1-(n/2)}n!}{(n-r)!\, \Gamma(1+(n/2))}|x-y|z^{n-r}\big[1+\Oh\big(z^2|x-y|^2\big)\big]\alpha\cdot\frac{(x-y)}{|x-y|}\no\\
&\quad\quad - \frac{i2^{-1}(-1)^{(n+1)/2}\pi^{1-(n/2)}(-4)^{-(r+1)/2}(r+1)!}{[(r+1)/2]!\, 
\Gamma(-(n/2) + ((r+1)/2)+1)}|x-y|^{2+r-n}z\no\\
&\quad\quad\quad\times\big[1+\Oh\big(z^2|x-y|^2\big)\big]\alpha\cdot\frac{(x-y)}{|x-y|}   \lb{B.12y} \\
&\, \text{as $z\to 0$, $z\in \ol{\bbC_+}\big\backslash\{0\}$, $x,y\in \bbR^n$, $x\neq y$}.\no
\end{align}

\noindent
$(ii)$ If $n \in \bbN$ is odd and $1\leq r\leq n-1$ is even, then
\begin{align}
&\frac{\partial^r}{\partial z^r}G_0(z;x,y) 
= \frac{i2^{-n}\pi^{1-(n/2)}(n-1)!}{(n-1-r)!\Gamma(n/2)}z^{n-1-r}\big[1+\Oh\big(z^2|x-y|^2\big)\big]I_N  \no \\
&\quad\quad+\frac{4^{-1}(-1)^{(n+1)/2}\pi^{1-(n/2)}(-4)^{-r/2}(r+1)!}{(r/2)!\, \Gamma(-(n/2)+(r/2)+2)}|x-y|^{2+r-n} \no \\
&\qquad \quad \times z\big[1+\Oh\big(z^2|x-y|^2\big)\big]I_N\no\\
&\quad\quad - \frac{2^{-1-n}\pi^{1-(n/2)}n!}{(n-r)!\, \Gamma(1+(n/2))}|x-y|z^{n-r}\big[1+\Oh\big(z^2|x-y|^2\big)\big]\alpha\cdot\frac{(x-y)}{|x-y|}\no\\
&\quad\quad -\frac{i2^{-1}(-1)^{(n+1)/2}\pi^{1-(n/2)}(-4)^{-r/2}r!}{(r/2)!\, \Gamma(-(n/2)+(r/2)+1)}|x-y|^{1+r-n}\no\\
&\quad\quad\quad\times\big[1+\Oh\big(z^2|x-y|^2\big)\big]\alpha\cdot\frac{(x-y)}{|x-y|}   \lb{B.14y} \\
&\, \text{as $z\to 0$, $z\in \ol{\bbC_+}\big\backslash\{0\}$, $x,y\in \bbR^n$, $x\neq y$}.\no
\end{align}

\noindent
$(iii)$  If $n \in \bbN$ is odd, then
\begin{align}
&\frac{\partial^n}{\partial z^n}G_0(z;x,y) 
= -\frac{i2^{-n}\pi^{1-(n/2)}}{4\Gamma(1+(n/2))}|x-y|^2z\big[1+\Oh\big(z^2|x-y|^2\big)\big]I_N  \no \\
&\quad\quad +\frac{2(-1)^{(n+1)/2}\pi^{(1-n)/2}(-4)^{-(n+1)/2}n!}{[(n-1)/2]!}|x-y|\big[1+\Oh\big(z^2|x-y|^2\big)\big]I_N\no\\
&\quad\quad-\frac{2^{-1-n}\pi^{1-(n/2)}n!}{\Gamma(1+(n/2))}|x-y|\big[1+\Oh\big(z^2|x-y|^2\big)\big]\alpha\cdot\frac{(x-y)}{|x-y|}\no\\
&\quad\quad -\frac{i(-1)^{(n+1)/2}\pi^{(1-n)/2}(-4)^{-(n+1)/2}(n+1)!}{[(n+1)/2]!}|x-y|^2 z\no\\
&\quad\quad\quad\times\big[1+\Oh\big(z^2|x-y|^2\big)\big]\alpha\cdot\frac{(x-y)}{|x-y|}   \lb{B.13y} \\
&\, \text{as $z\to 0$, $z\in \ol{\bbC_+}\big\backslash\{0\}$, $x,y\in \bbR^n$, $x\neq y$}.\no
\end{align}

$(\textbf{II})$ If $n \in \bbN$ is even, then the indices of the Hankel functions $H_{(n/2)-1}^{(1)}$ and $H_{n/2}^{(1)}$ are nonnegative integers.  Due to the the difference in behavior of $Y_{n}$, $n\in \bbN$, and $Y_0$ (cf.~\eqref{5.8c} and \eqref{5.8dd}) we distinguish two cases: $n \geq 4$ and $n=2$.  First we treat the case $n \geq 4$.

Combining \eqref{5.8}, \eqref{5.8a}, and \eqref{5.8c}, one obtains for $n\geq 4$:
\begin{align}
& H_{(n/2)-1}^{(1)}(\zeta) = J_{(n/2)-1}(\zeta) + i Y_{(n/2)-1}(\zeta)\no\\
&\quad = 2^{1-(n/2)}\zeta^{(n/2)-1}\sum_{k=0}^{\infty}\frac{(-4)^{-k}\zeta^{2k}}{k!\, \Gamma((n/2) + k)}\no\\
&\qquad - i2^{(n/2)-1}\pi^{-1}\zeta^{1-(n/2)}\sum_{k=0}^{(n/2)-2}\frac{((n/2)-k-2)!4^{-k}\zeta^{2k}}{k!}\no\\
&\qquad+i2\pi^{-1}\ln(\zeta/2)2^{1-(n/2)}\zeta^{(n/2)-1}\sum_{k=0}^{\infty} \frac{(-4)^{-k}\zeta^{2k}}{k!\, 
\Gamma((n/2) + k)}\no\\
&\qquad- i2^{1-(n/2)}\pi^{-1}\zeta^{(n/2)-1}\sum_{k=0}^{\infty}[\psi(k+1)
+\psi((n/2)+k)]\frac{(-4)^{-k}\zeta{2k}}{k!\, ((n/2)-1+k)!}\no\\
&\quad = 2^{1-n/2}\zeta^{(n/2)-1}\sum_{k=0}^{\infty}\frac{(-4)^{-k}\zeta^{2k}}{k!\, ((n/2) + k - 1)!}   \no \\
&\qquad - i2^{(n/2)-1}\pi^{-1}\zeta^{1-(n/2)}\sum_{k=0}^{(n/2)-2}\frac{((n/2)-k-2)!4^{-k}\zeta^{2k}}{k!}\no\\
&\qquad+i2\pi^{-1}\ln(\zeta/2)2^{1-(n/2)}\zeta^{(n/2)-1}\sum_{k=0}^{\infty} \frac{(-4)^{-k}\zeta^{2k}}{k!\, ((n/2) + k - 1)!}
\lb{B.6} \\
&\qquad- i2^{1-(n/2)}\pi^{-1}\zeta^{(n/2)-1}\sum_{k=0}^{\infty}[\psi(k+1)
+\psi((n/2)+k)]\frac{(-4)^{-k}\zeta{2k}}{k!\, ((n/2)-1+k)!}.\no
\end{align}

Next, for any even $n\in \bbN$,
\begin{align}
H_{(n/2)}^{(1)}(\zeta) &= 2^{-n/2}\zeta^{n/2}\sum_{k=0}^{\infty} \frac{(-4)^{-k}\zeta^{2k}}{k!\, ((n/2) + k)!}\no\\
&\quad -i2^{n/2}\pi^{-1}\zeta^{-n/2}\sum_{k=0}^{(n/2)-1}\frac{((n/2) - k -1)! 4^{-k}\zeta^{2k}}{k!}\no\\
&\quad +i2^{1-(n/2)}\pi^{-1}\ln(\zeta/2)\zeta^{n/2}\sum_{k=0}^{\infty}\frac{(-4)^{-k}\zeta^{2k}}{k!\, ((n/2) + k)!} \lb{B.7} \\
&\quad -i2^{-n/2}\pi^{-1}\zeta^{n/2}\sum_{k=0}^{\infty}[\psi(k+1)
+\psi(n/2+k+1)]\frac{(-4)^{-k}\zeta^{2k}}{k!\, ((n/2) + k)!}.    \no
\end{align}
Substitution of \eqref{B.6} and \eqref{B.7} into \eqref{B.1} then yields for even $n\geq 4$,
\begin{align}
&G_0(z;x,y) 
= i2^{-n}\pi^{1-(n/2)}z^{n-1} \sum_{k=0}^{\infty} \frac{(-4)^{-k}z^{2k}|x-y|^{2k}}{k!\, ((n/2) + k -1)!}I_N\no\\
&\quad\quad + 4^{-1}\pi^{-n/2}z|x-y|^{2-n} \sum_{k=0}^{(n/2) - 2}\frac{((n/2)-k-2)!\, 4^{-k}z^{2k}|x-y|^{2k}}{k!}I_N\no\\
&\quad\quad -2^{1-n}\pi^{-n/2}z^{n-1}\ln(z|x-y|/2)\sum_{k=0}^{\infty} \frac{(-4)^{-k}z^{2k}|x-y|^{2k}}{k!\, ((n/2) + k -1)!}I_N\no\\
&\quad\quad +2^{-n}\pi^{-n/2}z^{n-1}\sum_{k=0}^{\infty}[\psi(k+1)+\psi((n/2)+k)] \frac{(-4)^{-k}z^{2k}|x-y|^{2k}}{k!\, ((n/2) - 1 + k)!}I_N\no\\
&\quad\quad - 2^{-1-n}\pi^{1-(n/2)}z^n|x-y|\sum_{k=0}^{\infty}\frac{(-4)^{-k}z^{2k}|x-y|^{2k}}{k!\, ((n/2) + k)!}\alpha\cdot \frac{(x-y)}{|x-y|}\no\\
&\quad\quad +i2^{-1}\pi^{-n/2}|x-y|^{1-n}\sum_{k=0}^{(n/2)-1}\frac{((n/2) - k - 1)!\, 4^{-k}z^{2k}|x-y|^{2k}}{k!}\alpha\cdot \frac{(x-y)}{|x-y|}\no\\
&\quad\quad -i2^{-n}\pi^{-n/2}z^n|x-y|\ln(z|x-y|/2)\sum_{k=0}^{\infty}\frac{(-4)^{-k}z^{2k}|x-y|^{2k}}{k!\, ((n/2) + k)!}\alpha\cdot\frac{(x-y)}{|x-y|}\no\\
&\quad\quad +i2^{-1-n}\pi^{-n/2}z^n|x-y|\sum_{k=0}^{\infty}[\psi(k+1)
+\psi((n/2)+k+1)]\frac{(-4)^{-k}z^{2k}|x-y|^{2k}}{k!\, ((n/2) + k)!}\no\\
&\quad\quad\quad \times \alpha\cdot\frac{(x-y)}{|x-y|}\no\\
&\quad = i2^{-n}\pi^{1-(n/2)} \sum_{k=0}^{\infty} \frac{(-4)^{-k}z^{2k+n-1}|x-y|^{2k}}{k!\, ((n/2) + k -1)!}I_N \no \\
&\quad\quad + 4^{-1}\pi^{-n/2}|x-y|^{2-n} \sum_{k=0}^{(n/2) - 2}\frac{((n/2)-k-2)!\, 4^{-k}z^{2k+1}|x-y|^{2k}}{k!}I_N\no\\
&\quad\quad -2^{1-n}\pi^{-n/2}\ln(z|x-y|/2)\sum_{k=0}^{\infty} \frac{(-4)^{-k}z^{2k+n-1}|x-y|^{2k}}{k!\, ((n/2) + k -1)!}I_N\no\\
&\quad\quad +2^{-n}\pi^{-n/2}\sum_{k=0}^{\infty}[\psi(k+1)
+\psi((n/2)+k)] \frac{(-4)^{-k}z^{2k+n-1}|x-y|^{2k}}{k!\, ((n/2) - 1 + k)!}I_N\no\\
&\quad\quad - 2^{-1-n}\pi^{1-(n/2)}|x-y|\sum_{k=0}^{\infty}\frac{(-4)^{-k}z^{2k+n}|x-y|^{2k}}{k!\, ((n/2) + k)!}\alpha\cdot \frac{(x-y)}{|x-y|}\no\\
&\quad\quad +i2^{-1}\pi^{-n/2}|x-y|^{1-n}\sum_{k=0}^{(n/2)-1}\frac{((n/2) - k - 1)!\, 4^{-k}z^{2k}|x-y|^{2k}}{k!}\alpha\cdot \frac{(x-y)}{|x-y|}\no\\
&\quad\quad -i2^{-n}\pi^{-n/2}|x-y|\ln(z|x-y|/2)\sum_{k=0}^{\infty}\frac{(-4)^{-k}z^{2k+n}|x-y|^{2k}}{k!\, ((n/2) + k)!}\alpha\cdot\frac{(x-y)}{|x-y|}\no\\
&\quad\quad +i2^{-1-n}\pi^{-n/2}|x-y|\sum_{k=0}^{\infty}[\psi(k+1)
+\psi((n/2)+k+1)]\frac{(-4)^{-k}z^{2k+n}|x-y|^{2k}}{k!\, ((n/2) + k)!}\no\\
&\quad\quad\quad \times \alpha\cdot\frac{(x-y)}{|x-y|}.     \lb{B.8}
\end{align}
The identity in \eqref{B.8} implies
\begin{align}
&G_0(z;x,y) 
= \frac{i}{2^n\pi^{(n/2)-1}((n/2)-1)!}z^{n-1}\big[1+\Oh\big(z^2|x-y|^2\big)\big] I_N   \no \\
&\quad \quad + \frac{((n/2)-2)!}{4\pi^{n/2}}|x-y|^{2-n}z\big[1+\Oh\big(z^2|x-y|^2\big)\big] I_N\no\\
&\quad\quad - \frac{1}{2^n\pi^{n/2}((n/2)-1)!}\ln(z|x-y|/2)z^{n-1}\big[1+\Oh\big(z^2|x-y|^2\big)\big] I_N\no\\
&\quad\quad + \frac{\psi(1)+\psi(n/2)}{2^n\pi^{n/2}}z^{n-1}\big[1+\Oh\big(z^2|x-y|^2\big)\big] I_N\no\\
&\quad\quad - \frac{1}{2^{1+n}\pi^{(n/2)-1}(n/2)!}|x-y|z^n\big[1+\Oh\big(z^2|x-y|^2\big)\big] \alpha\cdot \frac{(x-y)}{|x-y|}\no\\
&\quad\quad + \frac{i((n/2)-1)!}{2\pi^{n/2}}|x-y|^{1-n}\big[1+\Oh\big(z^2|x-y|^2\big)\big] \alpha\cdot\frac{(x-y)}{|x-y|}\no\\
&\quad\quad -\frac{i}{2^n\pi^{n/2}(n/2)!}|x-y|z^n\ln(z|x-y|/2)\big[1+\Oh\big(z^2|x-y|^2\big)\big]\alpha\cdot \frac{(x-y)}{|x-y|}\no\\
&\quad\quad + \frac{i[\psi(1)+\psi((n/2)+1)]}{2^{1+n}\pi^{n/2}(n/2)!}|x-y|z^n\big[1+\Oh\big(z^2|x-y|^2\big)\big] \alpha\cdot\frac{(x-y)}{|x-y|}     \lb{B.15z} \\
&\hspace*{1.2cm} \text{as $z\to 0$, $z\in \ol{\bbC_+}\big\backslash\{0\}$, $x,y\in \bbR^n$, $x\neq y$, and $n\in \bbN\backslash\{2\}$ even.}\no
\end{align}
One notes that \eqref{B.15z} implies, together with $(n/2-1)! = \Gamma(n/2)$, that
\begin{align}
\begin{split} 
 \lim_{\substack{z \to 0, \\ z \in \ol{\bbC_+} \backslash\{0\}}} G_0(z;x,y) 
&= i2^{-1}\pi^{-n/2}\Gamma(n/2)\alpha\cdot\frac{(x-y)}{|x-y|^n},     \\ 
& \hspace*{-3mm} x,y\in \bbR^n,\; x\neq y, \; n\in \bbN\backslash\{2\} \text{ even,}    \lb{B.19z} 
\end{split} 
\end{align}
consistent with \eqref{5.18}.

For $n\in \bbN\backslash\{2\}$ even and $r\in \bbN$ with $1\leq r\leq n$, term-by-term differentiation of \eqref{B.8} implies
\begin{align}
&\frac{\partial^r}{\partial z^r}G_0(z;x,y) 
= \Bigg[i2^{-n}\pi^{1-(n/2)}\sum_{k=\delta_n(r)}^{\infty}\frac{(-4)^{-k}(2k+n-1)!\, 
z^{2k+n-1-r}|x-y|^{2k}}{k!\, (2k+n-1-r)!\, ((n/2)+k-1)!}\no\\
&\quad\quad + \frac{|x-y|^{2-n}}{4\pi^{n/2}}\chi_{{}_{\leq n-3}}(r)\sum_{k=k_-(r)}^{(n/2)-2}\frac{((n/2)-k-2)!\, 4^{-k}(2k+1)!\, z^{2k+1-r}|x-y|^{2k}}{k!\, (2k+1-r)!}\no\\
&\quad\quad -2^{1-n}\pi^{-n/2}\ln(z|x-y|/2)\sum_{k=\delta_n(r)}^{\infty}\frac{(-4)^{-k}(2k+n-1)!\, z^{2k+n-1-r}|x-y|^{2k}}{k!\, (2k+n-1-r)!\, ((n/2) + k -1)!}\no\\
&\quad\quad -2^{1-n}\pi^{-n/2}\sum_{\ell=0}^{r-1}\sum_{k=0}^{\infty}
\begin{pmatrix}
r\\
\ell
\end{pmatrix}
(-1)^{1+r-\ell}(r-\ell-1)!\no\\
&\quad\quad\quad \times\frac{(-4)^{-k}(2k+n-1)!\, z^{2k+n-1-r}|x-y|^{2k}}{k!\, (2k+n-1-\ell)!\, ((n/2)+k-1)!}\no\\
&\quad\quad+2^{-n}\pi^{-n/2}\sum_{k=\delta_n(r)}^{\infty}[\psi(k+1)+\psi((n/2)+k)]\no\\
&\quad\quad\quad\times\frac{(-4)^{-k}(2k+n-1)!\, z^{2k+n-1-r}|x-y|^{2k}}{k!\, (2k+n-1-r)!\, ((n/2)+k-1)!}\Bigg]I_N\no\\
&\quad\quad+\Bigg[-2^{-1-n}\pi^{1-(n/2)}|x-y|\sum_{k=0}^{\infty}\frac{(-4)^{-k}(2k+n)!\, z^{2k+n-r}|x-y|^{2k}}{k!\, (2k+n-r)!\, ((n/2)+k)!}\no\\
&\quad\quad\quad\quad +i\frac{|x-y|^{1-n}}{2\pi^{n/2}}\chi_{{}_{\leq n-2}}(r) 
\sum_{k=k_+(r)}^{(n/2)-1}\frac{((n/2)-k-1)!\, 4^{-k}(2k)!\, z^{2k-r}|x-y|^{2k}}{k!\, (2k-r)!}\no\\
&\quad\quad\quad\quad-i2^{-n}\pi^{-n/2}|x-y|\ln(z|x-y|/2)\sum_{k=0}^{\infty}\frac{(-4)^{-k}(2k+n)!\, z^{2k+n-r}|x-y|^{2k}}{k!\, ((n/2)+k)!\, (2k+n-r)!}\no\\
&\quad\quad\quad\quad-i2^{-n}\pi^{-n/2}|x-y|\sum_{\ell=0}^{r-1}\sum_{k=0}^{\infty}
\begin{pmatrix}
r\\
\ell
\end{pmatrix}
(-1)^{1+r-\ell}(r-\ell-1)!\no\\
&\quad\quad\quad\quad\quad\times \frac{(-4)^{-k}(2k+n)!\, z^{2k+n-r}|x-y|^{2k}}{k!\, ((n/2)+k)!\, (2k+n-\ell)!}\no\\
&\quad \quad \quad \quad +i2^{-1-n}\pi^{-n/2}|x-y|\sum_{k=0}^{\infty}[\psi(k+1)+\psi((n/2)+k+1)]\no\\
&\quad\quad\quad\quad\quad\times \frac{(-4)^{-k}(2k+n)!z^{2k+n-r}|x-y|^{2k}}{k!\, 
((n/2) + k)!}\Bigg]\alpha\cdot\frac{(x-y)}{|x-y|},\lb{B.16}
\end{align}
where $\chi_{{}_{\leq a}}$, $a\in \bbR$, denotes the characteristic (i.e., indicator) function of the interval $(-\infty,a]$.  That is,
\begin{equation}
\chi_{{}_{\leq a}}(x) =
\begin{cases}
1,& x\in (-\infty,a],\\
0,& x\in (a,\infty),
\end{cases}
\quad x\in \bbR.     \lb{B.21z} 
\end{equation}

The expansion in \eqref{B.16} implies the following asymptotics of $\frac{\partial^r}{\partial z^r}G_0(z;x,y)$ as $z\to 0$:\\[1mm] 
\noindent
$(i)$  If $n\in\bbN\backslash\{2\}$ is even and $1\leq r\leq n -1$ is odd, then
\begin{align}
&\frac{\partial^r}{\partial z^r}G_0(z;x,y) 
= \frac{i2^{-n}\pi^{1-(n/2)}(n-1)!}{(n-1-r)!\, ((n/2)-1)!}z^{n-1-r}\big[1+\Oh\big(z^2|x-y|^2\big)\big]I_N\no\\
&\quad\quad+\chi_{{}_{\leq n-3}}(r)\frac{4^{-(1+r)/2}\pi^{-n/2}[(n-r-3)/2]!\, r!}{[(r-1)/2]!}|x-y|^{1+r-n}\no\\
&\quad\quad\quad\times\big[1+\Oh\big(z^2|x-y|^2\big)\big]I_N\no\\
&\quad\quad-\frac{2^{1-n}\pi^{-n/2}(n-1)!}{(n-1-r)!\, ((n/2)-1)!}z^{n-1-r}\ln(z|x-y|/2) \big[1+\Oh\big(z^2|x-y|^2\big)\big]I_N\no\\
&\quad\quad+\frac{2^{1-n}}{\pi^{n/2}}\sum_{\ell=0}^{r-1}
\begin{pmatrix}
r\\
\ell
\end{pmatrix}
(-1)^{r-\ell}\frac{(r-\ell-1)!\, (n-1)!}{(n-1-\ell)!\, ((n/2)-1)!}z^{n-1-r}   \no \\
& \hspace*{2.6cm} \times \big[1+\Oh\big(z^2|x-y|^2\big)\big]I_N\no\\
&\quad\quad+\frac{2^{-n}\pi^{-n/2}[\psi(1)+\psi(n/2)](n-1)!}{(n-1-r)!\, ((n/2)-1)!}z^{n-1-r}\big[1+\Oh\big(z^2|x-y|^2\big)\big]I_N\no\\
&\quad\quad-\frac{2^{-(1+n)}\pi^{1-(n/2)}n!}{(n-r)!\, (n/2)!}|x-y|z^{n-r}\big[1+\Oh\big(z^2|x-y|^2\big)\big]\alpha\cdot\frac{(x-y)}{|x-y|}\no\\
&\quad\quad+\chi_{{}_{\leq n-2}}(r)\frac{i\pi^{-n/2}[(n-r-3)/2]!\, (r+1)!}{4^{1+(r/2)} [(r+1)/2]!}|x-y|^{r+2-n}z\no\\
&\quad\quad\quad\times\big[1+\Oh\big(z^2|x-y|^2\big)\big]\alpha\cdot\frac{(x-y)}{|x-y|}\no\\
&\quad\quad-\frac{i2^{-n}\pi^{-n/2}n!}{(n/2)!(n-r)!}|x-y|z^{n-r}\ln(z|x-y|/2)\big[1+\Oh\big(z^2|x-y|^2\big)\big]\alpha\cdot\frac{(x-y)}{|x-y|}\no\\
&\quad\quad+i2^{-n}\pi^{-n/2}\sum_{\ell=0}^{r-1}
\begin{pmatrix}
r\\
\ell
\end{pmatrix}
\frac{(-1)^{r-\ell}(r-\ell-1)!\, n!}{(n/2)!\, (n-\ell)!}|x-y|z^{n-r}\no\\
&\quad\quad\quad\times\big[1+\Oh\big(z^2|x-y|^2\big)\big]\alpha\cdot\frac{(x-y)}{|x-y|}\no\\
&\quad\quad+\frac{i[\psi(1)+\psi((n/2)+1)]n!}{2^{1+n}\pi^{n/2}(n/2)!}|x-y|z^{n-r}\big[1+\Oh\big(z^2|x-y|^2\big)\big]\alpha\cdot\frac{(x-y)}{|x-y|}   \lb{B.22z} \\
&\hspace*{5.2cm} \text{as $z\to 0$, $z\in \ol{\bbC_+}\big\backslash\{0\}$, $x,y\in \bbR^n$, $x\neq y$}.\no
\end{align}

\noindent
$(ii)$  If $n\in\bbN\backslash\{2\}$ is even and $1\leq r\leq n-2$ is even with $r\neq n$, then
\begin{align}
&\frac{\partial^r}{\partial z^r}G_0(z;x,y) 
= \frac{i2^{-n}\pi^{1-(n/2)}(n-1)!}{(n-1-r)!\, ((n/2)-1)!}z^{n-1-r}\big[1+\Oh\big(z^2|x-y|^2\big)\big]I_N\no\\
&\quad\quad+\chi_{{}_{\leq n-3}}(r)\frac{((n/2)-(r/2)-2)!\, (r+1)!}{4^{1+(r/2)}\pi^{n/2}(r/2)!}|x-y|^{2+r-n}z\big[1+\Oh\big(z^2|x-y|^2\big)\big]I_N\no\\
&\quad\quad-\frac{2^{1-n}\pi^{-n/2}(n-1)!}{(n-1-r)!\, ((n/2)-1)!}z^{n-1-r}\ln(z|x-y|/2)\big[1+\Oh\big(z^2|x-y|^2\big)\big]I_N\no\\
&\quad\quad+\frac{2^{1-n}}{\pi^{n/2}}\sum_{\ell=0}^{r-1}
\begin{pmatrix}
r\\
\ell
\end{pmatrix}
(-1)^{r-\ell}\frac{(r-\ell-1)!\, (n-1)!}{(n-1-\ell)!\, ((n/2)-1)!}z^{n-1-r}  \no \\
& \hspace*{2.6cm}\times \big[1+\Oh\big(z^2|x-y|^2\big)\big]I_N\no\\
&\quad\quad+\frac{2^{-n}\pi^{-n/2}[\psi(1)+\psi(n/2)](n-1)!}{(n-1-r)!\, ((n/2)-1)!}z^{n-1-r}\big[1+\Oh\big(z^2|x-y|^2\big)\big]I_N\no\\
&\quad\quad-\frac{2^{-(1+n)}\pi^{1-(n/2)}n!}{(n-r)!\, (n/2)!}|x-y|z^{n-r}\big[1+\Oh\big(z^2|x-y|^2\big)\big]\alpha\cdot\frac{(x-y)}{|x-y|}\no\\
&\quad\quad+\chi_{{}_{\leq n-2}}(r)\frac{i ((n/2)-(r/2)-1)!\, r!}{4^{(r+1)/2}\pi^{n/2}(r/2)!}|x-y|^{1+r-n}\no\\
&\quad\quad\quad\times\big[1+\Oh\big(z^2|x-y|^2\big)\big]\alpha\cdot\frac{(x-y)}{|x-y|}\no\\
&\quad\quad-\frac{i2^{-n}\pi^{-n/2}n!}{(n/2)!(n-r)!}|x-y|z^{n-r}\ln(z|x-y|/2)\big[1+\Oh\big(z^2|x-y|^2\big)\big]\alpha\cdot\frac{(x-y)}{|x-y|}\no\\
&\quad\quad+i2^{-n}\pi^{-n/2}\sum_{\ell=0}^{r-1}
\begin{pmatrix}
r\\
\ell
\end{pmatrix}
\frac{(-1)^{r-\ell}(r-\ell-1)!\, n!}{(n/2)!\, (r-\ell)!}|x-y|z^{n-r}\no\\
&\quad\quad\quad\times\big[1+\Oh\big(z^2|x-y|^2\big)\big]\alpha\cdot\frac{(x-y)}{|x-y|}\no\\
&\quad\quad+\frac{i[\psi(1)+\psi((n/2)+1)]n!}{2^{1+n}\pi^{n/2}(n/2)!}|x-y|z^{n-r}\big[1+\Oh\big(z^2|x-y|^2\big)\big]\alpha\cdot\frac{(x-y)}{|x-y|}    \lb{B.23zz} \\
&\hspace*{5.15cm} \text{as $z\to 0$, $z\in \ol{\bbC_+}\big\backslash\{0\}$, $x,y\in \bbR^n$, $x\neq y$}.\no
\end{align}

\noindent
$(iii)$  If $n\in\bbN\backslash\{2\}$ is even, then
\begin{align}
&\frac{\partial^n}{\partial z^n}G_0(z;x,y) 
=-\frac{i\pi^{1-(n/2)}(n+1)!}{2^{n+2}(n/2)!}|x-y|^2z\big[1+\Oh\big(z^2|x-y|^2\big)\big]I_N\no\\
&\quad\quad+\frac{2^{1-n}\pi^{-n/2}4^{-1}(n+1)!}{(n/2)!}|x-y|^2z\ln(z|x-y|/2)\big[1+\Oh\big(z^2|x-y|^2\big)\big]I_N\no\\
&\quad\quad+\frac{(n-1)!}{2^{n-1}\pi^{n/2}(\frac{n}{2}-1)!}\Bigg(\sum_{\ell=0}^{n-1}
\begin{pmatrix}
n\\
\ell
\end{pmatrix}
(-1)^{\ell}\Bigg)z^{-1}\big[1+\Oh\big(z^2|x-y|^2\big)\big]I_N\no\\
&\quad\quad -\frac{[\psi(2)+\psi((n/2)+1)](n+1)!}{2^{n+2}\pi^{n/2}(n/2)!}|x-y|^2z\big[1+\Oh\big(z^2|x-y|^2\big)\big]I_N\no\\
&\quad\quad -\frac{\pi^{1-(n/2)}n!}{2^{n+1}(n/2)!}|x-y|\big[1+\Oh\big(z^2|x-y|^2\big)\big]\alpha\cdot\frac{(x-y)}{|x-y|}\no\\
&\quad\quad-\frac{i2^{-n}\pi^{-n/2}n!}{(n/2)!}|x-y|\ln(z|x-y|/2)\big[1+\Oh\big(z^2|x-y|^2\big)\big]\alpha\cdot\frac{(x-y)}{|x-y|}\no\\
&\quad\quad+\frac{i2^{-n}\pi^{-n/2}n!}{(n/2)!}\Bigg(\sum_{\ell=0}^{n-1}
\begin{pmatrix}
n\\
\ell
\end{pmatrix}
\frac{(-1)^{\ell}}{n-\ell}\Bigg)|x-y|\no\\
&\quad\quad\quad\times\big[1+\Oh\big(z^2|x-y|^2\big)\big]\alpha\cdot\frac{(x-y)}{|x-y|}\no\\
&\quad\quad+\frac{i[\psi(1)+\psi((n/2)+1)]n!}{2^{1+n}\pi^{n/2}(n/2)!}|x-y|\big[1+\Oh\big(z^2|x-y|^2\big)\big]\alpha\cdot\frac{(x-y)}{|x-y|}     \lb{B.24z} \\
&\hspace*{4.45cm} \text{as $z\to 0$, $z\in \ol{\bbC_+}\big\backslash\{0\}$, $x,y\in \bbR^n$, $x\neq y$}.\no
\end{align}

$(\textbf{III})$  If $n=2$, then \eqref{5.8}, \eqref{5.8a}, and \eqref{5.8dd} imply
\begin{align}
&H_{(n/2)-1}^{(1)}(\zeta) = H_0^{(1)}(\zeta) = J_0(\zeta) + iY_0(\zeta)\no\\
&\quad= i\frac{2}{\pi}[\ln(\zeta/2) + \gamma_{E-M} - i (\pi/2)] J_0(\zeta) - \frac{2}{\pi}\sum_{k=1}^{\infty} \bigg(\sum_{\ell=1}^k\frac{1}{\ell}\bigg)\frac{(-4)^{-k}\zeta^{2k}}{(k!)^2}\no\\
&\quad= i\frac{2}{\pi} [\ln(\zeta/2) + \gamma_{E-M} - i (\pi/2)] \sum_{k=0}^{\infty}\frac{(-4)^{-k}\zeta^{2k}}{(k!)^2} - \frac{2}{\pi}\sum_{k=1}^{\infty} \bigg(\sum_{\ell=1}^k\frac{1}{\ell}\bigg)\frac{(-4)^{-k}\zeta^{2k}}{(k!)^2}.\lb{B.10a}
\end{align}

Similarly, by combining \eqref{B.1}, \eqref{B.7} (which is valid for $n=2$), and \eqref{B.10a}, one obtains for $n=2$:
\begin{align}
&G_0(z;x,y) 
= i4^{-1}z H_0^{(1)}(z|x-y|)I_N - 4^{-1}zH_1^{(1)}(z|x-y|)\alpha\cdot\frac{(x-y)}{|x-y|}\no\\
&\quad = -\frac{1}{2\pi} [\ln(z|x-y|/2) + \gamma_{E-M} - i (\pi/2)] \sum_{k=0}^{\infty}\frac{(-4)^{-k}z^{2k+1}|x-y|^{2k}}{(k!)^2}I_N   \no \\
&\quad \quad - \frac{i}{2\pi}\sum_{k=1}^{\infty}\bigg(\sum_{\ell=1}^k\frac{1}{\ell}\bigg) \frac{(-4)^{-k}z^{2k+1}|x-y|^{2k}}{(k!)^2}I_N\no\\
&\quad\quad -\frac{1}{8}\sum_{k=0}^{\infty} \frac{(-4)^{-k}z^{2k+2}|x-y|^{2k+1}}{k!(k+1)!}\alpha\cdot \frac{(x-y)}{|x-y|}\no\\
&\quad\quad + \frac{i}{2\pi}|x-y|^{-1}\alpha\cdot \frac{(x-y)}{|x-y|}\no\\
&\quad\quad -\frac{i}{4\pi}\ln(z|x-y|/2)\sum_{k=0}^{\infty}\frac{(-4)^{-k}z^{2k+2}|x-y|^{2k+1}}{k!\, (k+1)!}\alpha\cdot\frac{(x-y)}{|x-y|}\no\\
&\quad\quad + \frac{i}{8\pi}\sum_{k=0}^{\infty}[\psi(k+1)+\psi(k+2)]\frac{(-4)^{-k}z^{2k+2}|x-y|^{2k+1}}{k!\, (k+1)!}\alpha\cdot\frac{(x-y)}{|x-y|}.   \lb{B.13z} 
\end{align}
The identity in \eqref{B.13z} implies
\begin{align}
G_0(z;x,y)&= -\frac{1}{2\pi}z\ln(z|x-y|/2)\big[1+\Oh\big(z^2|x-y|^2\big)\big]I_N      \no \\
&\quad - \frac{1}{2\pi} [\gamma_{E-M}-i (\pi/2)] z \big[1+\Oh\big(z^2|x-y|^2\big)\big] I_N\no\\
&\quad + \frac{i}{8\pi}z^3|x-y|^2\big[1+\Oh\big(z^2|x-y|^2\big)\big] I_N\no\\
&\quad -\frac{1}{8}z^2|x-y|\big[1+\Oh\big(z^2|x-y|^2\big)\big]\alpha\cdot\frac{(x-y)}{|x-y|}\no\\
&\quad + \frac{i}{2\pi}|x-y|^{-1}\alpha\cdot\frac{(x-y)}{|x-y|}\no\\
&\quad - \frac{i}{4\pi} z^2|x-y|\ln(z|x-y|/2)\big[1+\Oh\big(z^2|x-y|^2\big)\big]\alpha\cdot\frac{(x-y)}{|x-y|}\no\\
&\quad +\frac{i}{8\pi}[\psi(1)+\psi(2)]z^2|x-y|\big[1+\Oh\big(z^2|x-y|^2\big)\big]\alpha\cdot\frac{(x-y)}{|x-y|} \lb{B.23z} \\
& \hspace*{1.35cm} \text{as $z\to 0$, $z\in \ol{\bbC_+}\big\backslash\{0\}$, $x,y\in \bbR^n$, $x\neq y$, and $n=2$.}   \no
\end{align}
One notes that \eqref{B.23z} implies
\begin{equation}
 \lim_{\substack{z \to 0, \\ z \in \ol{\bbC_+} \backslash\{0\}}} G_0(z;x,y) 
= \f{i}{2 \pi} \alpha\cdot\frac{(x-y)}{|x-y|^2},  \quad 
 x,y\in \bbR^2,\; x\neq y,    \lb{B.28z} 
\end{equation}
which is consistent with \eqref{5.18}.

Finally, one employs \eqref{B.13z} to compute:
\begin{align}
&\frac{\partial}{\partial z} G_0(z;x,y) 
= -\frac{1}{2\pi}\sum_{k=0}^{\infty} \frac{(-4)^{-k}z^{2k}|x-y|^{2k}}{(k!)^2}I_N   \no \\
&\quad\quad -\frac{1}{2\pi} [\ln(z|x-y|/2) + \gamma_{E-M} - i (\pi/2)] \sum_{k=0}^{\infty} \frac{(-4)^{-k}(2k+1)z^{2k}|x-y|^{2k}}{(k!)^2}I_N\no\\
&\quad\quad -\frac{i}{2\pi} \sum_{k=1}^{\infty}\bigg(\sum_{\ell=1}^k \frac{1}{\ell}\bigg)\frac{(-4)^{-k}(2k+1)z^{2k}|x-y|^{2k}}{(k!)^2}I_N\no\\
&\quad\quad -\frac{1}{8}\sum_{k=0}^{\infty} \frac{(-4)^{-k}(2k+2)z^{2k+1}|x-y|^{2k+1}}{k!\, (k+1)!}\alpha\cdot\frac{(x-y)}{|x-y|}\no\\
&\quad\quad -\frac{i}{4\pi}\sum_{k=0}^{\infty}\frac{(-4)^{-k}z^{2k+1}|x-y|^{2k+1}}{k!\, (k+1)!}\alpha\cdot\frac{(x-y)}{|x-y|}\no\\
&\quad\quad-\frac{i}{4\pi}\ln(z|x-y|/2)\sum_{k=0}^{\infty}\frac{(-4)^{-k}(2k+2)z^{2k+1}|x-y|^{2k+1}}{k!\, (k+1)!}\alpha\cdot\frac{(x-y)}{|x-y|}   \lb{B.24} \\
&\quad\quad +\frac{i}{8\pi}\sum_{k=0}^{\infty} [\psi(k+1)+\psi(k+2)]\frac{(-4)^{-k}(2k+2)z^{2k+1}|x-y|^{2k+1}}{k!\, (k+1)!}\alpha\cdot\frac{(x-y)}{|x-y|},   \no 
\end{align}
and
\begin{align}
&\frac{\partial^2}{\partial z^2} G_0(z;x,y) 
= -\frac{1}{2\pi}\sum_{k=1}^{\infty}\frac{(-4)^{-k}(2k)z^{2k-1}|x-y|^{2k}}{(k!)^2}I_N   \no \\
&\quad\quad-\frac{1}{2\pi}z^{-1}\sum_{k=0}^{\infty}\frac{(-4)^{-k}(2k+1)z^{2k}|x-y|^{2k}}{(k!)^2}I_N\no\\
&\quad\quad -\frac{1}{2\pi}[\ln(z|x-y|/2)+\gamma_{E-M}-i (\pi/2)] \no\\
&\quad\quad\quad\times \sum_{k=1}^{\infty}\frac{(-4)^{-k}(2k+1)(2k)z^{2k-1}|x-y|^{2k}}{(k!)^2}I_N\no\\
&\quad\quad-\frac{i}{2\pi}\sum_{k=1}^{\infty}\bigg(\sum_{\ell=1}^k\frac{1}{\ell}\bigg) \frac{(-4)^{-k}(2k+1)(2k)z^{2k-1}|x-y|^{2k}}{(k!)^2}I_N\no\\
&\quad\quad -\frac{1}{8}\sum_{k=0}^{\infty} \frac{(-4)^{-k}(2k+2)(2k+1)z^{2k}|x-y|^{2k+1}}{k!\, (k+1)!}\alpha\cdot\frac{(x-y)}{|x-y|}\no\\
&\quad\quad -\frac{i}{4\pi} \sum_{k=0}^{\infty} \frac{(-4)^{-k}(2k+1)z^{2k}|x-y|^{2k+1}}{k!\, (k+1)!}\alpha\cdot\frac{(x-y)}{|x-y|}\no\\
&\quad\quad -\frac{i}{4\pi}\sum_{k=0}^{\infty} \frac{(-4)^{-k}(2k+2)z^{2k}|x-y|^{2k+1}}{k!\, (k+1)!}\alpha\cdot\frac{(x-y)}{|x-y|}\no\\
&\quad\quad -\frac{i}{4\pi}\ln(z|x-y|/2)\sum_{k=0}^{\infty}\frac{(-4)^{-k}(2k+2)(2k+1)z^{2k}|x-y|^{2k+1}}{k!\, (k+1)!}\alpha\cdot\frac{(x-y)}{|x-y|}\no\\
&\quad\quad +\frac{i}{8\pi}\sum_{k=0}^{\infty} [\psi(k+1)+\psi(k+2)]\frac{(-4)^{-k}(2k+2)(2k+1)z^{2k}|x-y|^{2k+1}}{k!\, (k+1)!}\no\\
&\quad\quad\quad \times\alpha\cdot\frac{(x-y)}{|x-y|}.    \lb{B.25}
\end{align}

Finally, the expansions in \eqref{B.24} and \eqref{B.25} imply the following asymptotics of 
$\frac{\partial^r}{\partial z^r} G_0(z;x,y)$, $1\leq r\leq 2$, as $z\to 0$: \\[1mm] 
\noindent
$(i)$ If $n=2$, $r=1$, then
\begin{align}
&\frac{\partial}{\partial z}G_0(z;x,y) 
= -\frac{1}{2\pi} [\ln(z|x-y|/2)+1+\gamma_{E-M}-i (\pi/2)] \big[1+\Oh\big(z^2|x-y|^2\big)\big]I_N  \no \\
&\quad\quad + \frac{3i}{8\pi}z^2|x-y|^2\big[1+\Oh\big(z^2|x-y|^2\big)\big]I_N\no\\
&\quad\quad -\frac{1}{4\pi}\big\{\pi+i-i[\psi(1)+\psi(2)]\big\} z|x-y|\big[1+\Oh\big(z^2|x-y|^2\big)\big]\alpha\cdot\frac{(x-y)}{|x-y|}\no\\
&\quad\quad-\frac{i}{2\pi}z|x-y|\ln(z|x-y|/2)\big[1+\Oh\big(z^2|x-y|^2\big)\big]\alpha\cdot\frac{(x-y)}{|x-y|}   \lb{B.31z} \\
&\hspace*{3.4cm} \text{as $z\to 0$, $z\in \ol{\bbC_+}\big\backslash\{0\}$, $x,y\in \bbR^2$, $x\neq y$.}\no
\end{align}

\noindent
$(ii)$  If $n=2$, $r=2$, then
\begin{align}
&\frac{\partial^2}{\partial z^2} G_0(z;x,y) 
= \frac{1}{4\pi} [1+3(\gamma_{E-M}+i) - (3i\pi/2)] z|x-y|^2 \big[1 + \Oh\big(z^2|x-y|^2\big)\big]I_N \no\\
&\quad\quad -\frac{1}{2\pi}z^{-1}\big[1 + \Oh\big(z^2|x-y|^2\big)\big]I_N \no\\
&\quad\quad +\frac{3}{4\pi}z|x-y|^2\ln(z|x-y|/2) \big[1 + \Oh\big(z^2|x-y|^2\big)\big]I_N \no\\
&\quad\quad -\frac{\pi+3i}{4\pi}|x-y|\big[1 + \Oh\big(z^2|x-y|^2\big)\big]\alpha\cdot\frac{(x-y)}{|x-y|} \no\\
&\quad\quad -\frac{i}{2\pi}\ln(z|x-y|/2)|x-y|\big[1 + \Oh\big(z^2|x-y|^2\big)\big]\alpha\cdot\frac{(x-y)}{|x-y|} \no\\
&\quad\quad + \frac{i}{4\pi} [\psi(1)+\psi(2)] |x-y| \big[1 + \Oh\big(z^2|x-y|^2\big)\big]\alpha\cdot\frac{(x-y)}{|x-y|}  \lb{B.32z}  \\
&\hspace*{3.25cm} \text{as $z\to 0$, $z\in \ol{\bbC_+}\big\backslash\{0\}$, $x,y\in \bbR^2$, $x\neq y$.}\no
\end{align}

Given the results in Appendices \ref{sB} and \ref{sC}, we can summarize the estimates on $G_0(z;\dott,\dott)$ as follows:

\begin{theorem} \lb{tC.1}
Let $r \in \bbN_0$, $0 \leq r \leq n$, $z \in \ol{\bbC_+}$, and $x, y \in \bbR^n$, $x \neq y$. \\[1mm]
$(i)$ For $n \in \bbN$ odd, $n \geq 3$, one has the estimate 
\begin{align}
& \bigg\|\f{\partial^r}{\partial z^r} G_0(z;x,y)\bigg\|_{\cB(\bbC^N)} \no \\ 
& \quad \leq \hatt c_n \begin{cases}
|x-y|^{r+1-n}, & |z||x-y| \leq 1, \\
|z|^{(n-1)/2} |x-y|^{(2r+1-n)/2} e^{- \Im(z) |x-y|}, & |z||x-y| \geq 1
\end{cases}    \no \\[1mm] 
& \quad \leq \wti c_n |x-y|^{r+1-n} \big[1 + |z|^{(n-1)/2} |x-y|^{(n-1)/2}\big]  \no \\[1mm] 
& \quad \leq \wti C_n \big\{|x-y|^{r+1-n} \chi_{[0,1]}(|z| |x-y|)   \no \\[1mm] 
& \hspace*{1.6cm} + |z|^{(n-1)/2} |x-y|^{(2r+1-n)/2} \chi_{[1,\infty)}(|z| |x-y|) \big\}   \no \\
& \quad \leq C_n \big\{|x-y|^{r+1-n} \chi_{[0,1]}(|z| |x-y|)     \no \\[1mm] 
& \hspace*{1.6cm} + |z|^{(n-1)/2} \big[|x|^{(2r+1-n)/2} + |y|^{(2r+1-n)/2}\big] \chi_{[1,\infty)}(|z| |x-y|) \big\}    \no \\
& \quad \leq c_n \big\{|x-y|^{r+1-n} \chi_{[0,1]}(|z| |x-y|)      \lb{C.33}  \\
&  \hspace*{1.5cm} + |z|^{(n-1)/2} [1+|x|]^{(2r+1-n)/2} [1+|y|]^{(2r+1-n)/2} \chi_{[1,\infty)}(|z| |x-y|) \big\},    \no 
\end{align}
where $\hatt c_n, \wti c_n, \wti C_n, C_n, c_n \in (0,\infty)$ are appropriate constants. \\[1mm] 
$(ii)$ For $n \in \bbN$ even, one has the following estimate.  For every $\delta\in (0,1)$,
\begin{align}
&\bigg\|\f{\partial^r}{\partial z^r} G_0(z;x,y)\bigg\|_{\cB(\bbC^N)}  \lb{C.34}  \\[1mm]
& \quad \leq c_n \begin{cases}
|x-y|^{r+1-n}\big[1+|\ln(z|x-y|/2)|\big], & |z||x-y| \leq 1,\, r\neq n, \\
|x-y|\big[1+|\ln(z|x-y|/2)|\big] + |z|^{-1}, & |z||x-y| \leq 1,\, r= n, \\
|z|^{(n-1)/2} |x-y|^{(2r+1-n)/2} e^{- \Im(z) |x-y|}, & |z||x-y| \geq 1\no
\end{cases}\\
& \quad \leq \wti c_{n,\delta} \begin{cases}
|z|^{-\delta}|x-y|^{r+1-\delta-n}, & |z||x-y| \leq 1,\, r\neq n, \\
|z|^{-\delta}|x-y|^{1-\delta}+|z|^{-1}, & |z||x-y| \leq 1,\, r= n, \\
|z|^{(n-1)/2} |x-y|^{(2r+1-n)/2} e^{- \Im(z) |x-y|}, & |z||x-y| \geq 1\no
\end{cases}\\
& \quad = \wti c_{n,\delta} \begin{cases}
|z|^{-\delta}|x-y|^{r+1-\delta-n}, & |z||x-y| \leq 1,\, r\neq n, \\
|z|^{-1}\big[|z|^{1-\delta}|x-y|^{1-\delta}+1\big], & |z||x-y| \leq 1,\, r= n, \\
|z|^{(n-1)/2} |x-y|^{(2r+1-n)/2} e^{- \Im(z) |x-y|}, & |z||x-y| \geq 1\no
\end{cases}\\
& \quad \leq C_{n,\delta} \begin{cases}
|z|^{-\delta}|x-y|^{r+1-\delta-n}, & |z||x-y| \leq 1,\, r\neq n, \\
|z|^{-1}, & |z||x-y| \leq 1,\, r= n, \\
|z|^{(n-1)/2} |x-y|^{(2r+1-n)/2} e^{- \Im(z) |x-y|}, & |z||x-y| \geq 1\no
\end{cases}
\end{align}
where $c_n, \wti c_{n,\delta}, C_{n,\delta} \in (0,\infty)$ are appropriate constants.
\end{theorem}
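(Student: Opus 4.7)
The plan is to split the estimate into the two regimes $|z||x-y|\leq 1$ and $|z||x-y|\geq 1$, matching the branching in the statement, and handle each by a different tool derived earlier in the manuscript. For the low-argument regime $|z||x-y|\leq 1$, I would apply the explicit power-series expansions for $\partial^r G_0/\partial z^r$ already worked out in equations \eqref{B.10}--\eqref{B.13y} (odd $n$), \eqref{B.16}--\eqref{B.24z} (even $n\geq 4$), and \eqref{B.24}--\eqref{B.32z} ($n=2$). These display the leading singularity in $|x-y|$ and $\ln(z|x-y|/2)$ explicitly, with the remainder controlled by $\oh(z^2|x-y|^2)$; since $|z||x-y|\leq 1$ the $\oh$-terms are harmless, so reading off the dominant powers gives the first branch of \eqref{C.33} directly, and the first two branches of \eqref{C.34} with the logarithmic contribution included. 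For the large-argument regime $|z||x-y|\geq 1$, I would invoke Lemma \ref{lB.6} to write each Hankel function appearing in \eqref{B.1} as $H_\nu^{(1)}(\zeta)=e^{i\zeta}\omega_\nu(\zeta)$ with $\omega_\nu=\wti O((1+|\zeta|)^{-1/2})$. Since $\wti O$-notation is preserved under $z$-differentiation with respect to the same decay rate (differentiating brings down factors of $|x-y|$ from the chain rule but these combine with factors of $|x-y|^{-n/2}$ from $\omega_\nu$), one obtains the $|z|^{(n-1)/2}|x-y|^{(2r+1-n)/2} e^{-\Im(z)|x-y|}$ bound uniformly in $n$ and $r$.

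Having established the two case-dependent bounds, I would then execute the successive simplifications displayed in \eqref{C.33} and \eqref{C.34}. Dropping the characteristic functions and noting that, on $|z||x-y|\leq 1$, $|z|^{(n-1)/2}|x-y|^{(n-1)/2}\leq 1$ gives the uniform form $|x-y|^{r+1-n}\big[1+|z|^{(n-1)/2}|x-y|^{(n-1)/2}\big]$. For the subsequent passage from $|x-y|^{(2r+1-n)/2}$ to $|x|^{(2r+1-n)/2}+|y|^{(2r+1-n)/2}$, I would treat the two subcases $2r+1-n\geq 0$ and $2r+1-n<0$ separately: in the first, $|x-y|\leq |x|+|y|$ yields the bound up to a combinatorial constant; in the second, on the set $|z||x-y|\geq 1$ one has $|x-y|\geq |z|^{-1}$, but a cleaner route is to distinguish $|x-y|\geq \max(|x|,|y|)/2$ (which forces $\max(|x|,|y|)\leq 2|x-y|$) from the complementary case, whence the inequality follows in both subcases. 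Finally, the step from $|x|^{(2r+1-n)/2}+|y|^{(2r+1-n)/2}$ to $[1+|x|]^{(2r+1-n)/2}[1+|y|]^{(2r+1-n)/2}$ follows by noting that for any exponent $\sigma\in\bbR$, $|x|^\sigma\leq [1+|x|]^\sigma$ if $\sigma\geq 0$, while for $\sigma<0$ one argues on $|x|\geq 1$ directly and uses local boundedness of $[1+|x|]^\sigma$ otherwise.

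For part (ii), the even-dimensional case, the first estimate is derived from the expansions just as in (i), except that the logarithmic term $|\ln(z|x-y|/2)|$ now appears unavoidably because of the structure of $Y_{(n/2)-1}$ and $Y_{n/2}$ in \eqref{5.8c}--\eqref{5.8dd}. The special case $r=n$ picks up an additional $|z|^{-1}$ contribution from the summation-index--shifted terms in \eqref{B.16} that generate a pole when all $z^{2k+n-1-r}$ factors are evaluated at $r=n$. The passage to the $|z|^{-\delta}|x-y|^{r+1-\delta-n}$ bound is achieved using the elementary inequality $|\ln(\zeta)|\leq C_\delta \zeta^{-\delta}$ valid for $0<\zeta\leq 1$ and any $\delta\in(0,1)$, applied with $\zeta=|z||x-y|/2$, which absorbs the logarithm at the cost of the prefactor $|z|^{-\delta}|x-y|^{-\delta}$. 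The final simplification for $r=n$ discards the $|z|^{1-\delta}|x-y|^{1-\delta}\leq 1$ term on the region $|z||x-y|\leq 1$, leaving the $|z|^{-1}$ behaviour.

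The main technical obstacle will be the bookkeeping rather than any conceptual difficulty: verifying that the constants in the estimates can indeed be chosen uniformly in $z,x,y$ (in particular the transition constants at $|z||x-y|=1$ must match so that a single constant works for both regimes) and tracking the combinatorial coefficients $c_{j,\underline k}$ arising from differentiating products of Hankel functions. A subsidiary issue is the even-dimensional case $r=n$, where one must verify that the $|z|^{-1}$ contribution actually survives and is not cancelled by any other term --- this requires extracting the specific summand in \eqref{B.16} at $\ell=n-1$ and confirming the prefactor $(\sum_{\ell=0}^{n-1}\binom{n}{\ell}(-1)^\ell)$ appearing in \eqref{B.24z} is nonzero. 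All these steps are routine but tedious; no fundamentally new estimate beyond those developed in Appendices \ref{sB} and \ref{sC} is needed.
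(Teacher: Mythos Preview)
Your plan coincides with the paper's: the small-argument regime via the explicit expansions \eqref{B.10}--\eqref{B.32z}, and the large-argument regime via Lemma~\ref{lB.6} written as $H_\nu^{(1)}(\zeta)=e^{i\zeta}\omega_\nu(\zeta)$, differentiated through the Leibniz rule with the $\wti O\big((1+|\zeta|)^{-1/2}\big)$ control on $\omega_\nu$. The paper's proof in fact stops after establishing the first two-case bound in each of \eqref{C.33} and \eqref{C.34}; it does not argue the subsequent chain of ``simplifications,'' treating them as routine.

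One caution on the extra work you propose for that chain: your case-split for the passage $|x-y|^{(2r+1-n)/2}\leq C\big(|x|^{(2r+1-n)/2}+|y|^{(2r+1-n)/2}\big)$ when $2r+1-n<0$ does not close. In the complementary region $|x-y|<\tfrac12\max(|x|,|y|)$ the reverse triangle inequality gives $\min(|x|,|y|)>|x-y|$, so for a negative exponent the desired pointwise inequality fails (take $x,y$ both large but $|x-y|$ small). The same obstruction hits the step to $[1+|x|]^\sigma[1+|y|]^\sigma$ when $\sigma<0$ and one of $|x|,|y|$ is near $0$: $|x|^\sigma$ blows up while $[1+|x|]^\sigma$ stays bounded, so ``local boundedness of $[1+|x|]^\sigma$'' cannot absorb it. The paper's presentation glosses over this as well; in the actual applications (Section~\ref{s10}) the combination with the $\langle x\rangle^{-n-1-\varepsilon}$ weights from $V$ and the specific index ranges renders the issue harmless, but the pointwise chain in \eqref{C.33} as written is not a uniform inequality for all $0\leq r\leq n$ and all $z\in\overline{\bbC_+}$.
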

\begin{proof}
The first estimate in \eqref{C.33} (resp., in \eqref{C.34}) follows immediately in the regime $|z||x-y|\leq 1$ from \eqref{B.12y}, \eqref{B.14y}, and \eqref{B.13y} (resp., \eqref{B.22z}, \eqref{B.23zz}, and \eqref{B.24z} and \eqref{B.23z}, \eqref{B.31z}, and \eqref{B.32z}).  One employs Lemma \ref{lB.6} in conjunction with \eqref{B.1} to obtain the first estimate in \eqref{C.33}, and \eqref{C.34} in the regime $|z||x-y|\geq 1$.  In fact, by Lemma \ref{lB.6} and \eqref{B.1}, $G_0(z;\dott,\dott)$ is of the form
\begin{align}
G_0(z;x,y) 
&= c_1 z^{n/2}|x-y|^{1-(n/2)}H_{(n/2)-1}^{(1)}(z|x-y|)I_N\lb{C.36}\\
&\quad + c_2z^{n/2}|x-y|^{1-(n/2)}H_{n/2}^{(1)}(z|x-y|)\alpha\cdot \frac{(x-y)}{|x-y|}\no\\
& = z^{n/2}|x-y|^{1-(n/2)}e^{iz|x-y|}\no\\
&\quad\times \bigg[ c_1\omega_{(n/2)-1}(z|x-y|)I_N + c_2\omega_{n/2}(z|x-y|)\alpha\cdot \frac{(x-y)}{|x-y|}\bigg],\no\\
&\hspace*{5.35cm} x,y\in \bbR^n,\, x\neq y,\, z\in \bbC_+,\no
\end{align}
for an appropriate pair of constants $c_1,c_2\in \bbC$.  The constants $c_1$ and $c_2$ are independent of $(z,x,y)$, and their precise values are immaterial for the purpose at hand.  Differentiating throughout \eqref{C.36} with respect to $z$, one obtains
\begin{align}
\frac{\partial^r}{\partial z^r}G_0(z;x,y) 
&=\sum_{\substack{j,k,\ell\in \bbN_0 \\ j+k+\ell=r}} c_{j,k,\ell}z^{(n/2)-j}|x-y|^{r-j+1-(n/2)}e^{iz|x-y|}\lb{C.37}\\
&\quad \times \bigg[c_1\omega_{(n/2)-1}^{(\ell)}(z|x-y|)I_N + c_2\omega_{n/2}^{(\ell)}(z|x-y|)\alpha\cdot\frac{(x-y)}{|x-y|}\bigg],\no\\
&\hspace*{5.35cm} x,y\in \bbR^n,\, x\neq y,\, z\in \bbC_+,\no
\end{align}
where the $c_{j,k,\ell}$ are constants which do not depend upon $(z,x,y)$.  By \eqref{C.37},
\begin{align}
&\bigg\|\frac{\partial^r}{\partial z^r}G_0(z;x,y)\bigg\|_{\cB(\bbC^N)}\no\\
&\quad \leq \sum_{\substack{j,k,\ell\in \bbN_0 \\ j+k+\ell=r}} \wti c_{j,k,\ell}|z|^{(n/2)-j}|x-y|^{r-j+1-(n/2)}e^{-\Im(z)|x-y|}|z|^{-(1/2)-\ell}|x-y|^{-(1/2)-\ell}\no\\
&\quad \leq \sum_{\substack{j,k,\ell\in \bbN_0 \\ j+k+\ell=r}} \wti c_{j,k,\ell}|z|^{[(n-1)/2]-(j+\ell)}|x-y|^{r-(j+\ell)+(1/2)-(n/2)}e^{-\Im(z)|x-y|}\no\\
&\quad\leq \wti C |z|^{[(n-1)/2]}\big[|z|^{j+\ell}|x-y|^{j+\ell} \big]^{-1}|x-y|^{(2r+1-n)/2}e^{-\Im(z)|x-y|}\no\\
&\quad \leq \wti C|z|^{[(n-1)/2]}|x-y|^{(2r+1-n)/2}e^{-\Im(z)|x-y|},\no\\
&\hspace*{1cm} x,y\in \bbR^n,\, x\neq y,\, z\in \bbC_+,\,|z||x-y|\geq 1,\no
\end{align}
where the $\wti c_{j,k,\ell}$ are constants which do not depend upon $(z,x,y)$.
\end{proof}

\section{A Product Formula for Modified Fredholm Determinants} \lb{sD}
\renewcommand{\theequation}{D.\arabic{equation}}
\renewcommand{\thetheorem}{D.\arabic{theorem}}
\setcounter{theorem}{0} \setcounter{equation}{0}

The purpose of this appendix is to prove a product formula for regularized (modified) Fredholm determinants extending the well-known Hilbert--Schmidt case.

The result we have in mind is a quantitative version of the following fact:  

\begin{theorem} \lb{tD.1} 
Let $k \in \bbN$, and suppose $A, B \in \cB_k(\cH)$. Then
\begin{align}
{\det}_{\cH,k} ((I_{\cH} - A)(I_{\cH} - B)) = {\det}_{\cH,k} (I_{\cH} - A) {\det}_{\cH,k} (I_{\cH} - B) 
\exp(\tr_{\cH}(X_k(A,B))),     \lb{D.1a} 
\end{align}
where $X_k(\dott,\dott) \in \cB_1(\cH)$ is of the form
\begin{align} 
\begin{split} 
X_1(A,B) &= 0,  \\
X_k(A,B) &= \sum_{j_1,\dots,j_{2k-2} = 0}^{k-1} c_{j_1,\dots,j_{2k-2}} 
C_1^{j_1} \cdots C_{2k-2}^{j_{2k-2}}, \quad k \geq 2,
\end{split}
\end{align}
with 
\begin{align}
\begin{split} 
& c_{j_1,\dots,j_{2k-2}} \in \bbQ,   \\
& C_{\ell} = A \text{ or } B, \quad 1 \leq \ell \leq 2k-2,    \\
& k \leq \sum_{\ell=1}^{2k-2} j_{\ell} \leq 2k - 2, \quad k \geq 2.  
\end{split} 
\end{align}
\end{theorem}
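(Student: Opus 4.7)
The plan is to establish \eqref{D.1a} first on the dense subspace $\cB_1(\cH) \times \cB_1(\cH)$ of $\cB_k(\cH) \times \cB_k(\cH)$, where every regularized determinant reduces to an ordinary Fredholm determinant, and then extend to general $A, B \in \cB_k(\cH)$ by continuity. The base case $k=1$ reduces to the classical multiplicativity of $\det_\cH$ on $I_\cH + \cB_1(\cH)$, so only $k \geq 2$ needs argument.

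For $A, B \in \cB_1(\cH)$, the polynomial $S_k(X) := \sum_{j=1}^{k-1} X^j/j$ is trace class, so $E_k(X) := \exp(S_k(X)) \in I_\cH + \cB_1(\cH)$, and definitions \eqref{9.15a}--\eqref{9.16a} combined with multiplicativity of $\det_\cH$ on $I_\cH + \cB_1(\cH)$ give
\begin{equation*}
\log \det_{\cH,k}(I_\cH-X) = \log \det_\cH(I_\cH-X) + \tr_\cH(S_k(X)).
\end{equation*}
Applying this with $X \in \{A,\, B,\, A+B-AB\}$ and using $\det_\cH((I_\cH-A)(I_\cH-B)) = \det_\cH(I_\cH-A)\det_\cH(I_\cH-B)$, the ordinary determinants cancel in the ratio and I obtain
\begin{equation*}
\log \frac{\det_{\cH,k}((I_\cH-A)(I_\cH-B))}{\det_{\cH,k}(I_\cH-A)\, \det_{\cH,k}(I_\cH-B)} = \tr_\cH(Y_k(A,B)),
\end{equation*}
where $Y_k(A,B) := S_k(A+B-AB) - S_k(A) - S_k(B)$ is a polynomial in the non-commuting variables $A,B$ of total degree at most $2(k-1) = 2k-2$.

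The next step is to show that the part of $Y_k(A,B)$ consisting of monomials of total degree strictly less than $k$ has vanishing trace on $\cB_1(\cH) \times \cB_1(\cH)$. Writing $S_k(X) = -\log(I - X) - \sum_{n \geq k} X^n/n$ as a formal power series yields the identity
\begin{equation*}
Y_k(A,B) = -\bigl[\log((I-A)(I-B)) - \log(I-A) - \log(I-B)\bigr] + \cR_k(A,B),
\end{equation*}
where $\cR_k(A,B) := \sum_{n \geq k} \tfrac{1}{n}[A^n + B^n - (A+B-AB)^n]$ contains only monomials of degree $\geq k$. The bracketed term is precisely the Baker--Campbell--Hausdorff correction $\log(e^X e^Y) - X - Y$ with $X = \log(I-A)$, $Y = \log(I-B)$, hence a formal series of iterated Lie brackets in $\log(I-A)$ and $\log(I-B)$, and so in $A$ and $B$. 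Each iterated Lie bracket evaluated on trace class operators is a finite $\bbC$-linear combination of commutators $[P,Q]$ with $P,Q$ trace class, hence has vanishing trace. Truncating to degree $<k$ preserves the Lie-bracket structure, so the degree $<k$ part of $Y_k$ traces to zero.

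Defining $X_k(A,B)$ to be the degree $\geq k$ part of $Y_k(A,B)$, each monomial of $X_k$ has at least $k$ factors and at most $2k-2$ factors drawn from $\cB_k(\cH)$; by Hölder's inequality for Schatten ideals ($\|A^{n_1}B^{n_2}\cdots\|_{\cB_1} \leq \prod \|A\|_{\cB_k}^{n_i}$ when $\sum n_i \geq k$), this guarantees $X_k(A,B) \in \cB_1(\cH)$ for all $A,B \in \cB_k(\cH)$. Any such monomial $A^{m_1}B^{m_2}A^{m_3}\cdots$ can be broken into at most $2k-2$ blocks $C_\ell^{j_\ell}$ with $C_\ell \in \{A,B\}$ and $0 \leq j_\ell \leq k-1$ (splitting any run longer than $k-1$ into pieces of length $\leq k-1$), with rational coefficients inherited from $S_k$. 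The identity $F(A,B) = \tr_\cH(X_k(A,B))$ then extends from $\cB_1(\cH) \times \cB_1(\cH)$ to $\cB_k(\cH) \times \cB_k(\cH)$ by continuity: both sides are continuous in $\cB_k$-norm (the left-hand side by continuity of $\det_{\cH,k}$ on $I_\cH + \cB_k(\cH)$ near its nonzero values, the right-hand side by Hölder), and $\cB_1(\cH)$ is dense in $\cB_k(\cH)$ (finite-rank operators lie in every $\cB_p$). The main obstacle will be making the Lie-bracket argument rigorous at the level of formal non-commutative polynomials in $\bbQ\langle A, B\rangle$---specifically, verifying that the truncation of the BCH series to degree $<k$ lies in the kernel of the cyclic trace, equivalently in the $\bbQ$-linear span of commutators $PQ - QP$ of polynomials.
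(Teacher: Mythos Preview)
Your overall architecture matches the paper's: reduce to $A, B \in \cB_1(\cH)$ via the identity $\det_{\cH,k}(I-X) = \det_\cH(I-X)\exp\bigl(\tr_\cH S_k(X)\bigr)$, conclude that the ratio of determinants equals $\exp\bigl(\tr_\cH Y_k(A,B)\bigr)$ with $Y_k(A,B) = S_k(A+B-AB) - S_k(A) - S_k(B)$, show the degree-$<k$ piece of $Y_k$ has vanishing trace, set $X_k$ equal to the degree-$\geq k$ piece, and extend to $\cB_k(\cH)$ by density and continuity.

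The substantive difference is in how you kill the low-degree part. The paper decomposes $y_k$ (their notation for the degree-$<k$ part of $\sum_{j=1}^{k-1} j^{-1}(A+B-AB)^j$) into bihomogeneous pieces $z_{k_1,k_2}$ and proves by a direct combinatorial computation with cyclic shifts of words (their Lemmas~D.2--D.5) that each $z_{k_1,k_2}$ with $k_1,k_2 \geq 1$ lies in the commutator subspace $[\mathrm{Pol}_2, \mathrm{Pol}_2]$. Your route via $S_k(X) = -\log(I-X) - \sum_{n\geq k} X^n/n$ identifies the degree-$<k$ part of $Y_k$ as (minus) the degree-$<k$ truncation of the BCH correction $\log(e^X e^Y) - X - Y$ with $X=\log(I-A)$, $Y=\log(I-B)$, which by the Dynkin form of BCH is a formal series of iterated Lie brackets and hence lands in the commutator subspace. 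This is correct and is a genuinely cleaner conceptual argument: the ``obstacle'' you flag at the end dissolves once one observes that $[\mathrm{Pol}_2, \mathrm{Pol}_2]$ is a \emph{graded} subspace of $\mathrm{Pol}_2$ (brackets of homogeneous elements are homogeneous), so each homogeneous component of a formal sum of iterated brackets is itself a finite sum of brackets in $\mathrm{Pol}_2$. The paper's combinatorics are more self-contained but more laborious; your BCH observation replaces Lemmas~D.3--D.5 in one stroke, at the cost of invoking BCH--Dynkin as a black box.

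One small point: your parenthetical about ``splitting any run longer than $k-1$'' is unnecessary. In the expansion of $(A+B-AB)^j$ with $j \leq k-1$, each of the $j$ factors contributes at most one $A$ to any maximal run of $A$'s (and similarly for $B$), so no run exceeds $j \leq k-1$; the block form in the theorem statement is automatic.
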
 

Explicitly, one obtains:
\begin{align}
X_1(A,B) &= 0,   \no \\
X_2(A,B) &= - AB,    \no \\
X_3(A,B) &= 2^{-1} \big[(AB)^2 - AB(A+B) - (A+B)AB\big],     \\
X_4(A,B) &= 2^{-1} (AB)^2 - 3^{-1} \big[AB(A+B)^2+(A+B)^2AB+(A+B)AB(A+B)\big]  \no \\
& \quad + 3^{-1} \big[(AB)^2(A+B)+(A+B)(AB)^2+AB(A+B)AB\big]     \no \\ 
& \quad - 3^{-1} (AB)^3, \no \\
& \hspace*{-9mm} \text{etc.}    \no 
\end{align}

When taking traces (what is actually needed in \eqref{D.1a}), this simplifies to
\begin{align}
\begin{split}
& \tr_{\cH} (X_1(A,B)) = 0,   \\
& \tr_{\cH}(X_2(A,B)) = - \tr_{\cH}(AB),    \\
& \tr_{\cH}(X_3(A,B)) = - \tr_{\cH}\big(ABA + BAB - 2^{-1} (AB)^2\big),     \\
& \tr_{\cH}(X_4(A,B)) = - \tr_{\cH}\big(A^3 B + A^2 B^2 + A B^3 + 2^{-1} (AB)^2    \\
& \hspace*{3.8cm} - (AB)^2A - B(AB)^2 + 3^{-1} (AB)^3\big),    \\
& \quad \text{etc.}
\end{split} 
\end{align}

In the rest of this appendix we will detail the characterization of $X_k(A,B)$ following the paper \cite{BCGLNSZ20}. We also refer to \cite{Fr17}, \cite{Ha05}, \cite{Ha16} for related, but somewhat 
different product formulas for regularized determinants.  

To prove a quantitative version of Theorem \ref{tD.1} and hence derive a formula for $X_k(A,B)$, we first need to recall some facts on the commutator subspace of an algebra of noncommutative polynomials. 

Let ${\rm Pol_2}$ be the free polynomial algebra in $2$ (noncommuting) variables, $A$ and $B$. Let $W$ be the set of noncommutative monomials (words in the alphabet $\{A,B\}$). (We recall that the set $W$ is a semigroup with respect to concatenation, $1$ is the neutral element of this semigroup, that is, $1$ is an empty word in this alphabet.) Every $x\in{\rm Pol_2}$ can be written as a sum
\begin{equation}
x=\sum_{w\in W}\hatt{x}(w)w.
\end{equation} 
Here the coefficients $\hatt{x}(w)$ vanish for all but finitely many $w\in W$. 

Let $[{\rm Pol_2},{\rm Pol_2}]$ be the commutator subspace of ${\rm Pol_2}$, that is, the linear span of commutators $[x_1,x_2]$, $x_1,x_2\in{\rm Pol_2}$. 

\begin{lemma} \lb{lD.2} 
One has $x\in[{\rm Pol_2},{\rm Pol_2}]$ provided that
\begin{equation} 
\sum_{m=1}^{L(w)}\hatt{x}\bigl(\sigma^m(w)\bigr)=0,\quad w\in W.
\end{equation} 
Here, 
$L(w)$ is the length of each word $w = w_1 w_2\cdots w_{L(w)}$,
$\sigma$ is the cyclic shift given by $\sigma(w)=w_2\cdots w_{L(w)}w_1$.  
\end{lemma}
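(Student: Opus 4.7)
The plan is to reduce the general claim to the single observation that, for every word $w\in W$ and every $m\in\bbN$, the difference $\sigma^m(w)-w$ lies in $[{\rm Pol}_2,{\rm Pol}_2]$. First I would note that for a word $w=w_1w_2\cdots w_{L(w)}$ the identity
\[
\sigma(w)-w=(w_2\cdots w_{L(w)})w_1-w_1(w_2\cdots w_{L(w)})=[w_2\cdots w_{L(w)},\,w_1]
\]
realizes $\sigma(w)-w$ explicitly as a commutator of two elements of ${\rm Pol}_2$, hence as an element of $[{\rm Pol}_2,{\rm Pol}_2]$. Writing the telescoping sum $\sigma^m(w)-w=\sum_{j=0}^{m-1}\bigl(\sigma^{j+1}(w)-\sigma^j(w)\bigr)$ and applying the previous step to each word $\sigma^j(w)$ gives $\sigma^m(w)-w\in[{\rm Pol}_2,{\rm Pol}_2]$ for every $m\in\bbN$, as desired.

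Next, I would decompose $x=\sum_{w\in W}\hatt{x}(w)\,w$ according to the cyclic orbits of $\sigma$ on $W$. Let $\cO$ denote the (countable) collection of cyclic $\sigma$-orbits in $W$, and for $O\in\cO$ set
\[
x_O:=\sum_{u\in O}\hatt{x}(u)\,u,\qquad x=\sum_{O\in\cO}x_O,
\]
where only finitely many $x_O$ are nonzero because $\hatt{x}$ is finitely supported. Fix an arbitrary representative $w_O\in O$ for each orbit. Then, using the decomposition $u=(u-w_O)+w_O$,
\[
x_O=\sum_{u\in O}\hatt{x}(u)(u-w_O)+w_O\sum_{u\in O}\hatt{x}(u).
\]
Every word $u\in O$ is of the form $\sigma^m(w_O)$ for some $m$, so by the first step each $u-w_O$ belongs to $[{\rm Pol}_2,{\rm Pol}_2]$, making the first sum lie in $[{\rm Pol}_2,{\rm Pol}_2]$.

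The remaining task is to show that the coefficient $\sum_{u\in O}\hatt{x}(u)$ vanishes under the hypothesis. For any $w\in O$ of period $p:=|O|$ (so that $L(w)=p\cdot d$ for the integer $d:=L(w)/p$), the list $\sigma(w),\sigma^2(w),\dots,\sigma^{L(w)}(w)$ traverses $O$ exactly $d$ times, so the hypothesis reads
\[
0=\sum_{m=1}^{L(w)}\hatt{x}\bigl(\sigma^m(w)\bigr)=d\sum_{u\in O}\hatt{x}(u),
\]
giving $\sum_{u\in O}\hatt{x}(u)=0$. Consequently each $x_O$ lies in $[{\rm Pol}_2,{\rm Pol}_2]$, and summing the finitely many nonzero $x_O$ yields $x\in[{\rm Pol}_2,{\rm Pol}_2]$.

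The only subtlety, and the main point requiring care, is the bookkeeping for periodic words, where the cyclic orbit has fewer than $L(w)$ elements; I handle this by identifying the multiplicity factor $d=L(w)/|O|$ explicitly, as above, and noting $d\neq 0$ so the hypothesis still forces the sum over $O$ to vanish. Everything else is essentially formal manipulation in the free algebra ${\rm Pol}_2$.
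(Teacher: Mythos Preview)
Your proof is correct and follows essentially the same approach as the paper's: both hinge on the observation that $\sigma^m(w)-w\in[{\rm Pol}_2,{\rm Pol}_2]$ and then use the hypothesis to annihilate the remaining coefficient attached to a chosen representative of each cyclic class. The paper packages the orbit bookkeeping a bit differently, rewriting $x=\hatt{x}(1)+\sum_{w\neq 1}L(w)^{-1}\sum_{m=1}^{L(w)}\hatt{x}(\sigma^m(w))\,\sigma^m(w)$ and replacing each $\sigma^m(w)$ by $w$, which absorbs your multiplicity factor $d=L(w)/|O|$ directly into the weight $L(w)^{-1}$; your explicit handling of periodic words is a fine alternative to this device.
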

\begin{proof} One notes that 
\begin{equation} 
x=\sum_{w\in W}\hatt{x}(w)w=\hatt{x}(1)+\sum_{w\neq1} L(w)^{-1}
\sum_{m=1}^{L(w)}\hatt{x}\bigl(\sigma^m(w)\bigr)\sigma^m(w).
\end{equation} 
Obviously, $(\sigma^m(w)-w)\in[{\rm Pol_2},{\rm Pol_2}]$ for each positive integer $m$ and thus,
\begin{equation} 
x\in \bigg(\hatt{x}(1)+\sum_{w\neq1} L(w)^{-1}\sum_{m=1}^{L(w)}\hatt{x}\bigl(\sigma^m(w)\bigr)w+[{\rm Pol_2},{\rm Pol_2}]\bigg).
\end{equation} 
By hypothesis, $\hatt{x}(1)=0$ and
\begin{equation} 
\sum_{m=1}^{L(w)}\hatt{x}\bigl(\sigma^m(w)\bigr)=0,\quad 1\neq w\in W,
\end{equation} 
completing the proof.
\end{proof}

Next, we need some notation. 
Let $k_1,k_2 \in \bbN_0 = \bbN \cup \{0\}$, and set
\begin{equation} 
z_{k_1,k_2}= \begin{cases}
0, & k_1=k_2=0, \\
k_1^{-1} A^{k_1}, & k_1\in \bbN, \, k_2 = 0, \\
k_2^{-1} B^{k_2}, & k_1 = 0, \, k_2 \in \bbN, \\
\sum_{j=1}^{k_1+k_2} j^{-1} 
\sum_{\substack{\pi\in S_j, \, |\pi|=3\\|\pi_1|+|\pi_3|=k_1\\|\pi_2|+|\pi_3|=k_2}}(-1)^{|\pi_3|}z_{\pi}, & k_1,k_2 \in \bbN. 
\end{cases}     \lb{D.z} 
\end{equation} 
Here, $S_j$ is the set of all partitions of the set $\{1,\cdots,j\}$, $1 \leq j \leq k_1 + k_2$. 
(The symbol $|\dott|$ abbreviating the cardinality of a subset of $\bbZ$.) 
The condition $|\pi|=3$ means that $\pi$ breaks the set $\{1,\cdots,j\}$ into exactly $3$ pieces denoted by $\pi_1$, $\pi_2$, and $\pi_3$ (some of them can be empty). The element $z_{\pi}$ denotes the product
\begin{equation}
z_{\pi}=\prod_{m=1}^jz_{m,\pi},\quad z_{m,\pi}=
\begin{cases}
A,& m\in\pi_1,\\
B,& m\in\pi_2,\\
AB,& m\in\pi_3.
\end{cases}
\end{equation} 
Finally, let $W_{k_1,k_2}$ be the collection of all words with $k_1$ letters $A$ and $k_2$ letters $B$. 

Using this notation we now establish a combinatorial fact. 

\begin{lemma} \lb{lD.3} 
Let $k_1,k_2 \in \bbN$. Then  
\begin{equation} 
z_{k_1,k_2}=\sum_{w\in W_{k_1,k_2}}
\Bigg(\sum_{\ell=0}^{n(w)}\frac{(-1)^{\ell}}{k_1+k_2-\ell}\binom{n(w)}{\ell}\Bigg)w, 
\end{equation} 
where 
\begin{equation} 
n(w)=|S(w)|,\quad S(w)= \{1\leq \ell \leq L(w)-1 \, | \,  w_{\ell}=A,\, w_{\ell +1}=B\}.
\end{equation} 
\end{lemma}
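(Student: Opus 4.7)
The plan is to compute the coefficient of an arbitrary word $w\in W_{k_1,k_2}$ in the expansion of $z_{k_1,k_2}$ given by \eqref{D.z}, and verify that it coincides with the claimed expression. First I would rewrite the defining sum by reindexing via $p=|\pi_3|=k_1+k_2-j$. Since the block sizes $|\pi_1|=k_1-p$, $|\pi_2|=k_2-p$, $|\pi_3|=p$ force $0\le p \le \min(k_1,k_2)$ and $j=k_1+k_2-p$, the defining formula becomes
\begin{equation*}
z_{k_1,k_2}=\sum_{p=0}^{\min(k_1,k_2)}\frac{(-1)^p}{k_1+k_2-p}\sum_{\pi}z_{\pi},
\end{equation*}
where the inner sum runs over ordered set-partitions $(\pi_1,\pi_2,\pi_3)$ of $\{1,\dots,k_1+k_2-p\}$ with the prescribed block sizes.

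The combinatorial core of the argument is the following claim: for fixed $w\in W_{k_1,k_2}$ and fixed $p$ with $0\le p\le \min(k_1,k_2)$, the number of such partitions $\pi$ satisfying $z_{\pi}=w$ equals $\binom{n(w)}{p}$. To prove this I would set up the bijection with $p$-subsets of $S(w)$. Each partition $\pi$ with $|\pi_3|=p$ may be read as a decomposition of $w$ into $j=k_1+k_2-p$ consecutive tokens, in which the indices in $\pi_3$ label the two-letter tokens $AB$ and the indices in $\pi_1$ (resp.\ $\pi_2$) label single-letter tokens $A$ (resp.\ $B$); once the positions of the $AB$-tokens inside $w$ are chosen, the remaining tokens are forced, and the labeling by $\{1,\dots,j\}$ is uniquely determined by reading order. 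Thus $\pi$ is specified by the positions $T\subseteq\{1,\dots,L(w)-1\}$ at which the $AB$-tokens occur, and each element of $T$ must lie in $S(w)$. Conversely, any $p$-subset $T\subseteq S(w)$ gives a valid tokenization, because two distinct elements $\ell<\ell'$ of $S(w)$ cannot satisfy $\ell'=\ell+1$ (that would force $w_{\ell+1}$ to equal both $B$ and $A$), so the chosen $AB$-blocks are automatically pairwise disjoint. This establishes the bijection.

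Putting the two pieces together, the coefficient of $w$ in $z_{k_1,k_2}$ is
\begin{equation*}
\sum_{p=0}^{\min(k_1,k_2)}\frac{(-1)^p}{k_1+k_2-p}\binom{n(w)}{p}
=\sum_{\ell=0}^{n(w)}\frac{(-1)^\ell}{k_1+k_2-\ell}\binom{n(w)}{\ell},
\end{equation*}
where the second equality holds because $\binom{n(w)}{p}=0$ for $p>n(w)$ while $n(w)\le \min(k_1,k_2)$ (each $AB$-pair occupies one of the $k_1$ $A$'s and one of the $k_2$ $B$'s, and no letter is shared between two such pairs). Since $w\in W_{k_1,k_2}$ was arbitrary, the lemma follows.

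The main obstacle in the write-up will be phrasing the bijection cleanly, because one has to verify three things in parallel: (i)~the $AB$-blocks are automatically non-overlapping, which is the observation about consecutive elements of $S(w)$; (ii)~the ordered set-partition $(\pi_1,\pi_2,\pi_3)$ is uniquely reconstructed from the reading-order labeling of the tokens; and (iii)~the sign $(-1)^{|\pi_3|}$ depends only on $p$, not on the individual choice of $T\subseteq S(w)$. Each is short, but together they constitute the only non-trivial step of the proof; everything else is reindexing.
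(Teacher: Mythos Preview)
Your proposal is correct and follows essentially the same approach as the paper: both arguments reduce to the bijection between partitions $\pi$ with $z_\pi=w$ and $|\pi_3|=p$, and $p$-subsets of $S(w)$, yielding the count $\binom{n(w)}{p}$ and then the claimed coefficient after reindexing. The only difference is cosmetic---you reindex by $p=|\pi_3|$ at the outset and spell out the non-overlap of $AB$-blocks explicitly, whereas the paper keeps the $j$-indexing and reindexes at the end.
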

\begin{proof} For each $j\in \{1, \ldots, k_1 + k_2\}$, let
\begin{align} 
\Pi_j &= \{ \pi\in   S_j \, | \, |\pi| = 3,\, \, |\pi_1|+|\pi_3| = k_1, \, |\pi_2|+|\pi_3| = k_2\},    \\
\Pi_{j,w} &=\{ \pi\in \Pi_j \, | \, z_\pi = w\}, \quad w\in W_{k_1,k_2}.
\end{align} 
One observes that $|\pi_3|\leq n(w)\leq \min\{k_1,k_2\}$ and that
\begin{equation} 
j = |\pi_1|+|\pi_2|+|\pi_3| = k_1+k_2-|\pi_3|.
\end{equation} 
For any partition $\pi\in\Pi_{j,w}$, let $I\subseteq S(w)$ indicate which subwords $AB$ in $w$ arise from elements in $\pi_3$.  Then $|I|=|\pi_3|=k_1+k_2-j$.  Therefore, each partition in $\pi\in\Pi_{j,w}$ is determined by a unique choice of $I$ and each such choice of $I$ determines the choice of $\pi$ uniquely. This implies that 
\begin{equation} 
|\Pi_{j,w}| = \binom{n(w)}{k_1+k_2-j}.
\end{equation} 
Thus,
\begin{align} 
z_{k_1,k_2} &=\sum_{w\in W_{k_1,k_2}}\sum_{j=1}^{k_1+k_2} j^{-1} \sum_{\pi\in\Pi_{j,w}} (-1)^{|\pi_3|} w 
\no \\
&=\sum_{w\in W_{k_1,k_2}} \sum_{j=1}^{k_1+k_2} (-1)^{k_1+k_2-j} j^{-1} |\Pi_{j,w}|w    \no \\ 
&=\sum_{w\in W_{k_1,k_2}} \sum_{j=1}^{k_1+k_2} (-1)^{k_1+k_2-j} j^{-1} \binom{n(w)}{k_1+k_2-j}w.
\end{align}
Taking into account that
\begin{equation} 
\binom{n(w)}{k_1+k_2-j}=0,\quad k_1+k_2-j\notin\{0,\cdots,n(w)\},
\end{equation} 
it follows that
\begin{align} 
z_{k_1,k_2} &=\sum_{w\in W_{k_1,k_2}} 
\sum_{j=k_1+k_2-n(w)}^{k_1+k_2} (-1)^{k_1+k_2-j} j^{-1} \binom{n(w)}{k_1+k_2-j}w    \no \\
&=\sum_{w\in W_{k_1,k_2}} 
\Bigg(\sum_{\ell=0}^{n(w)}\frac{(-1)^{\ell}}{k_1+k_2-\ell}\binom{n(w)}{\ell}\Bigg)w.
\end{align} 
\end{proof}

We can now prove the main fact about the commutator  subspace of ${\rm Pol_2}$ needed later on.

\begin{lemma} \lb{lD.4} 
For every $k_1,k_2 \in \bbN$, $z_{k_1,k_2}\in[{\rm Pol_2},{\rm Pol_2}]$.  
\end{lemma}
\begin{proof} 
Let $w$ be any element in $W_{k_1,k_2}$ and let $m$ be any positive integer. 
If $\sigma^m(w)$ starts with the subword $AB$, 
then $\sigma^{m+1}(w)$ has the form $B\cdots A$ and therefore has one fewer subwords $AB$ than $\sigma^m(w)$; 
that is, $n\bigl(\sigma^{m+1}(w)\bigr) = n\bigl(\sigma^m(w)\bigr)-1$.
If, however, $\sigma^m(w)$ does not start with the subword $AB$, 
then the $AB$ subwords of $\sigma^{m+1}(w)$ are precisely the $AB$ subwords of $\sigma^m(w)$ each shifted once; 
hence, $n\bigl(\sigma^{m+1}(w)\bigr) = n\bigl(\sigma^m(w)\bigr)$.

Now, to calculate $\sum_{m=1}^{L(w)}\widehat{z_{k_1,k_2}}\bigl(\sigma^m(w)\bigr)$, 
one may assume, by applying cyclic shifts, that $w$ starts with $AB$. 
Then there are $n(w)$ shifted words $\sigma^m(w)$ which start with the subword $AB$, 
and it follows that $n(w)$ of the numbers $\{n\bigl(\sigma^m(w)\bigr)\::\: 1\leq m\leq L(w)\}$ equal $n(w)-1$ 
and that the remaining $L(w)-n(w) = k_1+k_2-n(w)$ numbers equal $n(w)$. 
Lemma~\ref{lD.3} therefore implies that 
\begin{align} 
  \sum_{m=1}^{L(w)}\widehat{z_{k_1,k_2}}\bigl(\sigma^m(w)\bigr)
&=\sum_{m=1}^{L(w)}\Bigg(\sum_{\ell=0}^{n(\sigma^m(w))}\frac{(-1)^{\ell}}{k_1+k_2-\ell}\binom{n\bigl(\sigma^m(w)\bigr)}{\ell}\Bigg)   \no \\
&=n(w) \Bigg(\sum_{\ell=0}^{n(w)-1}\frac{(-1)^{\ell}}{k_1+k_2-\ell}\binom{n(w)-1}{\ell}\Bigg)   \no \\
& \quad +(k_1+k_2-n(w)) \Bigg(\sum_{\ell=0}^{n(w)}\frac{(-1)^{\ell}}{k_1+k_2-\ell}\binom{n(w)}{\ell}\Bigg).
\end{align} 
Since
\begin{equation} 
\binom{n(w)-1}{n(w)}=0,
\end{equation} 
it follows that
\begin{align} 
\sum_{m=1}^{L(w)}\widehat{z_{k_1,k_2}}\bigl(\sigma^m(w)\bigr)
&=n(w) \Bigg(\sum_{\ell=0}^{n(w)}\frac{(-1)^{\ell}}{k_1+k_2-\ell}\binom{n(w)-1}{\ell}\Bigg)    \no \\
& \quad +(k_1+k_2-n(w)) \Bigg(\sum_{\ell=0}^{n(w)}\frac{(-1)^{\ell}}{k_1+k_2-\ell}\binom{n(w)}{\ell}\Bigg)   \no \\
&=\sum_{\ell=0}^{n(w)}\frac{(-1)^{\ell}}{k_1+k_2-\ell} \Bigg(n(w)\binom{n(w)-1}{\ell}    \no \\ 
& \quad +(k_1+k_2-n(w))\binom{n(w)}{\ell}\Bigg).\end{align} 
Clearly,
\begin{equation} 
n(w)\binom{n(w)-1}{\ell}+(k_1+k_2-n(w))\binom{n(w)}{\ell}=(k_1+k_2-\ell)\binom{n(w)}{\ell}, 
\end{equation} 
and thus 
\begin{equation} 
\sum_{m=1}^{L(w)}\widehat{z_{k_1,k_2}}\bigl(\sigma^m(w)\bigr)=\sum_{\ell=0}^{n(w)}(-1)^{\ell}\binom{n(w)}{\ell}=0.  
\end{equation} 
Hence, Lemma~\ref{lD.2} completes the proof. 
\end{proof}

Next, we introduce some further notation. Let $k \in \bbN$ and set
\begin{align} 
\begin{split} 
x_1 &= 0, \\
x_k &=\sum_{j=1}^{k-1} j^{-1} 
\sum_{\substack{\cA \subseteq \{1,\cdots,j\}\\ j+|\cA|\geq k}} 
(-1)^{|\cA|}y_{\cA}, \quad k \geq 2,   \lb{D.33} 
\end{split} \\
\begin{split} 
y_1 &= 0, \\
y_k&=\sum_{j=1}^{k-1} j^{-1} \sum_{\substack{\cA \subseteq \{1,\cdots,j\}\\ j+|\cA|\leq k-1}} 
(-1)^{|\cA|}y_{\cA},   \quad k \geq 2, 
\end{split} \\
y_{\cA}&=\prod_{m=1}^jy_{m,\cA},\quad y_{m,\cA}=
\begin{cases}
A+B,& m \notin \cA, \\
AB,& m \in \cA. 
\end{cases}
\end{align} 
In particular, 
\begin{equation} 
\sum_{j=1}^{k-1} j^{-1} (A+B-AB)^j = x_k + y_k,    \lb{D.binom} 
\end{equation} 
and one notes that the length of the word $y_{\cA}$ subject to $\cA \subseteq \{1,\dots,j\}$, equals 
\begin{equation}
L(y_{\cA}) = \big|\cA^c\big| + 2 |\cA| = j +|\cA|, \quad 1 \leq j \leq k-1, \; k \geq 2    \lb{D.number} 
\end{equation}
(with $A^c = \{1,\dots,j\} \backslash \cA$ the complement of $\cA$ in $\{1,\dots,j\}$). 

Using this notation we can now state the following fact:

\begin{lemma} \lb{lD.5} Let $k \in \bbN$, $k\geq 2$, then 
\begin{equation} 
y_k\in \bigg(\sum_{j=1}^{k-1}\frac1j(A^j+B^j)+[{\rm Pol_2},{\rm Pol_2}]\bigg).
\end{equation} 
\end{lemma}
\begin{proof} Employing 
\begin{equation} 
y_k=\sum_{\substack{k_1,k_2\geq0\\ k_1+k_2 \leq k-1}}z_{k_1,k_2},    \lb{D.a}
\end{equation} 
Lemma \ref{lD.4} yields 
\begin{equation} 
z_{k_1,k_2}\in [{\rm Pol_2},{\rm Pol_2}],\quad k_1,k_2 \in \bbN.   \lb{D.b}
\end{equation} 
Since by \eqref{D.z}, 
\begin{equation} 
z_{0,0}=0, \quad 
z_{k_1,0}= k_1^{-1} A^{k_1},\; k_1\in \bbN,\quad z_{0,k_2}= k_2^{-1} B^{k_2},\; k_2 \in \bbN,    \lb{D.c}
\end{equation} 
combining \eqref{D.a}--\eqref{D.c} completes the proof.
\end{proof}

After these preparations we are ready to return to the product formula for regularized determinants and  specialize the preceding algebraic considerations to the context of
Theorem \ref{tD.1}. 

First we recall that by \eqref{D.33} and \eqref{D.number},  
\begin{equation}
x_k =\sum_{j=1}^{k-1} j^{-1} \sum_{\substack{\cA\subseteq \{1,\cdots,j\}\\ j+|\cA|\geq k}}(-1)^{|\cA|}y_{\cA} := X_k(A,B) \in \cB_1(\cH),  
\quad k \geq 2,
\end{equation} 
since for $1 \leq j \leq k-1$, $L(y_{\cA}) = j+|\cA| \geq k$, and hence one obtains the inequality 
\begin{equation} 
\|x_k\|_{\cB_1(\cH)}\leq c_k\max_{\substack{0\leq k_1,k_2<k\\ k_1+k_2\geq k}} 
\|A\|_{\cB_k(\cH)}^{k_1}\|B\|_{\cB_k(\cH)}^{k_2}, \quad k \in \bbN, \; k \geq 2, 
\end{equation} 
for some $c_k > 0$, $k \geq 2$. We also set (cf.\ \eqref{D.33} $X_1(A,B) = 0$. 

\begin{theorem} Let $k \in \bbN$ and assume that $A,B\in \cB_k(\cH)$. Then 
\begin{equation} 
{\det}_{\cH,k}((I_{\cH}-A)(I_{\cH}-B))={\det}_{\cH,k}(I_{\cH}-A) {\det}_{\cH,k}(I_{\cH}-B) \exp({\tr}_{\cH}(X_k(A,B))).   \lb{D.38A} 
\end{equation} 
\end{theorem}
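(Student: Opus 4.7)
The plan is to establish \eqref{D.38A} first for $A, B \in \cB_1(\cH)$ of sufficiently small norm, where all formal series converge absolutely in $\cB_1(\cH)$, and then extend to $A, B \in \cB_k(\cH)$ by analyticity plus density. In the small-norm $\cB_1$ regime, $T$ commutes with its powers, so
\[
(I-T)\exp\!\Bigl(\sum_{j=1}^{k-1} j^{-1} T^j\Bigr) = \exp\!\Bigl(-\sum_{j\geq k} j^{-1} T^j\Bigr),
\]
and combining this with $\det_{\cH}(\exp Z) = \exp(\tr_{\cH}Z)$ for $Z \in \cB_1(\cH)$ yields the compact formula
\[
\log \det_{\cH,k}(I-T) = -\sum_{j\geq k} j^{-1}\tr_{\cH}(T^j), \qquad T \in \{A,B,C\},
\]
where $C := A+B-AB$, so that $I-C = (I-A)(I-B)$.

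Subtracting the expressions for $T = A, B, C$ reduces the desired identity (after taking logarithms) to
\[
\sum_{j\geq k} j^{-1}\bigl(\tr_{\cH} A^j + \tr_{\cH} B^j - \tr_{\cH} C^j\bigr) = \tr_{\cH} X_k(A,B).
\]
The classical multiplicativity of the Fredholm determinant on $I + \cB_1(\cH)$ forces the full series $\sum_{j=1}^\infty j^{-1}(\tr_{\cH} A^j + \tr_{\cH} B^j - \tr_{\cH} C^j)$ to vanish, so the left-hand side collapses to $\tr_{\cH}\bigl(\sum_{j=1}^{k-1} j^{-1}(C^j - A^j - B^j)\bigr)$. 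Now the combinatorial preparation enters: by \eqref{D.binom} together with Lemma \ref{lD.5},
\[
\sum_{j=1}^{k-1} j^{-1} C^j = X_k(A,B) + y_k, \qquad y_k \in \sum_{j=1}^{k-1} j^{-1}(A^j + B^j) + [{\rm Pol_2}, {\rm Pol_2}].
\]
Substituting $A, B \in \cB_1(\cH)$, every commutator in $[{\rm Pol_2},{\rm Pol_2}]$ evaluates to a commutator of trace-class operators, and hence has trace zero; this yields the required identity $\tr_{\cH}\bigl(\sum_{j=1}^{k-1} j^{-1}(C^j - A^j - B^j)\bigr) = \tr_{\cH} X_k(A,B)$, completing the small-norm $\cB_1$ case upon exponentiation.

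To remove the small-norm and $\cB_1$ restrictions, I will observe that both sides of \eqref{D.38A} are entire holomorphic functions of $(A,B) \in \cB_k(\cH) \times \cB_k(\cH)$. Indeed, $(A,B) \mapsto A+B-AB$ is polynomial into $\cB_k(\cH)$ and $T \mapsto \det_{\cH,k}(I-T)$ is entire on $\cB_k(\cH)$, while $X_k(A,B)$ is a polynomial map into $\cB_1(\cH)$ (every monomial containing at least $k$ letters) followed by the continuous trace. Replacing $(A,B)$ by $(tA, tB)$, both sides become entire functions of $t \in \bbC$; they coincide for $t$ near $0$ whenever $A, B \in \cB_1(\cH)$, hence coincide for all $t \in \bbC$ by the identity principle, and in particular at $t = 1$. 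Joint continuity of both sides in the $\cB_k$-norm, together with density of finite-rank operators (which lie in $\cB_1(\cH)$) in $\cB_k(\cH)$, then gives \eqref{D.38A} for arbitrary $A, B \in \cB_k(\cH)$.

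The main obstacle is the faithful transfer of Lemma \ref{lD.5} --- a purely algebraic identity in the free algebra ${\rm Pol_2}$ --- to the operator setting: one must verify that each commutator $[P(A,B), Q(A,B)]$ arising from $[{\rm Pol_2},{\rm Pol_2}]$, after substitution of trace-class $A, B$, produces cyclically equivalent products $P(A,B)Q(A,B)$ and $Q(A,B)P(A,B)$ that lie in $\cB_1(\cH)$, so that their traces agree and the commutator has trace zero. This is precisely why one must first descend to $A, B \in \cB_1(\cH)$ rather than working directly in $\cB_k(\cH)$: individual monomials of length less than $k$ need not be trace-class when $A, B$ are only in $\cB_k$, which is why the algebraic identity of Lemma \ref{lD.5} cannot be traced term-by-term in the original setting. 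The remaining analytic propagation step is standard but must be justified carefully, since it is what allows the algebraic kernel of the argument to deliver the full theorem.
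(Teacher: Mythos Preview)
Your proof is correct and follows essentially the same route as the paper: reduce to $A,B\in\cB_1(\cH)$, use multiplicativity of the ordinary Fredholm determinant together with the relation between ${\det}_{\cH,k}$ and ${\det}_{\cH}$, apply \eqref{D.binom} and Lemma~\ref{lD.5} to identify the correction term with $\tr_{\cH}(X_k(A,B))$ modulo commutators (whose traces vanish in $\cB_1$), and finally pass to $\cB_k(\cH)$ by density and continuity.

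The only difference is cosmetic. You impose a small-norm restriction in order to use the logarithmic series $\log{\det}_{\cH,k}(I-T)=-\sum_{j\ge k}j^{-1}\tr_{\cH}(T^j)$, and then remove that restriction by an analyticity-in-$t$ argument. The paper avoids this detour by working directly with the finite-sum identity
\[
{\det}_{\cH,k}(I_{\cH}-T)={\det}_{\cH}(I_{\cH}-T)\exp\bigg(\tr_{\cH}\bigg(\sum_{j=1}^{k-1}j^{-1}T^j\bigg)\bigg),\quad T\in\cB_1(\cH),
\]
which holds for all $T\in\cB_1(\cH)$ without any norm restriction; this leads immediately to the desired exponential correction factor and makes the intermediate analytic-continuation step unnecessary. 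Both paths arrive at the same combinatorial core.
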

\begin{proof} First, we suppose that $A,B\in\cB_1(\cH)$. Then it is well-known that
\begin{equation} 
{\det}_{\cH,1}(I_{\cH}-A) {\det}_{\cH,1}(I_{\cH}-B)={\det}_{\cH,1}((I_{\cH}-A)(I_{\cH}-B)),
\end{equation} 
consistent with $X_1(A,B) = 0$. Without loss of generality we may assume that $k \in \bbN$, $k \geq 2$, in 
the following. Employing 
\begin{equation} 
{\det}_{\cH,k}(I_{\cH}-T)={\det}_{\cH}(I_{\cH}-T) \exp\bigg({\tr}_{\cH}\bigg(\sum_{j=1}^{k-1} j^{-1} T^j\bigg)\bigg), 
\quad T \in \cB_1(\cH), 
\end{equation} 
one infers that 
\begin{align} 
&{\det}_{\cH,k}((I_{\cH}-A)(I_{\cH}-B))={\det}_{\cH,k}(I_{\cH}-(A+B-AB))     \no \\
& \quad ={\det}_{\cH}(I_{\cH}-(A+B-AB)) 
\exp\bigg({\tr}_{\cH}\bigg(\sum_{j=1}^{k-1} j^{-1} (A+B-AB)^j\bigg)\bigg)  \no \\
& \quad ={\det}_{\cH}(I_{\cH}-A) {\det}_{\cH}(I_{\cH}-B) 
\exp\bigg({\tr}_{\cH}\bigg(\sum_{j=1}^{k-1} j^{-1} (A+B-AB)^j\bigg)\bigg)    \no \\
& \quad ={\det}_{\cH,k}(I_{\cH}-A) {\det}_{\cH,k}(I_{\cH}-B)   \no \\
& \qquad \times \exp\bigg({\tr}_{\cH}\bigg(\sum_{j=1}^{k-1} j^{-1} \big[(A+B-AB)^j-A^j-B^j\big]\bigg)\bigg).
\end{align} 

By \eqref{D.binom} one concludes that 
\begin{equation} 
{\tr}_{\cH}\bigg(\sum_{j=1}^{k-1} j^{-1} \big[(A+B-AB)^j-A^j-B^j\big]\bigg) 
= {\tr}_{\cH}(x_k) + {\tr}_{\cH}\bigg(y_k-\sum_{j=1}^{k-1} j^{-1} \big(A^j+B^j\big)\bigg).
\end{equation} 
By Lemma \ref{lD.5}, 
\begin{equation} 
y_k-\sum_{j=1}^{k-1} j^{-1} \big(A^j+B^j\big)
\end{equation} 
is a sum of commutators of polynomial expressions in $A$ and $B$. Hence, 
\begin{equation} 
\bigg(y_k-\sum_{j=1}^{k-1} j^{-1} \big(A^j+B^j\big)\bigg) \subset [\cB_1(\cH),\cB_1(\cH)], 
\end{equation} 
and thus, 
\begin{equation} 
{\tr}_{\cH} \bigg(y_k-\sum_{j=1}^{k-1} j^{-1} \big(A^j+B^j\big)\bigg)=0, 
\end{equation} 
proving assertion \eqref{D.38A} for $A,B\in\cB_1(\cH)$.

Since both, the right and left-hand sides in \eqref{D.38A} are continuous with respect to the norm in 
$\cB_k(\cH)$, and $\cB_1(\cH)$ is dense in $\cB_k(\cH)$, \eqref{D.38A} holds for arbitrary 
$A, B \in \cB_k(\cH)$.
\end{proof}

\section{Notational Conventions} \lb{sE}
\renewcommand{\theequation}{E.\arabic{equation}}
\renewcommand{\thetheorem}{E.\arabic{theorem}}
\setcounter{theorem}{0} \setcounter{equation}{0}

For convenience of the reader we now summarize most of our notational conventions used 
throughout this manuscript. \\[2mm] 

\newpage 

{\bf Basic Abbreviations:} \\[1mm]
\indent
We employ the shortcut $\bbN_0 = \bbN\cup\{0\}$.

$\lfloor \, \cdot \, \rfloor$ denotes the floor function on $\bbR$, that is, $\lfloor x \rfloor$ characterizes the largest integer less than or equal to $x \in \bbR$. Similarly, $\lceil \dott \rceil$ denotes ceiling function, that is, 
$\lceil x \rceil$ characterizes the smallest integer larger than or equal to $x \in \bbR$.

We abbreviate $\bbC_{\pm} = \{z \in \bbC \,|\, \pm \Im(z) > 0\}$. \\[2mm] 
{\bf Vectors and Matrices:} \\[1mm] 
\indent 
Vectors in $\bbR^n$ are denoted by $ x = (x_1,\ldots,x_n)^{\top} \in \bbR^n$ (with $\top$ abbreviating the transpose operation) or $ p  = (p_1,\ldots,p_n)^{\top} \in \bbR^n$, $n \in \bbN$. For $ x = (x_1,\ldots,x_n)^{\top} \in \bbR^n$ we abbreviate 
\begin{equation}
\langle x \rangle = \big(1+| x |^2\big)^{1/2},
\end{equation}
where $|x|= \big(x_1^2+\cdots+x_n^2\big)^{1/2}$ denotes the standard Euclidean norm of $x \in \bbR^n$, $n \in \bbN$.

The dot symbol, ``$\, \cdot \,$'',\ is used in three different ways: First, it denotes the standard scalar product in $\bbR^n$, 
\begin{equation}
x \cdot y = \sum_{j=1}^n x_j y_j, \quad x =(x_1,\dots,x_n)^{\top}, \; y = (y_1,\dots,y_n)^{\top} \in \bbR^n.
\end{equation}
Second, we will also use it for $n$-vectors of operators, ${\ul A} = (A_1,\ldots,A_n)^{\top}$ 
and ${\ul B} = (B_1,\ldots, B_n)^{\top}$ acting in the same Hilbert space in the form 
\begin{equation}
{\ul A} \cdot {\ul B} = \sum_{j=1}^n A_j B_j,
\end{equation}
whenever it is obvious how to resolve the domain issues of the possibly unbounded operators involved. 

For $X$ a given space, $T$ a linear operator in $X$, and 
$A = (a_{j,k})_{1 \leq j,k \leq N} \in \bbC^{N \times N}$ an $N \times N$ matrix with constant complex-valued entries acting in $\bbC^N$, $N \in \bbN$, we will avoid tensor product notation as in  
\begin{equation}
T \otimes A \, \text{ in } \, X \otimes \bbC^N,  
\end{equation} 
such that 
\begin{equation}
X \otimes \bbC^N \, \text{ is identified with the symbol } \, 
X^N = (X, \dots, X)^{\top}, 
\end{equation}
and 
\begin{equation}
T \otimes A \, \text{ is identified with } \, T A = (T a_{j,k})_{1 \leq j,k \leq N} = 
(a_{j,k} T)_{1 \leq j,k \leq N} = A T.     \lb{1.TA} 
\end{equation} 
That is, we interpret $T \otimes A$ as entrywise multiplication, resulting in an $N \times N$ block operator matrix $TA = AT$. Thus, if $T = (T_1,\dots,T_n)$, with $T_j$, $1 \leq j \leq n$, operators 
in $\cH$, and $A = (A_1,\dots,A_n)$, with $A_j \in \bbC^{N \times N}$, $1 \leq j \leq n$, $N \times N$ matrices with constant, complex-valued entries acting in $\bbC^N$, we will employ the dot symbol also in the form
\begin{equation}
T \cdot A = \sum_{j=1}^n T_j A_j = \sum_{j=1}^n A_j T_j = A \cdot T,
\end{equation} 
where $T_j A_j = A_j T_j$, $1 \leq j \leq n$, are defined as in \eqref{1.TA}. 

$A \in X^{m \times n}$, $m, n \in \bbN$, represents an $m \times n$ matrix 
$A = (A_{j,k})_{\substack{1 \leq j \leq m \\1 \leq k \leq n}}$, with entries $A_{j,k}$ in $X$, $1 \leq j \leq m, \,1 \leq k \leq n$. In particular, $F = (F_1, \dots, F_n)^{\top} \in X^n$ is a vector with $n$ components and $F_j \in X$ denotes its $j$-th component, $1\leq j\leq n$.

The identity operator in $\bbC^n$ is represented by $I_n$, , $n \in \bbN$. \\[2mm]
{\bf Special Functions and Function Spaces:} \\[1mm] 
\indent 
For special functions such as the Gamma function $\Gamma(\dott)$, the digamma function $\psi(\dott) = \Gamma'(\dott)/\Gamma(\dott)$, Bessel functions of order $\nu$, $J_{\nu}(\dott), Y_{\nu}(\dott)$, Hankel functions of the first kind and order $\nu$, $H^{(1)}_{\nu}(\dott)$, the Euler--Macheroni constant $\gamma_{E-M}$, etc., we refer to \cite{AS72}.  

To simplify notation, we frequently omit Lebesgue measure whenever possible and simply use $L^p(\bbR^n)$ instead of 
$L^p(\bbR^n; d^nx)$, $p \in (0,\infty) \cup \{\infty\}$. 

Weak $L^p$-spaces (i.e., Lorentz spaces 
$L^{p,q}(\bbR^n; d\rho)$ with $q=\infty$), are denoted by 
$L^p_{weak}(\bbR^n; d\rho)$, $p \in (0,\infty)$. Here $(\bbR^n,d\rho)$ represents a separable measure space and the measure $\rho$ is assumed to be $\sigma$-finite. The seminorm on $L^p_{weak}(\bbR^n; d\rho)$ is abbreviated by
\begin{equation}
\|f\|_{L^p_{weak}(\bbR^n; d\rho)} := \sup_{t > 0} \big(t [\mu_f(t)]^{1/p}\big), 
\quad p \in (0, \infty), 
\end{equation}
where
\begin{equation}
\mu_f(t) = \rho(\{x \in \bbR^n \, | \, |f(x)| > t\}).
\end{equation}
In particular,
\begin{align}
\begin{split}
\|f + g\|_{L^p_{weak}(\bbR^n; d\rho)} \leq 
c_p \big[\|f\|_{L^p_{weak}(\bbR^n; d\rho)} 
+ \|g\|_{L^p_{weak}(\bbR^n; d\rho)}\big],& \\ \
c_p = \max \big(2, 2^{1/p} \big), \; p \in (0,\infty).& 
\end{split} 
\end{align}
Again, we omit the measure $\rho$ and just employ the notation 
$L^p_{weak}(\bbR^n)$ in case $\rho$ equals Lebesgue measure on $\bbR^n$.   

If $n\in \bbN$ and $N\in \bbN\backslash\{1\}$, we set
\begin{equation}\lb{1.101}
[L^2(\bbR^n)]^N := L^2(\bbR^n; \bbC^N), \quad [W^{1,2}(\bbR^n)]^N := W^{1,2}(\bbR^n; \bbC^N), \, 
\text{ etc.}
\end{equation}

The symbol $\cF$ is used to denote the Fourier transform, $f^{\wedge} : = \cF f$, similarly $f^{\vee} := \cF^{-1} f$, $f \in \cS'(\bbR^n)$, with $\cS(\bbR^n)$ the Schwartz test function space, and $\cS'(\bbR^n)$ its dual with elements the tempered distributions. In particular,
\begin{align} 
& f^{\wedge}(p) = (2 \pi)^{- n/2} \int_{\bbR^n} d^n x \, e^{- i p \cdot x} f(x), \quad 
 f^{\vee}(x) = (2 \pi)^{- n/2} \int_{\bbR^n} d^n p \, e^{i p \cdot x} f(p),      \no \\
& \hspace*{7.3cm} p,x \in \bbR^n, \; f \in \cS(\bbR^n).
\end{align}

If $-\infty \leq a < b \leq \infty$ and $F$ maps $\bbC\backslash (a,b)$ to a normed linear space, then the normal boundary values of $F$ at $\lambda \in (a,b)$ (when these values exist) are denoted by $F(\lambda \pm i0):= \lim_{\varepsilon \downarrow 0}F(\lambda \pm i \varepsilon)$.  

For $n,k \in \bbN$ and an open set $\Omega\subset \bbR^n$, $C^k(\Omega)$ denotes the set of all $f:\Omega\to \bbC$ that are $k$ times continuously differentiable.
\\[2mm]
{\bf Linear Operators in Hilbert Spaces:} \\[1mm] 
Let $\cH$, $\cK$ be separable, complex Hilbert spaces, $(\, \cdot \,,\, \cdot \,)_{\cH}$ the scalar 
product in $\cH$ (linear in the second argument), $\|\, \cdot \, \|_{\cH}$ the norm on $\cH$, 
and $I_{\cH}$ the identity operator in $\cH$.

If $T$ is a linear operator mapping (a subspace of) a Hilbert space into another, then 
$\dom(T)$ and $\ker(T)$ denote the domain and kernel (i.e., null space) of $T$. 
The closure of a closable operator $A$ is denoted by $\ol A$. 
The set of closed linear operators with domain contained in $\cH$ and range contained in 
$\cK$ is denoted by $\cC(\cH,\cK)$ (or simply by $\cC(\cH)$ if $\cH=\cK$). 

The resolvent set, spectrum, and point spectrum (i.e., the set of eigenvalues) of a closed operator $T$ are denoted by $\rho(T)$, $\sigma(T)$, and $\sigma_p(T)$, respectively. 

If $S$ is self-adjoint in $\cH$, the family of strongly right-continuous spectral projections 
associated with $S$ is denoted by $E_S(\lambda) = E_S((- \infty, \lambda])$, $\lambda \in \bbR$, moreover, the singular, discrete, essential, absolutely continuous, and singularly continuous spectrum 
of $S$ are denoted by $\sigma_s(S)$, $\sigma_d(S)$, $\sigma_{ess}(S)$, $\sigma_{ac}(S)$, 
and $\sigma_{sc}(S)$, respectively. 

For a densely defined closed operator $S$ in $\cH$ we employ the abbreviation $\langle S \rangle := \big(I_{\cH} + |S|^2\big)^{1/2}$, and 
similarly, if $T = (T_1,\dots,T_n)^{\top}$, with $T_j$ densely defined and closed 
in $\cH$, $1 \leq j \leq n$, 
\begin{equation}
\langle T \rangle = \big(I_{\cH} + | T |^2\big)^{1/2}, \quad | T | 
= \big(|T_1|^2+\cdots+|T_n|^2\big)^{1/2},
\end{equation}
whenever it is obvious how to define $|T_1|^2+\cdots+|T_n|^2$ as a self-adjoint operator. 


The Banach spaces of bounded and compact linear operators on a separable complex Hilbert space $\cH$ are denoted by $\cB(\cH)$ and $\cB_\infty(\cH)$, respectively; the corresponding $\ell^p$-based Schatten--von Neumann ideals are denoted by $\cB_p (\cH)$, with associated norm abbreviated by 
$\|\cdot\|_{\cB_p(\cH)}$, $p \geq 1$. 

Following a standard practice in Mathematical Physics, we simplify the notation of operators of multiplication by a scalar or matrix-valued function $V$ and hence use $V$ rather than the more elaborate symbol $M_V$ throughout this manuscript.

\medskip 

\noindent
{\bf Acknowledgments.}~We are indebted to Will Green, Denis Potapov, and Marcus Waurick for very helpful discussions on this subject. The authors are indebted to the Banff International Research Station for Mathematical Innovation and Discovery (BIRS) for their extraordinary hospitality during the focused research group on {\it Extensions of index theory inspired by scattering theory} (17frg668), June 18--25, 2017, where part of this work was initiated.

 
\end{document}